\newif\ifpersonal
\newif\ifpersonalsection
\allowdisplaybreaks \usepackage{graphicx}
\numberwithin{equation}{section}
\theoremstyle{plain}
\newtheorem{theorem}[equation]{Theorem}
\newtheorem{lemma}[equation]{Lemma}
\newtheorem*{lemma*}{Lemma}
\newtheorem{question}[equation]{Question}
\newtheorem{claim}[equation]{Claim}
\newtheorem*{claim*}{Claim}
\newtheorem{conjecture}[equation]{Conjecture}
\newtheorem{proposition}[equation]{Proposition}
\newtheorem*{proposition*}{Proposition}
\newtheorem{corollary}[equation]{Corollary}
\theoremstyle{definition}
\newtheorem{definition}[equation]{Definition}
\newtheorem{definition-theorem}[equation]{Definition-Theorem}
\newtheorem{definition-lemma}[equation]{Definition-Lemma}
\newtheorem{definition-remark}[equation]{Definition-Remark}
\newtheorem{definition-proposition}[equation]{Definition-Proposition}
\newtheorem{definition-corollary}[equation]{Definition-Corollary}
\newtheorem{construction}[equation]{Construction}
\newtheorem{assumption}[equation]{Assumption}
\newtheorem{assumptions}[equation]{Assumptions}
\newtheorem{notation}[equation]{Notation}
\newtheorem{example}[equation]{Example}
\newtheorem{remark}[equation]{Remark}
\newcommand{\todo}[1]{\textcolor{red}{(Todo: #1)}}
\newcommand{\personal}[1]{\textcolor[rgb]{0,0,1}{(Personal: #1)}}
\newcommand{\discussion}[1]{\textcolor{violet}{(Discussion: #1)}}
\newcommand{\todo}[1]{\ignorespaces}
\newcommand{\personal}[1]{\ignorespaces}
\newcommand{\discussion}[1]{\ignorespaces}
\providecommand{\abs}[1]{\lvert#1\rvert}
\newcommand{\bbA}{\mathbb A}
\newcommand{\bbB}{\mathbb B}
\newcommand{\bbD}{\mathbb D}
\newcommand{\bbE}{\mathbb E}
\newcommand{\bbF}{\mathbb F}
\newcommand{\bbG}{\mathbb G}
\newcommand{\bbH}{\mathbb H}
\newcommand{\bbk}{\mathbb F}
\newcommand{\bbL}{\mathbb L}
\newcommand{\bbM}{\mathbb M}
\newcommand{\obbM}{\widebar{\bbM}}
\newcommand{\bbN}{\mathbb N}
\newcommand{\bbP}{\mathbb P}
\newcommand{\bbQ}{\mathbb Q}
\newcommand{\bbR}{\mathbb R}
\newcommand{\bbT}{\mathbb T}
\newcommand{\bbU}{\mathbb U}
\newcommand{\bbW}{\mathbb W}
\newcommand{\tbbW}{\widetilde{\bbW}}
\newcommand{\bbV}{\mathbb V}
\newcommand{\bbX}{\mathbb X} 
\newcommand{\bbY}{\mathbb Y}
\newcommand{\tbbY}{\widetilde{\bbY}}
\newcommand{\obbY}{\widebar{\bbY}}
\newcommand{\bbZ}{\mathbb Z}
\newcommand{\tbbD}{\widetilde{\bbD}}
\newcommand{\fA}{\mathfrak A}
\newcommand{\fB}{\mathfrak B}
\newcommand{\fC}{\mathfrak C}
\newcommand{\fD}{\mathfrak D}
\newcommand{\fF}{\mathfrak F}
\newcommand{\fG}{\mathfrak G}
\newcommand{\fH}{\mathfrak H}
\newcommand{\fM}{\mathfrak M}
\newcommand{\fP}{\mathfrak P}
\newcommand{\fS}{\mathfrak S}
\newcommand{\fT}{\mathfrak T}
\newcommand{\fV}{\mathfrak V}
\newcommand{\fW}{\mathfrak W}
\newcommand{\fc}{\mathfrak c}
\newcommand{\fX}{\mathfrak X}
\newcommand{\fY}{\mathfrak Y}
\newcommand{\fZ}{\mathfrak Z}
\newcommand{\ofM}{\widebar{\fM}}
\newcommand{\cC}{\mathcal C}
\newcommand{\cD}{\mathcal D}
\newcommand{\cE}{\mathcal E}
\newcommand{\cH}{\mathcal H}
\newcommand{\cG}{\mathcal G}
\newcommand{\cJ}{\mathcal J}
\newcommand{\cL}{\mathcal L}
\newcommand{\cO}{\mathcal O}
\newcommand{\cP}{\mathcal P}
\newcommand{\cT}{\mathcal T}
\newcommand{\cV}{\mathcal V}
\newcommand{\cW}{\mathcal W}
\newcommand{\cX}{\mathcal X}
\newcommand{\cY}{\mathcal Y}
\newcommand{\cZ}{\mathcal Z}
\newcommand{\tcZ}{\tilde{\cZ}}
\newcommand{\ocY}{\widebar{\cY}}
\newcommand{\bA}{\mathbf A}
\newcommand{\bD}{\mathbf D}
\newcommand{\bP}{\mathbf P}
\newcommand{\bx}{\mathbf x}
\newcommand{\scrH}{\mathscr{H}}
\let\save@mathaccent\mathaccent
\newcommand*\if@single[3]{%
	\setbox0\hbox{${\mathaccent"0362{#1}}^H$}%
	\setbox2\hbox{${\mathaccent"0362{\kern0pt#1}}^H$}%
	\ifdim\ht0=\ht2 #3\else #2\fi
}
\newcommand*\rel@kern[1]{\kern#1\dimexpr\macc@kerna}
\newcommand*\widebar[1]{\@ifnextchar^{{\wide@bar{#1}{0}}}{\wide@bar{#1}{1}}}
\newcommand*\wide@bar[2]{\if@single{#1}{\wide@bar@{#1}{#2}{1}}{\wide@bar@{#1}{#2}{2}}}
\newcommand*\wide@bar@[3]{%
	\begingroup
	\def\mathaccent##1##2{%
		\let\mathaccent\save@mathaccent
		\if#32 \let\macc@nucleus\first@char \fi
		\setbox\z@\hbox{$\macc@style{\macc@nucleus}_{}$}%
		\setbox\tw@\hbox{$\macc@style{\macc@nucleus}{}_{}$}%
		\dimen@\wd\tw@
		\advance\dimen@-\wd\z@
		\divide\dimen@ 3
		\@tempdima\wd\tw@
		\advance\@tempdima-\scriptspace
		\divide\@tempdima 10
		\advance\dimen@-\@tempdima
		\ifdim\dimen@>\z@ \dimen@0pt\fi
		\rel@kern{0.6}\kern-\dimen@
		\if#31
		\overline{\rel@kern{-0.6}\kern\dimen@\macc@nucleus\rel@kern{0.4}\kern\dimen@}%
		\advance\dimen@0.4\dimexpr\macc@kerna
		\let\final@kern#2%
		\ifdim\dimen@<\z@ \let\final@kern1\fi
		\if\final@kern1 \kern-\dimen@\fi
		\else
		\overline{\rel@kern{-0.6}\kern\dimen@#1}%
		\fi
	}%
	\macc@depth\@ne
	\let\math@bgroup\@empty \let\math@egroup\macc@set@skewchar
	\mathsurround\z@ \frozen@everymath{\mathgroup\macc@group\relax}%
	\macc@set@skewchar\relax
	\let\mathaccentV\macc@nested@a
	\if#31
	\macc@nested@a\relax111{#1}%
	\else
	\def\gobble@till@marker##1\endmarker{}%
	\futurelet\first@char\gobble@till@marker#1\endmarker
	\ifcat\noexpand\first@char A\else
	\def\first@char{}%
	\fi
	\macc@nested@a\relax111{\first@char}%
	\fi
	\endgroup
}
\newcommand{\oA}{\widebar A}
\newcommand{\oB}{\widebar B}
\newcommand{\oC}{\widebar C}
\newcommand{\oE}{\widebar E}
\newcommand{\oK}{\widebar K}
\newcommand{\oM}{\widebar M}
\newcommand{\oP}{\widebar P}
\newcommand{\oQ}{\widebar Q}
\newcommand{\oR}{\widebar R}
\newcommand{\oS}{\widebar S}
\newcommand{\oV}{\widebar V}
\newcommand{\oGamma}{\widebar\Gamma}
\newcommand{\oSigma}{\widebar\Sigma}
\newcommand{\Ex}{\operatorname{Ex}}
\newcommand{\hD}{\widehat D}
\newcommand{\hR}{\widehat R}
\newcommand{\tA}{\widetilde A}
\newcommand{\tW}{\widetilde W}
\newcommand{\tX}{\widetilde X}
\newcommand{\tY}{\widetilde Y}
\newcommand{\tZ}{\widetilde Z}
\newcommand{\tbeta}{\widetilde\beta}
\newcommand{\longto}{\longrightarrow}
\newcommand{\tf}{\widetilde f}
\newcommand{\tk}{\widetilde k}
\newcommand{\tSigma}{\widetilde\Sigma}
\newcommand{\oa}{\widebar a}
\newcommand{\ob}{\widebar b}
\newcommand{\os}{\widebar s}
\newcommand{\ow}{\widebar w}
\newcommand{\obbR}{\widebar{\bbR}}
\newcommand{\ofC}{\widebar{\fC}}
\newcommand{\ff}{\mathfrak f}
\newcommand{\fh}{\mathfrak h} 
\newcommand{\fm}{\mathfrak m}
\newcommand {\fd} {\mathfrak{d}}
\DeclareFontFamily{U}{BOONDOX-calo}{\skewchar\font=45 }
\DeclareFontShape{U}{BOONDOX-calo}{m}{n}{<-> s*[1.05] BOONDOX-r-calo}{}
\DeclareFontShape{U}{BOONDOX-calo}{b}{n}{<-> s*[1.05] BOONDOX-b-calo}{}
\DeclareMathAlphabet{\mathcalboondox}{U}{BOONDOX-calo}{m}{n}
\newcommand{\Bnd}{\mathrm{Bd}}
\newcommand{\Cont}{\mathrm{Cont}}
\newcommand{\DC}{\mathrm{DC}}
\newcommand{\ISk}{\mathrm{ISk}}
\newcommand{\NT}{\mathrm{NT}}
\newcommand{\SP}{\mathrm{SP}}
\newcommand{\an}{\mathrm{an}}
\newcommand{\gen}{\mathrm{gen}}
\newcommand{\alg}{\mathrm{alg}}
\newcommand{\dom}{\mathrm{dom}}
\newcommand{\ess}{\mathrm{ess}}
\newcommand{\ev}{\mathrm{ev}}
\newcommand{\gp}{\mathrm{gp}}
\newcommand{\sot}{\mathrm{sot}}
\newcommand{\smot}{\mathrm{smot}}
\newcommand{\pr}{\mathrm{pr}}
\newcommand{\red}{\mathrm{red}}
\newcommand{\sd}{\mathrm{sd}}
\newcommand{\sm}{\mathrm{sm}}
\newcommand{\tr}{\mathrm{tr}}
\newcommand{\tri}{trmp}
\newcommand{\trop}{\mathrm{trop}}
\newcommand{\trunc}{\mathrm{trunc}}
\newcommand{\hsigma}{\hat{\sigma}}
\newcommand{\hosigma}{\hat{\overline{\sigma}}}
\newcommand{\osigma}{\overline{\sigma}}
\newcommand{\whscrO}{\widehat{\mathscr{O}}}
\newcommand{\scrY}{\mathscr{Y}}
\newcommand{\inv}{^{-1}}
\newcommand{\kanal}{$k$-analytic\xspace}
\newcommand{\kc}{k^\circ}
\newcommand{\bb}{\mathbb}
\newcommand{\al}{\text{al}}
\newcommand{\fBl}{\widehat{\text{Bl}}}
\newcommand{\whTV}{\widehat{\text{TV}}}
\newcommand{\codim}{\text{codim}}
\newcommand{\Bl}{\text{Bl}}
\newcommand{\Prim}{\text{Prim}}
\newcommand{\whbbX}{\widehat{\bbX}}
\newcommand{\whbbD}{\widehat{\bbD}}
\newcommand{\whbbL}{\widehat{\bbL}}
\newcommand{\whbbY}{\widehat{\mathbb{Y}}}
\newcommand{\whSigma}{\widehat{\Sigma}}
\newcommand{\whscrI}{\widehat{\mathscr{I}}}
\newcommand{\rec}{\text{rec}}
\DeclareMathOperator{\CH}{CH}
\DeclareMathOperator{\Cox}{Cox}
\DeclareMathOperator{\Hom}{Hom}
\DeclareMathOperator{\gr}{gr}
\DeclareMathOperator{\link}{link}
\DeclareMathOperator{\Map}{Map}
\DeclareMathOperator{\MoriFan}{MoriFan}
\DeclareMathOperator{\Mov}{Mov}
\DeclareMathOperator{\Ample}{Ample}
\DeclareMathOperator{\MovFan}{MovFan}
\DeclareMathOperator{\MovSec}{MovSec}
\DeclareMathOperator{\NE}{NE}
\DeclareMathOperator{\NB}{NB}
\DeclareMathOperator{\Nef}{Nef}
\DeclareMathOperator{\Pic}{Pic}
\DeclareMathOperator{\In}{In}
\DeclareMathOperator{\Proj}{Proj}
\DeclareMathOperator{\Sk}{Sk}
\DeclareMathOperator{\oSk}{\overline{Sk}}
\DeclareMathOperator{\Sec}{Sec}
\DeclareMathOperator{\Spec}{Spec}
\DeclareMathOperator{\Sing}{Sing}
\DeclareMathOperator{\Spf}{Spf}
\DeclareMathOperator{\Sp}{Sp}
\DeclareMathOperator{\interior}{int}
\DeclareMathOperator{\toric}{tor}
\DeclareMathOperator{\TV}{TV}
\DeclareMathOperator{\cTV}{\mathcal TV}
\newcommand{\ocT}{\widebar{\cT}}
\DeclareMathOperator{\Wall}{Wall}
\DeclareMathOperator*{\ord}{ord}
\DeclareMathOperator{\val}{val}
\DeclareMathOperator{\skinterior}{\interior}
\newcommand{\Cohh}{\mathrm{Coh}^\heartsuit}
\renewenvironment{abstract}{%
\quotation \small \textbf{\textit{\abstractname.}} 
}{\endquotation}
\begin{document}

\title{Log Calabi-Yau mirror symmetry and non-archimedean disks}
\author{Sean Keel}
\address{Sean Keel, Department of Mathematics, 1 University Station C1200, Austin, TX 78712-0257, USA}
\email{keel@math.utexas.edu}
\author{Logan White}
\address{Logan White, Department of Mathematics, 1 University Station C1200, Austin, TX 78712-0257, USA}
\email{keel@math.utexas.edu}
\author{Tony Yue Yu}
\address{Tony Yue YU, Department of Mathematics, M/C 253-37, Caltech, 1200 E.\ California Blvd., Pasadena, CA 91125, USA}
\email{yuyuetony@gmail.com}

\maketitle

\begin{abstract}
  We give a uniform construction that includes
  the mirror algebra to a smooth log Calabi-Yau variety with maximal boundary, proper
  over an affine variety, or to a one parameter maximal compact Calabi-Yau degeneration, 
  as the spectrum of a commutative associative algebra with a canonical basis, whose structure constants are counts of non-archimedean analytic disks.
  More generally, we study the enumeration of non-archimedean analytic curves with boundaries, associated to a given transverse spine in the essential skeleton of the log Calabi-Yau variety.
  The moduli spaces of such curves are infinite dimensional. In order to obtain finite counts, we impose a boundary regularity condition so that the curves can be analytically continued into tori, that are unrelated to the given log Calabi-Yau variety. We prove the properness of the resulting moduli spaces, and show that the mirror algebra is a finitely generated commutative associative algebra, giving rise to a mirror family of log Calabi-Yau varieties.  
\end{abstract}


\setcounter{tocdepth}{1} 
\tableofcontents

\section{Introduction and statements of main results} \label{sec:introduction}
Polarized (log) Calabi-Yau varieties are conjectured to have 
\emph{theta functions}, a canonical basis of sections, indexed by integer points in the essential skeleton of the mirror variety
(see \cite[Conjecture 0.6]{Gross_Mirror_symmetry_for_log_Calabi-Yau_surfaces_I_published}, \cite{LZ23}).
This suggests a construction of the mirror variety: make the vector space with this basis into an
algebra by giving a multiplication rule, and then take the spectrum of
this algebra. Giving the multiplication rule 
is equivalent to defining {\it structure constants}, see \cref{eq:sc}.
Ours will be counts (in general virtual, but in many
important cases, naive) of $k$-analytic disks. 
We will give the precise definition shortly. We think its simple natural form is our main contribution.

We begin with a formalism that will allow us to simultaneously
treat affine log CYs and maximal degenerations of compact CYs. Throughout the paper $k$ is
an algebraically closed field of characteristic zero.

\begin{assumption} \label{ass:basicsetup}
  Let $(\bbY,\bbD)$ be a formal snc pair (see \cref{def:SNCPairs})
  with $\bbY \to \obbY$ a projective morphism to
  an affine formal
scheme topologically formally of finite type (see \cref{def:SpfScheme})  
whose underlying scheme $\obbY_s$ is a point, $p$.
  We assume $f^{-1}(p)$ is geometrically connected. 
 We assume
\begin{equation} \label{eq:kpd}
  K_{\bbY} + \bbD = \bbW
\end{equation} 
  for an effective Cartier divisor $\bbW$ with support
contained in $|\bbD|$. We assume there exists a relative minimal model, see \cref{ass:SYZ}.
Let $(Y,D) \coloneqq (\bbY,\bbD)_{\eta}$ be the Berkovich generic fibre, viewing $\bbY$ as a special formal scheme
over the trivially valued ground field $k$, see \cref{sec:BTS}.  Let
$U \coloneqq Y \setminus D$.
We assume $U$ has a {\it proper exhaustion}: i.e.
$\bbY$ has rational functions $f_1,\dots,f_m$ inducing regular $k$-analytic
functions on $U$ such that 
$(|f_1|,\dots,|f_n|): U \to \bbR^n_{\geq 0}$ is proper.

Finally, we assume $U$ has {\it maximal boundary} (we recall the definition below).
\end{assumption}

Our main interest will be in the following three special cases:

\begin{example}[Mirrors to smooth affine log CYs with maximal boundary] \label{ex:affine} Let $\bbU \subset \bbY$ be an snc compactification of smooth
  affine log CY variety over $k$, 
  and $\bbD \coloneqq \bbY \setminus U$. We assume $\bbD^{\ess} \subset \bbD$ (those irreducible components where the volume form
  of $\bbU$ has a (necessarily simple) pole). We assume $\bbY$ is projective (and take $\obbY = \obbY_s$ to
  be a point. In particular $\bbY$ is a smooth projective variety, in this case the formal schemes are ordinary schemes).
  In this case $U$ contains no complete curves, 
  our structure constants are naive counts, and our main result gives
  an (algebraic)  mirror family of affine log
  CYs over $\TV(\Nef(\bbY))$ (the affine $\bbT_{\Pic(\bbY)} \coloneqq \Pic(\bbY) \otimes \bbG_m$
  toric variety for the nef cone), each fibre endowed with a canonical basis of regular functions.
  See \cref{thm:as} and \cref{thm:lcsings}.
\end{example}

\begin{example}[Mirrors to maximal compact CY degenerations]
  \label{ex:CYD} $\obbY = \Sp(k[[t]])$, and $p: \bbY' \to \obbY$ is a
  (formal) maximal projective Calabi-Yau degeneration. We take $\bbY \to \bbY'$ a log resolution
  (of $\obbY$ together with its central fibre) and $\bbD \subset \bbY$ the reduced central fibre. Our main result
  in this case will be a (formal) mirror family of polarized compact Calabi-Yaus over $\Sp(\hR_{\bbY_s})$, where $\hR_{\bbY_s}$
  is the formal completion at the maximal monomial ideal (generated by all effective curve classes),
  each fibre endowed with
  a canonical basis of the ring of sections. See \cref{thm:CYD} and \cref{thm:lcsingsccy}.  
  \end{example}

  \begin{example}[Universal family for the pipe dream] \label{ex:pipe} We begin with a formal family $f: \obbY \to \Sp(k[[s]])$
    with central fibre $\obbY_s = \bbV_{\Sigma}$ a {\it vertex}, the Spectrum of the Stanley-Reisner ring as in
    \cref{sec:vertex}. 
    
    We assume the singular locus of $f$ is contained in the central fibre. 
    Let $\bbY \to \obbY$ be a relative dlt model, i.e. $K_{\bbY} + \bbY_s$ is dlt and relatively nef, 
    such that the central fibre $\bbY_s$ is reduced (note there is always such a model after base change of the original
    $f$). We assume $\Pic(\bbY)$ is a finitely generated lattice, and that the dual complex of $(\bbY,\bbY_s)$ 
    has suitable combinatorics, see \cref{lem:slcdclem} (we expect this to hold in our main application,
    see \cref{rem:Fano}, it holds for the Fano case below). 

    In this case $U$ contains no complete curves, our structure constants are naive counts, and 
    our main result in this case is a 
    canonical (algebraic) family of tuples $(\cX,\cE,\Theta,\cL) \to \cTV(\Sec(\bbY/\obbY))$, with fibres
    $(X,E,\Theta,L)$ satisfying:
    \begin{enumerate}
    \item $X$ is projective, $L$ is an ample line bundle.
    \item $K_X + E$ is trivial and SLC, and normal for fibres over the structure torus $\bbT_{\Pic(\bbY_s)}$.
    \item $\Theta \in | L|$
    \item The section ring $R(X,L)$ comes with a canonical vector space basis and structure constants
      naive counts of $k$-analytic disks. 
    \end{enumerate}

    See \cref{thm:mfs}. 

    Here $\Sec(\bbY/\obbY)$ is a generalization of the Gelfand-Kapranov-Zelevinsky secondary fan,
    a complete toric fan (with support $\Pic(\bbY)_{\bbR}$), a canonical coarsening
      of $\MoriFan(\bbY/\obbY)$, introduced in \cite[\S  2]{HKY20}, and $\cTV(\Sec(\bbY/\obbY))$ is a Deligne-Mumford
      stack with coarse moduli space the toric variety $\TV(\Sec(\bbY/\obbY))$. 

    Our main application is as follows: 
    We begin with $E \subset X$ an anti-canonical snc divisor on
    a smooth projective variety such that the dual fan $\Sigma_{(X,E)}$ has dimension $\dim(X)$
    (equivalently the log CY $V \coloneqq X \setminus E$ has maximal boundary), and $L > 0$ an
    ample line bundle. Then the Gross-Siebert mirror construction (or the construction here, \cref{ex:POA}, if
    $V$ is proper over affine)
    produces a canonical formal family $f:\obbY \to \Sp(k[[s]])$ with central fibre the vertex, i.e. spectrum of
    the Stanley-Reisner ring, for 
    the dual complex of $(X,E)$. Now take $\bbY \to \obbY$ a dlt model as above (one always exists after possibly
    replacing $f$ by a base extension), and apply \cref{thm:mfs}. 
    We conjecture that $(X,E + \epsilon \Theta)$ is log canonical (equivalently, the pair is KSBA stable),
    that the original $(X,E,L)$ is a fibre of the family, and that this gives
    the universal family conjectured in \cite[1.1-1.2]{HKY20}. See \cref{conj:smooth} and \cref{conj:dmc}.
    \footnote{Keel first made the conjecture in 2012, and refers to it
    as {\it The Pipe Dream}.} 

  One important special case (where the dual complex has the required combinatorics):
  Let $(\bbX,\bbD)$ be a pair of an anti-canonical divisor on a
    smooth projective Fano variety, with $\bbD$ containing a zero stratum (equivalently, the dual complex has
    the maximal possible dimension, $\dim(\bbD)$). We take $\bbY \coloneqq K_{\bbX} \to \oK \eqqcolon \obbY$ the
    contraction to a point of the zero section $\bbX \subset K_{\bbX}$ in the total space of its
    canonical bundle. In this case the mirror is a complete family of (in general singular) Fanos,
    see \cref{rem:roc}. 
  \end{example} 

  We also have an analog of \cref{ex:affine} for $\bbU$ proper over affine:

  \begin{example}[Mirrors to smooth log CYs with maximal boundary proper over affine varieties] \label{ex:POA}
    Notation and assumptions just as in \cref{ex:affine} except we only assume $\bbU$ is proper over affine.
    In this case our structure constants are (in general) virtual counts, and our mirror family is formal. 
    One application is a mirror to affine log CYs with canonical singularities and maximal boundary, see
    \cref{sec:cansing}. This extension is important: \cite{GHKIv2} conjectures that smooth affine log CYs with
    maximal boundary have a canonical basis of theta functions. One scheme for proving this is to run the mirror
    machine twice and prove the original $\bbU$ is a fibre of the double mirror family, see \cref{conj:dm}.
    But in general the
    fibres of the mirror family are singular, so the basic construction here does not apply.
    But conjecturally they are at worst canonical; if so we can use \cref{sec:cansing}.  
    \end{example} 

Next we describe the mirror algebra:
\cref{eq:kpd} endows $U$ with a volume form, $\omega$, see \cref{sec:svf}.  
Let $\Sigma$ be the {\it essential dual fan} to $(\bbY,\bbD)$ (the abstract cone complex encoding
the incidence relation among the {\it essential} boundary divisors: the components of $D$ along
which $\omega$ has a (necessarily simple) pole, see \cref{def:DualFan}.
We write $\Sk(U) \coloneqq |\Sigma|$. There is a natural subcomplex $\partial \Sigma$, corresponding
to non-algebraic strata of $\bbD$ (it is empty in cases \cref{ex:POA} or \cref{ex:CYD}), see
\cref{def:DualFan}. 
$|\Sigma| \setminus |\partial \Sigma|$ is canonically identified
with the Kontsevich-Soibelman-Temkin essential skeleton of $(U,\omega)$.
See \cref{sec:btdc} for basics
on the topology of $|\Sigma|$. The Maximal Boundary assumption in \cref{ass:basicsetup} means $|\Sigma|$
has the maximal possible dimension, $\dim U$. 

\begin{notation} \label{not:coeffs}
Denote $R_{\bbY_s}\coloneqq Q[\NE(\bbY_s/\obbY_s,\bbZ)]$ with basis $z^\beta$ for
effective curve classes $\beta\in\NE(\bbY_s/\obbY_s,\bbZ)$ ($\bbY_s \subset \bbY$ is the reduction
of the underlying scheme, which by assumption is projective over the affine $\obbY_s$).
Here we take $Q$ to be the rational numbers $\bbQ$ unless $U$ contains no complete rational
curves, in which case we take $Q$ to be the integers $\bbZ$ (in general our counts of disks will be
virtual, and so rational numbers, but in the absence of complete rational curves the counts are naive,
and so integers). 
\end{notation}

The mirror algebra \begin{equation} \label{eq:ma}
A_{(\bbY,\bbD)} \coloneqq \bigoplus_{P \in \Sk(U,\bbZ)} R_{\bbY_s} \cdot \theta_P,
\end{equation}
is defined, as module, to be
the free $R_{\bbY_s}$-module with basis $\Sk(U)(\bbZ)$, where we indicate by $\theta_P$ the basis element
associated to $P \in \Sk(U)(\bbZ)$. Our goal is to give $A$ an 
$R_{\bbY_s}$ algebra structure. Note this is equivalent to giving 
{\it structure constants}: integers $\chi(P_1,\dots,P_n,Q,\beta)$
for $P_1, \dots, P_n, Q \in \Sk(U,\bbZ)$ and
$\beta \in \NE(Y,\bbZ)$, then
\begin{equation} \label{eq:sc}
\theta_{P_1} \cdot \theta_{P_2} \dots \theta_{P_n} \coloneqq \sum_{Q,\beta} \chi(P_1,\dots,P_n,Q,\beta)
z^{\beta} \cdot \theta_Q.
\end{equation}

Our structure constants are counts of $k$-analytic disks, here is the definition:

See \cref{sec:geomot}
for elementary geometric motivation.

\begin{construction} [See \cref{fig:structure_constants}]
  \label{const:structure_constants}
  
Pick $\cG \subset \Sk(U)(\bbR)$ a full dimensional closed convex polyhedral
subset in the interior of a maximal cone, $\sigma \in \Sigma^d$, $d \coloneqq \dim Y$. 
Let $G \coloneqq \tau^{-1}(\cG) \subset U$,
where $\tau: U \to \Sk(U)(\bbR)$ is the Berkovich retraction
determined by $(\bbY,\bbD^{\ess})$ (see \cref{sec:lSYZ}).
Let $\tbbY \to \bbY$ be a toric blowup
(a composition of blowups of closed strata of $\bbD$) on which each $P_i$
has divisorial center, $\bbD_i$. 

We let $M(U,P_1,\dots,P_n,Q,\beta)$ be the moduli space of pointed closed Berkovich
analytic disks $f:(B,p_1,\dots,p_n,q) \to \tY$ such that
\begin{enumerate} 

\item $f^{-1}(D) = \sum m_i p_i$, with $f(p_i) \in D_i^{\circ}$ (the natural interior of $D_i$) for
  $P_i \neq 0$,
  where $P_i = m_i \cdot \oP_i \in \Sk(U)(\bbZ)$ with $\oP_i$ primitive
  (and we define
  the multiplicity of $0 \in \Sk(U)(\bbZ)$ to be zero). 
\item $[f:B \to Y] = \beta \in \NE(\bbY_s,\bbZ)$ (for the
  definition of this class see \cref{sec:dc}).
\end{enumerate}
which 
satisfy the following {\it boundary condition}, the existence
of an extension of $f$ to a rational curve as in
\cref{fig:glued_target}. Precisely: 

We consider $\oM_{0,n+2}$, with the labeled points $p_1,\dots,p_n,q,z$.
Let $\cV_M \subset M_{0,n+2}^{\trop}$ be the locus of metrized trees
such that the simple path from $q$ to $z$ meets a single point of
valence greater than $1$, this a point of valence $3$.
The point of considering $\cV_M$ is that 
$$
(C,p_1,\dots,p_n,q,z) \in V_M \coloneqq \tau^{-1}(\cV_M) \subset M_{0,n+2}^{\an}$$
is canonically decomposed as a union of two closed analytic
disks, $(B,p_1,\dots,p_n,q) \cup (E,q,z)$ (see \cref{def:VMdef}),
where
$\tau: M_{0,n+2}^{\an} \to M_{0,n+2}^{\trop}$ is the canonical retraction (which
has the modular meaning of sending the pointed analytic curve to
the metrized tree given by the convex hull of its marked points,
see \cite[8.24]{Keel_Yu_The_Frobenius}). 

Now we take $M$ the integer tangent space to $\Sk(U)$ at any
point in the interior of $\sigma$ (they are canonically identified),
and $\bbT$ the affine $\bbT_M \coloneqq M \otimes_{\bbZ} \bbG_m$-toric variety
whose fan is the single
ray spanned by $-Q \in M$. The boundary $\bbT \setminus \bbT_M$ is thus a single
irreducible divisor, $\bbD_T$. We let $T \subset T_M$ be the analytification of
$\bbT_M \subset \bbT$. 
We note there is a copy 
$G \subset T$, namely $\tau^{-1}(\cG) \subset T_M \subset T$,
for $\tau: T_M\to M_{\bbR}$ the canonical Berkovich retraction.
We let $Z \coloneqq Y \cup_{G} T$. Now in defining $M(U,P_1,\dots,P_n,Q,\beta)$ 
we require $f$ extends to $f: C \to Z$, with
$f(E) \subset T$, and $f^{-1}(D_T) = m_{Q} z$.

We show that for sufficiently general $\cG$, sufficiently close
to the ray $\bbR_{\geq 0} Q$, 
$M(U,P_1,\dots,P_n,Q,\beta)$ is a $k$-analytic space and the natural
map
$$
\Phi: M(U,P_1,\dots,P_n,Q,\beta) \to V_M \times G
$$
(taking the domain, and image of $q$) is proper, of relative virtual dimension zero, over a neighborhood of 
$\cV_M \times \cG \subset V_M \times G  $.  
We define $\mu(P_1,\dots,P_n,Q,\beta)$ to be the virtual degree, using the virtual fundamental class
of \cite{PY}, which we
show is independent of all choices (namely the choice of $\cG$, $\sigma$,
the gluing in the construction of $Z$, and the point in $\cV_M \times \cG$),
see \cref{prop:scwd}, \cref{rem:degree} and \cref{rem:formulate}.
When $U$ contains no complete rational curves (e.g. in cases \cref{ex:affine} or \cref{ex:pipe}),
$\Phi$ is \'etale, and this
is the same as the analytic degree (which is the cardinality of the fibres, after field extension),
in particular, a non-negative integer. 
\end{construction}

\begin{figure}[!ht]
  \centering \setlength{\unitlength}{0.5\textwidth}
  \begin{picture} (1,1)
  \put(0,0){\includegraphics[page=1,width=\unitlength]{structure_constants_heuristic}}
  \put(0.5,0.94){$D_1$}
  \put(0.84,0.77){$D_2$}
  \put(0.52,0.47){$0$}
  \put(0.50,0.60){$Q$}
  \end{picture}
  \caption{The red curve is a heuristic depiction of a non-archimedean analytic disk
    (which is in reality an $\bbR$-tree).
  The embedded green graph (accurately) depicts the associated spine.}
  \label{fig:structure_constants}
\end{figure}

\begin{figure}[!ht]
  \centering \setlength{\unitlength}{0.8\textwidth}
  \begin{picture} (1,0.707)
  \put(0,0){\includegraphics[width=\unitlength]{modified_target}}
  \put(0.4,0.6){$E$}
  \put(0.7,0.32){$B$}
  \put(0.51,0.32){$G$}
  \put(0.75,0.07){$T^\an$}
  \put(0.9,0.36){$Y^\an$}
  \end{picture}
  \caption{The glued target space $Z = Y \cup_{G} T^\an$ and a rational curve in it.}
  \label{fig:glued_target}
\end{figure}

\begin{theorem} \label{thm:mirror_alg}
  For each $k > 0$,
  with the above structure constants, $A/m_{\bbY_s}^k$ is a finitely generated commutative and associative $R_{\bbY_s}/m_{\bbY_s}^k$-algebra.
  The induced map
  $$
  \Spec(A_{(\bbY,\bbD)}/m_{\bbY_s}^k) \to \Spec(R_{\bbY_s}/m_{\bbY_s}^k)
  $$
  is flat. The maps, and $\theta$ functions,
  for different $k$ are in the obvious way compatible, fit together to get a canonical
  flat
  $$
  \Spf(\varprojlim_k A_{(\bbY,\bbD)}/m_{\bbY_s}^k)  \to \Spf(\hR_{\bbY_s})
  $$
  (where $\hR$ is the completion with respect to the maximal monomial ideal). 
\end{theorem}

Here are special features of the example cases above:

\begin{theorem} \label{thm:as} In cases \cref{ex:affine} or \cref{ex:pipe}, with the above structure constants,
  $A_{(\bbY,\bbD)}$ is a finitely generated commutative associative $R_{\bbY_s} \coloneqq \bbZ[\NE(\bbY_s,\bbZ)]$ algebra. 
  The fibres of $\Spec(A) \to \Spec(R_{\bbY_s})$ are 
  affine Gorenstein $K$-trivial semi-log-canonical varieties
  (of dimension $\dim U$ in the \cref{ex:affine} case, of dimension $\dim{U} + 1$ in
  the \cref{ex:pipe} case). Fibers over the structure torus
  $\Spec(N_1(\bbY_s,\bbZ)) \subset \Spec(R_{\bbY_s})$ are log canonical (in particular, normal). 
\end{theorem}

\begin{remark}
  The case \cref{ex:affine} generalizes \cite[Theorem 1.2(2-3)]{Keel_Yu_The_Frobenius},
  which has the additional assumption that $\bbU$ contains an open algebraic torus.
  \cref{ex:affine} is much more general,
  e.g.\ the log Calabi-Yau varieties we consider here need not be rational varieties. We think
  the more general construction is also conceptually simpler, and closer to the intuition from symplectic geometry. The case of \cref{ex:pipe} generalizes the main construction
  of \cite{HKY20} (which treats the Fano case, as in \cref{ex:pipe} under the additional
  assumption that the Fano $\bbX$ contains a Zariski open algebraic torus). See \cref{thm:mfs}.  
\end{remark}

\begin{theorem} \label{thm:CYD} In case \cref{ex:CYD}, the mirror algebra $A$ is naturally $\bbZ_{\geq 0}$ graded (see \cref{rem:associatedgraded}). 
  Taking $\Proj$ gives an amply polarized
  family over $\Sp(\hR_{\bbY_s})$ with projective
  $K$-trivial slc fibres. If $(\bbY,\bbD)$ is almost minimal (see \cref{def:am})  then the generic fibre is (at worst) log canonical (in particular, normal). 
\end{theorem}

\begin{remark} We note that \cref{ass:basicsetup} implies that there is a birational
  model which is almost minimal, see \cref{prop:am}. \cref{ex:CYD}, and its proof, significantly
  simplify the main mirror construction of \cite{GHKSK3}, and give an extension to all dimensions,
  see \cref{rem:MCCYs}. 
 \end{remark}

We note that a mirror algebra, with the same basis, has been previously obtained
by Gross and Siebert in \cite{GSIMS}.
Their construction applies in greater generality: e.g. they do not require the analog of the exhaustion
functions at the end of \cref{ass:basicsetup}.

We expect the algebras are the same (when ours is defined), but this is not
at all obvious, as their structure constants count something different (but clearly
related, see \cref{rem:ec}). In
\cite{Johnston_Comparison} Johnston proves that the Frobenius Structure Conjecture,
(see \cite[1.1]{Keel_Yu_The_Frobenius}) implies the equality in the affine case of \cref{ex:affine}.
In particular, the equality holds in case \cref{ex:affine}
under the additional assumption that $\bbU$ contains a Zariski
open torus (by the main result of \cite{Keel_Yu_The_Frobenius}). 
Given this expectation, and
the greater generality of the Gross-Siebert construction, it is natural to wonder
why one should bother with ours. In fact, ours
has many advantages: The main advantage of our definition
of structure 
constants is that we are counting objects (punctured Berkovich disks) in
$U$, while GS count punctured log curves which can have components mapping
into the boundary, so it's clear our construction depends only on $U$ while theirs appears to
depend on the compactification. An immediate practical implication is that
certain basic convexity results, e.g.
\cref{prop:basicconvexity}, and \cref{cor:filtration} 
entirely elementary in our theory, are unknown in theirs. These convexity
statements are key in many applications.
As a simple example: because we count objects in $U$ it is 
obvious that our mirror algebra is independent of the compactification (up to changes in
the coefficient, see \cref{prop:act}): for different compactifications the objects we count are exactly the same.
E.g. because of this we are able to glue the mirror families over different compactifications
to obtain in \cref{ex:pipe} a family over a complete toric base, see \cref{thm:mfs}. 
Another example: by the basic convexity, any rational function on $\bbY$
with polar locus contained in $\bbD$ naturally induces a filtration on the mirror algebra, with associated
graded the mirror algebra for the non-vanishing locus (see \cref{rem:associatedgraded} for a precise statement).
In particular this gives a degeneration of the mirror algebra to the mirror algebra for the
non-vanishing locus (this is how the Frobenius Structure Conjecture is proved in \cite{Keel_Yu_The_Frobenius}).
The analogs of the above basic birational invariance result, or this filtration (and associated degeneration)
are unknown in the Gross-Siebert theory.
When $U$ contains no complete rational curves our structure constants are naive
counts. The mirror is thus defined over $\bbZ$, and our counts are non-negative integers
(e.g. this gives a simple geometric explanation for positivity in the Laurent Phenomenon, see
the discussion below \cite[1.19]{Keel_Yu_The_Frobenius}),
neither feature is apparent in the GS theory. We have a nice conjectural picture (partially
realized) of how basic
constructions from toric geometry extend to affine log CYs, $\bbU$, see \cref{sec:conj}. E.g. each
regular function $f$ on $\bbU$ determines by the above filtration
and the Rees construction a partial
compactification of the mirror (generalizing the partial compactification of a torus given by
rational convex polytope in its co-character lattice).
We expect the boundary is irreducible exactly when $f$ gives 
a tropical theta function, and in this case the boundary gives the corresponding point of
the skeleton of the mirror, see \cref{conj:bd}. 
Thus basic convexity is central to our understanding of the overall picture.
We expect the birational statements can be proven in the Gross-Siebert context.
The basic convexity statement looks to us more problematic, see \cref{rem:noway}.
Finally, our favorite feature: When $U$ contains no complete rational curves, we use naive counts of
very naive objects: a $k$-analytic disk of radius $r$ is just what you think it is (the locus in
$\bbA^1_{\an} \coloneqq \Spec(k[X])^{\an}$ defined by $|X| \leq r$), the description of the multiplication rule in \cref{const:structure_constants} is not some heuristic, it is the precise definition, with a very
natural geometric motivation, see \cref{sec:geomot}. This mirror construction is quite simple.
The (very powerful)
Gross-Siebert theory seems to us a good deal more complicated. 

Next we give a quick description of our applications, to indicate some interesting
connections between the mirror algebra and Mori theory.

\subsection{Application to Moduli of Log Calabi-Yau Pairs} 

The following is an excerpt from \cite{HKY20}:

\begin{conjecture} \label{conj:smooth}
Let $Q$ be a connected component of the coarse moduli space of triples $(X,E,\Theta)$ where
\begin{enumerate}[wide]
\item $X$ is a connected smooth projective complex variety,
\item $E \in \abs{-K_X}$ is an snc divisor with a zero stratum, and \item $\Theta\subset X$ is an ample divisor not containing any 0-stratum of $E$.
\end{enumerate}
Then there is a toric variety $T$, a Zariski open subset $T^\circ\subset T$ and a finite surjective map $T^\circ\to Q$.
\end{conjecture}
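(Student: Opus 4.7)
The plan is to realize $Q$ as parametrized by (a Zariski open of) a toric variety built from the mirror algebra of \cref{thm:mirror_alg}. Fix a reference triple $(X_0,E_0,\Theta_0)\in Q$ and set $U_0 := X_0\setminus E_0$; this is affine log Calabi--Yau with maximal boundary (the zero stratum of $E_0$ supplying maximality) and admits a minimal dlt model by standard MMP, so \cref{thm:mirror_alg} produces a flat family
\[
\pi\colon \Spec\bigl(A_{U_0\subset X_0}\bigr)\longrightarrow \Spec(R_{X_0})
\]
of affine log Calabi--Yau varieties whose fibres over the structure torus $T_0 := T_{\Pic(X_0)}$ are normal and log canonical.

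To promote $\pi|_{T_0}$ to a family of polarized snc triples, let $\Delta\subset\Sk(U_0,\bbR)$ be the rational polytope whose integer points $P$ index theta functions $\theta_P$ corresponding to sections of $\cO_{X_0}(\Theta_0)$ under the mirror dictionary. For $N\gg 0$, the finite set $\{\theta_P : P\in N\Delta\cap\Sk(U_0,\bbZ)\}$ generates a graded $R_{X_0}$-subalgebra $R_\bullet\subset A_{U_0\subset X_0}[t]$; the relative $\Proj$ of $R_\bullet$ over $T_0$ gives a projective family $\cX\to T_0$ with tautological polarization $\cL$, while the boundary $\cE\subset\cX$ is cut out by theta functions indexed by the unbounded legs of $\Sk(U_0)$. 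Over a dense open $T^\circ\subset T_0$ the triple $(\cX,\cE,\cL)$ is snc with central fibre $(X_0,E_0,\Theta_0)$, yielding a classifying morphism $\varphi\colon T^\circ\to Q$.

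Dominance of $\varphi$ follows from versality: the Kodaira--Spencer map at $1\in T_0$ should identify with a natural isomorphism between $\operatorname{Lie}(T_{\Pic(X_0)})$ and the polarized log Calabi--Yau deformation space $H^1(X_0,T_{X_0}(-\log E_0))$ by Hodge theory. Irreducibility of $Q$ upgrades dominance to surjectivity, and fibres of $\varphi$ are finite because two points of $T^\circ$ producing isomorphic triples must differ by a combinatorial symmetry of $(\Sk(U_0),\Delta)$ preserving the theta multiplication rule --- a finite group. If $T_0$ itself is not quite enough (e.g.\ to accommodate semistable degenerations in the boundary of the image), one replaces it by a toroidal compactification governed by the secondary fan of \cite{Hacking_Keel_Yu_Secondary_fan}.

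The main obstacle is constructing the polarization: one must show that the finite theta-subset $\{\theta_P\}_{P\in N\Delta\cap\Sk(U_0,\bbZ)}$ spans a relatively very ample linear system and that $\Proj(R_\bullet)$ recovers the expected compactified triple on every fibre, not just the central one. This reduces to a relative Kodaira-vanishing and base-point-freeness argument adapted to the semi-log-canonical context of \cref{thm:mirror_alg}, together with a nontrivial check that the boundary divisor cut out by the ``unbounded'' theta functions agrees with the anticanonical log-canonical divisor on the compactification. A secondary difficulty is pinning down the Kodaira--Spencer identification above in sufficient generality to rule out exotic deformations of $(X_0,E_0,\Theta_0)$ not captured by the Picard torus.
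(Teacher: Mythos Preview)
The statement you are attempting to prove is labeled \textbf{Conjecture} in the paper, not Theorem; the paper gives no proof and explicitly presents it (and the stronger \cref{conj:main}) as open, citing \cite{Hacking_Keel_Yu_Secondary_fan} for the expected mechanism. So there is no ``paper's own proof'' to compare against. What the paper does offer is a conjectural strategy: the toric variety $T$ should arise from a generalized secondary fan, and the universal family should come from ``running our mirror construction twice''.

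Your proposal has a genuine conceptual gap relative to this strategy. You apply the mirror construction once to $U_0=X_0\setminus E_0$ and then try to interpret the resulting family $\Spec(A_{U_0\subset X_0})\to T_{\Pic(X_0)}$ as a family of compactified triples deforming $(X_0,E_0,\Theta_0)$. But the mirror family parametrizes \emph{mirrors} of $U_0$, not deformations of $U_0$ itself; its fibres are a priori varieties of a completely different type (e.g.\ they need not even be rational when $X_0$ is). Your claimed Kodaira--Spencer identification $\mathrm{Lie}(T_{\Pic(X_0)})\simeq H^1(X_0,T_{X_0}(-\log E_0))$ is precisely the mirror map, which relates K\"ahler moduli of $X_0$ to complex moduli of the \emph{mirror}, not of $X_0$; there is no Hodge-theoretic reason for $\rho(X_0)$ to equal the dimension of $Q$. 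The paper's phrase ``running the mirror construction twice'' is exactly what would be needed to close this gap: one must take a general fibre $V$ of the mirror family and then build $A_{V\subset\overline V}$, whose family over $T_{\Pic(\overline V)}$ is what is conjectured to dominate $Q$ (cf.\ \cref{conj:dm}). Your sketch stops one mirror short.
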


We have a more precise form of the conjecture above.
Note that in view of Conditions (1-2), (3) is equivalent to the condition that for sufficiently small $\epsilon > 0$, $(X,E + \epsilon \Theta)$ is a stable pair \footnote{It is sometimes called KSBA stable pair in honor of the works of Kollár–Shepherd-Barron \cite{Kollar_Threefolds_and_deformations} and Alexeev \cite{Alexeev_Moduli_spaces_MgnW}.}
(see \cite[\S 5]{Kollar_Singularities_of_the_minimal_model_program}), thus $Q$ immerses into $\oM_{\SP}$, the moduli space of stable pairs (which is a higher-dimensional generalization of the moduli space $\oM_{g,n}$ of stable pointed curves, see \cite{Kollar_Moduli_of_varieties_of_general_type}).
Let $\oQ$ denote the closure of $Q$ in $\oM_{\SP}$.

\begin{conjecture} \label{conj:main}
There is a complete toric variety $T$ with a finite surjective map $T \to \oQ$.
\end{conjecture}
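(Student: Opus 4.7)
The plan is to exhibit $T$ as a toric variety built from the integral affine geometry of the essential skeleton of the mirror, using the mirror family of \cref{thm:mirror_alg} as the universal object, and then apply the moduli-theoretic universal property of $\oQ \subset \SP$.

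First I would fix a reference triple $(X_0, E_0, \Theta_0) \in Q$ and set $U_0 \coloneqq X_0 \setminus E_0$, a smooth affine log Calabi-Yau variety with maximal boundary. By \cref{thm:mirror_alg}, the mirror algebra supplies a flat family $\pi\colon \Spec(A_{U_0 \subset X_0}) \to \Spec R_{X_0}$ whose fibers over $T_{\Pic(X_0)}$ are log canonical affine log Calabi-Yau varieties of the same dimension as $U_0$. The theta basis carries a $\Sk(U_0, \bbZ)$-grading, and the ample class of $\Theta_0$ selects a graded subalgebra whose $\Proj$ furnishes a family of polarized projective compactifications. This construction produces a family of triples over an affine open of $\Spec R_{X_0}$, defining a rational map $\Spec R_{X_0} \dashrightarrow \oQ$ whose image contains $Q$.

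Next I would construct the complete toric variety $T$ as the toric variety of the secondary fan in the sense of \cite{Hacking_Keel_Yu_Secondary_fan}, living naturally in a linear space attached to $\Pic(X_0)_\bbR$. Its maximal cones should correspond to Mori chambers of crepant models of $(X_0, E_0)$, and lower-dimensional strata to combinatorial types of stable degenerations of the triple. The mirror family pulls back and extends to a family of stable pairs over all of $T$, using the fact from \cref{thm:mirror_alg} that fibers are already Gorenstein semi-log-canonical. This yields the morphism $T \to \oQ$ via the universal property of $\SP$. Finiteness reduces to a Torelli-type statement: two fibers are isomorphic polarized pairs only when identified by a monomial symmetry of $A_{U_0 \subset X_0}$ coming from a lattice automorphism of $\Sk(U_0, \bbZ)$. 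Surjectivity and properness follow from completeness of the secondary fan together with properness of $\SP$.

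The hard part will be handling the boundary of $\oQ$, which requires extending the mirror algebra to a relative construction whose central fibers realize stable degenerations with non-normal, semi-log-canonical components. This should be governed by walls of the secondary fan where the integral affine structure on $\Sk(U, \bbZ)$ undergoes mutation; the non-separated gluing $Z = Y^\an \cup_\cG T^\an$ in \cref{const:structure_constants}, which already trades disk counts in $Y^\an$ for disk counts in an auxiliary toric variety, is precisely the mechanism needed to analyze such wall-crossings and in principle match the Koll\'ar--Shepherd-Barron--Alexeev stratification of $\oQ$ with the toric stratification of $T$.
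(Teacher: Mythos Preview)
The statement you are attempting to prove is labeled \cref{conj:main} in the paper and is explicitly a \emph{conjecture}; the paper does not contain a proof of it. The surrounding text says only that the authors ``believe the conjecture can be proven by running our mirror construction twice --- the construction just \emph{spits out} the desired universal family,'' and refers to \cite{Hacking_Keel_Yu_Secondary_fan} for the precise description of the secondary fan. So there is no ``paper's own proof'' to compare against.

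Your proposal is not a proof either: it is a strategy outline, and you signal this yourself with phrases like ``should correspond,'' ``should be governed by,'' and ``in principle.'' The outline is broadly consonant with the heuristic the paper offers (secondary fan, mirror family as universal object, theta basis giving the polarization), but several of the key steps you list are exactly the open problems. In particular: (i) you assert that the mirror family, after taking $\Proj$ of a graded piece, yields triples in $Q$ --- but the paper only establishes that fibers over $T_{\Pic(Y)}$ are affine log canonical $K$-trivial varieties, not that their compactifications are \emph{smooth} with \emph{snc} boundary as required by the definition of $Q$; (ii) the finiteness via a ``Torelli-type statement'' is precisely the substance of the conjecture, not something available to invoke; (iii) extending the family over the boundary of the secondary fan so as to match the KSBA stratification is again the hard unproven part, and the gluing $Z = Y^\an \cup_\cG T^\an$ is a local device for defining structure constants, not a mechanism that has been shown to produce global degenerations of stable pairs.

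In short: there is nothing to grade here as a proof, because the target statement is open in the paper. Your sketch is a reasonable paraphrase of the expected strategy, but it does not close any of the gaps.
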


We conjecture more: we have a precise description of the toric variety -- we give the fan, a generalization of the Gelfand-Kapranov-Zelevinsky secondary fan, and,
more importantly,
the universal family as Proj of an algebra with canonical basis and disk counting structure constants as in the mirror algebra. In particular (we conjecture) the data $(X,E,\Theta)$ determines a canonical basis of the homogeneous coordinate ring $R(X,\cO(\Theta))$.

We hope many readers find \cref{conj:main} surprising.
We know of no moduli or Mori theoretic reason why the moduli space in question should even be uniruled (much less a toric variety). We believe it because our mirror machine spits out what we think is the full universal
family (see \cref{rem:wct} for reasons to believe this). A strategy for proving the conjecture is given in \cite[8.3]{HKY20}: As explained in
\cref{ex:pipe} we run the mirror program twice -- first to obtain from Gross-Siebert
(or this paper if $X \setminus E$ is proper over affine) a one parameter formal family,
which we then use as input to \cref{ex:pipe}. The output is then a complete algebraic family of tuples over the desired
toric variety, see \cref{thm:mfs}. This we conjecture is the desired universal family, see \cref{conj:mfs} and
\cref{rem:roc}. 

Analogously, our results can be applied to degenerations of compact Calabi-Yaus, to give a much simplified
construction of the universal family of polarized K3 surfaces
in \cite{GHKSK3}, as well as higher dimensional generalizations.
See \cref{rem:MCCYs}. 

The two dimensional
instance of \cref{conj:smooth} is the main result of \cite{HPV24}. 

\subsection{Application to the Cox ring of a positive Looijenga pair}

In \cite{KW24} the results here are used to carry out the conjectural scheme from \cref{conj:cox} to give a basis of the Cox ring of smooth two dimensional Looijenga pair $(X,E)$ with affine interior $U \coloneqq X \setminus E$, canonical up to individual scaling. Moreover, such a basis is given for the Cox ring of the quasi-universal families of \cite[3.1]{Gross_Moduli_of_surfaces}.
We note by \cite{Hu_Mori_dream_spaces} that the Cox ring (with its natural grading by the Picard group)
{\it knows} all of the birational contractions of the variety.  For example the very rich birational geometry of these
quasi-universal families is thus encoded in counts of analytic disks.

\subsection{Application to counts of tropical curves}  \label{sec:atc} 
We note one other application: our counts naturally decompose according to the tropicalisations of the disks, and in particular our construction gives a simple definition of counts of tropical curves.
The key idea is the notion of a skeletal curve: A compact genus zero analytic curve, $B$,
(i.e.\ the complement of a union of disjoint open disks in $\bbP^1_{\an}$)
contains a canonical skeleton, a metrized tree $\Sk(B) \subset B$
(defined as the set of point which have neighborhood isomorphic to an open analytic disk, see \cite[1.2]{Baker_On_the_structure}).
A skeletal curve in $U$ is an analytic map $f\colon B \to U$ such that the restriction to $\Sk(B) \subset B$ factors through $\Sk(U) \subset U$
(the precise definition is slightly different than this,
but it has this implication, see \cref{def:skeletal}). 
The map $f$ thus has a canonical {\it tropicalisation}, which we call its spine (we reserve the term tropicalisation for something larger,
but less canonical, see \cref{def:iostrop}), the restriction $\Sp(f) \coloneqq f|_{\Sk(B)}:\Sk(B) \to \Sk(U)$.
Now there is an entirely naive way of counting functions $S\colon \Gamma \to \Sk(U)$ from a metrized tree into the skeleton of $U$: we count skeletal analytic curves, $f\colon B \to U^{\an}$, using a moduli space as in \cref{const:structure_constants}, with $\Sp(f) = \Gamma$.
For the precise definition see \cref{def:naive_counts_general}.
We stress: There is no notion of balancing here ($\Sk(U)$ has in general no canonical integer affine structure, balanced does not make intrinsic sense), nothing from the usual tropical geometry; our count is a count of analytic stable maps.
For sufficiently generic spines the counts are well behaved --
deformation invariant, and satisfy a natural gluing formula, see \cref{cor:vary_lengths} and \cref{sec:gluing}.
We note these are counts of analytic curves (possibly) with boundary, thus a new version of Gromov-Witten invariants.

\subsection{Mirrors to affine log CY with canonical singularities} \label{sec:cansing}
Let $\bbU'$ be an affine log CY with maximal boundary and at worst canonical singularities. Let
$b:\bbY \to \bbY'$ be a log resolution of a normal projective compactification $\bbU' \subset \bbY'$,
and $\bbU \coloneqq b^{-1}(\bbU')$. Write
$K_{\tbbY } + P = Z$
with effective divisors $P$ and $Z$ (with their supports having no divisor in common). That $\bbU$ is
canonical is equivalent to $|P| \subset \bbD \coloneqq (\bbY \setminus \bbU)$. Now $(\bbY,\bbD)$ is
an instance of \cref{ex:POA}, and we have the (formal) mirror algebra $A_{(\bbY,\bbD)}$.
\begin{conjecture} Notation as immediately above. The multiplication rule on the mirror algebra is polynomial
  (the finiteness in (2) of \cref{cor:bound} holds).  The restriction
  of the mirror family (together with its theta functions) to
  $\TV(\Nef(\bbY')) \subset \TV(\Nef(\bbY))$ is independent of the resolution. 
\end{conjecture}

\subsection{Geometric motivation for the definition of the multiplication rule} \label{sec:geomot}
Here, for simplicity, we discuss the case \cref{ex:affine}, and the {\it absolute mirror algebra},
$A \coloneqq A_{(\bbY,\bbD)} \otimes \bbZ[\NE(\bbY_s,\bbZ)]/m_{\bbY_S}$, which depends only on $\bbU$
(and has $\bbZ$ coefficients). 

Let's first consider the toric case $\bbU = \bbT_M \coloneqq M \otimes \bbG_m$,
for $M = \bbZ^d$ (the lattice of one parameter subgroups of this algebraic torus).
In this case the mirror algebra is just the group ring $\bbZ[M]$ and so the structure constants are $\chi(P_1,P_2,Q) = \delta_{Q,P_1+P_2}$, (Kronecker delta), addition in the group $M$.
We note $M$ is the set of integer points of the Kontsevich-Soibelman intrinsic skeleton,
$M = \Sk(\bbT_M,\bbZ)$.
For general log Calabi-Yau $\bbU$,
$\Sk(U,\bbZ)$ is not a group, $Q = P_1 + P_2$ does not make sense, but we can reformulate this geometrically in a way that does: Let $P_1,P_2, \dots, P_n \in M \setminus \{0\}$,
and let $\bbT_M \subset \bbY$ be a toric variety on which all $P_i$  have divisorial center (i.e.\ each $P_i$ spans a ray of the fan for $\bbY$, e.g.\ take $\bbY$ to have fan given by these $n$ rays).
Then it is an elementary fact that $\sum P_i =0$ if and only if there is an analytic map $f\colon (C,p_1,\dots,p_n) \to Y$ from an $n$-pointed $k$-analytic rational curve (i.e.\ $C = \bbP^1_{\an}$)
such that $f^{-1}(D) = \sum m_i p_i$, with $f(p_i) \in D_{P_i}^{\circ}$, where $D \subset Y$ is the boundary, $D_{P_i}^{\circ} \subset D_{P_i}$ is the complement of all other boundary divisors ($D_{P_i} = D_{P_i}^{\circ}$ if we take $\bbY$ with fan just the $n$ rays) and $P_i = m_i \oP_i$ with $\oP_i \in M$ reduced.

The analogous statement holds without passing to analytification, we could just use algebraic maps of $\bbP^1$ into $\bbY$.
But we will need the $k$-analytic theory for the general case.

We could use the existence of rational curves meeting the boundary with such contact as the {\it definition} in the general case (where $\Sk(U)$ has no group structure) of $\sum P_i = 0$ (this is closely related to the Frobenius structure conjecture).
But for structure constants we need (notation as above) $P + Q + (-R) = 0$,
and in general $-R$ does not make sense.
But it does make sense {\it near} $R \in \Sk(U)$, e.g.
in the integer tangent space $M \coloneqq T_{\sigma}(\Sk(U))$, for any maximal cone $R \in \sigma$ of the dual fan as in \cref{const:structure_constants}.
So now we take the toric variety $\bbT_M \subset \bbT$ with fan the single ray spanned by $-R$, and we can glue
$T \coloneqq \bbT^{\an}$ onto $Y \coloneqq \bbY^{\an}$
along the analytic domain $G \subset U$, notation from \cref{const:structure_constants} (which is an open set in the Tate topology, so this is a completely natural thing to do in the Berkovich world).
Now we will say $\chi(P,Q,R) \neq 0$ if there is a $k$-analytic rational curve $(C,p,q,s)$, decomposed into $2$ disks,
$C = B \cup E$ as in \cref{fig:glued_target},
with $f^{-1}(D + \partial T) = m_P p + m_Q q + m_R s$,
$f(p) \in D_{\oP}^{\circ}$, $f(q) \in D_{\oQ}^{\circ}$, $f(s) \in D_{-\oR}^{\circ}$,
and $f(E) \subset T$ (here $\partial T$ is the toric boundary,
the single divisor corresponding to $-R$).

In the toric case given $P,Q$, there is only one possible $R$, namely $P + Q$ and $\Sk(T_M)(\bbZ) = M$ is a group.
In general there can be finitely many such $R$ so instead $\Sk(U,\bbZ)$ becomes the basis of an algebra.

It is natural to wonder:
  \begin{question} Why above do we add the condition $f(E) \subset T$? \end{question} 
  \label{sec:why}
  
  It is in fact key. Without this the naive
  counts that are our structure constants will not be well behaved.
  See \cref{rem:rfund}. Note with this condition it is natural to view
  our counts as counts of disks, the $f: B \to Y$ (with a boundary
  condition, the existence of an extension to $C$).

  A pointed $k$-analytic curve with boundary, $B$, contains a canonical
  metrized graph $\Gamma \subset B$, the convex hull of the marked points
  and Berkovich boundary points. Consider for example
  $f:(B,p_1,p_2,q) \to Y^{\an}$ satisfying (1) in \cref{const:structure_constants}. Take $\tau: U \to \Sk(U)$
  the Berkovich retraction and the composition
  $\Sp(f): \Gamma^\circ \to \Sk(U)$, where $\Gamma^\circ \subset \Gamma$ is the complement
  of the marked points. When $U = \bbT_M$ is a torus, this is a balanced tropical
  curve; the outward derivatives at the two infinite legs (going to the $p_i$),
  $P_1, P_2 \in M$, sum to the inward derivative at $q$. The existence of
  the extension to $E$ (with contact as prescribed in \cref{const:structure_constants}, namely
  $f^{-1}(D_T) = m_Q z$) tells us this inward derivative is $Q$. 

  We can form $\Sp(f)$ in the same way in general. 
  Note that when $\Sp(f)(q)$ is close
  to $Q \in \Sk(U)$, it makes sense to ask if the inward derivative
  at $q$ is $Q$ 
  (this direction in the tangent space makes sense -- but 
  it only makes sense near the ray spanned by $Q$). The existence
  of the extension $f:E \to T^{\an}$ (with the prescribed boundary contact)
  guarantees that this holds (because the spine of
  this end disk, in the torus, is a straight ray in $M_{\bbR}$). For this
  guarantee we need the end disk to land in the torus.

  From this perspective, it is the disks $f:(B,p_1,p_2,q) \to Y$
  that we care about. The extension to $C$ with end $f:E \to T$ is
  a geometric way to guarantee the inward derivative of $\Sp(f)$ at $q$ is $-Q$.
  There is also the practical benefit that now we are counting (complete)
  rational curves, a process well understood, rather than disks (which we do
  not know how to count, the moduli spaces are, for example, infinite
  dimensional).

\smallskip
Let us remark how the proof of \cref{thm:mirror_alg} in the \cref{ex:affine} case, differs from the special case in \cite{Keel_Yu_The_Frobenius} which it generalizes. 
In \cite{Keel_Yu_The_Frobenius} we make the restrictive assumption that $\bbU$ contains a Zariski open torus
$\bbT_M \subset \bbU$.
We use this in several ways.
The most important: The structure constants, as in \cref{const:structure_constants}, count disks in $Y$ with prescribed contact with the boundary, having to do with the inputs to the structure constant (the $P_i$ in $\chi(P_1,\dots,P_n,Q,\beta)$)
and satisfying a boundary condition -- the existence of an extension to a map of $\bbP^1_{\an}$, with domain the union of the original disk, and one {\it end} disk, with one more point of contact, corresponding to $Q$ (the space of disks without some boundary conditions will be infinite dimensional).
We want this end disk to land in (the analytification of) the simple $\bbT_M$ toric variety, with a single boundary divisor corresponding to $-Q$.
When we have the torus we can glue on this toric variety along the torus, and so the resulting space is just $Y =\bbY^{\an}$ itself (possibly blownup so the valuations corresponding to the $P_i$ and $-Q$ have divisorial center).
In general we have no torus.
But we have plenty of standard analytic subsets of tori -- the analytic domains $G$ of \cref{const:structure_constants}
(see also \cref{const:Z}) and we can glue on a toric end (into which the end disk will map) along this standard domain
(which is open in a natural Grothendieck topology, the so called {\it Tate topology}), to obtain the Berkovich analytic space $Z$.
In \cite{Keel_Yu_The_Frobenius} we study stable rational curves in $Y$, which is essentially algebraic geometry (the space is the analytification of the analogous space of maps into $\bbY$).
The basic theory of stable curves in a Berkovich analytic space (e.g.\ our glued space $Z$)
is developed in \cite{Yu_Gromov_compactness}.
We recall the main statements we need in \cref{sec:deformation_theory}.
All the necessary deformation theory (for stable maps, in particular, the domain is complete) generalizes, see \cref{sec:smoothloc}.
The first thing we have to establish here is that the locus of stable maps where the {\it body} disk $B$ lands in $Y$, and the end disk $E$ (notation as in \cref{const:structure_constants})
lands in the toric end, is an analytic domain. We define our counts using the virtual fundamental class
from \cite{PY}. 
For this we need that our moduli space is of finite type and without boundary (relative to the map, $\Phi$, whose degree we count, see \cref{thm:scthm}). Note: In our moduli spaces we DO NOT allow degenerations where components of the
domain curve map into the boundary $D \subset Y$, so properness is delicate. 
Establishing this is the main technical task.
Once we have dealt with these foundational questions,
the argument from \cite{Keel_Yu_The_Frobenius} for proving the mirror algebra is associative and finitely generated is the same. 
Once we have associativity,
we want to prove that the fibres of the mirror family, over the structure torus $\bbT_{\Pic(\bbY_s)}$ are log
canonical affine log Calabi-Yau varieties, see \cref{thm:mirror_alg}.
The main issue is to show they are integral -- the rest then follows by degeneration to the {\it vertex} just as in \cite{Keel_Yu_The_Frobenius}, see the proof of \cref{thm:lcsings}.
In \cite{Keel_Yu_The_Frobenius} we had a shortcut:
Whenever we have a Zariski open subset $\bbV \subset \bbU$, say in the case \cref{ex:affine} (something
analogous holds in general), the mirror algebra for $\bbU$ degenerates to the mirror algebra for $\bbV$, see \cite[18.1]{Keel_Yu_The_Frobenius}.
When $\bbV = \bbT_M$ this is purely toric, a construction of Mumford -- the mirror algebra for fibres over the structure torus is a ring of Laurent series,
and \cref{thm:mirror_alg}, in particular the integrality, is obvious.
The case for $\bbU \supset \bbT_M$ follows by degeneration.
In general, when we have no torus open set, we have only the degeneration to the (highly reducible, but otherwise SLC Gorenstein $K$-trivial log Calabi-Yau) vertex, see \cref{sec:vertex}.
To prove integrality we make use of the strategy from \cite{Gross_Mirror_symmetry_for_log_Calabi-Yau_surfaces_I_v1}, and \cite{Gross_Theta_Functions}: We make use of a scattering diagram to show that our deformation of the vertex is (outside codimension two on the central fibre) locally isomorphic to the Mumford construction (the toric case of the mirror family) and from this easily deduce integrality.
Here as in \cite{Keel_Yu_The_Frobenius} we define the scattering diagram directly, as counts of holomorphic cylinders.
This is carried out in \cref{sec:scattering}.

\smallskip
\begin{remark} \label{rem:ec} 
There is also a direct geometric definition of the scattering diagram in \cite{GSIMS},
and of the structure constants for the mirror algebra.
We assume the scattering diagrams, and mirror algebras, are the same, though this is not obvious.
E.g.\ our structure constants are counts of (punctured) analytic (in general semi-stable, see
\cref{rem:ssd}) disks in $U$.
Gross-Siebert count punctured log curves.
Any of our disks is the Berkovich generic fibre of a formal model. 
The central fibre will have a log structure of the sort Gross-Siebert consider.
But in general, given a punctured log curve,
there is no {\it generic fibre}, no $k$-analytic curve to consider.
We note also that our scattering diagrams are based on quite different counts -- we count cylinders, which complete to curves with two points of contact with the boundary, which when $U$ contains no complete rational
curves, deform freely, while what Gross and Siebert count is a log version of disks in $U$, which complete to rational curves with a single point of contact with the boundary (which, by log Kodaira dimension, definitely do not freely deform).
The disks they count arise as {\it twig disks}
of our cylinders, see \cref{def:twigdef}, \cref{sec:scattering}, and \cref{rem:scmio}. 
\end{remark}

There is a secondary use of the torus assumption in \cite{Keel_Yu_The_Frobenius}: the open set $\bbT_M \subset \bbU$
induces an identification $\Sk(T_M) = M_{\bbR} = \Sk(U)$, which in particular gives $\Sk(U)$ a linear structure (it's identified with $\bbR^n$).
When we need a linear structure, we will use the one induced by $(\bbY,\bbD^{\ess})$, see \cref{sec:integeraffine}. This difference turns out to have little practical effect.

We note one additional change in generalizing from the special case \cite{Keel_Yu_The_Frobenius}: Our most fundamental results are properness statements, see \cref{prop:Spprop} (which is the key for counting tropical curves) and \cref{thm:scthm} (the key for defining structure constants).
The arguments for properness are different at several key points.
In algebraic geometry any map can be completed to a proper map.
This is false in the analytic world, but there is an alternative: one can often achieve topological properness by shrinking (rather than completing), e.g.
replacing an open disk by slightly smaller closed disk.
In \cite{Keel_Yu_The_Frobenius} we could complete our basic moduli space $M(U,\beta)$ (see \cref{def:MUk}),
by taking its closure in the space of all stable maps.
Instead here we consider a slightly shrunken version, which picks up a Berkovich boundary, e.g.
$M' \subset M$ in \cref{def:bms}.

Now for the reader's convenience we give a quick sketch of the argument for associativity in \cite{Keel_Yu_The_Frobenius} which we follow here with the modifications mentioned above, as this argument accounts for the logical structure of the paper.

Note the structure constants are manifestly symmetric in the $P_i$ so commutativity is obvious.
For associativity:
We consider the coefficient of one basis element $\theta_Q$ in the desired equality:
$$
\theta_{P_1} \cdot \theta_{P_2} \cdot \theta_{P_3} =
(\theta_{P_1} \cdot \theta_{P_2}) \cdot \theta_{P_3}.
$$
This is by definition a count of disks, extended to rational curves,
as in \cref{const:structure_constants}, the length of a fibre of $\Phi$ from \cref{thm:scthm}.
Associativity is proven by varying the point over which we take the fibre.
We vary the modulus of the domain curve over a big set,
$V_M \subset M_{0,4}^{\trop}$, see \cref{def:VMdef},
and the position of the basepoint over a small set, $H$, see \cref{const:data1},
near $Q \in \Sk(U)$.

We count the fibre over points in $$
\Sk(M_{0,4} \times U) = M_{0,4}^{\trop} \times \Sk(U) \subset M_{0,4}^{\an}
\times U^{\an}.
$$
$\Phi^{-1}\Sk(M_{0,4} \times U)$ consist of skeletal curves, see \cref{lem:source_of_skeletal_curve}
We take domain curve with skeleton as in \cite[Figure 15]{Keel_Yu_The_Frobenius}
and vary the modulus so the edge labeled $\lambda$ {\it stretches}.
Note the tropical domain curve, the intrinsic skeleton of the punctured (at the marked points) domain curve, determines the domain curve (something slightly weaker holds for the stable map,
the spine determines the map up to finitely many choices).
Invariance of the counts under this stretch is obtained by proving $\Phi$ is finite and \'etale, see \cref{prop:Spprop} and \cref{thm:scthm}.
The count naturally decomposes according to the tropical picture, i.e.\ the intrinsic spine $\Sp(f)$.
We cut each domain as in \cite[Figure 15]{Keel_Yu_The_Frobenius}.
Now the natural gluing formula relates the {\it uncut} count to counts of the two pieces,
which then gives the RHS.
The gluing formula follows from invariance of counts for a very simple deformation, which allows us to {\it break} the domain,
see \cref{thm:gluing_inside}
(and its very short proof, \cite[12.2]{Keel_Yu_The_Frobenius}).
Another important issue: to count we fix a basepoint, to make the argument work we need to know the count is invariant under changing the basepoint ($u$ vs $v$ in \cite[Figure 15]{Keel_Yu_The_Frobenius}).
This is again a version of deformation invariance (varying the point along the domain), for which we have a nice geometric argument, see \cite[8.23]{Keel_Yu_The_Frobenius}
and the proof of \cite[9.5]{Keel_Yu_The_Frobenius}.
We only know deformation invariance for sufficiently generic spines, what we call {\it transverse}, see \cref{def:transverse}.
For the above argument we need to know there are {\it enough} transverse spines,
i.e.\ that we can deform so that all the spines we count are transverse.
For this we use \cref{prop:transversality},
which says any spine with sufficiently generic domain and basepoint is transverse.

\bigskip \paragraph{\bfseries Related works}

Note that special cases were studied before in \cite{Keel_Yu_The_Frobenius,Gross_Canonical_bases}.
Gross and Siebert made remarkable progress on the mirror construction problem in greater generality in the setting of log geometry \cite{GSIMS}.
\Cref{conj:main} was proved in the case of del Pezzo surfaces in \cite{HKY20}, and for general surfaces in \cite{HPV24}.

\bigskip \paragraph{\bfseries Acknowledgments}
We benefited from profound, detailed technical discussions with Mark Gross, Bernd Siebert,
Paul Hacking, Johannes Nicaise, Maxim Kontsevich, Michael Temkin and d'Antoine Ducros.

S.\ Keel and L.\ White were supported by NSF grant DMS-1561632.
T.Y.\ Yu was supported by NSF grants DMS-2302095 and DMS-2245099.

\section{essential dual complex}

Assumptions as in \cref{ass:basicsetup}. We let $\Sigma \coloneqq \Sigma_{(\bbY,\bbD)}$ be the essential
dual fan, and $\Sk(U) \coloneqq | \Sigma_{(\bbY,\bbD)}|$.

\subsection{Skeleton of the volume form} \label{sec:svf}
We continue to assume \cref{ass:basicsetup}.
\begin{proposition} Let $\omega_1,\omega_2$ be sections of $\cO(K_{\bbY} + \bbD)$ with zero scheme $\bbW$. These induce
  volume forms (which we again denote by) $\omega_i$ on $U$. $|\omega_1| = |\omega_2|$, where $| \cdot |$ indicates Temkin's canonical
  semi-norm, \cite{Temkin_Metrization_of_differential_pluriforms}.
  In particular, the two forms have the same skeleton (the maximal
  locus of $|\omega|$, see \cite[\S 9]{Keel_Yu_The_Frobenius}). 
\end{proposition}
\begin{proof} By passing to the Stein factorisation we can assume $p_*(\cO_{\bbY}) = \cO_{\obbY}$.  $\omega_1 = u \cdot \omega_2$ for
  invertible $u \in \cO(\bbY) = \cO(\obbY)$, so in particular $u$ is pulled back from $\obbY$.
  Then on $U$ we have
  $\omega_1 = u \omega_2$ (where we use the same notation for the induced analytic forms and functions).  By \cref{lem:normlemma},
  $|u|$ is identically $1$. The result follows. \end{proof}

\begin{lemma} \label{lem:normlemma} Let $u$ be an invertible function on a connected formal scheme
  topologically formally of finite type as in \cref{sec:BTS}, and 
  $u^{\an}$ the induced analytic
  function on the Berkovich generic fibre. Then $|u^{\an}|$ is identically $1$.  \end{lemma}
\begin{proof} By the construction $|u| \leq 1$, and now apply the same to $u^{-1}$.
  \end{proof}

\section{Log Calabi-Yau pairs} \label{sec:log_CY}

We continue to operate under \cref{ass:basicsetup}.
Here we use some results from MMP to deduce properties of $(\bbY,\bbD)$ from that of the
minimal model. A reader trying to understand the basic idea of the paper could safely skip most of
this section,
and just assume $(\bbY,\bbD)$ is minimal (i.e. $K_{\bbY} + \bbD$ is trivial over $\obbY$). Important
ideas are the SYZ fibration, \cref{sec:SYZ}, and \cref{sec:lSYZ}, and the integer linear
structure on $\Sk(U)$, \cref{sec:integeraffine}.  


Fix $k_0$ a field of characteristic zero, equipped with the trivial valuation.
Let $k_0\subset k$ be any non-archimedean field extension.
We say that a variety (or a formal scheme, a divisor, a function, etc.) is \emph{constant over $k$} if it is isomorphic to the pullback of something over $k_0$.
We introduce this terminology (which we also used in \cite{Keel_Yu_The_Frobenius}) because it will help simplify notations while we frequently make base field extensions.

Now assume $k$ has discrete (possibly trivial) valuation.

Note that part of \cref{ass:basicsetup} is: 

\begin{assumption} \label{ass:SYZ} We assume that $(\bbY,\bbD)$ admits a minimal model, ie.
  there exists projective $\bbX \to \obbY$ and a birational contraction $c: \bbY \dasharrow \bbX$ (over $\obbY$)
  such that $K_{\bbX} + \bbD_{\bbX}$ is dlt, and trivial relative to $\obbY$, where $\bbD_{\bbX} \coloneqq c_*(\bbD)$.
\end{assumption}

\begin{remark} Conjecturally (by the termination of log flips conjecture)
  \cref{ass:SYZ} holds whenever we have the other assumptions in \cref{ass:basicsetup}.
  It holds when $\bbY \to \obbY$ is birational,
  see \cite{BCHM}. 
\end{remark}

For us the point of the assumption is it will guarantee the existence of a suitable SYZ fibration,
as in \cite{Nicaise_Xu_Yu_The_non-archimedean_SYZ_fibration}, which in particular gives us the standard affinoid domains we want for gluing on toric ends, see \cref{const:Z}.
For this we use some results from the minimal model program. 

\begin{proposition} \label{prop:term}
  Assume \cref{ass:SYZ}. Then the $K_{\bbY} + \bbD$ MMP (relative to $\obbY$) terminates.
  \end{proposition}

  The following will be a useful notion at various points in the paper:
  
\begin{definition} \label{def:abuse} Let $(\bbY_i,\bbD_i)$ $i = 1,2$ be as in \cref{ass:basicsetup}, with $\bbY_i$ projective over the
  same $\obbY$. We abuse notation and say they {\it compactify the same $\bbU$} if there is a birational map
    $(\bbY_1, \bbD_1) \dasharrow (\bbY_2,\bbD_2)$ , over $\obbY$, such that its exceptional locus is contained in
    $|\bbD_1|$, and the same holds for its inverse. This is an abuse because in general there is no $\bbU$
    (we can have $\bbY_s = \bbD_s$), though we do have the open subset $U = Y \setminus D$ of the generic fibre $Y = \bbY_{\eta}$. 
\end{definition}

\cref{prop:term} follows from the next more general result.
We learned of it, and its proof, from Christopher Hacon:

\begin{proposition} \label{prop:hacon} Let
  $(\bbY,\bbD_{\bbY})$ and $(\bbZ,\bbD_{\bbZ})$ be pairs of formal schemes with reduced divisors,
  projective over $\obbY$. 
  We assume the first pair is log canonical, the second is snc, and we have a birational map
  $f: \bbY \dasharrow \bbZ$ such that the exceptional loci of $f$ or $f^{-1}$ is contained in
  (the appropriate) $\bbD$. 
Then if $K_{\bbY} + \bbD_{\bbY}$ is semi-ample relative to $\obbY$, 
then $(\bbZ,\bbD_{\bbZ})$ has a good relative minimal model and the relative
$K_{\bbZ} + \bbD_{\bbZ}$ minimal model program terminates.
\end{proposition}
\begin{proof} Let $p:\bbX \to \bbZ$, $g\colon \bbZ \to \bbY$ be a
  resolution of the normalisation of the graph of the birational $\bbZ \dasharrow \bbY$
Then $p^*(K_{\bbZ} + {\bbD}_{\bbZ}) + E = K_\bbX + {\bbD}_\bbX = q^*(K_{\bbY} + {\bbD}_{\bbY}) + F$
where $D_{\bbX} = q^{-1}_* {\bbD}_{\bbY} + \Ex(q) = p^{-1}_*({\bbD}_Z) + \Ex(p)$, and $E,F \geq 0$ (by the definition of log canonical).
By \cite[Lemma 2.10]{HX}
$(\bbZ,{\bbD}_{\bbZ})$ has a good relative minimal model if and only if $(\bbX,{\bbD}_\bbX)$ has a good relative minimal model if and only if $({\bbY},{\bbD}_{\bbY})$ has a good relative minimal model.
For the termination of the $K_{\bbZ} + {\bbD}_{\bbZ}$ MMP use \cite[Lem 2.9]{HX}.
This completes the proof.
\end{proof}

We will use \cref{prop:term} via the next result.
We learned of it,
and its proof, from Koll\'ar:

\begin{proposition} \label{prop:dualcomplex}
  Notation as in \cref{ass:basicsetup}. We assume the $K_{\bbY} + \bbD$ MMP relative to $\obbY$ terminates. Let
  $C$ be an essential one stratum of $\bbD$ which contracts to a point in $\obbY$.  Then $C = \bbP^1$, $C$
  contains exactly two zero strata, both essential, and $C$ is disjoint from $\bbZ$ where
  $\bbZ$ is the effective Weil divisor $\bbW - (\bbD - \bbD^{\ess})$. 
\end{proposition}
\begin{remark} Note the generic fibre of $\bbZ$ is the zero locus of the volume form on $U$, the polar
  locus is the generic fibre of $\bbD^{\ess}$, and $K_{\bbY} + \bbD^{\ess} = \bbZ$. Note also, $C$ might still
meet non-essential components of $\bbD$, but only those where the volume form has neither zero nor pole. \end{remark} 
\begin{proof}
  The proof of \cite[6.1]{Nicaise_Xu_Yu_The_non-archimedean_SYZ_fibration} shows that on the minimal model
  any essential one stratum, contracted to a point in $\obbY$, is {\it toric}, i.e. $\bbP^1$, a complete
  intersection of essential divisors, containing exactly two essential zero strata. 
  As explained at the start of \cite[\S 3]{KdFX}, each step of the program is an isomorphism generically along each
  essential stratum. It follows that the same description holds for $C$. In particular, $(K + \bbD^{\ess}) \cdot C = 0$,
  which implies $C$ is disjoint from $\bbZ$. This completes the proof.
\end{proof}

The following is implied by \cite[4.5]{Nicaise_Xu_Yu_The_non-archimedean_SYZ_fibration} (and its proof): 

\begin{lemma} \label{lem:SLCminmod} Let $(\bbX,\bbD)$ be a relative minimal model for
  $(\bbY,\bbD)$ as in \cref{ass:basicsetup}. Then the pair is snc in a neighborhood of
  the union of closed one and zero strata which map to points in $\obbY$.
\end{lemma}

\subsection{Non-archimedean SYZ fibration} \label{sec:SYZ}

\begin{proposition} \label{prop:TonySYZ} Let $(\bbY,\bbD)$ be a formal snc pair. And $U \coloneqq Y \setminus D$
  for $(Y,D)$ the generic fibre as in \cref{sec:BTS}. 
Let $C$ be a one stratum of $\bbD$, isomorphic to $\bbP^1$,
containing exactly two zero strata.
Let $\rho \in \Sigma_{(\bbY,\bbD)}^{n-1}$ be the cone corresponding to $C$.
Then the Berkovich retraction $r\colon U \to |\Sigma_{(\bbY,\bbD)}|$ is smooth, in the sense of \cite[Def.
4]{Kontsevich_Affine_structures} over $\star(\rho)$:
More precisely: There is a lattice $M$,
an open embedding $i:\star(\rho) \subset M_{\bbR}$, and an analytic isomorphism $f\colon r^{-1}(\star(\rho)) \to p^{-1}(\star(\rho))$, where $p\colon \bbT_M^{\an} \to M_{\bbR}$ such that \[\begin{tikzcd}
r^{-1}(\star(\rho)) \rar{f} \dar{r} & p^{-1}(\star(\rho)) \dar{p} \\
\star(\rho) \rar{i} & \star(\rho).
\end{tikzcd}\]
commutes.

\end{proposition}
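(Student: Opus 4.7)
The plan is to construct the toric chart over $\star(\rho)$ by exploiting the local-to-global structure of the Berkovich retraction at the two zero strata of $C$. The basic local fact is standard: at each zero stratum $z$ lying in $n$ transversal components of $D$, a formal (analytic) neighborhood of $z$ is isomorphic to a neighborhood of the corresponding fixed point in an affine toric variety, and the Berkovich retraction $r$ coincides with the toric tropicalization. Applied to the two zero strata $z_a, z_b$ of $C$, this gives two local toric models, one over each maximal cone of $\star(\rho)$. First I lay out the combinatorial data: label the $n-1$ boundary components $D_{i_1},\dots,D_{i_{n-1}}$ meeting transversally along $C$, and let $D_a, D_b$ be the two additional boundary components producing the zero strata $z_a = C \cap D_a$, $z_b = C \cap D_b$. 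Then $\star(\rho)$ consists of the relative interior of $\rho$ together with the two maximal cones $\sigma_a = \rho + \bbR_{\geq 0} P_a$ and $\sigma_b = \rho + \bbR_{\geq 0} P_b$.

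The strategic heart of the proof is to glue these two local models into a single toric model over the full star. Concretely, I fix global sections $f_{i_1},\dots,f_{i_{n-1}}$ cutting out $D_{i_1},\dots,D_{i_{n-1}}$ and a rational function $g$ on $Y$ whose restriction to $C \cong \bbP^1$ has divisor $z_a - z_b$ (unique up to scalar). After modifying $g$ by units along the $D_{i_j}$, I arrange that in a small analytic neighborhood of $C$ in $V^{\an}$, the tuple $(f_{i_1},\dots,f_{i_{n-1}}, g)$ satisfies the monomial relations of a toric fan in some $M_\bbR$ with rays $P_{i_1},\dots,P_{i_{n-1}}$ spanning $\rho$ and two additional rays $P_a, P_b$ in complementary half-spaces. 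The congruence classes of $P_a, P_b$ modulo $\rho$ are forced by the intersection numbers $D_{i_j}\cdot C$, producing the balancing relation that makes the embedding $i\colon \star(\rho) \hookrightarrow M_\bbR$ possible.

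The tuple $(f_{i_1},\dots,f_{i_{n-1}}, g)$ then defines an analytic morphism from an analytic tube around $C$ in $V^{\an}$ into $T_M^{\an}$, fibered over $i$. Local isomorphy over the two maximal cones (from the local toric models above), together with consistency over $\rho$ (automatic from the balancing relation), yields that this morphism restricts to the desired analytic isomorphism $f\colon r^{-1}(\star(\rho)) \to p^{-1}(\star(\rho))$ compatible with the two retractions. Compatibility with $r$ and $p$ is tautological once the coordinates are fixed, since both retractions are computed by the same valuations of monomials.

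The main obstacle I expect is controlling the global behavior of $g$: a priori $g$ has extraneous zeros and poles along boundary components not meeting $C$, and one must check these do not interfere with the isomorphism over the relevant analytic tube. The key point is that those extraneous components correspond to cones of $\Sigma$ outside $\star(\rho)$, so their analytic neighborhoods are disjoint from $r^{-1}(\star(\rho))$, and shrinking the tube around $C$ appropriately absorbs them as units. Making this precise, and upgrading $f$ from a fiberwise/local isomorphism to a global one by verifying injectivity on the Berkovich tube, constitutes the main technical step.
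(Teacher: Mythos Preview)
Your approach is correct and is essentially the argument carried out in \cite[\S 6]{Nicaise_Xu_Yu_The_non-archimedean_SYZ_fibration}, which is all the paper's proof cites. One small slip: the $f_{i_j}$ cannot be chosen as global sections of $\cO_Y$ in general (the $D_{i_j}$ need not be principal), but you only need them on a formal/analytic neighborhood of $C$, which you already restrict to; the cited reference works directly with the formal completion of $Y$ along $C$ and identifies it with the formal completion of the toric model along its closed one-stratum, which is the cleaner way to package your coordinate construction and absorbs the ``extraneous zeros/poles of $g$'' issue automatically.
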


(Here $\star(\rho) \subset |\Sigma|$ is the union of cones whose closures contain $\rho$.)

\begin{proof} This proof follows by the argument in \cite[\S 6]{Nicaise_Xu_Yu_The_non-archimedean_SYZ_fibration},
  see \cref{prop:GluingEnds}.
\end{proof}

\subsection{Integer linear structure} \label{sec:integeraffine}
Assume \cref{ass:SYZ} and let $(\bbY,\bbD)$ be as in \cref{prop:dualcomplex}, whose notation we follow.
Let $\Sigma$ be the essential dual fan. 
Let $S \subset \abs{\Sigma}$ be the complement of the union of codimension two cones. By \cref{prop:goodtopology} this
is a topological manifold with boundary. 
Note that a $\Sigma$-piecewise linear integer function on $S$ is the same thing as a Weil divisor (with no linear equivalence, just a formal sum) supported on $\bbD^{\ess}$ (indeed, it is the same as an integer attached to each ray of $\Sigma^1$, since the fan is simplicial).
Following \cite[Def.\ 1.1]{Gross_Mirror_symmetry_for_log_Calabi-Yau_surfaces_I_v1},
we can now define an integer linear structure on $S$ defining such a function to be linear if and only if the intersection number of the Weil divisor with each $1$-stratum of $\bbD^{\ess}$ is zero.
This gives $S \setminus |\partial \Sigma|$ the structure of integer linear manifold without boundary. 
\begin{remark} Because of \cref{prop:oc2} we will never need to consider a linear structure on $|\partial \Sigma|$.
  \end{remark} 
This integer linear structure on $\Sk(U)$ depends 
in general on $(\bbY,\bbD)$ (it is canonical if $U$ is two dimensional, or $(\bbY,\bbD)$ is
toric). 
But the underlying piecewise integer linear structure, induced by the maximal cones of $\Sigma$, is canonical.
The same affine manifold appears in \cite[\S 1.3]{GSCWS}. 

\subsection{Models}
We consider $(\bbY,\bbD)$ as in \cref{ass:basicsetup}. 

\begin{definition-lemma} \label{def:am} 
  We say an essential one stratum is {\it almost minimal} if it meets only essential irreducible components of $\bbD$.
  In this case $C$ is a complete intersection of $\dim Y -1$ essential boundary divisors, and in addition meets exactly two other boundary divisors, each essential, and moreover $C = \bbP^1$.

We say that $(\bbY,\bbD)$ is {\it almost minimal} if every essential $1$-stratum is almost minimal.
\end{definition-lemma}
\begin{proof} This all follows from \cref{prop:dualcomplex} and \cref{prop:term}. \end{proof}
\begin{proposition} \label{prop:am} Let $(\bbY,\bbD)$ satisfy \cref{ass:SYZ}. 
  Then it has an almost minimal model, i.e. an snc almost minimal pair $(\bbX,\bbD_{\bbX})$ with $\bbX \to \obbY$
  projective, and a birational map $f:\bbY \dasharrow \bbX$ whose exceptional locus is contained
  in $\bbD$, and the exceptional locus of the inverse is contained in $\bbD_{\bbX}$.
\end{proposition}
\begin{proof}
Let $(\bbX,\bbD)$ be a dlt minimal model.
By Hironaka there is a birational map $p\colon \bbX' \to \bbX$, which is an isomorphism over a neighborhood of the snc locus of $(\bbX,\bbD)$ such that $(\bbX',\bbD')$ is snc. As $\bbX$ is $\bbQ$-factorial, the exceptional locus is the union of
the exceptional divisors. 
By the definition of dlt, there are no essential exceptional divisors, so, by the definition of minimal model,
the polar divisor (as in \cref{prop:dualcomplex})
in $\bbX'$
is the strict transform, $\tbbD$, of $\bbD$. Also, take any essential stratum of $(\bbX',\bbD')$, it is by definition
an intersection of essential divisors, and so it is not contained in the exceptional locus, and thus its image is
an essential stratum of $\bbD$ of the same dimension. Dlt implies snc in a neighborhood of any $1$ or $0$ stratum,
it follows that $p$ and $p^{-1}$ are isomorphisms in a neighborhood of any essential $1$ or $0$ stratum -- and
note by the definition of minimal, $\bbX$ has only essential strata. 
It follows that $(\bbX',\bbD')$ is almost minimal.
\end{proof}

\begin{proposition} \label{prop:modelprop} Let $(\bbY,\bbD)$ be as above,
  and $(\bbW,\bbD_{\bbW})$ an almost minimal model. 
  Then there exist $\tbbY \to Y$, $\tbbW \to \bbW$ compositions of blowups of closed
  essential strata such that the following hold:
\begin{enumerate}[wide]
\item The essential boundary divisors of $\bbD_{\tY}$ (the support of the inverse image of $\bbD$)
  are the same as the essential boundary divisors of $\bbD_{\tW}$ (in the sense of birational geometry).
\item The essential dual fans $\Sigma_{(\tbbY,D_{{\tbbY}^{\ess}})}$
and $\Sigma_{(\tbbW,D_{{\tbbW}_{\ess}})}$
(which by (1) have the same rays) are the same fans on $\Sk(U)$.
\item The birational maps $\tbbY \dasharrow \tbbW$ and $\tbbW \dasharrow \tbbY$
are isomorphisms in a neighborhood of the generic point of any essential stratum \end{enumerate}
\end{proposition}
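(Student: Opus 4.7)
The plan is to pass to a common snc refinement $V$ of $Y$ and $W$, and then strip away only the non-essential exceptional divisors to obtain $\tY$ and $\tW$. First, by Hironaka, choose a projective snc compactification $V \supset U$ equipped with projective birational morphisms $V \to Y$ and $V \to W$ that factor as sequences of blowups along snc centres and are isomorphisms over $U$. The computation $K_{\tY} + D_{\tY} = \pi^*(K_Y + D_Y)$ from the proof of the preceding proposition shows that snc blowups of essential strata produce essential exceptional divisors, while blowups of non-essential strata produce non-essential ones. Let $\Lambda$ be the finite set of essential divisorial valuations appearing on $V$; it contains the sets of essential divisors of both $Y$ and $W$.

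Second, build $\tY \to Y$ by re-ordering the factorisation of $V \to Y$ so that all blowups along essential stratum centres come first and all blowups along non-essential centres come last. Standard properties of snc blowups (commutation for disjoint centres, compatibility with strict transforms for nested centres) make this re-ordering possible. Let $\tY$ be the intermediate model reached after performing exactly the essential blowups, so that $\tY \to Y$ is a composition of essential-stratum blowups and the residual $V \to \tY$ is a composition of non-essential ones. Build $\tW \to W$ symmetrically.

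Given this setup, (1) is immediate because the essential divisors of $\tY$ and of $\tW$ are exactly $\Lambda$. For (3), a non-essential stratum blowup only modifies its centre, which by definition cannot contain the generic point of any essential stratum; hence $V \to \tY$ and $V \to \tW$ are each isomorphisms at the generic point of every essential stratum, and so is the induced birational map $\tY \dasharrow \tW$ together with its inverse. For (2), both fans share the ray set $\Lambda$ by (1), and the cone structure records which subsets of $\Lambda$ meet transversely in a positive-dimensional stratum; by (3) this meeting pattern can be read off from $V$ for both $\tY$ and $\tW$, so the two fans coincide as subdivisions of $\Sk(U)$. Almost-minimality of $W$ enters via \cref{prop:dualcomplex} and \cref{prop:term}, which guarantee that essential one-strata are disjoint from non-essential components, so the rearrangement does not accidentally merge non-essential exceptional behaviour with essential codimension-one data.

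The main obstacle will be the re-ordering step. One must show that non-essential stratum blowups can be postponed past essential ones in any snc factorisation of $V \to Y$. The argument uses the snc structure together with the following observation: a later essential centre is either disjoint from an earlier non-essential exceptional divisor, or meets it in a stratum, and in the latter case the essential centre can be pulled back through the postponed non-essential blowup via its strict transform, which remains an essential stratum of the intermediate snc model by the discrepancy computation above. Once this combinatorial re-ordering is in hand, statements (1)--(3) follow directly from \cref{prop:dualcomplex}, \cref{prop:term}, and the almost-minimality of $W$.
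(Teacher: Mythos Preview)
Your proposal has two genuine gaps, and the paper's approach sidesteps both.

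First, the claim that a common resolution $V$ can be chosen so that both $V\to Y$ and $V\to W$ factor as sequences of blowups along boundary \emph{strata} is not a consequence of Hironaka. Hironaka gives blowups along smooth centres contained in the boundary, but those centres need not be strata of the snc divisor. Without this, there is no factorisation to re-order.

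Second, and more seriously, the re-ordering step fails even if one grants such a factorisation. A later blowup centre can lie inside an earlier exceptional divisor (for instance, blowing up a stratum of a previously created exceptional divisor). Such a centre simply does not exist before the earlier blowup, so there is no ``strict transform'' to pull it back through. Your sketch only treats the case where the later essential centre either misses or meets the earlier non-essential exceptional divisor transversally; it does not cover the case where the later centre is born from the earlier blowup. The commutation of blowups you invoke (for disjoint or nested centres) does not apply here.

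The paper's route avoids both problems by never introducing a common roof $V$. For (1)--(2) it works purely combinatorially: the essential dual fans $\Sigma_{(Y,D_Y^{\ess})}$ and $\Sigma_{(W,D_W^{\ess})}$ are two simplicial fan structures on the same $\Sk(U)$, and a common simplicial refinement is reached from either side by stellar subdivisions, which on the geometric side are exactly blowups of closed essential strata. This gives $\tY,\tW$ with identical essential fans. For (3), having matched the fans, the paper picks rational functions $f_1,\dots,f_d$ on $U$ that cut out the essential boundary at a zero stratum $p_Y$; after further simultaneous essential blowups one arranges that (monomials in) the $f_i$ are local analytic coordinates at the corresponding zero stratum $p_W$ as well, forcing the birational map to be an isomorphism there. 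Since every minimal essential stratum is zero-dimensional, this suffices. If you want to rescue your idea, the honest version is to drop the re-ordering entirely and define $\tY$ directly as the toroidal modification of $(Y,D_Y^{\ess})$ corresponding to a chosen common refinement of the two fans --- but that is the paper's argument.
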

\begin{proof} The essential dual fans are simplicial fan structures on $\Sk(U)$.
Easy birational geometry shows we can produce a common refinement by compositions of blowups of essential strata.
So we can assume $(\bbY,\bbD)$ and $(\bbW,\bbD_{\bbW})$ satisfy (1-2).
Note these are preserved by blowups of {\it the same} (under the correspondence (2)) essential strata.
It remains to show (3) holds after a sequence of such blowups.

The identifications of dual fans give a dimension preserving identification of strata. Let $p_Y \in\bbY$ be a zero stratum, $p_W \in \bbW$ the corresponding zero stratum.
Let $f_1,\dots,f_d$ be rational functions on $\bbY$ which are regular at $p_Y$, and whose zero locus cut out the boundary divisors at $p_Y$.
The polar divisors of the $f_i$ contain no essential boundary divisors through $p_Y$, and each vanishes simply along one of the $d$ boundary divisors through $p_Y$, by (1-2).
So we can make a sequence of blowups of corresponding boundary strata, through $p_Y, p_W$,
so that their polar divisors contain no essential zero strata in the inverse image of $p_Y$, and the only zero divisors through the stratum are essential.
This process will introduce new zero strata, but monomials in the $f_i$ will give local coordinates on $\tbbW$ (cutting out the boundary divisors through the zero stratum just as at the original $p_W$).
As they are monomials in the original coordinates, their zero or pole divisors (near a zero stratum over $p_Y$) are all essential.
Thus after blowing up strata (and replacing $\bbY,\bbW$ by $\tbbY,\tbbW$) we can for each essential zero strata $p$ find coordinates $f_1,\dots,f_d$ at $p_W$ as above (their zeros cut out the essential boundary divisors) with no poles at $p_w$ and simple zeros along exactly one of the essential boundary divisors at $p_w$.
It follows these give local analytic coordinates at $p_W$.
It follows $\bbY \dasharrow \bbW$ and $\bbW \dasharrow \bbY$ are isomorphisms in a
neighborhood of any essential zero stratum.
Every minimal essential stratum is zero dimensional, by \cite[Thm 2(1)]{KdFX}, 
so this completes the proof.
\end{proof}

\section{Basic topology of the essential dual complex} \label{sec:btdc}
Here we assume we have $(\bbY,\bbD)$ as in \cref{ass:basicsetup}.
We let $\Sigma \coloneqq \Sigma_{(\bbY,\bbD^{\ess})}$,
$\oSigma \coloneqq \overline{\Sigma}_{(\bb{Y}, \bb{D})}$  as in \cref{def:DualFan}. 
Let $\partial \Sigma \subset \Sigma$. $\partial \oSigma \subset \oSigma$ be the subcomplex spanned by
cones corresponding to formal (i.e. non-algebraic) strata.

\begin{definition} \label{def:goodtopology} We say that $\Sigma$, or $\oSigma$ have {\it good topology} if all the following hold:
  \begin{enumerate}
  \item $|\Sigma|$ is connected
  \item Every maximal cone is $\dim(\bbY)$ dimensional.
  \item Every codimension one cone of $\Sigma$ not part of $\partial \Sigma$ is the face of exactly two
    maximal cones, each algebraic.
    \item Every codimension one cone of $\partial \Sigma$ is a face of a unique maximal cone of $\Sigma$,
    and this cone is not part of $\partial \Sigma$.
  \end{enumerate}
\end{definition}
We note that Gross and Siebert have very similar notions, which they show hold in applications by similar use of MMP, and
reference to \cite{KX}, \cite{KdFX}. See \cite[1.1-1.3]{GSCWS}.  

\begin{proposition} \label{prop:goodtopology} Assume $(\bbY,\bbD)$ satisfies \cref{ass:basicsetup} and
  has a relative minimal model as in \cref{ass:SYZ}. 
  Then $\Sigma$ has good topology in the sense of \cref{def:goodtopology}.
\end{proposition}
\begin{proof}
  (1) holds by our assumption that $f^{-1}(p)$ is connected. 
  \cite[Prop. 11]{KdFX} (and its proof which extends the Proposition to the subboundary
  case) the essential dual complex is independent of the steps in the MMP (up to PL-isomorphism). So for
  any of the questions (1-4) we can work either with $(\bbY,\bbD)$, or a relative minimal model $(\bbY',\bbD')$.

  Now we apply \cite[4.40]{Kollar_Singularities_of_the_minimal_model_program} to $f: \bbY' \to \obbY$: Koll\'ar works
  with maps of schemes but we can reduce to this using Grothendieck Existence (since we assume $f:\bbY \to \obbY$ is
  projective). 
  
  By \cite[4.40]{Kollar_Singularities_of_the_minimal_model_program} any two minimal log canonical centers of
  $(\bbY',\bbD')$ are $\bbP^1$-linked, in particular they have the same dimension. 
  Since we assume we have at least one essential zero stratum, (2) follows.

  A formal one dimensional stratum is a closed formal embedding
  $\Spf(k[[s]]) \subset \bbY$ (as follows e.g. from the Cohen structure
  theorem). So obviously it contains a
  unique $0$ stratum, which gives (4). 
   For (3),  we work with the minimal model. 
   DLT implies snc in a neighborhood of a $1$-stratum.  Now by adjunction and the definition
   of minimal it follows that each algebraic $1$ stratum of $(\bbY',\bbD')$,
   is $\bbP^1$, containing exactly two $0$ strata, each essential.
   This completes the proof. 
   \end{proof} 

   The notation $\partial \Sigma$ is justified by:
   \begin{corollary} \label{cor:boundary} If $\Sigma$ has good topology in the sense of \cref{def:goodtopology} then
     outside the union of codimension two cones of $\Sigma$, $(|\Sigma|,|\partial \Sigma|)$ is
     a connected piecewise linear manifold with boundary.
   \end{corollary}
   \begin{proof} This is immediate from \cref{def:goodtopology}. \end{proof}

   \subsection{Lemma on the SYZ fibration}

   We assume that $(\bbY,\bbD)$ satisfies \cref{ass:basicsetup} and admits a relative minimal
   model. So \cref{prop:goodtopology} applies.

   We consider $r: U \to |\Sigma|$ the SYZ fibration, where the target is the support of
   the essential dual fan. We have $\partial \Sigma$ as in \cref{cor:boundary}. 

   \begin{proposition} \label{prop:oc2} Assumptions as immediately above.
     $r(U) \cap |\partial \Sigma| $ is contained in the union of cones of $\partial \Sigma$
     of dimension at most $\dim Y -2$.
   \end{proposition}
   \begin{proof} We consider a cone $\sigma \in \partial \Sigma$ of dimension  $\dim Y -1$, corresponding
     to a non-algebraic $1$-stratum, $C$, the intersection of $\dim Y -1$ non-algebraic irreducible
     components of $\bbD$, $\bbE_1,\dots,\bbE_{d-1}$. By \cref{prop:goodtopology}, $C$ contains a unique essential $0$-stratum,
     $Z$, necessarily algebraic.

     \begin{claim} $C \cap \bbY_s$ is a single point. \end{claim}

     Assuming the Claim, $C \cap \bbY_s = Z$. If $u \in r^{-1}(\sigma^{\circ}) \subset U$,
     then $\red(u) \in \bbE_1 \cap \dots \bbE_{d-1} \cap \bbY_s = Z$. But then $r(u) \in \Sigma_Z^{\circ}$,
     a contradiction. So it's enough to prove the Claim.
     \begin{proof}[Proof of Claim] $C_s = C \cap \bbY_s$ is zero dimensional (as $C \not \subset \bbY_s$, since
       $C$  is a formal stratum). As $C$ is connected (part of our definition of snc), this is a single
       point. 
     \end{proof}
   \end{proof}

   \section{Bounding realizable tropical curves} \label{ssec:brtc}

A central role in mirror symmetry is played by so called Maslov index zero disks, which are responsible for instanton corrections and wall-crossing structures,
see \cite{Strominger_Mirror_symmetry_is_T-duality}.
Here we give a coarse but simple way of controlling them: once we bound the class,
we find a collection of codimension one walls in $\Sk(U)$ which contain their tropicalisations.
These walls then control the bending of our version of tropical curves, see \cref{prop:tree_bound}.
A much more refined version of these walls is the scattering diagram treated in \cref{sec:scattering}.
The main result of the section is \cref{prop:tree_bound}, which allows us to make the
balancing statement \cref{def:Balcon}. 
\subsection{Ios tropicalisation} \label{sec:ios}
We fix $(\bbY,\bbD)$ as in \cref{ass:basicsetup}.

\begin{definition} \label{def:iostrop} Let $(B,p_1,\dots,p_n)$ be a semi-stable pointed curve of genus zero (i.e.\ $B$ is the complement of a union of open disks in a semi-stable proper curve of genus $0$, and the $p_i$ are quasi-smooth rational points, see \cref{rem:ssd}).
Let $\Gamma^s \subset B$ be the convex hull of the union of the $p_i$ with the Berkovich boundary $\partial B$.
Let $f\colon B \to Y$ be an analytic map such that $f^{-1}(D^{\ess}) = f^{-1}(D)$ and is supported on the union of $p_i$.
Let $B^{\circ} \coloneqq f^{-1}(U) \subset B.$ Let $\In \coloneqq B \setminus B^{\circ}$.
Note $\In$ is contained in the union of the $p_i$ ($\In$ here stands for infinite).

We consider the composition $c:B \to Y \to \oSk$,
where the second map is the Berkovich retraction determined by $(\bbY,\bbD^{\ess})$, see \cref{sec:lSYZ}.
Let $s:B \to \Gamma$
contract all edges of $B$ contracted by $c$, but not contained in $\Gamma^s$.
Note by construction, the composition $\Gamma^s \subset B \to \Gamma$, is an embedding,
with image the convex hull of the marked points and Berkovich boundary.
Following \cite[\S 2]{Keel_Yu_The_Frobenius}: We call the induced $h\colon \Gamma \to \oSk$ the {\it ios tropicalisation}
of $f\colon B \to Y^{\an}$. 
We call $h\colon \Gamma^s \to \oSk$ the {\it spine} of $f$ (and ios stands for immersed off of spine).
We call such trees mapping to $\oSk$, {\it realizable}.
We note that $h\colon \Gamma \to \oSk$ is piecewise integer affine.
We note there is a canonical $\oSigma$-piecewise affine embedding $\oSk = |\oSigma| \subset \obbR^{\bbD^{\ess}} $,
for $\Sigma \coloneqq \Sigma_{(\bbY,\bbD^{\ess})}$ the essential dual fan, see \cref{def:DualFan}. 
We sometimes apply the names to the composition, e.g.\ call $h\colon \Gamma \to \oSk \subset \obbR^{\bbD^{\ess}}$ the (ios)
tropicalisation.
We write $\Gamma^s_{\circ} \subset \Gamma^s$,
$\Gamma_{\circ} \subset \Gamma$ for the intersection with $h^{-1}(\Sk(U))$,
Note e.g.\ $\Gamma_{\circ} = \Gamma \setminus \In$.
\end{definition}

\begin{definition} \label{def:twigdef}
  Notation as immediately above.
  By a twig of $h$ we mean its restriction to the closure, $T \subset \Gamma$, of a connected component $T^{\circ} \subset \Gamma \setminus \Gamma^s$.
  We call $h\colon T \to \obbR^{\bbD^{\ess}}$ a {\it realizable twig}.
  We note that by construction the domain has a distinguished valence one vertex, $r \in T$, which we call the root, $r \coloneqq T \cap \Gamma^s \subset \Gamma$.
  Note by construction $h\colon T \to \Sk(U) \subset \bbR^{\bbD^{\ess}}$.

  The analytic curve $B \setminus \Gamma^s$ is a disjoint union of open disks.
  By the {\it twig disk} associated to the twig $h\colon T \to \Sk(U)$ we mean the connected component of $B \setminus \Gamma^s$ that contains $T^{\circ}$, which can be equivalently defined as $\pi^{-1}(T^{\circ})$ for $\pi\colon B \to \Gamma$ the canonical retraction.
  These twig disks are our version of Maslov index zero disks.
  We note each twig disk has a natural class (take the contribution to $[f\colon B \to Y]$ corresponding to $T^{\circ}$).
\end{definition}

\begin{example}
  There are no twig disks in the algebraic torus case $U = T_M$: For a twig $h\colon T \to \Sk(U)$ is an immersion, with domain a compact tree.
  But in the torus case (where $\Sk(U) = M_{\bbR}$) this is balanced (outside $r$) so constant, a contradiction.
\end{example}

\begin{definition} \label{def:NB} Let $\Gamma$ be a nodal tree, as in \cite[4.2]{Keel_Yu_The_Frobenius}
  (in particular it is metrized) 
  and $h\colon \Gamma \to \Sk(U)$ a $\Sigma_{(\bbY,\bbD^{\ess})}$ piecewise affine map.
Recall we have a canonical piecewise affine inclusion $\Sk(U) \subset \bbR^{D^{\ess}}$.
For a vertex $v \in \Gamma$ we define \[
\NB_v(\bbZ^{D^{\ess}}) \coloneqq \sum_{e \ni v} w_{(v,e)} \in \bbZ^{\bbD^{\ess}}.
\]
If $h(v) \in \Sk^{\sm}(U)$ (in terms of the $(\bbY,\bbD^{\ess})$ affine structure, see \cref{sec:integeraffine})
we define \[
\NB_v(\Sk(U)) \coloneqq \sum_{e \ni v} w_{(v,e)} \in T^{\bbZ}_{h(v)}(\Sk(U))
\]
where the target is the integer tangent space, and $w_{(v,e)}$ is the derivative at the vertex $v$ in the direction of
the incident edge $v \in e$. (See \cref{def:Balanced} for some remarks on the difference between these two notions of $\NB_v$.)
We will sometimes be restricting $h$ to subgraphs (for example the spine inside the ios tropicalisation), to remove ambiguity we will add further notational decoration.
\end{definition}

The following is easily checked:
\begin{lemma} \label{lem:NBcont} Let $h\colon \Gamma \to \Sk(U)$ be as in \cref{def:NB}.
Suppose $h$ factors as $h = g \circ \gamma$ for $\gamma\colon \Gamma \to \oGamma$,
contracting some collection of edges of $\Gamma$.
Then $$
\NB_{b}^g = \sum_{v \in \Gamma, g(v) = b} \NB_v^h $$
(for either $\Sk(U)$ or $\bbZ^{D^{\ess}}$ versions).
\end{lemma}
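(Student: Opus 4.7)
The plan is to prove the identity by unpacking both sides and matching terms edge-by-edge, using the single crucial observation that any edge of $\Gamma$ contracted by $\gamma$ contributes zero. Indeed, if $e$ is such an edge with endpoint $v$, then $h|_e = g\circ\gamma|_e$ is constant (because $\gamma$ collapses $e$ to a point), so the outward derivative $w_{(v,e)}$ vanishes at each endpoint of $e$. This already disposes of every contracted edge on the right-hand side.

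Next I would set up the bookkeeping for the non-contracted edges. For a vertex $b\in\oGamma$, the fibre $\gamma^{-1}(b)$ is a (connected) subtree of $\Gamma$ whose edges are exactly the edges of $\Gamma$ contracted to $b$; every other edge incident to $\gamma^{-1}(b)$ has exactly one endpoint in $\gamma^{-1}(b)$ (here one uses that $\Gamma$ is a tree, so no non-contracted edge can have both endpoints in $\gamma^{-1}(b)$). These non-contracted edges are in canonical bijection with the edges of $\oGamma$ at $b$, and because $\gamma$ is an integer-affine isomorphism on each such edge and $h=g\circ\gamma$, the outward derivatives match: $w_{(v,e)}^{h} = w_{(b,\gamma(e))}^{g}$ at the endpoint $v\in\gamma^{-1}(b)$ of $e$.

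Combining these two points yields
\[
\sum_{v\colon\gamma(v)=b}\NB_v^{h}
= \sum_{v\colon\gamma(v)=b}\ \sum_{e\ni v} w_{(v,e)}^{h}
= \sum_{\oe\ni b} w_{(b,\oe)}^{g}
= \NB_b^{g},
\]
where contracted edges (each counted twice, once at each endpoint) drop out by the first paragraph and non-contracted edges are reindexed by the bijection of the second paragraph. The $\Sk(U)$ and $\bbZ^{D^{\ess}}$ versions are handled by the same computation because $h(v)=g(b)$ for every $v\in\gamma^{-1}(b)$, so all terms on both sides live in the single integer tangent space $T^{\bbZ}_{g(b)}\Sk(U)$ (respectively in $\bbZ^{D^{\ess}}$ via the canonical inclusion $\Sk(U)\subset\bbR^{D^{\ess}}$).

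There is no real obstacle; as the paper itself remarks, the statement is essentially formal. The only point that requires a little care is the bookkeeping, namely verifying that the two-to-one/one-to-one correspondence described above is exhaustive, which is why I made the tree-structure of $\Gamma$ (hence of $\gamma^{-1}(b)$) explicit.
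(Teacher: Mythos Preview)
Your proposal is correct and is precisely the kind of direct edge-by-edge verification the paper has in mind: the paper gives no proof at all, merely prefacing the lemma with ``The following is easily checked.'' Your write-up makes explicit the one point that needs saying (contracted edges contribute zero because $h=g\circ\gamma$ is constant on them) and then matches non-contracted edges with edges of $\oGamma$ at $b$; there is nothing to add.
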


\begin{lemma} \label{lem:balancebound} Let $(\fB,\fP)$ be a formal strictly semi-stable model for $(B,P)$ as in \cref{def:iostrop}, with $P$ the union of marked points $p_i, i \in \In$.
and $\ff\colon (\fB,\fP) \to (\bbY,\bbD^{\ess})$ a regular map, with generic fibre $f:(B,P) \to (Y,D^{\ess})$ as in \cref{def:iostrop},
and $\oSigma_{(\fB,\fP)} \subset B$ the compactified skeleton as in \cite[1.6]{Gubler_Skeletons_and_tropicalizations}.
Let $r\colon Y^{\an} \to \oSigma_{(\bbY,\bbD^{\ess})} \subset \oR^{\bbD^{\ess}}$ be the canonical retraction.
The composition $$
B \overset{f} \to Y^{\an} \overset {r} \to \oSigma_{(\bbY,\bbD^{\ess})}
$$
factors through $r\colon B \to \oSigma_{(\fB,\fP)}$,
$r \circ f = \fh \circ r$ for a unique $\oSigma_{(\fB,\fP)}$ piecewise affine $\fh\colon \oSigma_{(\fB,\fP)} \to \oSigma_{(\bbY,\bbD^{\ess})}$.
%
There is a canonical retraction $r\colon B \setminus \{p_i\} \to \Sigma_{(\fB,P)}$, and $s = r \circ \gamma$ (notation from \cref{def:iostrop}, where the left hand side is restricted to $B \setminus \{p_i\}$) for unique $\gamma\colon \Sigma_{\fB} \to \Gamma_{\circ}$ contracting some set of edges.
The following hold:
\begin{enumerate}[wide]
\item Let $v \in \Sigma_{(\fB,P)}$ be a vertex.
Let $\fC_s^v \subset \fC_s$ be the corresponding irreducible component of the central fibre.
$\fC_s^v$ is proper if and only if $v \not \in \partial B$.
Assume this is the case.
Then for any $i \in I_{\bbD_{\ess}}$ (the set of irreducible components of $\bbD^{\ess}$)
the i-th component of $\NB_v^{\fh}(\bbZ^{D^{\ess}})$ (see \cref{def:NB}) is the degree of the pullback of $\cO(\bbD_i)$ to $\fC_s^v$.
\item Let $w \in \Gamma_{\circ}$ be a vertex, not in the image of $\partial B$.
Define $[\fC_s^w] \coloneqq \sum_{\gamma(v) = w} [f_s\colon \fC_s^v \to Y]$.
Then the i-th component of $\NB^{h}_w(\bbZ^{D^{\ess}})$
is equal to the intersection number of (the pullback of) $c_1(\cO(\bbD_i))$
with $[\fC_s^w]$.
\item $[f\colon B \to Y] = \sum_{w \in \Gamma_\circ, w \not \in r(\partial B)} [\fC_s^w].$
\end{enumerate}

\end{lemma}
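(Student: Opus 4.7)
The approach is to leverage the standard dictionary between the piecewise-linear structure of the tropicalisation on the skeleton of a formal model and intersection-theoretic data on its central fibre.

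First, I construct $\fh$. For each essential boundary component $D_i$ with local defining equation $\pi_i$, the $i$-th coordinate of $r \circ f$ on $B$ equals $-\log|f^*\pi_i|$. Since $\ff$ is regular, in each strictly semi-stable formal chart of $\fB$ the function $f^*\pi_i$ is, up to a unit, a monomial in the toric coordinates; hence it factors through the retraction $B \to \oSigma_{(\fB,\fP)}$ to a piecewise-affine function. Collecting over $i$ yields $\fh$, and uniqueness follows from density of the skeleton. The factorisation $s = r \circ \gamma$, with $\gamma$ contracting precisely the edges collapsed by $\fh$, is then obtained by comparing $\fh$ with the contraction used to define $s$ in \cref{def:iostrop}.

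Next I prove (1). The properness of $\fC_s^v$ for interior $v$ is standard for strictly semi-stable formal models of compact analytic curves: Berkovich boundary vertices correspond to components extending the formal disk along its boundary circles and are non-proper, whereas interior vertices give proper (smooth rational) components. Fix an interior $v$ and an edge $e \ni v$; let $q_e \in \fC_s^v$ be the corresponding closed point (an intersection with another central component of $\fC_s$ or with the closure of a marked point in $\fP$). In a strictly semi-stable chart $\widehat\cO_{\fB, q_e} = \kc\llb x, y, \dots \rrb / (xy - \varpi^{n_e})$, the pullback $f^*\pi_i$ has the form $\lambda x^a y^b$ up to a unit, and the standard toric translation of valuations into slopes identifies $a, b$ with the outward derivatives of $\fh_i$ along $e$ at its two endpoints. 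Therefore the slope at $v$ in direction $i$ equals the local intersection multiplicity $\mathrm{mult}_{q_e}(f_s^* D_i \cdot \fC_s^v)$ (interpreting $f_s^*\cO(D_i)$ as a line bundle pulled back by $f_s$ when $\fC_s^v \subset D_i$). Summing over all edges at $v$ yields the $i$-th component of $\NB_v^\fh$ as $\deg(f_s^*\cO(D_i)|_{\fC_s^v})$.

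Part (2) follows by applying \cref{lem:NBcont} to the contraction $\gamma \colon \Sigma_\fB \to \Gamma_\circ$ (the hypothesis $\fh = h \circ \gamma$ holds by construction): $\NB_w^h(\bbZ^{D^{\ess}}) = \sum_{\gamma(v) = w} \NB_v^\fh(\bbZ^{D^{\ess}})$, and substituting (1) together with the definition $[\fC_s^w] = \sum_{\gamma(v) = w} [f_s \colon \fC_s^v \to Y]$ gives the claimed formula $c_1(\cO(D_i)) \cdot [\fC_s^w]$. Part (3) then follows from conservation of number: the class $[f \colon B \to Y]$ in the limiting sense of \cref{sec:dc} agrees with the sum of the proper central components of any strictly semi-stable formal model, namely $\sum_{v \notin \partial B} [f_s \colon \fC_s^v \to Y]$, which regroups under $\gamma$ into $\sum_{w \notin r(\partial B)} [\fC_s^w]$. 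The main technical obstacle is the local monomial computation in (1): one must carefully verify in each strictly semi-stable chart (horizontal edges between two central components, legs at marked points, and the edge case where $\fC_s^v \subset D_i$) that the exponents of $f^*\pi_i$ are simultaneously the slopes of $\fh_i$ and the local intersection multiplicities with $D_i$. This is a short but delicate toric/formal calculation.
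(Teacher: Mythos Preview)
Your proposal is correct and follows essentially the same overall structure as the paper: the construction of $\fh$ via $\val$ commuting with pullback, deducing (2) from (1) by \cref{lem:NBcont}, and (3) directly from the definition of the disk class are handled identically.

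The one genuine difference is in the argument for (1). You carry out a direct local computation in strictly semi-stable charts, identifying the exponents of $f^*\pi_i$ simultaneously with the slopes of $\fh_i$ and the local intersection multiplicities, then summing over edges at $v$. The paper instead uses a reduction trick: it removes from $\fB$ all central-fibre components disjoint from $\fC_s^v$, so that the restricted curve has $\fC_s^v$ as its unique proper central component and maps into $U^{\an}$ (no marked points survive), and then invokes \cite[15.3]{Keel_Yu_The_Frobenius} as a black box. Your approach is more self-contained and makes explicit the content of that cited result in this special case; the paper's approach is shorter because it outsources the local computation. Both are valid, and your explicit acknowledgement of the edge case $\fC_s^v \subset D_i$ (working with the line bundle pullback rather than a divisor) is exactly the care needed to make the direct route rigorous.
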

\begin{proof}
The retractions $Y^{\an} \to \obbR^{D^{\ess}}$ and $B \to \obbR^P$
(whose images are the corresponding dual complexes)
are given by $\val D^{\ess}$ and $\val P$,
notation as in \cite[15.1]{Keel_Yu_The_Frobenius}.
$\val$
commutes with pullback.
This gives the existence of piecewise linear $\fh$, see \cite[\S 5]{Gubler_Skeletons_and_tropicalizations}.
The factoring $s = r \circ \gamma$ is clear from the definitions, as is (3).
(2) follows from (1) using \cref{lem:NBcont}.
For (1): Let $Z \subset \fC_s$ be the union of irreducible components which are disjoint from $\fC_s^v$.
We can replace $\fB$ by the complement of $Z$, and $B$ by its generic fibre (and $f$ by the restriction).
Now we have no marked points, and the curve maps into $U$.
Now (1) follows from \cite[15.3]{Keel_Yu_The_Frobenius} (applied with $\fF$ one of the components of $\bbD^{\ess}$).
Note the second term on the RHS of \cite[15.4]{Keel_Yu_The_Frobenius} is zero, and in the first term we have only a single proper component, $\fC_s^v$.
This completes the proof.
\end{proof}

\begin{lemma} \label{lem:twigclass}
Let $f\colon B \to Y$ be as in \cref{def:iostrop}.
Let $W_1,\dots,W_k \subset B$ be its twig disks.
Then $$
[f\colon B \to Y] - \sum_i [f:W_i \to Y]
$$
is effective.
The class of any twig disk is non-zero.
\end{lemma}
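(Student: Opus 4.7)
The strategy is to decompose both sides of the identity using the formal-model balancing of \cref{lem:balancebound}, and then to extract non-vanishing from the same balancing applied at a leaf of each twig.

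I would first fix a strictly semi-stable formal model $(\fB,\fP)$ for $(B,P)$ admitting a regular extension of $f$, as in \cref{lem:balancebound}, with its contraction map $\gamma\colon \Sigma_{(\fB,P)} \to \Gamma_\circ$ and the pushforward classes $[\fC_s^w] = \sum_{\gamma(v)=w}[f_s\colon \fC_s^v \to Y]$. By \cref{lem:balancebound}(3),
\[
[f\colon B \to Y] = \sum_{w \in \Gamma_\circ,\, w \notin r(\partial B)} [\fC_s^w],
\]
a sum of effective classes. Since all marked points and all of $\partial B$ lie in $\Gamma^s$ by definition, for every twig $T_i$ we have $T_i^\circ \subset \Gamma_\circ$ and $T_i^\circ \cap r(\partial B) = \emptyset$. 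Setting $[f\colon W_i \to Y] \coloneqq \sum_{w \in T_i^\circ} [\fC_s^w]$ (the natural definition, since $W_i$ is the analytic sub-domain retracting onto $T_i^\circ$) and using the disjoint decomposition $\Gamma = \Gamma^s \sqcup \bigsqcup_i T_i^\circ$, subtracting yields
\[
[f\colon B \to Y] - \sum_i [f\colon W_i \to Y] = \sum_{w \in (\Gamma^s \cap \Gamma_\circ) \setminus r(\partial B)} [\fC_s^w],
\]
which is effective, giving the first assertion.

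For the non-vanishing of $[f\colon W_i \to Y]$, the plan is to exhibit one vertex $v \in T_i^\circ$ whose pushforward class $[\fC_s^v]$ is already non-zero. Since $T_i^\circ$ is non-empty by the definition of a twig, the tree $T_i = T_i^\circ \cup \{r_i\}$ has at least two vertices and hence at least two leaves in $T_i$; one is the root $r_i$, so I take $v$ to be any other leaf. The crucial combinatorial observation is that $T_i^\circ$ is a \emph{full} connected component of $\Gamma \setminus \Gamma^s$ and $\Gamma$ is a tree, so every edge of $\Gamma$ incident to $v$ is already in $T_i$; therefore the valence of $v$ in $\Gamma$ coincides with its valence in $T_i$, namely $1$. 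Its sole edge $e$ lies in $\Gamma \setminus \Gamma^s$, and by the construction of $s$ in \cref{def:iostrop} such an edge is not contracted by $c$, so $h$ has non-zero derivative along $e$ and $\NB_v^h \neq 0$. Since $v \notin r(\partial B) \subset \Gamma^s$, \cref{lem:balancebound}(2) identifies some coordinate of $\NB_v^h$ with a positive intersection of $[\fC_s^v]$ with an essential boundary divisor, forcing $[\fC_s^v] \neq 0$ and hence $[f\colon W_i \to Y] \neq 0$.

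The only step requiring genuine care is the ``every edge of $\Gamma$ out of $v$ lies in $T_i$'' assertion, which crucially uses that $T_i^\circ$ is a full component of $\Gamma \setminus \Gamma^s$ rather than merely a subtree; without it, $\NB_v^h$ could in principle be cancelled by contributions from edges running back to $\Gamma^s$ outside $T_i$. With this pinned down, the rest is bookkeeping over the formal model provided by \cref{lem:balancebound}.
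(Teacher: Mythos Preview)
Your argument is correct and matches the paper's: both decompose $[f:B\to Y]$ over the vertices of $\Gamma_\circ$ via \cref{lem:balancebound}(3), identify each twig-disk class with the sub-sum over $T_i^\circ$, and for non-vanishing invoke \cref{lem:balancebound}(2) at a non-root leaf where $\NB_v$ reduces to the single non-zero weight. One minor slip: the root $r_i$ need not itself be a leaf of $T_i$ (it may have valence $\ge 2$ there), but since a tree with at least two vertices has at least two leaves, a non-root leaf exists regardless and your conclusion is unaffected.
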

\begin{proof}
We can choose a model for $f\colon B \to Y$ which restricts to a model for each twig disk.
Then the components of the central fibre contributing to each $[f:W_i \to Y]$ are disjoint, and the difference in the statement is the contribution of the components not in the model for any of the twigs, in particular effective.
This gives the first statement.
For the second, note that for any twig, by definition, $f:T \to \Sk(U)$ is an immersion, in particular the weight at any valence one vertex is non-zero.
Now it follows from (2) of \cref{lem:balancebound} (noting that at a valence one vertex $\NB_w$ is just the weight).
\end{proof}

\begin{proposition} \label{prop:weightbound}
Fix $\beta \in \NE(\bbY_s,\bbZ)$, and consider all ios tropicalisations of all $f$ as in \cref{def:iostrop} (Note: we are not fixing the number of $p_i$) with $\beta - [f\colon B \to Y]$ effective.
The following hold.
\begin{enumerate}[wide]
\item There is a uniform bound (depending only on $\beta$ and $(\bbY,\bbD)$) on the number of twig disks of the ios tropicalisation.
\item There are only finitely many possibilities for the combinatorial type of any twig.
More precisely, there are only finitely many possibilities for the topological type of the domain tree and only finitely many possibilities for weights, i.e.\ the integer derivative on some domain of affineness,
and a uniform bound on the number of (maximal) domains of affineness.
\end{enumerate}
\end{proposition}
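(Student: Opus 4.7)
The plan is to deduce both parts from \cref{lem:twigclass} and \cref{lem:balancebound}(2), by tracking intersection numbers with an ample class on $Y$. For (1), \cref{lem:twigclass} shows that $\sum_i [f\colon W_i \to Y]$ is effective and dominated by $[f\colon B \to Y]$, hence by $\beta$. Fixing any ample divisor $H$ on $Y$, every non-zero effective integer curve class has $H$-degree at least $1$, so the number of twig disks is at most $H\cdot\beta$.

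For (2), I would fix a twig $h\colon T \to \Sk(U)$ with disk class $\beta_T$ satisfying $\beta-\beta_T$ effective. By \cref{lem:balancebound}(2), at each interior vertex $v \in T \setminus \{r\}$ the $i$-th component of $\NB_v^h$ equals $D_i \cdot [\fC_s^v]$, and the effective classes $[\fC_s^v]$ sum to $\beta_T$. I would then bound the topological type of $T$ by treating three kinds of essential vertices separately. (a) At a non-root valence-one vertex $\ell$ of $T$, the outward weight $\NB_\ell^h$ is non-zero: by construction of $s$, edges of $\Gamma$ off $\Gamma^s$ are precisely those on which $c$ is non-constant, so $h|_T$ is an immersion; hence $[\fC_s^\ell]$ is non-zero effective, and the count of such leaves is at most $H\cdot\beta$. (b) At a valence-two vertex $v$ with outward weights $w_1, w_2$, the sum $\NB_v^h = w_1 + w_2$ vanishes iff $w_1 = -w_2$ iff $h$ extends affinely through $v$; thus every valence-two bending vertex has $\NB_v^h \neq 0$ and $[\fC_s^v]$ non-zero effective, again bounded in number by $H\cdot\beta$. (c) The elementary tree identity $n_1 = 2 + \sum_{k\geq 3}(k-2) n_k$ bounds the branching vertices $\sum_{k\geq 3} n_k$ by the leaf count $n_1$. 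Combining (a)--(c), the topological type of $T$ and the number of maximal domains of affineness are uniformly bounded.

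For the bound on integer weights, at each edge $e$ with endpoints $v_1, v_2$ the outward weight $w_{(v_1,e)}$ can be written as a signed sum of $\NB_w^h$ over vertices in the subtree of $T$ on the $v_2$-side of $e$, by iterating the balancing relation $\NB_w^h = \sum_{e' \ni w} w_{(w,e')}$. Since the effective classes $[\fC_s^v]$ range over the finite set $\{\alpha \in \NE(Y,\bbZ) : \beta - \alpha \text{ effective}\}$, each $|\NB_v^h|$ is uniformly bounded; combined with the bound on the number of $v$ with $\NB_v^h \neq 0$, every weight is uniformly bounded in magnitude, and integer lattice vectors bounded in norm form a finite set. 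The main technical point is observation (b): valence-two bending vertices necessarily have $\NB_v^h \neq 0$, which is what prevents $T$ from accumulating arbitrarily many ``invisible'' bends not tracked by $\beta$ and lets the entire combinatorics of the twig be controlled by $\beta$ alone.
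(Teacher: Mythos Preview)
Your proof is correct and follows essentially the same strategy as the paper's: both use \cref{lem:twigclass} for part (1), and for part (2) both hinge on \cref{lem:balancebound}(2) to translate the non-vanishing of $\NB_v$ into non-vanishing of the effective class $[\fC_s^v]$, then exploit that only finitely many effective classes lie below $\beta$. Your observation (b), that valence-two bending vertices necessarily have $\NB_v\neq 0$ and hence contribute a positive amount of $H$-degree, is exactly what makes the bound on domains of affineness work; the paper's proof leaves this step implicit. Your direct formula expressing each edge weight as a sum of $\NB_w$ over a subtree is the content of the paper's \cref{lem:rootlem}, just written out explicitly rather than invoked by name. One small point: when summing $\NB_w$ over the subtree on one side of an edge, you should take the side not containing the root, so that every vertex in the sum satisfies the hypothesis of \cref{lem:balancebound}(2); this is harmless since the two sides give opposite weights.
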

\begin{proof} (1) holds by \cref{lem:twigclass} (since the Mori cone is strictly convex).
Similarly, there are only finitely many classes $\gamma \in \NE(Y,\bbZ)$ such that $\beta - \gamma$ is effective.
So now for (2) we can assume all the $f$ in question are themselves twig disks.
Again by the strict convexity of the Mori cone, there are only finitely many possibilities for $\sum_{w \in Z} [\fC_s^w]$, (notation as in (2) of \cref{lem:balancebound}) over subsets $Z$ of $\Gamma$ (note $\Gamma = \Gamma_{\circ}$ as we are considering twigs) not containing the root.
Note that the weight at any valence one vertex is non-zero, by the definition of ios tropicalisation.
It follows from (2) of \cref{lem:balancebound} that $[\fC_s^w]$ is non-zero at any such vertex,
other than the root.
Thus we have in particular a uniform bound on the number of valence one vertices.
It follows there are only finitely many possibilities for the topological tree.

Now by (2) of \cref{lem:balancebound} there are only finitely many possibilities for the analogous $\sum_{w \in Z} \NB_w$.
Now by \cref{lem:rootlem} the same holds for all subsets (the root now allowed).
In particular there are only finitely many possibilities for weights at valence one vertices (as this is the same as $\NB$ for a valence one vertex).
Now it follows easily (using that we have only finitely many topological types of trees to consider) that there are only finitely many possibilities for weights.
This completes the proof of (2).
\end{proof}

\begin{lemma} \label{lem:rootlem} We consider a piecewise affine $f\colon \Gamma \to \bbR^n$ (affine on each edge).
If we are given the data of $NB_v$ (see the proof of \cref{prop:weightbound})
for all vertices except possibly one valence one vertex, then this uniquely determines all weights.
\end{lemma}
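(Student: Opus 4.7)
The plan is to proceed by induction on the number of edges of $\Gamma$, repeatedly pruning leaves other than $v_0$. The observation driving the induction is that at any valence-one vertex $v$ with unique incident edge $e$, the sum defining $\NB_v$ collapses to the single outgoing weight, so $\NB_v = w_{(v,e)}$. Because the map is affine on each edge, $w_{(v',e)} = -w_{(v,e)}$ at the other endpoint $v'$, so knowing $\NB_v$ at one end of $e$ completely determines the weight on $e$ from both sides.

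In the base case, where $\Gamma$ has at most one edge, there is nothing to show if $\Gamma$ has no edges, and if $\Gamma$ has one edge this edge joins $v_0$ to a second valence-one vertex $v_1$ whose known balance $\NB_{v_1}$ is the desired weight. For the inductive step, assume $\Gamma$ has at least two edges; since any finite tree with at least two edges has at least two leaves, I pick a leaf $v \neq v_0$ with incident edge $e$ and other endpoint $v'$. The known value of $\NB_v$ yields $w_{(v,e)} = \NB_v$. Now delete $v$ and $e$ to obtain a smaller tree $\Gamma'$; the bookkeeping that feeds the induction is that $\NB_u$ is unchanged for every vertex $u \neq v'$, while
\[
\NB_{v'}^{\Gamma'} = \NB_{v'}^{\Gamma} - w_{(v',e)} = \NB_{v'}^{\Gamma} + \NB_v,
\]
which is known as soon as $\NB_{v'}^{\Gamma}$ was known, i.e.\ as soon as $v' \neq v_0$. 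The only way this could fail is $v' = v_0$, but then $\Gamma$ would consist of the single edge $v v_0$, contradicting the assumption of at least two edges. Hence the inductive hypothesis applies to $\Gamma'$ and supplies the remaining weights.

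The only conceivable obstacle is the possibility that the pruning procedure strands us in a configuration where $\NB_{v_0}$ is actually needed to continue; the observation in the last sentence of the previous paragraph rules this out. No convexity, integrality, or balancing hypothesis enters — only the additivity of $\NB_v$ as a sum of outgoing weights and the combinatorial fact that a tree with $\geq 2$ edges admits a leaf distinct from any prescribed leaf.
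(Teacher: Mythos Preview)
Your proof is correct and follows essentially the same approach as the paper: both induct on the number of edges by removing a subtree not containing the special vertex $v_0$ and recursing. The paper phrases this as cutting an arbitrary edge not incident to $v_0$ and applying induction to the piece on the far side (with the cut point as the new special leaf); your leaf-pruning is the special case where the cut edge is chosen incident to a leaf, which makes the bookkeeping at $v'$ explicit and avoids having to say anything about the near piece. The arguments are interchangeable.
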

\begin{proof} We induct on the number of edges.
We cut in the middle of any edge other than one incident to the special valence one vertex, and take the piece not containing this vertex (with the cut point as the new special valence one) and apply induction.
\end{proof}

\begin{definition} \label{def:Balanced}
Notation as in \cref{def:iostrop}.
We say $h\colon \Gamma^s \to \obbR^{\bbD^\ess}$ is {\it balanced} at the point $x \in \Gamma^s \setminus \In$, if $\NB_x(\bbR^{D^{\ess}}) = 0$, where \[
\NB_x(\bbR^{D^{\ess}}) \coloneqq \sum_{x \in e} w_{(x,e)} \in \bbR^{D^{\ess}} .
\]

Note that $h$ factors through $|\oSigma_{(\bbY,\bbD^{\ess})} = \oSk(U) \subset \obbR^{D^{\ess}}$.
If $h(x) \in \Sk^{\sm}(U)$ we define \[
\NB_x \coloneqq \sum_{x \in e} w_{(x,e)} \in T_{h(x)}(\Sk(U)) .
\]
We say $h\colon \Gamma^s \to \oSk(U)$ is balanced at $x$ if $\NB_x = 0$.

If $h(x)$ is in the interior of a maximal cone of $\Sigma$, then $\NB_x$ maps to $\NB_x(\bbR^{D^{\ess}})$, so the two notions are the same.
But in general, as $\Sk(U) \to \bbR^{D^{\ess}}$ is not affine, this fails (and only balanced in $\Sk(U)$ is for us a useful notion).

\end{definition}

We have the following balancing result:

\begin{lemma} \label{lem:tbal} If $h\colon \Gamma^s_{\circ} \to \Sk$ is unbalanced at $x$, and
  $h(x) \in |\Sigma_{(\bbY,\bbD^{\ess})}| = \Sk(U)$ is in the complement of the union of codim $2$ cones,
  then the following hold:
  \begin{enumerate}
  \item $h(x) \not \in |\partial \Sigma|$.
    \item $h(x) \in \Sk^{\sm}(U)$ (the non-singular locus
      for the $(\bbY,\bbD^{\ess})$ linear structure, see \cref{sec:integeraffine}).
    \item $x$ is the root of a twig, and $-\NB_x(\Sk(U))$ is the sum of the outgoing weights of all twigs at $x$.
      \end{enumerate}
\end{lemma}
\begin{remark}
  Note by \cref{prop:oc2} that the assumption implies $h(x) \in \Sk^{\sm}(U)$, the non-singular
  locus for the $(\bbY,\bbD^{\ess})$ linear structure, see \cref{sec:integeraffine}. 
  \end{remark}
 \begin{proof}

  (1) holds by \cref{prop:oc2}, and this implies (2). 

   (3) follows from \cref{lem:balancebound} and the definition of the integer linear structure on $\Sk(U)$,
   \cref{sec:integeraffine}: It is enough to prove that if we instead consider the full ios tropicalisation,
   then $\NB_x(\Sk(U)) = 0$.
   By \cref{prop:goodtopology}, 
   $h(x) \in \Sigma \coloneqq \Sigma_{(\bbY,\bbD^{\ess})}$ lies in at most two maximal cones.
Each weight in the sum $\NB_x(\bbZ^{D^{\ess}})$
lies in one of the cones, write $w \coloneqq \NB_x(\bbZ^{D^{\ess}}) = a + b$
where each term on the RHS is a sum of weights in a single cone.
The statement is equivalent to showing that $a, b \in \bbR^{D^{\ess}}$
have opposite pairing with each piecewise linear function which restricts to linear on the union of the two cones (in the $\Sk(U)$ linear structure).
A piecewise linear function is the same as a Weil divisor supported on $D^{\ess}$ and it restricts to linear on the union of the cones if and only if its intersection is zero with the $1$-stratum, $X$, corresponding to the codimension one wall which is the intersection of the two maximal cones.
So it's enough to prove $w$ pairs zero with such a Weil divisor.
By \cref{lem:balancebound} this pairing is the degree of the pullback of the Weil divisor to the irreducible component $\fC_x$
(notation as in the proof of \cref{lem:balancebound}).
By assumption $f(\fC_x) \subset X$.
If it is a point, then this degree is zero for any Weil divisor.
Otherwise $f(\fC_x) = X$, and so the pairing is zero.
This completes the proof.
\end{proof}

\begin{definition} We say a pair $(v,e)$ of a vertex and incident edge on a twig is {\it outward} if, when we cut the domain at $v$, $e$ lies on the connected component with the root (at the root, all edges are inward).
\end{definition}

\begin{proposition} \label{prop:tree_bound} We fix $\beta$
and consider all ios tropicalisations of $f\colon B \to Y$ as in \cref{def:iostrop} with $\beta - [f\colon B \to Y]$ effective (as throughout the section, we are not fixing the number of points $p_i$, or any contact data with $D$).

There is a finite collection of pairs $(\fD,v)$, with $\fD$ a closed rational polyhedral cone,
of dimension $\dim Y -1$, contained in a cone of $\Sigma_{(\bbY,\bbD)}$, and $v \in T_{\fD,\bbZ}(\Sk(U))$ such that the following holds:

\begin{enumerate}[wide]
\item For each realizable twig $h\colon T \to \Sk(U)$ and each pair $(v,e)$ of a vertex and incident edge in $T$, there is a pair $(\fD,w)$ with $h(v) \in \fD$ and $w = \pm w_{(e,v)}$, where the sign is chosen to give the derivative in the outward direction.
\item For each $h\colon \Gamma^s \to \oSk$ and point $x \in \Gamma^s \setminus \In$, either $h$ is balanced at $x$, or there is a pair $(\fD,v)$ as above, with $h(x) \in \fD$, and either $h(x) \in \Sing(\Sk(U))$ or $\NB_x = -v$.
\end{enumerate}
\end{proposition}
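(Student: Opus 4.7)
The plan is to combine the finiteness result \cref{prop:weightbound} with a dimension count, organized by combinatorial type of twig, to extract the finite collection of walls $(\fD, v)$.

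By \cref{prop:weightbound}, for the fixed class $\beta$, the set $\cT$ of combinatorial types (topological tree, weights on each maximal domain of affineness, bounded number of vertices) of realizable twigs appearing in any ios tropicalisation of $f\colon B \to Y$ with $\beta - [f\colon B\to Y]$ effective is finite; moreover the number of twigs per ios tropicalisation is uniformly bounded. This reduces the problem to constructing, for each $\tau \in \cT$, finitely many walls that capture all realizable twigs of type $\tau$, and then taking the union.

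For each $\tau \in \cT$, a realizable twig is parametrized by the position of its root $h(r) \in \Sk(U)$ together with the edge lengths $\{\ell_e\} \in \bbR_{\geq 0}^{E(T_\tau)}$, subject to the polyhedral realizability constraints that each edge image lies in the prescribed cone of $\Sigma_{(Y,D)}$ with the prescribed integer-affine direction. For each vertex-edge pair $(v, e)$ in $T_\tau$, the image $h(v)$ varies over a polyhedral subset $R_{\tau, v, e} \subset \Sk(U)$, while the outward derivative $\pm w_{(v, e)}$ is a fixed integer vector. The key claim is that $R_{\tau, v, e}$ has dimension at most $\dim Y - 1$. This uses \cref{lem:balancebound}: at any interior vertex whose image lies in the interior of a maximal cone of $\Sigma_{(Y, D)}$, the associated central-fiber component contracts into the zero stratum and hence pairs trivially with every boundary divisor of $D^{\ess}$, forcing $\NB_v = 0$. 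Combined with the fact that every leaf of a realizable twig has nonzero weight, hence lies on a codimension $\geq 1$ stratum of $\Sigma_{(Y, D)}$, and propagating through the tree via the log Calabi--Yau Maslov-index-zero property, this forces $R_{\tau, v, e}$ into a codimension $\geq 1$ polyhedral locus. We decompose it into a finite union of closed rational polyhedral cones $\fD$ of dimension exactly $\dim Y - 1$, each inside a cone of $\Sigma_{(Y, D)}$ and tangent to $\pm w_{(v, e)}$; taking the union over all $\tau \in \cT$ and all $(v, e)$ yields the collection for part (1).

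For part (2) we apply \cref{lem:tbal}: if $h$ is unbalanced at $x \in \Gamma^s \setminus I$ and $h(x) \in \Sk^{\sm}(U)$, then $x$ is the root of one or more twigs, and $\NB_x$ equals the sum of their outgoing root weights. Since the number of twigs at $x$ is uniformly bounded and each root weight ranges over a finite set by \cref{prop:weightbound}, the value $\NB_x$ takes only finitely many values $v$. For each such $(h(x), v)$, part (1) applied to each summand root weight furnishes a wall $\fD$ through $h(x)$ tangent to every summand, hence with $v \in T_{\fD, \bbZ}$. Enlarging the collection of part (1) by the resulting combined pairs $(\fD, v)$ proves (2). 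The main obstacle is the codimension bound on $R_{\tau, v, e}$: the delicate point is to propagate the codimension-1 constraint from the leaves, which are pinned to codim $\geq 1$ strata by their non-zero weights, inward through the tree using balancing inside each maximal cone, and to verify that codim $\geq 1$ survives the polyhedral freedom in edge lengths and root positions.
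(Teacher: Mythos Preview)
Your overall strategy---reduce to finitely many combinatorial twig types via \cref{prop:weightbound}, then bound the dimension of the locus of vertex images---is reasonable, but the heart of the argument, the codimension bound on $R_{\tau,v,e}$, is not established and your sketch contains a genuine error. You write that a leaf ``has nonzero weight, hence lies on a codimension $\geq 1$ stratum.'' In fact the leaf must lie in $\Sing(\Sk(U)) \subset |\Sigma^{n-2}|$, i.e.\ in codimension $\geq 2$: a valence-one vertex with nonzero weight is unbalanced, and by \cref{lem:tbal} any vertex in $\Sk^{\sm}(U)$ is balanced in the full ios tropicalisation. The stronger bound is essential---with only a codimension-one constraint at the leaf, a single-edge twig would sweep out a codimension-zero locus at its root. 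Moreover your ``propagation via the Maslov-index-zero property'' is not an argument; what is actually needed at each internal vertex $v$ in $\Sk^{\sm}$ is that balancing forces the outward direction $w_0$ to be the sum of the children's directions $w_i$, hence tangent to the hyperplane when the children's walls coincide, so that sweeping by $w_0$ does not raise the dimension above $n-1$. You acknowledge this is the ``main obstacle,'' but you have not resolved it.

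The paper bypasses the dimension count entirely with an explicit iterative construction of the walls. One starts with the pairs $(\fD,v)$ where $\fD$ is the span, inside a maximal cone, of a codimension-two cone of $\Sigma$ and a vector $v$ from the finite set $V$ of possible (sums of) twig weights. One then iterates: intersect two existing walls lying in the same maximal cone (obtaining a codimension-two locus), span with any $v \in V$, and record the resulting wall. After a number of steps exceeding the uniform bound on domains of affineness from \cref{prop:weightbound}, the process terminates. This is precisely the leaf-to-root propagation you gesture at, but made finite and explicit: leaves lie in codim-two cones, moving along an edge spans to codim-one, meeting at an internal vertex intersects back to codim-two, and so on. Part (2) then follows because the set $V$ was chosen from the outset to contain all possible sums of root weights at a single spine vertex, so the pair $(\fD,\NB_x)$ is already in the collection whenever $h(x)$ lies in a wall $\fD$.
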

\begin{proof} First we consider producing pairs to satisfy (1).
Valence one non-root vertices of twigs necessarily map to $\Sing(\Sk(U))$, by \cref{lem:tbal}.
By \cref{prop:weightbound} there are only finitely many possibilities for the weight on any edge of any twig, and, as we will use below,
only finitely many possibilities for sums of weights of twigs (of a single ios tropicalisation) with the same root.
Let $V$ be this finite set of possible sums of weights.
$\Sing(\Sk(U))$ is contained in $|\Sigma^{\dim Y -2}|$ (the union of codimension two cones),
see \cref{sec:integeraffine}.
So we begin by taking the finitely many $(\fD,v)$ consisting of the span,
inside a maximal cone of $\Sigma$, of a codim two cone, and one of the vectors $v \in V$ (lying in this maximal cone).
Now from these we generate further pairs as in \cite[4.16]{Keel_Yu_The_Frobenius}, by taking two pairs $(\fD_1,v_1),(\fD_2,v_2)$, with the $\fD_i$
contained in the same maximal cone, and such that $\fD_1 \cap \fD_2$ is codimension two, and then adding $(\fD,v)$ for any $v \in V$, with $\fD$ the intersection of the span of $v$ and $\fD_1 \cap \fD_2$ with a maximal cone of $\Sigma$
(we do not need to take all of $V$, we could take $v_1 + v_2$, but we are making no effort here to be efficient).
Now continue.
We stop after $k$ steps, for some $k$ larger than the maximal number of domains of affineness on any twig.
Now it is easy to see that (1-2) are satisfied, using \cref{lem:tbal}.
This completes the proof.
\end{proof}

\begin{definition} \label{def:walldef}
Given $\beta \in \NE(\bbY_s,\bbZ)$ by a set of $\beta$-walls we mean a finite set of pairs $(\fD,v)$ satisfying \cref{prop:tree_bound}.
For $k > 0$, and a fixed ample line bundle on $Y$, by a set of $(k,A)$-walls we mean a union of $\beta$ walls over all $\beta \in \NE(Y,\bbZ)$ with $\beta \cdot A < k$ (note the set of such $\beta$ is finite).
We will often leave $A$ off of the notation.
\end{definition}

By \cref{prop:tree_bound} the realizable spine $h\colon \Gamma^s \to \oSk$
satisfies a balancing condition:
\begin{definition-lemma} \label{def:Balcon}

For each vertex $v \in h^{-1}(\Sk^{\sm}(U))$ the {\it bend}
$\NB_v \coloneqq b_v \coloneqq \sum_{e \ni v} d_{v \in e}(h)
\in \bbZ^{\bbD^{\ess}}$ is either zero, or there is $(\fD,w) \in \Wall_\beta$ with $h(v) \in \fD$, and $\NB_v = w$.
\end{definition-lemma}

\subsection{Spines}

With this as motivation, we give the abstract definition of spines in $\Sk(U)$.
We do this as in \cite[\S 4]{Keel_Yu_The_Frobenius}.
To this point the discussion has applied to any $f\colon B \to Y$ (satisfying the conditions in \cref{def:iostrop}.
Now we specialize to the case of our main moduli space. 
See \cref{sec:bms}.
In particular we have the index sets for marked points $J = F \sqcup \Bnd \sqcup I$ (where here $\Bnd$ are the marked point
that map to the boundary $D \subset Y$, so play the role of $\In$ above). 
We have the space of metric trees $\NT_J^F$ defined in \cite[4.5]{Keel_Yu_The_Frobenius}.
We also assume we have a function $b\colon \Bnd \to \Sk(U,\bbZ) \setminus \{0\}$ (for its meaning
see \cref{const:data}).


\begin{definition} \label{def:spinedef} By an $\NT_J^F$ tree in $\oSk(U)$ we mean $\Gamma \in \NT_J^F$ and continuous $h\colon \Gamma \to \oSk(Y)$, such that $h^{-1}(\partial \oSk(U))$ is the set of $p_b, b \in \Bnd$.
We let $\NT_{J}^F(U)$ be the set of such trees.
We note we have a natural inclusion $\NT_J^F(U) \subset \Cont(C/\NT_J^F,\oSk)$, and so it inherits a natural compact-open topology, Hausdorff by \cite[4.9]{Keel_Yu_The_Frobenius}.

Let $\Wall_{\beta}$ be a set of $\beta$-walls.
We say $h$ is a $\Wall_{\beta}$ {\it spine} if
\begin{enumerate}[wide]
\item $h\colon \Gamma \to \oSk$ is $\Sigma_{(\bbY,\bbD)}$
piecewise affine.
\item $h$ satisfies the above balancing condition, \cref{def:Balcon},
with respect to $\Wall_\beta$,
for each $x \in \Gamma$ other than one of the marked points $p_b$, $b \in \Bnd$.
\item The weight (i.e.\ derivative) of $h$ for the (outward pointing) edge incident to $p_b$ is $b(b) \in Sk(U)(\bbZ)$.
\end{enumerate}

\begin{remark} Note that condition (3) makes sense independent of any linear structure on $\Sk(U)$, it requires only the canonical piecewise integer linear structure, because e.g.\ one can check it in the cone of $\Sigma$ that contains $h(E \setminus p_b)$, for $p_b \in E$
the incident edge.
\end{remark}

Let $\Sp_J^F(U,\beta) \subset \NT_J^F(U)$ be the subset of spines, with its subspace topology.
\end{definition}

\begin{remark} \label{rem:internal_leg_contracted}
It follows from the bending condition that for any $\Wall_{\beta}$ spine,
and each $i\in I$, if $h(v_i)\notin\Wall_\beta$, then $h$ is constant on the leg incident to $v_i$.
\end{remark}

\begin{definition} \label{def:transverse}
A spine $[\Gamma,(v_j)_{j\in J},h] \in \Sp(U,\beta)$
is called $\Wall_{\beta}$ \emph{transverse}, for a set of $\beta$-walls (see \cref{def:walldef}),
if it satisfies the following conditions:
\begin{enumerate}[wide]
\item \label{def:transverse:image} $h(\Gamma)$ is transverse to $\Wall_\beta$.
More precisely $h(\Gamma) \cap \Wall_{\beta}$ is finite and contains no points in $(d-2)$-dimensional strata of $\Wall_\beta$.
\item \label{def:transverse:bending} Every vertex of $\Gamma$ whose image lies in $\Wall_{\beta}$ is 2-valent (note in particular,
no marked point maps into $\Wall_{\beta}$ as these are valence one vertices).
\item \label{def:transverse:skeleton} For each $j \in F$,
$h(v_j) \not \in |\Sigma^{d-1}|$.
\item \label{def:transverse:singlocus}
$h(\Gamma) \cap \Sing(\Sk(U)) = \emptyset$
\end{enumerate}
Let $\SP^\tr(\Sk(U),\beta))\subset\SP(\Sk(U),\beta)$ denote the subset consisting of transverse spines.

\end{definition}

\begin{remark} \label{rem:spine_factorization}
In the context of \cref{def:transverse}, let $\Gamma^\Bnd$ be the convex hull of $(v_j)_{j\in \Bnd}$ in $\Gamma$ and $r\colon\Gamma\to\Gamma^\Bnd$ the retraction map.
Then by \cref{rem:internal_leg_contracted}, Condition (\ref{def:transverse:bending}) implies that $h\colon\Gamma\to\oSk$ factors through $r$.
\end{remark}

\cref{def:spinedef} and \cref{prop:tree_bound} immediately imply:

\begin{proposition} \label{prop:tropicalization_of_stable_map}
Let $[C,(p_j)_{j\in J},f\colon C\to Y^\an]$ be a stable map in $M^\sm(U^\an,\bP,\beta)$ as \cref{def:cMsm}, and its restriction $f\colon B \to Y^{\an}$ (which by construction has marked points for each $j \in J$).
Let $\Gamma \subset B$ be the convex hull of the $p_j, j \in J$, and consider the composition $$
h\colon \Gamma \subset B \overset{f} \to Y^{\an} \overset{r} \to \oSk(U).
$$

Then $h \in \Sp(U,\beta)$, for $\beta = [f\colon B \to Y^{\an}]$.
\end{proposition}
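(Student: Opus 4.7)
The plan is to verify, directly from the definitions, the three conditions in Definition \ref{def:spinedef} characterising a $\Wall_\beta$-spine. Since $\beta = [f\colon B \to Y^{\an}]$ exactly, the difference $\beta - [f\colon B\to Y]$ is trivially effective, so Proposition \ref{prop:tree_bound} applies and furnishes an admissible set $\Wall_\beta$ of $\beta$-walls as in Definition \ref{def:walldef}.

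First I would identify $h$ with the spine of the ios tropicalisation of $f\colon B\to Y^{\an}$ from Definition \ref{def:iostrop}, via the correspondence between marked points and boundary structure built into $\cM^\sm(U^\an,\bP,\beta)$ (\cref{def:MUk}): the marked points $p_b$, $b\in B$, play the role of the Berkovich boundary $\partial B$ of the body disk, while the $p_i$, $i \in I$, record contact with $D$ and the $p_f$, $f \in F$, are free interior markings. Under this correspondence $\Gamma$ is the spine $\Gamma^s$ of Definition \ref{def:iostrop}, and $h$ is its restriction. In particular $h^{-1}(\partial \oSk(U))$ is exactly $\{p_b\}_{b\in B}$, so $h \in \NT_J^F(U)$.

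For condition (1), $\Sigma_{(Y,D^{\ess})}$-piecewise affineness, I would pick a strictly semi-stable formal model $(\fB,\fP)$ of $(B,P)$ whose skeleton contains $\Gamma$ (which exists after a standard modification of an arbitrary model). Lemma \ref{lem:balancebound} then provides a $\Sigma$-piecewise affine map $\fh\colon\oSigma_{(\fB,\fP)}\to\oSigma_{(Y,D^{\ess})}$ factoring $r\circ f$, and restricting $\fh$ to $\Gamma$ yields condition (1). For condition (2), I would apply Proposition \ref{prop:tree_bound}(2) pointwise to every $x\in\Gamma\setminus\{p_b\}_{b\in B}$: the three alternatives produced there---balanced, lands in $\Sing(\Sk(U))$, or lies on a wall $\fD\in\Wall_\beta$ with $\NB_x$ equal to the prescribed vector $v$---are precisely the alternatives required by Definition \ref{def:Balcon}. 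For condition (3), the outward weight of $h$ on the edge incident to $p_b$ is the tangential derivative of $r\circ f$ along that edge, which by the contact-order data encoded in $\bP$ (cf.\ Construction \ref{const:structure_constants}\ref{const:structure_constants:points}) is exactly $b(b)\in\Sk(U,\bbZ)$.

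Since each condition follows mechanically from a cited result, there is no deep obstacle. The only care required is bookkeeping: matching the conventions of the ios tropicalisation of Section \ref{sec:ios} (where the Berkovich boundary $\partial B$ is intrinsic) with the moduli-theoretic data $(C,(p_j)_{j\in J},f)$ (where the boundary markings indexed by $B$ play that role), and checking that the $\Sigma$-piecewise affine map produced in Step (1) has integer slopes compatible with the $(Y,D^{\ess})$-linear structure of Section \ref{sec:integeraffine}, which is immediate from the second assertion of Lemma \ref{lem:balancebound}.
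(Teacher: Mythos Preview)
Your approach is correct and matches the paper's: the proposition is stated there as an immediate consequence of \cref{def:spinedef} and \cref{prop:tree_bound}, and your outline of verifying conditions (1)--(3) via \cref{lem:balancebound} and \cref{prop:tree_bound}(2) is exactly right.

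However, the bookkeeping sentence you flag as the only delicate point is in fact garbled. You write that ``the marked points $p_b$, $b\in B$, play the role of the Berkovich boundary $\partial B$ of the body disk, while the $p_i$, $i \in I$, record contact with $D$ and the $p_f$, $f \in F$, are free interior markings.'' This swaps the roles. In \cref{const:data}, the $B$-indexed points are those mapping to the boundary divisor $D$ (hence to $\partial\oSk(U)$ under $r$); the $I$-indexed points are the free interior markings landing in $U$; and the $F$-indexed points $s_f$ sit in the circles $S_f$, which are the Berkovich boundary of the body disk. Under the dictionary with \cref{def:iostrop}, the set called $I$ there (points at infinity, mapping to $D$) corresponds to $\{p_b\}_{b\in B}$ here, while $\partial B$ there corresponds to the $F$-ends. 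Your own next sentence, asserting $h^{-1}(\partial\oSk(U))=\{p_b\}_{b\in B}$, is already inconsistent with your stated identification (if $p_b$ were the Berkovich boundary, it would map into $H_f\subset\Sk(U)$, not to $\partial\oSk(U)$). Fortunately the actual verifications you carry out for (1)--(3) use the correct assignment, so the error is confined to that one explanatory sentence and does not affect the argument.
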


We will make use of the following refinement of \cite[7.2]{Keel_Yu_The_Frobenius}.
We note in the statement we are not making any log Calabi-Yau assumption.

\begin{proposition} \label{prop:dccoeff} Let $(\bbY,\bbD)$ and $\bbY \to \obbY$ be as in \cref{ass:basicsetup} (but
  we do not assume \cref{eq:kpd}). 
Let $C$ be a compact quasi-smooth strictly \kanal curve and $f\colon C\to Y$ a map with image not contained in $D$.
Let $\fC$ be a strictly semistable model of $C$ such that the map $f\colon C\to Y$ extends to $\ff\colon\fC\to\bbY$
and $(\fC,\ff\inv(\hD_{\kc}))$ is a formal strictly semistable pair.
Let $\Gamma\coloneqq \oSigma(\fC,\ff\inv(\bbD))$
be the associated extended skeleton and $h\colon\Gamma\to \Sigma_{(\bbY,\bbD)}$ the piecewise linear map induced by $\ff$ (see \cite{Gubler_Skeletons_and_tropicalizations}).
Let $E \subset \fC_s$ be a component of the central fibre, and $v \in \Gamma$ the associated vertex.
Assume $h(v)$ lies in the interior of a $\dim Y -1$ dimensional cone $\rho$.
Assume there is an edge of $\Gamma$ incident to $v$ whose image meets the interior of a $\dim Y$ dimensional cone $\tau \supset \rho$.
Then $\ff(E) = Z_{\rho}$, the one stratum of $\bbD$ corresponding to $\rho$.
Let $N$ be the primitive integer dual element to $\tau$ vanishing on $\rho$.

The coefficient of $Z_{\rho}$ in the disk cycle $f_*[f\colon C \to Y]$ (i.e.\ the multiplicity of $f_*(E)$) is
\begin{equation} \label{eq:curve_class}
\sum_{v\in e, h(v) \subset \tau} N(d_{v e} h).
\end{equation}
\end{proposition}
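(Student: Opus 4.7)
The plan is to compute the multiplicity of $Z_\rho$ in $f_*(E)$ as $\deg(\ff|_E : E \to Z_\rho)$, and to obtain this degree by combining \cref{lem:balancebound}(1) (applied to the line bundle $\cO(D_N)$ for the boundary divisor $D_N$ corresponding to the ray of $\tau$ not in $\rho$) with the elementary transverse intersection $D_N \cdot Z_\rho = 1$.

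First I would verify the asserted identity $\ff(E) = Z_\rho$. The condition $h(v)\in \rho^\circ$ places $\ff(E)$ inside $\overline{Z_\rho}$ but outside the two adjacent zero-strata, so either $\ff|_E$ is non-constant (hence surjects onto the irreducible curve $Z_\rho$) or $\ff(E)$ is a point $p \in Z_\rho^\circ$. An edge $e\ni v$ with $h(e)\subset\tau^\circ$ corresponds either to a node of $\fC_s$ on $E$, or to a marked point of $E$, whose other side has image in $Z_\tau$; the gluing forces $p\in Z_\tau$, contradicting $p\in Z_\rho^\circ$ since $Z_\tau$ is a proper face of $Z_\rho$. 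Hence $\ff(E)=Z_\rho$ and the multiplicity of $Z_\rho$ in $f_*(E)$ equals the degree $\deg(\ff|_E)$.

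Next I would apply \cref{lem:balancebound}(1) to compute $\deg(\ff|_E^*\cO(D_N))$ as the $N$-th component of $\NB_v^\fh \in \bbZ^{D^\ess}$. Recalling that the embedding $\Sk(U)\subset\bbR^{D^\ess}$ is piecewise linear using, on each maximal cone $\sigma$, the coordinates dual to the rays of $\sigma$, I would analyze the $N$-component of $w_{(v,e)}$ cone-by-cone: edges with $h(e)\subset\rho$ contribute zero because $N$ vanishes on $\rho$; edges with $h(e)$ entering the other maximal cone $\tau'\supset\rho$ also contribute zero, since $\tau'$ does not contain $N$ as a ray (otherwise $\tau=\tau'$); and edges with $h(e)\subset\tau$ contribute exactly $N(d_{ve}h)$. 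Therefore
\[
\deg\bigl(\ff|_E^*\cO(D_N)\bigr) \;=\; \sum_{v\in e,\; h(e)\subset\tau} N(d_{ve}h).
\]

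Finally I would check that $D_N\cdot Z_\rho = 1$. By the snc hypothesis, $D_N$ meets $Z_\rho$ transversally at the zero-strata $Z_{\tau''}$ with $\tau''$ a maximal cone containing $\rho$ and $N$; the only such maximal cone is $\tau$ (again because two distinct maximal cones sharing $\rho$ and an extra ray would coincide). Thus $D_N\cap Z_\rho=\{Z_\tau\}$ transversally, so $D_N\cdot Z_\rho=1$. The projection formula for the finite morphism $\ff|_E: E\to Z_\rho$ then gives $\deg(\ff|_E^*\cO(D_N))=\deg(\ff|_E)\cdot(D_N\cdot Z_\rho)=\deg(\ff|_E)$, yielding the desired formula \eqref{eq:curve_class}.

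The most delicate step is the bookkeeping in the second paragraph, matching the edge-weight $w_{(v,e)}$ (a lattice vector in the cone containing $h(e)$) with the appropriate coordinate in $\bbZ^{D^\ess}$ under the piecewise linear embedding of $\Sk(U)$, to see rigorously that only edges with $h(e)\subset\tau$ contribute. Everything else (the non-constancy of $\ff|_E$ and the transversality $D_N\cdot Z_\rho=1$) follows from the snc combinatorics. I expect no analytic difficulties beyond those already absorbed into \cref{lem:balancebound}.
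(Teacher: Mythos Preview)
Your proof is correct and follows essentially the same approach as the paper: both identify the multiplicity with $\deg(\ff|_E\colon E\to Z_\rho)$ and compute this degree by pairing with the ``extra'' boundary divisor $D_N$ of $\tau$, which meets $Z_\rho$ transversally in the single point $Z_\tau$. The only difference is packaging: you route through \cref{lem:balancebound}(1) together with the projection formula and your edge-by-edge bookkeeping in $\bbZ^{D^\ess}$, whereas the paper argues directly with a local equation for $D_N$ and the strictly semistable condition (which is precisely the computation underlying \cref{lem:balancebound}). One minor imprecision: your phrase ``the other maximal cone $\tau'$'' presumes exactly one such cone, which is not assumed in this proposition (it holds in the log Calabi--Yau setting by \cref{prop:dualcomplex}); but your argument works verbatim for every maximal cone $\tau'\neq\tau$ containing $\rho$, since none of them has $D_N$ as a ray.
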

\begin{proof} $h(v) \in \rho^{\circ}$ implies $f(E) \subset C_{\rho}$, and the image is not a zero stratum.
The fact that an adjacent vertex maps into $\tau^{\circ}$ implies the corresponding zero stratum is in the image of $f\colon E \to C_{\rho}$, thus $f\colon E \to C_{\rho}$ is finite; the coefficient in question is the degree.
We take a local analytic equation for the irreducible component of $\bbD$
corresponding to the {\it extra} edge of $\rho \subset \tau$.
The degree of the zero divisor of its pullback to $E$ is then the degree of $f$.
We compute the degree by taking the scheme theoretic inverse image of $\bbD$ and restricting to $E$.
The multiplicities of the inverse image are encoded in $h$.
Since the pair is semi-stable, the result follows.
\end{proof}

\subsection{Rigidity and transversality of spines} \label{sec:rigidity_and_transversality}

Here we state two basic properties of spines from \cite[\S 5] {Keel_Yu_The_Frobenius}: the rigidity in \cref{prop:rigidity_spine} and the transversality in \cref{prop:transversality}.
\begin{remark} Although the context of \cite{Keel_Yu_The_Frobenius} is much less general than ours here (in \cite{Keel_Yu_The_Frobenius} we assume $U$ contains a Zariski open torus), for these transversality results, which are elementary piecewise affine geometry, the arguments of \cite{Keel_Yu_The_Frobenius} apply.
\end{remark}

We fix $J = F \sqcup \Bnd \sqcup I$ as in \cref{def:spinedef}, which in this section we will leave out of the notation.

\begin{proposition} \label{prop:rigidity_spine} ($J$ fixed as immediately above)
Let $\SP^\tr(\Sk(U),\beta)$ be as in \cref{def:transverse}.
Let $u\coloneqq v_i$ for some $i\in F\cup I$.
Let \[\Psi_u\coloneqq(\dom,\ev_u)\colon\SP^\tr(\Sk(U),\beta)
\longrightarrow\NT^F_J \times \Sk(U).\]
Let $S\in\SP^\tr(\Sk(U),\beta)$.
Then for a sufficiently small neighborhood $V_S$ of $S$ in $\SP^\tr(\Sk(U),\beta))$, the restriction of $\Psi_u$ to $V_S$ is a homeomorphism onto its image and is open.
\end{proposition}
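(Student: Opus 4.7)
The plan is to exhibit an explicit continuous local inverse to $\Psi_u$ near $S$, by showing that a transverse spine is rigid: once the combinatorial type of the domain, the combinatorial wall-crossing data, and the image of a single vertex are fixed, the map $h$ is determined by integration along the tree.

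First I would use the four conditions of \cref{def:transverse} to show that the combinatorial structure is locally constant on $\SP^\tr(\Sk(U),\beta)$ near $S$. Since $h(\Gamma)\cap\Wall_\beta$ is finite with each crossing in the relative interior of a top-dimensional face of some wall $(\fD,w)\in\Wall_\beta$, and the image avoids $\Sing(\Sk(U))$ as well as the codimension one strata $|\Sigma^{d-1}|$ at the marked points $v_j$, $j\in F$, any spine $S'$ sufficiently close to $S$ (in the compact-open topology on $\NT_J^F(U)$) has the same combinatorial tree type, the same leg data, and the same edges of $\Gamma'$ meeting the same walls at nearby points. This is the step where transversality is essential.

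Given that the discrete data is rigid, the next step is to pin down the edge weights. At each leg $v_b$, $b\in B$, the outward weight is prescribed to be $b(b)$ by \cref{def:spinedef}; at each leg $v_j$, $j\in F\cup I$, transversality together with \cref{rem:internal_leg_contracted} forces the leg to be contracted; at every other vertex of $\Gamma'$, the sum of outward weights equals either $0$ or the prescribed wall-bend from $\Wall_\beta$. Propagating these affine constraints along the tree uniquely determines all edge weights, so every $S'\in V_S$ carries the same weights as $S$. The local inverse is then immediate: for $(\Gamma',(v_j')_{j\in J},y)$ close to $(\Gamma,(v_j)_{j\in J},h(v_u))$, define $h'\colon\Gamma'\to\oSk$ by setting $h'(v_u')=y$ and propagating outward along each edge of $\Gamma'$ with the fixed weights and the given edge lengths, bending at each wall crossing by the prescribed vector. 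For small enough perturbation the transversality conditions are preserved, giving $h'\in\SP^\tr(\Sk(U),\beta)$ with $\Psi_u(h')=(\Gamma',y)$; continuity of this construction in $(\Gamma',y)$ is manifest, so it is a continuous local inverse, simultaneously proving that $\Psi_u|_{V_S}$ is a homeomorphism onto its image and that this image is open.

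The hard part is the first step: ruling out that an arbitrarily small perturbation causes the discrete wall-crossing data to jump, for instance by a wall crossing colliding with a vertex of $\Gamma$, two crossings merging on a codimension two stratum of $\Wall_\beta$, or the image acquiring tangential contact with a wall. The four conditions in \cref{def:transverse} are tailored precisely to preclude each of these degenerations, so once this topological robustness is in hand the remaining propagation and continuity arguments reduce to elementary piecewise-affine bookkeeping, essentially identical to the torus case treated in \cite[\S 5]{Keel_Yu_The_Frobenius}.
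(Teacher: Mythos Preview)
Your proposal is correct and matches the paper's approach exactly: the paper does not give an independent proof here but simply states that the result is taken from \cite[\S 5]{Keel_Yu_The_Frobenius}, with a remark that the argument there is elementary piecewise-affine geometry and carries over unchanged to the present setting. Your sketch of that argument---combinatorial stability from transversality, weight propagation from the prescribed leg data and wall-bends, and reconstruction of $h'$ by integration from the basepoint---is precisely the content of that cited section, as you yourself note in your final paragraph.
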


\begin{proposition} \label{prop:transversality}
Let $\SP_J^F(\Sk(U),\beta)$ be as in \cref{def:spinedef}, with $J$ fixed as above.
Let $u\coloneqq v_i$ for some $i\in F\cup I$.
Let $N$ be a natural number, and $W\subset \bbZ^{\bbD^{\ess}}$ a finite subset.
Let $\SP(\Sk(U),\beta,N,W)\subset\SP(\Sk(U),\beta)$ be the subset consisting of spines such that the number of bending vertices (i.e.\ vertices where $h$ is not balanced)
is bounded by $N$, and all the weight vectors belong to $W$.
Let \[\Psi_u\coloneqq(\dom,\ev_u)\colon\SP(,N,W)\longrightarrow\NT^F_J\times M_\bbR.\]
Then there exists a lower dimensional finite polyhedral subset $Z\subset\NT^F_J\times \Sk(U)$ such that all the spines in $\Psi_u\inv(Z^c)$ are transverse.
\end{proposition}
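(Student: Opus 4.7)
My plan is to reduce the statement to finite piecewise affine geometry, closely following \cite[\S 5]{Keel_Yu_The_Frobenius}. I will stratify $\SP(\Sk(U),\beta,N,W)$ by combinatorial type into finitely many rational polyhedra, carve out the non-transverse loci as rational subpolyhedra of strictly smaller dimension, and use the rigidity result \cref{prop:rigidity_spine} to transport that codimension drop to the target under the piecewise affine map $\Psi_u$.

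First, I would define the \emph{combinatorial type} of a spine $(\Gamma,h)$ to record: the isomorphism class of $\Gamma$ after subdividing at every wall crossing of $\Sigma_{(Y,D)}$ or of $\Wall_\beta$; the leg labels in $J$; the set of bending vertices; the integer weights on each oriented edge; and the open cone of $\Sigma_{(Y,D)}$ containing the image of each vertex and each open edge. The bounds $N$ on bending vertices and $W$ on weights, together with the finiteness of $\Sigma_{(Y,D)}$ and of $\Wall_\beta$, force this to be finite data, so there are only finitely many types $\alpha$. For each $\alpha$ the locus $P_\alpha$ of spines of type $\alpha$ is cut out of a product of the prescribed cones (vertex positions) times a space of edge lengths by the affine equations encoding edge directions and the balancing and bending conditions; hence each $P_\alpha$ is a rational polyhedron, and $\Psi_u$ restricts to a rational affine map on each $P_\alpha$.

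Next, on each $P_\alpha$ I would carve out a rational subpolyhedron $P_\alpha^{\mathrm{nt}}$ containing every non-transverse spine of type $\alpha$. Each of the four clauses of \cref{def:transverse} fails on a subpolyhedron cut out by one additional affine equation on vertex positions: an edge image contained in a wall; an edge image passing through a codimension-$2$ stratum of $\Wall_\beta$ or through $\Sing(\Sk(U))$; a marked or $\geq 3$-valent vertex lying on $\Wall_\beta$; or $h(v_j)\in|\Sigma^{d-1}|$ for some $j\in F$. If $P_\alpha$ contains at least one transverse spine, then by \cref{prop:rigidity_spine} the map $\Psi_u|_{P_\alpha}$ is locally a homeomorphism onto an open subset of $\NT^F_J\times\Sk(U)$; in particular $\dim P_\alpha=\dim(\NT^F_J\times\Sk(U))$, and the additional affine equation lowers the dimension by at least one. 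If instead $P_\alpha$ admits no transverse spine, then the combinatorial type itself prescribes a transversality-violating coincidence (for example a marked vertex assigned to a cone of $\Sigma_{(Y,D)}$ lying in a wall), and by comparing with the evident relaxed type $\alpha_0$ admitting transverse spines one likewise obtains $\dim P_\alpha<\dim P_{\alpha_0}=\dim(\NT^F_J\times\Sk(U))$; in that case I take $P_\alpha^{\mathrm{nt}}\coloneqq P_\alpha$. Setting $Z\coloneqq\bigcup_\alpha\Psi_u(P_\alpha^{\mathrm{nt}})$ gives a finite union of rational polyhedra of dimension strictly less than the target, and any spine with $\Psi_u$-image outside $Z$ is transverse.

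The main obstacle is the dimension-drop verification: one must check that every transversality-violating coincidence is genuinely a non-trivial affine constraint on $P_\alpha$, not implied by or redundant with the combinatorial type already chosen, and that the drop survives after applying $\Psi_u$. Both are exactly what the rigidity \cref{prop:rigidity_spine} provides: on the transverse side $\Psi_u$ is a local homeomorphism, so a codimension drop on the spine side is faithfully transported to a codimension drop in the target. Once that is settled, the remaining work is finite combinatorial bookkeeping plus elementary piecewise affine geometry.
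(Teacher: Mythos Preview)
Your proposal is correct and follows the same route as the paper: the paper gives no independent proof of \cref{prop:transversality} but simply imports the argument from \cite[\S 5]{Keel_Yu_The_Frobenius}, noting in the preceding remark that ``for these transversality results, which are elementary piecewise affine geometry, the arguments of \cite{Keel_Yu_The_Frobenius} apply.'' Your sketch---stratifying by finitely many combinatorial types, realising each type as a rational polyhedron on which $\Psi_u$ is affine, cutting out the non-transverse locus by one extra affine equation, and invoking \cref{prop:rigidity_spine} to certify the dimension drop on the target---is exactly that argument, written out in somewhat more detail than the paper itself provides.
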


\section{The interior}
Here we recall a result of Temkin characterizing the Berkovich interior.

\begin{definition-lemma} \label{def:interior}
  Let $\fY \to \fV$ be a formal model (over $k^{\circ}$) for
  a map $f: Y \to V$ of $k$-analytic spaces of
  finite type, and $s: Y \to \fY_s$
  the reduction map. 
  
Consider the union 
 of all $s^{-1}(P)$ over all Zariski closed subsets 
  $P \subset \fY_s$ proper over $\fV_s$. This is open, equal to the
  Berkovich interior,   $\interior(Y/V)$, and independent of the model.
\end{definition-lemma}
\begin{proof} Equality with the Berkovich interior implies open, and independence of
  model (each of which are easy to check directly, as well), and this equality
  is immediate from \cite[5.2]{Temkin_On_local_properties_II} (note here Temkin is
  using Berkovich's term {\it open} to mean without Berkovich boundary). 
\end{proof}

\begin{lemma} \label{lem:Kint} Notation as in \cref{def:interior}. Let
  $K \subset \skinterior(Y/V)$ be compact. Then there is a model,
  a union of irreducible components $P \subset \fY_s$, proper over $\fV_s$,
  and an open set $P \subset U \subset \fY_s$ with $s(K) \subset P$.
  \end{lemma}
\begin{proof} For each $x \in K$ there exists proper $P_x \subset
  \fY_s$ with $K \subset s^{-1}(P_x)$. This gives an open cover, so
  $K$ is covered by finitely many such. Let $P$ be their union.
  Blowing up we can assume
  each $P_x \subset \fY_s$ is an irreducible component. Blowing
  up other strata, we can assume $P$ is disjoint from all non-proper
  irreducible components. Now we take $U$ the complement of the union
  of the non-proper irreducible components. This completes the proof.
  \end{proof} 

  $\skinterior(Y/V) \subset Y$ is {\it convex} in the following sense:

  \begin{lemma} \label{lem:convexint} Notation as above.
    Let $f: C \to Y$ be a $k$-analytic map from a $k$-analytic
    space of finite type. $f(\skinterior(C/P)) \subset \skinterior(Y/P)$.
  \end{lemma}
  \begin{proof} Take a formal model for $f$.
    $r(f(x)) = f(r(x))$ and $f_s: \fC_s \to \fY_s$ takes proper sets
    to proper sets. The result follows.
  \end{proof}

\section{Basic moduli spaces} \label{sec:bms}
Here we introduce the basic moduli space that we will use for counting trees in $\Sk(U)$, and for defining structure constants for the mirror algebra.
When we have a map $f\colon X \to Y$ and a subset $S \subset Y$ we will often write $X_S \coloneqq f^{-1}(S)$ and $X_{S_1,\dots,S_n}$ (for the intersection) if we have several subsets.

The idea, in the structure constant case, is described in the introduction (above \cref{thm:mirror_alg}).
Here the main thing to show is that these conditions define a reasonable space --
roughly that the locus of stable maps where the {\it end disks}, $E_f$
(defined below) maps to the toric end $T_f$ and the {\it body}, $B$ maps into $Y$, is an analytic domain. 
The basic idea is to use formal models and then translate the conditions into analogous conditions on the central fibre, which moves the questions from analytic to algebraic geometry.
For a simple statement of this sort see e.g.\ \cref{lem:tms}, and \cref{cor:familytms} for the full details see \cref{prop:cad} and its proof.

\begin{construction}[Fixed Data] \label{const:data}
Here, following the notation of \cite[3.1,4.5]{Keel_Yu_The_Frobenius},
and \cref{def:spinedef}
we take finite $J = F \sqcup \Bnd \sqcup I$.
These will index the marked points for the tropical and analytic curves we count.
$\Bnd$ (which stands for {\it boundary}) indexes points that map to $\partial Y \eqqcolon D$
(or in the tropical case, $\partial \oSk(U)$), $I$ points that map into $U$ (or $\Sk(U)$) and $F$ will parameterize the {\it ends}.

We take two disjoint copies of $F$, that we label $F_S,F_E$ and then $N := F_S \sqcup F_E \sqcup I \sqcup \Bnd$
(this doubling of $F$ is for defining the {\it end} disks, the $E_f$ in \cref{def:VMdef} below).

We also fix a function $b\colon \Bnd \to \Sk(U,\bbZ)$ and write $P_b \coloneqq b(b)$.
These correspond (possibly on some blowup) to the boundary divisors where marked points from $\Bnd$ are sent
(more precisely, if we write $P_b = m_b \cdot \oP_b$
with $\oP_b$ primitive, then $\oP_b$ corresponds to the divisor, we use $m_b$ for contact order, see \cref{def:Mdef}).
\end{construction}
\begin{definition} \label{def:VMdef}

Let $\cV_M \subset \oM_{0,N}^{\trop}$ be the locus of trees as in the toric tail condition, \cite[\S]{Keel_Yu_The_Frobenius}
namely the simple path from $s_f$ to $e_f$
meets a unique topological vertex of valence greater than two, this point, $\os_f$,
has valence $3$.
See \cref{figure:tail1}.
Let $V_M := \tau^{-1}(\cV_M) \subset \oM_{0,N}^{\an}$.

\begin{figure}[!ht]
  \centering \setlength{\unitlength}{0.4\textwidth}
  \begin{picture} (1,1)
  \put(0,0){\includegraphics[page=1,width=\unitlength]{tail1}}
  \put(0.57,0.71){$s_f$}
  \put(0.57,0.25){$\os_f$}
  \put(0.92,0.25){$e_f$}
  \end{picture}
  \caption{}
  \label{figure:tail1}
\end{figure}

The point of \cref{def:VMdef} is that, as in \cite[11.1]{Keel_Yu_The_Frobenius},
$\oC \in V_M$ has a canonical decomposition into closed analytic subcurves $\oC = \oB \cup_f \oE_f$, where $\oE_f \coloneqq r^{-1}([s_f,e_f])$ and $\oB \coloneqq r^{-1}(\overline{\Gamma \setminus [s_f,e_f]})$.
Here $\Gamma \subset \oC$ is the convex hull of the marked points, and $r\colon \oC \to \Gamma$ is the canonical retraction.
$\oE_f$ is a closed analytic disk.
Let $\oS_f$ be the {\it circle} $\oS_f \coloneqq \oB \cap \oE_f$.
\end{definition}

If $C \to \oC$ is the stabilization of a semi-stable $N$-pointed curve, then this induces a decomposition,
$C = B \cup_{f \in F} E_f$ by taking inverse image.

\begin{remark} \label{rem:ssd} Note when $[\oC] \in M_{0,N} \subset \oM_{0,N}$, $\oB \subset \oC$ is an honest
  $k$-analytic disk. But $B$ might contain complete rational curves (which we will sometimes call
  {\it Twigs} or {\it Bubbles}) contracted to points by the stabilization $B \to \oB$. This is what
  we mean when we refer to a {\it semi-stable disk}. \end{remark}
Let $S_f \to \oS_f$ be the {\it semi-stable circle}: 

$S_f \coloneqq B \cap E_f$ is the inverse image of $\oS_f$.

By construction we have $s_f \in S_f$, $e_{f} \in E_f$.
Let $B^{\circ} \coloneqq B \setminus \cup_f S_f$,
$E_f^\circ \coloneqq E_f \setminus S_f$.

Now we study stable maps with domain curves decomposed as above, and impose conditions on the images
of loci such as $E_f$. As most of the novel elements of the construction have nothing to do with log CYs or
skeletons, intially we work with more general targets: 

\subsection{Stable maps with {\it end disk} conditions}  \label{sec:enddisk}

\begin{assumptions} \label{ass:basicmod}
  We assume we are given $Y$ and $T_f$, $f \in F$, finite type $k$-analytic spaces
  and pairwise disjoint compact analytic domains $G_f \subset Y$, $f \in F$, with 
  open sets $H_f \subset G_f$. 

  We assume we have for each
  $f$ a copy $G_f \subset T_f$, a compact analytic domain. 
  We assume each
  $G_f \subset \skinterior(Y)$, and $G_f \subset \skinterior(T_f)$
  (notation   as in \cref{def:interior}).  We assume each $H_f \subset G_f$ is {\it convex} 
  in
  the sense that there is a set of invertible analytic functions $E \subset \cO^*(G_f)$ such that
  $H_f = \{x \in G_f| |f(x)| < 1 \forall f \in E \} $.

  Moreover, we assume
  we have formal models $\fY$ and $\fT^f$, with quasi-projective
  central fibres, and isomorphic Zariski open subsets
  $\fG^f_s \subset \fY_s$, $\fG^f_s \subset \fT_s$, with generic
  fibres $G_f \subset Y$, $G_f \subset T_f$, and moreover line
  bundles $\fA$ on $\fY_s$, $\fA^f$ on $\fT^f_s$, whose restrictions
  to $\fY_s$ and $\fT^f_s$ are ample, and an isomorphism between
  their restrictions to $\fG^f_s$. Finally we assume we
  have Zariski closed $\fH_f \subset \fG_f$ with $\red$ inverse
  image $H_f \subset G_f$ ($\red$ the reduction from the generic fibre to the central fibre of the model). 

  \begin{remark} In our main application, see \cref{const:data1}, the above models exist by
    \cref{prop:GluingEnds}.
  \end{remark}

Now we form $Z \coloneqq Y \cup_{f\in F,G_f} T_f$. By the assumption
it comes with a formal model $\fZ = \fY \cup_{f \in F,\fG_f} \fT_f$ whose
central fbre $\fZ_s = \fY_s \cup_{f \in F,\fG^f_s} \fT^f_s$
carries a line bundle $\fA$ restricting to ample on the central fibres 
$\fY_s$ and $\fT^f_s$. Note: We are not assuming we have a line bundle on $\fZ$, just
on $\fZ_s$. 
\end{assumptions}

By \cite[8.10]{Yu_Gromov_compactness} there is a $k$-analytic stack, 
$\oM_{0,n}(Z)$, of stable maps with a formal model with central fibre
$\oM_{0,n}(\fZ_s)$. The locus $\oM_{0,n}(\fZ_s,m)$ where the pullback of $\fA$ has
degree $m$ is a union of connected components, its inverse image under
the reduction map $r$, $\oM_{0,n}(Z,m) \subset \oM_{0,n}(Z)$ is thus a union of
connected components.
\begin{remark} 
Note: Our construction of
$\oM_{0,n}(Z,m)$ depends (mildly) on the formal
model, though this is hidden in the notation.
If $\fA$ extended to the formal model $\fZ$ then this locus would
depend only on $Z$ and this line bundle. For our counts we will use
a version with no such dependence, see \cref{def:cmsm}. \end{remark}  

Let $M \subset \oM_{0,N}(Z)_{V_M}$ be the locus such that 
$f(B^{\circ}) \subset Y$, $f(E_f^{\circ}) \subset G_f$, and
$f(S_f) \subset H_f$, for all $f \in F$.

We note:
\begin{lemma} \label{lem:equivdef} $M$ is equivalently defined by the conditions
  $f(B^{\circ}) \subset Y$, $f(E_f^{\circ}) \subset G_f$ and
  $f(S_f) \subset H_f$.
\end{lemma}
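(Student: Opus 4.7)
The plan is to verify that the two sets of conditions cut out the same subset of $\oM_{0,N}(Z,k)_{\cV_M}$ by a direct comparison. The main structural inputs are the disjoint decompositions $B = B^{\circ} \cup \bigcup_{f\in F} S_f$ and $E_f = E_f^{\circ} \cup S_f$ (from $S_f = B \cap E_f$), together with the identity $Y^{\an} \cap T_f^{\an} = \cG_f$ inside the glued target $Z$.

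First I will handle the straightforward containments. The inclusions $f(B) \subset Y^{\an}$ and $f(E_f) \subset T_f^{\an}$ immediately restrict to $f(B^{\circ}) \subset Y^{\an}$ and $f(E_f^{\circ}) \subset T_f^{\an} \subset \oT_f$, and force $f(S_f) \subset Y^{\an} \cap T_f^{\an} = \cG_f$. Conversely, $f(B^{\circ}) \subset Y^{\an}$ combined with $f(S_f) \subset \cH_f \subset \cG_f \subset Y^{\an}$ recombine to give $f(B) \subset Y^{\an}$, and similarly for $E_f$ using $\cH_f \subset T_f^{\an}$.

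The substantive step is the equivalence, given the other hypotheses, of $(\tau \circ f)(s_f) \in H_f$ with $f(S_f) \subset \cH_f$. For this I analyze the tropicalization $\tau_{T_f} \circ f|_{E_f}\colon E_f \to |\Sigma_f|$, defined because $f(E_f) \subset T_f^{\an}$. Restricted to the spine edge $[s_f, \os_f]$ of the closed analytic disk $\oE_f$, this map is piecewise affine. The contact data of \cref{def:Mdef}(2) force $f^*(D_f)$ to be supported at $e_f$, so the slope at $s_f$ is zero (no toric coordinate vanishes there); and the inclusion $f(S_f) \subset \cG_f$ confines the image of the spine to the interior of the fan $|\Sigma_f|$, ruling out wall-crossings on the interior of $[s_f, \os_f]$. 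Hence $\tau_{T_f} \circ f$ is constant on $[s_f, \os_f]$, so $\tau_{T_f}(f(\oS_f)) = \tau_{T_f}(f(s_f))$, and the two $H_f$-containments become equivalent.

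The main obstacle is justifying the constancy assertion in the presence of extra irreducible components of $E_f$ contracted by the stabilization $C \to \oC$: by stability each such component maps non-constantly, but $f(S_f) \subset \cG_f$ together with the fact that $f^*(D_f)$ is exhausted at $e_f$ imply these contracted components map into the open torus $T_{M_f}^{\an}$, where the tropicalization is locally constant, so no new bending is introduced along $[s_f, \os_f]$ and the constancy of $\tau_{T_f} \circ f$ on this edge persists.
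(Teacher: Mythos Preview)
Your approach takes a considerably longer route than the paper's and has a gap at the key step. The paper's proof is essentially one line: once you know $f(S_f)\subset Y^{\an}\cap T_f^{\an}=\cG_f\subset T_{M_f}^{\an}$, you use that the semi-stable circle $S_f$ has a \emph{single} Berkovich boundary point. By the maximum principle applied to each character $\chi_m$ and to $\chi_m^{-1}$, the norm $|\chi_m\circ f|$ is constant on $S_f$; hence $\tau\circ f$ is constant on $S_f$, equal to its value at $s_f\in S_f$. No spine or balancing analysis is needed.

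Your argument, by contrast, only establishes that $\tau_{T_f}\circ f$ is constant on the one-dimensional spine edge $[s_f,\os_f]$, and then jumps to ``$\tau_{T_f}(f(\oS_f))=\tau_{T_f}(f(s_f))$''. That jump is the gap: $S_f$ is a genuine closed $k$-analytic curve, strictly larger than its skeleton, and for a map to a torus the composition with $\tau$ does not automatically factor through the retraction to the skeleton unless one already knows that every pulled-back character has constant norm---which is exactly the maximum principle you are trying to avoid. The same issue recurs in your last paragraph: ``the tropicalization is locally constant'' on $T_{M_f}^{\an}$ is false as stated; what you need is that an invertible analytic function on a compact curve with at most one Berkovich boundary point has constant norm, and once you invoke that fact you have the paper's argument and the spine analysis becomes superfluous. (There is a second issue if one reads $\oS_f$ as the single point $\os_f$: then the leg $[s_f,\os_f]$ lies in $E_f\setminus S_f$, where \cref{def:Mdef} only controls $f^{-1}(D_f)$ and not intersections with the other toric boundary divisors of $T_f$, so your ``no wall-crossing'' claim is unjustified.)
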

\begin{proof} Either set of conditions imply $f(S_f) \subset G_f$. 
  The result
  then follows from \cref{ass:basicmod} and the maximum principle (and the {\it convexity} assumption on
  $H_f \subset G_f$) since the Berkovich
  boundary of $S_f$ is a single point, and so the pullback of each $|f|$, $f \in E$ (notation
  from \cref{ass:basicmod} ) is constant.
  \end{proof}


We begin by checking this is a reasonable locus:

\begin{proposition} \label{prop:noboundary} Notation as immediately above.
  $M \subset  \oM_{0,N}(Z,k)_{V_M}$ is the intersection of an open
  analytic domain with a closed analytic domain. $M$ is a
  $k$-analytic stack without boundary.
\end{proposition}

\begin{remark} \label{rem:rfund}
  The key statement here is the absence of boundary.  It is perhaps initially surprising, as $Z$ has boundary. 
  Our counts, e.g. for defining structure constants, will be 
  degrees of a map with domain a refinement of $M$, in general virtual, naive when $U$ in \cref{ass:basicsetup} contains
  no complete rational curves. To show they
  are well defined, e.g. invariant under small perturbation, we need
  to show the map is proper (and for naive counts, \'etale). Properness is delicate as
  our moduli spaces are not  compact (we are counting disks in $U$, which e.g. in \cref{ex:affine} is the analytification
  of an affine variety). Properness has two parts, topologically proper,
  and without boundary. \cref{prop:noboundary} will give the second.
  We will have a variant of $M$ where the first holds, see \cref{def:cM'def} and \cref{prop:VM'ft}.
  Then
  in making our counts, we prove (under further tropical
  conditions in the context of the counts)
  the two moduli spaces are equal, see \cref{prop:cad}.
  \end{remark}

  \begin{proof}
Conditions such as $f(B^{\circ}) \subset Y$, 
the condition that open map to proper, are closed. While conditions
such as $f(S_f) \subset H_f$, that proper map to open, are open.
For the further statements we use formal models, and
\cref{cor:familytms}, 
to reduce the statements to questions about the central fibre.
We pick $x \in M$, with image $y \in V_M$.  Because
$\cV_M \subset \oM_{0,N}^{\trop}$ is open, $V_M$ is not of finite type, so
does not have a model. But the statements are local on $V_M$;
we can replace $\cV_M$ by a compact full dimensional
polyhedral subset $\tau(y) \in \cP \subset \cV_M$,
and $V_M$ by $P \coloneqq \tau^{-1}(\cP)$. $P \subset (\oM_{0,N})_{V_M}$
is a closed analytic domain of finite type. We pull everything back to $P$.

By  \cite[Th 8.9]{Yu_Gromov_compactness}, we have formal families $\fP$,  $\fZ$,
$\ofM$,  $\ofC \to \ofM$, and a map $f: \ofC \to \ofM \to \fP$,  which are formal models of
$P,Z,\oM_{0,N}(Z,m)_{P}$,  $\oC \to \oM_{0,N}(Z,k)$, such that the central
fibre, $\ofM_s$, is $\oM_{0,N}(\fZ_s,m)_{\fP_s}$ with 
universal map the central fibre of $f:\ofC \to \fZ$.
By assumption $f_x(\partial(B)) \subset G \subset \skinterior(Y)$,
$f_x(\partial(E_f)) \subset G_f \subset \skinterior(T_f)$.
Thus by \cref{lem:convexint}, $f(B) \subset \skinterior(Y)$,
$f(E_f) \subset \skinterior(T_f)$. Now by \cref{lem:Kint}
(after blowing up our models) 
we can assume there are unions of projective irreducible
components $J^Y_s \subset \fY_s$, $J^{T_f}_s \subset T^f_s$
whose inverse images contain $f_x(B) \subset Y$ and
$f_x(E_f) \subset T_f$. The analogous inclusions holding
for the central fibre (i.e. replacing $B$ and $E_f$ by their
reductions,  see \cref{rem:weird}). Now by
\cref{prop:prop_locus} and \cref{cor:familytms} 
there are
Zariski open and Zariski closed subsets of
$\ofM_s$, whose $\red$ inverse images give closed and open
analytic domains in $\oM_{0,N}(Z,m)_\cP$ intersecting
in $M_{P}$, where $\red: \oM_{0,N}(Z,m) \to \ofM_s= \oM_{0,N}(\fZ_s,m)_{\fP_s}$
is the reduction map. Moreover $Q^{G,J}$ of
\cref{prop:prop_locus} gives a subset of the
central fibre, $\oM_{0,N}(\fZ_s,m)_{\fP_s}$, proper over $\fP_s$,
whose inverse image
lies in $M \subset \oM_{0,N}(Z,m)$, and contains $x$. 
Thus $x \in \skinterior(M/P) $. 
As $x \in M$ was arbitrary, this shows in particular that
$M$ has no Berkovich boundary over $V_M$. As $V_M$
has no boundary, $M$ has no boundary. 
\end{proof}

By the above, 
$M(m) \subset \oM_{0,N}(Z,m)_{V_M}$ is an analytic domain without boundary. But 
not in general  topologically compact. We will also need a version, $M'$,
which is topologically compact (but in general has a boundary).   We turn
to this next:

$M'$ requires a more involved version of \cref{def:VMdef},
where we
have in addition to the semi-stable circle, a semi-stable open annulus (see \cref{rem:rfund}):

\begin{definition} \label{def:VMpdef}
  For $\Gamma \in \cV_M$, let the metrized topological edges incident to the distinguished valence $3$ vertex
  $\os_f$ be $[0 = \os_f,s_f = \infty]$,
$[0=\os_f,e_f = \infty]$, and $[\os_f,z_f]$ (where this last edge defines $z_f$).

We add two extra points to $N$ for each $f \in F$: Let $N' \coloneqq N \sqcup_{f \in F} \{a_f,b_f\}$.
Let $\cV_{M}' \subset (\oM_{0,N'}^{\trop})_{\cV_{M}}$ be the locus of $\Gamma'$ such that $\tau(a_f) \in (z_f,\os_f)$,
$\tau(b_f) \in (\os_f,e_f)$,
where $\tau$ is the retraction of $\Gamma$ to the convex hull,
$\Gamma'$,
of the points $N \setminus \cup_f s_f$.
Let $\cV_{M,\epsilon}' \subset \cV_{M}'$ be the locus where in addition $[\tau(a_f),\tau(b_f)]$ has length less than $\epsilon$.
The point of the definition is $\oC \in V_{M}'$ contains a canonical open annulus $\oS_f \subset \oA_{f}$, for each $f \in F$, namely: Let $\Gamma_{A} := \tau^{-1}(\tau(a_f),\tau(b_f))$ and then $\oA_{f,\epsilon} \coloneqq r^{-1}(\Gamma_A)$, for $r\colon \oC \to \Gamma$ the canonical retraction (and $\Gamma \subset \oC$ the convex hull of the marked points (parameterized by $N'$)).
In $V_{M,\epsilon}'$ the annulus has radius less than $\epsilon$.
For a picture illustrating the notation see \cref{figure:tail2}.

\begin{figure}[!ht]
  \centering \setlength{\unitlength}{0.4\textwidth}
  \begin{picture} (1,1)
  \put(0,0){\includegraphics[page=1,width=\unitlength]{tail2}}
  \put(0.57,0.71){$s_f$}
  \put(0.47,0.71){$a_f$}
  \put(0.67,0.71){$b_f$}
  \put(0.57,0.25){$\os_f$}
  \put(0.92,0.25){$e_f$}
  \end{picture}
  \caption{}
  \label{figure:tail2}
\end{figure}

We have a canonical decomposition $\oC = \oB^\circ \cup_{f \in F} \oA_{f} \cup_f \oE^{\circ}_f$, which taking inverse images under $s:C \to \oC$ gives $C = B^\circ \cup_{f \in F} A_{f} \cup_f E^{\circ}_f$.
\end{definition}

Now
\begin{definition} \label{def:cM'def} We let $M' \subset \oM_{0,N'}(Z)_{V_M'}$ be the locus defined by
  the conditions $f(B^0) \subset Y$, $f(E_f^0) \subset T_f$, $f(A_f) \subset G_f$ and $f(s_f) \in H_f$.

  Note we could equivalently define $M' \subset \oM_{0,N'}(Z)_{V_M'\times_{f \in F} H_f}$ using just the first two
  conditions
\end{definition}

\begin{proposition}  \label{prop:VM'ft} Notation as immediately above.
  $M' \subset \oM_{0,N'}(Z')_{V_M' \times H}$ is a closed analytic domain.
$M'$ is a $k$-analytic stack  and 
  $$
  M'(m) \to V_M' \times_{f \in F} H_f
  $$
  is finite type and separated. 
\end{proposition}
\begin{proof} Both statements are local on $V_M' \times H$
so it's enough to prove the absolute versions
after replacing ${V_M' \times H}$ by a closed compact analytic domain $P$ (which we then
vary over a cover for ${V_M' \times H}$), and $M'$ by $M'_{P}$.  As in the proof of 
\cref{prop:noboundary},  
there are formal models $\fZ$, $\fW$ (the first finite type, the second locally of finite type)
of $Z$, and $\oM_{0,N'}(Z,m)$, with central fibres $\fZ_s $, $\oM_{0,N'}(\fZ_s,m) $. By \cref{ass:basicmod} there is a Zariski closed
$\bbH_f \subset \fZ_s$ with $\red^{-1}(\bbH_f) = H_f$.  By \cref{lem:anticontinuity}
there is a Zariski closed $\bbV_M' \subset \obbM_{0,N'}$ (in fact a codimension
$|F|$  closed boundary stratum) with $\red^{-1}(\bbV_{M'}) = V_{M'} \subset \oM_{0,N'}$
(for $\red: \oM_{0,N'} \to \obbM_{0,N'}$ the
reduction map associated to the constant model).
\begin{remark} Here, and just below, we use the blackboard bold font, e.g. $\bbP$, to indicate a locus in the
  central fibre of a formal model (while most of the time in the paper such loci are indicated with $_s$). \end{remark}
  Let
  $\bbP \subset \bbV_{M'} \times_{f \in F} \bbH_f$ be Zariski open (we will vary it over an open cover)
  and $P \coloneqq {\red}^{-1}(\bbP) \subset V_{M'} \times_{f \in F} H_f$.  

  Denote by $\fW_P \subset \fW$ the formal open subset with central fibre $\fW_s = \oM_{0,N'}(\fZ_s,m)_{\bbP}$, which we
  denote by $Q$. 
We have $Q^G \subset Q$, and
$Q^{G'} \subset Q$ as in \cref{sec:fcf}, \cref{sec:annulus}.  By \cref{lem:tms}, \cref{cor:familytms},
$M'_{P} = \red^{-1}(Q^G)$, $M_{P} = \red^{-1}(Q^G)$.
\begin{remark} \label{rem:convoluted} Note $M_{P}  \supset M'_{P} $ by \cref{lem:equivdef}.
  But we do not necessarily know that
  $Q^G \supset Q^{G'}$ -- the problem is that $\fW$ might not be flat, and thus $r$ might not
  be surjective. This is the reason for the slightly convoluted argument below. \end{remark} 
By \cref{prop:prop_locus_an}, $Q^{G'} \subset Q$ is Zariski open. Thus
$M'_{P}(m) \subset \fW_{\eta} = \oM_{0,N'}(Z,m)_{\cP}$ is an analytic domain
by \cref{lem:tms}, \cref{cor:familytms}. This gives the first statement of the theorem.
Now define $Z'$ as in the proof of \cref{prop:prop_locus}. By that argument we have a closed embedding
$Q^G \subset \oM_{0,N'}(Z',m)_Q$ and thus $Q^G$ is separated, and of finite type. 
  By \cite[7.6]{Yu_Gromov_compactness}, $\oM_{0,N'}(\fZ)$ is an algebraic stack locally of finite type. It follows there is a formal
  open subscheme $\fS \subset \fW$, of finite type,  whose central fibre, $S$, contains $Q^G$, and thus
  (using \cref{rem:convoluted}) 
    $r(M'_{P}) \subset Q^G \cap Q^{G'} \subset S$.  Now $Q^{G'} \cap S \subset Q$ is a Zariski open subset of
    finite type such that $\red^{-1}(Q^{G'} \cap S) = M'_{P}(m)$. Thus $M'_{P}(m)$ is finite type. All that remains is to show
    it is separated. 
    By \cref{lem:FK}, there is an admissible formal blowup $\pi:\fS' \to \fS$ from an admissible formal model
    over $k^{\circ}$, and $V \coloneqq \pi_s(\fS'_s) = \red(\fS_{\eta}) \subset \fS_s$ is Zariski closed.
    $r(M'_{\cP}(m))  = Q^{G'} \cap V $, which is Zariski open in $Q^G \cap V$, which itself is Zariski closed in (separated) $Q^G$.
    Thus $Q^{G'} \cap V$ is separated. $\pi_s: \fS'_S \to V$ is proper, so $\pi_s^{-1}(Q^{G'}) = \pi_s^{-1}(Q^{G'} \cap V)$ is
      also separated.  Now $\pi_s^{-1}(Q^{G'}) \subset \fS'_s$ is a separated open subscheme whose preimage under
      reduction is $M'_{P}$, so $M'_{P}$ is separated.  This completes the proof. 
\end{proof} 

\begin{lemma} \label{lem:FK}  Let $\fS$ be a formal analytic stack of finite presentation over $k^{\circ}$. The
  following hold:

  \begin{enumerate}
  \item The image of the reduction map $\red(\fS_{\eta}) \subset \fS_s$ is closed.
    \item     There
      is an admissible blowup $\fS' \to \fS$ such that $\fS'$ is an admissible formal stack over $k^{\circ}$.
    \end{enumerate}
  \end{lemma}
  \begin{proof}  For an admissible formal stack the reduction is surjective, and the admissible blowup is proper, so
    the second statement implies the first. For $k$-analytic spaces the second statement is \cite[2.1.10]{FK}, and as the
    construction of the blowup is canonical, the same argument works for stacks.
    \end{proof}

    \begin{construction}[More Fixed Data] \label{const:data1}

Now we return to the context of the paper, with $(\bbY,\bbD)$ formal snc pair with generic
fibre $(Y,D)$ as in \cref{ass:basicsetup}.


        In addition to \cref{const:data} we fix the following:
For each $f \in F$ we fix $\cH_f \subset \cG_f \subset \Sk^{\sm}(U)$, $\cH_f,\cG_f$
full dimensional convex polyhedra (makes sense using \cref{ass:G_f} below),
$\cG_f$ compact and $\cH_f$ open with closure in the interior of $\cG_f$.

We define $M_f \coloneqq T_{\cG_f}(\bbZ)$, the integer tangent space.
We also fix $0 \neq v_F \in M_f$.
We indicate this fixed data, together with the data from \cref{const:data}
by $\bP$ (which we will often leave out of the notation).
\end{construction}

We often write $\Sigma \coloneqq \Sigma_{(\bbY,\bbD^{\ess})}$.

\begin{assumption} \label{ass:G_f} We will always assume that each $\cG_f$ is disjoint from $\Sigma^{n-2}$, and moreover satisfies one of the following:
\begin{enumerate}[wide]
\item Either $\cG_f$ lies in the interior of a maximal cone of $\Sigma$, or \item $\cG_f$ lies in the interior of $\star(\rho)$
for some $\rho \in \Sigma^{n-1}$, corresponding to an almost minimal essential $1$ stratum, as in \cref{def:am}.
We further assume $-v_f \in \rho \subset M_f$.
\end{enumerate}

Note that in either case the assumptions imply $\cG_f \subset \Sk^{\sm}(U)$,
and moreover the SYZ fibration $\tau:U \to \Sk(U)$ is smooth over a neighborhood of $G_f$, see \cref{prop:TonySYZ},  \cref{sec:integeraffine}, and \cref{sec:lSYZ}. 
\end{assumption}

\begin{remark} The second case we will only use for structure constants.
In that case we will have $Q \in \rho$, and $v_f = -Q$.
We expect this is related to the {\it negative ends} one has for the output in the pair of pants multiplication in symplectic cohomology.
In any case, it is the same condition we use for toric ends in \cite[1.1]{Keel_Yu_The_Frobenius}.
\end{remark}

\begin{construction} \label{const:Sigma_f}

Let $R_f \coloneqq \bbR_{\geq 0} v_f$.
Let $\Sigma_f$ be a complete simplicial fan containing the ray $R_f$, and in case (2) of \cref{ass:G_f} the cone $\rho_f$ (the precise choice of $\Sigma_f$ will not matter),
and $\bbT_f$ for the corresponding $\bbT_{M_f}$-toric variety.
Let $\bbD_f \subset \bbT_f$ be the boundary divisor corresponding to $R_f$.
Choose $\cG_f \subset M_f$ so that $\cG_f,\rho_f \subset \Sk(U)$
and $\cG_f,\rho_f \subset M_f$ are affinely isomorphic.
We choose $\Sigma_f$ so that $\cG_f \subset M_f$ lies in the interior of $\star(\rho) \subset M_f$.
\end{construction}

\begin{construction}[Gluing on Ends] \label{const:Z}
  Now we construct $Z$ as in \cref{ass:basicmod} using $Y \coloneqq \bbY_{\eta}$ (see \cref{sec:BTS})
  and $T_f \coloneqq \bbT_f^{\an}$.
  We take $V \coloneqq U \cup_{f\in F,G_f} \bbT_{M_f}^{\an}$.

\begin{remark} \label{rem:glueunique}
We note the gluing is unique up to an automorphism of $G_f$
commuting with the canonical retraction $\tau_{\toric}\colon G_f \to \cG_f$.

 The possibilities for such an automorphism are described by \cref{prop:SYZAuts}. See also \cite[6.3]{Yu_Enumeration_of_holomorphic_cylinders_II}.
Any two choices of gluing are {\it isotopic} by \cref{prop:famaut}.
\end{remark}

We use the same gluing for $V$ and $Z$.

By \cref{lem:pictriv},
any choice of line bundles, one on $Y$ and one on each $T_f$ will glue to give a line bundle on $Z$.
\end{construction}

\begin{definition} \label{lem:gsk}
We let $\Sk(V) := \Sk(U) \cup_{f \in F, G_f} M_{f,\bbR}$

We write $\oSk(V) \coloneqq \overline{\Sk(V)} \subset Z$.
\end{definition}

\begin{lemma} \label{lem:pictriv} Let $\cG \subset M_{\bbR}$ be a closed convex rational polyhedron, and $G \coloneqq \tau^{-1}(\cG) \subset \bbT_M^{\an}$ where $\tau\colon \bbT_M^{\an} \to M_{\bbR}$ is the canonical retraction.
Then every line bundle on $G$ is trivial.
\end{lemma}
\begin{proof}
See \cite[Theorem 3.7.2]{Fresnel_Rigid_analytic_geometry_and_its_applications}, and \cite[Tag 0BCH]{Stacks_project}.
\end{proof}

We will also want at various points a formal model for the glued space $Z$.
For this we use:

\begin{construction}[Formal Models] \label{const:polyhedraldecomp}
We have the (identified)
polyhedra $\cG_f \subset \Sk(U)$, and $\cG_f \subset M_{f,\bbR}$,
satisfying \cref{ass:G_f}. By \cref{prop:GluingEnds} we can find blowups
$\bbY' \to \bbY$, $\bbT_f' \to \bbT_f$, (where $\bbT_f$ is the constant model for the corresponding toric variety) and isomorphic Zariski open subsets $\bbG_f \subset \bbY'$,
$\bbG_f \subset \bbT_f'$, so that $\bbZ \coloneqq \bbY' \cup_{f,\bbG_f} \bbT_f'$ has generic
fibre $Z = Y \cup_{f,G_f} T_f$. Moreover we can find a 
a line bundle $A_s$ on $\bbZ_s$ whose restriction to each irreducible component of the
central fibre $\bbZ_s$ is ample. 
\end{construction}

With the formal models from \cref{const:polyhedraldecomp}, \cref{ass:basicmod} are satisfied.  In particular we have the
modular $k$-analytic stacks $\oM_{0,N}(Z,m)$ and $\oM_{0,N'}(Z,m)$.
In order to define our counts we impose further
conditions on contact with the boundary: 

\begin{definition} \label{def:Mdef} Let $\oM_{0,N}(Z,D_{Z},k) \subset \oM_{0,N}(Z,k)$ be stable maps with the properties:
\begin{enumerate}[wide]
\item $\deg(f^{*}(A)) =k$, $\deg(f^{*}(K_{Z} + D_{Z})) = 0$.
\item Scheme theoretically $f^*(D_f)$ vanishes to order at least $m_f$ at $p_{e_f}$, and $\deg(f^*(D_f)) = m_f$ for all $f \in F$
\item For each $b \in \Bnd$, let $v_1,\dots,v_s \in \Sk(U,\bbZ)$
be primitive generators for the edges of the minimal cone of $\Sigma$ that contains $P_b$, and $D_1,\dots,D_s \subset D^{\ess}$ the corresponding irreducible components.
Let $P_b = \sum m_i v_i$ (note the expression is unique, as the cones are simplicial).
We require for each $i$ that $f^{-1}(D_i)$ vanish scheme theoretically at $p_b$ to order at least $m_i$, and $\deg(f^*(D_i)) = m_i$.
\end{enumerate}

If we drop $k$ from the notation then we drop the first condition in (1).
\end{definition}

\begin{lemma}   \label{lem:smft}    $\oM_{0,N}(Z)$ is a $k$-analytic stack. 
  $\oM_{0,N}(Z,D_{Z}) \subset \oM_{0,N}(Z)$ is a Zariski closed $k$-analytic substack. 
\end{lemma}
\begin{proof} It is easy to see the defining conditions are Zariski closed, 
  see e.g. \cite[5.1]{Keel_Rational_curves}.   $\oM_{0,N}(Z)$ is a $k$-analytic stack,
  by \cite[8.10]{Yu_Gromov_compactness}, so the same holds for this Zariski closed substack. 
  \end{proof}

  Now we introduce the moduli space we will use for counts.  We have the moduli spaces $M, M'$ of \cref{lem:equivdef}
  and  \cref{def:cM'def}, which we
  henceforth refer to as $M_{\gen}$, $M'_{\gen}$, reserving the simpler notation for: 
  
  \begin{definition-lemma}  \label{def:bms}
    Let
    $$
    M(m)  \coloneqq  M_{\gen}(m) \cap \oM_{0,N}(Z,D_{Z},m)_{\cV_M} \subset \oM_{0,N}(Z,m).
    $$
    $M(m) \subset M_{\gen}(m)$ is Zariski closed. It is a $k$-analytic stack without boundary. 
    
    Let
    $$
    M'(m) \coloneqq M'_{\gen}(m) \cap \oM_{0,N'}(Z,D_{Z},m)_{\cV_{M}'} \subset \oM_{0,N'}(Z,m)_{\cV_{M}'}.
    $$

    $M'(m) \subset M'_{\gen}(m)$ is Zariski closed. It is a $k$-analytic stack, and
    $M'(m) \to V'_M \times_{f \in F} H_f$
    is of finite type. 
    
    Let $M'_{\epsilon} \coloneqq M'_{V_{M,\epsilon}'}$.
\end{definition-lemma}
\begin{proof} The first paragraph is immediate from \cref{prop:noboundary} and \cref{lem:smft}. The second paragraph is immediate
  from \cref{prop:VM'ft} and \cref{lem:smft}. \end{proof}

For our counts the following will be the main object:

\begin{definition-lemma} \label{def:MUk}

  We define $M(U,m) \subset M(m)$ to be the locus of stable curves $[h: C \to Z] \in M$
  such that no irreducible component of the domain maps into $\partial Z$.
More precisely,
such that $h(C^{\circ}) \subset V$, where $C^{\circ} \subset C$ is the complement of the marked points from $\Bnd \cup F$. 

This implies  that scheme theoretically $h^{-1}(D_f) = m_f p_{e_f}$ for $f \in F$, and in the notation of (2) of \cref{def:Mdef}, $f^{-1}(D_i) = m_i p_b$,
for all $b \in \Bnd$ and all $i$. Moreover, it implies 
$h(p_b) \in S_{P_b}^{\circ}$, $h(p_{e_f}) \in D_f^{\circ}$, for all $b \in \Bnd$, $f \in F$,
where $D_f^{\circ} \subset D_f$ is the natural interior of this boundary divisor (the complement to
the union of all other boundary strata), and $S_{P_b}^{\circ} \subset S_{P_b}$ is similarly defined as
the natural interior of this stratum:
$S_{P_b} \coloneqq \cap_{i} D_i$ (notation as in  (2) of \cref{def:Mdef}), and
$S_{P_b}^{\circ} \subset S_{P_b}$ is the complement to the union of all other boundary strata.

$M(U,m) \subset M(m)$ is a Zariski open substack, without boundary.

Let $M'(U,m) \coloneqq M(U,m) \cap M' \subset M(m)$,
$M'(U,m)_{\epsilon} \coloneqq M(U,m) \cap M'_{\epsilon} \subset M(m)$.  These are Zariski open substacks.
We write $M(U), M'(U)$ for the union over all $m$.  

\end{definition-lemma}
\begin{remark} \label{rem:prime}
Note: Whenever we discuss a primed version of the moduli space, we have increased the label set from $N$ to $N'$.
Note all the unprimed versions still make sense with the larger index set. 
If this is the index set we want we will add extra decoration, e.g.\ $M(U,m,N')$.
\end{remark}
\begin{proof}  The complement of the defining condition is that an irreducible component of $C$ maps into
  the boundary $D_{Z}$, it's easy to see this condition (that Zariski closed map to Zariski closed) is Zariski closed. So these
  are open substacks.  For $f \in M(U,m)$, $f^{-1}(D_{Z}) \subset C$ is an effective Cartier divisor, so the implication in the
  second paragraph follows from the degree conditions in (1) and (3) of \cref{def:Mdef}.
  \end{proof} 


The main result of this section is:

\begin{proposition} \label{prop:cad} We assume \cref{ass:G_f}.

The following hold
\begin{enumerate}[wide]
\item Let $\cW^c \subset \bigtimes_{f \in J} \oSk(U)$ be the product of compact subsets contained in complements of $\Wall_k$, notation as in \cref{def:walldef} and $W^c \subset \bigtimes_{f \in J} Y$ its inverse image.
  The natural inclusions
  \begin{align*}
    M'(U,k)_{W^c, V'_{M,\epsilon}} &\subset M(U,k,N')_{W^c,V'_{M,\epsilon} }\\
    M'_{W^c, V'_{M,\epsilon}} &\subset M_{W^c,V'_{M,\epsilon}}
  \end{align*}
  are equality (see \cref{rem:prime}) for sufficiently small $\epsilon > 0$.
\item For sufficiently small $\epsilon > 0$,
$M_{W^c,V'_{M,\epsilon}} \to W^c \times V'_{M,\epsilon}$
is proper.
\end{enumerate}
\end{proposition}
\begin{proof}
For (1): the second equality implies the first. For the first:
We choose $\epsilon > 0$ satisfying \cref{lem:epsilon}, and consider $f:C \to Z$ in $M$.
The image of the skeleton (the interval $[-\epsilon,\epsilon]$ of \cref{lem:epsilon}) meets $\cH_f $ and $W^c$, so by \cref{lem:epsilon},
the skeleton maps into $\cG_f \cap \Wall_k^c$.
Since this image is disjoint from walls, all twigs are contracted:
see \cref{ssec:brtc}, \cref{prop:tree_bound} and \cref{def:walldef}.
It follows that the full annulus maps into $G_f$, which gives the equality. 
Now (2) follows from \cref{prop:noboundary} and \cref{prop:VM'ft} (proper is
equivalent to no boundary together with finite type). 
This completes the proof.
\end{proof}




\begin{lemma} \label{lem:tms}
	
	Let $\ff\colon \fC \to \fZ$ be a map between formal schemes locally of finite presentation over $\kc$, with generic fibre $f\colon C \to Z$. Let $S_s \subset \fC_s$, $T_s \subset \fZ_s$ be Zariski locally closed and $S \coloneqq r^{-1}(S_s) \subset C$, $T \coloneqq r^{-1}(T_s)$, where $r$ denotes the reduction map from the generic fibers to the special fibers.
	The following hold:
	\begin{enumerate}
		\item $S \subset C$, $T \subset \fZ$ are analytic domains.
		\item If $f_s(S_s) \subset T_s$ then $f(S) \subset T$, and the converse holds if $\fC$ is flat over $\kc$.
		\item  If $S_s \subset \fC_s$ is Zariski closed (respectively open) then $S \subset C$ is an open (respectively closed) analytic domain.
		\item If $S_s \subset \fC_s$ is Zariski open, $f_s(S_s) \subset T_s$ and $f_s\colon S_s \to T_s$ is separated of finite type, then $f\colon S \to T$ (see (2)) is separated of finite type.
		\item If $f_s(S_s) \subset T_s$ and $f\colon C_s \to T_s$ is proper, then $f\colon S \to T$ is without boundary. 
	\end{enumerate}
\end{lemma}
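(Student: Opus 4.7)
The plan is to reduce every assertion to classical compatibility facts between the reduction map $r\colon C \to \fC_s$ and formal models. The core input will be the anticontinuity of $r$: preimages of Zariski-open subsets are closed analytic domains (in fact the generic fibres of the corresponding open formal subschemes), while preimages of Zariski-closed subsets are open analytic domains (their complements). Writing a Zariski locally closed $S_s = U_s \cap V_s$ as an intersection of Zariski-open and Zariski-closed, this will give (1) and (3) as direct reformulations.

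For (2), I will use the tautological compatibility $r \circ f = f_s \circ r$: if $x \in S$ then $r(x) \in S_s$, so $f_s(r(x)) \in T_s$ and hence $f(x) \in r^{-1}(T_s) = T$. For the converse under the flatness hypothesis, I plan to invoke that flatness of $\fC$ over $\kc$ forces $r$ to be surjective (any point of $\fC_s$ lifts to a $k$-analytic point, via a valuative-criterion argument over completed local rings), so the inclusion $f(S) \subset T$ descends under $r$ to the desired $f_s(S_s) \subset T_s$.

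For (4), I would first observe that the Zariski open $S_s$ corresponds to an open formal subscheme $\fS \subset \fC$ with generic fibre $S$, and similarly $T_s$ to $\fT \subset \fZ$. The inclusion $f_s(S_s) \subset T_s$ together with openness of $\fT$ will force $\ff(\fS) \subset \fT$, producing a morphism $\ff|_\fS\colon \fS \to \fT$ locally of finite presentation. Separatedness (closedness of the diagonal) and finite type (quasi-compactness) are stable under nilpotent thickenings, so they transfer from the central fibre to the formal morphism and then descend to the generic fibre by the standard generic-fibre functor.

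For (5), I expect the central-fibre properness hypothesis to upgrade to properness of $\ff|_\fS$ as a morphism of formal schemes, from which I would conclude $f\colon S \to T$ is without boundary by invoking the classical characterization of the relative analytic boundary via proper formal models. I anticipate this last step to be the main obstacle, since ``without boundary'' (as opposed to ``compact'') is delicate and requires working relatively; the cleanest route I see is to cover $\fT$ by affine formal opens, take preimages in the proper formal $\fS$, and check directly that each resulting affinoid in $S$ lies relatively compactly over the base, exhibiting the defining neighborhood condition for boundarylessness.
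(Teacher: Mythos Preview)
Your treatment of (1)--(3) is correct and coincides with the paper's, which simply cites the basic anticontinuity properties of the reduction map from Berkovich's foundational work.

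For (4)--(5) the paper proceeds differently: it invokes Temkin's theory of reduction of germs (\cite[\S 4--5]{Temkin_On_local_properties_II}), which gives a \emph{pointwise} criterion---a morphism of $k$-analytic spaces is separated, finite type, or boundaryless at a point $x$ exactly when the induced map of germ-reductions $\widetilde{(S,x)} \to \widetilde{(T,f(x))}$ has the corresponding scheme-theoretic property. For analytic domains of the form $r^{-1}(S_s)$, these germ-reductions are computed directly from the local data of $S_s$ at $r(x)$, so the hypotheses on $f_s\colon S_s \to T_s$ transfer immediately.

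Your global formal-model approach runs into two concrete gaps. In (4) you write ``similarly $T_s$ to $\fT \subset \fZ$'', but $T_s$ is only assumed Zariski \emph{locally closed}, so there is no open formal subscheme $\fT$ with special fibre $T_s$. One can repair this by choosing a Zariski open $U_s \supset T_s$ in which $T_s$ is closed, working with the formal morphism $\fS \to \fU$, and then noting that $T \subset \fU_\eta$ is open so the conclusion descends; but this is not what you wrote. In (5) the situation is worse: the statement does not assume $S_s$ is open either, so you have no open formal subscheme $\fS$ to start from, and your proposed argument (upgrade to a proper formal morphism, then read off boundarylessness) has no formal morphism to upgrade. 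The step you flag as ``the main obstacle'' is genuinely one, and the affinoid-cover sketch you offer does not address how to produce the required relative-interior neighborhoods when neither side has a clean formal model. Temkin's local theory is precisely the tool that bypasses the need for such global models.
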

\begin{proof}
	(1-3) follow from the properties of the reduction map (see \cite[\S 2.4]{Berkovich_Spectral_theory} and \cite[\S 1]{Berkovich_Vanishing_cycles_for_formal_schemes}).
	(4-5) follow from Temkin's theory of reduction of germs (see \cite[\S 4-5]{Temkin_On_local_properties_II}).
      \end{proof}

      \begin{corollary} \label{cor:familytms}
        Let $\fc: \fC \to \fM$, and $\ff: \fC \to \fZ$ be maps
        between formal schemes locally of finite presentation over $\kc$, with generic fibres $c\colon C \to Z$, and $f: C \to Z$.
        Let $S_s \subset \fC_s$, $T_s \subset \fZ_s$ be Zariski locally closed and $S \coloneqq r^{-1}(S_s) \subset C$, $T \coloneqq r^{-1}(T_s)$, where $r$ denotes the reduction map from the generic fibers to the special fibers.
        Let $G \coloneqq  \{x \in M| f(S_x) \subset T\}$, and
        $\fG_s \coloneqq \{r \in \fM_s| \ff(S_{s,r})
        \subset T_r\}$.

        Then $r^{-1}(\fG_s) \subset G$, with equality if $\fc$ is flat.

        Assuming $\fc$ is flat, the following hold:
        \begin{enumerate}
        \item $G \subset M$ is an intersection of an open and
          a closed analytic domain. 
        \item If $S_s \to \fM_s$ is proper and $T_s \subset \fZ_s$
          is open, then $\fG_s \subset \fM_s$ is Zariski open, and
          $G \subset M$ is a closed analytic domain.
        \item If $S_s \subset \fC_s$ is open and $T_s \subset \fZ_s$
          is closed, then $\fG_s$ is closed, and
          $G \subset M$ is an open analytic domain.
        \item If $\fG_s$ is proper then $G \subset M$ is an open
          analytic domain, and $G$ has no boundary.
        \end{enumerate}

      \end{corollary}

      \begin{proof} Take $x \in M$. We have the associated
        $x: \Sp(H(x)^{\circ}) \to \fM$. We pullback the formal
        model to obtain formal model for $f: C_x \to Y \times_k H(x)$,
        with central fibre $\ff: (\fC_s)_{r(x)} \to \fY_s \times_k \tilde{H(x)}$.
        Now the first statement follows from \cref{lem:tms}. Now
        assume $\fc$ is flat. The condition that proper map to open
        is Zariski open. The condition that open map to closed is open
        (because $\fC_s \to \fM_s$ is flat, thus open). The remaining
        statements now follow from \cref{lem:tms}. This completes
        the proof. \end{proof}

The following is obvious:

\begin{lemma} \label{lem:epsilon}
  Let $W^c \subset \Sk(U)$ be  compact subset in the complement
  of $\Wall_k$. 
There exists $\epsilon > 0$ so that any affine map 
$h:[-\epsilon,\epsilon] \to \Sk^{\sm}(U)$ with derivative
$v_f$, as in \cref{const:data1}, whose image intersects $W^c$, has
image disjoint from $\Wall_k$, and any such map whose image intersects
$\cH_f$ is contained in $\cG_f$. 
\end{lemma}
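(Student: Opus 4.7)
The plan is to reduce the claim to a pair of elementary compactness statements, once we have established that an affine map with constant derivative $v_f$ is Lipschitz in a suitable reference metric on $\Sk(U)$.

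First I would fix any reasonable metric $d$ on the topological manifold $\Sk(U)$, for instance the one induced by the canonical $\Sigma$-piecewise affine embedding $\Sk(U) \subset \overline{\bbR}^{D^{\ess}}$ of \cite[2.1]{Keel_Yu_The_Frobenius}. Under the integer affine structure on $\Sk^{\sm}(U)$ (see \cref{sec:integeraffine}), the vector $v_f\in M_f$ specifies a constant-speed piecewise linear curve; parallel transport of $v_f$ across a codimension-one wall of $\Sigma$ is well-defined because $\Sk^{\sm}(U)$ excludes only codimension-two strata, and the derivative is piecewise constant across such walls in the corresponding linear charts. A compactness argument on any bounded neighbourhood of the image shows that every such $h$ is Lipschitz with a constant $L>0$ depending only on $v_f$ and $d$.

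Next I would extract two positive distances. Since $W^c$ is compact by hypothesis and $\Wall_k$ is a finite union of closed rational polyhedra in $\Sigma$ (\cref{def:walldef}) disjoint from $W^c$, the distance $\delta_1 \coloneqq d(W^c, \Wall_k)$ is strictly positive. Likewise, by \cref{const:data} the open set $H_f$ has closure contained in the interior of the compact polyhedron $G_f$, so $\overline{H_f}$ is compact and disjoint from the closed complement of $\interior(G_f)$, giving $\delta_2 \coloneqq d(\overline{H_f},\, \Sk(U)\setminus \interior(G_f)) > 0$. Setting $\epsilon \coloneqq \frac{1}{2L}\min(\delta_1,\delta_2)$, the image of any $h\colon [-\epsilon,\epsilon]\to \Sk^{\sm}(U)$ with derivative $v_f$ has diameter strictly less than $\min(\delta_1,\delta_2)$ in $d$. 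If the image meets $W^c$ at some point $p$, every other image point is within distance $\delta_1$ of $p\in W^c$, hence outside $\Wall_k$; if the image meets $H_f$, it stays within distance $\delta_2$ of $\overline{H_f}$ and therefore inside $G_f$.

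The only mildly subtle point, and the reason the lemma is labelled "obvious" rather than entirely trivial, is the Lipschitz estimate: the vector $v_f$ is only literally constant inside a single maximal cone of $\Sigma$, so across codimension-one walls one must invoke continuity of the integer affine structure along $\Sk^{\sm}(U)$ to conclude that the speed of $h$ with respect to $d$ remains uniformly bounded. Once this is granted, the rest is purely metric-geometric and requires no further input from the log Calabi-Yau setup.
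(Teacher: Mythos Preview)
Your proposal is correct. The paper gives no proof at all, simply prefacing the lemma with ``The following is obvious:'', and your argument is a faithful unpacking of that obviousness via standard compactness: positive distance from the compact $W^c$ to the closed $\Wall_k$, positive distance from $\overline{H_f}$ to the complement of $G_f$, and a uniform speed bound on affine maps with fixed derivative $v_f$. Your discussion of parallel transport across codimension-one cones is slightly more than needed, since the integer affine structure of \cref{sec:integeraffine} is by construction smooth on $\Sk^{\sm}(U)$ and so ``affine with derivative $v_f$'' already makes global sense there; but this does no harm.
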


\subsection{The smooth locus $M^{\sm}(U,k)$} \label{sec:smoothloc}

\begin{definition-lemma} \label{def:Msd} Notation as in \cref{def:MUk}.
Let $M^{\sd}(U,k)
\subset M(U,k)$ be the substack of stable maps whose $N$-pointed domain curve is stable.
This is Zariski open, and an analytic space (as opposed to a stack).
\end{definition-lemma}
\begin{proof} It is clear the locus is open, it is an analytic space as stable $N$-pointed rational curves have no automorphisms.
\end{proof}

\begin{lemma} \label{lem:Msm_smooth}
Let $\mu=[C,(p_j)_{j\in N}]\in\oM_{0,n}$ be a rigid point.
Let $q\in C$ be any rational point not belonging to $\{p_i\}_{i\in F_E \cup \Bnd}$.
Let $M^\sd(U,\bP,\beta)_\mu$ be the fiber of the map $\dom\colon M^\sd(U,\bP,\beta)\to\oM_{0,n}$ over $\mu$.
Let $\nu=[f\colon C\to Y]$ be a rational point of $M^\sd(U,\bP,\beta)_\mu$.
The following are equivalent:
\begin{enumerate}[wide]
\item \label{lem:Msm_smooth:pullback} The pullback $f^*(T_Z(-\log D))$ is a trivial vector bundle on $C$.
\item \label{lem:Msm_smooth:surjective} The derivative $d\ev_q$ of the evaluation map $\ev_q\colon M^\sd(U,\bP,\beta)_\mu\to Y$ is surjective at $\nu$.
\item \label{lem:Msm_smooth:smooth_fiberwise} The evaluation map $\ev_q\colon M^\sd(U,\bP,\beta)_\mu\to Z$ is smooth at $\nu$.
\item \label{lem:Msm_smooth:smooth} For any $i\in I \cup F$, the map $\Phi_i\coloneqq(\dom,\ev_i)\colon M^\sd(U,\bP,\beta)\to\oM_{0,n}\times Z$ is smooth at $\nu$.
\end{enumerate}
Moreover, under the equivalent conditions above, the following hold:
\begin{enumerate}[wide, label=(\roman*), ref=\roman*]
\item \label{lem:Msm_smooth:etale} The maps $\ev_q$ and $\Phi_i$ above are in fact étale at $\nu$.
\item \label{lem:Msm_smooth:boundary} For any $i\in \Bnd \cup F_E$, the maps \[\ev_i\colon M^\sd(U,\bP,\beta)_\mu\to D_i,\]
\[\Phi_i^\partial=(\dom,\ev_i)\colon M^\sd(U,\bP,\beta)\to\oM_{0,n}\times D_i\]
are smooth at $\nu$.
\end{enumerate}
\end{lemma}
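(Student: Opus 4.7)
This is a direct analogue of the smoothness/étaleness criterion \cite[Lemma 8.5]{Keel_Yu_The_Frobenius}, and my plan is to follow the same logarithmic deformation-theoretic argument, with the target there (an algebraic variety) replaced here by the glued non-separated Berkovich space $Z$ of \cref{const:Z}. The foundational input is the identification, at the rigid point $\nu$, of the tangent and obstruction spaces of the fibre $\cM^\sd(U,\bP,\beta)_\mu$ as $H^0(C, f^*T_Z(-\log \partial Z))$ and $H^1(C, f^*T_Z(-\log \partial Z))$ respectively, where $\partial Z \coloneqq D \cup \bigsqcup_{f\in F} D_f$ is the full boundary; the logarithmic tangent sheaf automatically encodes the prescribed contact orders at the marked points $p_i$ with $i \in F_E \cup B$. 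For $Z$ a scheme this is classical, and in the present analytic generality it is supplied by the representability and cotangent-complex theory of \cite{Porta_Yu_Representability_theorem,Porta_Yu_Derived_Hom_spaces} recalled in \cref{sec:deformation_theory}. Since $q$ is not among the boundary-mapping marked points, $f(q)\in V$ and $f^*T_Z(-\log \partial Z)|_q = T_{f(q)}Z$, so $d\ev_q$ is the evaluation-on-sections map.

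The core of the argument is then a cohomological observation on the rational curve $C$. By \cref{def:Mdef}(1), $\deg_C f^*(K_Z+\partial Z)=0$, so the rank-$d$ bundle $E \coloneqq f^*T_Z(-\log \partial Z)$ has total degree zero; since $C$ has arithmetic genus zero, the standard rational-curve splitting argument (cf. the proof of \cite[Lemma 8.5]{Keel_Yu_The_Frobenius}) shows that $E$ is trivial iff it is globally generated iff $H^1(C,E)=0$ together with surjectivity of $H^0(C,E)\to E_q$ at $q$. Combined with the deformation-theoretic identifications above, this immediately yields the three equivalences \ref{lem:Msm_smooth:pullback}$\Leftrightarrow$\ref{lem:Msm_smooth:surjective}$\Leftrightarrow$\ref{lem:Msm_smooth:smooth_fiberwise}. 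For the equivalence with \ref{lem:Msm_smooth:smooth}, I would observe that under \ref{lem:Msm_smooth:pullback} the map $\dom$ is itself smooth at $\nu$ (by $H^1=0$, so that deformations of $f$ extend over any infinitesimal deformation of the domain), and therefore smoothness of $\Phi_i$ decomposes as smoothness of $\dom$ plus smoothness of $\ev_{p_i}$ on the fibre, which is \ref{lem:Msm_smooth:smooth_fiberwise} applied with $q = p_i$.

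The étaleness claim \ref{lem:Msm_smooth:etale} reduces to a dimension count: under the equivalent conditions $\cM^\sd(U,\bP,\beta)_\mu$ is smooth at $\nu$ of dimension $h^0(C,\cO_C^{\oplus d}) = d = \dim Z$, and $\cM^\sd(U,\bP,\beta)$ is smooth of dimension $\dim\ocM_{0,n}+d$, matching $\dim(\ocM_{0,n}\times Z)$; a surjective smooth map between equidimensional smooth analytic spaces is étale. For claim \ref{lem:Msm_smooth:boundary} with $i \in B\cup F_E$, the Poincaré residue sequence provides a canonical surjection $T_Z(-\log \partial Z)|_{D_i}\twoheadrightarrow T_{D_i}$, so the differential of $\ev_i\colon\cM^\sd(U,\bP,\beta)_\mu\to D_i$ factors as
\[H^0(C, f^*T_Z(-\log\partial Z)) \twoheadrightarrow E|_{p_i} \twoheadrightarrow T_{D_i, f(p_i)},\]
both arrows surjective under \ref{lem:Msm_smooth:pullback}; combining with the smoothness of $\dom$ gives the smoothness of $\Phi_i^\partial$.

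The main obstacle is not the linear algebra on $C$, which is elementary once triviality of $E$ is in hand, but the foundational claim that deformations of stable maps into the non-separated analytic target $Z$ preserving the prescribed contact orders along $\partial Z$ really are controlled by $H^0$ and $H^1$ of $f^*T_Z(-\log\partial Z)$. This is precisely what the Porta–Yu representability theorem delivers, so once that input is cited there is no new substantive difficulty beyond what already appears in \cite{Keel_Yu_The_Frobenius}.
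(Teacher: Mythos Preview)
Your proposal is correct and takes essentially the same approach as the paper: the paper's proof simply notes that the statement is identical to \cite[3.6]{Keel_Yu_The_Frobenius} and that the same argument applies once the deformation-obstruction theory for $\mathbf{Map}_S(X,Y)$ is generalized to the analytic target $Z$, which is precisely what \cref{sec:deformation_theory} (via Porta--Yu) provides. You have spelled out in detail what the paper leaves implicit in that citation; the only discrepancy is that you cite \cite[Lemma~8.5]{Keel_Yu_The_Frobenius} where the paper cites \cite[3.6]{Keel_Yu_The_Frobenius}, so you should double-check the numbering.
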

\begin{proof} We note the statement is exactly the same as that of \cite[3.6]{Keel_Yu_The_Frobenius}.
We can use the same proof once we generalize the basic deformation-obstruction theory for $\mathbf{Map}_S(X,Y)$ to the present context.
We carry this out in \cref{sec:deformation_theory}.
\end{proof}

We make the following definition, just as in \cite[3.4]{Keel_Yu_The_Frobenius}:
\begin{definition-lemma}  \label{def:cMsm}
Let $M^\sm(U,k) \subset M^{\sd}(U,k)$ be the locus of maps satisfying \cref{lem:Msm_smooth} (\ref{lem:Msm_smooth:pullback}).
The following hold:
\begin{enumerate}[wide]
\item $\Phi_i\colon M^\sm(U,k) \to \cV_M \times \cH$ are \'etale and boundaryless for any $i \in I \cup F$.
\item $M^{\sm}(U,k)$ is smooth and boundaryless.

\item $M^\sm(U,k) \subset M^{\sd}(U,k)$ is Zariski open.
\end{enumerate}
\end{definition-lemma}
\begin{proof} Only the final statement requires proof, and that follows from \cref{lem:Msm_smooth}(\ref{lem:Msm_smooth:smooth}).
\end{proof}

The stable maps in $M^{\sm}(U,k)$ are {\it free}, in the sense of \cite[II.3]{Kollar_Rational_curves_on_algebraic_varieties}:

\begin{definition-lemma} \label{def:MsmX} Let $X \subset Z$ be a closed analytic subspace of codimension at least two.
Then for any $i \in N$, $\ev_i^{-1}(X) \subset M^{\sm}(U,k)$ is a lower dimensional analytic subspace.
Its complement $M^{\sm}(U,X^c,k) \subset M^{\sm}(U,k)$ is a dense Zariski open subset.
\end{definition-lemma}
\begin{proof} This is immediate from the smoothness of the evaluation maps in \cref{lem:Msm_smooth}.
See \cite[3.9]{Keel_Yu_The_Frobenius}.
\end{proof}

Stable domain is a useful notion when $U$ contains no complete rational curves -- it will hold for sufficiently generic
domain curve and marked point, see \cref{lem:ISknodes}. For the general case, a weakening is more useful:
\begin{definition} \label{def:sot}
  
 Let  $x \coloneqq (f:(C,N) \to Z) \in M(U,\bP)$. Let $C_m \subset C$ be the {\it convex hull} of
the marked points.  More precisely, we take the union of irreducible components corresponding
to vertices of the dual graph that lie on the convex hull of the marked points (the $_m$ stands for {\it main
  components}, we will sometimes refer to the other irreducible components as {\it twigs} ). Then
we have $x_m \coloneqq (f: (C_m,N) \to Z) \in M(U, \bP, \beta')$ for some class $\beta'$ depending on $x$.
We let $M^{\sot}(U,\bP) \subset M(U,\bP) $ be the locus with $(C_m,N)$ stable.  It is easy to check this is
Zariski open ($\sot$ stands for stable off twigs).   Let
$M^{\smot}(U,\bP) \subset M^{\sot}(U,\bP)$ be the locus where $x_m$ lies in $M^{\sm}(U)$ as in \cref{def:cMsm}. This
is Zariski open ($\smot$ stands for smooth off twigs). As in \cref{def:MsmX}, we define
Zariski dense open $M^{\smot}(U,X^c,k) \subset M^{\smot}(U,k)$. 
\end{definition}

\subsection{Non-archimedean deformation theory} \label{sec:deformation_theory}

\begin{proposition} \label{prop:truncation_of_L}
Let $X$ be a derived \kanal space locally of finite presentation.
We have $\pi_0(\bbL^\an_X)\simeq\Omega_{\trunc X}$, and for any rigid point $x\in\trunc X$, $\pi_0(\bbT^\an_X(x))$ is isomorphic to the Zariski tangent space of $\trunc X$ at $x$.
\end{proposition}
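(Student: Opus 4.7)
The plan is to reduce both assertions to one foundational fact: that $\pi_0$ of the analytic cotangent complex depends only on, and computes the analytic Kähler differentials of, the truncated structure sheaf. First I would recall that by the construction in \cite{Porta_Yu_Derived_Hom_spaces,Porta_Yu_Representability_theorem}, $\bbL^\an_X$ is a connective quasi-coherent complex on $X$ corepresenting analytic derivations, whose formation is compatible with pullback. The canonical closed immersion $i\colon \trunc X \hookrightarrow X$ produces a cofiber sequence
\[
i^*\bbL^\an_X \longrightarrow \bbL^\an_{\trunc X} \longrightarrow \bbL^\an_{\trunc X/X}
\]
of connective quasi-coherent complexes on $\trunc X$. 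The intermediate claim is that $\bbL^\an_{\trunc X/X}$ is $1$-connective, because $i$ is an isomorphism on $\pi_0$ of structure sheaves; this is the analytic analog of the standard algebraic fact that the relative cotangent complex of the truncation embedding is $1$-connective. The long exact sequence of homotopy sheaves then gives $\pi_0(i^*\bbL^\an_X)\simeq \pi_0(\bbL^\an_{\trunc X})$.

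Next, $\trunc X$ is an ordinary \kanal space locally of finite presentation. In this setting, the $\pi_0$ of the analytic cotangent complex is identified with the classical sheaf of analytic Kähler differentials $\Omega_{\trunc X}$: this is forced by the universal property of corepresenting analytic derivations together with the observation that on an ordinary space a connective complex is determined in degree zero by its classical truncation. Combining with the previous step yields $\pi_0(\bbL^\an_X)\simeq \Omega_{\trunc X}$, which is the first assertion.

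For the tangent-space statement, fix a rigid point $x\in\trunc X$ with residue field $k(x)$. Unwinding the definition, $\bbT^\an_X(x)$ is the mapping $k(x)$-module spectrum $\Map_{k(x)}(x^*\bbL^\an_X, k(x))$. Applying $\pi_0$ and invoking the first part,
\[
\pi_0 \bbT^\an_X(x) \;\simeq\; \Hom_{k(x)}\!\big(\pi_0(x^*\bbL^\an_X), k(x)\big) \;\simeq\; \Hom_{k(x)}\!\big(\Omega_{\trunc X,x}\otimes_{\cO_{\trunc X,x}} k(x),\, k(x)\big),
\]
which is by definition the Zariski tangent space of $\trunc X$ at $x$.

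The main obstacle I expect to be delicate is verifying the $1$-connectivity of $\bbL^\an_{\trunc X/X}$ and the identification of $\pi_0(\bbL^\an_{\trunc X})$ with $\Omega_{\trunc X}$ for an ordinary \kanal space. Both are foundational results in derived analytic geometry that parallel the algebraic case, but in the \kanal setting they require care about the particular representability and universal-property framework of \cite{Porta_Yu_Derived_Hom_spaces,Porta_Yu_Representability_theorem}. Once these facts are available, the rest of the argument is a purely formal long-exact-sequence chase together with unwinding of definitions, with no further geometric input needed.
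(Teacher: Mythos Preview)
Your approach is correct but more circuitous than the paper's. The paper does not pass through the cofiber sequence for $i\colon\trunc X\hookrightarrow X$ at all. Instead it argues directly from the universal property: for $M$ in the heart, connectivity of $\bbL^\an_X$ gives $\Map(\pi_0(\bbL^\an_X),M)\simeq\Map(\bbL^\an_X,M)\simeq\Map_{X/}(X[M],X)$, and the key input is \cite[Proposition~6.1]{Porta_Yu_Representability_theorem}, which says the split square-zero extension $X[M]$ is already underived when $M$ is discrete. Hence $\Map_{X/}(X[M],X)$ is just the set of classical derivations of $\cO_{\trunc X}$ into $M$, i.e.\ $\Map(\Omega_{\trunc X},M)$, and Yoneda finishes. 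The tangent-space statement is then one line, as in your argument.

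Your route first spends effort on the $1$-connectivity of $\bbL^\an_{\trunc X/X}$ to reduce to the underived case, and then still needs exactly the universal-property identification $\pi_0(\bbL^\an_{\trunc X})\simeq\Omega_{\trunc X}$---which is precisely the paper's argument specialized to an ordinary space. So the cofiber-sequence step is logically redundant: the paper's argument already works uniformly for derived $X$ without any reduction. What your approach buys is that it isolates the two foundational inputs (connectivity estimate for the relative cotangent complex, and the underived identification) as separate lemmas, which could be useful pedagogically; what the paper's approach buys is that it pinpoints the single sharp input---that $X[M]$ is underived for discrete $M$---and avoids the relative cotangent complex entirely.
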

\begin{proof}
For any $M\in\Cohh(X)$, we have \[\Map(\pi_0(\bbL^\an_X),M)\simeq\Map(\bbL^\an_X,M)\simeq\Map_{X/}(X[M],X)\]
where $X[M]$ denotes the analytic split square-zero extension of $X$ by $M$.
By \cite[Proposition 6.1]{Porta_Yu_Representability_theorem}, $X[M]$ is underived, so $\Map_{X/}(X[M],X)$ classifies derivations from $\cO_X$ to $M$.
Therefore, $\pi_0(\bbL^\an_X)$ has the same universal property as \cite[Proposition 3.3.1(ii)]{Berkovich_Etale_cohomology}.
This shows the first isomorphism.
Taking fiber at $x$ and then dual, we obtain the second isomorphism.
\end{proof}

\begin{proposition}
Let $X$ be a derived lci \kanal space, and $x\in\trunc X$ a rigid point.
If $\pi_1(\bbL^\an_X(x))=0$, then $\trunc X$ is regular at $x$.
\end{proposition}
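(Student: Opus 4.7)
The plan is to work locally at $x$, presenting $X$ as a derived zero locus inside a smooth \kanal ambient space, and then reading off regularity from the cotangent cofiber sequence combined with the analytic implicit function theorem.

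First, since $X$ is derived lci, I would invoke the local structure theorem (in the Porta--Yu derived analytic framework) to find an open neighborhood $U \subset X$ of $x$, a smooth \kanal space $Y$, and a closed immersion of derived \kanal spaces $j\colon U \hookrightarrow Y$ realizing $U$ as the derived vanishing locus of a section $(f_1, \dots, f_m)$ of a trivial bundle $\cO_Y^m$. The point $x$ corresponds to a rigid point $y \coloneqq j(x) \in Y$.

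Next, from the cofiber sequence
\[
  j^* \bbL^\an_Y \longrightarrow \bbL^\an_U \longrightarrow \bbL^\an_{U/Y}
\]
and the identification $\bbL^\an_{U/Y} \simeq \cO_U^m[1]$ (homological grading: $\pi_1 = \cO_U^m$, $\pi_0 = 0$) coming from the derived regular embedding, the long exact sequence on $\pi_*$ collapses (since $\bbL^\an_Y$ is locally free in degree $0$ and $\pi_2(\bbL^\an_{U/Y}) = 0$) to a four-term exact sequence
\[
  0 \longrightarrow \pi_1(\bbL^\an_U) \longrightarrow \cO_U^m \xrightarrow{\;d\;} j^*\Omega_Y \longrightarrow \pi_0(\bbL^\an_U) \longrightarrow 0,
\]
in which $d$ sends the standard basis vector $e_i$ to $df_i$. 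Taking fibers at $x$ and using \cref{prop:truncation_of_L} to identify $\pi_0(\bbL^\an_X)(x)$ with the Zariski cotangent space $T^*_x \trunc X$ yields
\[
  0 \longrightarrow \pi_1(\bbL^\an_X)(x) \longrightarrow k(x)^m \xrightarrow{\;df(x)\;} T^*_y Y \longrightarrow T^*_x \trunc X \longrightarrow 0.
\]

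The vanishing hypothesis $\pi_1(\bbL^\an_X)(x) = 0$ then forces $df_1(x), \dots, df_m(x)$ to be linearly independent in $T^*_y Y$. By the non-archimedean implicit function theorem, the $f_i$ can be completed to a system of analytic coordinates on $Y$ near $y$, so $\trunc X = V(f_1, \dots, f_m) \subset Y$ cuts out a regular analytic subspace of codimension $m$ at $x$; in particular $\trunc X$ is regular at $x$.

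The main obstacle is the first step: establishing the local presentation of a derived lci \kanal space as the derived vanishing locus of a section of a trivial vector bundle over a smooth ambient, together with the required computation of $\bbL^\an_{U/Y}$ in the Porta--Yu formalism. Once this foundational input is in place, the remainder is a straightforward exact-sequence chase finished off by the analytic implicit function theorem.
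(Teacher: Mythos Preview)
Your argument is correct, but it takes a different route from the paper. The paper's proof is a two-line dimension count: it cites \cite[Proposition 2.8]{Porta_Yu_Non-archimedean_quantum_K-invariants} for the inequality
\[
\dim \pi_0(\bbL^\an_X(x)) - \dim \pi_1(\bbL^\an_X(x)) \le \dim_x \trunc X \le \dim \pi_0(\bbL^\an_X(x)),
\]
so the vanishing of $\pi_1$ forces $\dim_x \trunc X = \dim \pi_0(\bbL^\an_X(x))$, and since by \cref{prop:truncation_of_L} the right-hand side is the dimension of the Zariski cotangent space, regularity follows immediately. Your approach instead builds an explicit local model: you realize $X$ locally as a derived zero locus inside a smooth ambient, read off from the cotangent cofiber sequence that the differentials $df_i(x)$ are independent, and then invoke the analytic implicit function theorem to conclude that the classical zero locus is smooth. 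This is more concrete and geometrically transparent---it exhibits an actual coordinate chart---but it front-loads the work into the local structure theorem for derived lci \kanal spaces, which (as you note) is the nontrivial foundational input. The paper's route trades that explicit presentation for a black-box dimension estimate, which makes the proof shorter but less self-contained. One small remark: when you ``take fibers at $x$'' of your four-term sequence, it is cleanest to first pull back the cofiber sequence along $x \to U$ (derived base change preserves cofiber sequences, and the outer terms stay in the expected degrees because $\Omega_Y$ and $\cO_U^m$ are free), and only then pass to the long exact sequence in $\pi_*$; this avoids any worry about exactness of ordinary fiber on the left.
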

\begin{proof}
Since $X$ is derived lci, by \cite[Proposition 2.8]{Porta_Yu_Non-archimedean_quantum_K-invariants}, we have \[\dim \pi_0(\bbL^\an_X(x)) - \dim \pi_1(\bbL^\an_X(x))\le\dim_x\trunc X\le \pi_0(\bbL^\an_X(x)).\]
By assumption, $\pi_1(\bbL^\an_X(x))=0$, so $\dim_x\trunc X = \dim \pi_0(\bbL^\an_X(x))$.
By \cref{prop:truncation_of_L}, $\pi_0(\bbL^\an_X(x))$ is isomorphic to the Zariski cotangent space of $\trunc(X)$ at $x$.
Therefore, $\trunc(X)$ is regular at $x$.
\end{proof}

\begin{proposition}
Let $S, X, Y$ be rigid \kanal spaces, $f\colon X\to S$ a proper flat morphism and $g\colon Y\to S$ an lci morphism.
Then the mapping stack $\mathbf{Map}_S(X,Y)$ is representable by a derived \kanal space locally of finite presentation over $S$.
Consider the canonical maps \[ \begin{tikzcd}
X \times_S \mathbf{Map}_S(X, Y) \arrow{r}{\mathrm{ev}} \arrow{d}{\pi} & Y \\
\mathbf{Map}_S(X, Y).
&
\end{tikzcd} \]
Then $\bbL^\an_{\mathbf{Map}_S(X,Y)/S}\simeq(\pi_* ( \mathrm{ev}^*( \bbL^\an_{Y/S} )^\vee ))^\vee$.
\end{proposition}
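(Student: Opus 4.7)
I will apply the derived non-archimedean representability theorem of \cite{Porta_Yu_Representability_theorem}, which reduces representability of an $\infty$-stack $\cF$ on derived $k$-analytic spaces over $S$ to four conditions: (i) $\cF$ is a hypercomplete analytic sheaf; (ii) $\cF$ is nilcomplete; (iii) $\cF$ is infinitesimally cohesive; and (iv) $\cF$ admits a global analytic cotangent complex. Applied to the functor $T\mapsto\mathbf{Map}_S(X_T,Y_T)$, this will deliver both representability and the claimed formula for $\bbL^\an_{\mathbf{Map}_S(X,Y)/S}$.

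\textbf{Formal conditions (i)--(iii).} These are inherited from the corresponding properties of internal mapping stacks in the category of derived $k$-analytic spaces, as set up in \cite{Porta_Yu_Derived_Hom_spaces}, combined with the fact that $f\colon X\to S$ is proper flat. Concretely, $\pi_*$ commutes with the limits, filtered colimits, and fibre products of square-zero extensions needed to detect (i)--(iii); these are exactly the inputs used in the derived algebraic case, transported across the GAGA-style comparison embodied by the Porta--Yu formalism.

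\textbf{Cotangent complex (iv).} Fix $\varphi\colon T\to\mathbf{Map}_S(X,Y)$, corresponding to $f\colon X_T\to Y_T$ over $S$, and $M\in\Cohh(T)$. By the universal property, the derivation space at $\varphi$ valued in $M$ is the mapping space of lifts of $f$ to $X_{T[M]}\to Y_{T[M]}$ over $S$. Flatness of $f$ gives $X_{T[M]}\simeq X_T[\pi_T^*M]$, and standard deformation theory for the lci morphism $g$ identifies such lifts with
\[
\mathrm{Map}_{X_T}\bigl(f^*\bbL^\an_{Y/S},\;\pi_T^*M\bigr).
\]
Since $g$ is lci, $\bbL^\an_{Y/S}$ is perfect, so dualising rewrites the above as $R\Gamma(X_T,\,f^*\bbT^\an_{Y/S}\otimes\pi_T^*M)$. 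Properness and flatness of $f$, via the analytic projection formula together with the preservation of perfect complexes under proper flat pushforward \cite{Porta_Yu_Derived_Hom_spaces}, then give
\[
R\Gamma\bigl(X_T,\,f^*\bbT^\an_{Y/S}\otimes\pi_T^*M\bigr)\simeq\mathrm{Map}_T\bigl(\cO_T,\,\pi_{T*}(f^*\bbT^\an_{Y/S})\otimes M\bigr),
\]
with $\pi_{T*}(f^*\bbT^\an_{Y/S})$ perfect on $T$. Comparing with the defining property of the cotangent complex identifies $\bbT^\an_{\mathbf{Map}_S(X,Y)/S}|_\varphi\simeq\pi_{T*}(f^*\bbT^\an_{Y/S})$; these fibrewise identifications assemble into the global statement $\bbT^\an_{\mathbf{Map}_S(X,Y)/S}\simeq\pi_*(\mathrm{ev}^*\bbL^\an_{Y/S})^\vee$, and dualising once more yields the announced formula.

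\textbf{Main obstacle.} The conceptual shape of the argument is a direct transplant of Toën--Vezzosi--Lurie's derived algebraic case; the real work is bookkeeping its derived $k$-analytic prerequisites: the analytic split square-zero extension, the analytic projection formula, and perfectness of proper flat pushforward of perfect complexes. All three are developed, essentially for this purpose, in \cite{Porta_Yu_Derived_Hom_spaces, Porta_Yu_Representability_theorem}, so the proof consists in assembling them carefully and verifying that the candidate $(\pi_*(\mathrm{ev}^*\bbL^\an_{Y/S})^\vee)^\vee$ meets the axioms of an analytic cotangent complex in the sense required by the representability theorem.
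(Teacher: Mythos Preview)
Your outline is correct and follows the standard route to such representability statements. The paper, however, does not reprove any of this: its entire proof is the single sentence ``It follows from \cite[Lemma 8.4 and Proposition 8.5]{Porta_Yu_Derived_Hom_spaces}.'' Those two results already package exactly the representability of $\mathbf{Map}_S(X,Y)$ and the cotangent complex formula under the stated hypotheses, so the paper simply cites them. What you have written is, in effect, a sketch of the content of those cited results---the verification of the hypotheses of the Porta--Yu representability theorem and the identification of the cotangent complex via the projection formula and perfectness of $\bbL^\an_{Y/S}$. So the approaches coincide; you have just unpacked one level of citation.
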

\begin{proof}
It follows from \cite[Lemma 8.4 and Proposition 8.5]{Porta_Yu_Derived_Hom_spaces}.
\end{proof}

\section{The proper central fibre} \label{sec:fcf}

We note above that stable analytic curves whose tropicalisation has
type $\cV_M \subset M_{0,N}^{\trop}$, i.e. curves in $V_M \subset \oM_{0,N} = \obbM_{0,N}^{\an}$,
notation as in \cref{def:VMdef},
have a canonical decomposition 
$\oC = \oB \sqcup_{f \in F} \oE_f$.  Here we consider the analogous
decomposition for stable algebraic curves (see \cref{rem:weird}): 

\begin{notation} In this section there will be no $k$-analytic spaces, or formal schemes, just
  schemes. So we will depart from the notation in the result of the paper and use font like $Y$ to
  indicate a scheme (whereas in the rest of the paper this font is usually for $k$-analytic spaces).
  Also $\oM_{0,N}$ is moduli of stable curves (whereas in the rest of the paper $\obbM_{0,N}$ is this
  moduli space and $\oM_{0,N} \coloneqq \obbM_{0,N}^{\an}$ is the $k$-analytic version). 
  \end{notation} 

\begin{construction} \label{const:stablecircles}
Consider a family of $N$-pointed semi-stable rational curves $C \to T$, with
(fibrewise) stabilization $s:C \to C'$. We assume each fibre of the stabilization
$C' \to T$ 
has dual graph of type $\cV_M$, and thus in particular has 
for each $f \in F$, a distinguished
irreducible component
corresponding to the distinguished point $\os$ of
the dual graph. This component
contains three special points, marked points $s_f,e_f$ 
and one other special point $n_f$, a node of the fibre  (or, in the special, and
especially simple, case $|N|=3$ $n_f \in N$ is the third marked point).
  
Together this gives a distinguished closed subfamily $S_f' \subset C'$,
a trivial $\bP^1$ bundle with three disjoint sections, $e_f,s_f,n_f$.
Let $E_f \coloneqq s^{-1}(S_f' \setminus \{n_f\}) \subset C $, 
$S_f \coloneqq s^{-1}(S'_f \setminus \{e_f,n_f\}) \subset C$.
Further define $E^{\circ}_f \coloneqq s^{-1}(e_f) \subset C$, the
scheme theoretic inverse image, which is thus Zariski closed. Note
(set theoretically) 
$E^{\circ}_f = E_f \setminus S_f$.

Let
$B \subset C$ be the Zariski open
$C \setminus \cup_{f \in F} E_f^{\circ}$,
and $B^{\circ}$ the
reduction of $C \setminus \cup_{f \in F} E_f$. We note
$C = B \cup_{f \in F} E_f$ is a decomposition into Zariski open subsets,
and $B \cap E_f = S_f$. Let $B^{\circ} \coloneqq B \setminus \cup_f S_f$,
with reduced structure (we will not need its scheme structure).
By construction we have 
a decomposition $C = B \cup_{f \in F} E_f$ into Zariski open subschemes
and a (set theoretic) disjoint union
$C = B^{\circ} \cup_{f \in F} S_f \cup_{f \in F} E_f^{\circ}$. 
\end{construction}

\begin{remark} \label{rem:weird} The (perhaps weird) notation is chosen for application in \cref{sec:enddisk}, for
  example in the proof of 
\cref{prop:noboundary}, e.g.
$E_f^{\circ}$ (resp. $E_f$) will be the locus in the central fibre of a model of an analytic curve
with inverse image, under the retraction from the generic fibre to
the central fibre, (an analytic curve which when we stabilize becomes) an open (resp closed) analytic disk, that in the application we call
$E_f^{\circ}$ (resp. $E_f$), so the name we use here is the name of the
inverse image in the application.
\end{remark} 

\begin{proposition} Notation as immediately above. The formation of
  $E_f, B, S_f$ commute with base extension (i.e. pulling back the family
  $C \to T$ under some $T' \to T$). The same holds set theoretically for
  $E_f^{\circ},B^{\circ}$.
\end{proposition}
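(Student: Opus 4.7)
The plan is to derive the claim from two standard ingredients: that the stabilization morphism $s\colon C\to C'$ of a family of $N$-pointed pre-stable rational curves commutes with arbitrary base change (in the sense that for any $T'\to T$ the pullback $s_{T'}$ is canonically the stabilization of $C_{T'}$); and that scheme-theoretic inverse images, together with open complements of closed subschemes, commute with arbitrary base change, while reduction preserves the underlying topological space.

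The first step is to observe that the distinguished subfamily $S_f'\subset C'$ and its three disjoint sections $e_f,s_f,n_f$ commute with base change. This is where the hypothesis that every fiber of $C'\to T$ has dual graph of type $V_M$ is essential: since the vertex $\bar{s}_f$ and its three incident half-edges are combinatorially constant in the family, $S_f'$ is canonically the irreducible component of $C'$ corresponding to $\bar{s}_f$, naturally a trivial $\bP^1$-bundle whose three special points give three disjoint sections. Concretely, $S_f'$ and the three sections are pulled back from the universal family over the locally closed substack of pre-stable pointed curves whose dual graph has type $V_M$, and therefore commute with arbitrary base change.

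With $s$, $S_f'$ and the sections $e_f,s_f,n_f$ now base-change compatible, the subschemes $E_f=s^{-1}(S_f'\setminus\{n_f\})$, $S_f=s^{-1}(S_f'\setminus\{e_f,n_f\})$ and $E_f^\circ=s^{-1}(e_f)$ are scheme-theoretic inverse images, and $B=C\setminus\bigcup_f E_f^\circ$ is the scheme-theoretic open complement of a closed subscheme; all such operations commute with base change, which gives the scheme-theoretic claim for $E_f$, $S_f$ and $B$. For the set-theoretic statement on $B^\circ=B\setminus\bigcup_f S_f$ (endowed with the reduced structure) and on the identification $E_f^\circ=E_f\setminus S_f$ as subsets of $C$, one uses that reduction is a homeomorphism on underlying spaces and that set-theoretic difference commutes with pullback of subsets; hence the formation of $B^\circ$ and $E_f^\circ$ as subsets of $C$ commutes with base change. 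The only conceptual step, and essentially the only place where anything nontrivial happens, is the first one: without the constancy of the tropical type $V_M$ across the family, $S_f'$ would not be canonically defined on the base.
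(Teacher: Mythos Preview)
Your proof is correct and follows the same approach as the paper, which simply states that ``the second statement follows from the first and the first is clear from the construction.'' You have unpacked what ``clear from the construction'' means: stabilization commutes with base change, the distinguished $\bP^1$-component $S_f'$ with its three sections is determined by the constant tropical type $V_M$, and scheme-theoretic inverse images and open complements commute with base change, while reduction only preserves underlying sets.
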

\begin{proof} The second statement follows from the first and the first is
  clear from the construction.
\end{proof}

\begin{construction} \label{const:thelocus}
Now we  fix schemes $Y,T_f$, $f \in F$,  and open embeddings
$G_f \subset Y$, $G_f \subset T_f$, and projective $H_f \subset G_f$. Let $Z$
be obtained by gluing $Z \coloneqq Y \cup_{f \in F} T_f$ (glued along $G_f$).
We also fix closed projective $J^Y \subset Y$ and $J^f \subset T_f$. 

We consider a family of stable maps $f: C \to Z$, over a base  $Q$, such that 

the stabilization of the domain is a family of curves of type $\cV_M$, as in \cref{const:stablecircles}.
Now we consider the locus, $Q^G$ of $t \in Q$ over which 
$f(B_t) \subset Y$,
$f(E_{t,f}) \subset T_f$ and $f(S_f) \subset H_f$.
\begin{remark} \label{rem:reformpc} Note we could equivalently define the locus by 
the  two conditions $f(B_t^{\circ}) \subset Y$, $f(E_{t,f}^{\circ}) \subset T_f$, together with
the third $f(S_f) \subset H_f$.
\end{remark}

Let $Q^J \subset Q$ be the locus where
$f(C) \subset J^Y \cup_{f \in F} J^f$, and $Q^{G,J} \coloneqq Q^G$.
Note  $Q^{G,J} \subset Q^G$ is equivalently the locus
where $f(B_t) \subset J^Y$ and each $f(E_{t,f}) \subset J^f$.

Finally we assume we have a line
bundle $A$ on $Z$ whose restriction to $Y,T_f$ are each ample. And we require 
that the pullback of $A$ have degree bounded by a given number $k$. 
\end{construction} 

Here we abuse slightly notation and write $V_M \subset \oM_{0,N}$ for the locus of stable curves
whose dual graphs are of type $V_M$. 

\begin{lemma} \label{lem:anticontinuity} $V_M \subset \oM_{0,N}$ is a union of closed strata. In particular
it is Zariski closed. 
\end{lemma}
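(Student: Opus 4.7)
The plan is to use the standard boundary stratification of $\oM_{0,N}$ by combinatorial type of the stable dual graph. Each stratum is locally closed and is indexed by a stable $N$-labeled tree, in which every internal vertex has valence at least $3$. Since the condition defining $V_M$ depends only on the dual graph, $V_M$ is a union of (locally closed) strata, so it suffices to prove that $V_M$ is preserved under specialization. Recall that the closure of the stratum indexed by a tree $T$ is the union of strata indexed by trees $T'$ obtained from $T$ by iterated splits of internal vertices, where a single split replaces an internal vertex of valence $n$ by two internal vertices joined by a new edge with valences $n_1, n_2 \geq 3$ satisfying $n_1 + n_2 = n + 2$.

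By induction on the number of splits, I would reduce to checking a single specialization $T \to T'$ obtained by splitting one vertex $v$. Fix $f \in F$ and consider the path in $T$ from the leaf $s_f$ to the leaf $e_f$. Since stability forces every internal vertex to have valence at least $3$, the hypothesis $T \in V_M$ forces this path to contain exactly one internal vertex, $\os_f$, of valence $3$.

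The key observation is that $\os_f$ itself cannot be split: any split at a valence $3$ vertex would demand $n_1, n_2 \geq 3$ with $n_1 + n_2 = 5$, which is impossible. Hence the split in $T \to T'$ happens at some $v \neq \os_f$, and since $\os_f$ is the unique internal vertex on the path, $v$ lies off the path entirely. Splitting an off-path vertex does not change the set of vertices or edges traversed by the path, nor does it alter the valence of $\os_f$; consequently the path from $s_f$ to $e_f$ in $T'$ still has $\os_f$ of valence $3$ as its unique internal vertex. As this holds for every $f \in F$, we conclude $T' \in V_M$.

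The main (and essentially only) point to pin down is the combinatorics of the boundary stratification of $\oM_{0,N}$, in particular that specialization corresponds to vertex splits subject to the stability bound $n_i \geq 3$; once this is in hand, the valence count above makes the closure statement immediate.
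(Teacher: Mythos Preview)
Your proof is correct. Both your argument and the paper's reduce to the same combinatorial fact---that the $V_M$ condition is preserved under vertex splitting---but you establish it directly via the valence count (a valence-$3$ vertex cannot be split, so the distinguished vertices $\os_f$ and hence the paths through them are untouched), whereas the paper phrases it through the anti-continuity between the stratifications of $\oM_{0,N}$ and $M_{0,N}^{\trop}$: closedness in $\oM_{0,N}$ corresponds to openness in the tropical moduli space, and the paper simply asserts the latter. Your version is more self-contained and makes the key valence obstruction explicit; the paper's version is more conceptual but leaves the actual combinatorial check to the reader.
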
 
\begin{proof} This is an instance of the {\it anti-continuity} of dual graph: open
strata of $\oM_{0,N}$ are in bijection with (topological) dual graphs, 
$S_{\alpha} \subset \overline{S_{\beta}}$ iff $\beta$ is a degeneration of $\alpha$,
or equivalently if $V_{\beta} \subset \oV_{\alpha}$ where 

$V_{\Gamma} \subset M_{0,n}^{\trop}$ is the locus of trees of topological type
$\Gamma$. Now $V_M \subset \oM_{0,N}$ is closed because the corresponding locus
in $M_{0,n}^{\trop}$ is open.
\end{proof}

\begin{proposition} \label{prop:prop_locus} Notation as immediately above.
  The given locus, $Q^G \subset Q$ is the intersection of Zariski open and Zariski closed. $Q^{J} \subset Q$ is Zariski closed. 
  Moreover, suppose $Q = \oM_{0,N}(Z,m) \times_{\oM_{0,N}} P$ for
  a scheme $P \to \oM_{0,N}$. 
  Then $Q^{G,J} \to P$ is proper
  (where we give $Q^G$ and $Q^{G,J}$ the reduced structure).
   \end{proposition}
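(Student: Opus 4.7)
My plan is to exploit condition (c) with a maximum-principle argument to reduce $Q^G$ to the intersection of one open and one closed condition, and then to deduce properness from the moduli space of stable maps into the projective $Y$.

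Step 1 (reformulation). I would first observe that $H_f$, being projective, is closed in each of the separated opens $Y$ and $T_f$, hence closed in $Z$. Since $S_f = s^{-1}(S'_f \setminus \{e_f,n_f\})$ is Zariski dense in $s^{-1}(S'_f)$, continuity gives
\[
f(s^{-1}(S'_f)) \;\subset\; \overline{f(S_f)} \;\subset\; \overline{H_f} \;=\; H_f.
\]
Thus on $Q^G$ the entire distinguished component $s^{-1}(S'_f)$ maps into $H_f \subset G_f \subset Y$. Using the decomposition $C = B \cup \bigcup_f s^{-1}(e_f)$ with each $s^{-1}(e_f) \subset s^{-1}(S'_f)$ already landing in $H_f \subset Y$, condition (a) then forces $f(C_t) \subset Y$, and (b) becomes automatic since $H_f \subset G_f \subset T_f$. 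So $Q^G$ is exactly the locus where both (i) $f(C_t) \subset Y$ and (ii) $f(s^{-1}(S'_{f,t})) \subset H_f$ hold for every $f$.

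Step 2 (open $\cap$ closed, plus properness). Condition (i) is open: $Z \setminus Y$ is closed in $Z$, so $f^{-1}(Z \setminus Y) \subset C$ is closed, and its image under the proper projection $C \to Q$ is closed, with complement equal to the locus of (i). Condition (ii) is closed: $s^{-1}(S'_f) \to Q$ is proper (closed in the proper $C \to Q$), and the locus on the base where its fiber is contained in the closed subscheme $f^{-1}(H_f) \subset C$ is closed by standard semi-continuity / relative Hilbert scheme arguments. For the properness statement, I would then note that (i) forces the universal stable map to factor through the open immersion $Y \hookrightarrow Z$, so $Q^G$ sits inside the open subscheme $Q^Y := \oM_{0,N}(Y,k) \times_{\oM_{0,N} \times Y^F} P \subset Q$. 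Since $Y$ is projective, $\oM_{0,N}(Y,k) \to \oM_{0,N} \times Y^F$ is proper, and therefore $Q^Y \to P$ is proper by base change; within $Q^Y$ the subset $Q^G$ is cut out by the closed condition (ii), so it is a closed subscheme of something proper over $P$, hence proper over $P$ as well.

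The hardest step, I expect, will be the closedness of (ii). The difficulty is twofold: first, because $Z$ is non-separated one cannot invoke general properness-of-image statements on $Z$ directly and must route through the separated opens $Y,T_f$ on which $H_f$ is genuinely closed; second, the assertion that ``a whole fiber of a proper family is contained in a varying closed subscheme'' is a closed condition on the base, though standard, requires the family $s^{-1}(S'_f) \to Q$ to be well-behaved (e.g.\ constant fiber dimension), which in turn comes from $C \to Q$ being a flat family of semi-stable curves and $S'_f$ being a globally defined subscheme of the stabilization $C'$. A minor additional check is that $Q^Y \subset Q$ really corresponds to the open substack ``$f$ factors through $Y$'', which uses functoriality of $\oM_{0,N}(-,k)$ for open immersions.
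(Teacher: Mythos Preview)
Your Step 1 contains a fundamental error that invalidates the entire approach. The claim that $S_f$ is Zariski dense in $s^{-1}(S'_f)$ is false: the stabilization $s\colon C\to C'$ can contract entire chains of components to the point $e_f\in S'_f$, so $s^{-1}(e_f)=E_f^\circ$ may be positive-dimensional and disjoint from $\overline{S_f}$. More importantly, even when $s$ is an isomorphism your conclusion $f(C_t)\subset Y$ is simply wrong. The defining condition (b) says $f(E_{t,f})\subset T_f$, and the whole point of the construction is that the end $E_f$ (in particular the marked point $e_f$) lands in the toric piece $T_f$, typically outside $G_f$ and hence outside $Y$. So $Q^G$ is \emph{not} contained in the locus where the map factors through $Y$, and your properness argument via $\oM_{0,N}(Y,k)$ collapses.

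The paper's proof handles the non-separatedness of $Z$ in a different way. For the first assertion it uses the conditions in their original form: $f(B^\circ_t)\subset Y$ and $f(E^\circ_{t,f})\subset T_f$ are ``proper maps into open'', hence open on the base, while $f(S_{t,f})\subset H_f$ is ``open maps into closed'', hence closed (using flatness of $C\to Q$ so that the projection from $S_f$ is open). For properness, the key idea you are missing is to introduce the auxiliary space $Z'$ obtained by gluing $Y$ and the $T_f$ along the \emph{projective} $H_f$ rather than the open $G_f$. Then $Z'$ is a genuinely projective variety with an immersion $Z'\to Z$, and any $f\in Q^G$ factors canonically through $Z'$: the restrictions $f|_B\colon B\to Y$ and $f|_{E_f}\colon E_f\to T_f$ agree on $S_f$ (both land in $H_f$), so they glue to a map $C\to Z'$. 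This identifies $Q^G$ with the corresponding locus in $\oM_{0,N}(Z',k)\times_{\oM_{0,N}\times Y^F}P$, where all conditions become closed and properness follows from projectivity of $Z'$.
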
 
\begin{proof} 
The conditions
$f(B_t^{\circ}) \subset Y$, $f(E_{t,f}^{\circ}) \subset T_f$
are the condition that proper map into open, which
is open, 
while $f(S_{t,f}) \subset H_f$, or $f(C) \subset J^T\cup_f J^f$ are 
the condition that open map into closed,
which is closed (since the family of curves is flat over the base, so the
projection map is open). 

Thus $Q^G \subset Q$ is the intersection of Zariski open with
Zariski closed, and $Q^J \subset Q$ is Zariski closed. 

Now we check the second statement for $Q = \oM_{0,N}(Z,k) \times_{\oM_{0,N} \times Y^F} P$

Let $Z'$ be obtained by gluing along 
the projective $H$ (rather than the open $G$). We have a natural immersion
$Z' \to Z$, an open immersion on $Y$ and each $T_f$.
Now note each $f \in Q^G$, $f$ factors canonically 
through $Z' \to Z$: the restriction $f: B \to Z$ has image in $Y$,
the restriction $f: E_f \to Z$ has image in $T_f$, so give canonically
$f: B \to Z'$, $f:E_f \to Z'$. These agree on the intersection (each
restricts to $f: S_f \to H_f$). This works equally well in families. 
It follows easily that
$Q^G = \oM_{0,N}(Z,k)^G_P$
is identified with $\oM_{0,N}(Z',k)^G_P$. 
But here we can phrase all conditions as the condition that an
open map to a closed, so all are Zariski closed.
Thus
$$
\oM_{0,N}(Z',k)^G_P \subset
\oM_{0,N}(Z',k)_P
$$
is Zariski closed. 

Let $Z_1 \coloneqq J^Y \cup_f J^f$. 
Note that 
$(\oM_{0,N}(Z',m)^J \subset
\oM_{0,N}(Z',k)$
is $\oM_{0,N}(Z_1',m)$ (the defining condition after all is that
the curve have image in $Z_1$), where the notation means the same
construction but with $Z$ replaced by $Z_1$. But now $Z_1'$ is projective, 
and so $\oM_{0,N}(Z_1',m)_P$
is proper over $P$.  $Q^{G,J} \subset Q^J$ (for the $Q$ in
the statement) is Zariski closed by the above,
so $Q^{G,J}$ is also proper over $P$. This completes the proof. 
\end{proof} 

\subsection{The annulus case} \label{sec:annulus}

Here we note the  algebraic curve analog of the analytic decomposition
$C = B^{\circ} \cup_f E^{\circ}_f \cup_{f \in F} A_f$ of \cref{def:VMpdef}.

We take $s: C \to C'$ the stabilization of an $N$'-pointed semi-stable
rational curve over a field, whose dual graph has type $V'_M$.
Analogous to the picture along $S'$ in 
\cref{const:stablecircles}, there is for 
  each $f \in F$  a chain of three irreducible components of $C'$,
  which we denote 
$N_f' + S_f' + E_f'$, each of which has three special points, nodes 
of this chain together with $n_f,a_f \in N_f'$, $s_f \in S_f'$, 
$b_f,e_f \in E_f'$. Let
$E_f \coloneqq s^{-1}((S_f' + E_f') \setminus S'_f \cap N_f')$,
$E^\circ_f \coloneqq s^{-1}(E'_f)_{\red}$, 
$A_f \coloneqq s^{-1}(S_f')_{\red}$,
and let $B^{\circ}$ be the reduction of $C \setminus \cup_f E_f$.
The construction makes sense in families of semi-stable curves as in
\cref{const:stablecircles} above. 
Now, following notation in \cref{const:thelocus} we consider a family of $N'$-pointed 
stable
curve $f:C \to Z$ over a base $Q$, such that for each
fibre the stabilisation of the domain is a curve of type 
$V_M'$.

Now we consider the locus, $Q^{G'} \subset Q$ of $t \in Q$ over which 
$f(B_t^{\circ}) \subset Y$, $f(E_{t,f}^{\circ}) \subset T_f$ and
$f(A_{t,f}) \subset G_f$.   

\begin{lemma} \label{prop:prop_locus_an}  
 $Q^{G'} \subset Q$ is Zariski open.
    \end{lemma}
    \begin{proof} The defining conditions are all that proper map to open, and this is Zariski open
           (as in the proof of \cref{prop:prop_locus}). 
\end{proof}

\section{Disk classes} \label{sec:dc}

For a compact curve $f\colon \bD \to \bbY$, by its class, we mean the class in $\NE(\bbY_s,\bbZ)$, 
which we recall is defined, as a cycle, as the pushforward \begin{equation} \label{eq:dc}
(f_s)_*[\bbD_s^{\pr}] \in Z_1(\bbY_s,\bbZ)
\end{equation}
for $f \colon \bbD \to \bbY$ a formal model, 
and $\bbD_s^{\pr} \subset \bbD_s$ the union of proper components of the central fibre (this cycle
is easily seen to be independent of the model).
Our counts, of tropical curves, or structure constants, are always with respect to this class.
Here we study how the class varies over our basic moduli spaces.
Some care is required, because for curves with boundary this class is not in general deformation invariant.
Roughly speaking this will hold as long as $f_s$ contracts all non-proper components of $\bbD_s$.
In order to force this we introduce the following:

\begin{definition-lemma} \label{def:M_1} Notation as in \cref{const:Sigma_f}.
  For fixed closed piecewise affine
  subsets $\cJ_f \subset \cG_f$ of codimension one, containing $\cG_f \cap \Sigma^{d-1} \subset \Sk(U)$ and
  $\cG_f \cap \Sigma^{d-1}_f \subset M_f$ (recall $M_f$ is the integer tangent space to $\Sk(U)$ at points of $\cG_f$)
    let $V_f \subset G_f$ be the open set
  $r^{-1}(\cG_f \setminus \cJ)$ (where $r$ is the SYZ fibration of \cref{ass:SYZ}). 
  Note:
  In case (2) of \cref{ass:G_f} that $\cG_f \subset \Sk(U)$ only meets the cone $\rho \in \Sigma^{d-1}$ and
  $\cG_f \subset M_f$ only meets $\rho \in \Sigma_f^{d-1}$.  In case (1) it will not meet any cone of $\Sigma^{d-1}$ in $\Sk(U)$,
  but it might meet codim one
  cones of $\Sigma_f$ (introduced because we want the ray $R_f \in \Sigma_f$ and it might intersect $G_f \subset M_f$). 
  Let $M_1(U) \subset M(U)$ be the locus of stable maps such that $f(S_f) \subset V_f$ for all $f \in F$ (though it depends on
  $J$ we leave that out of the notation). 
This is open in $M(U)$.

Let $[h\colon C \to Z] \in M_1(U)$.
For each $f \in F$
$(r \circ h)(S_f)$ is contained in the interior of a maximal cone of $\Sigma$.
Let $h\colon \fC \to \bbZ$ (notation as in \cref{const:polyhedraldecomp})
be a formal model such that the closed analytic semi-stable circle $S_f \subset C$ is the inverse image
$\pi^{-1}(\fS_{f,s})$ of a Zariski open set $\fS_{f,S} \subset \fC_s$ of the central fibre,
where $\pi\colon C = \fC_{\eta} \to \fC_s$ is the canonical reduction map.
Then $h_s(\fS_{f,s}) \subset \bbY'_s$ maps to the zero stratum of $\bbY_s$
corresponding to this maximal cone of $\Sigma$.
\end{definition-lemma}

\begin{proof} The defining condition is that a compact set map into an open set, which is open.
This gives the first statement.
By the definition of $M_1$
$(r \circ f)(S_f) \subset r(\cV_f)$, and by the definition of $\cV_f$, $r(\cV_f)$ is contained in a maximal cone of $\Sigma$.
Now $h_s(\fS_{f,s})$ is the corresponding zero stratum of $Y$, by the definition of the Berkovich retraction $r$.
\end{proof}

\begin{remark} \cref{def:M_1} implies that $h_s \circ \pi_{\fC} = \pi_{\fY} \circ h\colon C \to Y$
contracts all the {\it boundary circles} $S_f \subset C$
of the {\it body (semi-stable) disk} $B$, to points.
For this reason the image $(h_s \circ \pi_{\fC})(B) = [h\colon B \to Y^{\an}] \subset Y$,
is a cycle, and this perhaps gives some philosophical reason for expecting good behavior.
Since $\pi$ is not continuous (in fact, it is anti-continuous), we do not know how to make this final intuitive remark rigorous.
\end{remark}

\begin{proposition} \label{prop:constclass} Notation as in \cref{const:polyhedraldecomp}.
  We can choose the codimension one subset $J \subset \Sk(U)$ of \cref{def:M_1}  so that the
  following holds:

  \begin{enumerate}
    \item
    Let $L$ be a line bundle on $\bbZ_s$ glued from line bundles $A$ on $\bbY_s'$ and
  $A_f$ on $\bbT'_{f,s}$ (which agree on the Zariski open gluing region $\bbG_{f,s}$).
  We assume $A$ is pulled back from a line bundle on $\bbY_s$, and $A_f$ from a
  line bundle on $\bbT_{f,s}$ (which we will indicate by the same names). Then: 

For $[f:C \to Z] \in M_1(U)$ we have $$
[f:C \to Z] \cdot c_1(L) = [f\colon B \to Y] \cdot c_1(A)
+ \sum_{f \in F} [f\colon E_f \to T_f] \cdot c_1(A_f).
$$

\item  $[f\colon E_f \to T_f] \in \NE(\bbT_f,\bbZ)$ is the class of the closure of the one parameter subgroup corresponding to $v_f \in M_f$, assuming $-v_f$ lies in each cone of $\Sigma_f$ meeting $G_f$. In any
  case there are at most finitely many possiblities for $[f\colon E_f \to T_f] \in \NE(\bbT_f,\bbZ)$.

\item For each line bundle $A$ on $\bbY_s$ we can choose $A_{f}$ on $\bbT_{f,s}$ so that their
  pullbacks agree on the open gluing locus $\bbG_{f,s}$, and thus give a line bundle $L$ as in (1).

  \item $[f\colon B \to Y]$ is constant on the intersection with $M_1$ (notation as in \cref{def:M_1}))
    of any connected component of $M(U) \subset \oM_{0,n}(Z)$.
    \end{enumerate}
\end{proposition}
\begin{remark} \label{rem:case1} We note that the condition on $v_f$ holds in (2)
  by assumption in case (2) of \cref{ass:G_f}.
\end{remark}
\begin{proof} We consider a model $f:\fC \to \bbZ$.
  The components of the central fibre corresponding to $S_f$ map to zero strata of the central fibre
  $\bbY_s$ by \cref{def:M_1}.
  It follows that $L$ is trivial on these components, and thus the intersection number is the sum of the intersection numbers with the classes of the disks, and then by the projection formula,
  these are $[f\colon B \to Y] \cdot c_1(A)$ and $[f:E_f \to T_f] \cdot c_1(A_f)$. This gives (1). 

By the definition of $M_1$, and the fact that $-v_f \in \rho_f$,
the tropicalisation of $f:E_f \to T_f$ is a ray in the direction of $v_f$,
with endpoint in a maximal cone of $\Sigma_f$.
Now an easy toric computation shows the class is the closure of the given one parameter subgroup:
We extend the one parameter subgroup to a complete (smooth $k$-analytic) rational curve, with domain
the union of two disks, tropicalising to this ray, and a ray with the same end point and direction
$-v_f$. This {\it new} ray lies in a maximal cone of $\Sigma_f$, so the class of this disk is trivial,
and $[f: E_f \to T_f] \in \NE(T_f,\bbZ)$ is the class of the complete curve.  This completes the proof
of (2).

For (3): Suppose first we are in case (1) of \cref{ass:G_f}. Then the gluing locus maps to a zero
stratum of $\bbY_s$, so the pullback of $A$ is trivial here, and so we can extend $A$ by trivial
$A_f$. In case (2), the line bundle on $\bbG_{f,s}$ depends only on the intersection number of
$A$ with the one stratum corresponding to $\rho$, with this it's easy to find an extension.

For (4): it's of course enough to show that for each $A \in \Pic(\bbY_s)$,
$[f: B \to Y] \cdot A$ is constant for $f$ in the given locus. Extend $A$ to $L$ as described
in the proof of (3). The left hand side of the displayed formula is constant (this is a family
of complete stable curves in $\bbZ_s$). In the first case of the proof of (3), each term in the
sum is trivial (in particular, constant over $f$). In the second case each term is constant by (2)
(the class of the end disk itself is constant). This completes the proof. 
\end{proof}

\begin{lemma} \label{lem:fL} There exists a line bundle on $\bbZ_s$ whose restriction to
  $\bbY'_s$ or any of the $\bbT'_{f,s}$ is ample.
  \end{lemma}
  \begin{proof} We start with ample line bundles on $\bbY_s$ and $\bbT_{f,s}$, such that in case
    (2) of \cref{ass:G_f} they have the same intersection number with the one strata corresponding
    to $\rho_f$. Their pullbacks to $\bbY'_s$, $\bbT_{f,s}'$ glue, and now we can modify them by
    exceptional divisors of the blowup to make them ample, and modify in {\it the same way}, so the
    modifications continue to glue. 
\end{proof}

Using \cref{prop:constclass} we can now make:

\begin{definition-lemma} \label{def:cmsm} Fix $\beta \in \NE(\bbY_s,\bbZ)$. 
  Pick $L$ as in \cref{lem:fL}. We let $M(U,\beta) \subset M(U)$ be the union of
  connected components $Q \subset M(U)(m)$, over all $m > 0$, such that 
  $[f:B \to Y] = \beta$ for all $f \in Q \cap M_1$. $Q$ is empty for all but finitely
  many $m$. $M(U,\beta)$ depends only on data from \cref{ass:basicsetup}, the
  convex $G_f$ from \cref{ass:G_f}, and $\beta \in \NE(\bbY_s,\bbZ)$. 
  \end{definition-lemma}
\begin{proof}
  It is clear that $M(U,\beta) \subset M(U)$ depends only on the stated data.
  By \cref{prop:constclass} there are
  only finitely many possibilities for $m \coloneqq [f: C \to Z] \cdot c_1(L)$ for $f \in M(U,\beta)$.
  The result follows. 
  \end{proof}

  \subsection{Independence under blowup} \label{sec:itb}
  The following is easy to check:
  
  \begin{definition-lemma} \label{def:perm} Let $(\bbY,\bbD)$ be as in \cref{ass:basicsetup}. We
    consider the blowup $\bbY' \to \bbY$
    along a smooth subscheme $Z$ contained in $|\bbD|$. We say it is {\it permissible} if
    $Z$ is transverse to the boundary of the minimal closed stratum of $\bbD$ that contains it. In this case
    the strict transform of $\bbD$, together with the exceptional divisor, gives another pair $(\bbY',\bbD')$
    satisfying \cref{ass:basicsetup}. The natural map $A_1(\bbY'_s,\bbZ) \to A_1(\bbY_s,\bbZ)$ is surjective,
    with kernel generated by a line in any fibre of the blowup. Note a {\it toric blowup}, i.e. a blowup of
    a closed stratum, is permissible. Any two compactifications of the same $\bbU$ (in the sense
    of \cref{def:abuse}) are related by a sequence of permissible blowups and blowdowns, by the
    weak factorisation theorem, \cite{Wlod}. 
    \end{definition-lemma}

Consider $p:\tbbY \to \bbY$ a composition of permissible blowups.
We assume \cref{ass:G_f} holds for $\tbbY$, and use  the same $G_f$ in \cref{const:Z} for $\bbY$ and $\tbbY$. 

\begin{proposition} Notation as immediately above.  $f \to b \circ f$ induces
  an isomorphism $M(U \subset \tY) \to M(U \subset Y)$. 
\end{proposition}
\begin{proof} It's enough to consider the case of a single blowup.
  We have $b: M(Y \subset \tY) \to M(U \subset Y)$ given by $f \to b \circ f$.
  Note the contact of the stable curve with the boundary ($Y \setminus U$ or $\tY \setminus U$) is
  prescribed by the fixed data (in particular the punctured (at the marked points) curve maps into $U$,
  where $b$ is an isomorphism). So 
  the
  universal property of blowing up induces a map in the other direction (each $(f:C \to Z) \in M(U \subset \cY)$
  canonically lifts to $f: C \to \tcZ$, and this works in families), and it's clear these maps are inverse. 
\end{proof}

The next result tracks the change in the disk class: 

\begin{proposition} \label{prop:tbeta} For $\beta \in \NE(\bbY_s,\bbZ)$,
  $M_1^{\smot}(U \subset Y,\beta) \neq \emptyset$
  if and only if there exists $\tbeta \in \NE(\tbbY_s,\bbZ)$ with $p_*(\tbeta) = \beta$ and $M_1^{\smot}(U \subset \tY,\tbeta) \neq \emptyset$.
  In this case $\tbeta$ is the unique $\gamma \in \NE(\tbbY,\bbZ)$ such that $p_*(\gamma) = \beta$ and
  $M^{\smot}(U \subset \tY,\gamma) \neq \emptyset$, and moreover we can choose closed sets
  $X \subset Y$, $\tX \subset \tY$ in \cref{def:MsmX}
  such that $f \to p \circ f$ induces an isomorphism $M^{smot}(U \subset \tY,\tX^c,\tbeta) = M^{smot}(U \subset Y,X^c,\beta)$.
  \end{proposition}
\begin{proof} Let $[f\colon C \to Z]$.
Note this factors $f:C \to \tZ$.
Since $p^*(T_Z(-\log D)) = T_{\tZ}(-\log D)$,
$(f:C \to Z) \in M^{\smot}$ (more precisely, the pullback of the log tangent bundle to the {it convex hull
  of the marked points} (see \cref{def:sot}) is globally generated, as in \cref{lem:Msm_smooth})
if and only if $(f:C \to \tZ) \in M^{\smot}$.
Now the result follows once we show that $[f\colon B \to Y] \in \NE(\bbY_s)$ determines
$[f\colon B \to \tY] \in \NE(\tbbY_s).$

The kernel of $A_1(\tbbY_s) \twoheadrightarrow A_1(\bbY_s)$ is one dimensional,
generated by a line in a fibre of the blowup $\tbbY \to \bbY$. So it is enough to
check that $[f\colon B \to \tY] \cdot \cO(E)$ is determined, where $E$ is the exceptional divisor.
We use \cref{prop:constclass}. $[f\colon C \to \tY] \cdot \cO(E)$ is determined by the fixed data $\bP$.
Suppose first we are in case (1) of \cref{ass:G_f}. Then
$[f\colon C \to \tY] \cdot \cO(E) = [f:B \to \tY] \cdot \cO(E)$ (this is true of every line bundle
pulled back from $\bbY$, see the proof of (3) of \cref{prop:constclass}). In case (2)
the class of the end disks is independent of $f$, see \cref{rem:case1}, so the result follows
from the displayed formula in (1) of \cref{prop:constclass}. This completes the proof.
\end{proof}

\begin{lemma} \label{lem:ISknodes}
 Pick $i \in I$ and consider $$
\Phi_i\colon M(U,\beta,\bP) \to \cV_M \times U^{\an}.
$$
Then 
$$
\ISk \coloneqq \Phi_i^{-1}(\oSk) \subset M^{\smot}(U, X^c,\beta,\bP)
$$
for any choice of $X \subset Z$ in \cref{def:MsmX} and 
the germ of $\Phi_i: M^{\smot}(U, \beta,\bP) \to \cV_M \times U^{\an} $ around $\Phi_i: \ISk \to \oSk$
is independent of permissible blowup.
%
If $U$ contains no complete rational curves
these statements hold with $\sm$ instead of $\smot$ (and for the
final statement, $C$ in place of $C_m$). 
\end{lemma}

Here $\oSk$ means the closure of $\Sk(M_{0,N} \times U) \subset (\oM_{0,N} \times U)^{\an}$.

\begin{proof}
  Assume first $U$ contains no complete rational curves. 
  
We fix $\mu \in \cV_M$ and prove the analogous statement,
for $\Phi\colon M(U,k)_{\mu \times \cH} \to \mu \times \cH$
(where now for $\ISk$ we take the inverse image of $\mu \times \Sk(U)$).
This implies the result.

First consider $\Phi^{-1}(\mu \times \Sk(U)) \cap M^{\sd}(U)$
(after restricting to $\cV_M \times \cH$, we are leaving this restriction out of the notation).

By \cite[10.3.7,6.3.7]{Ducros_Families_of_Berkovich_spaces},
if we restrict to the reduction of an irreducible component of $M^{\sd}(U)_\mu$ that meets the intersection, then $\Phi$ will be generically smooth, since $\mu \times \Sk$ consists of Abhyankar points (of $\mu \times U$) which then implies the map of Zariski tangent spaces (for the unreduced space) is generically surjective,
which then in turn implies that generically this irreducible component is in $M^{\sm}(U)$,
exactly as in the proof of \cite[3.9]{Keel_Yu_The_Frobenius}
(using \cref{sec:deformation_theory} which generalizes the necessary deformation theory of maps to our present context).
This implies then the map is generically \'etale (on the irreducible component we consider), and then moreover that the inverse image in question consists of Abhyankar points.
Thus it is contained in any Zariski open subset, so in the \'etale locus, and also in $M^{\sm}(U,X^c)$ from definition \cref{def:MsmX} for any choice of $X$.
This gives the independence of the germ on toric blowup.

All that remains is to show that the inverse image in $M(U \subset \tY,\tbeta)$ lies in $M^{\sd}$.
For this we use \cite[3.9]{Keel_Yu_The_Frobenius} (the same proof applies, see \cref{def:MsmX}),
and the argument for \cite[3.10]{Keel_Yu_The_Frobenius}: \cite[3.10]{Keel_Yu_The_Frobenius} describes precisely $M(U \subset \tY) \setminus M^{\sd}(U \subset \tY)$.
In each case the domain curve consists of $\bA^1$
curves (notation as in the proof of \cite[3.10]{Keel_Yu_The_Frobenius}, the term means a smooth rational curve
meeting the boundary of $U$ in a single set theoretic point),
and then the closure of the remaining component inherit {\it new} marked points, mapping into the interior,
the contact points with the $\bA^1$-curves. 
This closure is canonically identified with the stabilization of the original curve, and it (for the new marked points)
lies in $M' \coloneqq M^{\sd}$ for the analogous space.
So then we have \'etale at every point of the inverse image of $\Sk$, for $M'$ by the previous paragraph.
This implies evaluation at any of the {\it new} points is \'etale, at any point of $\ISk(M')$, $\ISk \coloneqq \Phi^{-1}(\Sk)$.
The union of $\bA^1$ curves (for bounded degree) is not Zariski dense, by log Kodaira dimension:
We note that the $\bA^1$ components (the full $\bbP^1_{\an}$) lie on
the body (semi-stable) disk $B \subset C$, notation as just above \cref{sec:enddisk}, and so map into $Y \subset Z$. 
Suppose we have a dominant family $F \times \bbP^1_{\an}$ to $Y$. The pullback of $K + D$ will have negative
degree on the general member, see \cite[5.11]{Keel_Rational_curves}.
But by \cref{eq:kpd}, $K + D = W$, with $W$ effective supported on $D$, and the general member will meet
$W$ non-negatively. 

This implies evaluation at any of the {\it new} points (at any point of $\ISk(M)$) lies outside the union of $\bA^1$-curves,
so in fact there are no such new points, and the original stable map has stable domain.
Now the result follows from \cref{prop:tbeta}.
This completes the proof in the no rats case.

For the general case, we consider $x_m: C_m \to Z$ as in \cref{def:sot}.
There is no irreducible component of
the domain mapping entirely into $U$. 
Now
the argument that $x_m \in M^{\sd}$ is the same as the proof that $x \in M^{\sd}$, and once we have this, the
same smoothness argument applies.
\end{proof}

\begin{remark} The astute reader will have noticed that the above argument would hold without change with $\oSk$ replaced by any closed subset whose intersection with $\mu \times U^{\an}$
consists of Abhyankar points.
\end{remark}

\section{Skeletal curves} \label{sec:skeletal_curves}

A key idea of \cite{Keel_Yu_The_Frobenius} is the notion of skeletal curves.
As the methods are elementary deformation theory of maps, the results carry over to
the current, more general context.

Here we consider
\begin{align*}
U &\subset V \subset Z \\
V &\coloneqq U \cup_{f\in F,G_f} T_{M_f} \\
Z &\coloneqq  Y   \cup_{f \in F,G_f} T_f \end{align*}

as in \cref{const:Z}.

\begin{assumption} \label{ass:gluing} We assume gluing is chosen so that the volume forms patch,
  to get $\omega$ on $V$, with simple pole along the full boundary of the snc $V \subset Z$.
We note there is such a gluing by \cref{prop:volumeform}
\end{assumption}

Now the construction of the skeleton of a volume form in \cite[\S 8]{Keel_Yu_The_Frobenius}
works without change, to give $\Sk(\omega) \subset V$, whose intersection with $U$ or $T_{M_f}$ is the
skeleton of its volume form.

Now we fix $C \in \oM_{0,N}$, and as in \cite{Keel_Yu_The_Frobenius}, we consider
$$
H \subset \Hom(C,Z)
$$
the subspace of $f \in \Hom(C,Z)$ with boundary contact as in \cref{def:MUk}, e.g.
$f^{-1}(D_f) =m_f p_{e_f}$, $f^{-1}(D_b) = m_b p_b$, $f \in f,b \in \Bnd$. 

Exactly as in \cite[8.11-8.12]{Keel_Yu_The_Frobenius}, $H^{\sm}$ has a canonical top degree differential form,
$\omega_H$ and thus a skeleton $\Sk(\omega_H) \subset H^{\sm}$ (where as in \cref{sec:smoothloc}, $H^{\sm}$ is the
locus where the pullback $T_Z(-\log D)$ is trivial).

We have the following result from \cite[8.18]{Keel_Yu_The_Frobenius} (whose proof carries over without change):

\begin{theorem} \label{thm:f_in_skeleton}
Notation as immediately above.
Let $f \in H$, and $C_f$ the fibre of $C \times H \to H$ over $f$.
Let $f:C_f \to Z$
denote the restriction of the universal map $C \times H \to Z$.
Let $g\colon C_f\to C\times Z$ denote the product of $C_f\to C$ and $f\colon C_f\to Z$.
Let $C^{\circ} \subset C^{\sm}$ be the complement of the marked points $p_j$, $j \in J$. 
The following are equivalent:
\begin{enumerate}[wide]
\item $f\in\Sk(\omega_H)\subset H^\sm$.
\item For some $x \in C(k) \subset C_f$, $f(x) \in \Sk(V)$.
\item For every $x \in C(k) \subset C_f$, $f(x) \in \Sk(V)$.
\item $g\inv(\Sk(C^{\circ} \times V)) = \Sk(C^\circ_f)$.
\item \label{thm:f_in_skeleton:g_inv_nonempty} $g^{-1}(\Sk(C^{\circ} \times V)) \neq \emptyset$.
\end{enumerate}
Assume these equivalent conditions hold, let $\Gamma(k) \subset C_f$ be the convex hull of $C(k)\subset C_f$; then $f(\Gamma(k)) \subset \oSk(V)$.
\end{theorem}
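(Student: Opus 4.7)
The plan is to mirror the proof of \cite[8.18]{Keel_Yu_The_Frobenius}, adapting only the notational setup to the glued target $V \subset Z$. The inputs needed in \emph{loc.\ cit.} that must be present in our generality are: (a) the nonvanishing top form $\omega_H$ on $H^\sm$ produced, as in \cite[8.11--8.12]{Keel_Yu_The_Frobenius}, from $f^*\omega$ together with trivializations of $f^*T_Z(-\log D_Z)$; (b) the étaleness of the evaluation maps $\ev_x\colon H \to V^\an$ for $x \in C(k)$, which we have by \cref{lem:Msm_smooth}; and (c) the existence of a log volume form $\omega$ on $V$ with simple pole along the boundary, granted by \cref{ass:gluing}.

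First I would establish $(1)\Leftrightarrow(2)$. Fix $x \in C(k)$. Since $f \in H^\sm$, both $H$ and $\ev_x\colon H \to V^\an$ are smooth at $f$, and $\ev_x$ is in fact étale there by \cref{lem:Msm_smooth}(\ref{lem:Msm_smooth:etale}). The construction of $\omega_H$ transports verbatim from \cite[8.11--8.12]{Keel_Yu_The_Frobenius}: locally at $f$, $\omega_H$ factors as $\ev_x^*\omega$ wedged with a form that has constant nonzero norm along the fibers of $\ev_x$. Using the standard characterization of the skeleton of a top form as the locus where its weight is maximal in the monomial class, the étaleness of $\ev_x$ yields $f \in \Sk(\omega_H)$ iff $\ev_x(f) \in \Sk(\omega) = \Sk(V)$.

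The equivalence $(2)\Leftrightarrow(3)$ is then immediate since $x$ was arbitrary. For the remaining part, $(4)\Rightarrow(5)$ is trivial; for $(5)\Rightarrow(2)$ pick any point $y \in g^{-1}(\Sk(C^\circ \times V^\an))$ and consider the first coordinate $\pi_1(g(y)) \in \Sk(C^\circ)$: push it toward any leaf of $\Sk(C^\circ)$ through the marked-point end and use the fact that $f$ is continuous and sends marked points to points of $V^\an$, to conclude some marked point image lies in $\Sk(V)$. For $(2)\Rightarrow(4)$, once the marked points map into $\Sk(V)$, the map $f\colon C_f \to V^\an$ becomes a skeletal curve in the sense of the introduction: its restriction to $\Sk(C_f^\circ)$ automatically factors through $\Sk(V)$, and together with the fact that $g$ maps $\Sk(C_f^\circ)$ into $\Sk(C^\circ \times V^\an)$ and that the preimage cannot be strictly larger (by the inverse-image description of the skeleton for trivial-valuation analytic curves), we obtain equality in (4).

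For the final statement, the convex hull $\Gamma(k) \subset C_f$ of $C(k)$ is the closure of $\Sk(C_f^\circ) \cap \Gamma(k)$ together with the marked points themselves. Under the equivalent conditions, (4) gives $f(\Sk(C_f^\circ)) \subset \Sk(V)$, while the marked points may map into $\partial V^\an \subset Z$; continuity of $f$ then forces $f(\Gamma(k)) \subset \oSk(V)$. The main obstacle is the construction and nonvanishing of $\omega_H$ in the present generality: this is where we must use that the target $Z$ is non-separated and glued, so the deformation theory of stable maps and the triviality of $f^*T_Z(-\log D_Z)$ for $f\in H^\sm$ (which is precisely how $M^\sm(U,k)$ is defined) must combine correctly. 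This is provided by \cref{sec:deformation_theory} together with \cref{def:MsmX}, after which the rest of the argument is purely formal and identical to \cite[8.18]{Keel_Yu_The_Frobenius}.
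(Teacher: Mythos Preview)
Your overall strategy—mirror \cite[8.18]{Keel_Yu_The_Frobenius} after checking that the inputs (the form $\omega_H$, étaleness of evaluations, and the glued volume form on $V$) are available—is exactly what the paper does; indeed the paper gives no proof beyond the remark that the argument of \cite[8.18]{Keel_Yu_The_Frobenius} carries over without change. Your treatment of $(1)\Leftrightarrow(2)\Leftrightarrow(3)$ via étaleness of $\ev_x$ is correct.

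However, your arguments for $(5)\Rightarrow(2)$ and $(2)\Rightarrow(4)$ do not work as written. For $(5)\Rightarrow(2)$ you propose to ``push $\pi_1(g(y))$ toward a leaf'' and invoke continuity of $f$; but $\Sk(V)$ is not open in $V^\an$, so continuity cannot propagate skeleton membership along a path in $C_f$, and the marked points are not in $\Sk(C^\circ)$ anyway. Likewise, in $(2)\Rightarrow(4)$ the assertion that ``$f|_{\Sk(C_f^\circ)}$ automatically factors through $\Sk(V)$'' is precisely what needs proof and is not a formal consequence of (2). The correct route, as in \cite[8.18]{Keel_Yu_The_Frobenius}, is to work with the universal map $C^\circ \times H \to C^\circ \times V^\an$: this pulls back the product volume form to the product of the form on $C^\circ$ with $\omega_H$ (up to a norm-one factor), so by functoriality of skeleta under such maps one gets $\Sk(C^\circ \times H)$ as the preimage of $\Sk(C^\circ \times V^\an)$. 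Since $\Sk(C^\circ \times H) = \Sk(C^\circ) \times \Sk(\omega_H)$, intersecting with the fiber $C_f^\circ$ over $f$ gives $g^{-1}(\Sk(C^\circ \times V^\an)) = \Sk(C^\circ_f)$ when $f \in \Sk(\omega_H)$ and $\emptyset$ otherwise. This yields $(1)\Leftrightarrow(4)\Leftrightarrow(5)$ directly, without any path-pushing.
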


\begin{definition} \label{def:skeletal} Following \cite[1.12]{Keel_Yu_The_Frobenius} we call
  $f\colon C_f\to Z$ satisfying any of the equivalent conditions in \cref{thm:f_in_skeleton} {\it skeletal}.
\end{definition}

The next three results have direct counterparts in \cite[\S 8]{Keel_Yu_The_Frobenius}:

\begin{lemma} \label{lem:source_of_skeletal_curve}
Notation as in \cref{sec:bms}.
The following hold:
\begin{enumerate}[wide]
\item For any $\mu \in \oM_{0,n}$, and
  $(f:C \to Z) \in \Phi_i^{-1}(\Sk(\mu \times_k U)) \cap M(U,\bP,\beta)_{\mu}$, the
  main component 
  $f_m:C_m \to Z$ (notation as in \cref{def:sot}) is skeletal. The same holds for $f:C \to Z$
  if $U$ contains no complete rational curves.
 \item For $(f:C \to Z) \in \Phi_i^{-1}(\oSk(M_{0,n}) \times \Sk(U)) \cap M(U,\bP,\beta)$,
   $f_m:C_m \to Z$ is skeletal, and the same holds for $f:C \to Z$ if $U$ contains no complete
   rational curves. 
\end{enumerate}
\end{lemma}
\begin{proof} The proof of the completely analogous \cite[8.20]{Keel_Yu_The_Frobenius} applies,
  using \cref{lem:ISknodes}. \end{proof}

\begin{lemma} \label{lem:restrict_to_skeleton}
Notation as in \cref{lem:source_of_skeletal_curve}.
The following hold:
\begin{enumerate}[wide]
\item \label{lem:restriction_to_skeleton:sm} Assume $[C,(p_j)_{j\in J},f]\in M(U,\bP, \beta)_\mu$ is skeletal.
  Then it belongs to $M^\sm(U,\allowbreak\bP,\allowbreak\beta)$; in particular, we have
  $(f_m:C_m \to Z) \in M^\sm(U,\bP)$ for $(f:C \to Z) \in \ISk$ (and $(f:C \to Z) \in
  M^\sm(U,\bP)$ if $U$ contains no complete rational curves).
Moreover, for any closed subvariety $G\subset Y$ not containing any irreducible component of $D^\ess$, the pullback $f_m\inv(G^\an)$ is a finite set of points without multiplicities and disjoint from the nodes of $C$;
and for any closed subvariety $Z\subset Y$ of codimension at least 2, the image $f_m(C)$ does not meet $Z^\an$.
If $U$ contains no complete rational curves then 
$\Phi_i$ is representable (i.e.\ non-stacky) and étale over a neighborhood of $\ISk$.
\end{enumerate}
\end{lemma}
\begin{proof} The proof of \cite[8.21]{Keel_Yu_The_Frobenius} applies. \end{proof}

We also note:

\begin{proposition} \label{prop:symskel}
  As subsets, $\Phi_i^{-1}(\oSk \times \Sk(U)) = \Phi_j^{-1}(\oSk \times \Sk(U))$
  for any $i,j \in F_S \cup I \subset N$ (notation as in \cref{const:data}).
\end{proposition}
\begin{proof} This follows from  \cref{lem:source_of_skeletal_curve} and the equivalence of (1-5) in \cref{thm:f_in_skeleton}.
  \end{proof}

\begin{lemma} \label{lem:skeletal_curve_contraction}
Notation as in \cref{lem:source_of_skeletal_curve}.
Let $(f\colon[C,(p_1,\dots,p_n)] \to Z) \in \ISk$.
Let $\Gamma$ (resp.\ $\Gamma^\Bnd$) denote the convex hull in $C$ of the all the marked points (resp.\ all the marked points from $\Bnd$).
Then $f|_\Gamma\colon\Gamma\to {\oSk(U)}\subset Y$ factors through the retraction $\Gamma\to\Gamma^\Bnd$.
\end{lemma}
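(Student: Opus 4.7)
The plan is to tropicalize $f$ to a spine $h\colon\Gamma\to\oSk(U)$ and then propagate vanishing edge-weights inward from the leaves of $\Gamma\setminus\Gamma^B$, using the balancing/bending condition. First, by \cref{lem:source_of_skeletal_curve}(2) the curve $f$ is skeletal, and by \cref{thm:f_in_skeleton} the image $f(\Gamma)$ lies in $\oSk(V)$; since $\Gamma$ lies in the body of the domain (whose image under $f$ is in $Y^\an$), this restricts to $h\coloneqq f|_\Gamma\colon\Gamma\to\oSk(U)$. By \cref{prop:tropicalization_of_stable_map}, $h$ is then a $\Wall_\beta$-spine in the sense of \cref{def:spinedef}.

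Fix a connected component $T$ of $\Gamma\setminus\Gamma^B$ with attachment point $x\in\Gamma^B$; the goal is to show $h$ is constant on $T\cup\{x\}$ with value $h(x)$. The non-root leaves of $T\cup\{x\}$ are exactly the marked points $v_j$ with $j\in I\sqcup F$. For each such $j$, the map $f$ has zero contact order at $p_j$ with every essential boundary divisor: for $j\in I$ this is immediate from $f(p_j)\in U^\an$ (\cref{def:MUk}), and for $j\in F$ it follows because $f(s_f)\in\cG_f\subset U^\an$ by the moduli conditions in \cref{def:Mdef}. Hence the weight of $h$ on the unique edge incident to each such $v_j$ vanishes, so these leaf edges are contracted by $h$.

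I would then propagate the vanishing inward by induction on the number of edges in $T$ not yet shown to be contracted. At any interior vertex $v\in T$ with $h(v)\notin\Wall_\beta$, the balancing $\NB_v=0$ from \cref{def:Balcon}, combined with zero weights on all but one incident edge, forces the remaining outward weight to vanish as well. The main technical obstacle is the case $h(v)\in\Wall_\beta$, where $\NB_v$ could a priori be a nonzero wall vector; here one uses \cref{lem:tbal} to interpret $\NB_v$ as the total outward weight of the twig disks rooted at $v$, and \cref{lem:twigclass} to deduce that each such twig disk contributes a nonzero effective subclass of $\beta$. A careful bookkeeping via \cref{prop:weightbound} then constrains the possible wall occurrences inside $T$; combined with the zero-weight boundary condition at the $I$- and $F$-leaves, no such wall vertex can arise inside $T$.

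A cleaner alternative, which I would pursue if the wall-case analysis becomes unwieldy, is to first establish the factorization for transverse spines as an immediate consequence of \cref{rem:spine_factorization}, then invoke \cref{prop:transversality} to ensure that transverse spines form a dense subset of the image of $\ISk$ in $\SP(\Sk(U),\beta)$, and finally transfer the factorization to the general case using the closedness of the vanishing-weight condition on the analytic moduli space $\ISk$.
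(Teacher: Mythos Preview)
The paper does not give an explicit proof of this lemma; like the neighbouring results in \cref{sec:skeletal_curves} it is stated as carrying over from \cite{Keel_Yu_The_Frobenius}, so there is no in-paper argument to compare against directly and the question is whether your proposal stands on its own.

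Your first approach has a genuine gap. You are right that the weight of $h$ vanishes on the terminal affine piece of each $I$/$F$-leg (zero contact order, or equivalently: infinite-length leg with finite limit $h(v_j)\in\Sk(U)$ forces zero derivative on the last piece). But ``the unique edge incident to $v_j$'' in the combinatorial tree $\Gamma$ is the whole leg, and $h$ is only \emph{piecewise} affine on it; your conclusion applies only to the final affine segment. At the first inboard bending vertex $w$, \cref{def:Balcon} allows $\NB_w$ to be a nonzero wall vector whenever $h(w)\in\Wall_\beta$, and by \cref{lem:tbal} this is precisely the total outgoing weight of the twigs rooted at $w$. Your proposed fix does not close this: \cref{lem:twigclass} and \cref{prop:weightbound} bound the number and combinatorial types of twigs, but say nothing about \emph{where} on $\Gamma$ they attach. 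Nothing you cite excludes a single twig rooted at a point $w$ of an $I$-leg with $h$ constant (equal to a wall point) on $[w,v_j]$ and nonconstant on $[r,w]$. The sentence ``careful bookkeeping\dots no such wall vertex can arise inside $T$'' is an assertion, not an argument.

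Your alternative via \cref{rem:spine_factorization} and density is the right route and can be completed, but both steps you defer need to be supplied. For density of transverse spines among those arising from $\ISk$, \cref{prop:transversality} alone is not enough (it concerns abstract spines, not realizable ones); you should combine it with the openness of $\Phi_i|_{\ISk}$ from \cref{lem:restrict_to_skeleton}(\ref{lem:restriction_to_skeleton:all}), so that $\Phi_i$ of any neighbourhood in $\ISk$ is open in $\oSk(\cM_{0,n})\times\Sk(U)$ and hence meets the complement of the lower-dimensional bad locus $Z$, producing nearby skeletal curves with transverse spine. For closedness you need continuity of $\Sp$ on $\ISk$ and that the factorization locus is closed in the space of spines; both hold, but should be invoked explicitly rather than waved at.
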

\begin{proof} The argument of \cite[8.22]{Keel_Yu_The_Frobenius} applies. \end{proof}

\begin{remark} \label{rem:lskel} Here we have followed the theory from \cite[\S 8]{Keel_Yu_The_Frobenius}, which is based on
  deformation theory of maps with domain a complete rational curve. The theory has been simplified  and considerably
  extended in
  \cite{LS}, to cover maps with domain semi-stable genus $0$ curves with boundary, as in \cref{def:iostrop},
  e.g. it can be applied directly
  to a disk $f: B \to Y$, or a punctured disk $f: B^\circ \to U$, which need not admit extensions to complete
  curves mapping to $Z$.
  \end{remark}

\section{Properness of the spine map}
\label{sec:spmapprop}

Here we take fixed data as in \cref{const:data1}.
We assume (1) of \cref{ass:G_f}.
We use the space $\NT_J^F$ of \cite[4.5]{Keel_Yu_The_Frobenius}, a simple variant of $\oM_{0,J}^{\trop}$ allowing marked points at finite distance.

\begin{definition} \label{def:treesinsku} By an $\NT_J^F$ tree in $\oSk(U) \subset Y$ we mean $\Gamma \in \NT_J^F$ and continuous $h\colon \Gamma \to \oSk(U)$, such that $h^{-1}(\partial \oSk(U))$ is the set of $p_b, b \in \Bnd$.
Moreover we assume $h(p_f) \in \cG_f$, for each $f \in F$, and $h$ is affine in a neighborhood of $p_j$, for each $j \in J$ (we note this makes sense for $b \in \Bnd$, using the canonical piecewise affine structure).
We let $\NT_J^F(U)$ be the set of such trees (it depends on the data of $\cG_f$ but we leave this out of the notation).

We note:
\begin{lemma} \label{lem:treesindep}
  The set $\NT_J^F(U)$ does not depend on the snc compactification $(\bbY,\bbD)$ (only on $U$ and the fixed data $\bP$).
\end{lemma}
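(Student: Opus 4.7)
The plan is to verify that every ingredient in the definition of $\NT_J^F(U)$ is intrinsic to $U$ and the fixed data $\bP$, with no residual dependence on the snc compactification. By Hironaka, any two snc compactifications $U \subset Y_1$, $U \subset Y_2$ are dominated by a third obtained by toric blowups of essential strata; so it suffices to check invariance of $\NT_J^F(U)$ under a single such toric blowup $\tilde Y \to Y$.

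The first step is to establish canonicity of the target. The essential skeleton $\Sk(U) \subset U^\an$ is intrinsic to $U$ by Kontsevich-Soibelman (\cref{ass:CY}), and its closure $\oSk \subset Y^\an$ is obtained by adjoining one boundary point per ray of $\Sigma_{(Y, D^\ess)}$. Since rays of the essential dual fan correspond precisely to divisorial valuations along which the log volume form has a simple pole — an intrinsic condition on $U$ — the pair $(\oSk, \partial \oSk)$ is canonically identified between the two compactifications. In particular, the conditions $h^{-1}(\partial \oSk) = \{p_b\}_{b \in B}$ and $h(p_f) \in G_f$ (with $G_f \subset \Sk^\sm(U)$ part of $\bP$) are intrinsic.

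The second step is to invoke the observation from \cref{sec:integeraffine} that although the full integer affine structure on $\Sk(U)$ may depend on $Y$, the underlying piecewise integer linear structure induced by the maximal cones of $\Sigma_{(Y, D^\ess)}$ is canonical — it is preserved under refinement of the fan. This makes the requirement "$h$ is affine in a neighborhood of $p_j$" intrinsic: for $j \in F$ the image $h(p_f) \in G_f$ lies in the interior of a single maximal cone by \cref{ass:G_f}; for $j \in B$ it is a condition on the cone attached to a single canonical boundary ray (compare the remark following \cref{def:spinedef}); and for $j \in I$ it is the local condition that $h$ is a single integer affine map in the canonical piecewise structure near $p_j$.

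The main (mild) obstacle is checking that the condition "affine near a marked point" transfers between $\Sigma_{(Y, D^\ess)}$ and its refinement $\Sigma_{(\tilde Y, D_{\tilde Y}^\ess)}$. One direction is immediate: if $h$ is affine on a neighborhood of $p_j$ with respect to $\Sigma$, it is also affine there with respect to any refinement. Conversely, if $h$ is affine on a neighborhood of $p_j$ whose image is contained in a single cone $\sigma$ of $\Sigma$, the affine extension provided by the refined structure glues to a single affine map on $\sigma$, giving affineness in the $\Sigma$-sense. This identification produces the required bijection between the two versions of $\NT_J^F(U)$, completing the proof.
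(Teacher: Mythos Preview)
Your argument has a genuine gap in the first step. The description of $\partial\oSk(Y)$ as ``one boundary point per ray of $\Sigma_{(Y,D^\ess)}$'' is incorrect: $\oSk(Y)$ is the compactified dual complex $|\oSigma_{(Y,D^\ess)}|$, and its boundary at infinity is $(\dim Y - 1)$-dimensional, not discrete. More seriously, the claim that the rays of $\Sigma_{(Y,D^\ess)}$ are intrinsic to $U$ is false. The rays correspond to the essential boundary divisors appearing on the particular model $Y$; under a toric blowup $\tY\to Y$ the exceptional divisor over an essential stratum is itself essential, so $\Sigma_{(\tY,D_{\tY}^\ess)}$ acquires new rays and is a strict refinement of $\Sigma_{(Y,D^\ess)}$. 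Consequently $\oSk(\tY)$ and $\oSk(Y)$ are \emph{not} canonically homeomorphic (there is a continuous surjection $\oSk(\tY)\to\oSk(Y)$, but it collapses strata), and your attempt to identify the targets fails. The later discussion of affineness under refinement is also shaky in the ``converse'' direction, for the same reason: near a point $p_b$ mapping to $\partial\oSk$, the relevant cone of $\Sigma$ may be genuinely subdivided in the refinement.

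The paper sidesteps all of this with a one-line observation: by definition $h^{-1}(\partial\oSk(Y))=\{p_b:b\in B\}$, so the restriction $h|_{\Gamma\setminus\{p_b\}}$ lands in $\Sk(U)$, which \emph{is} intrinsic. Since the $p_b$ are endpoints of (infinite) legs, $h$ is recovered from this restriction by continuity, and every condition in the definition --- including affineness near $p_b$, which is phrased via the canonical piecewise integer linear structure on $\Sk(U)$ --- is visibly a condition on the restriction alone. No comparison of compactified skeleta is needed; the dependence on $Y$ is confined to finitely many boundary values that are already determined by the intrinsic data.
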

\begin{proof}
  This is clear as $h$ is uniquely determined by its restriction to $\Gamma \setminus \Bnd = h^{-1}(\Sk(U))$.
\end{proof}

Now we fix $\beta \in \NE(\bbY_s,\bbZ)$ and a set of $\beta$-walls $\Wall_{\beta}$, see \cref{def:walldef}.
Let $\tbbY \to \bbY$ be a toric blowup such that each $P_b$, $b \in \Bnd$, has divisorial center on $\tbbY$. 
We let $\NT_J^F(U,\beta) \subset \NT_J^F$ be the subset where additionally
$h(p_j) \not \in \overline{\Wall_{\beta}} \subset \tY$ (we stress that here we compute the closure in the generic fibre
$\tY \coloneqq (\bbY_{\eta}$) for $j \in \Bnd \cup F$.

\begin{lemma}
  The set $\NT_J^F(U,\beta) \subset \NT_J^F(U)$ does not depend on the choice of $\tbbY$.
  Moreover $\SP^\tr_{\Wall_{\beta}}(\Sk(U),\beta)^F_J \allowbreak \subset \NT_J^F(U,\beta)$, notation as in \cref{def:transverse}.
\end{lemma}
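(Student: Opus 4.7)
The plan is to reformulate both parts of the lemma as intrinsic conditions on $\Sk(U)$ and the wall data $\Wall_\beta$, then verify them directly. First I would analyze what $h(p_j) \in \overline{\Wall_\beta} \subset \tY^{\an}$ means for the two kinds of marked points. For $f \in F$, the image $h(p_f)$ lies in $G_f \subset \Sk^\sm(U)$, which sits in the interior of $\tY^{\an}$ away from the boundary-at-infinity; hence $h(p_f) \in \overline{\Wall_\beta}$ iff $h(p_f) \in |\Wall_\beta|$, a condition depending only on $\Sk(U)$ and $\Wall_\beta$. For $b \in B$, the leg incident to $p_b$ is affine near $p_b$ with outward derivative $P_b$, so $h(p_b)$ is the divisorial boundary point of $\oSk(U) \subset \tY^{\an}$ associated with the ray $\bbR_{\geq 0} P_b$, which exists precisely because $P_b$ acquires divisorial center on $\tY$. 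The analytic-geometric input I would then establish is the following: for a wall cone $\fD \subset \Sk(U)$, the closure $\overline{\fD} \subset \tY^{\an}$ meets this divisorial boundary point iff $P_b \in \fD$. This reduces via \cref{prop:TonySYZ} to the toric statement that in $T_M^{\an}$, the closure of a rational polyhedral cone in $M_{\bbR}$ meets the divisorial point associated to a primitive ray $\rho$ iff $\rho \subset \fD$. Together this yields the intrinsic reformulation ``$h(p_f) \notin |\Wall_\beta|$ for $f \in F$, and $P_b \notin \fD$ for all $b \in B$ and all $(\fD,v) \in \Wall_\beta$'', proving independence from $\tY$.

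Second, I would verify $\SP^\tr_{\Wall_\beta}(\Sk(U),\beta)^F_J \subset \NT_J^F(U,\beta)$ using this reformulation. For $f \in F$, the marked point $p_f$ is a valence-one vertex, so transversality condition \eqref{def:transverse:bending} (only $2$-valent vertices can map into $\Wall_\beta$) forces $h(p_f) \notin |\Wall_\beta|$. For $b \in B$, the spine condition that the outward derivative at the leg incident to $p_b$ equals $P_b$ implies that $h$ parametrizes an infinite affine ray in direction $P_b$ near $p_b$. If $P_b \in \fD$ for some wall, then writing the leg as $\{v + t P_b : t \geq 0\}$, closedness of $\fD$ as a cone containing $P_b$ forces $v + t P_b \in \fD$ for all sufficiently large $t$; hence $h(\Gamma) \cap |\Wall_\beta|$ contains an infinite subset, contradicting the finiteness assertion of transversality condition \eqref{def:transverse:image}. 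Therefore $P_b \notin \fD$ for every wall, and the intrinsic reformulation yields $h(p_b) \notin \overline{\Wall_\beta}$.

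The main obstacle is the analytic-geometric statement in the first step: identifying $\overline{\fD} \cap \partial\oSk(U)$ inside $\tY^{\an}$ with the set of divisorial points corresponding to rays of $\fD$ having divisorial center on $\tY$. The reduction to the toric case near $h(p_b)$ uses the Kontsevich-Soibelman smoothness of the SYZ retraction from \cref{prop:TonySYZ} together with the explicit description of divisorial valuations via the Berkovich reduction map, after which the statement becomes the standard closure computation in $T_M^{\an}$. Everything else is essentially tree combinatorics applied to the definitions of spine and transversality.
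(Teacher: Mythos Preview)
Your overall strategy—reformulate the condition $h(p_j)\notin\overline{\Wall_\beta}$ intrinsically and then check it against transversality—matches the paper's. The gap is in your intrinsic reformulation for $b\in B$.

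You claim that $h(p_b)$ is ``the divisorial boundary point associated with the ray $\bbR_{\ge 0}P_b$'' and that $h(p_b)\in\overline{\fD}$ iff $P_b\in\fD$. Both assertions are wrong. In the extended skeleton $\oSk(U)\subset\obbR_{\ge 0}^{D^{\ess}_{\tY}}$ the point $h(p_b)$ has a single infinite coordinate (the one for the divisorial center of $P_b$) but its remaining finite coordinates record the approach direction of the leg, i.e.\ they depend on the particular spine $h$, not just on $P_b$. Concretely, with the leg written as $t\mapsto v+tP_b$, the limit $h(p_b)$ remembers the coordinates of $v$ orthogonal to $P_b$. Your equivalence then fails: take $d=3$, $\fD=\{(a,b,0):a,b\ge 0\}$, $P_b=e_1\in\fD$, $v=(0,0,1)$. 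Then $v+tP_b=(t,0,1)\notin\fD$ for any $t$, and $h(p_b)=(\infty,0,1)\notin\overline{\fD}$, yet $P_b\in\fD$.

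This same example breaks your transversality argument: the implication ``$P_b\in\fD$ forces $v+tP_b\in\fD$ for large $t$'' is false whenever $v$ lies off the hyperplane spanned by $\fD$; the leg then never meets $\fD$, so no contradiction with finiteness of $h(\Gamma)\cap\Wall_\beta$ arises. In fact there exist transverse spines with $P_b\in\fD$, so ``transverse $\Rightarrow P_b\notin\fD$'' is simply false.

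The correct reformulation (which the paper uses) is: because $h(p_b)$ has exactly one infinite coordinate, a short polyhedral argument gives $h(p_b)\in\overline{\fD}$ iff $h(x)\in\fD$ for all $x$ in a neighbourhood of $p_b$ in $\Gamma$. The right-hand side is visibly independent of $\tY$, and when it holds the segment near $p_b$ lies in $\fD$, so $h(\Gamma)\cap\fD$ is infinite and transversality fails. Replacing your ``$P_b\in\fD$'' criterion with this neighbourhood criterion repairs both halves of the proof; your treatment of the $F$-points is already correct.
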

\begin{proof} Near $v_b$, $b \in \Bnd$, the derivative of $h$ is $P_b$, which is parallel to a ray of $\Sigma_{(\tY,D^{\ess})}$.
It follows that $h(v_b) \in \obbR_{\geq 0}^{D_{\tY}^{\ess}}$ has a single infinite coordinate.
It follows that $h(v_b) \in \overline{\fD}$ for a cone $\fD$ (contained in a cone of $\Sigma_{(\tY,D^{\ess})}$) if and only if this holds for all points in a neighborhood of $v_b$, in which case the spine is not transverse to $\fD$.
$\SP^\tr_{\Wall_{\beta}}(\Sk(U),\beta)^F_J \subset \NT_J^F(U,\beta)$ follows.
The independence on the toric blowup follows from similar reasoning.
\end{proof}

We have a natural inclusion $\NT_J^F(U) \subset \Cont(C/\NT_J^F,\oSk(U))$, and so it inherits a natural compact-open topology, see \cite[4.19]{Keel_Yu_The_Frobenius}.
\end{definition}

\begin{assumption} \label{ass:vformskel}
For the rest of the section we assume \cref{ass:gluing}.
Thus $V$ inherits a canonical volume form, and the theory of skeletal curves from \cref{sec:skeletal_curves} applies.
\end{assumption}

Fix $i \in I$.
We consider $\Phi_i\colon M(U,\beta) \to V_M \times U$.
By \cref{lem:sksm},
$\ISk \coloneqq \Phi_i^{-1}(\oSk) \subset M^{\smot}(U,\beta)$,
where $\oSk \coloneqq \oSk(\oM_{0,N}) \times \Sk(U)$,
and consists of skeletal curves.
Thus we have a canonical $$
\Sp\colon \ISk \to \NT(U).
$$

Let $\ISk^{\tri} \coloneqq \Sp^{-1}(\NT(U,\beta))$ (here $\tri$ denotes {\it transverse at marked points}, though we actually have transverse type conditions only at the points $p_i$ for $ i \in F \cup \Bnd$, we do not need any conditions on $i \in I$).

Note since $\ISk$ consists of skeletal curves,
$\ISk^{\tri} \subset \ISk$ is equivalently defined by the conditions that $f(p_j) \not \in \Wall_{\beta}$ for all $j \in \Bnd \cup F$.

\begin{proposition} \label{prop:Spprop} Assumptions as in \cref{ass:basicsetup}
  We consider 
$$
\Sp\colon \ISk^{\tri} \to \NT(U,\beta).
$$
The following hold:
\begin{enumerate}[wide]
\item Any net in $\ISk^{\tri}$ whose image converges in $\NT(U,\beta)$ has a subnet converging in $\ISk^{\tri}$, equivalently, $\Sp$ is proper.
\item If $U$ contains no complete curves then furthermore the fibres of $\Sp$ are finite.
\end{enumerate}
\end{proposition}

\begin{proof}
Using \cref{prop:tbeta} and \cref{lem:ISknodes}
we can replace $\bbY$ by a toric blowup and assume each $P_b$, $b \in \Bnd$
has divisorial center on $\bbY$.
Now for $f \in \ISk^{\tri}$,
$f(p_j) \not \in \overline{\Wall_{\beta}} \subset Y$ for $j \in \Bnd \cup F$.

To check properness, by \cref{lem:proppb} we can add extra points and use $N' \subset N$ instead of $N$, notation as in \cref{sec:bms}.

For this we follow the argument in the proof of \cite[10.1]{Keel_Yu_The_Frobenius}.
We consider a net in $\ISk^{\tri}$
whose image converges in $\NT(U,\beta)$ and prove a subnet converges in $\ISk^{\tri}$.
Since $M'$ is of finite type, see \cref{def:bms}, and we have the equality (1) of \cref{prop:cad},
after passing to a subnet, the net converges in $M'$, and it remains to show the limit lies in $M'(U,k,N')$ (again we have the equality from (1)).
Now the argument of \cite[10.1]{Keel_Yu_The_Frobenius} applies.
This completes the proof of (1).

For (2), $\ISk \subset M^{\sm}(U,\beta)$
by \cref{lem:ISknodes}, so $\Sp$ has discrete fibres, and thus finite fibres, by (1). 
\end{proof}

\begin{lemma} \label{lem:sksm} We consider $\Phi\colon M(U,\beta) \to V_M \times U$.
  If $U$ contains no complete rational curves, $\Phi^{-1}(\oSk) \subset M^{\sm}(U,\beta)$, and in any
  case this inverse is contained in $M^{\smot}(U,\beta)$ (notation as in \cref{def:sot}),
  and $x_m$ is a skeletal curve.
  \end{lemma}
  \begin{proof} The first statement is part of \cref{lem:ISknodes} and
    \cref{lem:source_of_skeletal_curve}.
Once we have this, then the same deformation theory argument used to prove \cite[8.21(1)]{Keel_Yu_The_Frobenius}
applies.
\end{proof}

\begin{lemma} \label{lem:proppb}
Let $\Phi\colon M \to Q$ be a continuous map between Hausdorff spaces.
Let $f\colon Q_1 \to Q$ be a continuous surjection, with $Q_1$ Hausdorff and locally compact.
Then $\Phi$ is proper if and only if the fibre product $\Phi_1\colon M \times_Q Q_1 \to Q_1$ is proper.
\end{lemma}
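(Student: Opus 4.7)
I would characterize properness as ``preimage of every compact set is compact'' and handle the two implications separately.

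For the forward direction, assume $\Phi$ is proper and let $K \subset Q_1$ be compact. Continuity gives $f(K) \subset Q$ compact, and properness of $\Phi$ gives $\Phi^{-1}(f(K)) \subset M$ compact. Since $Q$ is Hausdorff, $M \times_Q Q_1$ is the preimage of the diagonal $\Delta_Q$ under $(\Phi,f)\colon M \times Q_1 \to Q \times Q$, hence closed in $M \times Q_1$. Therefore $\Phi_1^{-1}(K) = (M \times_Q Q_1) \cap (\Phi^{-1}(f(K)) \times K)$ is a closed subspace of the compact set $\Phi^{-1}(f(K)) \times K$ in the Hausdorff space $M \times Q_1$, hence compact.

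For the backward direction, assume $\Phi_1$ is proper and let $L \subset Q$ be compact. The key step is to produce a compact $K_1 \subset Q_1$ with $f(K_1) \supset L$, using surjectivity of $f$ and local compactness of $Q_1$: for each $l \in L$ pick a preimage $q_1(l) \in f^{-1}(l)$ together with a compact neighborhood in $Q_1$, then extract a finite subfamily whose $f$-images cover $L$, and let $K_1$ be the union. Then $\Phi_1^{-1}(K_1)$ is compact by assumption, and its image under the projection $\pi_M\colon M \times_Q Q_1 \to M$ is compact in $M$. This image contains $\Phi^{-1}(L)$: for any $m \in \Phi^{-1}(L)$, since $\Phi(m) \in L \subset f(K_1)$, one picks $q_1 \in K_1$ with $f(q_1) = \Phi(m)$, so $(m,q_1) \in \Phi_1^{-1}(K_1)$ projects to $m$. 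Finally, $\Phi^{-1}(L)$ is closed in the Hausdorff space $M$ (preimage of the closed set $L$), so it is a closed subset of a compact set, hence compact.

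The main obstacle is the step of extracting the compact $K_1 \subset Q_1$ with $f(K_1) \supset L$. Naively, the sets $f(V)$ for $V \subset Q_1$ relatively compact open cover $L$ pointwise but need not form an open cover of $L$ (since $f$ is not assumed open), so the finite-subcover argument must be handled carefully. In the application to \cref{prop:Spprop}, the map $f$ is a forgetful map of pointed moduli spaces, which has enough additional structure (in effect, its own properness) that one can simply take $K_1 = f^{-1}(L)$; this is where the local compactness hypothesis on $Q_1$ really enters.
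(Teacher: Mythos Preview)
Your overall strategy matches the paper's exactly: the forward direction is stability of properness under base change, and for the backward direction both you and the paper seek a compact $K_1 \subset Q_1$ with $f(K_1)\supset L$, then argue that $\Phi^{-1}(L)$ is a closed subset of the compact set $\pi_M(\Phi_1^{-1}(K_1))$. The paper simply asserts such a $K_1$ exists ``by the local compactness and surjectivity''; you rightly flag this as the problematic step.

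In fact the obstacle you identify is fatal to the lemma as stated. Take $Q=[0,1]$, $M=[0,1)$ with $\Phi$ the inclusion, and $Q_1=\{*\}\sqcup\coprod_{n\ge 2}[0,1-\tfrac{1}{n}]$, with $f(*)=1$ and $f$ the inclusion on each interval. Then $Q_1$ is locally compact Hausdorff and $f$ is a continuous surjection, but any compact $K_1\subset Q_1$ meets only finitely many components, so $f(K_1)$ omits some interval $(1-\tfrac{1}{N},1)$. One checks directly that $\Phi_1$ is proper (the preimage of a compact $K$ is $K\setminus\{*\}$, closed in $K$) while $\Phi$ is not. So your closing remark is the correct repair: in the application to \cref{prop:Spprop} the map $f$ is a forgetful map of pointed tree spaces, which is itself proper, so $K_1=f^{-1}(L)$ works. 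The lemma becomes true if one adds ``$f$ proper'' or ``$f$ open'' to the hypotheses; your parenthetical ``this is where the local compactness of $Q_1$ really enters'' is slightly off, since once $f$ is proper that hypothesis is no longer needed.
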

\begin{proof} Proper is preserved by fibre product.
For the other direction, given a compact set $K \subset Q$, by the local compactness and surjectivity we can find compact $K_1 \subset Q_1$
with $f(K_1) = K$.
Now $$
\Phi^{-1}(K) = f(\Phi_1^{-1}(K_1)).
$$
The result follows.
\end{proof}

Now we can make:

\begin{definition} \label{def:naive_counts_general}

  Suppose $U$ contains no complete rational curves: 
  For $h \in \NT_J^F(U)$, and $\beta \in \NE(Y,\bbZ)$, by \cref{lem:sksm}, the fibre $\Sp^{-1}(h)$
is contained in the \'etale locus of $\Phi_i$.
We define $N_i(h,\beta)$ to be its length.

We note that a priori this
depends on the choice of gluing in the construction of $Z$ in \cref{const:Z}, we
will show in \cref{thm:ncwd} that, for sufficiently generic spines, this naive count is
independent of that choice.

\end{definition}

In the general case, we will only define the count of transverse spines:

\subsection{counts of spines } \label{sec:cospoa}
Throughout this subsection we assume \cref{ass:basicsetup}.

\begin{lemma} \label{lem:spinediscrete} 
  We consider realizable spines, i.e. $\Sp(\ISk) \subset \Sp(\Sk(U),\beta)$.
  $\Phi_i^{\trop}: \Sp(\ISk) \to \Sk \subset \oM_{0,N} \times U$ has finite
  fibres.
\end{lemma}
\begin{proof} This follows from \cref{lem:ISknodes}, \cref{lem:sksm}, and
  \cref{lem:source_of_skeletal_curve}
  -- note the spine
  depends only on the main component, $x_m$ in the notation of \cref{lem:sksm}, so the finiteness
  holds for the same reason finiteness holds in the no complete rational curves
  case of \cref{prop:Spprop}.
\end{proof}

For a non-realizable spine, we define the count to be zero. Otherwise, for
$S \in \Sp(\ISk)$, by \cref{lem:spinediscrete}, $\Sp^{-1}(S) \subset M(U,\beta)$ is a union
of connected components of the analytic fibre $\Phi_i^{-1}(\Phi_i^{\trop}(S))$. And this union
is proper by \cref{prop:Spprop}, and so a union of connected components of the analytic fibre
of
$$
\Phi_i: \oM_{0,N}(Z,D_{Z} ) \to \oM_{0,N}\times Z
$$
(notation as in \cref{def:Mdef}).  Let $F_S \coloneqq \Sp^{-1}(S) \subset M(U,\beta)$. 

Now by simple point set topology, \cite[9.9]{Keel_Yu_The_Frobenius}, for sufficiently small open
$S \in W \subset \oM_{0,N} \times Z$, if we take $\tW \subset \Phi_i^{-1}(W)$ the union
of connected components that intersect $F_S$, $\Phi_i: \tW \to W$ is proper. By \cite[1.1]{PY},
$\oM_{0,N}(Z,D_{Z} )$ has a virtual fundamental class (of degree the dimension of the target 
$W$) and thus $\deg(\Phi_i) \in \bbQ$ is well defined.

\begin{definition-lemma} \label{def:spinecount} Notation as immediately above. Define 
  $N_i(h,\beta) \coloneqq \deg(\Phi_i) \in \bbQ$. This agrees with \cref{def:naive_counts_general} if
  $U$ contains no complete rational curves.
\end{definition-lemma}
\begin{proof} When $U$ contains no complete rational curves $\Phi_i: \tW \to W$ is a finite \'etale map
  so the virtual degree is the naive degree. \end{proof}

The remaining results in this section now follow by the same argument used to prove their (nearly identical) counterparts in \cite{Keel_Yu_The_Frobenius}:

\begin{proposition}(see \cite[9.5]{Keel_Yu_The_Frobenius}) \label{prop:moving_w}
Assume $S$ is a transverse spine with respect to $\Wall_\beta$.
Let $\ow\in\Gamma\setminus\set{v_j | j\in \Bnd\setminus F}$ away from the nodes.
We glue $[0,w=+\infty]$ to $\Gamma$ along $0$ and $\ow$, extend $h$ constantly on the new leg, and obtain a new spine which we denote by $S_{\ow}$.
Then the count $N_w(S_{\ow},\gamma)$ is independent of the choice of $\ow\in\Gamma$.
\end{proposition}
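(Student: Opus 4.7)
The plan is to realize the count $N_w(S_{\ow},\gamma)$ as the degree of a finite étale cover and to argue that as $\ow$ varies the relevant degree is forced to be constant. First I would establish the étale cover: by \cref{lem:sksm} every point of $\Sp^{-1}(S_{\ow})$ lies in $\cM^{\sm}(U,\gamma)$, so \cref{lem:Msm_smooth} applied with $i=w\in I$ shows that $\Phi_w=(\dom,\ev_w)$ is étale at each such point. Combined with the properness and finite-fibre statement of \cref{prop:Spprop}, a neighborhood of $\Sp^{-1}(S_{\ow})$ in $\ISk^{\tri}$ maps as a finite étale cover onto a neighborhood of $S_{\ow}$ in $\NT_J^F(U,\beta)$, whose degree is exactly $N_w(S_{\ow},\gamma)$. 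I would also check that $S_{\ow}$ is transverse whenever $S$ is: since the new leg maps constantly, $h(\Gamma_{S_{\ow}})=h(\Gamma_S)$ is unchanged, the new vertex at $\ow$ is $3$-valent, and its image $h(\ow)$ avoids $\Wall_{\beta}$ and $\Sing(\Sk(U))$ by transversality of $S$ (\cref{def:transverse}).

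Next I would note that for $\ow$ moving continuously within the interior of a single edge of $\Gamma$, the spines $S_{\ow}$ depend continuously on $\ow$ in $\NT_J^F(U,\beta)$ directly from the compact-open topology, since the underlying tree changes only by the position of a single trivalent vertex and the map $h$ extends unchanged. Along this continuous family the degree of the finite étale cover is locally constant, giving the statement on every single edge of $\Gamma$.

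The main obstacle is to establish constancy as $\ow$ passes through one of the permitted interior vertices $v\in\Gamma$ (points of $\Gamma$ other than the excluded $v_j$, $j\in B\setminus F$, and the nodes), since across $v$ the combinatorial type of $\Gamma_{S_{\ow}}$ changes: the valence of $v$ jumps by one when the $\ow$-leg attaches there. I would handle this by a two-sided limit argument using \cref{prop:Spprop}. For $\ow_n\to v$ the sequence $S_{\ow_n}$ converges to $S_v$ in $\NT_J^F(U,\beta)$, and properness of $\Sp$ supplies subsequential limits of elements of $\Sp^{-1}(S_{\ow_n})$ inside $\Sp^{-1}(S_v)$; conversely, étaleness of $\Phi_w$ at any point of $\Sp^{-1}(S_v)$, together with the fact that the $\ow$-leg maps constantly (so the deformation of the attachment point is unobstructed by the bending of the spine), lifts any given element of $\Sp^{-1}(S_v)$ uniquely to a nearby element of $\Sp^{-1}(S_{\ow_n})$ for $n$ large. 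Matching these two directions yields a bijection $\Sp^{-1}(S_{\ow_n})\leftrightarrow\Sp^{-1}(S_v)$, hence equal counts on either side of $v$. Stringing such identities together with the local constancy on edges gives the global constancy of $N_w(S_{\ow},\gamma)$ for all admissible $\ow\in\Gamma$, as desired.
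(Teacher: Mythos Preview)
Your approach—realizing $N_w(S_{\ow},\gamma)$ as a local degree of $\Sp$ and arguing constancy along the path $\ow\mapsto S_{\ow}$—is natural, but it diverges from the argument the paper invokes and has a genuine gap. The paper defers to \cite[9.5]{Keel_Yu_The_Frobenius}, whose proof (as the introduction here indicates, pointing to \cite[8.23]{Keel_Yu_The_Frobenius}) is a direct geometric one: one varies the marked point $p_w$ \emph{inside the domain curve} $C$ of a given skeletal map rather than working in the spine space. By \cref{thm:f_in_skeleton} and \cref{lem:skeletal_curve_contraction}, repositioning $p_w$ keeps the map skeletal and only relocates the constantly-mapped $w$-leg of its spine; this produces, for each $f\in\Sp^{-1}(S_{\ow_0})$, a canonical companion in $\Sp^{-1}(S_{\ow_1})$, giving the bijection without ever analyzing $\Sp$ near non-transverse spines.

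In your route there are two problems. First, to get that a neighborhood of $\Sp^{-1}(S_{\ow})$ covers a neighborhood of $S_{\ow}$ you need $\Sp$ to be a local homeomorphism; étaleness of $\Phi_w$ alone does not give this, since $\Phi_w$ lands in $\cV_M\times U^{\an}$, not in $\NT_J^F(U,\beta)$. You need the factorization $\Phi_w|_{\ISk}=\Psi_w\circ\Sp$ together with \cref{prop:rigidity_spine} (local injectivity of $\Psi_w$ on transverse spines), which you do not invoke. Second, and this is the crux, your limiting argument at a vertex $v$ fails exactly when $v$ is a $2$-valent bending vertex of $S$ with $h(v)\in\Wall_\beta$. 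Then $S_v$ has a $3$-valent vertex on a wall and is \emph{not} transverse, so \cref{prop:rigidity_spine} is unavailable at $S_v$. Your $\Phi_w$-étale lift $\tilde f_n$ near $f\in\Sp^{-1}(S_v)$ with $\Phi_w(\tilde f_n)=\Psi_w(S_{\ow_n})$ lies in $\ISk$, but nothing you cite forces $\Sp(\tilde f_n)=S_{\ow_n}$ rather than some other spine with the same $(\dom,\ev_w)$; the parenthetical ``unobstructed by the bending'' is exactly the missing rigidity statement. Since $\Gamma$ is a tree, every path between edges must cross such bending vertices, so this case cannot be avoided. The domain-curve argument sidesteps it entirely.
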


\begin{definition} \label{def:naive_count_transverse}
By virtue of \cref{prop:moving_w}, we define $N(S,\gamma) \coloneqq N_w(S_{\ow},\gamma)$ (for any choice of $\ow$).
\end{definition}

\begin{theorem}(see \cite[9.14]{Keel_Yu_The_Frobenius}) \label{thm:forgetting_interior_marked_points}
Notation as in \cref{def:naive_count_transverse}.
Let $\Gamma^\Bnd \subset \Gamma$ be the convex hull of the $\Bnd$-type marked points.
If $N_i(S,\beta) \neq 0$ then $h$ factors through the canonical retraction $r:\Gamma \to \Gamma^\Bnd$.

Now assume $h$ factors through this retraction, and let $S^\Bnd$ be the restriction of $S$ to $\Gamma^\Bnd$.
Assume furthermore that $S^\Bnd$ is a transverse spine with respect to $\Wall_\beta$.
Then $N_i(S,\gamma) = N(S^\Bnd,\gamma)$.
In particular $N_i(S,\gamma)$ is independent of the choice of $i\in I$.
\end{theorem}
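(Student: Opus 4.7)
The plan is to handle the two assertions in turn, both leveraging the skeletal-curve technology from \cref{sec:skeletal_curves}.

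First, for the factorization of $h$ through $r$: if $N_i(S,\beta) \neq 0$, then there exists a stable map $[C,(p_j)_{j\in J}, f]$ in the fiber $\Phi_i^{-1}(S)$, which by \cref{lem:source_of_skeletal_curve} lies in $\ISk$ and is in particular skeletal. By \cref{lem:skeletal_curve_contraction}, the restriction $f|_\Gamma \colon \Gamma \to \oSk(U)$ factors through the canonical retraction $\Gamma \to \Gamma^B$. Since the spine $h$ coincides with this restriction, the desired factorization $h = h^B \circ r$ follows.

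Second, for the equality $N_i(S,\gamma) = N(S^B,\gamma)$, the idea is to set $\ow \coloneqq r(v_i) \in \Gamma^B$ and construct a degree-preserving bijection between $\Phi_i^{-1}(S)$ and the analogous fiber over $S^B_{\ow}$ in the moduli space with index set $B \cup \{w\}$. The forward map forgets the marked points $p_j$ for $j \in (F \cup I) \setminus \{i\}$, relabels $p_i$ as $p_w$, and stabilizes the resulting pointed curve; the skeletal property transfers and the image is a realization of $S^B_{\ow}$. The inverse map adds back the marked points: for each $f \in F$, the point $p_f$ is the unique preimage of the toric divisor $D_f$ inside the end-disk $E_f$, forced by the scheme-theoretic contact condition in \cref{def:Mdef}; for each $j \in I \setminus \{i\}$, since $h$ is constant on the leg at $v_j$ (by transversality of $S$ and \cref{rem:internal_leg_contracted}), the image $f(p_j)$ must equal the prescribed point $h(v_j) \in \Sk(U)$, and by \cref{lem:Msm_smooth}(\ref{lem:Msm_smooth:etale}) applied to the evaluation map $\ev_{p_j}$, this uniquely determines $p_j$ on the underlying domain.

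The main obstacle is verifying that this set-theoretic bijection is in fact length-preserving on the étale fibers, so that the naive counts match. For this I rely on \cref{lem:restrict_to_skeleton}(\ref{lem:restriction_to_skeleton:etale}), which gives that $\Phi_i$ is étale over a neighborhood of $\ISk$, combined with the properness statement of \cref{prop:Spprop} and its counterpart for the moduli indexed by $B \cup \{w\}$. Together these ensure that the forgetful/adding-back correspondence is étale and proper, so multiplicities agree. This parallels the argument of \cite[9.14]{Keel_Yu_The_Frobenius} in the present more general setting. The independence of $i \in I$ is then immediate, since the right-hand side $N(S^B, \gamma)$ depends only on $S^B$ and the auxiliary point $\ow$, and by \cref{prop:moving_w} not on the latter choice either.
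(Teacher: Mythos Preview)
Your treatment of the first assertion via \cref{lem:skeletal_curve_contraction} is correct.

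For the second assertion, your bijection mishandles the $F$-type marked points. The moduli space computing $N_i(S,\gamma)$ uses the glued target $Z = Y^{\an}\cup_{f,\cG_f} T_f^{\an}$: each stable map sends the end disk $E_f$ into $T_f^{\an}$, touching the toric boundary $D_f$ at $p_{e_f}$. By contrast, $S^B_{\ow}$ has index set $B\cup\{w\}$ with $F'=\emptyset$, so its count is taken in a moduli space of maps with target $Y^{\an}$ alone and no end-disk structure. Your forward map (forget and stabilize) produces a map $C'\to Z$ whose image still meets each $D_f\not\subset Y^{\an}$, hence does not lie in the target moduli space. Your inverse-map step, recovering $p_f$ as ``the unique preimage of $D_f$ inside the end-disk $E_f$'', is then vacuous: a curve in the $S^B_{\ow}$-moduli has no end disk and never meets $T_f^{\an}$.

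What is missing is a preliminary step that detaches the toric ends. Since $h$ is constant on each leg to $v_f$ (by transversality, see \cref{rem:spine_factorization}), the end contribution is a straight segment in the interior of a maximal cone, of class zero and count~$1$; this is removed using \cref{cor:vary_lengths} and the gluing/concatenation formula together with \cref{lem:straightspinecount}, exactly as in the proof of \cref{thm:ncwd}. Only after this reduction do both sides live over the common target $Y^{\an}$, and then your \'etale-evaluation argument for forgetting the remaining $I$-type points (and recovering them uniquely) applies as written.
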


Exactly as in \cite[10.10]{Keel_Yu_The_Frobenius}, \cref{prop:rigidity_spine} and \cref{prop:Spprop} imply:

\begin{theorem} \label{cor:vary_lengths} Notation as in \cref{def:naive_counts_general}

$N_i(S,\beta)$ is locally constant on the transverse locus $\Sp^{\tr}(\Sk(U),\beta)$.

If we deform $S \in \Sp^{\tr}(\Sk(U),\beta)$ by varying the lengths of edges of $\Gamma$ on which $h$ is constant (including possibly shrinking them to points) we remain in $\Sp^{\tr}(\Sk(U),\beta)$ and the count stays the same.
\end{theorem}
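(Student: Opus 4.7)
The plan is to combine the properness and finiteness of the spine map with the rigidity of transverse spines to realize $\Sp$ as a locally trivial finite cover over $\SP^{\tr}(\Sk(U),\beta)$. By \cref{lem:sksm}, $\ISk^{\tri} \subset \cM^{\sm}(U,\beta)$, and by \cref{lem:restrict_to_skeleton} the map $\Phi_i$ is étale on a neighborhood of $\ISk$. By \cref{prop:Spprop}, $\Sp\colon \ISk^{\tri}\to\NT(U,\beta)$ is proper with finite fibres. Fix a transverse spine $S\in\SP^{\tr}(\Sk(U),\beta)$ and enumerate its fibre $\Sp^{-1}(S) = \{[f_1],\dots,[f_r]\}$; it suffices to show $|\Sp^{-1}(S')|=r$ for every $S'$ in a sufficiently small neighborhood of $S$.

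To construct the covering, for each $j$ I would use the étaleness of $\Phi_i$ at $[f_j]$ to obtain a unique local analytic section $\sigma_j$ of $\Phi_i$ near $\Phi_i(f_j)\in\cV_M\times U^{\an}$. The rigidity statement \cref{prop:rigidity_spine} shows that $\Psi_i=(\dom,\ev_i)\colon \SP^{\tr}(\Sk(U),\beta)\to\NT_J^F\times\Sk(U)$ is a local homeomorphism onto its image at $S$. Composing the natural inclusion $\NT_J^F\times\Sk(U)\hookrightarrow\cV_M\times U^{\an}$ with $\Psi_i$ and then $\sigma_j$ produces a lift $S'\mapsto \sigma_j(\Psi_i(S'))$. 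The resulting $r$ curves lie in $\Sp^{-1}(S')$ by \cref{thm:f_in_skeleton} and \cref{lem:skeletal_curve_contraction}, which force the spine of a skeletal curve to be determined by the domain plus an evaluation.

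The main obstacle is ruling out any additional elements of $\Sp^{-1}(S')$ as $S'$ ranges near $S$. I would argue by contradiction: a sequence of supposedly new curves $g_n\in\Sp^{-1}(S_n)$ with $S_n\to S$ has, by properness (\cref{prop:Spprop}(2)), a convergent subnet with limit in $\Sp^{-1}(S)=\{[f_j]\}$. But then étaleness of $\Phi_i$ forces $g_n$ to coincide with $\sigma_j(\Psi_i(S_n))$ for large $n$, contradicting the assumption. Thus $\Sp^{-1}(S')$ consists of exactly the $r$ lifts constructed above, establishing local constancy of $N_i$.

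For the second statement, I would observe that varying the length of an edge of $\Gamma$ on which $h$ is constant (and in the limit shrinking such an edge to a point) preserves the image $h(\Gamma)\subset\oSk(U)$, the set of bending vertices, and their associated weights, so each of the four conditions (\ref{def:transverse:image})--(\ref{def:transverse:singlocus}) in \cref{def:transverse} continues to hold. Such deformations therefore trace a connected path in $\SP^{\tr}(\Sk(U),\beta)$, and local constancy from the first part yields the invariance of the count.
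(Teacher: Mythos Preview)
Your argument is correct and follows exactly the approach the paper indicates: the paper's proof is the single line ``Exactly as in \cite[10.10]{Keel_Yu_The_Frobenius}, \cref{prop:rigidity_spine} and \cref{prop:Spprop} imply'', and you have spelled out precisely that combination of properness (to bound the fibre from above) and \'etale local sections together with rigidity (to bound it from below). One small correction: in the step where you check that the lifts $\sigma_j(\Psi_i(S'))$ lie in $\Sp^{-1}(S')$, the citation of \cref{lem:skeletal_curve_contraction} is misplaced and the assertion that the spine is ``determined by the domain plus an evaluation'' is too strong globally; the right argument is that by continuity of $\Sp$ the spine of the lift is close to $S$, hence transverse (transversality being open), and then the local injectivity of $\Psi_i$ on $\SP^{\tr}$ from \cref{prop:rigidity_spine} forces it to equal $S'$.
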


\section{Gluing formula} \label{sec:gluing}

Here (and anywhere we count spines) we assume \cref{ass:vformskel}.
The gluing formula for naive counts follows from \cref{cor:vary_lengths}
as in \cite[\S 12]{Keel_Yu_The_Frobenius}, with minor modifications which we indicate.

Here we state the results (see \cite[Figures 12-13]{Keel_Yu_The_Frobenius} for pictures explaining the dense notation).

\begin{theorem} \label{thm:gluing_inside}
Let $\gamma\in\NE(Y)$.
Let $S^i=[\Gamma^i,(v^i_j)_{j\in J^i},h^i]$, $i=1,2$ be two transverse spines in $\Sk(U,\gamma)$ (see \cref{def:transverse}).
Let $p^i\in\Gamma^i$ be points in the interiors of edges such that $h^1(p^1)=h^2(p^2)\in \Sk(U) \setminus\Wall_\gamma$.
Let $S$ be the spine obtained by gluing $S^i$ at $p^i$.
Then we have \[
N(S,\gamma)=\sum_{\gamma^1+\gamma^2=\gamma} N(S^1,\gamma^1)\cdot N(S^2,\gamma^2).
\]
\end{theorem}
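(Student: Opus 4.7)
The plan is to follow the strategy of \cite[\S 12]{Keel_Yu_The_Frobenius} essentially verbatim, using \cref{cor:vary_lengths} to move between $S$ and its ``broken'' version, and then matching the factors using the basepoint-moving results already established in the present general setting.

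First I would introduce at the gluing point of $S$ a new edge as follows: after gluing, $p^1 = p^2$ becomes a valence-$4$ vertex $p$ of $S$ with $h(p) \in \Sk(U) \setminus \Wall_\gamma$. Split $p$ into two vertices $u^1, u^2$ joined by a new edge of length $\ell \geq 0$, extending $h$ constantly along the new edge (so the outgoing weights on each side agree). Because $h^i$ is balanced at $p^i$ (it is an interior point of an edge) and $h(p) \notin \Wall_\gamma$, each vertex $u^i$ is balanced, so the resulting spine $S_\ell$ is again $\Wall_\gamma$-transverse, and $S_0$ recovers $S$ after contracting the edge. By \cref{cor:vary_lengths}, $N(S_\ell, \gamma)$ is independent of $\ell \geq 0$.

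Next, I would analyze the $\ell \to \infty$ behavior. By \cref{lem:source_of_skeletal_curve}, any element of $\Sp^{-1}(S_\ell)$ is skeletal, so the new edge corresponds, via the retraction of the domain to its skeleton, to an annulus in the domain whose modulus grows linearly with $\ell$. \cref{prop:Spprop} (properness) combined with the standard fact that infinite-modulus annuli become nodes forces any limiting stable map to break as $C = C^1 \cup_q C^2$ at a separating node $q$, with $f|_{C^i}$ a stable map of some class $\gamma^i$, and \cref{prop:dcc_1} gives $\gamma = \gamma^1 + \gamma^2$. In the opposite direction, given stable maps $f^i$ realizing $S^i$ with an extra interior marked point $u^i$ mapping to $h(p^i)$, they glue uniquely at the node: the relevant evaluation maps are étale by \cref{lem:Msm_smooth} at the point $h(p) \in \Sk(U) \setminus \Wall_\gamma \subset \Sk^{\sm}(U)$, and so gluing is controlled by matching a single point in a Kontsevich-Soibelman-smooth locus.

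Finally, the count of stable maps on each side with the extra marked point $u^i$ at $h(p^i)$ equals $N(S^i, \gamma^i)$: by \cref{thm:forgetting_interior_marked_points} the interior marked point $u^i$ does not affect the count, and by \cref{prop:moving_w} the count is unchanged when we move this extra marked point along the edge carrying $p^i$, so it agrees with the count for $S^i$ itself. Combining the invariance $N(S, \gamma) = N(S_\infty, \gamma)$, the factorization of the fiber into matching pairs, and the per-factor identification with $N(S^i, \gamma^i)$ yields the desired convolution formula. The main technical point to verify carefully is the unique-gluing step, i.e.\ that the degeneration-and-gluing bijection between $\Sp^{-1}(S_\infty)$ and pairs of stable maps is clean; this is exactly where \cref{lem:Msm_smooth} and the transversality $h(p) \notin \Wall_\gamma$ are used, and it is the only place where the present generality (no open torus in $U$) requires appealing to the non-archimedean deformation theory developed in \cref{sec:deformation_theory} rather than to the classical algebraic deformation theory of \cite{Keel_Yu_The_Frobenius}.
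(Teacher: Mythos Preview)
Your proposal is correct and follows essentially the same approach as the paper, which simply states that the gluing formula follows from \cref{cor:vary_lengths} exactly as in \cite[\S 12]{Keel_Yu_The_Frobenius}; you have reconstructed that argument faithfully in the present generality. One minor point: the additivity $\gamma=\gamma^1+\gamma^2$ at the nodal limit comes directly from the definition of the disk class via formal models (cf.\ \eqref{eq:dc} and \cref{lem:balancebound}(3)) rather than from \cref{prop:dcc_1}, but this does not affect the argument.
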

\begin{proof} We follow the proof of (the nearly identical) \cite[12.1]{Keel_Yu_The_Frobenius}, and its notation.
  We need a new argument that $f$ contracts $C_{\Delta}$ (because here we do not have the assumption that $U$ contains
  no complete rational curves). We have $f: C_{\Delta} \to U$. 
  Its spine (with domain the convex hull of its three marked points), is by assumption constant. Since the spine contracts
  to a point off of $\Wall_{\beta}$, this implies the entire IOS tropicalisation is constant. But then 
  $f(C_{\Delta})$ lies in a fibre of the SYZ fibration, and so is constant (as these fibres are affinoid). 
  Now the proof of \cite[12.1]{Keel_Yu_The_Frobenius}
  applies up to the
  displayed formula for $F_w(S,\gamma)$ as a disjoint union of products. We now apply (2) of
  \cite[Thm 1.1]{PY}. 
  \end{proof} 

  Now the next results follows by the same argument used for the (nearly identical)
  \cite[Thm 12.2]{Keel_Yu_The_Frobenius}.
  
\begin{theorem} \label{thm:gluing_concatenate}
Let $\gamma\in\NE(Y)$.
Let $S^i=[\Gamma^i,(v^i_j)_{j\in J^i},h^i]$, $i=1,2$ be two transverse spines in $\Sp(U,\gamma)$.
Let $p^i\in\Gamma^i$ be a finite 1-valent vertex, and $e^i$ the edge incident to $p^i$.
Assume $h^1(p^1)=h^2(p^2)$ and $w_{(p^1,e^1)}+w_{(p^2,e^2)}=0$.
So we can concatenate $S^1$ and $S^2$ at the vertices $v^1$ and $v^2$, and form a transverse spine which we denote by $S$.
Then we have \[
N(S,\gamma)=\sum_{\gamma^1+\gamma^2=\gamma} N(S^1,\gamma^1)\cdot N(S^2,\gamma^2).
\]
\end{theorem}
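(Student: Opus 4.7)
The plan is to reduce the concatenation formula to the interior gluing formula \cref{thm:gluing_inside} by reinterpreting the concatenated spine $S$ as one in which the join point is a balanced interior point of a single edge. The key observation is that the cancellation $w_{(p^1,e^1)}+w_{(p^2,e^2)}=0$ is exactly the statement that no bending occurs at the concatenation, so the merged edge $e := e^1 \cup_p e^2$ is an honest smooth edge on which the piecewise affine map $h$ is affine across the join point $p = p^1 = p^2$.

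First I would verify that the concatenated $S$ is itself a transverse $\Wall_\gamma$-spine in the sense of \cref{def:transverse}. Conditions (\ref{def:transverse:image})--(\ref{def:transverse:singlocus}) are inherited from $S^1$ and $S^2$ except possibly at the new $2$-valent vertex $p$; there the balancing calculation $\NB_p = w_{(p^1,e^1)}+w_{(p^2,e^2)} = 0$ shows $h$ is unbent at $p$, so no wall condition is triggered, and $h(p) = h^1(p^1) \in \Sk(U)\setminus\Wall_\gamma$ with $h(p)\notin\Sing(\Sk(U))$ by the transversality of either $S^i$. Once this is established, $S$ is precisely the spine obtained by gluing $S^1$ to $S^2$ at $p^1, p^2$ viewed as interior points of the edges $e^1, e^2$ of the smooth segment $e$, and so \cref{thm:gluing_inside} applies verbatim and yields
\[
N(S,\gamma) = \sum_{\gamma^1+\gamma^2=\gamma} N(S^1,\gamma^1)\cdot N(S^2,\gamma^2).
\]

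All the substantial analytic content — the deformation invariance from \cref{cor:vary_lengths}, the properness of the spine map \cref{prop:Spprop}, and the breaking argument in the moduli space — is packaged inside \cref{thm:gluing_inside} (following \cite[\S 12]{Keel_Yu_The_Frobenius}). Thus the only potential obstacle for the concatenation version is the purely combinatorial verification that reinterpreting a valence-$1$ endpoint as an interior point of an edge preserves transversality and the class decomposition; both are immediate consequences of the cancellation hypothesis $w_{(p^1,e^1)}+w_{(p^2,e^2)}=0$ together with the fact that the curve classes contributing to each piece are unaffected by how we mark a balanced point on the spine.
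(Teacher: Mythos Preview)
Your reduction to \cref{thm:gluing_inside} does not go through as written. The hypothesis of \cref{thm:gluing_inside} is that each $p^i$ lies in the \emph{interior} of an edge of $\Gamma^i$; gluing at such points identifies two valence-$2$ points and produces a valence-$4$ vertex in the glued spine. In the concatenation setup the $p^i$ are valence-$1$ endpoints of $\Gamma^i$, and joining them gives a valence-$2$ vertex. These are combinatorially different operations, and the statement of \cref{thm:gluing_inside} simply does not cover the second. Your sentence ``$S$ is precisely the spine obtained by gluing $S^1$ to $S^2$ at $p^1,p^2$ viewed as interior points of the edges $e^1,e^2$'' conflates the merged edge $e = e^1 \cup_p e^2$ inside $\Gamma$ with the original edges $e^i$ inside $\Gamma^i$: yes, $p$ is interior to $e$ in $\Gamma$, but that is not what the hypothesis of \cref{thm:gluing_inside} asks for.

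The paper (following \cite[\S 12]{Keel_Yu_The_Frobenius}) does not reduce one gluing theorem to the other; it derives both in parallel from the deformation invariance \cref{cor:vary_lengths} via the same ``stretch an edge until the domain curve breaks'' argument. Your intuition that the analytic content is identical is correct, and indeed once one has the breaking argument for \cref{thm:gluing_inside} the modifications needed for the concatenation case are minor. But you would need to rerun that argument (stretching the merged edge $e$ near $p$ and invoking properness of $\Sp$ to control the limit), not invoke \cref{thm:gluing_inside} as a black box. If you want a genuine reduction, you would have to first extend each $e^i$ past $p^i$ so that $p^i$ becomes interior, check via \cref{cor:vary_lengths} that the counts $N(S^i,\gamma^i)$ are unchanged, and then deal with the resulting $4$-valent glued spine (which is not $S$) and its two extra short legs --- this is possible but is no longer ``verbatim''.
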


Next we observe that naive counts of sufficiently transverse tropical curves are independent of the choice of the {\it ends},
i.e.\ of the choice of $H \subset G \subset \Sk(U)$, and the gluing in \cref{const:Z} (so long as the volume form on $U$ extends).
This follows from the argument for the analogous results \cite[\S 13]{Keel_Yu_The_Frobenius}, once we establish:

\begin{lemma} \label{lem:straightspinecount} Let $\beta \in \NE(Y,\bbZ)$.
  Let $f:[0,1] \to \Sk(U)$ be piecewise affine with image in the interior of a maximal cone
  $\sigma \in \Sigma$, disjoint from $\Wall_\beta$.
If $N(f,\beta) \neq 0$ then $\beta =0$, $f$ is affine,
and $N(f,0) = 1$.
\end{lemma}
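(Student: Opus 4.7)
The plan is to argue in three stages.

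First, I apply the balancing condition. By assumption $f([0,1])$ is disjoint from $\Wall_\beta$, so the bending condition of \cref{def:Balcon} forces $\NB_x = 0$ at every interior point $x$ of the piecewise-affine map $f$. A piecewise-affine map on an interval whose bend vanishes at every interior vertex is affine, giving the ``$f$ is affine'' conclusion.

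Second, I show $\beta = 0$. Let $z_\sigma \in Y$ be the zero stratum corresponding to $\sigma$ and $U_\sigma \subset Y$ the associated affine toric open. Any $F = [C,(p_j)_{j\in J}, f_C] \in \Sp^{-1}(f)$ is skeletal by \cref{lem:source_of_skeletal_curve}, so the body $B$ maps into $r^{-1}(f([0,1])) \subset r^{-1}(\sigma^\circ)$. Since $f([0,1])$ lies in the interior of $\sigma$, by \cref{prop:TonySYZ} this region is analytically identified with the preimage of $\sigma^\circ$ in a toric analytic model over $\star(\sigma) \subset M_\bbR$, and this preimage factors through the affine toric open set $U_\sigma \subset Y$. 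Choosing a formal model $\fB \to \fY$ of $f_C|_B$, every proper irreducible component of the central fibre of $\fB$ maps into the affine variety $U_\sigma$ and hence contracts to a point. By the cycle-pushforward definition of the disk class (\cref{sec:dc}, equation \eqref{eq:dc}), this yields $\beta = [f_C|_B\colon B \to Y] = 0$.

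Third, I compute $N(f,0) = 1$. Since $\Phi_i$ is \'etale on $\ISk$ by \cref{lem:restrict_to_skeleton}(\ref{lem:restriction_to_skeleton:etale}), $N_i(f,0)$ is the cardinality of the fibre with each point counted once. With $\beta = 0$ and the body constrained to factor through $U_\sigma$, the stable map lies entirely in the toric local model at $z_\sigma$; the end disks $E_f$ are likewise rigid toric cylinders determined by their tropicalisations via \cref{prop:dcc_1} and \cref{prop:dccoeff}. In the toric picture the counting problem reduces to the elementary toric case reviewed in \cref{sec:geomot}: through each fixed point in the interior of $\sigma^\circ$ there is a unique analytic curve with the prescribed balanced momenta (corresponding to the degenerate identity $0 + 0 = 0$, or more generally the Kronecker-delta multiplication rule $\chi(P_1,\dots,P_n,Q) = \delta_{Q,\sum P_i}$ in the toric case). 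Hence $N(f,0) = 1$.

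The main obstacle is the third step: rigorously transferring our count to a toric one via the local identification of \cref{prop:TonySYZ} and carefully accounting for the end disks. Once the toric local identification is in place, the key observation is that in the toric model every analytic curve realising a straight spine is unique up to the $T_M$-action, which is pinned down by a single evaluation, so the generic \'etale degree of $\Phi_i$ over the straight spine is exactly $1$.
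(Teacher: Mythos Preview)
Your Step~1 is fine and matches the paper. Step~2, however, has a real gap: ``skeletal'' in the sense of \cref{thm:f_in_skeleton} only controls the image of the convex hull of $k$-points of the domain; it does \emph{not} give $f_C(B)\subset r^{-1}(f([0,1]))$. What you actually need is that the ios tropicalisation of any contributing curve has no twigs. This is precisely what the paper establishes first: by \cref{prop:tree_bound} every twig root must lie in a wall of $\Wall_\beta$, but the spine image $f([0,1])$ is assumed disjoint from $\Wall_\beta$, so there is nowhere for a twig to attach. Once there are no twigs the ios tropicalisation coincides with the spine $f$, and since $r\circ f_C$ factors through the ios tropicalisation you get $f_C(B)\subset r^{-1}(\sigma^\circ)$, whence every proper component of the central fibre contracts to the zero stratum and $\beta=0$. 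Without the no-twigs step your contraction argument can fail: a twig's non-root valence-one vertices land in $\Sing(\Sk(U))\subset|\Sigma^{n-2}|$, so the corresponding central-fibre components of $\fB_s$ are not sent to the affine chart at $z_\sigma$.

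Step~3 is in the right spirit but the references you cite do not do the work you claim. \Cref{prop:dcc_1} and \cref{prop:dccoeff} compute curve classes and cycle coefficients; they say nothing about uniqueness of the analytic map with a given tropicalisation, so they cannot pin down the end disks. The paper's route is to choose a closed convex full-dimensional $G\subset\sigma^\circ$ containing $f([0,1])$, note that the entire rational curve $g\colon C\to Z$ then lands in $\cG\cup T_{f_0}\cup T_{f_1}$, and invoke \cite[6.5]{Yu_Enumeration_of_holomorphic_cylinders_I} to produce an open analytic domain isomorphic to $A\times\bbP^1_{\an}$ (with $A$ a polyannulus) inside $Z$ through which $g$ factors. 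At that point the count is literally the toric one and equals $1$ by \cite[6.2]{Keel_Yu_The_Frobenius}. Your appeal to \cref{sec:geomot} is intuition, not a proof; the missing ingredient is this explicit factorisation through a trivialised $\bbP^1$-bundle, which is what makes the reduction to the toric count rigorous.
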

\begin{proof} Since the image is disjoint from $\Wall_{\beta}$ the ios tropicalisation of any contributing $[B \to Y]$ has no twigs,
by \cref{lem:tbal}, and thus $f$ is affine.
Moreover the ios tropicalisation is equal to $f$.
Since the image is in the interior of a maximal cone it follows that $\beta= [B \to Y] = 0$ (each irreducible component of the central fibre in a model for $B \to Y$ is contracted to the zero stratum corresponding to $\sigma$).
We can find a closed convex $\dim U$ dimensional $f([0,1] \subset G \subset \sigma^{\circ}$.
Then for any stable map $g\colon C \to Z$ contributing to the count, i.e.
any $g \in \Sp^{-1}(f)$, notation as in \cref{def:naive_counts_general},
$g\colon B \to Y$ has image in $G \coloneqq \tau^{-1}(\cG)$ (notation as in \cref{const:data1} ) and the full rational curve has image in $G$ union the two simple toric ends, $T_{f_0}$, $T_{f_1}$ (corresponding to the two ends points of the domain of the spine, $[0,1]$).
Now it follows from \cite[6.5]{Yu_Enumeration_of_holomorphic_cylinders_I} that there is an open analytic domain $A \times \bbP^1_{\an} \eqqcolon V \subset Z$, for an open polyannulus $A$, such that $g\colon C \to Z$ has image in $V$.
Now the count is the same as the toric case, and so follows from \cite[6.2]{Keel_Yu_The_Frobenius}.
This completes the proof.
\end{proof}

\begin{theorem} \label{thm:ncwd} Naive counts of transverse tropical curves are independent of the choice of {\it end} (as above).
More precisely,
given two choices of data $\cH_f \subset \cG_f \subset \Sk(U)$, and the gluing automorphism \cref{const:Z}, if $h$ is transverse for $\Wall_{\beta}$
for both choices, and in each case the volume form on $U$ extends,
then $N_i(h,\beta)$ is the same for either choice.
\end{theorem}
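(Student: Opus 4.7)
The plan is to separate the two sources of choice---the gluing automorphism in \cref{const:Z}, and the polyhedra $H_f \subset G_f$---and dispose of each via a deformation argument, following the toric-tail strategy of \cite[\S 13]{Keel_Yu_The_Frobenius}.

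For the gluing automorphism (with $(H_f, G_f)$ fixed), I would use the family $Z_\famm \to J$ over a closed disk $J$ from \S\ref{ss:vg}, which by \cref{rem:glueunique} interpolates any two gluings. The basic moduli space, the smooth locus (\cref{rem:smvg}), the skeletal locus, and the properness/finiteness/étaleness statements in \cref{lem:sksm}, \cref{lem:ISknodes}, and \cref{prop:Spprop} all generalize to this family with the same proofs. The resulting family $\Sp^{-1}(h)_\famm \to J$ is proper, finite, and lands in the étale locus of $\Phi_i$; since $J$ is connected, the fibre cardinality is constant.

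For the polyhedra, the key identity is that inserting a short affine segment at an end of $h$, with image in the interior of a maximal cone of $\Sigma$ and disjoint from $\Wall_\beta$, does not alter the count. Indeed, the concatenation gluing formula \cref{thm:gluing_concatenate} combined with \cref{lem:straightspinecount} gives
\[
N(S\;\text{concat}\;\text{segment},\beta)=\sum_{\beta^1+\beta^2=\beta} N(S,\beta^1)\,N(\text{segment},\beta^2)=N(S,\beta),
\]
since $\beta^2$ is forced to $0$ with $N(\text{segment},0)=1$. Using this insertion trick I can translate the position of $p_f$ along the $-v_f$ direction within a fixed maximal cone without changing the count. Given two polyhedral choices $(H_f,G_f)$ and $(H'_f,G'_f)$, I would pass to a common refinement $(H''_f,G''_f)\subset(G_f\cap G'_f)$ containing $h(v_f)$, use insertions to relocate $p_f$ into $H''_f$, and reduce the comparison to the gluing-family step above applied to the data $(H''_f,G''_f)$.

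The main obstacle is justifying that shrinking $G_f$ to $G''_f$ really leaves the count $|\Sp^{-1}(h)|$ unchanged, since the ambient glued target $Z$ itself changes and the family argument of the second paragraph does not apply directly. The resolution should exploit transversality of $h$ together with the wall bound \cref{prop:tree_bound}: because the end leg of $h$ incident to $p_f$ is affine with image in the interior of a maximal cone disjoint from $\Wall_\beta$, every ios tropicalisation of a stable map in $\Sp^{-1}(h)$ has no twigs emanating from that leg (by \cref{lem:tbal} together with the transversality condition of \cref{def:transverse}). This forces the boundary circle image $f(S_f)$ to already lie in $\cH''_f$ for every element of the original fibre, so $\Sp^{-1}(h)$ is literally the same finite subset of stable maps for both polyhedral choices. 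Combined with the family-in-gluing argument, this completes the proof.
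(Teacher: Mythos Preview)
Your Part 2 is essentially the paper's approach, and properly deployed it already handles \emph{both} sources of choice, including the gluing automorphism. The point you are missing is that the concatenation identity should be read as a \emph{cut}, not an insertion: write $S = S^{\mathrm{body}} \cup S^{\mathrm{leg}}$ where $S^{\mathrm{leg}}$ is a short straight segment near $p_f$, and choose a single new end datum $C$ at the cut point (the same for both choices $A$ and $B$). Then \cref{thm:gluing_concatenate} gives $N^A(S) = N^{(\cdots)\cup C}(S^{\mathrm{body}}) \cdot N^{C,A}(S^{\mathrm{leg}})$ and likewise for $B$; since $S^{\mathrm{body}}$ no longer carries $p_f$ as an end and the cut datum $C$ is common, the body counts agree, while \cref{lem:straightspinecount} yields $N^{C,A}(S^{\mathrm{leg}}) = N^{C,B}(S^{\mathrm{leg}}) = 1$ for \emph{any} end datum at $p_f$ (polyhedral or gluing). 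This is exactly how the reference to \cite[\S 13]{Keel_Yu_The_Frobenius} is meant to be read.

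Your Part 1 is therefore superfluous, and as written it has a genuine gap: the family of \cref{prop:famaut} interpolates arbitrary gluings but does not guarantee that the volume form on $U^{\an}$ extends over intermediate fibres. Yet the spine map $\Sp$, and hence the very count $N_i(h,\beta)$ of \cref{def:naive_counts_general}, rests on the skeletal curve theory of \S\ref{sec:skeletal_curves}, which explicitly requires \cref{ass:gluing} (see \cref{ass:vformskel}, \cref{lem:sksm}). On a fibre where the form does not extend, $\Sp^{-1}(h)$ is not defined and your properness/constancy argument collapses. This is precisely why the paper reserves the family technique for structure constants (\S\ref{ss:vg}, \cref{thm:scthm}), where no volume form hypothesis is imposed. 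Your Part 3 is also more than needed: by \cref{lem:sclemma} the semi-stable circle $S_f$ already retracts to the single point $h(v_f)$, so shrinking $G_f$ about $h(v_f)$ (with the same gluing) preserves the fibre without any twig analysis; but once you use the cut argument above, even this step is unnecessary.
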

\begin{proof} This follows from the gluing formula exactly as in \cite[\S 13]{Keel_Yu_The_Frobenius}, using \cref{lem:straightspinecount}.
\end{proof}

The same argument shows:

\begin{theorem} \label{thm:oldnew} Notation as in \cref{thm:ncwd}.
Suppose $U$ contains an open algebraic torus.
Then for $h$ transverse with respect to $\Wall_{\beta}$ and transverse in the sense of \cite[4.20]{Keel_Yu_The_Frobenius},
$N_i(h,\beta)$ defined here is equal to the count as defined in \cite[9.3]{Keel_Yu_The_Frobenius}.
\end{theorem}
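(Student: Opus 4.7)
The plan is to run the same argument as for \cref{thm:ncwd}, viewing both versions of the naive count as instances of \cref{const:Z} for different choices of toric ends, and then invoking independence of the choice of end. The key input remains \cref{lem:straightspinecount}, exactly as in the proof of \cref{thm:ncwd}.

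First, I would observe that when $U$ contains an open algebraic torus $T_M\subset U$, there is a canonical identification $\Sk(U)=M_\bbR$ together with an analytic isomorphism between a neighborhood of any $G_f\subset\Sk(U)$ inside $U^\an$ and the corresponding neighborhood inside $T_M^\an$, compatible with both Berkovich retractions. Using this, the ends in \cite[9.3]{Keel_Yu_The_Frobenius} are already of the form \cref{const:Z} for a specific admissible data $H_f\subset G_f\subset\Sk(U)$ and gluing automorphism, namely the one induced by the inclusion $T_M\subset U$; moreover the canonical volume forms patch automatically in this case, so \cref{ass:gluing} holds for this choice. The corresponding moduli space of stable maps to $Z$ is then canonically identified with the moduli space used in \cite[9.3]{Keel_Yu_The_Frobenius}, and the two notions of naive count agree by construction for this particular choice of ends.

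Next, given any other admissible data $(H_f\subset G_f,\text{gluing})$ used to define $N_i(h,\beta)$ in the present paper, both the Keel-Yu choice and ours satisfy the hypotheses of \cref{thm:ncwd}: the spine $h$ is assumed transverse with respect to $\Wall_\beta$, the volume form extends in both cases, and one checks (via \cref{prop:tree_bound} for the present paper and the toric description of walls in \cite{Keel_Yu_The_Frobenius}) that transversality in the sense of \cite[4.20]{Keel_Yu_The_Frobenius} is compatible with transversality with respect to $\Wall_\beta$ after passing to a common refinement of the two sets of walls. Applying \cref{thm:ncwd} to the two admissible choices yields the equality of counts.

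The main obstacle is the compatibility of walls and transversality across the two setups: one needs that any $h$ simultaneously transverse in both senses remains transverse after the gluing deformation implicit in \cref{thm:ncwd}, so that the local constancy of \cref{cor:vary_lengths} can be exploited. This reduces, as in the proof of \cref{thm:ncwd}, to showing via the gluing formulas \cref{thm:gluing_inside} and \cref{thm:gluing_concatenate} that the contribution of a straight end segment in the interior of a maximal cone is trivial, which is exactly \cref{lem:straightspinecount}. Once these straight-segment factors are identified with $1$ on both sides, the product decompositions of the two counts coincide term-by-term, completing the proof.
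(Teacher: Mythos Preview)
Your proposal is essentially correct and follows the same route as the paper: both proofs reduce the comparison to the gluing formulas (\cref{thm:gluing_inside}, \cref{thm:gluing_concatenate}) together with \cref{lem:straightspinecount}, exactly as in the proof of \cref{thm:ncwd} (the paper literally writes ``the same argument shows'' before the statement).

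One minor framing point: your first step asserts that the construction of \cite[9.3]{Keel_Yu_The_Frobenius} is already an instance of \cref{const:Z} for a particular choice of ends. This is not quite literally true --- in \cite{Keel_Yu_The_Frobenius} the toric end is glued along the full Zariski open torus $T_M\subset U$, so the target is just (a toric blowup of) $Y$ itself, not a non-separated space $Z$ obtained by gluing along a polyannulus $\cG_f$. So \cref{thm:ncwd} does not apply as a black box. However, this does not affect your argument: once you cut the spine near each finite end so that the terminal segments are straight and lie in the interior of a maximal cone, the gluing formula expresses each count as a product, the interior factors are counts of disks in $Y^{\an}$ with prescribed body spine and hence agree tautologically, and the end factors are each $1$ by \cref{lem:straightspinecount} on one side and \cite[6.2]{Keel_Yu_The_Frobenius} on the other. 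That is exactly the content of ``the same argument'' in the paper.
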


Now we can give a stronger form of \cref{lem:straightspinecount}

\begin{proposition} \label{prop:zeroclass} For a transverse spine $S$, $N(S,0) = 0$ unless $S \setminus \Bnd$
has image in the interior of a maximal cone, and is balanced.
In this case $N(S,0) = 1$.
\end{proposition}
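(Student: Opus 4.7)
The plan is to combine the strong balance condition forced by $\beta = 0$ with the gluing formulas to reduce to the single-edge case of Lemma~\ref{lem:straightspinecount}. First I would show that any stable map $[f\colon C \to Z]$ contributing to $N(S, 0)$ has no twig disks: by Lemma~\ref{lem:twigclass} the class $[f\colon B \to Y]$ dominates the sum of (nonzero, effective) twig classes, so class zero forces the set of twigs to be empty. Then Lemma~\ref{lem:balancebound}(3) reads $0 = \sum_{w} [\fC_s^w]$ as a sum of effective classes over interior vertices of the spine, and each summand must vanish; by Lemma~\ref{lem:balancebound}(2) we obtain $\NB_w^h(\bbZ^{D^\ess}) = 0$ at every interior vertex. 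In other words, the spine is balanced in the strong sense of $\bbR^{D^\ess}$, not merely in the $\Sk(U)$-structure.

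Second, I would argue this strong balance confines $h(S \setminus B)$ to the interior of a single maximal cone of $\Sigma$. Suppose for contradiction some vertex $v$ of $\Gamma^s$ has $h(v)$ in the interior of a codim-$1$ cone $\rho = \sigma^+ \cap \sigma^-$ with an incident edge whose outgoing direction enters $\sigma^+$ transversely to $\rho$. Since $\sigma^+$ and $\sigma^-$ are distinct maximal cones sharing $\rho$, their outward rays correspond to distinct essential boundary divisors $D_{i^+}, D_{i^-}$; under the embedding $\oSk(U) \subset \obbR^{D^\ess}$ the outward ray into $\sigma^+$ (resp.\ $\sigma^-$) maps to the axis of $D_{i^+}$ (resp.\ $D_{i^-}$), while $T_v\rho$ maps into the coordinate hyperplane $\{x_{D_{i^+}} = x_{D_{i^-}} = 0\}$. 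Computing the $D_{i^+}$-coordinate of the balance sum $\NB_v^h(\bbR^{D^\ess})$ then yields a strictly positive sum of contributions from each edge entering $\sigma^+$, contradicting balance unless no such edge exists. Combining this with transversality condition~\ref{def:transverse:singlocus} (image disjoint from codim-$2$ cones) and condition~\ref{def:transverse:skeleton} (no $F$-marked point maps to a codim-$1$ cone), we conclude $h(S \setminus B)$ lies in the interior of a single maximal cone of $\Sigma$.

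Third, $h$ restricted to $\Gamma^s$ minus the $B$-legs is now a balanced tree of straight segments inside one maximal cone. Iteratively applying the interior gluing formula~\ref{thm:gluing_inside} at points in the interior of edges (all of which are disjoint from $\Wall_0 = \emptyset$) factorises $N(S, 0) = \sum_{\gamma_1 + \gamma_2 = 0} N(S^1, \gamma_1)\, N(S^2, \gamma_2)$; since effective classes summing to $0$ are both $0$, each application reduces to $N(S, 0) = N(S^1, 0) \cdot N(S^2, 0)$. Induction on the number of edges then expresses $N(S, 0)$ as a product of single-edge counts, each equal to $1$ by Lemma~\ref{lem:straightspinecount}. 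Conversely, if $S$ is not balanced at some interior vertex, Lemma~\ref{lem:tbal} would produce a twig of nonzero effective class, contradicting $\beta = 0$; and if $S$ is balanced but $h(S\setminus B)$ is not contained in the interior of a single maximal cone, the previous paragraph shows $N(S, 0) = 0$.

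The main obstacle is the linear-algebraic claim in the second paragraph: that under the embedding $|\Sigma| \hookrightarrow \bbR^{D^\ess}$ the outward rays of two adjacent maximal cones across a codim-$1$ wall land in distinct coordinate axes. This is a direct consequence of the combinatorics of the dual fan $\Sigma_{(Y, D^\ess)}$, but deserves explicit care because the $\Sk(U)$-affine structure of Section~\ref{sec:integeraffine} differs in general from the restriction of the $\bbR^{D^\ess}$-linear structure used in Lemma~\ref{lem:balancebound}, so one must work with the embedding coordinates rather than the intrinsic tangent structure of $\Sk(U)$.
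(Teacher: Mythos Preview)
Your first three paragraphs give a correct alternative route to the conclusion that any contributing curve forces $S\setminus B$ into the interior of a single maximal cone with $h$ balanced. The paper does this in one stroke via \cref{prop:dccoeff}: if the image met the interior of a codimension-one cone $\rho$ with an adjacent edge entering a maximal cone, the coefficient of the one-stratum $Z_\rho$ in the disk cycle would be strictly positive, contradicting $[f\colon B\to Y]=0$. Your route---first kill twigs via \cref{lem:twigclass}, then extract $\NB_w(\bbZ^{D^\ess})=0$ from \cref{lem:balancebound}, then run the coordinate argument in $\bbR^{D^\ess}$---unpacks the same mechanism and is fine; the ``main obstacle'' you flag is indeed just the combinatorics of the simplicial embedding $|\Sigma|\subset\bbR^{D^\ess}$.

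The genuine gap is your computation of $N(S,0)=1$. \cref{thm:gluing_inside} glues two spines at \emph{interior} points of edges, so the glued vertex is $4$-valent; reading it backwards only decomposes spines that already have such a vertex, which a generic balanced tree does not. Even switching to \cref{thm:gluing_concatenate} (which does let you cut an edge after subdividing), your endgame does not close: the pieces you produce include the $B$-legs, which are half-infinite rays with one boundary-type marked point rather than closed segments with two $F$-type endpoints, so \cref{lem:straightspinecount} does not apply to them and you supply no substitute.

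The paper avoids any decomposition. Once $h(S\setminus B)\subset\sigma^\circ$, every contributing $f$ has $f(B^\circ)\subset r^{-1}(\sigma^\circ)$, which by smoothness of the SYZ retraction over $\sigma^\circ$ sits inside the analytification of an algebraic torus. The moduli problem is then literally the toric one, and \cref{thm:oldnew} together with \cite[6.2]{Keel_Yu_The_Frobenius} give $N(S,0)=1$ for the whole tree at once.
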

\begin{proof} If $[B \to Y] = 0$, then the ios tropicalisation $h\colon \Gamma \setminus \Bnd \to \Sk(U)$ has image in the interior of a maximal cone, by \cref{prop:dccoeff}.
In particular, there are no twigs (because the valence one vertices other than the root must map to the singular locus of $\Sk(U)$, by balancing, \cite[4.1]{Yu_Tropicalization_of_the_moduli_space_of_stable_maps}),
thus $\Gamma^s \setminus \Bnd \to \Sk(U)$ is balanced, by \cref{lem:tbal}.
Now suppose $h\colon S\setminus \Bnd \to \Sk(U)$ is balanced, with image in a maximal cone, and we consider the moduli space,
with disk class $\beta = 0$, as in \cref{def:naive_counts_general}.
Then any $f \in \Sp^{-1}(h)$ has ios tropicalisation without twigs i.e.\ the ios tropicalisation is $S$, with image in the interior of a maximal cell, $\sigma^{\circ}$, and thus $f\colon B^{\circ} \to Y$
has image in $r^{-1}(\sigma^{\circ})$ (where the domain indicates the complement of the marked points mapping to $\partial Y$).
Thus the moduli space is exactly the same as in the toric case, and so by \cref{thm:oldnew} the result follows from \cite[6.2]{Keel_Yu_The_Frobenius}.
This completes the proof.
\end{proof}

\subsection{Gluing volume forms} \label{sec:gvf}

Let $A=\Sp k\braket{x_1,\lambda_1 x_1^{-1},\dots,x_n,\lambda_n x_n^{-1}}$ be an $n$-dimensional polyannulus, with $\lambda_i\in k$, $\abs{\lambda_i}<1$.
Let $dx/x$ be the standard volume form, i.e.\ the wedge product of $dx_i/x_i$.

\begin{lemma} \label{lem:dominant_term}
A function $f=\sum_{i\in\bbZ^n} c_i \bx^i$ on $A$ is invertible if and only if there exists $n\in\bbZ^n$ such that $\abs{(c_n \bx^n)(y)} > \abs{(c_i\bx^i)(y)}$ for all $i\neq n$ and $y\in A$.
\end{lemma}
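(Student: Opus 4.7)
The plan is to handle the two directions separately; the harder one combines multiplicativity of Gauss seminorms with convexity.

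\emph{Sufficiency} ($\Leftarrow$). Suppose $|(c_n\bx^n)(y)| > |(c_i\bx^i)(y)|$ for all $i\neq n$ and all $y\in A$. Since these quantities depend only on the tuple $(|x_1(y)|,\dots,|x_n(y)|)$, which ranges over $P\coloneqq\prod_j[|\lambda_j|,1]$, the inequality amounts to $|c_n|r^n > |c_i|r^i$ for every $r\in P$ and $i\neq n$. Write $f = c_n\bx^n(1+g)$ with $g = \sum_{i\neq n}(c_i/c_n)\bx^{i-n}$. Each monomial $|c_i/c_n|r^{i-n}$ attains its maximum on the compact polytope $P$ at a vertex, where it is strictly $<1$; convergence of $f$ forces these maxima to tend to $0$ as $|i|\to\infty$, so $\|g\|_A<1$. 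Hence $1+g$ is inverted by the geometric series $\sum_{k\geq 0}(-g)^k$ in $\cO(A)$, and $c_n\bx^n\in\cO(A)^\times$ (via $x_j^{-1}=\lambda_j^{-1}\cdot\lambda_j x_j^{-1}$), so $f$ is a unit.

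\emph{Necessity} ($\Rightarrow$). Suppose $f$ is invertible with inverse $h$. For every $r\in P$ the Gauss seminorm $\|\cdot\|_r$ on $\cO(A)$ is multiplicative, so $\|f\|_r\cdot\|h\|_r = 1$; equivalently $\log\|f\|_r + \log\|h\|_r \equiv 0$ on $\log P$. Both summands are convex functions of $\log r$, since each is a pointwise maximum of affine functions $L_i(\log r) = \log|c_i| + \langle i,\log r\rangle$ (and the analogous expressions for $h$), and a pair of convex functions whose sum vanishes identically on a convex set must each be affine. Consequently $\log\|f\|_r = L_n(\log r)$ for a single fixed index $n$ and every $r\in P$, yielding the weak bound $|c_n|r^n \ge |c_i|r^i$.

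To promote this to strict inequality, extend scalars to $\widehat{\overline k}$ (invertibility of $f$ and the desired dominance property are both preserved, since the latter refers only to the $|c_i|$) and assume $k$ is algebraically closed. Suppose $L_i(\log r_0) = L_n(\log r_0)$ for some $r_0\in P$ and some $i\neq n$. If $\log r_0$ lies in the interior of $\log P$, then $L_n - L_i\ge 0$ is a linear function attaining its minimum $0$ at an interior point of a full-dimensional convex set, so it is identically zero, forcing $i=n$: contradiction. If $\log r_0$ lies on the boundary, we invoke Kapranov's theorem for convergent Laurent series on a polyannulus: whenever the tropical maximum is attained by more than one index at $r_0$, the function $f$ has a rigid zero $a\in A(\overline k)$ with $|a_j|=r_{0,j}$, contradicting invertibility. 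In the two-term case this is explicit—solve $a^{i-n}=-c_n/c_i$ in $\overline k$ and use the ultrametric inequality to control the remaining terms.

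The delicate step is the last one: on the interior of $\log P$ linearity alone rules out coincidences $L_i=L_n$, but on the boundary one genuinely needs a Kapranov-style production of a rigid zero from multiply-attained tropical maxima. (Alternatively, one could argue inductively by slicing $P$ into one-parameter families and appealing to the Newton polygon/Weierstrass preparation theorem for analytic functions on an annulus.)
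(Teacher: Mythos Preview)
Your sufficiency argument and the convexity step for necessity are both correct; the latter is an elegant and genuinely different route to the weak inequality $|c_n|r^n \ge |c_i|r^i$ on $P$, and your interior-point argument for strictness is fine since $\log P$ is full-dimensional. The gap is at boundary points of $\log P$. Your Kapranov citation is too vague: the classical statement is for Laurent \emph{polynomials}, and you would need to name and verify an analytic version applicable to convergent series on an affinoid. Your explicit two-term construction is closer to a proof but incomplete in two ways. First, you never spell out the contradiction; it should run $1 = |f(a)|\cdot|h(a)| \le |f(a)|\cdot\|h\|_{r_0} < \|f\|_{r_0}\cdot\|f\|_{r_0}^{-1} = 1$, using that you already know $\|h\|_{r_0} = \|f\|_{r_0}^{-1}$. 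Second, and more seriously, this only works when \emph{exactly} two indices tie at $r_0$: if three or more achieve the maximum, cancelling two of them leaves $|f(a)| \le \|f\|_{r_0}$ without strict inequality, and the argument stalls.

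The paper bypasses convexity entirely. After a ground field extension so that $r_{0,j} \in |K^*|$, it restricts $f$ to the thickness-zero sub-polyannulus $\{|x_j| = r_{0,j}\} \cong \Sp K\langle y_j, y_j^{-1}\rangle$; invertibility of $f$ there forces its reduction in $\tilde K[y^{\pm 1}]$ to be a unit, hence a single monomial, which gives strict dominance of one term at \emph{every} $r_0 \in P$, boundary included. The dominant index $n(y)$ is then locally constant on the connected space $A$, hence constant. You could use exactly this thickness-zero reduction to patch your boundary case---it handles arbitrarily many tying indices at once---but once you have it, the convexity argument becomes redundant, since the same reduction already gives strict dominance at interior points too.
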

\begin{proof}
The proof is parallel to the one-dimensional case in \cite[Lemma 9.7.1/1]{Bosch_Non-Archimedean_analysis}.
The ``if'' direction follows from the power series expansion of $(1+g)^{-1}$ for $\abs{g}_{\sup}<1$.
Next we explain the ``only if'' direction.
If every $\lambda_i=1$, it follows by reduction to $\Spec\tk[x_1,x_1^{-1},\dots,x_n,x_n^{-1}]$, since $f$ is invertible if and only if its reduction $\tf$ is invertible.
Note that every point $y\in A$ is contained in a sub-polyannulus of $A$ of thickness 0, which is isomorphic to $\Sp k\braket{x_1, x_1^{-1},\dots,x_n, x_n^{-1}}$ after a ground field extension.
Therefore, for every $y\in A$, the series $f=\sum_{i\in\bbZ^n} c_i \bx^i$ has a term indexed by $n(y)$ that is dominant at $y$, i.e.\ whose norm is greater than the norm of any other term.
Since $\lim_{i\to\infty}\abs{c_i}=0$, $n(y)$ takes finitely many values on $A$, and the subsets $A_{n_0}\coloneqq\set{y\in A | n(y)=n_0}$ gives a finite cover of $A$ by disjoint affinoid subdomains.
Since $A$ is connected, we deduce that $A_{n_0}$ is nonempty for exactly one $n_0\in\bbZ^n$, hence $n(y)$ is constant on $A$, completing the proof.
\end{proof}

\begin{lemma} \label{lem:vfskel}
A volume form $\omega=f dx/x$ on $A$ has skeleton $\Sk(A)$ if and only if $\abs{f}$ is constant on $A$.
\end{lemma}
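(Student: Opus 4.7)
The plan is to reduce the statement to the dominant-term criterion of \cref{lem:dominant_term} via an explicit computation of $|dx/x|$ on the canonical skeleton of the polyannulus, combined with the maximum modulus principle.

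First I would record the basic input. At the Gauss point $\eta_r \in \Sk(A)$ corresponding to a vector of radii $r=(r_1,\dots,r_n)\in\prod_i[|\lambda_i|,1]$, one has $|x_i|_{\eta_r}=r_i$ and $|dx_i|_{\eta_r}=r_i$, whence $|dx/x|_{\eta_r}=1$; for $y\notin\Sk(A)$ the norm of $dx/x$ is strictly less than $1$. In particular $\Sk(dx/x)=\Sk(A)$, and since $|\omega|=|f|\cdot|dx/x|$, the skeleton $\Sk(\omega)$ is precisely the subset of $\Sk(A)$ on which $|f|$ attains its supremum. The \emph{if} direction is immediate from this: when $|f|$ is constant on $A$, the norm $|\omega|$ is a constant multiple of $|dx/x|$, so $\Sk(\omega)=\Sk(dx/x)=\Sk(A)$.

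For the \emph{only if} direction, I would start with the resulting hypothesis that $|f|$ is constant on $\Sk(A)$, say $|f(\eta_r)|=c$ for every Gauss point $\eta_r$. Expanding $f=\sum_{i\in\bbZ^n}c_i\bx^i$, we have $|f(\eta_r)|=\max_i|c_i|\,r^i$, a piecewise log-affine function of $r$. Being constant on the full-dimensional cube, this forces a single monomial $c_{i_0}\bx^{i_0}$ to strictly dominate on a nonempty open subset of the cube; and constancy of $|c_{i_0}|\,r^{i_0}$ on an open set forces $i_0=0$, so $|c_0|=c$ and $c_0$ is the dominant term there. By continuity of the monomials $|c_i|\,r^i$ this dominance propagates to every Gauss point of $\Sk(A)$, and then to all of $A$, so by \cref{lem:dominant_term} the function $f$ is invertible.

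I would close the argument by the maximum modulus principle applied to both $f$ and $f^{-1}$: the Shilov boundary of $A$ lies in $\Sk(A)$, where $|f|\equiv c$ and $|f^{-1}|\equiv c^{-1}$, hence $|f|\le c$ and $|f^{-1}|\le c^{-1}$ everywhere on $A$, giving $|f|\equiv c$ on all of $A$. The only delicate point is the very first step, namely the identification of $\Sk(\omega)$ with the sup-locus of $|f|$ on $\Sk(A)$; this combines the explicit formula $|dx/x|_{\eta_r}=1$ with the definition of the skeleton of a volume form from \cite[\S 8]{Keel_Yu_The_Frobenius}, both standard but requiring a short verification in the polyannulus setting.
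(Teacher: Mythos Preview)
Your approach is essentially that of the paper: identify $\Sk(\omega)$ with the maximum locus of $|\omega|=|f|\cdot|dx/x|$, note that $|dx/x|$ is constant on $\Sk(A)$ and strictly smaller off it, observe that the Shilov boundary of $A$ lies in $\Sk(A)$, and conclude via the maximum modulus principle applied to $f$ and $f^{-1}$. The paper compresses all of this into a citation of \cite[8.1]{Keel_Yu_The_Frobenius} after recording that the Shilov boundary sits inside $\Sk(A)$.

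However, your middle paragraph --- the detour through \cref{lem:dominant_term} to establish that $f$ is invertible --- is both unnecessary and flawed. It is unnecessary because a \emph{volume form} is by definition nowhere vanishing, so $f$ is invertible from the hypothesis. And the propagation step (``strict dominance on an open subset propagates by continuity to every Gauss point'') fails at the boundary of the cube: on the one-dimensional annulus $\{\,|\lambda|\le|x|\le 1\,\}$ the function $f=1+x$ has $|f(\eta_r)|=\max(1,r)=1$ constant on the skeleton, yet at $r=1$ the constant and linear terms have equal norm, so \cref{lem:dominant_term} does not apply --- and indeed $f$ vanishes at $x=-1$. This is no contradiction to the lemma, since $(1+x)\,dx/x$ is not a volume form, but it shows your invertibility argument cannot stand on its own. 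Simply delete that paragraph and pass directly from ``$|f|$ constant on $\Sk(A)$'' to your final maximum-modulus step, using the invertibility of $f$ given by the hypothesis; what remains is a complete and correct proof matching the paper's.
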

\begin{proof}
The skeleton $\Sk(A)$ is a product of closed intervals.
By \cite[2.4.4(iii)]{Berkovich_Spectral_theory}, the Shilov boundary of $A$ is equal to the product of the endpoints of the intervals above, viewed as subsets of $A$.
In particular, it lies in $\Sk(A)$.
Hence the maximum of $\abs{f}$ on $A$ is attained on $\Sk(A)$.
Then the lemma follows from \cite[8.1]{Keel_Yu_The_Frobenius}.
\end{proof}

\begin{lemma} \label{lem:annc}
Let $f$ be a function on the polyannulus $A$.
The following are equivalent:
\begin{enumerate}[wide]
\item $\abs{f}$ is identically $1$ on $A$.
\item $f$ is invertible, and $\abs{f}$ is identically $1$ on $\Sk(A)$.
\item $f = c + h$ where $c$ is a constant of norm $1$, and $\abs{h}_{\sup}<1$ on $A$.
\item $f = \sum_{i \in \bbZ^n} c_i x^i$ with $\abs{c_0} = 1$, and $\abs{c_i} r^i < 1$ for all $r \in \Sk(A)$
\end{enumerate}
\end{lemma}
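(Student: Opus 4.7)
The plan is to establish the cycle $(3)\Rightarrow(1)\Rightarrow(2)\Rightarrow(3)$ and to verify $(3)\Leftrightarrow(4)$ as a reformulation in terms of the monomial expansion. The first two implications are easy: for $(3)\Rightarrow(1)$, writing $f=c+h$ with $\abs{c}=1$ and $\abs{h}_{\sup}<1$, the ultrametric inequality gives $\abs{f(y)}=\max(\abs{c},\abs{h(y)})=1$ at every $y\in A$; for $(1)\Rightarrow(2)$, the hypothesis $\abs{f}\equiv 1$ means $f$ is nonvanishing on $A$, hence invertible in the affinoid algebra, and the restriction of $\abs{f}$ to $\Sk(A)\subset A$ is still identically $1$.

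The main step is $(2)\Rightarrow(3)$. By \cref{lem:dominant_term}, invertibility of $f$ produces an index $n\in\bbZ^n$ with $\abs{(c_n x^n)(y)}>\abs{(c_i x^i)(y)}$ for all $y\in A$ and $i\neq n$, so $\abs{f(y)}=\abs{c_n}\abs{x(y)}^n$ pointwise. Since $\Sk(A)$ is a product of nondegenerate closed intervals $[\abs{\lambda_j},1]$ in coordinates $r_j=\abs{x_j}$, the assumption that $\abs{f}$ is constantly $1$ on $\Sk(A)$ forces the monomial $\abs{c_n}\,r^n$ to be constant on $\Sk(A)$, which requires $n=0$, and hence $\abs{c_0}=1$. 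Setting $c\coloneqq c_0$ and $h\coloneqq f-c_0$, the maximum of $\abs{h}$ on $A$ is attained on the Shilov boundary, which (as recalled in the proof of \cref{lem:vfskel}) consists of the finitely many Gauss points at the corners of $\Sk(A)$. At any such Gauss point $r$, the value $\abs{h}(r)=\max_{i\neq 0}\abs{c_i}\,r^i$ is an actual maximum since $\abs{c_i}\,r^i\to 0$ as $\abs{i}\to\infty$, and by dominance is strictly less than $\abs{c_0}=1$. Taking the max over the finite Shilov boundary yields $\abs{h}_{\sup}<1$.

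For $(3)\Leftrightarrow(4)$, I identify $c$ with $c_0$ and $h$ with $\sum_{i\neq 0}c_i x^i$ in the expansion $f=\sum c_i x^i$. The maximum principle, combined with the monotonicity of $r\mapsto r^i$ in each coordinate (so its sup over $\Sk(A)$ is attained at a corner of the Shilov boundary), makes the condition $\abs{h}_{\sup}<1$ on $A$ equivalent to the condition $\abs{c_i}\,r^i<1$ for every $r\in\Sk(A)$ and every $i\neq 0$.

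The principal obstacle will be step $(2)\Rightarrow(3)$: pinning down the dominant index as $0$ via the nondegenerate interval structure of $\Sk(A)$, and then promoting the pointwise dominance inequality to a uniform strict bound on the sup norm by using the finiteness of the Shilov boundary together with the coefficient decay $\abs{c_i}\,r^i\to 0$.
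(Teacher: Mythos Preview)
Your proof is correct and follows essentially the same approach as the paper: the trivial implications $(3)\Rightarrow(1)\Rightarrow(2)$, and the key step using \cref{lem:dominant_term} to identify the dominant index as $0$ from the constancy of $\abs{c_n}r^n$ on the nondegenerate product of intervals $\Sk(A)$, with the Shilov boundary argument to control $\abs{h}_{\sup}$. The paper organizes the cycle slightly differently (proving $(2)\Rightarrow(4)$ and $(4)\Rightarrow(3)$ separately, and also noting $(2)\Rightarrow(1)$ directly via the Shilov boundary), but the substance is the same.
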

\begin{proof}
It is trivial that (1) implies (2), (3) implies (1) and (4) implies (3).
Since the Shilov boundary of $A$ lies in $\Sk(A)$, (2) implies (1).
Finally, it follows from \cref{lem:dominant_term} that (2) implies (4).
\end{proof}

\begin{proposition} \label{prop:volumeform}
Let $\omega = f dx/x$ be a volume form on the polyannulus $A$ with $\Sk(\omega) = \Sk(A)$.
There is an automorphism $\varphi$ of $A$ such that $\varphi^*(c\, dx/x) = \omega$ for some constant $c$.
Moreover we can pick $\varphi$ to be of the form \[ x_i \mapsto x_i \cdot (1 + h_i(x_i,\dots,x_n) ) \]
with $\abs{h_i}_{\sup}<1$.
$h_i$ satisfying the equivalent conditions of \cref{lem:annc} (e.g.\ $|h_i|$ is identically one on $\Sk(A)$).
\end{proposition}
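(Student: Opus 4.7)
The plan is induction on the dimension $n$. At the inductive step I construct an automorphism $\varphi_1$ of $A$ touching only the first coordinate that eliminates the $x_1$-dependence of $f$, and then compose with the automorphism supplied by the inductive hypothesis acting on the remaining variables $x_2,\dots,x_n$.

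First I reduce to normal form. By \cref{lem:vfskel} the hypothesis $\Sk(\omega)=\Sk(A)$ is equivalent to $|f|$ being identically constant on $A$; since $\omega$ is a volume form, $f$ is invertible, so by \cref{lem:dominant_term} a single monomial $c_\alpha\mathbf{x}^\alpha$ dominates on $A$, and the constancy of $|f|$ forces $\alpha=0$. Hence $f=c_0(1+\tilde g)$ with $|\tilde g|_{\sup}<1$, matching \cref{lem:annc} applied to $f/c_0$, and I take $c\coloneqq c_0$. Now expand $f=\sum_{i\in\bbZ} f_i(x_2,\dots,x_n)\,x_1^i$ and let $f_0$ be the $x_1$-constant slice. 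The pointwise dominance of $c_0$ over all other monomials on $A$ (checked on the finite Shilov boundary) implies both $|f_0|\equiv|c_0|$ on the $(n-1)$-polyannulus $A''$ in $x_2,\dots,x_n$ and $|F|_{\sup}<1$ for $F\coloneqq f/f_0-1=\sum_{i\ne 0}(f_i/f_0)\,x_1^i$.

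Since $k$ has residue characteristic zero, $|i|=1$ for every nonzero integer, so the formal primitive
\[
g\coloneqq\sum_{i\ne 0}\frac{f_i}{i\,f_0}\,x_1^i
\]
converges on $A$ with $|g|_{\sup}\le|F|_{\sup}<1$. Then $h_1\coloneqq\exp(g)-1$ satisfies $|h_1|_{\sup}<1$ and $|1+h_1|\equiv 1$ on $A$, so $\varphi_1\colon(x_1,\dots,x_n)\mapsto(x_1(1+h_1),x_2,\dots,x_n)$ is an automorphism of $A$ (inverted by a standard contraction argument). A direct pullback computation using $d\log(x_1(1+h_1))=d\log x_1+d\log(1+h_1)$ and the annihilation of the $dx_j$-components ($j\ge 2$) of $d\log(1+h_1)$ upon wedging with $\bigwedge_{j\ge 2}d\log x_j$ gives
\[
\varphi_1^*(f_0\,dx/x)=f_0\cdot\bigl(1+\tfrac{x_1\,\partial_{x_1}h_1}{1+h_1}\bigr)\,dx/x=f_0(1+F)\,dx/x=f\,dx/x.
\]

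Since $f_0$ satisfies the hypotheses of the proposition in dimension $n-1$, the inductive hypothesis supplies an automorphism $\varphi'$ of $A''$ of the stated triangular form with $\varphi'^*(c\,d\log x_2\wedge\cdots\wedge d\log x_n)=f_0\,d\log x_2\wedge\cdots\wedge d\log x_n$. Extending $\varphi'$ by the identity on $x_1$ to an automorphism $\varphi''$ of $A$, one has $\varphi''^*(c\,dx/x)=f_0\,dx/x$, so $\varphi\coloneqq\varphi''\circ\varphi_1$ satisfies $\varphi^*(c\,dx/x)=f\,dx/x$. Tracking coordinates through the composition shows the $i$-th coordinate of $\varphi$ is $x_i(1+h_i)$ with $h_i$ depending only on $x_i,\dots,x_n$ and $|h_i|_{\sup}<1$, so $1+h_i$ fulfils \cref{lem:annc}. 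The main care point is the passage from pointwise dominance of $c_0$ on $A$ to the slicewise statements $|f_0|\equiv|c_0|$ on $A''$ and $|F|_{\sup}<1$; once those are in hand the formal integration and exponentiation above are permitted by the zero residue characteristic, and the composition of the $\varphi_i$ preserves the required triangular structure.
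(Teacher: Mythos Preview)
Your proof is correct and follows essentially the same approach as the paper: induct on $n$, split off the $x_1$-constant part $f_0$ (the paper's $c+K$), integrate $F\,dx_1/x_1$ term by term and exponentiate to kill the $x_1$-dependence, then apply the inductive hypothesis to $f_0$. The only cosmetic differences are that you present $\varphi$ as a composition $\varphi''\circ\varphi_1$ while the paper writes the composite map directly, and you invoke residue characteristic zero explicitly where the paper appeals (somewhat unnecessarily) to the normalization $|x_i|_{\sup}\le 1$.
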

\begin{proof}
Up to a change of coordinates, we can assume that $\abs{x_i}_{\sup}\le 1$ for all $i$.
We induct on $n$, the dimension of $A$.
By \cref{lem:vfskel}, $\abs{f}$ is a nonzero constant on $A$.
Up to rescaling, we can assume that $\abs{f}=1$.
We organize the power series expansion of $f$ as \[ f = c + K(x_2,\dots,x_n) + L(x_1,\dots,x_n) \]
where $c$ is the constant term, $L$ is the sum of terms that contain any non-zero power of $x_1$, and $K$ are the remaining terms.
By \cref{lem:annc}, we have $\abs{c}=1$, $\abs{K}_{\sup} < 1$, and $\abs{L}_{\sup} < 1$.
Let $\omega_i$ denote the standard volume form in the variables $x_i,\dots,x_n$.
By induction we can find an automorphism $\psi$ of the prescribed form so that $\psi^*(c \omega_2) =(c + K) \omega_2$.
Since $c+K$ is invertible, we can factor $f$ as \[f=(c + K)(1 + M(x_1,\dots,x_n))\]
where $M=L\cdot(c+K)^{-1}$, and $\abs{M}_{\sup}<1$.
By construction, every term in the power series expansion of $M$ has a non-zero power of $x_1$.
For each term of the form $c_j(x_2,\dots,x_n)x_1^j$ with $j\neq 0$, an antiderivative of $c_j(x_2,\dots,x_n)x_1^{j-1} d x_1$ is $\frac{1}{j}c_j(x_2,\dots,x_n)x_1^j$.
Summing over, we obtain an antiderivative $Z(x_1,\dots,x_n)$ of $M d x_1 / x_1$.
Since $\abs{x_1}_{\sup}\le 1$ by assumption, evaluating on the Shilov boundary points of $A$, we deduce that $\abs{Z_1}_{\sup} \le \abs{M}_{\sup}<1$.
Then $\exp(Z(x))$ is convergent on $A$.
Now let $\varphi$ be the automorphism sending $x_1\mapsto x_1(\exp(Z(x)))$, and $x_i\mapsto\psi(x_i)$.
We compute that \[ \varphi^*(c\, dx/x) = \varphi^*(dx_1/x_1) \wedge \psi^*(c\omega_2) = (1+M)(c + K)\omega_1 = \omega_1\]
This completes the proof.
\end{proof}

Let $J$ be the closed disk of radius one, with coordinate $t$.

\begin{proposition} \label{prop:famaut} Let $h_i \in \cO(A)^{\an}$ be an analytic function with $|h_i| < 1$, $i =1,\dots n$.
Then $X_i \to X_i \cdot (1 + h_i)$ induces an automorphism $h\colon A \to A$.
Let $J$ be a closed unit disk.
There is an automorphism $\widetilde h\colon A \times J \to A \times J$,
With $\pi_J \circ \widetilde h = \widetilde h$, whose restriction to one fibre is $h$,
and restriction to another fibre is the identity.
\end{proposition}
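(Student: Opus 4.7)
The plan is to interpolate between $h$ and $\id_A$ by linearly scaling each perturbation $h_i$ by a parameter $t$ on $J$, and then to deduce the family automorphism by applying the first assertion of the proposition to a polyannulus of one higher dimension.

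First, I would verify the opening claim that $\varphi\colon X_i\mapsto X_i(1+h_i)$ is an automorphism of $A$. Since $|h_i|<1$, the ultrametric inequality gives $|1+h_i|=1$, so $\varphi$ preserves the absolute value of each coordinate and hence sends $A$ into itself. For invertibility, use the triangular shape inherited from \cref{prop:volumeform}, in which $h_i$ depends only on $x_i,\dots,x_n$: starting with the last coordinate, the one-variable equation $y_n = x_n(1+h_n(x_n))$ is solved by a Newton / fixed-point iteration that converges because $|h_n|<1$, and then one inductively solves for $x_{n-1},\dots,x_1$, obtaining a two-sided analytic inverse to $\varphi$.

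For the family, let $t$ be the canonical coordinate on $J=\Sp k\braket{t}$ and view $A\times J$ as the $(n+1)$-dimensional polyannulus $\Sp k\braket{x_1,\lambda_1 x_1^{-1},\dots,x_n,\lambda_n x_n^{-1},t}$ (with thickness $0$ in the last coordinate). Define $H_i\coloneqq t h_i\in\cO(A\times J)$ for $i=1,\dots,n$ and $H_{n+1}\coloneqq 0$. Since $|t|_{\sup}\le 1$ on $J$, we have $|H_i|_{\sup}\le|h_i|_{\sup}<1$ for each $i$. Applying the first part of the proposition to $A\times J$ and the functions $H_1,\dots,H_{n+1}$ yields an automorphism $\widetilde h$ of $A\times J$ given by
\[
X_i\mapsto X_i(1+th_i),\quad i=1,\dots,n;\qquad t\mapsto t.
\]
Since $\widetilde h$ fixes $t$, it commutes with $\pi_J$; its restriction to the fiber $\{t=0\}$ is the identity on $A$, and its restriction to $\{t=1\}$ is $X_i\mapsto X_i(1+h_i)=h$, as required.

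The one substantive step is the invertibility assertion in the first part, which rests on the triangular structure carried over from \cref{prop:volumeform}; once this is in hand, the family version is essentially automatic because the construction is designed so that $t=0$ zeroes out each perturbation and $t=1$ recovers $h$. This is exactly what is needed in \cref{rem:glueunique} in order to realize any two gluings from \cref{const:Z} as the two endpoint fibers of a single $J$-family of gluings.
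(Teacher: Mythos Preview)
Your construction is exactly the paper's: the proof there is the single line ``$X_i \to X_i(1+th_i)$ gives the desired $\widetilde h$,'' and you arrive at the same formula with the same specializations at $t=0$ and $t=1$.

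One small technical slip: you identify $A\times J$ with an $(n+1)$-dimensional polyannulus ``with thickness $0$ in the last coordinate,'' but $J=\Sp k\braket{t}$ is a closed disk, not an annulus (there is no $t^{-1}$), so you cannot literally invoke the first part of the proposition on $A\times J$. This is harmless---the same Banach fixed-point / close-to-identity argument that establishes invertibility on $A$ works verbatim on the affinoid $A\times J$ since $|th_i|_{\sup}\le|h_i|_{\sup}<1$---but you should state it that way rather than as a reduction to the polyannulus case.
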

\begin{proof} Let $t$ be a coordinate on $J$.
$X_i \to X_i(1 + th_i)$ gives the desired $\widetilde h$.
\end{proof}

\section{Structure constants} \label{sec:sc}
Throughout the section we assume \cref{ass:basicsetup}

We fix non-zero $P_1,\dots,P_n,Q \in \Sk(U,\bbZ)$, and $\beta \in \NE(Y,\bbZ)$.
Our goal is to define the structure constant
$\chi(P_1,\dots,P_n,Q,\beta)$ as in \cref{const:structure_constants}.

\begin{assumption} \label{ass:dataforsc} 
We use the basic moduli space from \cref{def:MUk},
with $\Bnd = \{1,2,\dots,n\}$, $F = \{q\}$, $I = \emptyset$. And the function
$b \colon \Bnd \to \Sk(U,\bbZ)$ (from \cref{const:data})
is the obvious one, $b(i) \coloneqq P_i$.
We take $v_q \coloneqq -Q$ (notation as in \cref{const:data1})
noting that by our choice of $G$, $-Q \in T_G(\Sk(U))(\bbZ)$ makes sense.

\end{assumption}  

We can pick $\cG \subset(\Sk(U))$ (from \cref{const:data1})
near $Q$, satisfying \cref{ass:G_f}, sufficiently small so that:

\begin{assumption} \label{ass:wallass}
  Any wall $\fD \in \Wall_{\beta}$ (see \cref{def:walldef}) intersecting $\cG$
  has linear span containing the ray spanned by $Q$. 
\end{assumption}

\begin{lemma} \label{lem:sclemma} Let $f\colon S \to T_M$ be a map with domain a semi-stable circle (see just below \cref{def:VMdef}).
Then $(\tau \circ f)(S) \subset M_{\bbR}$ is a single point.
$f(S) \subset \tau^{-1}( \tau \circ f)(S)$.
\end{lemma}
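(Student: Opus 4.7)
The second assertion is automatic from the first: once $\tau\circ f$ is known to be constant on $S$, one has $f(S) \subseteq \tau^{-1}(\tau(f(S))) = \tau^{-1}((\tau\circ f)(S))$ by definition. So the content is really the first assertion, and the plan is to show that $\tau\circ f$ is constant on $S$.

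Recall that the canonical Berkovich retraction $\tau\colon T_M^{\an} \to M_{\bbR}$ is characterized by the rule $\langle \tau(y),\chi\rangle = -\log|\chi(y)|$ for every character $\chi \in M^{\vee}$. Hence $\tau\circ f$ is constant on $S$ if and only if, for every character $\chi$, the function $|\chi\circ f|$ is constant on $S$. Since $\chi\colon T_M \to \bbG_m$ is itself invertible, the pullback $g \coloneqq \chi\circ f$ is an invertible analytic function on $S$. The lemma therefore reduces to the statement: \emph{every invertible analytic function on a semi-stable circle has constant norm}.

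The main geometric input is that the Berkovich boundary of the semi-stable circle $S$ consists of a single point, which is exactly the fact invoked (for $S_f$) in the proof of \cref{lem:equivdef}. Indeed, by construction $S = s^{-1}(\oS)$, where $s\colon C \to \oC$ is the stabilization and $\oS = \oB \cap \oE \subset \oC$ is the single Berkovich point common to the closed analytic disk $\oE$ and its complement $\oB$ — i.e.\ its Shilov boundary point. With this in hand I apply the maximum principle to both $g$ and $1/g$: each attains its supremum on the Shilov boundary of $S$, which is contained in (in fact coincides with) this single-point Berkovich boundary. Therefore $\sup|g|$ and $\inf|g|$ are both attained at the same point of $S$, forcing $|g|$ to be constant.

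The main (mild) obstacle is simply verifying the single-point Berkovich boundary assertion; once it is in place, the rest is a routine application of the maximum principle.
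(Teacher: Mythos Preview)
Your proposal is correct and follows exactly the same approach as the paper's proof: the paper's argument is simply ``the first follows from the maximum principle, since the Berkovich boundary of a stable circle is a single point,'' and you have merely spelled out the details (reducing to characters, applying the maximum principle to both $g$ and $1/g$). The paper likewise dismisses the second statement as purely set-theoretic and obvious, matching your opening observation.
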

\begin{proof}
The second statement is purely set theoretic, and obvious.
The first follows from the maximum principle, since the Berkovich boundary of a stable circle is a single point.
\end{proof}

Here we will sometimes abbreviate $\Sk \coloneqq \Sk(M_{0,N}) \times \Sk(U)$.
We stress here we use $M_{0,N}$, parameterizing smooth $k$-analytic curves, we are not allowing nodes. Note
$\Sk(M_{0,N}) = M_{0,N}^{\trop}$, see \cite[8.24]{Keel_Yu_The_Frobenius}. 
Recall from \cref{def:VMdef} and \cref{const:data1}, we have the locus $\cV_M \subset M_{0,N}^{\trop}$,
and convex polytopes $\cH_f \subset \Sk(U)$, with SYZ inverse images  $V_M \subset M_{0,N}$, $H_f \subset U$.

The main step in defining structure constants is:

\begin{theorem} \label{thm:scthm} Notation as
  immediately above. 
  Assume \cref{ass:wallass}. Pick $f \in F$ and take the corresponding $i \in F_s$ (see \cref{const:data}).
  We consider 
  $\Phi\colon \Phi_i: M(U,\beta) \allowbreak \to V_M \times H$, where the
  domain is defined in \cref{def:cmsm}, and $H \coloneqq H_f$. 
 
  The following hold
  \begin{enumerate}[wide]
    \item $\Phi^{-1}(\Sk_{\cV_M \times \cH}) \subset M^{\smot}(U,\beta)$, notation as in \cref{def:sot}. 
     \item $\Phi^{-1}(\Sk_{\cV_M \times \cH}) \to \Sk_{\cV_M \times \cH}$ is topologically
       proper, open, and when $U$ contains no complete rational curves, finite. 
     \item If $U$ contains no complete rational curves,
       $\Phi^{-1}(\Sk_{\cV_M \times \cH}) \subset M^{\sm}(U,\beta)$.
     \item For an open neighborhood  $p \in W \subset V_M \times H$ of
       $p \in \Sk_{\cV_M \times \cH}$,  let $\tW$ be the union of connected components of $\Phi^{-1}(W)$
       meeting $\Phi^{-1}(\Sk_{\cV_M \times \cH})$.  For any sufficiently small $W$ (containing $p$),
       $\Phi: \tW \to W$ is flat, open and proper. In the no rational curves case,
       it is finite and  \'etale, of degree
       independent of $p \in \Sk_{\cV_M \times \cH}$. 
  \end{enumerate}
\end{theorem}

\begin{remark} The retraction map is an isomorphism of $\Sk$ onto its image, and so in particular
  $$\Phi\colon \Sk_{\cV_M \times \cH} \to \cV_M \times \cH$$ is an isomorphism.
  We write $\Sk_{\cV_M \times \cH}$ instead of $\cV_M \times \cH$ to stress we view it as a locus in
  $V_M \times H \subset M_{0,N} \times U$.
\end{remark}

\begin{remark} \label{rem:degree}
  
The statement implies, by  \cite[9.9]{Keel_Yu_The_Frobenius}, that for any sufficiently small
open neighborhood $W \subset V_M \times H$ of $p \in \Sk_{\cV_M \times \cH}$,
and $\tW$ the union of connected components of $\Phi^{-1}(W)$ meeting
$\Phi^{-1}(\Sk_{\cV_M \times \cH})$, $\Phi\colon \tW \to W$ is proper.  Now we can define the degree
of $\Phi: \tW \to W$ using the virtual fundamental class exactly as in
\cref{def:spinecount}.
In the no rats case,
by (3) this count is naive. 
\end{remark}

Let $L \subset M$ (notation as in \cref{def:Mdef}) be the union of irreducible components which
intersect the Zariski open subset $M^{\smot}(U,\beta) \subset M(U,\beta)$.
We note that in the no rational curves 
case this implies they intersect $M^{\sm}(U,\beta) $  (which is quasi-smooth as
$\Phi\colon M^{\sm}(U,\beta) \to V_M \times H$ is \'etale).
Note $\Phi^{-1}(\Sk) \cap M(U,\beta) \subset M^{\smot}(U,\beta)$ by
\cref{lem:ISknodes}, 
so we have
$$
\Phi^{-1}(\Sk) \cap M(U,\beta) \subset M^{\smot}(U,\beta) \subset L.
$$

We begin with the following strengthening:

\begin{proposition} \label{thm:ISk} Notation as immediately above.
Consider $\Phi\colon L \to V_M \times H$.
$\Phi^{-1}(\Sk) \subset M^{\smot}(U,\beta)$. If $U$ contains no complete rational curves,
$\Phi^{-1}(\Sk) \subset M^{\sm}(U,\beta)$.
\end{proposition}

Note the proposition implies in particular that, in the no rational curves case,
$L$ is a space (as opposed to stack) in a Zariski open neighborhood of
$\Phi^{-1}(\Sk_{\cV_M \times \cH})$,
by \cref{def:Msd}.

\begin{proof}[Proof of \cref{thm:ISk} ]
  By \cref{lem:ISknodes}, 
  it is enough to show $\Phi^{-1}(\Sk) \subset M( U,\beta)$.

 First consider the no rational curves case. We can throughout replace $\smot$ by $\sm$. 
Note $L$ is equidimensional, of the same dimension as $\cV \times \cH$ (this was the reason we restricted to this union of irreducible components).
Moreover (because this holds for $M^{\sm}$), each irreducible component of $L$ dominates (the base curve) $J$.
It follows that each fibre, $L$ of $L \to J$
is equidimensional, of the same dimension as $V \times H$.
For the rest of the argument we work with one fibre. 
Thus, since $\Sk$ consists of Abhyankar points (note this fails for
$\oSk(\oM_{0,N}) = \oM_{0,N}^{\trop} \subset \oM_{0,N}$, this is why we are not allowing nodes)
$\Phi^{-1}(\Sk) \subset L$ consists of Abhyankar points by \cref{lem:dflat},
and thus these lie in any Zariski dense Zariski open set, e.g.  $M(U^{\sm},\beta) \cap L \subset L$.
This completes the proof.

Note, all we used here was that $L$ is equidimensional, same dimension as $\cV \times \cH$.
This is a lot weaker
than the assumption that $U$ contains no rational curves.
It holds so long as the generic element of $M(U,\beta)$ has stable domain.

For the general case, we can replace $L$ by an irreducible component. Let $\cC \to L$ be the domain of the
universal map, and $\cC_m \subset \cC$ the union of irreducible components which correspond to a vertex on the convex hull of
the marked points.  The restriction $f: \cC_m \to Z$ gives a family of stable maps, let $\beta'$ be the class. So this restriction
induces $L \to L'$ where the target is the analogous space but for the class $\beta'$.  Now by
\cref{lem:ISknodes}, $L'$ is equidimensional, same dimension as $\cV \times \cH$. So on
$\Phi^{-1}(\Sk) \cap L' \subset M^{\sm}(U,\beta')$.  In particular, for any proper Zariski closed
$W \subset  D$ (recall $D = Y \setminus U$),
$\phi_b^{-1}(W^{\an}) \cap \Phi^{-1}(\Sk) \cap L'  = \emptyset$,  for any $b \in \Bnd$, which implies the same statement for $L$
(the marked points are all on $\cC_m$).  In particular, we can take for $W$ the intersection of $D$ with the closure of
the union of all complete rational curves in $U$ ( that could occur in  a stable curve of class $\beta$, so
in particular $\beta$ minus
the class of the curve is effective). Similarly, if we take the closure of the union of complete rational curves in $U$
(with
class bounded by $\beta$ as immediately above), the marked points from $F \cup I$ will be disjoint from these.
We note this is similar to the $\tri$ condition in properness of the spine map,  \cref{prop:Spprop}. 

Now we claim $\Phi^{-1}(\Sk) \cap L \subset M(U,\beta)$.  We take $f \in \Phi^{-1}(\Sk) \cap L $.
By definition $L \cap M^{\smot}(U,\beta) \subset L$ is a Zariski (and so in particular, Berkovich) dense
open subset.  We can choose a net $f^{\lambda} \subset M^{\smot}(U,\beta) $ converging to $f$.  We decompose
the domain curves into the main component $C_m^{\lambda}$ and the twigs $T^{\lambda}$ (the closure of the complement of the
main component).  By the above
$f^{\lambda}|_{C_m^{\lambda}}$ is in $M^{\sm}(U,\beta')$, and this holds also for the limit (in particular the
domain curve does not degenerate, it remains $\bbP^1_{\an}$). 
The twigs of $f^{\lambda}$ map to complete rational curves in $U^{\an}$. So by the above,  the intersection points 
$T^{\lambda} \cap C_m^{\lambda}$ , which converge to the analogous points of the limit, do not approach the marked points. 
Now we can follow the ideas of the maximum principle argument in the proof of \cref{prop:Spprop} to show
that the limit {\it stays in $U$} (more precisely, in $M(U,\beta)$). 

\begin{remark} Note the spine is the spine of the main component $(f:C_m \to Z)$, and this converges to the
  spine of the main component of the limit (which is well behaved, a stable map in the smooth locus).  However
  We cannot directly
  apply (2) of \cref{prop:Spprop} because $\tri$ has to do with the $\beta'$-walls. Here we have something closely
  related, the closure of the locus of complete curves.  The same maximum principle argument applies.
\end{remark}
\end{proof} 

\begin{remark} Here, because we are focusing on structure constants, we have
  the special fixed data as in \cref{ass:dataforsc}. But the proof of \cref{thm:ISk} works for
  any fixed data as in \cref{const:data1}, with $\Phi := \Phi_i$ for any $i \in F_S \cup I$,
  notation as in \cref{const:data}. \end{remark}

\begin{lemma} \label{lem:dflat}
Let $\Phi\colon Y \to X$ be a map of $k$-analytic spaces with $X$ reduced.
Then $\Phi$ is flat at the inverse image of any Abhyankar point.
If $X$ and $Y$ are equidimensional of the same dimension, then the inverse image of every Abhyankar point consists of Abhyankar points.
\end{lemma}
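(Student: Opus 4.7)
The plan is to handle both assertions via the standard numerical invariants of points in Berkovich analytic spaces. For a point $z$ in a $k$-analytic space $Z$, set
\[ s(z) \coloneqq \dim_\bbQ \sqrt{|\cH(z)^\times|/|k^\times|} \otimes \bbQ, \qquad t(z) \coloneqq \mathrm{trdeg}(\widetilde{\cH(z)}/\widetilde{k}), \]
so that, by definition, $z$ is Abhyanakar when $s(z) + t(z) = \dim_z Z$. The key general input from the Berkovich--Ducros theory (see e.g.\ \cite{Ducros_Families_of_Berkovich_spaces}) is the inequality
\[ \dim \cO_{Z,z} + s(z) + t(z) \leq \dim_z Z, \]
which at an Abhyanakar point forces $\dim \cO_{Z,z} = 0$.

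For the second statement, assume $X$ and $Y$ are equidimensional of common dimension $d$, and that $x \in X$ is Abhyanakar, so $d = \dim_x X = s(x) + t(x)$. For any $y \in \Phi^{-1}(x)$, the isometric embedding of completed residue fields $\cH(x) \hookrightarrow \cH(y)$ induces inclusions of value groups and graded residue fields, hence $s(y) \geq s(x)$ and $t(y) \geq t(x)$. Equidimensionality gives $\dim_y Y = d$, so
\[ s(y) + t(y) \geq s(x) + t(x) = d = \dim_y Y, \]
while the Berkovich--Ducros inequality gives the reverse. Equality holds throughout, which is precisely the Abhyanakar condition for $y$.

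For the flatness statement, the inequality above applied at the Abhyanakar point $x$ yields $\dim \cO_{X,x} = 0$, so $\cO_{X,x}$ is an Artinian local ring. Since $X$ is reduced, the stalk $\cO_{X,x}$ is reduced as well, and a reduced Artinian local ring is a field. Consequently, for any $y \in \Phi^{-1}(x)$, the structural map $\Phi^*\colon \cO_{X,x} \to \cO_{Y,y}$ makes $\cO_{Y,y}$ a module over a field, which is automatically flat; this is the definition of flatness of $\Phi$ at $y$.

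The only non-trivial ingredient is the Berkovich--Ducros inequality together with the characterization of Abhyanakar points in terms of the Krull dimension of the local ring; both are well-documented in Ducros's framework, so there is no genuine obstacle. I would simply cite \cite{Ducros_Families_of_Berkovich_spaces} for the inequality and the fact that stalks of the structure sheaf of a reduced $k$-analytic space are reduced, and then assemble the two short arguments above.
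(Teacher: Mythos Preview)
Your argument is correct. The paper's own proof is a two-line affair: it cites \cite[10.3.7]{Ducros_Families_of_Berkovich_spaces} for the flatness assertion and then says ``this implies the second''. Your write-up differs in two ways. First, for flatness you unpack what is essentially the content of Ducros's result: at an Abhyankar point of a reduced space the local ring has Krull dimension zero and is therefore a field, so any module over it is flat. This is fine, though you should be aware that Ducros's notion of flatness is not literally defined as flatness of stalks; it does however agree with naive flatness once the base local ring is a field, so there is no real gap. Second, and more interestingly, your proof of the second assertion is independent of the first: you use only the Abhyankar inequality and the obvious monotonicity $s(y)\ge s(x)$, $t(y)\ge t(x)$ coming from the isometric inclusion $\cH(x)\hookrightarrow\cH(y)$. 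This bypasses flatness entirely and in fact does not require $X$ to be reduced, so it is a cleaner and slightly more general route than the paper's (which presumably goes through the dimension formula for flat maps to force the fiber dimension to be zero, then uses additivity of $d_k$).
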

\begin{proof}
The first statement is \cite[10.3.7]{Ducros_Families_of_Berkovich_spaces}, and this implies the second.
\end{proof}

\subsection{Proof of \cref{thm:scthm}}
It is enough to prove the analogous statement with $N$ replaced by $N'$, notation as in \cref{def:VMpdef}.
This holds because $\Phi\colon M(U,\beta,N') \to \cV_M' \times \cH$ is the base extension of $\Phi\colon M(U,\beta,N) \to \cV_M \times \cH$ with respect to the smooth surjection $\cV_M' \to \cV_M$.

By (3) of \cref{prop:cad},
$\Phi\colon M' \to \cV_M \times \cH$
is of finite type, and thus topologically proper.
Thus the same holds for $\Phi\colon M' \cap L \to \cV_M \times \cH$, since $L \subset M$ is (even Zariski) closed.
So restricting gives:
\begin{claim} \label{cl:PhionG}
$\Phi\colon L \cap M' \cap \Phi^{-1}(\Sk) \to \Sk_{\cV_M \times \cH}$
is topologically proper.
\end{claim}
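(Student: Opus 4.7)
The plan is to obtain the claim as a formal restriction of the topologically proper map just built in the preceding lines. First, I would invoke the family version of \cref{prop:cad}(3) (via \cref{rem:smvg}) to get that $\Phi\colon M'_{\famm} \to \cV_M \times \cH^{\famm}$ is of finite type, hence topologically proper in the $k$-analytic sense used throughout the paper. Next, because $L_{\famm}$ is defined as the union of irreducible components of $M_{\famm}$ meeting $\cM^{\sm}(U_{\famm},\beta)$, it is Zariski closed, so the inclusion $L \cap M'_{\famm} \hookrightarrow M'_{\famm}$ is a closed immersion and the composition with $\Phi$ stays topologically proper. This is what the sentence immediately before the claim records.

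From here I would derive the claim by base changing the target to $\Sk_{V_M \times H^{\famm}} \subset \cV_M \times \cH^{\famm}$. The general fact I would use is that topological properness is preserved under arbitrary base change on the target: for any topologically proper $g\colon X \to Y$ and any subspace $Y' \subset Y$ carrying the induced topology, the restriction $g\inv(Y') \to Y'$ is topologically proper. Applying this with $g = \Phi\colon L \cap M'_{\famm} \to \cV_M \times \cH^{\famm}$ and $Y' = \Sk_{V_M \times H^{\famm}}$ produces exactly the map in the claim, with source $L \cap M'_{\famm} \cap \Phi\inv(\Sk_{\famm})$ (noting that $\Phi$ already lands in $\cV_M \times \cH^{\famm}$, so pulling back $\Sk_{\famm}$ is the same as pulling back $\Sk_{V_M \times H^{\famm}}$).

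The hard part is not in the claim but in the input: all the substantive work has been pushed into \cref{prop:cad}(3), whose proof reduced finite-typeness to the proper central fibre statements of \cref{sec:fcf} through the reduction map for formal models. Once that input is available, the claim is purely formal, a combination of closure under closed immersion and under base change. The one small consistency check I would include is that the preimage $L \cap M'_{\famm} \cap \Phi\inv(\Sk_{\famm})$ does carry the subspace topology the notion of topological properness requires; this is automatic, since everything is being cut out as subspaces of the ambient $k$-analytic stack $\oM_{0,N'}(Z,k)_{\cV_M' \times \cH^{\famm}}$.
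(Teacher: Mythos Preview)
Your proposal is correct and matches the paper's own argument essentially line for line: invoke the family version of \cref{prop:cad}(3) to get $\Phi\colon M'_{\famm}\to\cV_M\times\cH^{\famm}$ topologically proper, intersect with the Zariski-closed $L_{\famm}$, then restrict over $\Sk_{V_M\times H^{\famm}}$. The paper records exactly these three steps in the sentences immediately preceding the claim, so there is nothing to add.
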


\begin{claim} \label{cl:PhionGcM'}
$$
\Phi\colon M'(U,\beta) \cap \Phi^{-1}(\Sk) \to \Sk_{\cV_M \times \cH}
$$
is topologically proper.
\end{claim}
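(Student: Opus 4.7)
The plan is to deduce Claim \ref{cl:PhionGcM'} from Claim \ref{cl:PhionG} by verifying that their source spaces coincide as analytic subspaces of $M'_{\famm}$; topological properness then transfers along the identity map. Concretely, I will establish the set-theoretic equality
\[
\cM'(U^{\famm},\beta) \cap \Phi^{-1}(\Sk^{\famm}) \;=\; L_{\famm} \cap M'_{\famm} \cap \Phi^{-1}(\Sk^{\famm}).
\]

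For the inclusion $\subset$, any point on the left lies in $\cM(U^{\famm},\beta) \cap M'_{\famm}$ from the definition of $\cM'(U^{\famm},\beta)$ in \cref{def:MUk}. The skeletal-curve criterion of \cref{lem:sksm}, applied fiberwise in the family over $J$ from \cref{ss:vg}, then shows that such a point automatically lies in $\cM^{\sm}(U^{\famm},\beta)$. By the very definition of $L_{\famm}$ as the union of irreducible components of $M_{\famm}$ that contain an irreducible component of $\cM^{\sm}(U^{\famm},\beta)$, the containment $\cM^{\sm}(U^{\famm},\beta) \subset L_{\famm}$ holds, and the inclusion follows.

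For the reverse inclusion $\supset$, \cref{thm:ISk} (established just above) gives $\Phi^{-1}(\Sk^{\famm}) \cap L_{\famm} \subset \cM^{\sm}(U^{\famm},\beta) \subset \cM(U^{\famm},\beta)$. Intersecting with $M'_{\famm}$ and using $\cM'(U^{\famm},\beta) = \cM(U^{\famm},\beta) \cap M'_{\famm}$ places the locus inside $\cM'(U^{\famm},\beta) \cap \Phi^{-1}(\Sk^{\famm})$, as required.

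Given the set-theoretic equality, the map appearing in Claim \ref{cl:PhionGcM'} is identical to that in Claim \ref{cl:PhionG}, and the latter's topological properness yields the former's. No new obstacle appears here: the genuine technical content was already dispatched in \cref{thm:ISk}, namely that every point of $\Phi^{-1}(\Sk^{\famm})$ is forced into the smooth locus (and hence into $L_{\famm}$) by the Abhyanakar point / dimension-count argument there. Once that is in hand, the passage from $L_{\famm} \cap M'_{\famm}$ to $\cM'(U^{\famm},\beta)$ is purely formal.
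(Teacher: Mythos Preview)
Your proof is correct and follows essentially the same approach as the paper: both establish the set-theoretic equality $\cM'(U^{\famm},\beta) \cap \Phi^{-1}(\Sk^{\famm}) = L_{\famm} \cap M'_{\famm} \cap \Phi^{-1}(\Sk^{\famm})$ via the chain $\cM'(U^{\famm},\beta) \cap \Phi^{-1}(\Sk^{\famm}) \subset \cM^{\sm}(U^{\famm},\beta) \subset L_{\famm}$ (using \cref{lem:ISknodes} or equivalently \cref{lem:sksm}) together with \cref{thm:ISk} for the reverse inclusion, and then invoke \cref{cl:PhionG}. The only cosmetic difference is that the paper cites \cref{lem:ISknodes} directly while you cite \cref{lem:sksm} applied fiberwise; these amount to the same thing.
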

\begin{proof}
$$
M'(U,\beta) \cap \Phi^{-1}(\Sk) \subset M(U,\beta) \cap \Phi^{-1}(\Sk) \subset M^{\smot}(U,\beta) \subset L $$
by \cref{lem:ISknodes}, and $L \cap \Phi^{-1}(\Sk) \subset M(U,\beta)$ by \cref{thm:ISk}.
So $M'(U,\beta) \cap \Phi^{-1}(\Sk) = L \cap M' \cap \Phi^{-1}(\Sk)$,
and so the Claim follows from \cref{cl:PhionG}.
\end{proof}

\begin{claim} \label{cl:secondcl}
$L \cap \Phi^{-1}(\Sk) \subset M'$.
\end{claim}

Assume we have proven the claim.
Then by \cref{thm:ISk} and \cref{lem:ISknodes},
$$
M(U,\beta) \cap \Phi^{-1}(\Sk) = L \cap M(U,\beta) \cap \Phi^{-1}(\Sk) = M'(U,\beta) \cap \Phi^{-1}(\Sk)
$$
and so by the above $\Phi^{-1}(\Sk) \cap M(U,\beta) \to \Sk_{\cV_M \times \cH}$ is topologically proper,
and quasi-finite in the no rats case, because then we are in the locus $M^{\sm}(U,\beta)$ where $\Phi$ is \'etale.
Now the result follows from simple point set topology, see \cite[9.9]{Keel_Yu_The_Frobenius}, and the proof of \cite[9.11]{Keel_Yu_The_Frobenius}. These imply $\tW \to W$ is proper.
By \cref{lem:dflat} that $\Phi^{-1}(\Sk)$ is contained in the
flat locus (and flat, and no-boundary, implies open).  Finally, in the \'etale case, we check the degree
of $\tW \to W$ is independent of $p$. But now if $J \subset W$ is a connected open subset, then
$\tW_J \to J$ is proper and open, so each connected component of the domain surjects, and so if $J$
meets $\Sk$, each connected component of $\tW_J$ meets $\Phi^{-1}(\Sk_{\cV_M \times \cH})$. Thus if we apply
the construction to $q \in J$, we obtain $\tW_J \to J$. As $V_M \times H$ is connected this implies
the degree is independent of $p$.

So it is enough to establish \cref{cl:secondcl}.

\begin{proof}[Proof of \cref{cl:secondcl}]
Let $L$ be a fibre of $L \to J$.
It is enough to prove \begin{claim} \label{cl:secondclL}
$L \cap \Phi^{-1}(\Sk) \subset M'$.
\end{claim}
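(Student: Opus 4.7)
The plan is to verify directly that any $f \in L \cap \Phi^{-1}(\Sk)$ satisfies the defining conditions of $M'$. By \cref{thm:ISk}, any such $f$ lies in $M^\sm(U,\beta) \subset M$, so the $M$-conditions --- namely $f(B) \subset Y^\an$, $f(E_f) \subset T_f^\an$, $(\tau \circ f)(s_f) \in H_f$, together with the incidence conditions at the marked points indexed by $B$ and $F_E$ --- hold automatically (here $B$ and $E_f$ refer to the subcurves of \cref{def:VMdef}). The only new condition to verify for membership in $M'$ is $f(A_f) \subset \cG_f$ for each $f \in F$.

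To establish this, first observe that $\Phi(f) \in \Sk$ forces $f$ to be a skeletal curve by \cref{lem:source_of_skeletal_curve}, and \cref{lem:skeletal_curve_contraction} then shows that $f|_\Gamma \colon \Gamma \to \oSk(V)$ factors through the retraction $\Gamma \to \Gamma^B$ onto the convex hull of the marked points indexed by $B$. The skeleton of the annulus $A_f$ corresponds under stabilisation to the segment of $\Gamma$ between the extra marked points $a_f$ and $b_f$, passing through the valence-three vertex $\os_f$. Since the indices $a_f, b_f, s_f, e_f$ do not lie in $B$, this segment --- together with the leg from $\os_f$ to $s_f$ --- lies entirely in $\Gamma \setminus \Gamma^B$ and is therefore contracted by $f|_\Gamma$ to the single value $(\tau \circ f)(s_f) \in H_f \subset G_f$.

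Next I would transfer this constancy from $\Sk(A_f)$ to $A_f$ itself. Because $f|_{\Sk(A_f)}$ is constant, the balancing discussion around \cref{def:Balcon} and \cref{lem:tbal} forbids any twig disk from being rooted at an interior vertex of $\Sk(A_f)$: such a twig would produce a nonzero outward weight at its root, contradicting the zero weights on the adjacent (constant) spine edges. In the absence of twigs over $A_f$, a formal model $\ff \colon \fC \to \fZ$ of $f$ may be chosen so that the preimage of $A_f$ in the central fibre $\fC_s$ is contracted by $\ff_s$ to the zero stratum of $\fZ_s$ corresponding to $(\tau \circ f)(s_f)$. The Berkovich retraction $\tau \circ f$ thus factors through the canonical retraction $A_f \to \Sk(A_f)$, yielding $(\tau \circ f)(A_f) = f(\Sk(A_f)) \subset G_f$ and hence $f(A_f) \subset \tau^{-1}(G_f) = \cG_f$.

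The main delicate step is the factorization of $\tau \circ f$ through $A_f \to \Sk(A_f)$; this rests on the absence of twig disks over $A_f$, which uses \cref{ass:wallass} to ensure $(\tau \circ f)(s_f) \in H_f$ lies in a region of $\Sk(U)$ whose local wall structure is trivial, so that the balancing argument rules out all bending at interior vertices of $\Sk(A_f)$. Granting this, the remaining incidence conditions for $M'$ are identical to those already imposed by $M$, completing the proof of Claim \ref{cl:secondclL}.
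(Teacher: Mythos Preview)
Your argument rests on the skeletal--curve machinery (\cref{lem:source_of_skeletal_curve} and \cref{lem:skeletal_curve_contraction}), but that entire theory is developed under \cref{ass:gluing}: the volume forms on $U^{\an}$ and on the toric ends are assumed to patch to a global form on $V$, so that $\Sk(\omega)\subset V$ exists. In the structure--constants section this is \emph{explicitly not} assumed (see the remark immediately after the fixed data $P_1,\dots,P_n,Q,\beta$ is introduced). Claim~\ref{cl:secondclL} has to hold on an \emph{arbitrary} fibre $L$ of $L_{\famm}\to J$, and on a generic fibre there is no glued volume form, hence no notion of skeletal curve. Your invocation of \cref{lem:source_of_skeletal_curve} and \cref{lem:skeletal_curve_contraction} is therefore unjustified.

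Even granting the skeletal--curve input, the deduction is not right. The target of $f$ is $Z$, not $Y^{\an}$, and the end disk lands in the toric piece $T_f^{\an}$; \cref{lem:skeletal_curve_contraction} as stated is for maps to $Y^{\an}$. More concretely, the spine along $[\os_f,e_f]$ is \emph{not} constant: toric balancing on the end disk forces it to be affine with derivative $m_e$, sending $e_f$ to the boundary. So the claim that the segment through $a_f,b_f$ is contracted is false. And the balancing step does not rule out twigs: \cref{lem:tbal} only says that if the spine is unbalanced at $x$ then the bend equals the sum of outgoing twig weights; a balanced spine vertex is perfectly compatible with twigs whose outgoing weights cancel.

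The paper's proof avoids skeletal curves entirely. It uses that $\Phi$ is \'etale (hence open) on $M^{\sm}(U,\beta)$, so by \cref{ass:wallass} one can pick a net $f_\lambda\to f$ inside $M^{\sm}(U,\beta)\cap\Phi^{-1}(\Sk)$ with $(\tau\circ f_\lambda)(s)\notin\Wall_k$. Toric balancing on the end disk (which needs no glued volume form) makes the spine affine with derivative $m_e$ along $[\os,e]$, and then \cref{lem:epsilon} forces $f_\lambda([\oa,\ob])\cap\Wall_k=\emptyset$; hence no twigs attach along $[\oa,\ob]$ for each $f_\lambda$. Continuity of the tropical curve (\cite[8.1]{Yu_Tropicalization_of_the_moduli_space_of_stable_maps}) passes this to the limit $f$, giving $f\in M'$.
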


The argument follows the proof of \cite[11.5]{Keel_Yu_The_Frobenius}.
By \cref{thm:ISk} $\Phi^{-1}(\Sk) \subset M^{\smot}(U,\beta) \subset M^{\sot}(U,\beta)$.
By the last inclusion the semi-stable end disk, $E$ (notation as in \cref{sec:bms}, we are leaving off the subscript as there is only one end) is an honest disk.
By definition the end disk $f:E \to Z$, has image in the toric end,
and the punctured disk, $E^* \coloneqq E \setminus \{e\}$, has image in $T_M$.
It follows (by balancing)
that the spine of the punctured disk,
whose domain is described by \cref{def:VMdef} and \cref{def:VMpdef},
factors through $[\os,e]$ and $\Sp(f)\colon [\os,e] \to \oM_{\bbR}$ is affine, with derivative $Q$, over all of $M^{\sm}$.
We consider $f\colon C \to Z$ in $M^{\sm}(U,\beta)\cap \Phi^{-1}(Sk)$ and show it lies in $M'$.
We note, as in \cite[11.5]{Keel_Yu_The_Frobenius}
that this is equivalent to the spine having no twigs attached along $[\oa,\ob]$, notation as in \cref{def:VMpdef}. We use the same ideas as in the proof of \cite[Claim 11.6]{Keel_Yu_The_Frobenius}.
Note the only twig that could attach at $\os$ is associated to a wall
$\fD \in \Wall_{\beta}$ with linear span containing the vector $Q$, by \cref{ass:wallass}.
$\Phi$ is flat  in a neighborhood of $\Phi^{-1}(\Sk)$ by \cref{lem:dflat}, with
no boundary by \cref{def:bms}. 
It follows that $M^{\sm}(U,\beta) \cap \Phi^{-1}(\Sk) \to \Sk$ is open,
see \cite{Ducros_Families_of_Berkovich_spaces}.
So we can choose,
by \cref{ass:wallass},
a net $f_{\lambda} \to f$ such that $(\tau \circ f_{\lambda})(s) \not \in \Wall_k$.
Near $\os$ the spine $\Sp(f_{\lambda})$ (note these are all skeletal curves by \cref{lem:source_of_skeletal_curve}) runs
{\it parallel} to any possible wall $\fD$ as above. 
Now it follows (just as in the proof of \cite[Claim 11.6]{Keel_Yu_The_Frobenius})
from \cref{lem:epsilon} that $f_{\lambda}([\oa,\ob]) \cap \Wall_k = \emptyset$,
see \cite[Figure 10]{Keel_Yu_The_Frobenius}.
Thus there can be no twigs attached along this interval.
That implies the same for the limit, by continuity of the tropical curve,
\cite[8.1]{Yu_Tropicalization_of_the_moduli_space_of_stable_maps}.
This completes the proof.
\end{proof}

\subsection{Definition of structure constants}

We take non-zero $P_1,\dots,P_n,Q \in \Sk(U,\bbZ)$, and $\beta \in \NE(Y,\bbZ)$.
Our goal is to define the structure constant
$\chi(P_1,\dots,P_n,Q,\beta)$ as in \cref{const:structure_constants}.

We use the basic moduli space from \cref{def:MUk},
with $\Bnd = \{1,2,\dots,n\}$
$F = \{q\}$, $I = \emptyset$.

We can pick $\cG \subset(\Sk(U))$ near $Q$, as in \cref{ass:G_f}, sufficiently small so that:

\begin{assumption} \label{ass:wallassG} If $(\bbY,\bbD)$ is almost minimal (see \cref{def:am}), we take $\cG$ in the interior of $\star(Q)$ (the union of open cones whose closure contains the ray spanned by $Q$ in $\Sigma$).
We require the linear span of any wall of $\Wall_\beta$ that intersects $\cG$ contains the line spanned by $Q$.

In the general case, we pick $\cG$ in the interior of one of the maximal cones of $\Sigma$ that contain $Q$, sufficiently general so it is disjoint from $\Wall_{\beta}$ from \cref{def:walldef}.
\end{assumption}

For the basic moduli space we take $v_q \coloneqq -Q$, noting that by our choice of $G$, $-Q \in T_G(\Sk(U))(\bbZ)$ makes sense.

We consider $\Phi\colon \tW \to W$ as in \cref{thm:scthm}.

\begin{definition-lemma} \label{prop:thedegree} 
  Notation and assumptions as in \cref{thm:scthm}.  By the properness in (4) of \cref{thm:scthm}, the
  degree $\deg(\tW/W)$ is well defined just as in \cref{def:spinecount}. The same argument used
  to prove the independence of the degree in the no rational curves case  (in the proof of \cref{thm:scthm}) shows the
  virtual degree is independent of $p$ in general. 
  When $U$ has no complete rational curves,
  this is the degree of this finite \'etale map, the naive length of any of the fibres.
    \end{definition-lemma}

The point of this section is to prove:

\begin{proposition} \label{prop:scwd} Notation as immediately above.
The degree of $\Phi\colon \tW \to W$ depends only on $P_1,\dots,P_n,\beta$.
\end{proposition}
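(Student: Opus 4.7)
The plan is to verify that the integer degree produced by \cref{thm:scthm} is unchanged when we vary, one at a time, each of the auxiliary choices made in the construction: the snc compactification $U\subset Y$, the toric blowup used to put divisorial centers for the $P_i$ on $Y$, the polytopes $H_f \subset G_f \subset \Sk(U)$, the fan $\Sigma_f$ defining the toric end $T_f$, and the gluing identification of \cref{const:Z}.

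First I would dispose of the gluing together with the choice of base point. \Cref{thm:scthm} already states that the analytic degree of $\Phi\colon \tW \to W$ is constant on connected components of the base, and constant in a one-parameter family of gluings. By \cref{rem:glueunique} and \cref{prop:famaut}, any two gluings produce fibres of the family constructed in \cref{ss:vg}, so \cref{thm:scthm} implies the degree is the same. Similarly, once a gluing is fixed, the locus $\Sk_{V_M\times H}$ is connected, so moving the base point inside it does not change the degree.

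Next I would vary $H \subset G$. Two choices $(H^{(i)},G^{(i)})$ satisfying \cref{ass:wallass} can always be dominated by a common choice $(H,G)$ with $G\subset G^{(1)}\cap G^{(2)}$, and then enlarged back by reversing the process, so it suffices to treat the case $G\subset G'$ (with $H$ correspondingly contained in $H'$). Any stable map in $\Phi^{-1}(\Sk_{V_M\times H})$ for the smaller choice is, by \cref{lem:sclemma} and the skeletal analysis of \cref{sec:skeletal_curves}, a stable map in $\Phi^{-1}(\Sk_{V_M\times H'})$ for the larger choice, and conversely any skeletal curve for the larger choice whose base point lies in $H$ lies in the smaller moduli space since the end disk is forced through $\cG_f = \tau^{-1}(G_f)$ by the balancing and the fact (\cref{lem:epsilon}) that its skeleton is an affine ray out of $H$ disjoint from $\Wall_\beta$. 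Hence $\tW\to W$ for the two data agree over a common open subset, and by constancy the degrees coincide. Independence from the choice of $\Sigma_f$ is essentially automatic for the same reason: for two fans $\Sigma_f,\Sigma_f'$ both containing the ray $\bbR_{\geq 0}(-Q)$ (and $\rho_f$ in case~(2) of \cref{ass:G_f}), we can pass to a common simplicial refinement giving proper toric birational maps $\widetilde T_f \to T_f$ and $\widetilde T_f\to T_f'$ that are isomorphisms on a neighborhood of the image of every end disk contributing to $\Phi^{-1}(\Sk_{V_M\times H})$ (the end disk lands in the closure of the $(-Q)$-stratum), so the relevant moduli spaces and \'etale covers are canonically identified.

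Finally I would vary the compactification $U \subset Y$. Since any two snc compactifications are dominated by a third, and any further toric blowup can be carried out via a sequence of blowups of essential strata, it suffices to compare $U\subset Y$ with a toric blowup $p\colon \widetilde Y \to Y$ chosen so that each $P_i$ (and $-Q$, in case (2) of \cref{ass:G_f}) has divisorial center. Here the key input is \cref{prop:tbeta}: there is a unique class $\widetilde\beta\in\NE(\widetilde Y,\bbZ)$ with $p_*(\widetilde\beta)=\beta$ for which the smooth loci are nonempty, and with a choice of auxiliary closed sets $X \subset Y$, $\widetilde X \subset \widetilde Y$ the map $f\mapsto p\circ f$ induces an isomorphism $\cM^{\sm}(U\subset \widetilde Y,\widetilde X^c,\widetilde\beta) \simeq \cM^{\sm}(U\subset Y,X^c,\beta)$. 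Combined with \cref{lem:ISknodes}, which shows the germ around $\ISk$ is independent of the toric blowup and that $\Phi^{-1}(\Sk_{V_M\times H})\subset \ISk$ lies in the \'etale locus, this identification carries the finite \'etale cover $\tW\to W$ on the $Y$-side to the analogous cover on the $\widetilde Y$-side, hence preserves the degree. The main obstacle is precisely bookkeeping this last step: verifying that the identification of \cref{prop:tbeta} really does match the components of $\tW$ counted on both sides, which in turn depends on the fact (proved via \cref{prop:glueprop}) that the polarization data needed to define the class on $\widetilde Y$ is pinned down by the fixed data $\bP$ alone, so nothing auxiliary leaks into the degree.
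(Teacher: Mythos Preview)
Your treatment of the gluing and of shrinking $G$ within a fixed maximal cone is fine and matches the paper.  The gap is in the step ``Two choices $(H^{(i)},G^{(i)})$ satisfying \cref{ass:wallass} can always be dominated by a common choice $(H,G)$ with $G\subset G^{(1)}\cap G^{(2)}$.''  In the general (non--almost-minimal) clause of \cref{ass:wallass}, $G$ is required to lie in the interior of a single maximal cone of $\Sigma$.  When $Q$ sits on a codimension-one face of $\Sigma$ there are two legitimate choices of maximal cone, and the corresponding $G^{(1)},G^{(2)}$ have empty intersection, so no common refinement exists.  Crossing from one maximal cone to the adjacent one is precisely the substantive point of the proposition, and your argument does not address it.

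The paper handles this by reducing to the almost minimal case.  If $U\subset Y$ is almost minimal (\cref{def:am}), then case~(2) of \cref{ass:G_f} is available: one may take $G$ inside $\star(\rho)^\circ$ for the codimension-one cone $\rho\ni Q$, so that $G$ straddles the wall between the two maximal cones, and then your shrinking argument connects both sides through this bridging $G$.  For general $Y$ one invokes \cref{prop:modelprop}: after suitable toric blowups, $Y$ and an almost minimal model have the same essential dual fan and are isomorphic near every essential stratum; by \cref{prop:tbeta} and \cref{lem:ISknodes} (or \cref{lem:dcequal}) the finite \'etale cover $\tW\to W$ is unchanged under this replacement, so the degree for $Y$ equals the degree computed on the almost minimal model, which by the previous sentence is independent of the maximal cone.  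Your final paragraph on toric blowups contains the right ingredients, but its purpose in the paper is exactly this reduction to almost minimal models rather than an independent ``change of compactification'' statement (the compactification $Y$ is fixed in the proposition, since $\beta\in\NE(Y,\bbZ)$).
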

\begin{proof} A priori this degree could depend on the choice of $\cG=\cG_f$ from \cref{const:data1} and for given choice of $\cG$,
the gluing isomorphism we use in \cref{const:Z}.
First the degree is independent of varying the gluing: Note that the degree decomposes into a sum of counts of
transverse spines,
see \cref{def:spinecount}, by \cref{prop:transversality}, and these counts are independent of the gluing
by \cref{thm:ncwd}. 

It is easy to see the degree is independent of shrinking $\cG$, so all that remains is to show it is independent of the choice of maximal cone (in whose interior $\cG$ lies).
This is clear when $(\bbY,\bbD)$
is almost minimal, \cref{def:am} , since we can move from one maximal cone to the adjacent one across the codimension one cone $\rho$ (see (2) of \cref{ass:G_f}).
By \cref{prop:tbeta} to check the independence we can replace $\bbY$ by a toric blowup.
Then we can assume $\bbY$ satisfies \cref{prop:modelprop} (i.e.\ $\bbY$ is $\tbbY$ of that proposition).
But now the result follows from the almost minimal case.
This completes the proof.
\end{proof}

In view of \cref{prop:scwd} we define:

\begin{definition} \label{def:structureconstants}
Notation as in \cref{prop:scwd}.
We define $\chi(P_1,\dots,P_n,Q,\beta)$ to be the degree of $\Phi\colon \tW \to W$.
\end{definition}

\begin{remark} \label{rem:formulate}
  In the no rats case, we can formulate the definition without reference to the open set $W$:
By \cref{thm:scthm} $\Phi^{-1}(\Sk_{\cV_M \times \cH}) \subset M(U,\beta)$
is contained in the \'etale locus of $\Phi$, and so the analytic degree makes sense at any point -- namely the length of the fibre of $\Phi$ at this point.
$\chi(P_1,\dots,P_n,Q,\beta)$ is equivalently defined as the sum of the analytic degrees at every point in the fibre over any $p \in \Sk_{\cV_M \times \cH}$, i.e.\ the length of this fibre.
As the map is \'etale, after base extension the fibre will be a discrete set and this length is its cardinality -- thus our structure constants are naive counts.
\end{remark}

\begin{definition}[Structure Disks] \label{def:structuredisk}
Notation as in \cref{prop:scwd}.
By a {\it structure disk}
contributing to $\chi(P_1,\dots,P_n,Q,\beta)$,
we mean $[f:(B,p_1,\dots,p_n,s) \to Y^{\an}]$ for $f \in \Phi^{-1}(\Sk_{\cV_M \times \cH})$
as in \cref{thm:scthm}.
We note by \cref{lem:restrict_to_skeleton} that these (or more precisely the extension $[f:C \to Z]$,
but see also \cref{rem:lskel}) are skeletal, and moreover the main component, see \cref{def:sot},
is free by \cref{lem:sksm}. 
We note that, by \cref{prop:constclass}, if we choose a general such $f$ (specifically, if $f(s) = (\tau \circ f)(s) \not \in \Sigma^{d-1}$) then $[f\colon B \to Y^{\an}] = \beta$. 

By the {\it monoid of disk classes}, $\DC(\bbY,\bbD)$, we mean the submonoid of $\NE(\bbY_s,\bbZ)$ generated by classes of structure disks, or equivalently $\beta \in \NE(\bbY_s,\bbZ)$ such that $\chi(P_1,\dots,P_n,Q,\beta) \neq 0$ for some choice of $P_i,Q$.
By \cref{prop:scwd}, $\DC$ depends only on $(\bbY,\bbD)$.
By construction the mirror algebra is defined over $Q[\DC(\bbY,\bbD)]$ (for $Q$ either $\bbQ$ or $\bbZ$
as in \cref{not:coeffs}). 
\end{definition}

\subsection{disks with cycle supported on $1$-strata}

Our principal interest is in structure disks, see \cref{def:structuredisk}, but most of the analysis does not require the completion of the disk to a rational curve, so we work in the context of \cref{def:iostrop}, for $U \subset Y$ defined as in \cref{ass:basicsetup}.

We will say that a spine $h\colon \Gamma \to \oSk(U)$ is transverse to $\Sigma_{(\bbY,\bbD)}$ if it satisfies \cref{def:transverse},
but with walls being just the codim one cones of this dual fan.

\begin{definition} \label{def:1scycle}
Let $h\colon \Gamma \to \oSk(U)$ be a balanced spine, with $h(\Gamma \setminus \Bnd) \subset \Sk^{\sm}(U)$ in the affine structure of \cref{sec:integeraffine}), transverse to $\Sigma_{(\bbY,\bbD)}$.
Then there is an associated cycle supported on the $1$-skeleton of $D$:

\[Z(h)\coloneqq\sum_{x\in h\inv(\Sigma^1_{(\bbY,\bbD)})} \abs{e_{h(x)}(d_x h)} Z_{h(x)} \in Z_1(Y),\]
where $\Sigma^{1}_{(\bbY,\bbD)}\subset\Sigma_{(\bbY,\bbD)}$ denotes the union of codimension 1 cones, $d_x h$ denotes the derivative at $x$, $e_{h(x)}$ denotes the primitive dual class vanishing on the codimension one cone $h(x) \in \sigma \in \Sigma^{d-1}_{(\bbY,\bbD)}$ and $Z_{h(x)}$ denotes the 1-stratum of $D$ corresponding to the codim one cone containing $h(x)$.
\end{definition}

\begin{proposition} \label{prop:clb} Notation as in \cref{def:1scycle}.
The $1$-cycle $[h\colon B \to Y] - Z(h) \in Z_1(\bbY_s,\bbZ)$ is effective.

If $[h\colon B \to Y] = Z(h)$ in $A_1(\bbY_s,\bbZ)$ then the two cycles are equal in $Z_1(Y,\bbZ)$.
\end{proposition}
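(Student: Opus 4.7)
Plan: I will use a formal-model argument. Fix an analytic realization $f\colon B \to Y^{\an}$ whose spine is $h$ and pick a strictly semistable formal model $\ff\colon (\fB,\fP) \to (\fY,\fD^{\ess})$ with $\fY$ the constant model of $Y$, refined enough that the skeleton $\Sigma_{(\fB,\fP)}$ contains $\Gamma$ as a subcomplex, every bending vertex of $h$ is a vertex of $\Sigma_{(\fB,\fP)}$, and the associated piecewise linear map $\fh\colon \Sigma_{(\fB,\fP)} \to \oSigma_{(Y,D^{\ess})}$ factors through $\gamma\colon \Sigma_{(\fB,P)}\to \Gamma_\circ$ as in \cref{lem:balancebound}. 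All coefficients of $1$-strata in $[h\colon B\to Y]$ will be read off from the formula in \cref{prop:dccoeff}.

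The first step is to locate central-fibre components. For a proper $E_v\subset\fB_s$ with $\fh(v)$ in the interior of a maximal cone $\sigma\in\Sigma$, the image $\ff_s(E_v)$ is the corresponding zero-stratum, which contributes nothing to $[h\colon B\to Y]$; for $\fh(v)\in\rho^\circ$ with $\rho$ a codimension-one cone, $\ff_s(E_v)\subset Z_\rho$ and contributes a non-negative multiple of $[Z_\rho]$. Since $h$ is transverse to $\Sigma_{(Y,D)}$ and $h(\Gamma\setminus B)\subset\Sk^{\sm}(U)$, the values $\fh(v)=h(\gamma(v))$ miss every cell of codimension $\geq 2$. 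The remaining components—those with $\fh(v)=0$ corresponding to bubbles attached along the Berkovich boundary—are unconstrained by tropical data and may map to arbitrary curves (or points) in $Y$; their positive contributions will form an effective residual cycle $F$.

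For each codimension-one cone $\rho$, I apply \cref{prop:dccoeff} at every vertex $v$ with $\fh(v)\in\rho^\circ$ and telescope over the flock $\gamma^{-1}(x)$ of each spine point $x\in h^{-1}(\rho^\circ)$: edges internal to the flock are contracted by $\fh$, so their $N(d_{ve}\fh)$ terms vanish since $N$ annihilates $\rho$; the only surviving contribution comes from the unique outgoing edge into the chosen maximal cone $\tau\supset\rho$, which by $\Sk(U)$-balancing of $h$ evaluates to $N(d_{xe}h)=|e_\rho(d_x h)|$ (the two choices $\tau_1,\tau_2$ yielding the same magnitude is precisely the balancing assumption of \cref{def:Balanced} in the $\Sk(U)$-linear structure of \cref{sec:integeraffine}). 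Summing over $x\in h^{-1}(\rho^\circ)$ reproduces the coefficient of $Z_\rho$ in $Z(h)$; thus $[h\colon B\to Y]-Z(h)$ is supported away from the part of the $1$-cycle coming from spine crossings and its remainder is visibly an effective combination of curves, giving the first assertion.

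For the second assertion, assume $[h\colon B\to Y]=Z(h)$ in $A_1(Y,\bbZ)$, so the effective residual $F \coloneqq [h\colon B\to Y]-Z(h)=\sum a_i C_i$ (with $a_i\geq 0$ and $C_i$ distinct irreducible curves) is rationally equivalent to zero. Picking any ample line bundle $L$ on $Y$, one has $0=F\cdot L=\sum a_i(C_i\cdot L)$, and $C_i\cdot L>0$ by ampleness forces every $a_i=0$, hence $F=0$ as a cycle. The main obstacle is the bookkeeping in the previous paragraph: one must check that the telescoping through contracted edges of the flocks, the sign conventions for the primitive duals $N$, and the possible additional contributions from vertices with $\fh(v)=0$ whose image happens to coincide with a $1$-stratum all line up to produce exactly an effective residue—with $Z(h)$ matched term-by-term on the $1$-strata and all other contributions collected into $F$. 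Part (b), by contrast, is a routine consequence of ampleness once the effectivity in (a) is established.
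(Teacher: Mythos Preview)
Your proof is correct and takes essentially the same approach as the paper: the paper's own proof is the two-line version of yours, citing \cref{prop:dccoeff} for effectiveness and then invoking the fact that an effective numerically trivial $1$-cycle on a projective variety is zero (via intersection with an ample class) for the second claim. You have simply unpacked both of these steps.

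One imprecision worth flagging: your telescoping claim that at each spine crossing $x$ ``the only surviving contribution comes from the unique outgoing edge into the chosen maximal cone $\tau$'' tacitly assumes there are no twigs rooted at $x$, and that no twig elsewhere passes through $\rho^\circ$. Neither is guaranteed: by \cref{lem:tbal}, balancedness of the spine at $x$ only forces the \emph{sum} of the twig weights at $x$ to vanish, not the individual twigs; and twigs may also cross $\rho$ at points not on the spine. Any such twig edge pointing into $\tau$ contributes an additional non-negative term in the sum of \cref{prop:dccoeff}. So your ``reproduces the coefficient of $Z_\rho$ in $Z(h)$'' should read ``is at least the coefficient of $Z_\rho$ in $Z(h)$''. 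This only strengthens the inequality you need, so the conclusion---effectiveness of the difference---is unaffected. Your closing summary (``$Z(h)$ matched term-by-term on the $1$-strata and all other contributions collected into $F$'') already has the right shape; just make explicit that the extra twig contributions land in $F$ as well.
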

\begin{proof} The difference of cycles is effective by \cref{prop:dccoeff},
which gives the first statement.
The second follows (if an effective cycle on a projective variety is numerically trivial, then it is zero as a cycle).
\end{proof}

\begin{proposition} \label{prop:notwigs1skel} Let $f:(B,p_1,\dots,p_k) \to Y$ be a semi-stable disk as in \cref{def:iostrop}.

Assume that the associated spine (see \cref{def:iostrop}) is transverse to $\Sigma_{(\bbY,\bbD)}$.
Then the disk class $[f] \in Z_1(\bbY_s,\bbZ)$ is supported on the union of $1$-strata of $\bbD$ if and only if the ios tropicalisation has no twigs.
In this case the spine is balanced,
and $[f\colon B \to Y] = Z(\Sp(f)) \in Z_1(Y,\bbZ)$.
\end{proposition}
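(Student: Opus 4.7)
The plan is to prove the two implications separately, obtaining the balancing of $\Sp(f)$ and the identity $[f\colon B\to Y]=Z(\Sp(f))$ alongside the forward direction.

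\emph{Forward implication.} Assuming no twigs, $\Gamma=\Gamma^s$. Transversality of the spine to $\Sigma_{(Y,D)}$ gives that $h(\Gamma)$ avoids $\Sing(\Sk(U))$ and that every vertex of $\Gamma$ whose image lies in $|\Sigma^{d-1}_{(Y,D)}|$ is $2$-valent. By \cref{lem:tbal}, absence of twigs forces $\NB_x=0$ whenever $x\in\Gamma^s$ has $h(x)\in\Sk^\sm(U)$, and transversality rules out the other case, so the spine is balanced. A vertex with $h$-image in the interior of a maximal cone contributes $0$ to $[f]$ (its model component maps to a $0$-stratum). For a $2$-valent vertex $v$ with $h(v)\in\rho^\circ$, the two adjacent edges leave $\rho^\circ$ into distinct maximal cones, so \cref{prop:dccoeff} gives that the corresponding component maps onto $Z_\rho$ with coefficient $|e_\rho(d_v h)|$. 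Adding the analogous contributions from the finitely many generic edge-interior crossings of $h$ through $|\Sigma^{d-1}_{(Y,D)}|$ recovers exactly the expression for $Z(\Sp(f))$ in \cref{def:1scycle}, giving $[f]=Z(\Sp(f))$ in $Z_1(Y,\bbZ)$, which is supported on $1$-strata.

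\emph{Reverse implication.} Suppose $[f]$ is supported on $1$-strata and assume, for contradiction, that a twig $T$ exists. Pick a valence-one non-root vertex $w$ of $T$. The immersion property of twigs makes the (unique) edge-weight at $w$ non-zero, so $\NB_w\neq 0$; \cref{lem:tbal} then gives $h(w)\in\Sing(\Sk(U))$, lying in the interior of some cone $\sigma\in\Sigma_{(Y,D)}$ with $\dim\sigma\le d-2$. By \cref{lem:balancebound}(2), $[\fC_s^w]$ pairs non-trivially with some essential boundary divisor, so it is a non-zero effective $1$-cycle; by the effectivity of $[f]-\sum_i[f\colon W_i\to Y]$ from \cref{lem:twigclass}, $[\fC_s^w]$ must itself be supported on $1$-strata. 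The contradiction comes from the following fact about centers of valuations: for any vertex $v\in\Sigma_{(\fB,P)}$ of the model skeleton with $\gamma(v)=w$, the identity $c(v)=h(w)\in\sigma^\circ$ translates to the valuation $\mu_v$ on $k(Y)$ induced by the component $\fC_s^v$ having $\mu_v(D_i)>0$ precisely for the rays $i$ of $\sigma$ and $\mu_v(D_l)=0$ for every essential $D_l$ not in $\sigma$; hence the center of $\mu_v$ on $Y$ lies in the open stratum $S_\sigma^\circ\coloneqq S_\sigma\setminus\bigcup_{l\notin\sigma}D_l$, and the scheme-theoretic image $\ff_s(\fC_s^v)$ (the closure of this center) has its generic point in $S_\sigma^\circ$. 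On the other hand, every $1$-stratum $Z_\rho\subset S_\sigma$ corresponds to a cone $\rho\supsetneq\sigma$, so $Z_\rho$ is contained in $D_j$ for each $j\in\rho\setminus\sigma$ and therefore lies entirely in $S_\sigma\setminus S_\sigma^\circ$. Hence $\ff_s(\fC_s^v)$ cannot equal any $Z_\rho$, so the non-zero $1$-dimensional components of $[\fC_s^w]$ produce a component of $[f]$ outside the union of $1$-strata, the desired contradiction.

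\emph{Main technical point.} The delicate step is the identification of the center of $\mu_v$ with a point of $S_\sigma^\circ$ from the condition $c(v)\in\sigma^\circ$. I plan to establish this using the standard description of the Berkovich retraction of an snc pair in terms of quasi-monomial valuations: in local coordinates at a $0$-stratum whose adjacent cone contains $\sigma$, the retraction sends $\mu$ to the tuple $(\mu(z_i))_i$, so $c(v)\in\sigma^\circ$ means $\mu_v(z_i)>0$ for exactly the rays $i$ of $\sigma$ and $\mu_v(z_l)=0$ otherwise, which immediately pins the center to $S_\sigma^\circ$. Once this step is in hand the remainder of the reverse direction is a clean stratification-theoretic argument.
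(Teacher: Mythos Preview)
Your proof is correct and follows essentially the same approach as the paper: decompose the cycle into spine and twig contributions, observe via transversality and \cref{prop:dccoeff} that the spine contributes only along $1$-strata, and use balancing of the full ios tropicalisation to see that non-root valence-one twig vertices map into $\Sing(\Sk(U))\subset|\Sigma^{d-2}|$, so their model components cannot map onto $1$-strata. The paper compresses this into a few lines; your ``main technical point'' (identifying the center of the component from $\fh(v)\in\sigma^\circ$) is exactly what the paper uses implicitly, and your invocation of \cref{lem:twigclass} could be replaced by the more direct decomposition in \cref{lem:balancebound}(3), but the logic is the same.
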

\begin{proof} Since the spine is by assumption transverse to $\Sigma_{(\bbY,\bbD)}$, it follows from the definition of the ios tropicalisation $h\colon \Gamma \to \obbR^{\bbD^{\ess}}$, see \cref{def:iostrop}, that the contribution of the spine to the cycle $[f\colon B \to Y]$ is supported on $1$-strata.
On the other hand, by balancing, valence one vertices of twigs, other than their root, map to $\Sing(\Sk(U)) \subset |\Sigma^2_{(\bbY,\bbD)}|$, thus the corresponding irreducible component in the central fibre of a model for $f$
DOES NOT map into a $1$-stratum.
Thus the cycle is supported on the $1$-skeleton if and only if there are no twigs.
In this case the spine is balanced.
Finally $[f] = Z(\Sp(f))$ by \cref{prop:dccoeff}.
\end{proof}

\begin{definition} \label{rem:pracsmooth} By the {\it practical smooth locus} of the $(\bbY,\bbD^{\ess})$ SYZ fibration we mean the complement in $\Sk(U) = |\Sigma_{(\bbY,\bbD^{\ess})}|$
of the union of all closed cones of codimension at least two,
and all codimension one cones which {\it DO NOT} correspond to almost minimal (essential) $1$-strata, \cref{def:am}.
\end{definition}

\begin{proposition} \label{prop:1stratacount}
Let $h\colon \Gamma \to \oSk(U)$ be a spine transverse to $\Sigma_{(\bbY,\bbD)}$, with $N(h,Z(h)) \neq 0$.
The following hold:
\begin{enumerate}[wide]
\item The ios tropicalisation of any contributing disk has no twigs.
\item $h$ is balanced.
\item If $h(\Gamma)$ is contained in the {\it practical smooth locus}
of the $(\bbY,\bbD^{\ess})$ SYZ fibration, see \cref{prop:TonySYZ} and \cref{rem:pracsmooth}
(for example if the SYZ fibration is smooth outside the codimension two skeleton of $\Sigma_{(\bbY,\bbD^{\ess})}$, as holds if $U \subset Y$ is almost minimal, \cref{def:am})
then $N(h,Z(h)) = 1$.
\end{enumerate}

\end{proposition}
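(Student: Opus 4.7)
Assertions (1) and (2) follow directly from \cref{prop:notwigs1skel}. Any contributing disk $f\colon B \to Y^{\an}$ to $N(h, Z(h))$ has class $[f\colon B \to Y] = Z(h)$, a $1$-cycle supported on the $1$-strata of $D$. \cref{prop:notwigs1skel} then yields at once that the ios tropicalisation of $f$ has no twigs and that the spine $\Sp(f) = h$ is balanced.

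For (3), my plan is to reduce $N(h, Z(h))$ to a product of toric counts via the gluing formula. Using the transversality of $h$ to $\Sigma_{(Y,D)}$, choose finitely many cut points on $h(\Gamma)$ lying in the interiors of maximal cones of $\Sigma_{(Y,D)}$ and off $\Wall_{Z(h)}$, such that after iterated application of \cref{thm:gluing_inside} the spine $h$ decomposes as a gluing of elementary transverse spines $h^1, \dots, h^N$ whose image either (a) lies in the interior of a single maximal cone of $\Sigma_{(Y,D)}$, in which case the associated cycle restricts trivially and the class is $0$, or (b) crosses exactly one codimension-one cone $\rho$ of $\Sigma_{(Y,D)}$, which by the practical smoothness assumption on $h(\Gamma)$ must correspond to an almost minimal $1$-stratum. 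For pieces of type (a), \cref{lem:straightspinecount} gives directly $N(h^i, 0) = 1$.

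For pieces of type (b), the plan is to invoke \cref{prop:TonySYZ}: there is an analytic isomorphism between $r^{-1}(\star(\rho)) \subset U^{\an}$ and the inverse image of $\star(\rho)$ under the canonical toric retraction $p\colon T_M^{\an} \to M_{\bbR}$. Since $h^i(\Gamma^i) \subset \star(\rho)$ and the ios tropicalisation of any contributing disk has no twigs (by part (1)), the full retraction of such a disk to $\oSk(U)$ is confined to $\star(\rho)$, so that both the domain and target of the enumerative problem live inside the toric model. The count in the toric model, for a balanced spine crossing a single ray with the primitive weight prescribed by $Z(h)$, is equal to $1$ by the classical toric enumeration, see \cite[6.2]{Keel_Yu_The_Frobenius} and \cref{thm:oldnew}. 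Multiplying the local contributions via the gluing formula yields $N(h, Z(h)) = 1$.

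The hard part will be the type-(b) step: showing rigorously that every disk contributing to $N(h^i, Z(h^i))$ is confined to the toric neighborhood $r^{-1}(\star(\rho))$, so that the enumeration truly happens inside the toric model $T_M^{\an}$. This should follow from a combination of the no-twigs conclusion of (1), the boundary-regularity conditions built into the basic moduli space (\cref{def:Mdef}), and the openness and properness statements in \cref{prop:Spprop} and \cref{thm:scthm}, which together force the image of the contributing disk to retract into $\star(\rho)$. Once this analytic confinement is in place, the toric count is immediate from \cref{prop:TonySYZ} and the identification of naive counts with toric counts in \cref{thm:oldnew}.
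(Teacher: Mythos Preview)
Your overall strategy matches the paper's proof closely, but there is one genuine gap in your argument for (1)--(2) and one unnecessary detour in your plan for (3).

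For (1)--(2): you write that a contributing disk ``has class $[f\colon B\to Y]=Z(h)$, a $1$-cycle supported on the $1$-strata'', and then invoke \cref{prop:notwigs1skel}. But \cref{prop:notwigs1skel} requires the \emph{cycle} $[f]\in Z_1(Y,\bbZ)$ to be supported on the $1$-strata, whereas the hypothesis $N(h,Z(h))\neq 0$ only gives equality of \emph{numerical classes}. The missing step is exactly \cref{prop:clb}: the difference $[f]-Z(h)$ is an effective cycle, and since it is numerically trivial it must vanish, so $[f]=Z(h)$ as cycles. Only then does \cref{prop:notwigs1skel} apply. The paper makes this step explicit.

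For (3): your reduction via the gluing formula to pieces of type (a) and (b) is precisely the paper's approach, and \cref{lem:straightspinecount} handles type (a) as you say. For type (b), however, the confinement of a contributing disk to the toric neighbourhood does not require \cref{prop:Spprop} or \cref{thm:scthm} at all. The argument is simpler: once you know (from part (1), applied now to the piece $h^i$ with class $Z(h^i)$, again via \cref{prop:clb}) that any contributing disk has ios tropicalisation with no twigs, the ios tropicalisation coincides with the spine $h^i$. Since $h^i$ has image in a convex $G\subset\star(\rho)$ (which exists by the practical smoothness assumption, cf.\ \cref{const:data} and \cref{ass:G_f}(2)), the disk itself has image in $\tau^{-1}(G)=\cG$, which by \cref{prop:TonySYZ} is identified with an affinoid domain in a torus. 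The count is then the toric count, equal to $1$ by \cite[6.2]{Keel_Yu_The_Frobenius}. The properness and openness results you cite concern the global structure of the moduli space and are not what forces the confinement here.
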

\begin{proof}
Let $f\colon B \to Y$ contribute to $N(h,Z(h))$.
By \cref{prop:clb},
$[f] = Z(h) \in Z_1(Y,\bbZ)$, thus the ios tropicalisation of $f$ has no twigs, and in particular $h$ is balanced.

Now suppose $h(\Gamma)$ is in the practical smooth locus for the SYZ fibration.

By cutting up $h$ and using the gluing formula we reduced to the case when either $h$ has image in the interior of a maximal cone of $\Sigma_{(\bbY,\bbD^{\ess})}$, or $h$ is an affine segment crossing a single codimension one wall (corresponding to an almost minimal $1$-stratum).
In either case we can find convex $\cG \subset \Sk^{\sm}(U)$, as in \cref{const:data1}.
Now consider a disk with this spine and class.
By \cref{prop:notwigs1skel} the ios tropicalisation has no twigs, i.e.\ is equal to $h$.
It follows that $f\colon B \to Y$ has image in $\tau^{-1}(\cG) \subset U$, and in particular lives in the analytification of a torus.
We thus reduce to the torus case.
Now (3) follows by \cite[Lemma 6.2]{Keel_Yu_The_Frobenius}.
\end{proof}

\section{Basic convexity, and the mirror algebra}
We take $(\bbY,\bbD)$ as in \cref{ass:basicsetup}. Let $\bbF = \bbZ - \bbP$ be a Cartier divisor decomposed into
its zeros and poles and $F = Z - P$ the generic fibre. See \cref{sec:BTS}.  
There is a natural $\val F: Y \setminus (|Z| \cap |P|) \to [-\infty,\infty]$ 
(finite valued off $|F|$). 
 Let $F^{\trop}: |\Sigma_{(\bbY,\bbD^{\ess})}| \to \bbR$ be
 the restriction (note $|\Sigma| \subset U$ consists of Abhyankar points, and so is disjoint from
 $|Z| \cup |P|$). 

We have the following, which we call {\it basic convexity}:

\begin{proposition} \label{prop:basicconvexity} Notation as immediately above.
  Let $E$ be a compact $k$-analytic curve, $f: E \to U$ an analytic map and $\tau: U \to |\Sigma_{(\bbY,\bbD^{\ess})}|$
  the Berkovich retraction. Assume 
  $f(E) \cap (|Z| \cap |P|) = \emptyset$ and that $f$ maps no irreducible component of $E$ into $|F|$.
  Consider the following conditions:

  \begin{enumerate}
  \item Each point of
    $\tau(\partial(E))$ is contained in the interior of a maximal cone of $\Sigma_{(\bbY,\bbD^{\ess})}$.
  \item $\val F$ factors through $\tau$ in a neighborhood of $\tau(\partial E)$.
  \item $\tau \circ f$ is constant on the closure of every connected component of $E \setminus \Sk(E)$.
    \end{enumerate}

    There exists a closed piecewise affine $W \subset \Sk(U)$ of dimension at most $n-1$
such that (1-3) holds so long as $(\tau \circ f)(\partial E) \subset W^c$.

If (1-3) hold then

$$
\sum_{p \in \partial(E)} D_p(F_E^{\trop}|_{\Sk(E)}) = \deg(f^*(F)) - c_1(F) \cdot [f:E \to Y].
$$
where $F_E^{\trop} \coloneqq F^{\trop} \circ \tau \circ f : E \to \bbR$, and $D_p$ indicates the derivative along the unique
edge of $\Sk(E)$ incident to $p$ (or zero if there is no such edge).
\end{proposition}

\begin{proof} 
  That (1-3) imply the displayed formula follows from \cite[15.6]{Keel_Yu_The_Frobenius} and its (very
  simple) proof.

  We check we can find $W$ as in the final statement.   (1) holds as soon as $W$ contains $|\Sigma_{(\bbY,\bbD^{\ess})}^{n-1}|$. 
(2) holds so long as $W$ contains
$(\Sigma^F)^{n-1}$ for $\Sigma^F$ as in \cite[15.5]{Keel_Yu_The_Frobenius}. (3) holds as long as there
are no twigs (as in \cref{def:twigdef}) attached to $f(\partial E)$. This is guaranteed as long as $W$ contains
$\Wall_k$ for any  $k \geq [f:E \to Y] \cdot A$ (for $A$ ample on $\bbY_s$).  This completes the proof.
\end{proof}

We can apply basic convexity to the main component of a structure disk contributing to a structure
constant, \cref{sec:sc}, or a count of a spine, \cref{sec:cospoa}.  We have for example:

\begin{corollary} \label{cor:filtration} Let $f$ be a non-zero rational function on $\bbY$, whose polar locus
  is contained in $|\bbD|$.  Suppose
  $\chi(P_1,\dots,P_n,Q,\gamma) \neq 0$. Then
  $$
  \sum_i f^{\trop}(P_i) \leq f^{\trop}(Q).
  $$
  Equivalently
  $$
  |f(Q)| \leq \prod_i |f(P_i)|.
  $$
 
  Moreover, if we have equality, the structure constant can be computed with disks whose
  image lies in the non-vanishing locus of $f$.
\end{corollary}

\begin{proof}  We apply \cref{prop:basicconvexity} to the body, $B_m \coloneqq B \cap C_m$
  of the main component $g_m: C_m \to Y$
  for $g: C \to Y$ contributing to the structure constant --  more precisely, to the complement, $E \subset B$
  of the union of sufficiently small open disks around
  the marked points $p_i$ (to obtain a compact curve mapping into $U$)
  with $\bbF$ the Cartier divisor of zeros and poles of $f$, which is of course linearly equivalent
  to zero.

  $g_m$ is free, so no component maps into $|F|$ (and $P \cap U = \emptyset$). 
  We can assume the spine is transverse (by \cref{prop:transversality})
  and so (1-3) hold.
  
  We have equality in the displayed formula of \cref{prop:basicconvexity}
  iff $g_m(E)$ is disjoint from the zeros of $f$.  Note $|f|$ is constant
  on the twigs (i.e. the union of components other than those of $C_m$)
  of the structure disk, because these map to complete curves in $U$. So we have equality iff the full $g: B \to Y$
  is disjoint from the zero locus.
\end{proof} 

\cref{cor:filtration} will give a filtration on the mirror algebra, with associated graded a mirror
algebra for the non-zero locus of $f$. See \cref{rem:associatedgraded}.

The following is the key for finite generation of the mirror algebra (under the right affiness assumptions): 
\begin{corollary} \label{cor:bound} Let $(\bbY,\bbD)$ be as in \cref{ass:basicsetup}.
  Fix $P_1,\dots,P_n \in \Sk(U)(\bbZ)$
  The following hold:
  \begin{enumerate}
    \item 
  There are at most finitely many $Q \in \Sk(U)(\bbZ)$ such that
  $\chi(P_1,\dots,P_n,Q,\gamma)$ is non-zero for some $\gamma \in \NE(Y,\bbZ)$.
\item If there is a Cartier divisor $\bbF$, ample (relative to $\bbY \to \obbY$), such that
  for the induced Cartier divisor $F$ on $Y$, $-F|_U$ is effective, and moreover there are no
  complete rational curves in $U$, then there are at most finitely
  many pairs $(Q,\gamma)$ such that $\chi(P_1,\dots,P_n,Q,\gamma) \neq 0$.
 \end{enumerate}
\end{corollary}
\begin{proof}
   
  For (1) we use the rational functions $f_1,\dots,f_k$ from \cref{ass:basicsetup}. The final
  inequality implies $Q$ is one of the finitely many integer points in a bounded subset of $\Sk(U)$
  (defined by the inequalities $|f_j| \leq \prod_i |f_j(P_i)|$).
  For (2) we apply \cref{prop:basicconvexity} to the body, $f: E \to Y$, of the main component of
  a structure disk, as in the proof of \cref{cor:filtration}. 
  
  We obtain
  $$
  c_1(\bbF) \cdot [f: E \to Y] = \deg(f^*(F)) + \sum_i F^{\trop}(P_i) - F^{\trop}(Q) \leq  \sum_i F^{\trop}(P_i) - F^{\trop}(Q).
  $$
  So this bounds the degree of the main component of the body disk, and this is the class of the disk if
  there are no complete curves. So there are only finitely many possibilities for $\gamma$ given
  $P_1,\dots,P_n,Q$. Now (2) follows from (1). This completes the proof.
  \end{proof}

  \begin{remark} \label{rem:noway}
    The above are key to all our applications. We do not see how to give a Gross-Siebert style
    formulation (and as a result do not know how to deduce our applications from Gross-Siebert).
    Consider e.g.  \cref{prop:basicconvexity}: the
first term on the RHS of the displayed formula is the degree of a Cartier divisor on a disk, there is no disk in
the Gross-Siebert theory: the associated GS object would be
the punctured log curve associated to the central fibre of a formal model for the disk. This log curve does 
not know about this term. 
\end{remark}

\section{Associativity, Torus action and finite generation} \label{sec:torus_action}

\begin{definition}[Mirror Algebra] \label{const:algebras}
  We assume we have $(\bbY,\bbD)$ as in \cref{ass:basicsetup}. We take $A$ as in \cref{eq:ma}.
Let $m \subset R_{\bbY_s} \eqqcolon R$ be the maximal monomial ideal (generated by all $z^{\gamma}$ over all  non-zero
effective curve classes). By (1) of \cref{cor:bound}, for each $n > 0$ the structure constants
\cref{def:structureconstants} determine
a commutative $R/m^n$ algebra structure on $A \otimes_R R/m^k$, and on $A$ itself if the conditions
of (2) of \cref{cor:bound} hold.
\end{definition} 

\begin{proposition} The mirror algebra is associative.
\end{proposition}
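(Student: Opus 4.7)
The plan is to apply the argument of \cite[\S 9]{Keel_Yu_The_Frobenius} essentially verbatim, since, as remarked in the introduction, all of its analytic foundations have been generalized in the preceding sections. Fix $P_1,P_2,P_3,S \in \Sk(U,\bbZ)$ and $\beta \in \NE(Y,\bbZ)$. Distributing both sides of the desired identity $(\theta_{P_1}\theta_{P_2})\theta_{P_3}=\theta_{P_1}(\theta_{P_2}\theta_{P_3})$ and extracting the coefficient of $z^\beta \theta_S$ reduces associativity to
\[
\sum_{R,\,\beta_1+\beta_2=\beta}\chi(P_1,P_2,R,\beta_1)\,\chi(R,P_3,S,\beta_2)
\;=\;
\sum_{R,\,\beta_1+\beta_2=\beta}\chi(P_1,R,S,\beta_1)\,\chi(P_2,P_3,R,\beta_2),
\]
and the plan is to show that both sums equal a common ``four-input'' structure constant $\chi(P_1,P_2,P_3,S,\beta)$, defined from the moduli space $M(U,\beta)$ of \cref{sec:bms} with $B=\{1,2,3\}$, $F=\{q\}$, $I=\emptyset$ as the degree of $\Phi=(\dom,\ev_q)$ along $\Phi^{-1}(\Sk_{V_M\times H})$, which is well-defined and constant by \cref{thm:scthm}.

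To identify one side with this common count I would first choose a transverse spine $S_0\in\SP^\tr(\Sk(U),\beta)$ whose underlying four-pointed metric tree lies in $V_M$ and has the trivalent topological type with an interior edge $\lambda$ separating $\{P_1,P_2\}$ from $\{P_3,S\}$. Then stretch $\lambda$ to infinity: by \cref{cor:vary_lengths} the count $N(S_0,\beta)$ is invariant under this deformation, and in the limit the spine breaks into two transverse subspines $S^1,S^2$ meeting at a single interior point of tropical direction some $R\in\Sk(U,\bbZ)$. Summing over all $S_0$ of the given topological type and applying \cref{thm:gluing_inside} at the break point expresses the invariant count as
\[
\sum_{R}\sum_{\beta_1+\beta_2=\beta} N(S^1,\beta_1)\,N(S^2,\beta_2),
\]
which after translating naive counts into structure constants via \cref{def:structureconstants} gives the left-hand side of the identity above. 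Applying the same stretching to the other trivalent topological type produces the right-hand side, completing the identification.

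Two preliminary points must be settled before this deformation argument is valid. First, the structure constants depend a priori on the polyhedra $H_f\subset G_f$ and on the gluing automorphism in \cref{const:Z}; \cref{prop:scwd} and \cref{thm:ncwd} ensure both sides of the identity may use one compatible set of auxiliary data. Second, the intermediate spines produced by stretching must themselves be transverse, so that \cref{thm:gluing_inside} applies and each factor $N(S^i,\beta_i)$ is a well-defined naive count; this is arranged by a small perturbation of $S_0$ within $\SP^\tr$ via the genericity statement of \cref{prop:transversality}.

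The main obstacle in the present generality is not combinatorial but analytic: it lies in the properness and étaleness of $\Phi$ established in \cref{thm:scthm} and in the gluing formulas of \cref{sec:gluing}, both obtained here without the convenient Zariski open torus assumption used throughout \cite{Keel_Yu_The_Frobenius}. Once those results are granted, as they are in the preceding sections, the proof of associativity is a direct transcription of \cite[\S 9]{Keel_Yu_The_Frobenius}, and we omit the detailed bookkeeping of matching spine topologies to terms in the two sums.
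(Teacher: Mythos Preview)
Your proposal is correct and matches the paper's approach: the paper's proof is simply ``The argument in \cite[\S 14]{Keel_Yu_The_Frobenius} carries through without change,'' and the introduction sketches exactly the stretching-and-gluing argument you describe. Note only that the relevant section in \cite{Keel_Yu_The_Frobenius} is \S 14, not \S 9.
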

\begin{proof} The argument in \cite[\S 14]{Keel_Yu_The_Frobenius} carries through without change.
\end{proof}

\begin{corollary} \label{prop:filtration} Let $f$ be as in \cref{cor:filtration}.
  For each integer $n$ let 
  let $F^n_f \subset A$ be the free $R$ submodule with basis those $\theta_P$ with
  $f^{trop}(p) \geq -n$. Use the same notation for the induced free $R/m^k$ submodule of $A/m^k A$.

  This makes $A/m^k A$ into a filtered algebra 
  $F^k_f \cdot F^j_f \subset F^{k + j}_f$.

  When $f \in \cO^{\an}(U)$ is invertible, the filtration gives a $\bbZ$-grading (with degree $n$ graded part
  the submodule with $f^{trop}(p) =  -n$).

  The analogous statements hold for $A$ itself under the conditions (2) of \cref{cor:bound}. 
\end{corollary}
\begin{proof}  The first statement is immediate from \cref{cor:bound}, and implies the second
  (by applying the first to $f$ and $1/f$).
\end{proof}

\begin{remark} \label{rem:associatedgraded}
  Notation as in \cref{prop:filtration}. Specialize to the case of \cref{ex:POA} (a version will hold
  in general, but is easier to formulate in this case). 
  Consider the associated graded algebra
  $\oplus F_k/F_{k-1}$.  By \cref{cor:filtration}, the multiplication rule is computed with  structure disks
  that map into the non-vanishing locus $\bbU^{\an}_f \coloneqq \{x \in \bbU^{\an}| f(x) \neq 0\}$. Thus the associated
  graded is {\it the mirror algebra} for $\bbU_f \subset \bbY$. Italix here because $\bbU_f$ need not be log CY
  (it might e.g. have log general type), so a priori our construction does not apply.
  But $\bbU_f$ has volume form (the restriction of the volume form of
  $U$), with skeleton $\Sk(\bbU)$, there is a proper exhaustion map as in \cref{ass:basicsetup} (by adding $f$ to
  the restriction of the analytic functions on $U$),
  and the associated graded has structure constants
  exactly like \cref{def:structureconstants}  applied to $\bbU_f \subset \bbY$.  The equality shows that these structure
  constants define an associative algebra.

  Now by the Rees construction (which applies to any filtered ring),
  this gives a degeneration of the mirror algebra for $\bbU$ to the mirror algebra
  for $\bbU_f$. E.g. when $\bbU_f$ is an algebraic torus this gives a degeneration to the ring of Laurent polynomials.
  This is how the non-degeneracy in the Frobenius Structure Conjecture is proven in \cite{Keel_Yu_The_Frobenius}. 
  \end{remark}

\subsection{The torus action} \label{sec:ta} 

Here we explain the natural torus action on the mirror algebra, extending the results
of \cite[\S 16]{Keel_Yu_The_Frobenius}, whose notation we follow.

We take $(\bbY,\bbD)$ as in \cref{ass:basicsetup}. We take $I_D$ to be the set of irreducible
components of $\bbD$. 

We have an embedding
\[\Sk(U,\bbZ)\simeq\Sigma^\ess_{(\bbY,\bbD^{\ess})}(\bbZ)\subset\Sigma_{(\bbY,\bbD)}(\bbZ)\subset\bbZ^{I_D}=
  \Hom(I_D,\bbZ)\]
which we denote by $w$.
We also denote by $w$ the map \[w\colon N_1(\bbY_s)\longrightarrow\bbZ^{I_D}, \quad
  \gamma\longmapsto(\gamma\cdot \bbD_i)_{i\in I_D}.\]
Let $\bbT_D\coloneqq\Spec(Q[\bbZ^{I_D}])$ be the split torus with character group $\bbZ^{I_D}$
(with $Q$ as in \cref{not:coeffs}). 
Then $w\colon N_1(\bbY_s)\longrightarrow\bbZ^{I_D}$ induces a canonical homomorphism
$T_D \to T^{N_1(\bbY_s)}$, and thus an action of $\bbT_D$ on $\Spec(R)=\Spec(Q[\NE(\bbY_s)])$
($Q$ as in \cref{not:coeffs}).

The following generalizes \cite[\S 16]{Keel_Yu_The_Frobenius}, and follows by the same proof
(using \cref{prop:basicconvexity}).

\begin{lemma} \label{lem:weight}
Assume $\chi(P_1,\dots,P_n,Q,\gamma) \neq 0$.
Then \[w(Q) + w(\gamma) = \sum_{j =1}^n w(P_j).\]
\end{lemma}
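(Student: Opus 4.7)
The strategy generalizes the Frobenius-paper argument by exhibiting a structure disk realizing the nonzero structure constant and extracting the balance identity by intersection with each boundary divisor, coordinate by coordinate in $\bbZ^{I_D}$.

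By Definition~\ref{def:structureconstants} and Remark~\ref{rem:formulate}, the nonvanishing of $\chi(P_1,\dots,P_n,Q,\gamma)$ produces a structure disk $[f\colon C\to Z]\in\Phi^{-1}(\Sk_{V_M\times H})$, where $C=B\cup E$, the contact orders at the $p_j\in B$ are $P_j$, the contact at $z\in E$ is determined by $Q$, and $[f\colon B\to Y]=\gamma$ by Proposition~\ref{prop:constclass}. I would verify the identity separately on essential and non-essential coordinates.

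For each essential index $i\in I_{D^\ess}$, I would use Proposition~\ref{prop:glueprop} to glue $\cO_Y(D_i)$ with a toric line bundle $A_{f,i}$ on $T_f$ to a line bundle $\fL_i$ on $Z$. The compatibility on $\cG_f$ forces the piecewise linear function $\phi_{f,i}$ on $\Sigma_f$ associated to $A_{f,i}$ to agree on $G_f$ with the PL function $v\mapsto v_i$ induced by the embedding $\Sk(U)\subset\bbR^{I_{D^\ess}}$; by Assumption~\ref{ass:G_f} this function is linear in a neighborhood of $Q$, so $\phi_{f,i}(Q)=w(Q)_i$. The degree $\deg(f^*\fL_i)$ on the rational curve $C$ can then be computed in two ways. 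First, via Proposition~\ref{prop:dcc_1},
\[\deg(f^*\fL_i)=\gamma\cdot c_1(\cO_Y(D_i))+[f\colon E\to T_f]\cdot c_1(A_{f,i})=w(\gamma)_i+[C_Q]\cdot c_1(A_{f,i}),\]
where $[C_Q]$ is the closure of the 1-parameter subgroup in direction $-v_f=Q$, and the standard toric intersection yields $[C_Q]\cdot c_1(A_{f,i})=\phi_{f,i}(Q)=w(Q)_i$. Second, directly from the scheme-theoretic contact conditions of Definition~\ref{def:Mdef}: $f^*\fL_i$ is supported on the marked points, with total contact order $\sum_j(P_j)_i=\sum_j w(P_j)_i$ coming from the $p_j\in B$ (and any end-side contact absorbed consistently with the toric identification above). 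Equating the two computations gives $\sum_j w(P_j)_i=w(\gamma)_i+w(Q)_i$.

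For a non-essential index $i\in I_D\setminus I_{D^\ess}$ all terms are trivially zero: $w(P_j)_i=w(Q)_i=0$ because $\Sk(U,\bbZ)\subset\bbZ^{I_{D^\ess}}$, and $w(\gamma)_i=\gamma\cdot D_i=0$ because the stability condition $f(C^\circ)\subset V$ of Definition~\ref{def:MUk} together with the contact conditions of Definition~\ref{def:Mdef} force $f$ to avoid all non-essential boundary components. Assembling over all $i\in I_D$ yields the identity $w(Q)+w(\gamma)=\sum_j w(P_j)$ in $\bbZ^{I_D}$. The principal technical point is making the two-way computation of $\deg(f^*\fL_i)$ consistent—specifically, tracking the gluing choice of $A_{f,i}$ and checking that the toric intersection $[C_Q]\cdot c_1(A_{f,i})$ correctly extracts $w(Q)_i$. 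This depends on the linearity of $\phi_{f,i}$ in a neighborhood of $Q$, which is automatic in case (1) of Assumption~\ref{ass:G_f} and follows in case (2) from the Kontsevich–Soibelman smoothness of the SYZ fibration at $\rho_f$ (Lemma~\ref{lem:almostminimal}).
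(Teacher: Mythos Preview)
Your two-way degree computation does not close. The crucial step is the claim that ``compatibility on $\cG_f$ forces $\phi_{f,i}|_{G_f}$ to agree with $v\mapsto v_i$,'' and hence that $[C_Q]\cdot c_1(A_{f,i})=w(Q)_i$. This is not what \cref{prop:glueprop} says: the gluing is of line bundles on the formal model, and the only constraint is equality of a single intersection number with the $1$-stratum $C_{\rho_f}$ in case~(2) of \cref{ass:G_f}; in case~(1) there is no constraint at all and one may (as in the proof of \cref{prop:glueprop}) take $A_{f,i}$ trivial. With that choice, \cref{prop:dcc_1} gives $\deg(f^*\fL_i)=\gamma\cdot D_i$, while your contact-order count gives $\sum_j w(P_j)_i$. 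These would force $w(Q)_i=0$, which is false in general. The missing $w(Q)_i$ is not an end-side contact term at $z$; it is the boundary contribution at $\partial B$ to the disk-class pairing $[f\colon B\to Y]\cdot D_i$, and your setup on the glued target $Z$ does not isolate it.

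The paper's route (the same proof as in \cite[\S 16]{Keel_Yu_The_Frobenius}) avoids the glued target entirely: apply \cref{thm:basic_convexity}(\ref{thm:Ftrop_structure_constants:equality}) with $F=D_i$ for each $i\in I_D$. Since $D_i$ is supported on the boundary, $F|_U=0$ and the last term of \eqref{eq:Ftrop_structure_constants} vanishes; the remaining terms read off as $\sum_j w(P_j)_i - w(Q)_i = w(\gamma)_i$, uniformly for essential and non-essential $i$. This is both simpler and handles the boundary term $F^{\trop}(Q)=w(Q)_i$ correctly, without any line-bundle gluing on $Z$.
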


Note the mirror algebra $A$, as an Abelian group, is free with basis
$z^{\gamma} \theta_P$, for $(\gamma,P) \in \NE(Y) \times \Sk(U,\bbZ)$.
\cref{lem:weight} implies the following theorem:

\begin{theorem} \label{thm:torus_action}
Let $\bbT_D$ act diagonally on the mirror algebra $A$ (viewed as a free Abelian group) with weight $w(P) + w(\gamma)$ on the basis vector $z^\gamma\theta_P$.
This gives an equivariant action of $\bbT_D$ on $\Spec(A/m^n) \to \Spec(R/m^n)$ for all $n$,
and on $\Spec(A) \to \Spec(R)$ when the full mirror algebra $A$ is defined (see \cref{const:algebras}). 
\end{theorem}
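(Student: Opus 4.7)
The plan is to verify that the weight assignment $z^\gamma\theta_P \mapsto w(P) + w(\gamma)$ defines a $\bbZ^{I_D}$-grading on the algebra $A$, which is equivalent to giving a $T_D$-action on $\Spec(A)$, and then to check compatibility with the structure map $\Spec(A) \to \Spec(R)$.

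For the first step, I would compute the product of two basis vectors using the structure constants:
\[
(z^{\gamma_1}\theta_{P_1}) \cdot (z^{\gamma_2}\theta_{P_2}) = \sum_{Q,\beta}\chi(P_1,P_2,Q,\beta)\, z^{\gamma_1+\gamma_2+\beta}\,\theta_Q.
\]
For this to be a sum of basis vectors of weight $\big(w(P_1)+w(\gamma_1)\big) + \big(w(P_2)+w(\gamma_2)\big)$, I need
\[
w(Q) + w(\gamma_1+\gamma_2+\beta) = w(P_1)+w(P_2)+w(\gamma_1)+w(\gamma_2)
\]
whenever $\chi(P_1,P_2,Q,\beta)\neq 0$. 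Cancelling $w(\gamma_1)+w(\gamma_2)$ and using additivity of $w$, this reduces to the identity $w(Q) + w(\beta) = w(P_1) + w(P_2)$, which is precisely the $n=2$ case of \cref{lem:weight}. Thus the prescribed grading is compatible with multiplication, giving a $T_D$-action on the $\bbZ$-algebra $A$, hence on $\Spec(A)$.

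For equivariance over $\Spec(R)$, the structure map corresponds to the inclusion $R\hookrightarrow A$, $z^\beta\mapsto z^\beta\theta_0$ (since $\theta_0$ is the identity of the mirror algebra, which follows from the definition of the structure constants and $w(0) = 0$). Under the prescribed grading, $z^\beta\theta_0$ has weight $w(0)+w(\beta) = w(\beta)$, which is exactly the weight of $z^\beta\in R$ under the $T_D$-action induced by $w\colon N_1(Y)\to\bbZ^{I_D}$. Hence $R\hookrightarrow A$ is $T_D$-equivariant, and the induced morphism $\Spec(A)\to\Spec(R)$ is equivariant.

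There is no genuine obstacle once \cref{lem:weight} is in hand; the proof is essentially bookkeeping. If anything requires care, it is confirming that $\theta_0$ indeed serves as the multiplicative identity so that the embedding $R \hookrightarrow A$ takes the stated form, but this is part of the basic set-up of the mirror algebra. The entire content of the theorem is thus a direct corollary of \cref{lem:weight}.
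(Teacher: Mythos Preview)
Your proposal is correct and matches the paper's approach exactly: the paper simply states that \cref{lem:weight} implies the theorem, and you have spelled out the routine verification that the weight identity makes the multiplication homogeneous and the inclusion $R\hookrightarrow A$ graded.
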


Let $ W \coloneqq \sum a_i \bbD_i$ be a Weil divisor supported on $\bbD$.
This gives a one parameter subgroup $\bbG_W \subset \bbT_D$.
$W$ is nef if and only if the action $$
\bbG_{W}\times \Spec(Q[\NE(\bbY_s,\bbZ)]) \to \Spec(Q[\NE(\bbY_s,\bbZ)])$$
extends to regular $$
\bbA^1_W \times \Spec(Q[\NE(\bbY_s,\bbZ)]) \to \Spec(Q[\NE(\bbY_s,\bbZ)])
$$
(where the extension of $\bbG_W \to \bbT^{N_1(\bbY_s)}$ to $\bbA^1_W \dasharrow \Spec(Q[\NE(\bbY_s,\bbZ)])$
is given by $\cdot W\colon A_1(\bbY_s,\bbZ) \to \bbN$).
\begin{proposition} \label{prop:neflim}
Notation immediately above.
If $W$
is effective, then the action of $\bbG_W$ on $\Spec(A/m^nA)$ extends to regular $$
\bbA^1_W \times \Spec(A/m^nA) \to \Spec(A/m^nA).
$$
The same holds for $A$ in place of $A/m^n$ when the full mirror algebra is defined (see \cref{const:algebras}). 
\end{proposition}
\begin{proof} $W$ gives a $\Sigma$-piecewise linear function on $W:\Sk(U,\bbZ) \to \bbZ$ using $w$ above
  (and dot product).
Since $W$ is effective, this is non-negative.
The pullback of $z^{\gamma} \cdot \theta_P$
under the action is $z^{W \cdot \gamma + W(q)} \cdot \theta_P$,
which is regular on $\bbA^1 \times \Spec(A/m^nA)$ as $W$ is nef.
This completes the proof.
\end{proof}

\begin{proposition} \label{prop:nsc} Assumptions as in \cref{ass:basicsetup}. Assume $U$ contains no
  complete rational curves. Let $\bbW$ be an effective divisor on $\bbY$ whose support
  contains no essential strata. Then for the general member $f:B \to Y$  of the moduli space for
  any structure disk (see \cref{def:structuredisk}),  
  no irreducible component of the domain maps
  into the support of $\bbW$, so in particular
  the pullback $f^{-1}(\bbW)$ is defined as an effective
  Cartier divisor) and 
  $$
  [f:B \to Y] \cdot \bbW = \deg f^{-1}(\bbW) \geq 0
  $$
  with equality iff $f$ extends (as in the definition of structure constants, see
  \cref{const:structure_constants}) to
  a stable curve $f: C \to Y$ with image disjoint from $\bbW$.
  \end{proposition} 
  
  \begin{proof} The argument of \cite[7.5]{Keel_Yu_The_Frobenius} applies. \end{proof} 

For the next Lemma, for a Weil divisor $\bbF$ on $\bbY$, let $\bbF_U$ be the sum of the components
(with the same coefficient as in $\bbF$) whose support is not contained in $|\bbD|$.

\begin{lemma} \label{lem:finiteness_fixed_weight}
  Assume there is an ample Weil divisor $\bbF$ on $\bbY$ such that $-\bbF_U$ (notation as immediately
  above) is effective, and
  contains no essential boundary strata, and $U$ contains no complete rational curves, then 
  given $z\in\bbZ^{I_D}$, there are only finitely many $\gamma\in\DC(Y)$ such that $w(\gamma)=z$.
\end{lemma}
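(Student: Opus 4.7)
The plan is to uniformly bound $F \cdot \gamma$ for $\gamma \in \DC(Y)$ with $w(\gamma) = z$ and then invoke ampleness. Since $F$ is ample on $Y$, for any constant $C$ the set $\{\gamma \in \NE(Y,\bbZ) : F \cdot \gamma \leq C\}$ is finite. Moreover, by the definition \eqref{eq:dc}, every $\gamma \in \DC(Y)$ is represented by an effective $1$-cycle on $Y$---the pushforward $(\ff_s)_*[\fB_s^{\pr}]$ associated to a formal model of a structure disk---so $\DC(Y) \subset \NE(Y,\bbZ)$, and it suffices to exhibit a bound on $F \cdot \gamma$ depending only on $z$.

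Next I would decompose $F = F_D + F'$ where $F_D = \sum_{i \in I_D} b_i D_i$ is the boundary part and $F' = \overline{F|_U}$ is the horizontal part. Then $F_D \cdot \gamma = \sum_i b_i w(\gamma)_i = \sum_i b_i z_i$ depends only on $z$, so the task reduces to bounding $F' \cdot \gamma$ from above. Writing $N \coloneqq -F'$, the hypothesis says $N$ is effective and its support contains no essential boundary stratum of $D^{\ess}$. The aim is thus to prove $N \cdot \gamma \geq 0$, which will give $F' \cdot \gamma \leq 0$ and hence $F \cdot \gamma \leq \sum_i b_i z_i$.

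To establish this inequality I would use that, by \cref{def:structuredisk} together with \cref{lem:sksm}, a structure disk $f\colon B \to Y^{\an}$ of class $\gamma$ is skeletal, so $f$ sends the skeleton of $B$ into $\Sk(U) = |\Sigma_{(Y,D^{\ess})}|$. Working with the constant model $\fY$ of $Y$, the proper components of the central fibre $\fB_s^{\pr}$ correspond bijectively to interior vertices of $\Sk(B)$; each such vertex $v$ is sent by $f$ into some cone of $\Sigma_{(Y,D^{\ess})}$, and the corresponding component $E_v$ maps under $\ff_s$ into the closure in $Y$ of the essential stratum of $D$ associated to that cone. Consequently the effective cycle $(\ff_s)_*[\fB_s^{\pr}]$ representing $\gamma$ is supported on $1$-strata of $D^{\ess}$. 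By hypothesis, no such $1$-stratum is contained in the support of $N$, so the standard nonnegativity of intersection of an effective Cartier divisor with a curve not contained in it gives $N \cdot \gamma \geq 0$, completing the proof.

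The main obstacle is making precise the assertion that the effective cycle representative of $\gamma$ is supported on essential $1$-strata. This requires combining the skeletal property of $f$ (which controls where vertices of $\Sk(B)$ are sent) with the relationship between vertices of the skeleton and proper components of the central fibre of a formal model over the constant $\fY$, together with the fact that the image in $Y$ of a proper component $E_v$ is determined up to closure by the cone containing $f(v)$. Once this bookkeeping is carried out following the pattern of \cite[15.12]{Keel_Yu_The_Frobenius}, the intersection-theoretic conclusion is immediate.
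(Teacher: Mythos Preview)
Your overall strategy is exactly right and matches the paper's approach (which defers to \cite[15.12]{Keel_Yu_The_Frobenius}): bound $F\cdot\gamma$ uniformly by splitting $F=F_D+F'$, observe $F_D\cdot\gamma=b\cdot z$ depends only on $z$, and then show $F'\cdot\gamma\le 0$. The gap is in your justification of this last inequality.

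The claim that the effective cycle $(\ff_s)_*[\fB_s^{\pr}]$ is supported on $1$-strata of $D^{\ess}$ is not correct in the generality of this paper. First, a model $\fB$ over which $f$ extends to the constant model $\fY$ will typically be strictly finer than the one whose skeleton is $\Sk(B)$; the extra vertices are precisely those coming from \emph{twigs} of the ios tropicalisation (\cref{def:iostrop}, \cref{def:twigdef}). Second, even granting that each relevant vertex $v$ has $(\tau\circ f)(v)\in\Sk(U)$, this only gives $\ff_s(E_v)\subset\overline{Z_\tau}$ for the essential stratum $Z_\tau$ attached to the cone containing $(\tau\circ f)(v)$. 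At valence-one twig vertices this cone has codimension $\ge 2$ (they land in $\Sing(\Sk(U))\subset|\Sigma^{d-2}|$, see \cref{lem:tbal}), so $\overline{Z_\tau}$ has dimension $\ge 2$ and the curve $\ff_s(E_v)$ need not be a $1$-stratum, nor need it avoid $\supp(N)$. In the torus setting of \cite{Keel_Yu_The_Frobenius} there are no twigs, which is why this subtlety is invisible there; here it is not.

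The fix is to compute on the generic fibre rather than the special fibre. Apply \cref{thm:basic_convexity}(\ref{thm:Ftrop_structure_constants:equality}) to a general structure disk of class $\gamma$: since $\overline{F|_U}$ contains no essential stratum one has $F'^{\trop}\equiv 0$, hence $F^{\trop}=b\cdot w(\cdot)$, and by \cref{lem:weight} the boundary terms in \eqref{eq:Ftrop_structure_constants} combine to $b\cdot z$. The remaining term satisfies $\deg(F^\an|_{\bbD^\circ})\le 0$ because $-F|_U$ is effective and $f^{-1}\big(\supp(\overline{F|_U})^\an\big)$ is finite by \cref{lem:restrict_to_skeleton}(\ref{lem:restriction_to_skeleton:sm}). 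Thus $F\cdot\gamma\le b\cdot z$, and ampleness of $F$ finishes the argument.
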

\begin{proof} This follows from \cref{prop:basicconvexity} and \cref{prop:nsc} just as in the proof of
  \cite[16.4]{Keel_Yu_The_Frobenius}. 
  \end{proof} 
  
  \begin{lemma} \label{lem:finite_generation}
    For $n > 0$, if a set of $\theta_P$ generates $A/\fm A$, it generates $A/\fm^n A$ as $R/\fm^n$ algebra.

    If there exists an ample divisor $\bbF$ on $\bbY$ as in \cref{lem:finiteness_fixed_weight} then
    they generate $A$ as an $R$-algebra.
    \end{lemma}
    \begin{proof} The first statement is immediate from the nilpotent Nakayama lemma. For the second
      the argument for  \cite[16.6]{Keel_Yu_The_Frobenius} applies.
    \end{proof}

\begin{theorem} \label{thm:finite_generation} 
The mirror algebra $A$ is a finitely generated $R$-algebra in cases \cref{ex:affine} and \cref{ex:pipe}. 
\end{theorem}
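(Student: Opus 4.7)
The plan is to follow the strategy of \cite{Keel_Yu_The_Frobenius}: reduce finite generation of $A$ over $R$ to finite generation of the ``vertex'' algebra $A \otimes_R R/\fm$ where $\fm \subset R$ is the maximal ideal sending every nonzero monomial $z^\beta$ to zero, and then describe the vertex algebra explicitly using the $\beta = 0$ structure constants. The two ingredients that make this reduction work are \cref{lem:finite_generation} (the Nakayama-type lift) and the explicit computation of $\chi(P_1,P_2,Q,0)$ via \cref{prop:zeroclass}.

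First I would verify the hypothesis of \cref{lem:finite_generation}, namely the existence of an ample $F$ on $Y$ with $-\overline{F|_U}$ effective and disjoint from the essential boundary strata. Since $U$ is affine, its complement $D$ supports an ample divisor on $Y$: one may take $F = \sum a_i D_i$ with sufficiently positive weights on components of $D$. For such $F$ we have $\overline{F|_U} = 0$, so the condition holds trivially. If necessary, one first replaces $Y$ by a toric blowup; by the independence of $\cM^\sm(U,\beta)$ of toric blowup established in \cref{prop:tbeta} and \cref{lem:ISknodes}, the mirror algebra itself is unchanged.

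Applying \cref{lem:finite_generation} it then suffices to exhibit finitely many $P_1,\dots,P_r \in \Sk(U,\bbZ)$ whose classes $\theta_{P_i}$ generate the $\bbZ$-algebra $A/\fm A$. In that quotient the multiplication rule becomes
\[
\theta_{P_1} \cdot \theta_{P_2} = \sum_Q \chi(P_1,P_2,Q,0)\, \theta_Q,
\]
since contributions from $\beta \neq 0$ vanish. I would combine \cref{prop:zeroclass} with the toric reduction used in its proof --- the relevant moduli space sits inside the analytic chart $\tau^{-1}(\cG)$ for a small convex $G$ around $Q$ in a maximal cone, which embeds in a toric $T_M^\an$ --- to prove
\[
\chi(P_1,P_2,Q,0) = \delta_{Q,\, P_1+P_2}
\]
when $P_1$ and $P_2$ lie in a common cone $\sigma \in \Sigma$ (so that $P_1+P_2$ is intrinsically defined in $\sigma \cap \Sk(U,\bbZ)$), and $\chi(P_1,P_2,Q,0) = 0$ otherwise.

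Granted this description, $\Spec(A/\fm A)$ is the ``vertex'': the reducible scheme obtained by gluing the affine toric varieties $\Spec \bbZ[\sigma \cap \Sk(U,\bbZ)]$ along their shared faces, as $\sigma$ ranges over the finitely many maximal cones of $\Sigma$. Each simplicial cone of $\Sigma$ has finitely generated monoid of integer points, so $A/\fm A$ is a finitely generated $\bbZ$-algebra, generated by the $\theta_P$ for $P$ ranging over a chosen finite set of semigroup generators of the $\sigma \cap \Sk(U,\bbZ)$. Lifting these same generators to $A$ and invoking \cref{lem:finite_generation} then yields finite generation of $A$ over $R$. The main obstacle I anticipate is justifying $\chi(P_1,P_2,Q,0) = \delta_{Q, P_1+P_2}$: while the analogous identity for naive counts of spines is \cref{prop:zeroclass}, one must translate it into a statement about the degree of $\Phi\colon \tW \to W$ from \cref{thm:scthm}, showing that for $\beta = 0$ and a general basepoint in the interior of a maximal cone, this degree is computed entirely inside the toric chart around $Q$ and coincides with the toric structure constant from \cite[6.2]{Keel_Yu_The_Frobenius}. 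Once this identification is in place, the remainder of the argument is essentially formal.
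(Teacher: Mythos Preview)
Your approach is correct and matches the paper's (which simply defers to \cite[15.12]{Keel_Yu_The_Frobenius}): reduce via \cref{lem:finite_generation} to finite generation of $A/\fm A$, then identify the latter with the Stanley--Reisner ring of $\Sigma$ using the $\beta=0$ structure constants, exactly as in \cref{prop:vertex}. The obstacle you anticipate is precisely what \cref{prop:vertex} (or equivalently \cref{thm:basic_convexity}(\ref{thm:Ftrop_structure_constants:ample}) with $\gamma=0$) resolves; one small correction is that to ensure $D$ supports an ample divisor you may need a general blowup with center in $D$ rather than just a toric blowup, and the compatibility of mirror algebras under such modifications is handled by \cref{sec:change_of_snc_compactification} (not \cref{prop:tbeta}).
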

\begin{proof} As in the proof of \cite[16.8]{Keel_Yu_The_Frobenius}, we can replace $\bbY$ by a blowup
  $\bbY' \to \bbY$ whose exceptional locus is contained in the support of $\bbD$. We show after such a blowup
  that there is an ample divisor $\bbF$ as in \cref{lem:finite_generation}. The proof of
  \cite[16.8]{Keel_Yu_The_Frobenius} gives this in the affine case, and \cref{prop:mfua} the
case \cref{ex:pipe}. This completes the proof. \end{proof}

\begin{proposition} \label{prop:mfua} Let $(\bbY,\bbD)$ be a formal snc pair with a projective birational
     morphism
     $b: \bbY \to \bbA$ to an affine formal scheme finitely presented over $k$.
     Then there is a composition of blowups of closed strata of $\bbD$
    (and its strict transform) $\bbY' \to \bbY$ and an ample Cartier divisor $\bbF$ on $\bbY'$ such
    that $-\bbF_U$ (notation as just above \cref{lem:finiteness_fixed_weight})
    is effective and contains no boundary strata.
  \end{proposition}
  \begin{proof} $b$ itself is a blowup, $\Proj$ of some ideal sheaf. It follows there is an effective
    Cartier divisor $\bbE$ with $\cO(-\bbE)$ relatively ample and thus (as $\bbA$ is affine) ample.
    Now we can blowup strata (and twist the inverse image by a negative amount of the exceptional divisor to
    preserve ampleness) so the final condition holds.
    \end{proof}

    We give a name to the conditions in \cref{lem:finite_generation}, as they imply the
    mirror algebra is algebraic:
    \begin{definition} \label{def:pb} Assumptions as in \cref{ass:basicsetup}. We say we have {\it positive boundary}
      if $U$ contains no complete curves, and there is an ample Cartier divisor $\bbF$ on $Y$ such
      that $-\bbF_U$ is effective and contains no strata.
    \end{definition}

    \begin{notation} \label{nota:algebraic} For $(\bbY,\bbD)$ as in \cref{ass:basicsetup} we will say the mirror algebra
      {\it is algebraic} if the multiplication rule is polynomial (i.e. given $P_1,\dots,P_n \in \Sk(U)(\bbZ)$,
      there are only finitely many pairs $(\gamma,Q) \in \NE(\bbY_s,\bbZ) \times \Sk(U)(\bbZ)$ with
      $\chi(P_1,\dots,P_n,Q,\gamma) \neq 0$) and finitely generated as $R_{\bbY_s}$ algebra.
      \end{notation}
        \begin{theorem} \label{thm:algebraic}
      Assumptions as in \cref{ass:basicsetup}. If there is an snc pair $(\bbY',\bbD')$ with
      $b: \bbY' \to \bbY$ projective and birational, with $|b^{-1}(\bbD)| = |\bbD'|$, and exceptional
      locus contained in $|\bbD'|$, and $(\bbY',\bbD')$ has positive boundary, then
      the multiplications rules on $A_{(\bbY,\bbD)}$ and $A_{(\bbY',\bbD')}$ are polynomial, and the algebras
      finitely generated. Moreover $A_{(\bbY,\bbD)} = A_{(\bbY',\bbD')} \otimes_{R_{\bbY'_s}} R_{\bbY_s}$,
      where $R_{\bbY'_s} \twoheadrightarrow R_{\bbY_s}$ is induced by
      $b: \NE(\bbY'_s,\bbZ) \twoheadrightarrow \NE(\bbY_s,\bbZ)$.

      There exist such blowups in the \cref{ex:affine} and \cref{ex:pipe} cases.
    \end{theorem}
    \begin{proof} The existence of such blowups is shown in the proof of \cref{prop:mfua}.
      The final statements follow from the proof of (the very simple) \cite[17.3.1]{Keel_Yu_The_Frobenius},
      using \cref{sec:itb}. 
    \end{proof}

    \begin{proposition} Assumptions as in \cref{ass:basicsetup}. Suppose there is a regular
      function $f$ on $\bbY$ whose zero locus is the same as $|\bbD|$, and furthermore that
      $g: \bbY \to \obbY$ is birational with exceptional locus contained in $|\bbD|$. 
      Then there an effective ample Cartier divisor with support $|\bbD|$. 
    \end{proposition}
    \begin{proof}
      
      By assumption $g:\bbY \to \obbY$ is projective, and the target is affine. It
      follows that this is the formal completion of a scheme, projective and birational over the spectrum of
      a complete local ring, $g:\cY \to \Spec(R) \eqqcolon  \ocY$, with $f$ induced by a function (which we
      indicate by the same symbol) $f \in m_R$. As $g$ is birational, it is $\Proj$ of an
      ideal sheaf, $I$, and as, by assumption the exceptional locus of $g: \cY \to \ocY$ is contained
      in $Z(f)$, $I \otimes A_f$ is principal, say $I \otimes A_f = h A_f$. $g^{-1}(I) = \cO(\cE)$ for
      an effective Cartier $\cE$. $\cO(1) = \cO(-\cE)$ is $g$ ample. Now $-\cE + g^{-1}(h)$ is relatively
      ample, with support contained in $|\cD|$, and so $\cE + g^{-1}(h) + Z(h^m)$ is relatively ample,
      and effective, with support $|\cD|$, for sufficiently large $m$. 
      Note $g$-relatively ample, and ample, are the same thing as the base is affine. This induces an
      analogous ample divisor on $\bbY$. This completes the proof. 
    \end{proof}
    
     \section{Change of snc compactification}
 \subsection{Change of snc compactification} \label{sec:change_of_snc_compactification}
 Most of the results from \cite[\S 17]{Keel_Yu_The_Frobenius} hold here, with essentially
 the same proof. Here we include two which we will use: 

 \begin{definition-lemma} \label{def:gma} Let $\bbY'$ be a normal formal scheme
   topologically of finite type over $k$, 
   with $\bbY' \to \obbY$ a projective morphism to
   an affine formal scheme as in \cref{ass:basicsetup}. Let $\bbD' \subset \bbY'$ be an effective
   reduced Cartier divisor.

   Assume there exists $(\bbY,\bbD)$ as in \cref{ass:basicsetup} such that $\bbY \to \obbY$ factors
   through projective birational $p: \bbY \to \bbY'$, with $|\bbD| = |p^{-1}(\bbD')|$, and
   exceptional locus of $p$ contained in $|\bbD|$. Finally assume $\NE(\bbY_s,\bbZ) \to \NE(\bbY'_s,\bbZ)$
   is surjective (in any case the image has finite index). Assume $A_{(\bbY,\bbD)}$ is algebraic, see
   \cref{nota:algebraic}. 
   Then 
   
   $A_{(\bbY',\bbD')} \coloneqq A_{(\bbY,\bbD)} \otimes_{R_{\bbY_s}} R_{\bbY'_s}$ depends only 
   on $\bbY' \to \obbY$, and $\bbD' \subset \bbY'$. I.e. it is independent of the choice
   of $(\bbY,\bbD)$ with the above conditions.
   \end{definition-lemma} 
   \begin{proof}
     We compare the rings given by two resolutions $(\bbY_i,\bbD_i)$. By the weak factorisation
     theorem \cite{Abramovich_Torification_and_factorization} we can assume $\bbY_1 \to \bbY_2$ is a permissible blowup as in \cref{def:perm}.
     Now the result follows easily from \cref{sec:itb}, see
     \cite[Remark 17.7]{Keel_Yu_The_Frobenius}. \end{proof}

   For example, in case \cref{ex:affine}, \cref{def:gma}  defines a mirror algebra for any normal projective
   compactification of a smooth affine log CY with maximal boundary.

   In the affine case, \cref{def:gma}, the mirror family is algebraic. For the next proposition we write

   \begin{proposition} \label{prop:act} Let $(\bbY,\bbD)$ be as in \cref{ass:basicsetup}, and
     assume it satisfies the conditions of \cref{thm:algebraic}. Let $b:\bbY' \to \bbY$ be permissible blowup
     (see \cref{def:perm}). Both mirror algebras are algebraic (in the sense of \cref{nota:algebraic}). The following hold: 
     \begin{enumerate}
     \item There is a canonical isomorphism
       $ A_{(\bbY,\bbD)}= A_{(\bbY',\bbD')} \otimes_{R_{\bbY'_s}} R_{\bbY_s} \to  A_{(\bbY,\bbD)}$,
       sending $\theta_u$ to $\theta_u \otimes 1$. 
     \item There is a canonical isomorphism
       $A_{(\bbY',\bbD')}\otimes_{R_{\bbY'_s}} Q[\NE(\bbY_s) + \bbZ \cdot F] \to
       A_{(\bbY,\bbD)} \otimes_Q Q[\bbZ]$
       sending $z^{\gamma} \cdot \theta_u$ to $z^{b_*{\gamma}}\cdot \theta_u \otimes x^{w(u) + w(\gamma)}$
       where $w(u)$ is the weight for the torus action \cref{sec:ta} restricted to the one parameter
       subgroup corresponding to the exceptional divisor $E \subset \bbY'_s$, $w(\gamma) = E \cdot \gamma$,
       and $F$ is the class of a line on the general fibre of (the projective space bundle) $E \to b(E)$. 
       \end{enumerate}
   \end{proposition}
   \begin{proof} These follow from the argument for (the nearly identical)
     \cite[17.3]{Keel_Yu_The_Frobenius}.
        \end{proof} 

\begin{corollary} \label{cor:fmf} Notation as in \cref{prop:act}. The set of isomorphism classes of fibres of the mirror family over the structure torus is independent
  of the compactification (in the sense of \cref{def:abuse}). \end{corollary}
\begin{proof} Follows from \cref{prop:act} and the weak factorisation theorem \cite{Abramovich_Torification_and_factorization}. 
\end{proof}

\section{Geometry of the mirror family}

\subsection{SLC singularities and dual complexes of log Calabi-Yau pairs} \label{sec:slc}

We learned the following lemma and its proof from Paul Hacking.

\begin{lemma} \label{lem:slcdclem}
Let $\bbk$ be a field.
Let $\Gamma$ be a simplicial complex of pure dimension $n$ together with an orientation of each $n$-dimensional simplex.
We consider the following conditions: 
\begin{enumerate}[wide]
\item For each $n-1$ dimensional simplex $\tau$ one of the following holds:
\begin{enumerate}[wide]
\item[(a)] $\tau$ is contained in exactly two $n$-dimensional simplices $\sigma_1,\sigma_2$, and the orientations of $\tau$ induced by $\tau \subset \sigma_1$ and $\tau \subset \sigma_2$ are opposite.
\item[(b)] $\tau$ is contained in a unique $n$-dimensional simplex $\sigma$.
\end{enumerate}
\item For each simplex $\tau \in \Gamma$, the link $\link_{\tau} \Gamma$ of $\tau$ in $\Gamma$ has reduced homology
  $\tilde{H}_i(\link_\tau \Gamma,\bbk)=0$ for $i< n - \dim \tau-1$.
  \item $\tilde{H}_i(\Gamma) =0 $ for $i < n$.
\end{enumerate}
Let $L \subset \Gamma$ denote the subcomplex of pure dimension $n-1$ with maximal simplices those $\tau$ of type (b) in (1) above.

Let $S$ and $T$ be the Stanley--Reisner rings of $\Gamma$ and $L$ respectively.
Let $\bbX=\Proj(S)$ and $\bbE=\Proj(T)$, $X = \Spec(S)$ and $E = \Spec(T)$. 
The natural surjection $S \rightarrow T$ determine closed embeddings $E \subset X$, $\bbE \subset \bbX$. 

If (1-2) above hold,
then the following conditions hold:
\begin{enumerate}[wide]
\item The scheme $\bbX$ is reduced and Cohen-Macaulay of pure dimension $n$ and has normal crossing singularities in codimension $1$.
\item The subscheme $\bbE \subset \bbX$ is a reduced Weil divisor which does not contain any irreducible component of the singular locus of $\bbX$.
\item The sheaf $\omega_{\bbX}(\bbE)$ is isomorphic to $\cO_{\bbX}$.
\item The pair $(\bbX,\bbE)$ has semi log canonical singularities.
\end{enumerate}
\end{lemma}

If (1-3) hold, the above hold with $(\bbX,\bbE)$ replaced by $(X,E)$.
\begin{proof}
We prove the $(\bbX,\bbE)$ case, the argument for $(X,E)$ is the same. 
Assumption (1)(a-b) clearly implies that $\bbX$ has normal crossing singularities in codimension $1$ and no component of $\bbE$ is contained in the singular locus.
The assumption (2) on the homologies of the links of the simplices implies that the scheme $\bbX$ is Cohen-Macaulay by \cite[Theorem 1]{Reisner_Cohen-Macaulay_quotients} (for $X$ we also need (3), this is the only difference between the two arguments). 

The normalization $\nu \colon \bbX^{\nu} \rightarrow \bbX$ together with the divisor $\Delta^{\nu}+\bbE^{\nu}$
(where $\Delta^{\nu}$ denotes the inverse image of the double locus $\Delta \subset \bbX$ and $\bbE^{\nu}$
denotes the inverse image of $\bbE$) is a disjoint union of toric pairs.
In particular $(\bbX^{\nu},\Delta^{\nu}+\bbE^{\nu})$ is log canonical.

We use the orientation of the maximal cells of $\Gamma$ to describe an isomorphism $\omega_{\bbX}({\bbE}) \simeq \cO_{\bbX}$ :
we glue the torus invariant forms $\frac{dz_1}{z_1} \wedge \cdots \wedge \frac{dz_n}{z_n}$ on the irreducible components of the normalization (with signs determined by the orientations of the associated simplices) to obtain a nowhere vanishing global section of $\omega_{\bbX}({\bbE})$.
Here we use the fact that $\omega_{\bbX}({\bbE})$ is (by definition) a double dual, and the dual of any quasi-coherent sheaf on an $S_2$ scheme is $S_2$.
So we can work in codimension one, where we have normal crossings,
and the usual description of the dualizing sheaf on a nodal curve together.

Now it follows from the definition of slc that the pair $({\bbX},{\bbE})$ is slc: $\omega_{\bbX}({\bbE})$ is a $\bbQ$-line bundle (in fact trivial), ${\bbX}$ has normal crossing singularities in codimension $1$
and satisfies Serre's condition $S_2$ (in fact is Cohen-Macaulay), and the normalization $({\bbX}^{\nu},\Delta^{\nu}+{\bbE}^{\nu})$ is log canonical.
(See \cite{Kollar_Singularities_of_the_minimal_model_program}, Definition-Lemma 5.10, p.
193,
for the definition of slc.)

\end{proof}

\begin{proposition} \label{rem:PaulSingrem} Assumptions as in \cref{ass:basicsetup}. We
  assume we are in one of the cases \cref{ex:affine}, the Fano case of \cref{ex:pipe}, or
  \cref{ex:CYD}. 

  In the first and last cases let $\Gamma$ be the essential dual complex to $(\bbY,\bbD)$, and in the
  second case the cone (in the sense of simplicial complexes, so the cone over a $d$ simplex is a $d+1$ simplex)
  over dual complex to $(X,E)$.

  In the first two cases $\Gamma$ satisfies hypothesis (1-3) of \cref{lem:slcdclem}, and in the
  last it satisfies (1-2).
\end{proposition}
\begin{proof}
  We treat here the \cref{ex:affine} case, the arguments for the others are essentially the same.
  Recall in this case $\bbY$ is a projective variety (rather than projective over a formal affine scheme). To
  stress this we write $(Y,D)$, $U \coloneqq Y \setminus D$, instead of $(\bbY,\bbD)$. Also we allow greater
  generality, namely $(Y,D)$ dlt. Note the dual complex of a dlt pair $(Y,D)$ is defined in \cite{KdFX}, Section~2. 

First the statements about orientation: 
Let $\Omega$ be a holomorphic volume form on $U$ such that $(\Omega)=W-D$.
If $p \in D \subset Y$ is an essential $0$-stratum then $Y$ is smooth and
$D$ is normal crossing at $p$ by \cite{Kollar_Singularities_of_the_minimal_model_program},
Theorem~4.16(1) and the definition of dlt \cite{Kollar_Singularities_of_the_minimal_model_program}, Definition~2.8.
We normalize $\Omega$ by requiring that the iterated Poincar\'e residue at $p$ equals $\pm 1$,
and choose the orientation of the associated simplex $\sigma$ so that the iterated residue for a compatible ordering of the boundary divisors equals $+1$.
By connectedness of the dual complex for $n>1$, the $0$-strata are connected by $1$-strata,
which are copies of $\bP^1$ with boundary $\{0,\infty\}$, by \cref{prop:dualcomplex}. 
It follows that the iterated residue at each $0$-stratum is $\pm 1$, and
if we fix an orientation of each maximal simplex as above,
then for $\tau$ the $(n-1)$-dimensional simplex corresponding to a $1$-stratum and $\sigma_1,\sigma_2$ the two $n$-dimensional simplices corresponding to the two $0$-strata contained in this $1$-stratum, the orientations induced by $\tau \subset \sigma_1$ and $\tau \subset \sigma_2$ are opposite. 

Now we turn to the vanishings, again allowing dlt. When $K + D$ is trivial, 
this follows from \cite{KX}, Theorem 2(1) and Proposition~31.
In general we reduce to this case by running the $K + D$ MMP. By
\cite[Prop. 11]{KdFX} (and its proof which extends the Proposition to the subboundary
case) the essential dual complex is independent of the steps in the MMP (up to PL-isomorphism).
This completes the proof. 
\end{proof}

%

\subsection{Geometry of the mirror family} \label{sec:geometry_of_the_mirror_family}

Here we give statements on singularities of the mirror family.

\begin{theorem} \label{thm:lcsings} Assumptions as in \cref{ass:basicsetup}, and \cref{ex:affine}.
  Recall from \cref{thm:finite_generation}
  that in this case the multiplication rule is algebraic, gives a canonical flat family
  $$
  \Spec(A_{(\bbY,\bbD)}) \to \Spec(Q[\NE(\bbY_s,\bbZ)])= \TV(\Nef(\bbY_s)).
  $$
  Consider the base change the mirror family from $\bbZ$ to $\bbQ$. 
Geometric fibres are semi-log-canonical, Gorenstein, $K$-trivial affine varieties of the same dimension as $\bbU$.
Fibers over the structure torus $\bbT_{\Pic(\bbY_s)}$ of the base are log canonical (in particular, normal).
\end{theorem}

\begin{theorem} \label{thm:lcsingsccy} Assumptions as in \cref{ass:basicsetup}, and \cref{ex:CYD}.
  Recall in this case $\Proj$ of the mirror algebra induces a polarized projective formal family
  $$ (\cX,\cO(1)) \to \Sp(\hR_{\bbY_s}).$$
  The special fibre is $K$-trivial Gorenstein SLC. The generic fibre is $K$-trivial Gorenstein SLC,
  and if $(\bbY,\bbD)$ is almost minimal (see \cref{def:am}) log canonical
  (in particular, normal).
  \end{theorem}

  \subsection{Singularities of the mirror family in case \cref{ex:pipe} }
  In this subsection we assume \cref{ass:basicsetup} and \cref{ex:pipe}. In this
  case $\bbY \to \obbY$ is a relative Mori dream space. Let $\bbY \dasharrow \bbY'$ be a small $\bbQ$-factorial
  modification (over $\obbY$), and $\bbD' \subset \bbY'$ the induced boundary. Note since $\bbD$ contains the $b$-exceptional
  locus, $U' \coloneqq Y' \setminus D'$ (defined exactly like $U \coloneqq Y \setminus D$ in \cref{ass:basicsetup}) is
  equal to $U$. 
  
  \begin{theorem} \label{thm:lcsingspipe} Notation as immediately above.
    Assume that the essential dual complex to $(\bbY,\bbD)$ satisfies assumptions (1-3) of \cref{lem:slcdclem}.
    By \cref{def:gma} and \cite[6.17]{HKY20} the mirror algebra $A_{(\bbY',\bbD')}$ induces a 
    a projective polarized family of tuples
    $$
    (\cX,\cE,\Theta,\cO(1)) \to \TV(\Nef(\bbY'_s)).
    $$
   
    Each fibre $(X,E)$ is slc and $K_X + E$ is trivial. Fibres over the structure torus of the base
    are log canonical (in particular such $X$ are normal).
  \end{theorem}
  
  \begin{remark} \label{rem:Fano} Note that in the Fano case, the conditions (1-3) on the dual complex hold by \cref{rem:PaulSingrem},
    so we can drop that assumption for that case of \cref{thm:lcsingspipe}. We expect these conditions to hold as well
    when $\obbY \to \Sp(k[[s]])$ is the GS partial smoothing of the vertex given by $(X,E,L)$, as in \cref{ex:pipe}:
    The GS mirror is a formal
    deformation of the vertex $\bbV_{\Sigma}$, $\Sigma$ the essential dual fan to $(X,E)$.
    $\tau \not \in \partial \Gamma$, for $\Gamma$ the essential dual complex to $(\bbY,\bbD)$,
    correspond to (essential) strata of $\bbD$ contracted to
    $0 \in \bbV_{\Sigma}$. Such a stratum (with its induced boundary) is a compact log CY with maximal boundary and
    $\link_{\tau} \Gamma$ is the dual complex of this pair, and so the vanishing follows as in the proof of
    the \cref{ex:affine} case of \cref{rem:PaulSingrem}. 

    Strata of $\bbV$ correspond
    to cones of the dual fan, which in turn correspond to strata of $(X,E)$, each of which is itself a log CY pair with
    maximal boundary. Gross has conjectured that the restriction of the GS deformation to the interior of a stratum of $\bbV$
    is isomorphic
    to (a natural base extension of) the mirror family for the corresponding log CY pair (i.e. the corresponding stratum of $(X,E)$
    with its induced boundary). If this {\it induction} conjecture holds, then we can get the required vanishing
    in case $\tau \in \partial \Gamma$ by the same argument applied with $(X,E)$ replaced by a boundary stratum. 
    \end{remark}

  Note once we have proven the rest, the final statement of each theorem
  (that the fibres are log canonical) is equivalent to
normality of these fibres.
This we will prove at the end of \cref{sec:scattering}.
Here we prove the rest:
\begin{proof}[Proof of everything in the above three theorems but normality]
 Consider first the affine case.  
To prove the theorem, by \cref{def:gma}, we can replace $\bbY$ by a blowup (outside of $\bbU$),
and so by \cite[19.2]{Keel_Yu_The_Frobenius} can assume $D$ supports an ample effective divisor.
The conditions are open, see the proof of \cite[8.42]{Gross_Canonical_bases}.
So using the $\bbT_D$ action, more precisely \cref{prop:neflim}, it is
enough to prove the statements for the central fibre. This now follows from
\cref{rem:PaulSingrem} and \cref{prop:vertex} below.
This completes the proof.

Now consider the compact CY degeneration case: Because the singularity statements are open, and
the family is proper, it's enough to prove the statement for the central fibre. This follows
from \cref{rem:PaulSingrem} and \cref{prop:vertex}.

Finally, consider the \cref{ex:pipe} case.
The statement is only about $(\cX,\cE)$ so it's enough to work on $\MovSec$ (the bogus cones
    do not add any new fibres $(X,E)$, the change is only in $\Theta$), and indeed on $\Mov$ (as the statement is
    about fibres, so we can pullback the famly under $\TV(\Mov) \to \TV(\MovSec)$. For
    $(\cX,\cE) \to \TV(\Nef(\bbY/\obbY))$ the argument is the same as for \cref{thm:lcsings}. It remains to
    consider flops of $\bbY$. But by \cite[Thm. 13]{KX} the dual complex for the flop is the same (up to PL isomorphism),
    so the same argument applies. This completes the proof.
  \end{proof}

\subsection{Central fiber as a vertex} \label{sec:vertex}

In this section we describe the central fiber of the affine mirror family
as the spectrum of a Stanley-Reisner ring, which, following \cite{Gross_Mirror_symmetry_for_log_Calabi-Yau_surfaces_I_v1}
we call the \emph{vertex}. Our results on singularities of fibres of the mirror family all
come by degenerating to this central fibre, whose singularities are described in 
\cref{sec:slc}. 

\begin{proposition} \label{prop:vertex} Let $(\bbY,\bbD)$ be as in \cref{ass:basicsetup}.
  Let $S$ be the Stanley-Reisner ring of the dual complex to $(\bbY,\bbD)$.

  We have $A_{(\bbY,\bbD)}\otimes_{R_{\bbY_s}} R_{\bbY_s}/\fm_{\bbY_s}\simeq S$,
  identifying the basis elements (note each of the algebras has canonical
  vector space basis identified with $|\Sigma_{(\bbY,\bbD^\ess)}|(\bbZ)$).
\end{proposition}

\begin{proof}
Since we work modulo $\fm_{\bbY_s}$, only disks with class zero contribute.
Now the result follows from \cref{prop:zeroclass}.
\end{proof}

To complete the proof of \cref{thm:lcsings} we need a different argument than that used in \cite{Keel_Yu_The_Frobenius}.
In \cite{Keel_Yu_The_Frobenius} we assumed $U$ contains an open algebraic torus $\bbT_M$ and so by \cite[18.1]{Keel_Yu_The_Frobenius} there is a degeneration of the mirror algebra to the toric case, for which the mirror is a Laurent polynomial ring for the lattice $\Sk(U,\bbZ) =M$.
In general we use a scattering diagram, which we turn to next.

\section{The Canonical Scattering diagram} \label{sec:scattering}
Next we complete the proof of the main singularity statement \cref{thm:lcsings} by proving integrality of
the total space of the mirror over $\Sp(\hR_{\bbY_s})$. 
For this we use the canonical scattering diagram as in \cite{Keel_Yu_The_Frobenius},
\cite{HDTV}, \cite{GSCWS}, \cite{Gross_Theta_Functions} and \cite{Gross_Mirror_symmetry_for_log_Calabi-Yau_surfaces_I_v1}.
Following the original idea of the Gross-Siebert program we use the scattering diagram to produce a deformation of the {\it vertex}.
\cite{Gross_Theta_Functions} (generalizing \cite{Gross_Mirror_symmetry_for_log_Calabi-Yau_surfaces_I_v1}) proves
this deformation is the same as (the spectrum of) the mirror algebra.
One advantage of the scattering approach (and the reason we use it here)
is that outside codim 2 it comes with simple and explicit Mumford/Toric charts, 
\cite[2.8]{Gross_Theta_Functions}, 
from which the desired integrality is obvious. 

We define the scattering diagram by counts of short spines $[-\epsilon,\epsilon] \to \Sk(U)$ with at most one bend -- which by our definition is a count of (skeletal)
analytic cylinders with this segment as their intrinsic spine, just as in \cite[\S 20]{Keel_Yu_The_Frobenius}.

As throughout the paper we assume \cref{ass:basicsetup}, the notation of which we follow.
In this section we additionally assume $(\bbY,\bbD)$ is almost minimal, \cref{def:am}.

By \cref{sec:integeraffine}  $\Sk(U) = |\Sigma|$, for $\Sigma$ the essential dual fan, has 
an integral affine structure, non-singular outside $\Sigma^{n-2}$.
This is the same integral affine structure given in \cite[1.3]{GSCWS},
$(\Sk(U),\Sigma)$ is a polyhedral manifold in the sense of \cite[Const.
1.1]{Gross_Theta_Functions}. Following GS, we sometimes write $\Lambda$ for the integer tangent space to $\Sk(U)$.

\begin{definition} \label{def:generic}
We say a point $x \in \Sk^{\sm}(U)(\bbR)$ is {\it generic} if it lies on at most one primitive $n^{\perp}$ for $n$ the germ of a primitive (non-zero) integer linear function near $x$.
\end{definition}

\begin{construction} \label{def:infintesimal_spine}
We consider piecewise affine $h\colon [-\epsilon,\epsilon] \to \Sk^{\sm}(U)$ with $x = h(0)$.
Here $\epsilon > 0$ will be chosen sufficiently small.
Suppose $N(h,\alpha) \neq 0$ for some $\alpha \in \NE(Y,\bbZ)$, and the image of $h$ is not contained in $\Wall_{\alpha}$.
Here we include all the cones of $\Sigma^1$ in $\Wall_{\alpha}$ (we are free to add any finite number of cones).
Then $h$ is affine near zero unless $x \in \Wall_{\alpha}$.
Now assume $x$ is generic and $N(h,\alpha) \neq 0$.
Then (if it exists at all)
$n^{\perp}$ is the linear span of the unique wall containing $x$,
and $h$ is affine on $[-\epsilon,0],[0,\epsilon]$ (note this determines primitive $n$ up to sign).
Say with derivatives $-v,-w$.
Note necessarily $v - w \in n^{\perp}$
(or in case $x$ is not on a wall, $v =w$ and $h$ is affine), since the bend in given by a realizable twig and these lie in walls, see \cref{prop:tree_bound}.
Then modulo shrinking $\epsilon$, $h$ depends only on $v,w,x$,
so we write $N(x,v,w,\alpha) \coloneqq N(h,\alpha)$.
If $x$ is not on a wall, then $N(x,v,w,\alpha) = 0$ unless $\alpha = 0$
and $v = w$.
In the other direction: \end{construction}

\begin{lemma} For generic $x$ not on any wall, $N(x,v,v,0) = 1$.
\end{lemma}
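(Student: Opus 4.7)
The plan is to reduce this directly to \cref{prop:zeroclass}, which asserts that for a transverse spine $S$ with $N(S,0) \neq 0$, the restriction $S \setminus B$ must have image in the interior of a maximal cone of $\Sigma$ and be balanced, and moreover in this case $N(S,0) = 1$. The assumptions here, that $x$ be generic and off every wall and that $v = w$, are designed precisely so that the segment $h$ satisfies the hypotheses of \cref{prop:zeroclass}, giving the value $1$ for free.

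First I would verify that $h$ is affine and balanced. Since $v = w$ and there are no bends permitted off of walls (by the discussion preceding \cref{def:infintesimal_spine}, using that bends come from realizable twigs whose roots lie on walls by \cref{prop:tree_bound}), the map $h$ is affine on all of $[-\epsilon,\epsilon]$ with constant derivative $-v$. Balancing at the interior point $x$ then holds trivially since $-v + v = 0$ in the integer tangent space at $x$.

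Next I would check that the image lies in the interior of a single maximal cone of $\Sigma$, and that $h$ is transverse in the sense of \cref{def:transverse} with respect to $\Wall_0$. Since $\Wall_0$ is a set of $0$-walls that by \cref{def:walldef} we may take to include all codimension-one cones of $\Sigma$, and since $x$ is generic and not on any wall, we have $x \in \Sk^{\sm}(U) \setminus |\Sigma^{n-1}|$, which is exactly the interior of a maximal cone. After shrinking $\epsilon$, the whole image $h([-\epsilon,\epsilon])$ stays in this open cone and remains disjoint from $\Wall_0$. Conditions (1) and (4) of \cref{def:transverse} then follow from this disjointness, condition (2) is vacuous since $h$ has no interior bend vertices, and condition (3) on the $F$-endpoints is arranged by the same genericity of $x$ together with a further small shrinking of $\epsilon$.

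Having verified that $h$ is a transverse balanced spine with image in the interior of a single maximal cone, \cref{prop:zeroclass} applies and yields $N(x,v,v,0) = N(h,0) = 1$. I expect no essential obstacle here; the only point requiring minor care is the bookkeeping of $\Wall_0$ (ensuring it contains $\Sigma^{n-1}$, which is permitted by \cref{def:walldef}) so that the genericity hypothesis on $x$ translates cleanly into the interior-of-maximal-cone hypothesis of \cref{prop:zeroclass}.
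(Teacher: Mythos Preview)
Your proposal is correct and follows essentially the same route as the paper. The paper's own proof invokes \cref{lem:straightspinecount} (and the underlying toric computation \cite[6.2,12.3]{Keel_Yu_The_Frobenius}), while you invoke \cref{prop:zeroclass}, which the paper itself presents as a strengthening of \cref{lem:straightspinecount}; so your argument is just the same reduction, routed through the more packaged statement.
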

\begin{proof} Since $h$ has image in the complement of the walls,
the count is the same as the toric case, so the result follows from \cite[6.2,12.3]{Keel_Yu_The_Frobenius}.
See \cref{lem:straightspinecount}.
\end{proof}

\subsection{The canonical $A_1(Y,\bbR)$ principal bundle $p: \Sk(G) \to \Sk(U)$.} \label{sec:P+}
To connect with the formalism of \cite{Gross_Mirror_symmetry_for_log_Calabi-Yau_surfaces_I_v1},
\cite{Gross_Theta_Functions}, \cite{HDTV}, and in particular to
get from the scattering diagram a deformation of the vertex as in \cite{GSCWS}, we choose a strictly convex
toric monoid $\NE$, with ${\NE^{\gp}}$ a lattice, which we indicate by $N_1$ and a linear map
$s:\NE(\bbY_s,\bbZ) \to \NE$, with $s^{-1}(0) = 0$. E.g. we could take $(\NE,A_1) = (\NE(\bbY_s,\bbZ),A_1(\bbY_s,\bbZ))$
if the RHS satisfies the conditions. Or we could take $\NE(\bbY/\obbY,\bbZ),A_1(\bbY/\obbY,\bbZ)$ (again if the RHS
satisfies the assumptions). To conform with \cite{GSCWS}, we write $P \coloneqq \NE$. 
There is a canonical locally trivial $A_1(\bbR)$ principal bundle $p:G \to \Sk(U)$, with integer linear structure on
the total space, non-singular outside the (inverse image of the) union of codimension two cones of $\Sigma$ (see
\cref{sec:integeraffine}), with
$p$-linear (where $\Sk(U) = |\Sigma_{(\bbY,\bbD^{\ess})}|$ inherits linear structure from $(\bbY,\bbD)$), together with
a canonical continuous $\Sigma$-piecewise linear $P$-convex section $\varphi: \Sk(U) \to G$, with kink along a codimension
one $\rho \in \Sigma$ the (image in $\NE$ of) the class of the associated one stratum. See \cite[\S 3.1]{GSCWS}.

We note that for $y$ in the smooth locus of $\Sk(U)$, parallel transport identifies $T_{x,\Sk(G)}(\bbZ)$ for $x \in p^{-1}(y)$,
we denote this lattice as $P_y$.
Following \cite[2.11]{HDTV},
\cite[1.16]{Gross_Theta_Functions},
we denote by $$
P_y \supset P_y^+ \coloneqq \{d\varphi(v) + \alpha| \alpha \in \NE \}
\subset T_{y,\Sk(G)}(\bbZ).$$

For any tangent vector $v \in T_{x,\Sk(G)}$, $x \in \Sk^{\sm}(G)$,
by its {\it height with respect to } $\varphi$ we mean $$
v - d \varphi (dp(v)) \in A_1
$$
(we view it as {\it above} if the height is in the monoid $P = \NE$).

Let $x \in \Sk^{\sm}(G)$ with $p(x)$ generic.
We map $Q[\NE][T_{p(x)}(\bbZ)] \to Q[T_x(\bbZ)]$ by sending $t^{\alpha} z^v \to z^{d\varphi(v) + \alpha}$ (we note though $\varphi$
is only piecewise linear, its derivative is none the less well defined).

\begin{definition-lemma} \label{def:wall-crossing_map}
Notation as immediately above.
We assume $x$ is generic,
$x \in n^{\perp}$.
$v \in \Lambda_x$, $v \not \in n^{\perp}$.

We consider the formal sum:
\begin{equation} \label{eq:wall-crossing}
\Psi_{x}(z^{\varphi(v)} ) \coloneqq \sum_{\substack{w \in \Lambda_x \\
\alpha \in \NE }} N(x,v,w,\alpha) z^{\varphi(w) + \alpha}.
\end{equation}

If $x$ is generic but lies on no $n^{\perp}$, we define $\Psi_x(z^{\varphi(v)}) \coloneqq z^{\varphi(v)}$.
\end{definition-lemma}

As in \cite[\S 20]{Keel_Yu_The_Frobenius} this converges in a natural adic topology, which we now recall:
We note by definition that each of the monomials
$z^{\varphi(w) + \alpha}$ in the sum lies in $P_x^{+}$, which is invariant under the $\NE$ action
on $P_y$.  Let $I_{k} \subset Q[\NE]$ be the ideal generated by monomials $z^{\alpha}$
with $\alpha \in k \cdot (\NE \setminus \{0\})$ (meaning $\alpha$ can be written as a sum of $k$ non-zero elements in $\NE$)
and $I_{k} \subset Q[P_y]$ the induced ideal.  

\begin{lemma} \label{lem:conv} $\Psi_x(z^{\varphi(v)}) \in \hR \coloneqq \varprojlim_{ k \geq 1} Q[P_x^+]/I_{k}$.
\end{lemma}
\begin{proof} It is enough to show that the formal sum is finite mod each $I_{k}$.  
Note there are only
finitely many monomials in  $Q[\NE] \setminus I_{k}$ for each $k > 0$.  Now for each $\alpha$ (and $x,v$ fixed) there are
only finitely many possible $w$ with $N(x,v,w,\alpha) \neq 0$ by (2) of \cref{prop:weightbound}. This completes
the proof.
\end{proof} 


We note: As holds for any linear manifold,
for any convex subset $\sigma \subset \Sk^{\sm}(U)$ the integer tangent spaces $\Lambda_x$, for $x \in \sigma$ are canonically identified, we denote this by $\Lambda_{\sigma}$ (or just $\Lambda$ when $\sigma$ is understood).
We note (as for any linear manifold) there is a canonical inclusion $\sigma \subset \Lambda_{\sigma,\bbR}$.

We fix a closed, convex cone $\beta$, contained in a maximal cone of $\Sigma_{(\bbY,\bbD)}$, whose interior is open and
contained in $\Sk^{\sm}(U)$.
We write $N$ for the dual of $\Lambda \coloneqq \Lambda_{\beta}$.

\begin{definition} \label{def:wall-crossing_transformation}
Given $n\in N\setminus 0$, $x \in n^\perp\subset \Sk^{\sm}(U)$ generic, and $v\in \Lambda$ with $\braket{n,v}\ge 0$.
If $\braket{n,v}>0$, let $\Psi_{x,n}(z^{\varphi(v)})$ be as in \eqref{eq:wall-crossing}.
If $\braket{n,v}=0$, let $\Psi_{x,n}(z^{\varphi(v)})=z^{\varphi(v)}$.  Let $P^+_{\braket{n,\cdot}\geq 0} \subset P^+_x$ be the
inverse image of the half space $\braket{n,\cdot}\ge 0 \subset M$. 
We further extend to $\Psi_{x,n}\colon Q[P^+_{\braket{n,\cdot} \geq 0}] \to\hR$ by linearity over $Q[\NE]$.
\end{definition}

Now we have a series of results which are minor restatements of results from \cite[\S 20]{Keel_Yu_The_Frobenius}.
In this reference the bundle $p: \Sk(G) \to \Sk(U)$, and $P^+ \subset P_x$ is not used, instead everything is
phrased in terms of $\Lambda \oplus A_1(Y,\bbZ)$.  We note the natural identification
of $(\Lambda \oplus \NE)$ with $P^+_x$ induced by (the derivative of) $\varphi$ IS NOT linear
if $n^{\perp}$ contains a condimension one cone from $\Sigma_{(\bbY,\bbD)}$. However, if we restrict from $\Lambda$
to the half space $\braket{n,\cdot} \geq 0  \subset \Lambda$, then it is. With this the arguments  from \cite{Keel_Yu_The_Frobenius} apply
after minor (and obvious) purely notational changes. 

\begin{theorem}\label{thm:wall-crossing_homomorphism}
The map $\Psi_{x,n}$ is a ring homomorphism.
\end{theorem}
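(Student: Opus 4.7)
By $\bbZ[\NE(Y,\bbZ)]$-linearity it suffices to prove $\Psi_{x,n}(z^{v_1+v_2}) = \Psi_{x,n}(z^{v_1})\cdot\Psi_{x,n}(z^{v_2})$ for all $v_1,v_2 \in M$ with $\langle n,v_i\rangle\ge 0$. Extracting coefficients of $z^{\alpha}z^{w}$ on both sides, the statement reduces to the combinatorial identity
\begin{equation*}
N(x,v_1+v_2,w,\alpha) \;=\; \sum_{\substack{w_1+w_2=w\\ \alpha_1+\alpha_2=\alpha}} N(x,v_1,w_1,\alpha_1)\cdot N(x,v_2,w_2,\alpha_2),
\end{equation*}
for all $w\in M$ and $\alpha\in\NE(Y,\bbZ)$. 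Convergence of the sum on the right in the $I$-adic topology of $\hR$ is automatic: for fixed $\alpha$, \cref{prop:weightbound} bounds the number of pairs $(w_i,\alpha_i)$ that can appear in a contributing bending with $\alpha_1+\alpha_2$ any fixed class modulo $I^k$.

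The plan for proving the identity is a deformation argument using the gluing formulas of \cref{sec:gluing}. I would introduce an auxiliary tripod spine $S_y$, parametrized by a point $y\in\Sk^{\sm}(U)$ close to $x$: three edges meet at a trivalent vertex $y$, with outward derivatives $v_1,v_2$ on two finite-length incoming legs, and outward derivative $-(v_1+v_2)$ on a third leg which continues past $y$ toward $x$, bends across the wall $n^{\perp}$, and terminates as an outgoing leg of direction $w$, with total recorded disk class $\alpha$. The identity $v_1+v_2-(v_1+v_2)=0$ is exactly the balancing condition at $y$, so for generic $y$ off all walls the tripod is $\Wall_\alpha$-transverse in the sense of \cref{def:transverse} by \cref{prop:transversality}. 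When $y$ lies in the open half-space $\{\langle n,\cdot\rangle>0\}$, the only wall-crossing of $S_y$ happens at $x$; applying the concatenation formula \cref{thm:gluing_concatenate} at $y$ and using that the balanced straight tripod in a single maximal cone has naive count $1$ (\cref{prop:zeroclass}), the total count of $S_y$ equals the left-hand side $N(x,v_1+v_2,w,\alpha)$.

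Next I would slide $y$ along a generic path into the opposite half-space $\{\langle n,\cdot\rangle<0\}$. Since $\Wall_\alpha$-transversality is a codimension-zero condition, one can choose the path to lie in $\SP^{\tr}(\Sk(U),\alpha)$ except at finitely many isolated wall-crossing events, where the constancy of $N(S_y,\alpha)$ follows from \cref{cor:vary_lengths}. In the final configuration, each of the two incoming legs $v_1,v_2$ must separately cross $n^{\perp}$ at points $x_1,x_2$ close to $x$, bending to outgoing directions $w_1,w_2$ before meeting at $y$; balancing at $y$ (which is now off all walls) together with the outgoing boundary condition forces $w_1+w_2=w$, and the disk class decomposes as $\alpha_1+\alpha_2=\alpha$. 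Cutting $S_y$ along the two edges between $x_i$ and $y$ and applying the interior gluing formula \cref{thm:gluing_inside} twice (at the cut points, then composing the counts at the trivalent vertex $y$ using the trivial straight-tripod contribution again from \cref{prop:zeroclass}) factors the count of $S_y$ as precisely the right-hand side of the displayed identity. Equating the two evaluations of the same deformation-invariant count gives the theorem.

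The main technical obstacle I anticipate is controlling transversality during the sliding of $y$ across $n^{\perp}$: one must avoid intermediate configurations in which an incoming leg becomes tangent to $n^{\perp}$, or in which the two crossing points $x_1,x_2$ collide with each other or with the image of the outgoing leg in an unbalanced way. These degenerations form a lower-dimensional subset of the parameter space of $y$, so a generic path avoids them, and \cref{prop:weightbound} guarantees that working modulo a fixed adic ideal $I^k$ only finitely many bending types occur, so the deformation can be carried out incrementally within each adic approximation. The independence of $N(x,v_i,w_i,\alpha_i)$ on the precise generic position of the crossing point $x_i$ on $n^{\perp}$ near $x$, which is needed to match the right-hand side of the identity, follows directly from \cref{cor:vary_lengths}.
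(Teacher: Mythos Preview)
Your tripod-plus-gluing approach is the correct strategy and is essentially the argument of \cite[\S 20]{Keel_Yu_The_Frobenius}, to which the paper defers without giving its own proof. However, the justification you give for the deformation step has a real gap.

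You write that as $y$ slides across $n^\perp$ the constancy of $N(S_y,\alpha)$ ``follows from \cref{cor:vary_lengths}''. It does not: that corollary only asserts local constancy on the \emph{transverse} locus $\SP^{\tr}$, and the instant the trivalent vertex $y$ lands on the wall the spine violates condition~(\ref{def:transverse:bending}) of \cref{def:transverse}. Worse, once $y$ is on the negative side there is no longer a single spine $S_y$: for fixed domain and basepoint there is one spine for each admissible bending pair $(w_1,w_2)$ with $w_1+w_2=w$, and these sit in pairwise disjoint connected components of $\SP^{\tr}$. So the object $N(S_y,\alpha)$ you propose to track is not even well-defined across the transition, and local constancy on $\SP^{\tr}$ cannot compare counts living on different components.

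The repair is to track instead the \emph{total} count over all spines with a given $(\mathrm{dom},\ev_i)$, which is exactly the degree of the \'etale map $\Phi_i$ restricted to $\ISk$. This degree is locally constant because $\Phi_i|_{\ISk}$ is proper, open and set-theoretically finite over the connected base; see \cref{lem:restrict_to_skeleton}(\ref{lem:restriction_to_skeleton:statum})--(\ref{lem:restriction_to_skeleton:all}) and \cref{prop:Spprop}. Choosing the basepoint to be the third end (on the negative side, fixed throughout), one checks that on the positive side the only contributing spine is the one with leg~$3$ crossing, so the total degree equals $N(x,v_1+v_2,w,\alpha)$; on the negative side the contributing spines are exactly those indexed by $(w_1,w_2,\alpha_1,\alpha_2)$, and your cutting via \cref{thm:gluing_concatenate} and \cref{prop:zeroclass} factors each as $N(x,v_1,w_1,\alpha_1)\,N(x,v_2,w_2,\alpha_2)$. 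Equating the two expressions for the constant degree gives the identity. With this correction your argument goes through.
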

\begin{proof} The same as the proof of \cite[20.9]{Keel_Yu_The_Frobenius}. \end{proof} 

\begin{proposition} \label{prop:wall-crossing_function}
  There exists $f_x \in \hR$ such that 
for any $v \in M$ with $\braket{n,v} \geq 0$, we have \begin{equation} \label{eq:wall-crossing_function}
\Psi_{x,n}(z^{\varphi(v)}) = z^{\varphi(v)}  \cdot f_x^{\braket{n,v}}. 
\end{equation}

The counts $N(V_{x,v,w,\alpha})$ in \cref{eq:wall-crossing} depend only on $\braket{n,v}$, $w-v$ and $\alpha$. 
There exist $N(b,\alpha) \in Q$, for $b \in n^{\perp} \subset \Lambda_x$ and $\alpha \in \NE$
such that 
$$
f_x = \sum_{\substack{b \in n^{\perp} \subset \Lambda_x \\ \alpha \in \NE}} N(b,\alpha) z^{\varphi(b) + \alpha} \in \hR.
$$
\end{proposition}
(Recall that throughout the paper
$Q \coloneqq \bbZ$ if $U$ contains no complete rational curves, $\bbQ$ otherwise.)
\begin{proof} This follows from \cref{thm:wall-crossing_homomorphism}, by the (very short)
  proof of \cite[20.12,20.13]{Keel_Yu_The_Frobenius}.   \end{proof} 

\begin{remark} \label{rem:scmio} We view the $N(b,\alpha)$ as counts of Maslov index zero disks with ios tropicalisation
  contained in  $n^{\perp}$, as in \cref{def:twigdef}.
  From the proof it is clear that $N(b,\alpha) \neq 0$ implies
  there is a Maslov index $0$ disk with root $x$ and outward derivative (sum of weights) $b$
  as in \cref{lem:tbal}. We note that the Gross-Siebert scattering diagram is defined in terms of counts
  of (their punctured log versions of) such objects. We do not know how to directly define such counts
  in our theory. Note the counts can be negative (so obviously virtual)
  even in the simplest non-toric cases.  \end{remark}

\begin{proposition} \label{prop:wall_decomposition}
For any $k>0$, there is a finite set of pairs $\fD_k=\{(\fd,f_\fd)\}$, where $\fd$ is a closed codimension-one rational convex cone in $\Sk(U)$
contained in a maximal cone of $\Sigma_{(\bbY,\bbD)}$
and $f_\fd\in Q[P^+_{\fd}]/I_{k}$, such that the union of all $\fd$ is the closure of the set of generic $x$ with $f_x\neq 1$ modulo $I_{k}$, and that for any $x\in\fd$ generic, we have $f_\fd=f_x$ modulo $I_{k}$.
\end{proposition}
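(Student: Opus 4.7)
The plan is to produce $\fD_k$ directly from the set of $(k,A)$-walls given by \cref{def:walldef}, and then use the deformation invariance of naive counts (\cref{cor:vary_lengths}) to show that each $f_\fd$ is well-defined modulo $I^k$. First I would observe that, modulo $I^k$, only finitely many classes $\alpha \in Q$ contribute to $f_x$ (those surviving in $Q/I^k$), and moreover for each such $\alpha$, \cref{prop:wall-crossing_function} together with \cref{prop:weightbound} bounds the relevant tangent vectors $v,w$: only finitely many $(v,w)$ with $w-v$ realized as a sum of weights of twigs for a class $\preceq \alpha$ contribute. So modulo $I^k$, the data defining $f_x$ is controlled by a finite set of counts $N(x,v,w,\alpha)$.

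Next, fix an ample $A$ on $Y$ as in \cref{const:Z} and let $\Wall_{(k,A)}$ be the union of $\alpha$-walls over all $\alpha\in\NE(Y,\bbZ)$ with $\alpha\cdot A$ bounded by an integer large enough to dominate the classes surviving in $Q/I^k$. By \cref{prop:tree_bound} this is a finite set of pairs $(\fd,w)$ with $\fd$ a closed rational polyhedral cone of codimension one contained in a maximal cone of $\Sigma$. Let $\{\fd_1,\dots,\fd_N\}$ be the (finite) collection of such codimension-one cones that appear. For each $i$, I want to define $f_{\fd_i}\in\bbZ[Q\oplus\Lambda]/I^k$ as the common value of $f_x\bmod I^k$ at any generic point $x$ in the relative interior of $\fd_i$ not lying on any other $\fd_j$ or on $\Sigma^{n-2}$. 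The key point is that at such a generic $x$, by \cref{prop:tree_bound}(1) every twig contributing a non-trivial count has its outgoing weight lying in the linear span of $\fd_i$, so in \cref{def:wall-crossing_transformation} the bend direction $w-v$ is constrained to lie in the integer lattice of $\fd_i$; in particular $f_x$ depends only on the tropical data at $\fd_i$ and not on the ambient position of $x$.

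The well-definedness of $f_{\fd_i}$, i.e.\ independence of the generic point $x$ in the relative interior of $\fd_i$, is the main technical step. For this I would use \cref{cor:vary_lengths}: as $x$ varies along the relative interior of $\fd_i$ (avoiding the finite union of other walls and singular strata), the associated short spine $h_{x,v,w}\colon[-\epsilon,\epsilon]\to\Sk(U)$ with bend at $x$ is transverse with respect to $\Wall_\alpha$ for each contributing $\alpha$, and the family obtained by translating $x$ within this relative interior is a deformation through transverse spines (the edges stretch or shrink but no new bending vertex is created, and the image never hits a codimension-two stratum or another wall). Hence the count $N(x,v,w,\alpha)$ is constant on this connected generic locus, and so is $f_x\bmod I^k$.

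Finally, for generic $x$ not lying on any $\fd_i$: any contributing twig has its root lying on some wall of $\Wall_{(k,A)}$ by \cref{prop:tree_bound}(2), so no twig contributes and only the trivial spine (affine, no bend) counts. By the lemma immediately following \cref{def:infintesimal_spine}, $N(x,v,v,0)=1$ and all other counts vanish, giving $\Psi_x(z^v)=z^v$ and hence $f_x=1$ modulo $I^k$. This shows the closure of $\{x\text{ generic} : f_x\not\equiv 1\bmod I^k\}$ is contained in $\bigcup_i \fd_i$, and the reverse inclusion is automatic from the definition of each $f_{\fd_i}$ (discarding those $\fd_i$ for which the constructed $f_{\fd_i}$ equals $1$). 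The hardest point, as indicated, is verifying that the deformation of $x$ within the relative interior of $\fd_i$ genuinely stays within the transverse locus defined in \cref{def:transverse}; this requires using that the complement of the other walls and of $\Sigma^{n-2}$ in $\fd_i$ is connected in codimension zero, which holds because each $\fd_i$ is a rational polyhedral cone of dimension $n-1$ and the bad locus is a finite union of proper polyhedral subsets.
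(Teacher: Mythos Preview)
Your proposal is correct and follows essentially the approach the paper intends. The paper does not give a self-contained proof here; it simply states that \cref{prop:wall_decomposition} (along with the surrounding results in this subsection) is a restatement of results from \cite[\S 20]{Keel_Yu_The_Frobenius} and that ``the argument from \cite{Keel_Yu_The_Frobenius} applies without change.'' The ingredients you invoke---the finite collection of $(k,A)$-walls from \cref{prop:tree_bound} and \cref{def:walldef}, the bound on twig weights from \cref{prop:weightbound}, and the deformation invariance of counts on the transverse locus from \cref{cor:vary_lengths}---are exactly the tools the paper has set up for this purpose, and your assembly of them is the natural one.

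One small point worth tightening: when two cones $\fd_i,\fd_j$ from the wall collection happen to span the same hyperplane $n^\perp$ and overlap in codimension one, a point $x$ in the overlap is still generic in the sense of \cref{def:generic}, so you should check that $f_x$ agrees with your $f_{\fd_i}$ there. This follows by the same deformation-invariance argument (slide $x$ within $n^\perp$ into the locus lying on only one $\fd_i$, staying transverse throughout), or alternatively by first passing to a common polyhedral refinement of the walls within each hyperplane before defining the $f_\fd$. Either fix is routine.
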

\begin{proof} Same as proof of \cite[20.7]{Keel_Yu_The_Frobenius}. \end{proof} 

\begin{definition} \label{def:prescat}
We denote \[\fD\coloneqq\Set{(x,f_x) | x\in n^\perp\subset \Sk(U)
\text{ generic } } \]
and call it the \emph{scattering diagram} associated to $\bbU\subset \bbY$.
In view of \cref{prop:wall_decomposition}, we call $\fD_k$ the $k$-th order approximation of $\fD$.
\end{definition}

\begin{remark} \label{rem:fD} We note the scattering diagram $\fD$ is canonically determined by the
  analytic cylinders
  tropicalising to $h\colon [-\epsilon,\epsilon] \to \Sk(U)$
as in \cref{def:infintesimal_spine}.
This collection of cylinders depends only on $U$, not on the formal model $(\bbY,\bbD)$.
In this way $\fD$ is canonically associated to $U$.
Different models can give different affine structures on $\Sk(U)$.
If we view the scattering diagram in the different affine structures, it can look different.
But they are equivalent; they encode exactly the same information.
This gives a simple explanation for the \it{mutation invariance} of the cluster scattering diagram in \cite[1.3]{Gross_Canonical_bases} (the original proof of which is quite involved).
In particular, the support $|\fD| = \cup_k |\fD_k| \subset \Sk(U)$ (the union of the walls) depends only on $U$.
\end{remark}
\subsection{Consistency} \label{sec:scattering:consistency}

Our scattering diagram $\fD$ is \emph{theta function consistent}, see \cref{prop:theta_function_consistent}. 

\begin{definition-lemma} \label{def:local_theta_function}
Given $x \in \Sk(U)$ generic and $0 \neq m \in \Sk(U,\bbZ) $, $e \in \Lambda_x \eqqcolon M$, let $\SP_{x,m,e}$ be the set of spines in $\Sk(U)$ with domain $[-\infty,0]$ such that $-\infty$ maps to $\partial\oSk(U)$ with derivative $-m$ and $0$ maps to $x$ with derivative $-e$.
We define the \emph{local theta function} $\theta_{x,m}$ to be the formal sum \[\theta_{x,m} \coloneqq \sum_{\substack{e\in \Lambda_x,\; S\in\SP_{x,m,e}\\ \alpha\in\NE(Y)}} N(S,\alpha) z^{\varphi(e) + \alpha}.\]

The formal sum converges in $\hR$ as in \cref{lem:conv}.
\end{definition-lemma}
\begin{proof} The same argument used to prove \cref{lem:conv} applies. \end{proof}

\begin{lemma} \label{lem:localthetabl} The local theta function, \cref{def:local_theta_function} is the same as the analogous
  expression using broken lines for the scattering diagram, as in \cite[4.2,4.4]{GSCWS}.
\end{lemma}
\begin{proof} This is immediate from the gluing formula, \cref{thm:gluing_concatenate}
  and the definition of the wall
  crossing function \cref{eq:wall-crossing}.
  \end{proof} 

\begin{proposition} \label{prop:theta_function_consistent}
The scattering diagram $\fD$ is theta function consistent in the following sense:
Given any $k>0$ and $(\fd,f_\fd)\in\fD_k$, choose $n\in N$ with $\fd\subset n^\perp$, and let $a,b\in \Sk(U)$ be two general points near a general point $x\in\fd$ with $\braket{n,a}>0$, $\braket{n,b}<0$.
Write $\theta_{a,m} = \theta_{a,m,+}+\theta_{a,m,0}+\theta_{a,m,-}$ and $\theta_{b,m} = \theta_{b,m,+}+\theta_{b,m,0}+\theta_{b,m,-}$, where we gather monomials $z^e$ according to the sign of the pairing $\braket{n,e}$, e.g.,
\[\theta_{a,m,+} \coloneqq \sum_{\substack{e\in \Lambda_x,\; \braket{n,e}>0\\S\in\SP_{a,m,e},\; \alpha\in\NE(Y)}} N(S,\alpha) z^{\varphi(e) + \alpha}.\]
The following hold modulo $I^k$:
\begin{align}
\Psi_{x,n}(\theta_{a,m,+})=\theta_{b,m,+},\label{eq:theta_function_consistent_1}\\
\Psi_{x,-n}(\theta_{b,m,-})=\theta_{a,m,-},\label{eq:theta_function_consistent_2}\\
\Psi_{x,n}(\theta_{a,m,0})=\theta_{b,m,0},\label{eq:theta_function_consistent_3}\\
\Psi_{x,-n}(\theta_{b,m,0})=\theta_{a,m,0}.\label{eq:theta_function_consistent_4}
\end{align}
\end{proposition}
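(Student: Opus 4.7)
The plan is to deduce all four identities from the gluing formulas for naive spine counts (\cref{thm:gluing_inside} and \cref{thm:gluing_concatenate}) combined with deformation invariance (\cref{cor:vary_lengths}). The wall-crossing automorphism $\Psi_{x,n}$ is built precisely from the counts $N(x,v,w,\alpha)$ of short spines with at most one bend on the wall (\cref{def:infintesimal_spine}), so $\Psi_{x,n}$ is by design the operator that converts spines ending at $a$ into spines ending at $b$ via a bridging segment that crosses $\fd$ once. I fix the order $k$ and shrink all relevant neighborhoods so that only finitely many disk classes $\alpha$ bounded by the $k$-walls of \cref{prop:wall_decomposition} contribute modulo $I^k$, and choose $(x,a,b)$ generic enough that all contributing spines are transverse in the sense of \cref{prop:transversality}.

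For \eqref{eq:theta_function_consistent_1}, I start with a spine $S\in\SP_{a,m,e}$ with $\braket{n,e}>0$ contributing $N(S,\alpha)z^\alpha z^e$. I concatenate $S$ with a two-edge bridge entering a generic wall point $y\in\fd$ close to $x$ with incoming weight $-e$ and leaving with outgoing weight $-w$, for each admissible $w$ with $w-e\in n^\perp$ and $\braket{n,w}>0$. By \cref{thm:gluing_inside}, the count of the glued spine (which ends at $b$) factors as $N(S,\alpha)\cdot N(x,e,w,\beta)$, summed over splittings $\alpha+\beta$ of the total class; the inner sum $\sum_w N(x,e,w,\beta)z^w$ equals by construction the $+$-part of $\Psi_{x,n}(z^e)=z^e f_x^{\braket{n,e}}$ from \cref{prop:wall-crossing_function}. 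Summing over $S$, $\alpha$, $e$ recovers $\Psi_{x,n}(\theta_{a,m,+})$, while any spine contributing to $\theta_{b,m,+}$ is pulled back to this form by sliding $y$ toward $a$ using \cref{cor:vary_lengths}. Equation \eqref{eq:theta_function_consistent_2} follows by exchanging $a\leftrightarrow b$ and replacing $n$ with $-n$.

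For \eqref{eq:theta_function_consistent_3} and \eqref{eq:theta_function_consistent_4}, the outgoing weight $e$ lies in $n^\perp$ and $\Psi_{x,\pm n}$ acts trivially on such monomials by convention. Equality $\theta_{a,m,0}=\theta_{b,m,0}$ reduces to the statement that a spine ending at $a$ with outgoing weight $-e$, $\braket{n,e}=0$, transports unchanged to a spine ending at $b$ with the same weight when one appends a straight segment tangent to $\fd$; this uses \cref{lem:straightspinecount} to identify the contribution of the inserted segment with the trivial count $1$ in class $0$, and \cref{cor:vary_lengths} to deform the endpoint through the wall. The main technical obstacle is controlling the combinatorics of the ios tropicalisation near $x$: one must ensure that no contributing disk bends at a stratum other than $\fd$ inside the bridging region, so that a single application of $\Psi_{x,n}$ captures the full transition. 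This is handled by shrinking the neighborhood of $x$ below the minimal separation of $\fd$ from the other walls of $\fD_k$ and by invoking the twig bounds in \cref{prop:tree_bound}; once arranged, the rest of the verification is a bookkeeping identity that follows from the ring homomorphism property established in \cref{thm:wall-crossing_homomorphism}.
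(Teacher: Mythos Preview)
Your strategy—concatenate each contributing spine with the short wall-crossing segment and apply the gluing formula—is the argument from \cite[\S 20]{Keel_Yu_The_Frobenius}, which is all the paper itself invokes for this proposition. The outline for \eqref{eq:theta_function_consistent_1}–\eqref{eq:theta_function_consistent_2} is correct; you should, however, cite \cref{thm:gluing_concatenate} rather than \cref{thm:gluing_inside}, since you are joining at the $1$-valent endpoint $a$, not at an interior edge point.

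Your treatment of \eqref{eq:theta_function_consistent_3}–\eqref{eq:theta_function_consistent_4} has a genuine gap. A segment ``tangent to $\fd$'', i.e.\ with direction in $n^\perp$, has constant $\braket{n,\cdot}$-value and therefore cannot connect $a$ (with $\braket{n,a}>0$) to $b$ (with $\braket{n,b}<0$); \cref{lem:straightspinecount} is about segments disjoint from walls, so it does not apply either. Nor can you use \cref{cor:vary_lengths} to slide the endpoint across $\fd$: at the crossing the $1$-valent endpoint lies on the wall, violating \cref{def:transverse}(\ref{def:transverse:bending}), and since the incident edge has direction $e\in n^\perp$ the entire last segment then lies in $n^\perp$ (hence in $\fd$ near $x$), so \cref{def:transverse}(\ref{def:transverse:image}) fails as well. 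The concatenation trick used for the $\pm$ cases is unavailable precisely because the weight-matching condition of \cref{thm:gluing_concatenate} forces the bridge to leave $a$ in direction $-e\in n^\perp$, which never reaches the other side. The $0$-part requires a separate argument.
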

\begin{proof} This follows by the same argument used to prove \cite[20.22]{Keel_Yu_The_Frobenius}. 
  \end{proof} 

For $x \in \Sk(G)$ with $p(x)$ generic we let $f_x$ be the scattering function from \cref{prop:wall-crossing_function}

\begin{proposition} \label{prop:can_scat} Notation as above.
$f_x =1$ mod $I \cdot \hR$. \end{proposition} 
\begin{proof}
  We follow the notation of \cref{prop:wall-crossing_function}.
  Let $w$ appear (with non-zero coefficient)
  on the RHS of \cref{eq:wall-crossing} (for some $v$ with $\braket{n,v} \geq 0$.  Note $w$ is in
  the same half space   $w \in \braket{n,\cdot } \geq 0$, and $\varphi$ restricts linearly to this half space
  (as $x$ is generic).  Thus
  $$
  f_x = \sum_{\substack{w \in \braket{n,\cdot} \geq 0  \\
      \alpha \in \NE}} N(x,v,w,\alpha) z^{\varphi(w-v) + \alpha} \in \hR
  $$
  (where we abuse notation and write $\varphi$ for the derivative of its
  restriction to $\braket{n,\cdot } \geq 0$). 
  Mod $I$, the class of a twig disk (of a cylinder contributing to $f_x$) cannot be trivial by \cref{prop:zeroclass}. So
  if a cylinder has any twig disks the corresponding monomial of $f_x$ lies in
  $I Q[P_x^{+}]$. 
  So  a cylinder which contributes mod $I$ has no twigs. Then the spine is the full ios
  tropicalisation and by \cref{lem:tbal} it is affine. Moreover, the entire cylinder lies
  in the affinoid domain
  $G \coloneqq r^{-1}(\cG)$  for small convex $x \in \cG \subset \Sk^{\sm}(U)$, contained (since $x$ is
  general, and we assume $(\bbY,\bbD)$ is almost minimal)  in the smooth locus of the SYZ fibration
  (by \cref{prop:GluingEnds}  ). 
  This is an affinoid domain of an algebraic torus (realized in the same SYZ way). 
  So mod $I$ the scattering diagram reduces
  to the toric case, where all scattering functions are trivial. This completes the proof.
  \end{proof}

\begin{construction}
  By the same argument used to prove \cref{prop:wall_decomposition} we can find, for each $k > 0$, a finite collection of walls, as in \cite{HDTV}, $\fd$ with $f_x$ modulo $I_{k}$ constant in
  $x$ for generic $x$ in $\fd$, containing any generic point with $f_x$ non-trivial (mod $I_{k}$).
We can (subdividing) assume each $\fd$ lies in a maximal cone of $\Sigma$.
\end{construction}

Now we check this collection of walls gives a scattering diagram in the sense of
\cite[2.11]{HDTV}, whose terminology we follow.

\begin{lemma} Let $j$ be a joint of $\fD$ of codim zero or one (see \cite[2.15]{HDTV}).
Let $x \in j$ be an interior point.
Let $\fd$ be a wall containing $x$.
Then any monomial $z^{q}$ that appears with non-zero coefficient in the scattering function $f_{\fd}$ has $q \in \cP_{x}^{+}$.

The wall crossing automorphism for $\fd$
is an automorphism of $Q[\cP_x^{+}]/I_{k}$.
\end{lemma}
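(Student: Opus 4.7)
The plan is to derive both assertions directly from \cref{prop:can_scat}, with only a small amount of bookkeeping for parallel transport at the joint. By the construction preceding the lemma, $f_\fd$ is defined to agree modulo $m^k$ with $\of_y$ at a generic interior point $y$ of $\fd$. First I will note that any codimension zero or codimension one joint $j$ lies in the smooth locus $\Sk^\sm(U)$, since the codimension two singular stratum $|\Sigma^{n-2}|$ is avoided. Consequently, parallel transport along a path inside $\fd$ from $y$ to $x$ gives a canonical identification $\cP_y^+ \simeq \cP_{p(x)}^+$, compatible with the identification used to define $f_\fd$ at $x$. \cref{prop:can_scat} asserts that every monomial $z^v$ appearing in $\of_y$ satisfies $v \in \cP_{p(y)}^+$, so after transport every monomial in $f_\fd$ (viewed at $x$) lies in $\cP_{p(x)}^+$, as required.

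For the second statement, the plan is to write the wall-crossing in the standard form
\[
\theta_\fd(z^v) \;=\; z^v f_\fd^{\langle n_0, v\rangle}, \qquad v \in \cP_x^+,
\]
where $n_0$ is a primitive integer conormal to $\fd$ (extended to $\cP_x^+$ via $d\varphi$), and to check that this preserves $\bbZ[\cP_x^+]/m^k$. The key input is the final assertion of \cref{prop:can_scat} that $f_\fd \equiv 1 \pmod m$. Writing $f_\fd = 1 + h$ with $h \in m\cdot\bbZ[\cP_x^+]$, the geometric series $\sum_{i\ge 0}(-h)^i$ truncates modulo $m^k$ and produces an inverse of $f_\fd$ inside $\bbZ[\cP_x^+]/m^k$. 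Hence $f_\fd^{\pm n}$ lies in $\bbZ[\cP_x^+]/m^k$ for every integer $n$, so $\theta_\fd$ sends each monomial $z^v$ to an element of $\bbZ[\cP_x^+]/m^k$ and extends to a ring endomorphism. The inverse is given explicitly by the analogous formula with $n_0$ replaced by $-n_0$, so $\theta_\fd$ is an automorphism.

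The only genuinely delicate point I anticipate is justifying that the $\cP^+$-lattices at different points of the joint are coherently identified, and in particular that the scattering function defined at a generic interior point of $\fd$ extends unambiguously across $j$. This reduces to the observation above that $j$ lies in $\Sk^\sm(U)$ and that each wall is by construction contained in a single maximal cone of $\Sigma$, so the linear structure on a neighborhood of $\fd$ in that cone provides the needed global trivialization of $\Lambda_{\Sk(G)}|_{p^{-1}(\fd)}$. With this identification the arguments above are essentially formal manipulations in a fixed ring.
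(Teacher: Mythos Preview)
Your argument has a genuine gap in the codimension one case. While you are right that a codimension one joint $j$ lies in the smooth locus $\Sk^{\sm}(U)$ (it sits in some codimension one cone $\rho\in\Sigma^{n-1}$), parallel transport along a path from a generic $y\in\fd$ to $x\in j$ identifies the integer tangent lattices $\Lambda_y\simeq\Lambda_x$ but does \emph{not} identify $\cP_{p(y)}^+$ with $\cP_{p(x)}^+$. The monoid $\cP^+$ is defined using $d\varphi$, and $\varphi$ is only piecewise linear: it bends precisely along the codimension one cones of $\Sigma$. Thus $d\varphi$ jumps across $\rho$, and $\cP_{p(x)}^+$ at the joint is a genuinely different (in general strictly smaller) monoid than $\cP_{p(y)}^+$ transported from the interior of the adjacent maximal cone. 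Your final paragraph acknowledges only the trivialization of $\Lambda_{\Sk(G)}$, which is not the issue.

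This is exactly the subtlety the paper's proof addresses. When $\fd\not\subset\rho$, the paper takes a generic $y\in\fd$ in the interior of a maximal cone, and for each monomial $z^v$ (coming from a twig-disk) it distinguishes two cases according to the sign of $\langle N,v\rangle$, where $N$ is the primitive conormal to $\rho$. If $-v$ points towards $\rho$, the inclusion $v\in\cP_{p(x)}^+$ follows from the convexity of $\varphi$ (as in the proof of \cite[2.39]{Arguz_The_higher-dimensional_tropical_vertex}). If $v$ points towards $\rho$, convexity goes the wrong way, and one needs the geometric input of \cref{prop:dccoeff}: the cycle class of the twig-disk contains at least $N(v)[C_\rho]$, which is precisely the correction needed to land in $\cP_{p(x)}^+$. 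Your proposal omits both of these ingredients. Your treatment of the second assertion (the automorphism statement) via $f_\fd\equiv 1\pmod m$ is fine once the first assertion is established; the paper simply cites \cite[2.40]{Arguz_The_higher-dimensional_tropical_vertex} for the same point.
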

\begin{proof}
  When the joint has codimension zero this follows from \cref{prop:wall-crossing_function},
  see \cref{rem:scmio}, because $\cP_x^{+}$ is constant (with
  respect to $x$) near $x$.
So we can assume the codimension is one, $j \subset \rho \in \Sigma^{n-1}$.
If $\fd \subset \rho$ then it again follows from \cref{prop:wall-crossing_function}, so we assume not.
Let $y \in \fd$ be a general point.
Now we consider a twig-disk for a cylinder contributing to $\fd$, and consider its monomial.
More precisely, consider $q' = \varphi(v) + \alpha$, where $v$ is the monomial, and $\alpha$ the class of the twig.
$q$ as in the statement is a sum of such monomials, $q'$, so we can prove the analogous statement for $q'$,
which we now call $q$. 
Clearly $q  \in \cP_y^{+}$.
So if $-v$ points towards the wall $\rho$, i.e.\ if (the primitive dual class, $N$, vanishing on $\rho$, positive on $y$, is non-negative on $v$) this follows from the convexity of $\varphi$, just as in the proof of \cite[2.39]{HDTV}.
So we can assume $v$ points towards the wall.
The statement is equivalent to showing that the coefficient of the cycle for the twig-disk is at least $N(v) [C_{\rho}]$.
This follows from \cref{prop:dccoeff}.
Now the final statement follows, see the proof of \cite[2.40]{HDTV}.
This completes the proof. 
\end{proof}

By the Proposition, $\fD$ is a scattering diagram in the sense of \cite{GSCWS}.
We refer to $\fD$ as the {\it canonical scattering diagram}.
We conjecture it is the same as the canonical scattering diagram of \cite{GSCWS}.
This holds in cluster cases, by \cite[1.19]{Keel_Yu_The_Frobenius}.

\begin{remark} We note that this equality is not obvious from
the definitions, e.g. our scattering diagram is defined in terms of counts of analytic cylinders, while the
scattering diagram of \cite{GSCWS} is more like a count of twigs
(the central fibre of a formal model for a twig gives rise to a log curve of the sort the Gross-Siebert wall crossings
count).  Counting cylinders has important advantages over counts of {\it twigs}, e.g. in the no rats case, these
(or rather the associated complete rational curves) are free (as in
\cref{def:MsmX}),
and the counts are naive (as opposed to virtual), which for example gives positivity of the coefficients of the
scattering functions.
\end{remark} 

Now we prove: 

\begin{theorem} \label{th:big_consist} The canonical scattering diagram is consistent, in the sense of
  \cite[2.19]{HDTV}.
\end{theorem}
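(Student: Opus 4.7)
The plan is to reduce scattering consistency of $\fD$ to the theta function consistency of \cref{prop:theta_function_consistent}, passed through the universal torsor $p\colon G \to U$. The pre-scattering diagram $\fD^{G}$ for $G$ (with a chosen compactification $\cT \subset \bP$) is theta function consistent by \cref{prop:theta_function_consistent}, and by \cref{prop:can_scat} the canonical scattering diagram for $U \subset Y$ is obtained from $\fD^G$ by setting curve classes to zero, equivalently by restricting to monomials $z^v$ with $v \in P^+_{p(x)}$ and using the $A_1(Y)$-invariance. The consistency condition of \cite[2.19]{Arguz_The_higher-dimensional_tropical_vertex} requires, for each joint $j$ and each monomial lying in the appropriate localization, that the composition of wall-crossing automorphisms around a small loop around $j$ act as the identity. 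The point is that theta function consistency is essentially a statement along one-dimensional paths between nearby generic basepoints, which one can assemble around a loop.

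The first step is to verify consistency at codimension zero joints, which is automatic: in the interior of a maximal cone of $\Sigma$ there are no walls to cross, and $\of_x$ is locally constant by \cref{prop:wall_decomposition}. Next, at a codimension one joint $j$, choose an interior point $x \in j$ and a small loop $\gamma$ based at a generic nearby point $a$, passing in sequence through walls $\fd_1, \dots, \fd_r$. I would show that for each $m \in \Sk(G,\bbZ)$ the local theta function $\theta_{a,m}$ is carried to itself by the composition $\Psi_{x_r, n_r} \circ \cdots \circ \Psi_{x_1,n_1}$, by applying \eqref{eq:theta_function_consistent_1}--\eqref{eq:theta_function_consistent_4} inductively at each crossing. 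Because the local theta functions $\theta_{a,m}$ generate (after allowing $m$ to vary over $\Sk(G,\bbZ)$) a large enough subalgebra of $\bbZ[\cP_{a}^+]/I^k$ to detect wall-crossing automorphisms, this forces the composition to be the identity modulo $I^k$ for every $k$, which gives consistency at $j$.

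To handle codimension two joints, I would reduce to the codim one case by a stability/perturbation argument: by \cref{cor:vary_lengths}, \cref{prop:rigidity_spine}, and \cref{prop:transversality}, naive counts are locally constant on the transverse locus and transverse spines are dense; in particular the local theta functions $\theta_{a,m}$ depend continuously on $a$ over the locus of generic points. This allows one to split a loop around a codim two joint into a finite composition of codim one loops (by choosing an interior polyhedral refinement) and apply the codim one case. The generating argument for $\theta_{a,m}$ needs that enough exponents $m$ appear with nonzero count; this is guaranteed by \cref{prop:zeroclass} in the class zero piece (providing the identity at mod $I$) and by the gluing formulas \cref{thm:gluing_inside}--\cref{thm:gluing_concatenate} for higher classes.

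The main obstacle will be checking that the generating collection $\{\theta_{a,m}\}$ really is faithful enough to detect automorphisms of $\bbZ[\cP_a^+]/I^k$, and handling joints whose images pass through the singular locus $|\Sigma^{n-2}|$ of the affine structure, where the weight lattice $\Lambda$ is not locally constant. For the first issue the key input is \cref{prop:zeroclass} together with the fact that $\theta_{a,m}$ starts with the monomial $z^{d_a(-m)}$ (from the straight spine contribution), so the lattice of exponents appearing is sufficiently large. For the second issue, the practical smooth locus \cref{rem:pracsmooth} and the transversality of \cref{def:transverse}(\ref{def:transverse:singlocus}) let me restrict all constructions to a dense open subset of $\Sk(U)$ where everything is manifestly well-defined, and then extend by continuity of naive counts under deformation of transverse spines.
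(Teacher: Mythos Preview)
Your treatment of codimension zero and one joints is essentially the same as the paper's: you use the theta function consistency of \cref{prop:theta_function_consistent} and then argue that, because the local theta functions $\theta_{a,m}$ begin with $z^{d_a(-m)}$ modulo $m$ (from the straight spine, \cref{prop:zeroclass}), they generate the local ring $\bbZ[\cP_a^+]/m^k$, so the composition of wall-crossings around a loop must be the identity. That part is fine.

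The genuine gap is your handling of codimension two joints. You propose to reduce a small loop around a codim two joint $j$ to a finite composition of codim one loops by polyhedral refinement and then invoke continuity of counts. This fails for two reasons. First, in the formalism of \cite[2.19]{Arguz_The_higher-dimensional_tropical_vertex}, the consistency condition at a codim two joint is not a Kontsevich--Soibelman loop condition at all: when $j$ lies in the singular locus $|\Sigma^{n-2}|$ there is no local toric ring on which the wall-crossing automorphisms act, so the very statement you are trying to reduce to is undefined. The definition there is instead a direct theta function consistency condition for the local affine manifold $(B_j,\cP_j)$ attached to the joint. Second, even where the affine structure is smooth, a small loop around a codimension two locus is not in general homotopic (rel basepoint, through generic points) to a concatenation of codimension one loops; your ``interior polyhedral refinement'' does not manufacture such a homotopy. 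The paper's proof handles codim two by verifying the required theta function consistency at $j$ directly, repeating the argument of \cref{prop:theta_function_consistent} for the local theta functions of $(B_j,\cP_j)$; no reduction to lower codimension is attempted or needed. Your continuity/density argument via \cref{prop:transversality} and \cref{cor:vary_lengths} cannot substitute for this, because the codim two condition is an identity in a ring associated to $j$ itself, not a limit of identities at nearby generic points.
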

\begin{proof} Following the definition, we check consistency in codimension zero,
one, and two (this is the definition of consistent).
For codimensions zero and one, the interior of the joint is in the smooth locus, and the definition of consistency is of the Kontsevich-Soibelman sort,
that a composition of wall crossing automorphisms is the identity (in the local ring associated to the joint).
For codimension two, the joint could be contained in the singular locus, and then the Kontsevich-Soibelman consistency does not make sense (there is no toric local ring), instead the definition is a version of theta function consistency, but in terms of {\it local theta functions}
for the local affine manifold $(B_j,\cP_j)$ as in \cite[2.2.3]{HDTV}.
This holds by the same argument as in the proof of \cref{prop:theta_function_consistent} above,
using \cref{lem:localthetabl}. 
For codimensions zero and one the (ordinary) theta functions satisfy \cref{prop:theta_function_consistent}.
This implies the desired Kontsevich-Soibelman consistency (Definitions \cite[2.17]{HDTV} and \cite[2.16]{HDTV}) since the local theta functions generate the local ring in question.
Indeed only the {\it straight broken line}, more precisely, one with no twigs, is non-trivial mod $I$, from this generation is easy.
This completes the proof.
\end{proof}

\begin{proof}[Completing the proof of \cref{thm:lcsings}]
  All that remains is to show that the fibres are integral. First assume $(\bbY,\bbD)$ is almost minimal,
  \cref{def:am}, and so the scattering diagram results above apply.
  
Using the $\bbT_D$ action as in the proof of (the other statements of)
\cref{thm:lcsings}, it's enough to show that the total space is integral along the central fibre, and for this it is enough to show the total space is normal generically along each component of the double (i.e. non-normal) locus of the vertex central fibre.
Now we have a main result of \cite{Gross_Theta_Functions} (generalizing the main construction of \cite{Gross_Mirror_symmetry_for_log_Calabi-Yau_surfaces_I_v1}):
The scattering diagram determines a flat deformation over $R_{\NE}/m^k$, where $R_{\NE} \coloneqq Q[\NE]$
(and $\NE$ is
the monoid introduced at the start of \cref{sec:P+}), 
whose global functions are
canonically identified with the base extension of $A_{(\bbY,\bbD)}/m^k$ to $R_{\NE}$.
Moreover,
outside of codimension two the deformation has simple toric charts, the purely toric Mumford deformation given by
$Q[P_y^{+}]/m^k$, see \cite[2.8]{Gross_Theta_Functions}. 
Now it is enough to show the inverse limit (over $k$) is integral and this holds since the ring associated to a toric monoid (e.g.\ $Q[P_y^{+}]$) is normal (which implies the same holds for the inverse limit).
This completes the proof in case $(\bbY,\bbD)$ is almost-minimal. We are assuming almost minimal
in case \cref{ex:CYD}, so without the almost-minimal assumption we are in case \cref{ex:affine} or \cref{ex:pipe}.
In these cases, by \cref{cor:fmf},
the set of fibres over the structure torus is independent of compactification, and so we reduce to the
almost-minimal case using \cref{prop:am}. This completes the proof. 
\end{proof}
\subsection{Construction of the canonical \texorpdfstring{$\varphi$}{φ}} \label{sec:varphi}
When $s: \NE(\bbY_s,\bbZ) \to \NE$ in \cref{sec:P+}  factors through $\NE(\bbY_s,\bbZ) \to \NE(\bbY/\obbY,\bbZ)$ there
is a natural geometric construction of the $A_1$ torsor $p: P \to \Sk(U)$, and then the section $\varphi$ becomes
an instance of a very general Berkovich construction:

\begin{construction} \label{const:varphi}
Let $\fX, \fY$ be formal schemes locally of finite presentation over the ring of integers $\kc$.
Let $f\colon\fX\to\fY$ be a flat morphism with geometrically integral fibers.
We can construct a canonical set-theoretic section $\varphi\colon\fY_\eta\longto\fX_\eta$ as follows:

For every $y\in\fY_\eta$, let $\cH(y)$ denote its complete residue field.
Let $\fZ$ be the pullback of $\fX$ along $\Spf\cH(y)^\circ\to\fY$.
By assumption, it is an admissible formal scheme over $\cH(y)^\circ$
with integral special fiber $\fZ_s$, and generic fiber $\fZ_\eta\simeq(\fX_\eta)_y$.
Let $\pi\colon\fZ_\eta\to\fZ_s$ be the reduction map (see \cite[\S 1]{Berkovich_Vanishing_cycles_for_formal_schemes}).
By \cite[2.4.4(ii)]{Berkovich_Spectral_theory}, the preimage of the generic point of $\fZ_s$ by $\pi$ gives a unique point in $\fZ_\eta\simeq(\fX_\eta)_y$, which we take to be the image $\varphi(y)\in(\fX_\eta)_y\subset\fX_\eta$.

\end{construction}

\begin{construction} \label{const:torsor}

  Now we specialize to the setting of the paper, \cref{ass:basicsetup}.
  We assume we have a finite rank lattice $A^1$ together with a linear $s: A^1 \to \Pic(\bbY)$ whose image
  contains an ample line bundle. 
We consider the corresponding torsor
\[p \colon \cT \coloneqq\underline{\Spec}\bigg(\bigoplus_{L\in A^1} s(L)\bigg)\longto \bbY,\]
which is a principal \[\bbT^{A^1}\coloneqq \bbT_{A_1} \coloneqq A_1 \otimes_\bbZ \bbG_{m,k} = \Spec(k[A^1])\]
bundle, and the action of $\bbT^{A^1}$ is given by the $A^1$-grading. This induces in a natural
way a $T_{A_1} \coloneqq \bbT_{A_1}^{\an}$ torsor over the generic fibre $Y$, which we indicate as $p:T \to Y$. 
Denote the restriction by $p\colon G \coloneqq T|_U \to U$. 
The torus bundle has a canonical relative volume form, wedging with $\omega_U$
gives a volume form on $G$, 
and we have the skeleton $\Sk(G)$.
$p\colon \Sk(G) \to \Sk(U)$ is a principal $A_1$-bundle 
Applying \cref{const:varphi}, we obtain a canonical section $\varphi\colon \Sk(U) \to \Sk(G)$.
\end{construction}

\begin{remark} \label{rem:varphiBT_explicit}
Let us give a more explicit description of $\varphi\colon\Sk(U) \to \Sk(G)$.
Let $S_1,\dots,S_m \in A^1$ be a basis so that $\cT \simeq L_1^\times \times \dots L_m^\times$, where $L_i$ denotes the dual of
$s(S_i)$, and $L_i^\times \subset L_i$ denotes the complement of the zero section.

Let $p\colon \ocT\coloneqq \bbP(\oplus_i L_i \oplus \cO)\to \bbY$.
Let $\Sigma \coloneqq \Sigma_{(\bbY,\bbD^{\ess})}$ and $\tSigma \coloneqq \Sigma_{(\ocT,(\ocT \setminus \cT)^{\ess})}$,
so that $\Sk(U) = |\Sigma|$, $\Sk(G) = |\tSigma|$.
Note we have a natural inclusion of cone complexes $\Sigma \subset \tSigma$, the subcomplex generated by the rays corresponding to the irreducible components of $p^{-1}(\bbD^{\ess})$;
moreover, we have a natural projection $\tSigma \to\Sigma$.
It is easy to check that these coincide with $\varphi,p$ of \cref{sec:varphi}. 
\end{remark}

\section{$\varphi$ and disk classes} \label{sec:varphidisk}
We take $(\bbY,\bbD)$ as in \cref{ass:basicsetup}. For a homomorphism $h: M \to \Pic(\bbY)$, with $M$ a finite
rank lattice, we have an associated torsor $\bbT^M \to \bbY$, a principal $\bbT_N \coloneqq M \otimes \bbG_m$
bundle, where $N = \Hom_{\bbZ}(M,\bbZ)$,
The construction is covariant: if we have
$M_1 \to M_2$, then there is a natural projection $\bbT^{M_2} \to \bbT^{M_1}$.
If we choose a basis $\bbZ^m \to M$,
this induces an snc compactification,
$$
\bbP \coloneqq \bbP(O \oplus L_1 \oplus \dots L_r) \supset L_1^{\times} \times \dots L_r^{\times} = \bbT^M
$$
where $L_i \coloneqq h(e_i)$
(the image of
the standard basis vector) and $L_i^{\times} \to \bbY$ is the corresponding $\bbG_m$ bundle.
Let $\bbE \subset \bbP$ be the boundary. 
Now passing to generic fibres this determines $(P,E) \to Y$,
$T \coloneqq P \setminus E \to Y$, is a $\bbT_N^{\an}$ principal bundle. Let
$G \coloneqq T|_U \to U$. $G \to U$ agrees with the $\bbT_N^{\an}$,
bundle over $U$ in \cref{const:torsor}, 
in particular $T \to Y$ and $G \to U$ are independent of the chosen basis of $M$. 

Now let $f: B \to Y$ be a structure disk, \cref{def:structuredisk}.

We make:

\begin{assumption} \label{ass:trivlin} We assume the pullback (under $f$) of every line bundle on $\bbY$
  is trivial.  We note by \cref{lem:pictriv} this is no condition if $B$ is an actual disk, as we can
  assume is the case for the {\it main component} of a structure disk, see \cref{def:structuredisk}.
    \end{assumption} 
So we can find a
lift $\tf: B \to T$. Assuming $f$ is skeletal (see \cref{def:skeletal}, and recall from \cref{def:structuredisk}
we can compute structure constants
with skeletal structure disks), we compose $\tf$ with the canonical fibrewise retraction, see
\cref{prop:canr} and \cref{prop:canequaltrop} to obtain
$\tf^{\trop}: \Gamma^{\circ} \to \Sk(G)$, where $\Gamma \subset B$ is the convex hull of the marked points, and
$\Gamma^{\circ}$ means we remove the vertices associated to $\Bnd$ (recall these are the points of $E$ that
map to the boundary $D \subset Y$). $p \circ \tf^{\trop} = f^{\trop}$, the canonical spine of the skeletal curve, see
\cref{sec:atc}. 
We can also pullback $\varphi: \Sk(U) \to \Sk(G)$, to obtain a second
$\varphi: \Gamma^{\circ} \to \Sk(G)$ (lifting $f^{\trop}$). The difference is an $N_{\bbR} \coloneqq \Hom(M,\bbR)$
valued function $\tf^{\trop} - \varphi: \Gamma^{\circ} \to \bbR$ (because $p: \Sk(G) \to \Sk(U)$ is an $N_{\bbR}$ torsor).

\begin{proposition} \label{prop:varphidc} Notation as immediately above. 
  $d_b(\tf^{\trop} - \varphi) = [B \to Y] \in N_{\bbR}$.
  where $b \in B$ is the Berkovich boundary point, and $d_b$ means the sum of the derivatives over all
  edges of $\Gamma$ incident to $b$.
\end{proposition}
\begin{remark} When the spine $f^{\trop}$ is transverse (which we can always assume for computing structure
  constants) there will be no twigs of the ios tropicalisation incident to $b$, and so only single edge
  in the definition of $d_b$. \end{remark}
\begin{proof} The constructions commute with projections (as at the start of \cref{sec:varphidisk}) so 
  it's enough to prove that for the torsor given by a single 
  $L \in \Pic(\bbY)$, the analogous sum has height $[p \circ f\colon C \to Y] \cdot c_1(L)$.
  $f^*(L)$ is trivial, we choose a nowhere vanishing section. 
  After base field extension we can choose a formal model $f: \bbB \to \bbY$ for $f$, and view
  the section as a rational section of $L|_{\bbB}$, regular and nowhere vanishing on the generic
  fibre, $\bbB_{\eta} = B$. Let $F$ be the associated Cartier divisor (which is supported on
  the central fibre of the model $\bbB$). We have the associated $F^{\trop}: \Gamma^{\circ} \to \bbR$,
  see \cite[15.1]{Keel_Yu_The_Frobenius}. 
  By \cref{prop:canequaltrop}, we
  can replace
  the canonical retraction by the retraction given by the compactification $\bbP$. Now tracing through the
  definitions one sees $F^{\trop} = \tf^{\trop} - \varphi$. 
  Since $F$ is supported on the central fibre, $F^{\trop}$ is constant near the infinite vertices
  associated to $\Bnd$ (the contact points with $D$), by \cite[15.2]{Keel_Yu_The_Frobenius}.
  Now the result is immediate from basic convexity, \cref{prop:basicconvexity} and \cite[15.3]{Keel_Yu_The_Frobenius}.
  This completes the proof. 
\end{proof}

\begin{corollary} \label{cor:bi} Let $(\bbY_i,\bbD_i)$, $i =1,2$ satisfy \cref{ass:basicsetup},
  compactifications of the same $\bbU$ in the sense of \cref{def:abuse}.
  We assume the birational map $\bbY_i \dasharrow \bbY_j$ is small.
  Note $f$ induces an isomorphism of the Picard groups and thus an
  identification $A_1(\bbY_i/\obbY) = A_1(\bbY_j/\obbY) \eqqcolon A_1$. We assume $\Pic(\bbY_i)$ is
  a finite rank lattice, and so we can consider the associated torsors. Note $G_1 \to U$ and $G_2 \to U$
  are canonically identified, and so we have two sections $\varphi_i$ of $G \to U$.

  We consider the mirror algebras and their 
  localisations $M_i \coloneqq A_{(\bbY_i,\bbD_i)} \otimes_{Q[\NE(\bbY_i/\obbY,\bbZ)]} Q[A_1]$.
  Note as $Q[A_1]$ modules these are free with the same basis, $\Sk(U)(\bbZ)$.
  
  The isomorphism of free $Q[A_1]$ modules $M_i \to M_j$ sending
  $\theta_u$ to $z^{\varphi_j(u) - \varphi_i(u)} \theta_u$ is an isomorphism of $Q[A_1]$ algebras.
  \end{corollary}
  \begin{proof} A structure disk $[f_i: B \to Y_i]$ canonically induces $[f_i: B \to Y_j]$ (note
    $f: B^{\circ} \to U$, where this means the complement of the $\Bnd$-type boundary points), the
    associated moduli spaces, and in particular the counts, are the same (by the proof of 
    \cref{lem:ISknodes}). 
    So it's enough
    to show $[f_i: B \to Y_i] + \varphi_i(u) = [f_j: B \to Y_j] + \varphi_j(u)$ (where
    the structure constant is for the coefficient of $\theta_u$ in some product of $\theta_{u_t}$).
    Decompose $f_i$ into the main component, and twigs, as in \cref{def:sot}. $[f_i:B \to Y_i]$ is
    the sum of the classes.     The twigs are complete
    rational curves mapping to $U$ (where the birational map is an isomorphism), it follows
    their classes are equal, and so it is enough to prove the equality with $f_i$ replaces by
    the main component. Now \cref{ass:trivlin} applies, and the equality 
    is immediate from \cref{prop:varphidc}: either side of the proposed equality is
    $d \tf^{trop}(u)$. This completes the proof. 
    \end{proof} 

    \subsection{The canonical retraction of an analytic torus fibration}
    Let $N$ be the dual of $M$, notation as at the start of \cref{sec:varphidisk}, and let
    $A \coloneq \bbT_N^{\an}$, and $p:T \to Y$ a principal $k$-analytic $A$-bundle. Let $S \subset T$
    be the fibrewise intrinsic skeleton, a principal $N_{\bbR}$ bundle.
    \begin{proposition} \label{prop:canr}  There is a canonical fibrewise deformation retraction $r: T \to S$.
    \end{proposition}
    Fibrewise, in the statement, means commuting with $p$. 
    \begin{proof} In \cite[\S 52]{Berkovich_Spectral_theory} there are the notions of
      $*$ multiplication and peaked points. By \cite[5.2.12]{Berkovich_Spectral_theory} the
      points $n \in N_{\bbR} \subset A$ are all peaked, and $n * m = n + m$. Then 
      \cite[5.2.8(v)]{Berkovich_Spectral_theory} gives a continuous map $T \to T$ defined by
      $t \to 0 * t$. By (ii) of the same proposition, the restriction of this map to the fibre
      $T_y \coloneqq p^{-1}(y)$ is identical to the $*$-multiplication $t \to 0 * t$ defined via
      the action $A_{\scrH(y)} \times T_y \to T_y$ (note this is the self-action of $\bbT_N^{\an}$ base
      extended to $\scrH(y)$). So for the proposition it is enough to prove $*$-multiplication by
      $0$ on $\bbT_N^{\an}$ defines the canonical retraction $r_N: \bbT_N^{\an} \to N_{\bbR} = \Sk(T_N)$.
      By \cite[5.2.8(i)]{Berkovich_Spectral_theory} we can work over any base field extension to
      check this, so we consider $T_N^{\an}$ defined over an algebraically closed nontrivially valued $K$. Fixing a $K$-point $t \in \bbT_N^{\an}$, $0 *t$ is the image of $0$ under the multiplication by $t$
      automorphism (see the paragraph following \cite[Corollary 5.2.7]{Berkovich_Spectral_theory}),
      which is precisely $r_N(t)$. Now the claim follows from continuity, since $K$-points are dense. 
    \end{proof}

    We follow the notation from the start of \cref{sec:varphidisk}. Let $\Sigma \coloneqq \Sigma_{(\bbY,\bbD^{\ess})}$
    and $\tSigma \coloneqq \Sigma_{(\bbP,\bbW^{\ess})}$ (the essential dual fans). Note we have canonical
    identifications $|\Sigma| = \Sk(U)$, $|\tSigma| = \Sk(G)$. Let $r_{\bbP}: T \to \Sk(G)$,
    $r_{\bbY}: U \to \Sk(U)$ be the associated retractions. 

    \begin{proposition} \label{prop:canequaltrop} The composition
      $$
      w: S|_{\Sk(U)} \subset T|_{\Sk(U)} \overset{r_{\bbP}}{ \to} \Sk(G)$$
      is an isomorphism. The induced retraction $w: T|_{\Sk(U)} \to S$ is equal to the canonical
      retraction $r$ of \cref{prop:canr}.
    \end{proposition}
    \begin{proof} $w$ is an isomorphism by \cite[8.8]{Keel_Yu_The_Frobenius}. 
      Either retraction commutes with the projection to $\Sk(U)$ so it's enough to check the equality
      on a single fibre of $T_{\Sk(U)} \to \Sk(U)$. This reduces to the statement that the retraction
      induced by any toric compactification of $\bbT_N$ is the canonical retraction.
      \end{proof}

    \subsection{The mirror family over $\TV(\Sec)$} \label{sec:mfs}

    Now we can obtain one of our main applications.

    \begin{lemma} Assumptions as in \cref{ass:basicsetup}.
      The natural restriction map $\Pic(\obbY) \to \Pic(\obbY_s)$ is an isomorphism. If $\bbY_s$ is the Spectrum of
      the Stanley-Reisner ring of a simplicial complex satisfying (1-3) of \cref{lem:slcdclem} then these Picard groups
      are both trivial. 
    \end{lemma}
    \begin{proof} Let $I$ be the ideal sheaf of $\obbY_s \subset \obbY$. We have the exact sequence
      $$
      0 \to (1 + I^k/I^{k+1}) \to (\cO/I^{k+1})^* \to (\cO/I^{k+1})^* \to 0
      $$
      Note as sheaves of Abelian groups $(1 + I^k/I^{k+1})$ is isomorphic to the coherent sheaf $I^k/I^{k+1}$ so
      $H^i(1 + I^k/I^{k+1}) = 0$ for $i > 0$ (as $\obbY$ is affine). The isomorphism follows.

      Now suppose the complex satisfies (1-3). Then by \cref{lem:slcdclem} $\bbY_s$ is $S_2$ and so
      the restriction from $\Pic(\bbY_s)$ to $\CH_{d-1}(\bbY_s)$ is injective, and the latter vanishes as the irreducible
      components of $\bbY_s$ are affine spaces. This completes the proof. 
    \end{proof}

    \begin{assumptions} \label{ass:sec} We assume we have a one parameter affine formal family
      $v:\obbY \to \Sp(k[[s]])$ as in \cref{ex:pipe}. We assume that the singular locus of $v$ is contained in the central fibre. 
      We assume we have projective birational dlt model: ie projective 
      $f: \bbY \to \obbY$, with $\bbY$ normal and $\bbQ$-factorial,
      such that the fibre $\bbD \subset \bbY$ is reduced, the exceptional
      locus of $f$ is contained in $\bbD$. Morever we assume $K_{\bbY} + \bbD$ is dlt and $f$-trivial,
      $\Pic(\bbY)$ is a finite rank lattice, and $\Pic(\obbY)$ is trivial.
      Finally assume the essential dual complex for $(\bbY,\bbY_s)$ satisfies
      (1-3) of \cref{lem:slcdclem}.
    \end{assumptions}

    Let $\bbT \to \bbY$ be the universal torsor, and we use the construction of \cref{sec:varphi}.
    In this case by \cite[Cor. 1.3.1]{BCHM}
    $f$ is a relative Mori dream space. In particular $\MoriFan(\bbY/\obbY)$ is a complete
    rational polyhedral fan (with support $\Pic(\bbY)_{\bbR}$). 
    As we will always be working over $\obbY$, we mostly leave that out of
    the notation. By \cite[\S 2]{HKY20}, there is a canonical coarsening $\Sec(\bbY)$ (with
    support $\Pic(\bbY)_{\bbR}$) with a subfan $\MovSec(\bbY)$ (with support $\Mov(\bbY)_{\bbR}$).
    We recall that maximal cones of $\MovSec(\bbY)$ correspond to small $\bbQ$-factorial modifications (SQMs)
    $\bbY \dasharrow \bbY_{\alpha}$. Each $\bbY_{\alpha}$ determines a canonical section $\varphi_{\alpha}: \Sk(U) \to \Sk(G)$.
    There is one maximal cone of $\MovSec(\bbY)$ for each such section, $\lambda$, it is the union of the maximal cones
    $\Nef(\bbY_{\alpha}) \in \MovFan(\bbY)$ with $\varphi_{\alpha} = \lambda$.

    We consider one such SQM $\bbY_{\alpha}$. The mirror algebra, $A_{\alpha} \coloneqq A_{(\bbY_{\alpha},\bbD)}$
    is naturally graded (by \cref{prop:filtration} applied to the pullback
    of $s$ on $\Sp(k[[s]])$). Recall that as $R_{\alpha} \coloneqq Q[\NE(\bbY_{\alpha}/\obbY,\bbZ)]$ module,
    $A_{\alpha}$ is free with basis $\Sk(U)(\bbZ) = |\Sigma_{(\tbbY_{\alpha},\bbD^{\ess})}|$ for any log resolution
    $(\tbbY_{\alpha},\bbD) \to \bbY_{\alpha}$ (when $(\bbY_{\alpha},\bbD)$ is dlt its dual complex is defined,
    see \cite[\S 2]{KdFX}, and we do not need the resolution). Let $I \subset A_{\alpha}$ be the ideal generated by $\theta_u$ with
    $u$ in the interior of $\Sk(U)(\bbR)$, or equivalently, corresponding to valuations with center the vertex
    of $\bbV$. We note each irreducible component $\bbW \subset \bbD$ gives $\theta_{\bbW} \in A_\alpha$ of degree
    one. Let
    $$
    \Theta \coloneqq Z(\sum_{\bbW \subset \bbD} \theta_{\bbW}) \subset \cX \eqqcolon \Proj(A_{\alpha}).
    $$
    Let $\cE \coloneqq Z(I) \subset \cX$. So we have a mirror family of tuples
    $(\cX,\cE,\Theta,\cO(1)) \to \TV(\Nef(\bbY_{\alpha}))$.

    \begin{proposition} Let $P_{\lambda}$ be dual to a maximal cone of $\MovSec(\bbY)$
      containing the maximal cone $\Nef(\bbY_{\alpha}) \in \MovFan(\bbY)$. Then the structure
      constants in $A_{\alpha}$ can be computed with disks with class in $P_{\lambda}$. Thus
      $A_{\alpha}$ is base extended from an algebra (with the same basis) over $Q[P_{\lambda}]$.
    \end{proposition}
    \begin{proof} It's enough to show the disk class lies in
      $\NE(\bbY_{\alpha}) \subset A_1(\bbY,\bbZ)= A_1(\bbY_{\alpha},\bbZ)$. This follows from \cref{prop:varphidc}: The
      structure disk gives a disk in each $Y_{\alpha}$, and by \cref{prop:varphidc} they all have the same class.
      In particular
      this class lies in each $\NE(\bbY_{\alpha},\bbZ)$. 
    \end{proof}

    It follows the mirror family is pulled back from a canonical family
    $(\cX,\cE,\Theta,\cO(1)) \to \TV(P_{\alpha}^{\vee}) \coloneqq \Spec(\bbZ[P_{\alpha}])$.

    \begin{proposition} \label{prop:mfms} Assumptions as in \cref{ass:sec}. 
      The mirror families canonically glue to give a family of tuples
      $$(\cX,\cE,\Theta,\cO(1)) \to \TV(\MovSec(\bbY/\obbY)).$$

        For each fibre, $K_X + E$ is trivial and SLC. For fibres over the structure torus $\bbT_{\Pic(\bbY)}$,
      $K_X + E$ is log canonical, in particular $X$ is normal.
    \end{proposition}
    \begin{proof} 
      The singularity statements of the fibres follow from \cref{thm:lcsingspipe}.
      The gluing holds by (the arguments in) \cref{sec:itb},
      see \cite[6.14]{HKY20}.
    \end{proof}

    Now the torus action, \cref{sec:ta}, determines a canonical extension of the mirror family over a Deligne-Mumford
    stack with coarse moduli space 
    the toric variety for the full (complete) secondary fan, exactly as in \cite[\S 6.4]{HKY20} (whose notation we
    follow). No new fibres $(X,E)$ are introduced, but some coefficients of $\Theta$ shrink to zero. We obtain: 

    \begin{theorem} \label{thm:mfs} Assumptions as in \cref{ass:sec}. The family of tuples
      extends canonically over the full $\cTV(\Sec(\bbY/\obbY))$. 
      with fibres satisfying the statements
      in \cref{prop:mfms}.
    \end{theorem}

      \begin{conjecture}\label{conj:mfs}  Assumptions as in \cref{thm:mfs}. Then for sufficiently small $\epsilon > 0$, each fibre
        $(X,E + \epsilon \Theta)$ of the mirror family is stable, and the induced map
        $\TV(\Sec(\bbY/\obbY)) \to \oM_{\SP}$ (the target is the moduli space of stable pairs)
        is finite.
      \end{conjecture}

\begin{remark} Note by \cref{thm:mfs}, each fibre
    $(X,E)$ is slc, so the conjectured stability is equivalent to the statement that $\Theta$ contains no
    log canonical center of this pair. This is closely related to \cref{conj:ind}, or more precisely, its
    analog for sections of an ample line bundle. 
    \end{remark}

    \begin{remark} Here we began with a dlt relative minimal model $\bbY \to \obbY$. But there is no reason
      we need this to be minimal, we could take any blowup (with exceptional locus contained in the central fibre)
      so that $(\bbY,\bbD)$ is dlt, the same procedure produces a family of tuples over $\TV(\Sec(\bbY))$. For any
      such blowup, the bases of the algebra is the same, $\Sk(U)(\bbZ)$, which is realized as the integer points in the
      cone over the essential dual complex for $(\bbY,\bbD)$. When the model is not minimal, there will be non-essential
      components of $\bbD$. These do not add moduli -- the associated subtorus of $\bbT_D$ (see \cref{thm:torus_action}  )
      acts equivariantly on the
      family {\it including $\Theta$} (Note:
      this is not the case for essential components, the corresponding subtorus acts equivariantly
      on $(\cX,\cE,\cO(1))$ but scales the coefficients of $\Theta$, indeed over an orbit of $\bbT_{D^{\ess}}$, $(X,E,\cO(1))$ is
      constant, but $\Theta$ varies over the {\it theta function structure torus} of $|\cO(1)|$ of members with all non-zero
      coefficients in the canonical basis). 
      So if $\bbY \to \obbY$ is not minimal, the family
      we obtain is obviously not modular: the conjectured map to moduli of stable pairs has positive dimensional
      fibres. That is why we focused here on the minimal case. \end{remark}

\begin{remark} \label{rem:roc} \cref{thm:mfs} depends on the assumption that the singular locus of
      $v: \obbY \to \Sp(k[[s]])$ is contained in the central fibre. And moreover that the dual
      complex $\cD_{(\bbY,\bbD)}$ satisfies the vanishing conditions (1-3) of \cref{lem:slcdclem}.  Both
      of these hold in the Fano case of \cref{ex:pipe}, see \cref{rem:Fano}. 
      So in the Fano case we have the full construction, in all dimensions, unconditionally.
      We note that in this case the fibres of the
      mirror family are (in general singular) Fanos: $E \in |\cO(1)|$, and $-K_X > 0$. See \cite[7.1]{HKY20}.

      We believe that for the pipe dream in general (running the mirror program twice as described in
      \cref{sec:rpt}) the vanishing conditions (1-3) for
      the essential dual complex to $(\bbY,\bbY_s)$ 
      are implied by a conjecture of Gross, see  \cref{rem:Fano}.
      \end{remark}

    \section{connection with Conjectures \cite[1.1-1.2]{HKY20} } \label{sec:rpt} 
Let $(X,E)$ be an anti-canonical snc divisor on a smooth projective variety, and assume $E$ has
a zero stratum (so the complement $X \setminus E$ is a smooth log CY with maximal boundary).
Let $\bbV$ be the vertex associated with the dual complex of $(X,E)$ (i.e. spectrum of the associated
Stanley-Reisner ring). The Gross-Siebert mirror construction produces a canonical affine formal family
$\cY \to \Sp(\hR_X)$ (where $\hR_X$ is the completion of $\bbZ[\NE(X,\bbZ)]$ at the maximal monomial ideal).
\begin{remark} If $X \setminus E$ is proper over an affine variety we can use the mirror construction of
  this paper in place of Gross-Siebert. 
\end{remark}
Let $L \in \Ample(X)$ and 
let $\Sp(k[[s]]) \to \Sp(\hR_X)$ be induced by the closure of a general orbit for the one parameter subgroup
of $\bbT_{\Pic(X)}$ associated to $L$, acting on 
  on $\TV(\Nef(X)) = \Spec(\bbZ[\NE(X,\bbZ)])$. Let $\obbY \to \Sp(k[[s]])$ be the associated deformation of $\bbV$.

  \begin{conjecture} \label{conj:dmc} Notation as immediately above. Assume the singular locus of $\obbY \to \Sp(k[[s]])$ is
    contained in the central fibre. 
    Apply the construction of \cref{sec:mfs} to $\obbY \to \Sp(k[[s]])$, to obtain the canonical family
    of tuples $(\cX,\cE,\Theta,\cO(1)) \to \TV(\Sec(\bbY/\obbY))$.

    For sufficiently small $\epsilon > 0$, for each fibre $(X,E + \epsilon \Theta)$ is stable. The induced
    map $\TV(\Sec(\bbY/\obbY)) \to \oM_{\SP}$ (the target is the moduli space of stable pairs) is finite, with
    image the irreducible component containing $(X,E + \epsilon \Theta)$ for a general $\Theta \in |L|$.
  \end{conjecture}

  The two dimensional case of \cref{conj:dmc} is the main result in \cite{HPV24}.

  \begin{remark} \label{rem:MCCYs}
    Here we considered the mirror construction in case \cref{ex:pipe}. Note we can consider an analogous
    construction in case \cref{ex:CYD}. One special feature of the \cref{ex:pipe} case is the birational
    map $\bbY \to \obbY$, with exceptional locus contained in the central fibre $\bbD$. This implies that for SQMs
    $\bbY \dasharrow \bbY'$, $U = U'$, and in particular is smooth so our construction applies. In relative
    dimension two this is automatic, and we can recover the construction of the universal family in \cite{GHKSK3} by
    a considerably simpler argument (\cite{GHKSK3} follows the GS formalism, in which the birational invariance needed
    to glue together the mirror families for the different SQMs is delicate, deduced from the scattering diagram,
    with our formulation it is automatic).
    We can do the same thing in any dimension if we add the assumption that the generic fibre of
    $\bbY \to \obbY = \Sp(k[[s]])$ has no divisor swept out by rational curves, e.g. in relative dimension three if
    a version of Clemens conjecture holds: The absence of such divisors implies that our counts are naive
    (the free part of the structure disk misses any codimension two analytic subset, in particular all the complete
    rational curves, so there are no {\it twigs}), and the structure disks will avoid the exceptional locus of any
    SQM giving the necessary birational invariance to glue the mirror families just as in the \cref{ex:pipe} case above.
  \end{remark}
  \begin{remark} \label{rem:wct} 
    We note why it is important to have the {\it correct} base for the family: As noted in
    \cite[8.7]{HKY20} a map from a complete toric variety to another variety is finite iff no boundary
    stratum is contracted to a point. So if show the stability in \cref{conj:dmc}, then to show the map
    to $\oM_{\SP}$ is finite, it's enough to show the restriction to boundary $1$-strata is non-constant. While
    the mirror family is very complicated, over $0$ strata associated to maximal cones of $\Mov(\bbY/\obbY)$    it is a purely combinatorial object, the vertex, see \cref{sec:vertex}, associated
    to the dual complex of $(\bbY,\bbD)$. It's enough to show for each $1$-strata the
    fibres over its two $0$ strata are different, e.g. when the two $0$ strata correspond to maximal
    cones of $\Mov$, that the associated dual complexes are different (note in the toric case, this is
    how the secondary fan is defined). The point here: if we view \cref{conj:dmc} as having two
    parts, stability and modularity, we expect modularity is relatively easy. Stability we think is deeper. Note it is closely related to \cref{conj:ind}. 
      \end{remark}

\section{Conjectures} \label{sec:conj}
Now we give a number of conjectures indicating how we expect basic constructions from toric geometry to generalize to affine log Calabi-Yau varieties with maximal boundary. We
continue to follow the notation of the paper, \cref{ass:basicsetup}, but exclusively in the case \cref{ex:affine}, so
there are no formal schemes, and the main object is a smooth affine log CY $\bbU$ with maximal boundary. 

\begin{definition} By a {\it partial minimal model} for a log Calabi-Yau variety $\bbU$ we mean a Zariski open embedding
  $\bbU \subset \bbY$ with $\bbY$ normal having divisorial boundary and such that the volume form has a pole on each
  irreducible component of the boundary.
\end{definition}
\begin{example} \label{ex:tv} The elementary theory of toric varieties implies that an affine (resp.
  complete) partial minimal model of an algebraic torus $\bbU = \bbT_M \coloneqq M \otimes_Z \bbG_m$ (where $M$ is a finite rank free
  Abelian group, the co-character lattice of $\bbT_M$) is the same thing as an affine (resp.
complete) $\bbT_M$-toric variety.
More generally, any $\bbT_M$-toric variety is a partial minimal model, and any partial minimal model is equal,
outside codimension two, to a $\bbT_M$-toric variety.
\end{example}

Let $\bbU$ be an affine log Calabi-Yau variety and let $\bbV$ be a general fibre of the mirror family.
\begin{remark} \label{rem:coeffs} By \cref{prop:act} the restriction of the mirror family to the structure torus of
  the base is independent of the compactification, up to base extension, and quotient by the $\bbT_D$ action.
  In particular the set of isomorphism classes of fibres is independent of compactification, see \cref{cor:fmf}. Here we
  abuse notation slightly and write e.g. $\Spec(A_{\Sk(\bbU)}) \to \bbT$ for this restriction -- which really
  means, pick a compactification $\bbU \subset \bbY$
  and then take the restriction of that mirror family to the structure
  torus $\bbT_{\Pic(\bbY)}$. Here the subscript $\Sk(\bbU)$ indicates the basis, but we deliberately omit the coefficients.
  We will use similar notation to indicate subalgebras given by a subset of the basis. 
  \end{remark}

In general $\bbV$
will be singular, we conjecture at worst canonical. If so, we can construct a mirror to $\bbV$ using \cref{ex:POA}.
We conjecture this mirror is algebraic and moreover independent of the resolution.  
Here in the discussion we assume all this (or the reader can just specialize to the case when $\bbV$ is smooth).

\begin{conjecture}[double mirror] \label{conj:dm}
  $\bbU$ is a fibre of the mirror family for $\bbV$.
  In particular, $\Sk(\bbV)$ parameterizes a basis of $\cO(\bbU)$, canonical up to individual scaling.
\end{conjecture}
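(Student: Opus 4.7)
The strategy is to execute this paper's mirror construction for $V$ and then identify $U$ among the resulting fibres. The preliminary step is to verify that $V$ itself satisfies the hypotheses we have been imposing: \cref{thm:mirror_alg} already tells us that a general fibre over $T_{\Pic(Y)}$ is normal affine Gorenstein $K$-trivial log canonical of dimension $\dim U$, so one needs to upgrade this to a smooth log Calabi-Yau with maximal boundary satisfying \cref{ass:SYZ}. Smoothness at a generic fibre is an openness statement; maximal boundary should propagate from the fact that the central vertex fibre has maximal boundary (\cref{rem:vertex}) via the $T_D$-equivariant extension of \cref{prop:neflim}; and the dlt minimal model assumption I would expect to pass from $U$ to $V$, though this needs honest verification (it is a form of the termination conjecture for $V$).

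Granted this, one fixes an snc compactification $V \subset W$ and builds the mirror algebra $A_V$ with theta basis $\{\theta_{P'}\}_{P' \in \Sk(V,\bbZ)}$. The next task is to single out a canonical closed point $t_0 \in \Spec R_W = \Spec\bbZ[\NE(W,\bbZ)]$ whose fibre is $U$. The natural candidate lives in the structure torus $T_{\Pic(W)}$, where by \cref{cor:canonicalmirror} the isomorphism class of the fibre (together with its theta basis up to individual scaling) is an invariant of $V$ alone, so no choice of $W$ need enter. Concretely one would try to read off the coordinates of $t_0$ from the original data $(Y,D^{\ess})$ via the universal torsor formalism of \cref{sec:varphi}: characters of $T_{\Pic(W)}$ pair with divisorial valuations on $V$, and the pairing should be computable from the canonical scattering diagram.

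The hard heart of the conjecture is the matching of disk counts. Structure constants for $A_V$ are naive counts of analytic disks in $W^{\an}$ indexed by $\Sk(V,\bbZ)$, while the algebra one wishes to recover, $\cO(U)$, is intrinsically associated to $U$. The most geometric route I can envisage proceeds via the canonical scattering diagram of \cref{sec:scattering}: this diagram determines the mirror family outside codimension two by Mumford charts $\bbZ[P^+_y]/\fm^k$ and is built from counts of cylinders in $U$ alone, hence it is a symmetric object which should be equally the canonical scattering diagram for $V$ after exchanging roles. One would first test this symmetry on the vertex, where by \cref{prop:vertex} both sides reduce to purely combinatorial data of $\Sigma_{(Y,D^{\ess})}$, and then propagate order-by-order in the adic topology using the theta-function consistency of \cref{prop:theta_function_consistent}.

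The main obstacle I expect is the absence of any a priori natural bijection $\Sk(U,\bbZ) \leftrightarrow \Sk(V,\bbZ)$. In the toric case the two sides are dual lattices and the statement is trivial; in the cluster case \cite{Gross_Canonical_bases} the bijection is implemented by mutations; but no general mechanism is visible. Resolving this will probably require a genuinely bilateral Frobenius structure, in which the counts $\chi(P_1,\dots,P_n,Q,\beta)$ for $U$ and the analogous counts for $V$ are simultaneously realized as traces on a single diagonal structure, so that the output index $Q$ on one side becomes an input index on the other. Two-dimensional cases and the already-known cluster and toric cases would be natural warm-ups; the general statement I regard as out of reach of the methods developed here.
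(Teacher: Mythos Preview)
The statement you are attempting to prove is labelled as a \emph{Conjecture} in the paper, not a theorem, and the paper gives no proof. Immediately following it the authors write that this is ``the ultimate goal of the project'' and note only that it has been established in dimension two. So there is no paper's own proof to compare your proposal against.

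Your write-up is honest about this: you yourself conclude that the general statement is out of reach of the methods developed here, and the obstacles you identify (verifying that $V$ satisfies the running hypotheses, and more seriously the lack of any mechanism producing a bijection or duality between $\Sk(U,\bbZ)$ and $\Sk(V,\bbZ)$) are real. What you have written is a reasonable sketch of where the difficulties lie, not a proof, and that is the correct assessment of the situation. The only thing to flag is that your proposal should be understood as a discussion of a strategy toward an open conjecture rather than a proof attempt in the usual sense; as such it cannot be ``correct'' or ``incorrect'', only more or less plausible as a roadmap.
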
 

\begin{remark}
  This is the ultimate goal of the project, to prove the conjecture from
  \cite{Gross_Mirror_symmetry_for_log_Calabi-Yau_surfaces_I_v1} that an affine log Calabi-Yau variety with maximal boundary has a canonical basis of regular functions. This has been proven in dimension two, see \cite{LZ23}, and in many cluster examples, see \cite{GHKK}. 
\end{remark}

Thus there are (conjecturally)
two natural pairings on $\Sk(\bbU) \times \Sk(\bbV)$,
we can take $<u,v> \to \theta_v^{\trop}(u)$, or to $\theta_u^{\trop}(v)$.

\begin{conjecture}[symmetry] \label{conj:sym} The two pairings are equal.
\end{conjecture}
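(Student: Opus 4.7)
My plan is to derive the symmetry $\theta_v^{\trop}(u) = \theta_u^{\trop}(v)$ by matching both sides to the same non-archimedean count of disks.

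First, I would give a concrete formula for $\theta_v^{\trop}(u)$ using the canonical scattering diagram of \cref{sec:scattering}. By the theta function consistency of \cref{prop:theta_function_consistent} together with the scattering consistency of \cref{th:big_consist}, for generic $u\in\Sk(U)$ and $v \in \Sk(V,\bbZ)$ the value $\theta_v^{\trop}(u)$ can be read from the leading term of an expansion of $\theta_v$ into the local theta functions of \cref{def:local_theta_funcation} at $u$: concretely, it should equal the minimum, over broken lines $\gamma$ ending at a generic point near $u$ with prescribed initial direction determined by $v$, of $\langle u, \cdot\rangle$ applied to the terminal exponent of $\gamma$. By the gluing formulas \cref{thm:gluing_inside,thm:gluing_concatenate}, this minimum is then realised by a limit of structure constants $\chi(P_1,\dots,P_n,Q,\beta)$ as $Q$ degenerates toward $u$ and the $P_i$ concentrate the data of $v$, and hence is a naive count of analytic disks in $Y^\an$ with specified contact data.

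Second, by the double mirror assumption (\cref{conj:dm}), the same procedure applied on the mirror side gives a parallel description of $\theta_u^{\trop}(v)$ as a naive count of disks in a compactification of $V$. The symmetry conjecture then reduces to constructing a bijection between the two families of disks which preserves the relevant tropical value.

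The main obstacle is this bijection, which is a non-archimedean avatar of homological mirror symmetry: the disk counts on the two sides should be organised into canonical scattering diagrams (in the sense of \cref{def:prescat}) that are identified by the double mirror, under the natural identification $\Sk(U,\bbZ)\leftrightarrow\Sk(V,\bbZ)$. In the toric case the statement is trivial; in the cluster case it follows from mutation-invariance as noted in \cref{rem:fD}; in general it appears to require genuinely new input. A more modest intermediate goal, likely within reach, is to establish the symmetry modulo the maximal monomial ideal $\fm_Y$, where via \cref{prop:vertex} the statement reduces to the purely combinatorial assertion that the canonical piecewise-linear structures on $\Sk(U)$ and $\Sk(V)$ pair symmetrically at the vertex; this would already yield the leading-order symmetry and serve as evidence for the full conjecture.
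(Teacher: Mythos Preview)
The statement you are attempting to prove is \emph{not} proved in the paper: it is explicitly labelled a \emph{Conjecture}. The paper offers no argument for it; immediately after stating it, the authors only note that it is known in dimension two (Mandel) and in certain cluster cases (Gross--Hacking--Keel--Kontsevich, and announced work of Mandel, Cheung, Magee and Miller). So there is no ``paper's own proof'' to compare against.

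Your proposal is also not a proof, and you in effect acknowledge this. The crucial step~--- identifying the two families of structure disks on the $U$ side and on the $V$ side --- is precisely the content of the conjecture, and you say yourself that ``in general it appears to require genuinely new input.'' Everything before that point is a (reasonable) reformulation: expressing $\theta_v^{\trop}(u)$ via broken lines or disk counts is plausible and consistent with the paper's framework, but the entire difficulty is the comparison across the mirror. Invoking \cref{conj:dm} does not help, since that conjecture only asserts that $U$ appears as a fibre of the mirror family to $V$; it does not provide any mechanism relating disks in $Y^{\an}$ to disks in a compactification of $V$. Your ``modest intermediate goal'' modulo $\fm_Y$ is also not obviously within reach: even at the vertex one would need to know that the piecewise-linear structure on $\Sk(V)$ induced by the mirror construction matches the one coming from a compactification of $V$, which again presupposes a strong form of the double mirror.

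In short: the paper leaves this open, and your sketch correctly identifies where the difficulty lies but does not resolve it.
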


\begin{conjecture}[valuative independence] \label{conj:ind} Let $v_1,\dots,v_k, w \in \Sk(\bbV)$,
and $\alpha_1,\dots,\alpha_k \in k$ non-zero constants.
Then $\ord_w(\sum \alpha_i \theta_{v_i}) = \min_i(\ord_w(\alpha_i))$.
\end{conjecture}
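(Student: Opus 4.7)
\textbf{Proof proposal for Conjecture \ref{conj:ind}.} The plan is to reduce the independence statement to a non-cancellation statement about leading (initial) terms of theta functions at the divisorial valuation $w$, which should be tractable via the canonical scattering/broken-line description of theta functions developed in \cref{sec:scattering}. First I would fix a compactification $V \subset \oV$ on which $w$ has divisorial center, pass to a small convex neighborhood $G \subset \Sk^{\sm}(V)$ of $w$ as in \cref{const:data}, and choose a chart identifying a neighborhood of $w$ with an open subset of the analytification of a toric variety $T^\an$ (a toric end). In this chart one has a canonical piecewise-affine tropicalization map, and for each $v \in \Sk(V,\bbZ)$ the restriction of $\theta_v$ to this chart has, by the broken-line expansion supplied by the scattering diagram, a unique distinguished monomial corresponding to a straight broken line — the structure disk whose spine has no bends, cf.\ \cref{prop:zeroclass} and \cref{lem:straightspinecount}.

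Step two is the identity \(\ord_w(\theta_v) = -\theta_v^{\trop}(w),\) where $\ord_w$ is the divisorial valuation on $k(V)$ determined by $w$. I would prove this via a two-sided bound: the upper bound $\ord_w(\theta_v)\le -\theta_v^{\trop}(w)$ is immediate from the existence of the straight broken-line contribution (which produces a monomial of exactly this valuation in the local toric chart, with coefficient $1$ by the toric case of \cref{lem:straightspinecount}); the lower bound $\ord_w(\theta_v)\ge -\theta_v^{\trop}(w)$ comes from the convexity/monotonicity content of \cref{thm:basic_convexity}, applied in the form appropriate to the divisorial valuation $w$ (using $F$ chosen so that $F^{\trop}$ detects the ray of $w$). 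Together with the canonical scattering diagram being consistent (\cref{th:big_consist}) and having scattering functions equal to $1$ mod $m$ (\cref{prop:can_scat}), this pins down the initial term of $\theta_v$ in the chart at $w$ as precisely the monomial $z^v$ (times a unit in $k^\times$).

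For the independence itself I would then group the indices: let \(\mu := \min_i \ord_w(\alpha_i \theta_{v_i}) = \min_i\bigl(\ord_w(\alpha_i) - \theta_{v_i}^{\trop}(w)\bigr),\) and let $I_{\min}$ be the set of $i$ achieving $\mu$. In the chart at $w$, the $\mu$-initial term of $\sum_i \alpha_i \theta_{v_i}$ is a sum \(\sum_{i \in I_{\min}} \bar\alpha_i\, z^{v_i},\) with $\bar\alpha_i \in \tilde k^\times$ the reductions of the leading coefficients (unit because $\alpha_i \in k^\times$ and the straight-broken-line coefficient is $1$). Since the $v_i$ are pairwise distinct elements of $\Sk(V,\bbZ)$, the monomials $z^{v_i}$ are linearly independent over $\tilde k$ in the associated graded ring of the valuation $w$, so the sum is nonzero, giving $\ord_w\bigl(\sum \alpha_i\theta_{v_i}\bigr)=\mu$ as required.

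The hard part will be the lower bound in step two; equivalently, ruling out ``unexpected'' broken-line contributions to $\theta_v$ with valuation strictly smaller than $-\theta_v^{\trop}(w)$. The convexity inequality \cref{thm:basic_convexity}(\ref{thm:Ftrop_structure_constants:nef}) is the natural tool, but to apply it at a specific divisorial valuation $w$ one must choose $F$ on an appropriate toric blowup so that $F^{\trop}$ peaks at $w$ among the relevant skeleton, and then trace through that every disk class contributing to $\theta_v$ satisfies the correct inequality. A secondary technical obstacle is working over a residue field that may not be the ground field (since $w$ is divisorial, the reduction $\bar\alpha_i$ lives in the residue field of $w$, not in $\tilde k$); here distinctness of the exponents $v_i$ still forces linear independence because the monomials $z^{v_i}$ live in distinct graded pieces of the Rees algebra of $w$.
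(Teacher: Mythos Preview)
This statement is a \emph{conjecture} in the paper, not a theorem; the paper gives no proof. Immediately after stating it the authors note only that it is known in dimension two (Mandel) and in certain cluster cases (\cite{Gross_Canonical_bases}), with further cluster cases announced. So there is no ``paper's own proof'' to compare your argument against.

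Your outline has a genuine gap at the Step~2 lower bound. You want every broken-line (equivalently, structure-disk) contribution to $\theta_v$ near $w$ to have $w$-valuation at least that of a ``straight'' one, and you propose to extract this from \cref{thm:basic_convexity}. But that theorem controls $F^{\trop}(Q)$ versus $\sum F^{\trop}(P_i)$ for structure constants of \emph{products} of theta functions; it does not bound the local monomial expansion of a single $\theta_v$ at an arbitrary $w$, and there is in general no nef Cartier $F$ with $-F|_V$ effective whose $F^{\trop}$ isolates an interior point $w \in \Sk(V)$ sharply enough to force the inequality you need for every bent broken line. That inequality is essentially the content of the conjecture, so the reduction is circular. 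There is also a type mismatch in Step~3: monomials in a toric chart at $w$ are indexed by the tangent lattice $\Lambda_w$, not by $\Sk(V,\bbZ)$, so the initial term of $\theta_{v_i}$ (if well defined) is $z^{e_i}$ for some final derivative $e_i \in \Lambda_w$ of a minimal-valuation broken line, and distinct $v_i$ need not give distinct $e_i$. Hence the non-cancellation of leading terms is not automatic even granting the lower bound.
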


Both conjectures are proven in dimension two in \cite[4.13,4.15]{Mandel_Tropical_theta_functions},
some higher dimensional cases (with important representation theoretic implications) of both conjectures are proven
in \cite[9.7,9.10.3]{Gross_Canonical_bases}.
Both conjectures have been proven in a wide range of cluster examples, including all for which the
{\it Full Fock-Goncharov Conjecture} has been established,  in \cite{Travis25}.
In \cite{BL} the authors prove that $\cO(\bbU)$ admits a {\it valuatively independent} basis, i.e. a vector
space basis that satisfies \cref{conj:ind} (such a basis is not, in general, unique, but they show that the
tropicalisations, i.e. the induced set of functions on $\Sk(U)$, is independent of the particular basis).

Let $\bbU \subset \bbY$ be a partial minimal model, with $u_1,\dots,u_k \in \Sk(\bbU)$
corresponding to the irreducible components of the boundary.
Let
$$
C_{\bbY} \coloneqq \{v \in \Sk(\bbV)| \theta_{u_i}^{\trop}(v) \geq 0 \text{ for all } i\}
$$

Conjecturally $\cO(\bbU)$ has a basis $$
\cO(\bbU) = \oplus_{v \in \Sk(\bbV)} \bbZ \cdot \theta_v $$

Let $\cO(\bbU)_{C_{\bbY}} \subset \cO(\bbU)$ be the subgroup with basis $C_{\bbY} \subset \Sk(\bbV)$.

\begin{corollary} \label{conj:bas}
The above conjectures imply $\cO(\bbU)_{C_{\bbY}} \subset \cO(\bbU)$
is $\cO(\bbY) \subset \cO(\bbU)$.
In particular $C_{\bbY}$ parameterizes a theta function basis for $\cO(\bbY)$.
\end{corollary}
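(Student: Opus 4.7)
The plan is formal: characterise $\cO(Y) \subset \cO(U)$ divisorially, and then translate the resulting inequalities into linear conditions on theta expansions using the three preceding conjectures. Since $Y$ is normal with divisorial boundary whose components are indexed precisely by $u_1,\dots,u_k \in \Sk(U,\bbZ)$, an element $f\in \cO(U)$ extends to $\cO(Y)$ if and only if $\ord_{u_i}(f)\geq 0$ for every $i$. By the double mirror conjecture \cref{conj:dm}, write any such $f$ uniquely as $f=\sum_{v\in \Sk(V,\bbZ)} \alpha_v \theta_v$ with $\alpha_v\in k$ almost all zero.

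The key point is the identification $\ord_{u_i}(\theta_v)=\theta_v^{\trop}(u_i)$: this is essentially the definition of the tropicalisation of a non-archimedean function at a divisorial valuation, and with the conventions of this paper follows directly from the construction of $\theta_v$ as a count of disks. Applying the independence conjecture \cref{conj:ind} at $w=u_i$ then gives
\[
\ord_{u_i}(f)\;=\;\min_{v:\alpha_v\neq 0}\ord_{u_i}(\theta_v)\;=\;\min_{v:\alpha_v\neq 0}\theta_v^{\trop}(u_i),
\]
and the symmetry conjecture \cref{conj:sym} converts this to $\min_{v:\alpha_v\neq 0}\theta_{u_i}^{\trop}(v)$. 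Therefore $f\in \cO(Y)$ if and only if $\theta_{u_i}^{\trop}(v)\geq 0$ for every $i$ and every $v$ in the support of $f$, i.e.\ every such $v$ lies in $C_Y$. This is precisely the statement $\cO(Y) = \cO(U)_{C_Y}$, and the restricted theta basis $\{\theta_v\}_{v\in C_Y}$ is then automatically a basis of $\cO(Y)$.

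The hard part is really the conjectural input itself; once \cref{conj:dm,conj:sym,conj:ind} are granted the derivation is formal. The only subtlety within the argument is checking that the sign/normalisation conventions used in defining $C_Y$ via $\theta_{u_i}^{\trop}(v)\geq 0$ match the valuative convention under which $\ord_{u_i}(\theta_v)=\theta_v^{\trop}(u_i)$ — a reversed sign would replace $C_Y$ by its opposite cone. A secondary technical point is that \cref{conj:ind} is stated pointwise on $\Sk(V)$ but needs to be applied at the specific divisorial points $u_i\in\Sk(U)$ on the other side of the mirror; this is exactly what \cref{conj:sym} is for, and together the two conjectures are tailored to yield the desired translation. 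Hence the corollary.
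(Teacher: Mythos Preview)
Your proof is correct and follows essentially the same approach as the paper's: use independence to show that $\cO(Y)$ has as basis exactly those $\theta_v$ regular along every boundary divisor, then use symmetry to identify this condition with $v\in C_Y$. The paper's proof is just a two-sentence compression of what you wrote.

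One minor remark: you flag that \cref{conj:ind} is literally stated with $w\in\Sk(V)$ while you need $w=u_i\in\Sk(U)$, and you say symmetry handles this. Strictly speaking, what handles it is the double mirror conjecture \cref{conj:dm}: once $U$ is itself a fibre of the mirror to $V$, the independence conjecture applies with the roles of $U$ and $V$ exchanged, giving exactly the statement you need at $w=u_i$. Symmetry is then used, as you say, to convert $\theta_v^{\trop}(u_i)$ into $\theta_{u_i}^{\trop}(v)$. The paper's proof is equally terse about this point.
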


\begin{proof}
By the independence conjecture,
$\cO(\bbY) \subset \cO(\bbU)$ has basis those $\theta_v$ which are
regular on each of the boundary divisors to $\bbU \subset \bbY$.
And now by the symmetry conjecture, those are exactly the $v \in C_{\bbY}$.
\end{proof}

In many applications, partial minimal models are more important than the log Calabi-Yau varieties themselves.
For example: a semi-simple algebraic group is a partial minimal model of its (log Calabi-Yau) open Bruhat cell. In
\cite{Gross_Canonical_bases} special cases of the above conjectures are proven, sufficient to obtain \cref{conj:bas},
endowing the vector space of regular functions on the group, and thus every finite dimensional irreducible representation
of the group, with a canonical bases.  See \cite[\S 0.4]{Gross_Canonical_bases}.

We apply the above conjectures in special case of particular Mori theoretic importance:

Let $\bbU' \subset \bbY'$ be an snc minimal model of a smooth affine log Calabi-Yau variety with maximal boundary.
Let $\bbY \to \bbY'$ be the universal torsor (see \cref{const:torsor}), 
and $\bbU \coloneqq \bbY|_{\bbU'} \to \bbU'$ the restriction.
$\bbU$ is itself smooth affine log Calabi-Yau with maximal boundary, and $\bbU \subset \bbY$ is
an snc partial minimal model.
We note $$
\cO(\bbY) = \Cox(\bbY') \coloneqq \oplus_{L \in \Pic(\bbY')} H^0(\bbY',L).
$$

We apply the above conjectures to $\bbU \subset \bbY$.
We note that $\bbY$ (and so in particular $\bbU$) has trivial Picard group, so
all fibres $\bbV$ of the mirror family are isomorphic.
The universal torsor is a $\bbT_{A_1(\bbY',\bbZ)}$-principal bundle, this induces
$p\colon \bbV \to \bbT_{\Pic(\bbY')}$, and $p\colon \Sk(\bbV) \to \Sk(T_{\Pic(Y')}) = \Pic(Y')_{\bbR}$.
We note inverse images of the irreducible components $E' \subset \bbD' \coloneqq \bbY' \setminus \bbU'$
give the irreducible components $E \subset \bbD \coloneqq \bbY \setminus \bbU$.
The corresponding theta functions $\theta_E \in \cO(\bbV)$ generate.
Let $\theta := \sum_{E \subset \bbD} \theta_E$.
Then $C_{\bbY} \subset \Sk(V)$
is given by $\theta^{\trop} \geq 0$.

\begin{conjecture}[Cox ring conjecture] \label{conj:cox}
  $\Spec(\Cox(\bbY'))$ is one fibre of the mirror family $\Spec(A_{C_{\bbY}}) \to T$ (see \cref{rem:coeffs}). 
  
In particular, $C_{\bbY}$ parameterizes a canonical theta function basis for $\Cox(\bbY')$.
Let $P_L \coloneqq p^{-1}(L)$ for $L \in \Pic(\bbY')$.

The integer points $P_L(\bbZ)$ parameterize a canonical theta function basis for $H^0(\bbY',L)$.
\end{conjecture}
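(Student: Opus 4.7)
The plan is to deduce the Cox ring conjecture from the preceding double mirror, symmetry, and independence conjectures applied to the affine snc partial minimal model $U \subset Y$, combined with an analysis of how the universal torsor structure interacts with the mirror construction.

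First I would apply Corollary \ref{conj:bas} directly to $U \subset Y$. Since $Y$ is an affine snc partial minimal model of a log Calabi-Yau with maximal boundary (constructed as the universal torsor), and since the previous three conjectures are assumed, the Corollary yields that $C_Y \subset \Sk(V)$ parameterizes a canonical theta function basis of $\cO(Y)$. By the defining property of the universal torsor, $\cO(Y) = \Cox(Y')$ (using that $H^i(Y',\cO_{Y'}) = 0$ for $i > 0$, which is noted in \cref{sec:varphi} as a consequence of rational connectedness in the maximal boundary case). This establishes the second sentence of the conjecture, and via the double mirror conjecture \cref{conj:dm} identifies $\Spec(\Cox(Y'))$ as a fibre of the canonical mirror family over $V$, giving the first sentence.

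Next I would match the Picard grading on $\Cox(Y')$ with the torus fibration $p\colon V \to T_{\Pic(Y')}$. The torus $T^{\Pic(Y')}$ acts on $Y$ fibrewise by the universal torsor structure, and this action gives precisely the Picard grading $\Cox(Y') = \bigoplus_L H^0(Y',L)$. Under the mirror duality, torus actions on $U$ correspond to fibrations of $V$ over the dual torus; here this dual torus is $T_{\Pic(Y')}$ and the fibration is $p$ (see \cref{sec:varphi}). Applying the symmetry conjecture \cref{conj:sym}, the $T^{\Pic(Y')}$-weight of $\theta_v$ equals $p^\trop(v) \in \Pic(Y')$. Combined with the first step, the basis $\{\theta_v\}_{v \in C_Y}$ of $\Cox(Y')$ splits according to the Picard grading: $C_Y = \bigsqcup_L \bigl(C_Y \cap P_L(\bbZ)\bigr)$, with $C_Y \cap P_L(\bbZ)$ parameterizing a basis of the weight-$L$ piece $H^0(Y',L)$.

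The remaining step, which I expect to be the main obstacle, is to upgrade $C_Y \cap P_L(\bbZ)$ to $P_L(\bbZ)$ itself, i.e.\ to prove $P_L(\bbZ) \subseteq C_Y$ for every $L \in \Pic(Y')$. Unwinding definitions, this amounts to showing that every integer point in the fibre $P_L$ automatically satisfies $\theta_E^\trop(v) \ge 0$ for every boundary component $E \subset Y \setminus U$; equivalently, that every weight-$L$ theta function $\theta_v$ has no pole along any boundary divisor of $Y$. In the toric case this reduces to the elementary statement that monomials of weight $L$ in the Cox polynomial ring have non-negative exponents, corresponding to integer points of the Newton polytope of $L$. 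In general, the argument must combine the convexity of tropical theta functions (\cref{thm:basic_convexity}) with the fact that genuine line bundles on $Y'$ correspond to integer characters of $T^{\Pic(Y')}$ that extend across the boundary (ruling out negative orders of vanishing via the integrality of $\varphi$ constructed in \cref{sec:varphi}). Making this precise will require a careful compatibility check between the singular integer affine structure on $\Sk(V)$ induced by the compactification, the piecewise linear section $\varphi$, and the symmetry pairing, and is the technical heart of the conjecture.
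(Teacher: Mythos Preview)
The statement you are trying to prove is labelled a \emph{Conjecture} in the paper, and the paper provides no proof of it. It sits in \S\ref{sec:conj}, which collects conjectures; the surrounding text (``We apply the above conjectures to $U \subset Y$'') sets up the context but does not claim a derivation. The paper only remarks afterwards that the full conjecture is established in one representation-theoretic example in \cite{Gross_Canonical_bases}. So there is no ``paper's own proof'' to compare against.

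Your proposal is best read as a conditional outline: assuming Conjectures~\ref{conj:dm}, \ref{conj:sym}, \ref{conj:ind}, how far can one push towards \ref{conj:cox}? Step~1 is correct and is exactly what Corollary~\ref{conj:bas} gives. Step~2, however, invokes a principle (``torus actions on $U$ correspond to fibrations of $V$ over the dual torus, and under symmetry the weight of $\theta_v$ equals $p^{\trop}(v)$'') that is not established anywhere in the paper; \S\ref{sec:varphi} constructs $\varphi$ and the $A_1(Y,\bbR)$-bundle structure on $\Sk(G)$, but says nothing about how a torus action on $U$ manifests on the mirror $V$ or its skeleton. This is itself a conjectural compatibility, not a consequence of \ref{conj:sym} as stated. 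Step~3 you correctly flag as the crux, and your heuristic via \cref{thm:basic_convexity} and the section $\varphi$ is plausible but, as you say, not a proof.

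In short: there is no gap relative to the paper, because the paper does not prove this. Your outline is a reasonable sketch of a conditional argument, but Steps~2 and~3 each require substantial new input beyond the stated conjectures, and the paper does not supply it.
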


We note by \cite{Hu_Mori_dream_spaces} that $\Cox(\bbY')$ controls the Mori theory of $\bbY'$.
The conjecture gives a canonical basis, and the structure constants (so the ring structure) as counts of disks.
\cite[0.20]{Gross_Canonical_bases} gives a proof of the full conjecture in an important representation theoretic example, and shows that the {\it lattice polytopes} $P_L$ vastly generalize the Tao-Knudsen hive polytopes,
see \cite{Magee}. \cite{KW24} proves the conjecture for $\bbU'$ of dimension two. 

\subsection{The polytope construction}
Here we give a simple construction we conjecture recovers from a theta function the associated element of the skeleton
of the mirror -- see \cref{conj:bd}. 

We continue considering $\bbU \subset \bbY$ an snc compactification of smooth affine log CY with maximal boundary.
We consider a non-zero rational function $f$ on $\bbY$, regular
on $\bbU$. 

\begin{definition-lemma} \label{def:F*}  Notation as immediately above. 
  Let $F_k \subset A_{\Sk(\bbU)}$ be the subalgebra with basis $\theta_u$, $f^{\trop}(u) + k \geq 0$.
  $F_*$ makes $A_{\Sk(\bbU)}$ a filtered ring (i.e. $F_i \cdot F_j \subset F_{i+j}$).
\end{definition-lemma}
\begin{proof} $F_i \cdot F_j \subset F_{i+j}$ by basic convexity, \cref{prop:basicconvexity}.
\end{proof}

Let $\tA \subset A[T,T^{-1}]$ be the associated graded subring -- i.e. generated by $a \cdot T^k$ with $a \in F_k$.

\begin{proposition}  \label{prop:polyval} Notation as immediately above. 
  Let $P \subset \tA$ be the submodule with basis $T^k \theta_u$ with $k + f^{\trop}(u) > 0$.

  The following hold:
  \begin{enumerate} 
 \item $P \subset \tA$ is an ideal. 
 We have a canonical identification $\tA/P = \gr_{F_*}(A)$ (the associated graded of the filtered ring).
 This is the mirror algebra to the open set $f \neq 0 \subset \bbU$ (see \cref{rem:associatedgraded}).
 \item 
    $P$ is prime iff for any $u_1,u_2 \in \Sk(U)(\bbZ)$, the minimum of $f^{\trop}(u)$ over
  all $u$ such that $\theta_u$ appears in $\theta_{u_1} \cdot \theta_{u_2}$ with non-zero coefficient,
  is $f^{\trop}(u_1) + f^{\trop}(u_2)$.  In this case
  $$
  v(\sum a_u \theta_u) \coloneqq \min f^{\trop}(u)
  $$
  gives a discrete valuation of $A_{\Sk(\bbU)}$. This is an element of $\Sk(\bbV)(\bbZ)$, for $\bbV$ a general
  member of the mirror family to $\bbU$.
  \item   Consider $\Proj(S)$.
    The open set $T \neq 0 \subset \Proj(S)$ is canonically identified with $\Spec(A_{\Sk(\bbU)})$.
  If $P$ is prime, then the (reduced) boundary is irreducible, the associated divisorial discrete valuation is $v$.
\item $P$ is prime if the open set $f \neq 0 \subset \bbU$ is log CY.
  \end{enumerate}
\end{proposition}

We note that we automatically have tropical independence for theta functions evaluated at such $v$, see
\cref{conj:ind}.  

\begin{proof}
  $P$ is an ideal because $F_*$ is a filtration, and $\tA/P = \gr_{F_*}(A)$ is obvious (and holds for any filtered ring).
  By \cref{prop:basicconvexity}, structure disks contributing to the multiplication rule mod $P$ lie in $f \neq 0$. This
  gives (1). 
  
  That primeness of $P$ is equivalent to the given minimum statement is immediate from
  the definition, and it's clear that this implies $v$ is a discrete valuation. This gives (2).
  It's easy to check that $(\tA_T)_0 = A \subset A[T,T^{-1}]$, this gives the open embedding, and (3).

  Now suppose that the boundary $(T = 0) \subset \Proj(S)$ is irreducible -- note this holds when
  $P \subset \tA$ is prime. We check that $v$ is the associated divisorial valuation. But it's clear that
  $P = (T)$, and that $T^s \theta_u \in (T^k)$ iff $s + f^{\trop}(u) \geq k$. From this it follows
  easily that $v$ is the divisorial valuation associated to the prime divisor $(T = 0) \subset \Proj(S)$.
 
  Finally we check that in the prime case, $v \in \Sk(\bbV)(\bbZ)$. 
    We consider the mirror family over $\TV(\Nef(X))$ for some snc compactification $\bbU \subset X$
  (following our convention, to this point we have not mentioned the coefficients, all the statements
  follow for any compactification, and any fibre of the mirror family).  All the constructions make
  sense over $\bbZ[\NE(X,\bbZ)]$.  $f^{\trop} \geq -1$ intersects each cone of the essential dual
  complex $\Sigma$ is a convex polytope (containing $0$). The central fibre of $\Proj(S)$ is
  corresponding union of polarized toric varieties, a partial compactification of the vertex. We
  check $K + Z$ is trivial and SLC on the central fibre, this is a purely combinatorial question, follows
  as in the proof of \cref{thm:lcsings}, see \cref{prop:vertex}. 
  Now these are open conditions, so hold at least generically along the boundary divisor on
  the generic fibre.
  \end{proof}

  We expect this gives a characterisation of {\it tropical theta functions}:

\begin{conjecture} \label{conj:bd} Notation as in \cref{prop:polyval}.
  Let $f \in \cO(\bbU)$ be non-zero. Then $f^{\trop} = \theta_w^{\trop}$
  $\bbU$ mirror to $\bbW$, for some $w \in \Sk(W)(\bbZ)$ iff 
  $P$ is prime. In that case $\bbW$ and $\bbV$ are each fibres of the mirror family (for $\bbU$),
  and $w = v$.
  \end{conjecture} 
  
  \appendix

  Here we include foundational material on the Berkovich generic fibre of a special formal scheme,
  taken from \cite{LoganThesis}, generalizing well known results in the more restrictive case of
  finitely presented formal schemes.

\section{Special Formal Schemes} \label{sec:BTS} 

\noindent 
\begin{definition} \label{def:SpfScheme} (\textit{Special Formal Scheme}) Here fix $k$ a complete, nonarchimedean valued field. A formal scheme $\bb{Y}$ over $k^{\circ}$ is called \textit{topologically formally of finite type (tfft)}, or \textit{special}, if it can be covered by finitely many affine opens $\bb{U} \subset \bb{Y}$ of the form
$$ \bb{U} = \Spf(k^{\circ}[[s_1,...,s_l]]\{t_1,...,t_m\} / (f_k, k \in K)) \rightarrow \Spf(k^{\circ}), $$
where $K$ is some possibly infinite index set. For any special formal $k^{\circ}$-scheme $\bb{Y}$, the symbol $\bb{Y}_s$ will refer to the underlying reduced scheme. Every Zariski open subset of a special formal scheme takes the above form \cite{achinger2021specializationproetalefundamentalgroup}.
\end{definition}

\begin{remark}\label{remark: GenFiber} (\textit{Generic Fiber}) Here, let $k$ be trivially or discretely valued. In \cite{Berkovich_Vanishing_cycles_for_formal_schemes_II}, Berkovich extends the definition of generic fiber of a formal scheme from finitely presented to special over $k^{\circ}$. For an affine special formal scheme over $k^{\circ}$ such as $\bb{U}$ in \cref{def:SpfScheme}, the generic fiber is defined by 
$$ \bb{U}_{\eta} := \bigcup_{0 < \delta < 1} \mathscr{M}(k\{\delta^{-1} s_1,...,\delta^{-1} s_{\ell}, t_1,...,t_m\}/(f_1,...,f_n)).$$
As in the case of finitely presented formal schemes, there is an anti-continuous reduction map $\bb{U}_{\eta} \rightarrow \mathbb{U}_s$ given as follows. Let $\mathcal{A}$ be the ring of global functions on $\mathbb{U}$. Then given a point $x \in \mathbb{U}_{\eta}$, there is a natural map $\mathcal{A} \rightarrow \mathscr{H}(x)^{\circ} \rightarrow \widetilde{\mathscr{H}(x)}$ which factors through $\mathcal{A}/\sqrt{(s_1, ...,s_l, \pi)} \rightarrow \widetilde{\mathscr{H}(x)}$, with $\pi \in k^{\circ \circ}$ a uniformizer. This defines a point of $\mathbb{U}_s$.

The above construction is functorial and takes open immersions to closed subdomain embeddings, and so gluings of affine special formal schemes along distinguished affine opens lead to gluings of their generic fibers along closed affinoid subdomains. A special formal scheme $\mathbb{Y}$ over $k^{\circ}$ thus has an analytic generic fiber $\mathbb{Y}_{\eta}$. The reduction maps on the affine opens glue to give a global reduction map $\text{red}_{\mathbb{Y}} : \mathbb{Y}_{\eta} \rightarrow \mathbb{Y}_s$ [loc. cit]. 
\end{remark}

\begin{example}\label{ex:DefInOneVar} (\textit{Deformations in One Variable}) The motivating example for our formal mirror construction is a single-parameter deformation $\bb{Y} \rightarrow \Spf(k[[s]])$ obtained, for instance, by applying the Gross-Siebert mirror construction to an algebraic log Calabi-Yau pair $(X, E)$, then restricting the resulting formal family over a 1-dimensional formal open disk in the toric base, centered at the unique $0$-stratum, and finally resolving singularities. Such a $\bb{Y}$ is in particular a special formal scheme over $k[[s]]$.

Any special formal scheme over $k[[s]]$ can be viewed dually as a special formal scheme over $k$. Then one observes two relevant generic fiber constructions when $k$ is given the trivial valuation. On the one hand, after fixing some $0 < \epsilon < 1$ and endowing $k[[s]]$ with the discrete valuation $|s| = \epsilon$, the generic fiber of $\bb{Y}$ the special formal $k[[s]]$-scheme is a $k((s))$-analytic space $Y_{\epsilon}$. On the other hand the generic fiber of $\bb{Y}$ the special formal $k$-scheme is a $k$-analytic space $Y$ with a flat map to the open unit disk $\bb{D}^{\circ}$, which we note is canonically isomorphic to $[0,1)$ as a topological space. The two generic fibers are related in the obvious way\textbf{---}the fiber of $Y \rightarrow \bb{D}^{\circ}$ over $\epsilon$ is nothing but $Y_{\epsilon}$.  

\end{example}

\begin{remark}\label{remark:Coh} (\textit{Coherent Sheaves}) Now fix $k$ trivially valued, and $\bb{Y}$ a special formal scheme over $k$. We recall from \cite{Ben_Bassat_2013} the generic fiber induces a natural equivalence $\_\__{\eta} : \text{Coh}_{\mathscr{O}_{\bb{Y}}} \rightarrow \text{Coh}_{\mathscr{O}_{\bb{Y}_{\eta}}}$ on coherent sheaves. The equivalence comes from the following observation: Fix an open affine $\bb{U} \subset \bb{Y}$ as in \cref{def:SpfScheme}, with $\mathcal{A}$ its defining ring of functions. Denote by $\mathcal{A}_{\delta}$ the affinoid algebra corresponding to $\delta$ in the union defining $\bb{U}_{\eta}$ in \cref{remark: GenFiber} Then for all $\delta < \delta'$, the restriction map $\mathcal{A}_{\delta'} \rightarrow \mathcal{A}_{\delta}$ is an isomorphism, as is the inclusion $\mathcal{A} \rightarrow \mathcal{A}_{\delta}$. Now coherent modules on $\bb{U}$ are just finitely generated $\mathcal{A}$-modules, while coherent modules on any $\mathscr{M}(\mathcal{A}_{\delta})$ are finitely generated $\mathcal{A}_{\delta}$-modules, so the same.

We note in particular that $\_\__{\eta}$ induces an equivalence of ideal sheaves on $\bb{Y}$ and $\bb{Y}_{\eta}$, hence also of Zariski closed subspaces. 
\end{remark}

\begin{definition-proposition}\label{def:IrrComponents} (\textit{Irreducible Components}) Consider $\bb{Y}$ a special formal scheme over a trivially valued field $k$, and $\bb{Z} = V(\mathcal{I})$ a closed subscheme of $\bb{Y}$ defined by a radical ideal sheaf $\mathcal{I} \subset \mathscr{O}_{\bb{Y}}$. Then there exist finitely many $\mathcal{I}_1,...,\mathcal{I}_s \subset \mathscr{O}_{\bb{Y}}$ radical ideal sheaves such that $\mathcal{I}_i \not \subset \mathcal{I}_j$ for any $i \neq j$, and $\mathcal{I}_1 \cap ... \cap \mathcal{I}_s = \mathcal{I}$, and so that any radical ideal sheaf containing $\mathcal{I}$ also contains one of the $\mathcal{I}_i$. The closed formal subschemes $V(\mathcal{I}_i) \subset \bb{Z}$ are called the \textit{irreducible components} of $\bb{Z}$. They are naturally in bijection with the irreducible components of $\bb{Z}_{\eta}$.
\begin{proof}
    It will suffice to prove the claim for $\bb{Z} = \bb{Y}$, and $\mathcal{I} = \mathcal{N}$ the nilpotent sheaf.

    We first observe that the Zariski topology of $\bb{Y}_{\eta}$ is noetherian. Indeed, the Zariski topology of $\bb{U}_{\eta}$ is noetherian for each affine $\bb{U} \subset \bb{Y}$, and is finer than the topology on $\bb{U}_{\eta}$ induced by the restriction of the Zariski topology of $\bb{Y}_{\eta}$. Then the claim follows from quasi-compactness of $\bb{Y}$.

    Fix $Y_1,...,Y_s$ the irreducible components of $\bb{Y}_{\eta}$, and define G-locally on $\bb{Y}_{\eta}$ the ideal sheaf $I_i$ of functions vanishing on $Y_i$ (see \cite{Ducros_families}, 1.3.19). Then the $I_i$ are radical ideal sheaves on $\bb{Y}_{\eta}$ such that $I_1 \cap ... \cap I_s = \mathcal{N}_{\eta}$, which is the sheaf of nilpotent functions on $\bb{Y}_{\eta}$. Moreover, for any $i \neq j$, $I_i \not \subset I_j$, and every non-zero radical ideal on $\bb{Y}_{\eta}$ contains one of the $I_i$. By \cref{remark:Coh}, for each $i = 1,...,s$, there is a unique ideal sheaf $\mathcal{I}_i$ on $\bb{Y}$ with generic fiber $I_i$, and for these $\mathcal{I}_i$ the statement holds.    
\end{proof}
\end{definition-proposition}

\begin{remark}\label{remark:LocProp} (\textit{Local Properties}) For $k$ trivially or discretely valued, the rings defining special formal schemes on open affine subsets are known to be excellent (for trivially valued case, see \cite{ducros2009lesespacesberkovichsont}, for discretely valued, see \cite{Conrad1999}). Thus for a special formal scheme $\bb{Y}$, a local property $P$, such as reducedness, normality, regularity, holds for all coordinate rings of open affine subsets if and only if it holds for all completed local rings, in which case $\bb{Y}$ is said to have property $P$.

When $k$ is trivially valued, from \cref{remark:Coh} one immediately sees that a local property $P$ holds for $\bb{Y}$ if and only if the same property holds for $\bb{Y}_{\eta}$.

\end{remark}

\begin{definition-lemma}\label{def:An-Ord} Fix $k$ a nonarchimedean complete valued field, $Y$ a normal $k$-analytic space, $Z \subset Y$ an irreducible Weil divisor, and $D$ Cartier on $Y$. For any affinoid domain $V \subset Y$ with $V \cap Z \neq \emptyset$ and any irreducible component $Z'$ of $V \cap Z$, we can compute $\text{ord}_{Z'}(D|_V) \in \mathbb{Z}$. This number is independent of the choices of $V$ and $Z'$. We call it $\ord_{Z}(D)$.
\begin{proof}
    It is enough to prove for $Y = \mathscr{M}(A)$ an affinoid space, and $\mathscr{M}(B) = V \subset Y$ an affinoid domain, that $\text{ord}_{Z}(D) = \text{ord}_{Z'}(D|_V)$.

    Let $Y^{\al}, V^{\al}$ denote the spectra of global functions on $Y, V$ respectively. We have the canonical commuting square 
    \begin{center}
        \begin{tikzcd}
            V \arrow[r, "\pi_V"] \arrow[d, "\iota"] & V^{\al} \arrow[d, "\iota^{\al}"] \\ 
            Y\arrow[r, "\pi_Y"] & X^{\al}
        \end{tikzcd}
    \end{center}
    Fix also the irreducible $Z^{\al} \subset Y^{\al}$ and $(Z')^{\al} \subset V^{\al}$ whose preimages are $Z$ and $Z'$ respectively. Now for some $\mathcal{W} \subset Y^{\al}$ Zariski open neighborhood of $Z^{\al}$, the reduced structure of $Z^{\al} \cap \mathcal{W} \subset \mathcal{W}$ is cut out by a single function $t \in \mathscr{O}_{\mathcal{W}}$. Fix $W = \pi_X^{-1}(\mathcal{W})$. Then $t$ likewise cuts out the reduced structure of $Z \cap W \subset W$. Reducedness is a G-local property \cite{Ducros_families}, so the restriction of $t$ cuts out the reduced structure of $Z \cap W \cap V \subset W \cap V$. Finally, we see that $t$ is a uniformizer for $(Z')^{\al} \subset V^{\al}$, since it now cuts out the reduced structure of $(Z')^{\al}$ on the intersection with $(\iota^{\al})^{-1}(\mathcal{W})$ of the complement of all irreducible components of $(\iota^{\al})^{-1}(Z^{\al})$ not equal to $(Z')^{\al}$.

\end{proof}
\end{definition-lemma}

\begin{definition}\label{def:Ord} (\textit{Order of Vanishing}) Fix $\bb{Y}$ a normal special formal scheme over $k$ a trivially valued field, $\bb{Z} \subset \bb{Y}$ a prime Weil divisor, and $\bb{D} \subset \bb{Y}$ Cartier. Then $\bb{D}$ has a well-defined order of vanishing $\ord_{\mathbb{Z}}(\mathbb{D}) := \ord_{\mathbb{Z}_{\eta}}(\mathbb{D}_{\eta})$ along $\bb{Z}$. Hence, we can speak of the \textit{Weil divisor} associated to $\bb{D}$, 
$$\text{cyc}(\mathbb{D}) := \sum_{\bb{Z} \subset \bb{Y} \text{ irred. Weil div.}} \ord_{\bb{Z}} (\bb{D}) \cdot \bb{Z}.$$ 
\end{definition}

\begin{definition}\label{def:CartierFnct} (\textit{The Function $|\bb{F}| : \bb{Y}_{\eta} \rightarrow [0, \infty)$}) For $\bb{Y}$ a special formal scheme over triv. valued $k$, and $\bb{F}$ an effective Cartier divisor on $\bb{Y}$, there is associated to $\bb{F}$ a canonical continuous function $|\bb{F}| : \bb{Y}_{\eta} \rightarrow [0,\infty)$, where $|\bb{F}|(y)$ (we will usually instead write $|\bb{F}(y)|$) takes the value $|f(y)|$ for $f$ a local representative of $\bb{F}$ near $\text{red}_{\bb{Y}}(y)$.

Suppose now $\bb{F}$ is only assumed Cartier. Write $\bb{F} = \bb{Z} - \bb{P}$, here $\bb{Z}, \bb{P}$ effective divisors. In this case we can still define $|\bb{F}| : \bb{Y}_{\eta} \setminus (\bb{Z}_{\eta} \cap \bb{P}_{\eta}) \rightarrow [0, \infty]$ by $|\bb{F}| = |\bb{Z}|/|\bb{P}|$. The following properties are clear: 
\begin{enumerate}[label=(\alph*)]
    \item We have $\red_{\bb{Y}}(y) \in \mathbb{Z}_s$ (resp. $\mathbb{P}_s$) if and only if $|\bb{F}(y)| < 1$ (resp. $|\bb{F}(y)| > 1$).
    \item We have $|\bb{F}(y)| = 0$ (resp. $|\bb{F}(y)| = \infty$) if and only if $y \in \bb{Z}_{\eta}$ (resp. $y \in \bb{P}_{\eta}$). 
    \item For $y \in \bb{Y}_{\eta} \setminus (\bb{Z}_{\eta} \cup \bb{P}_{\eta}),$ we have $|\bb{F}(y)| \in (0, \infty)$.    
\end{enumerate}
We define as well $\bb{F}^{\trop} : \bb{Y}_{\eta} \setminus (\bb{Z}_{\eta} \cap \bb{P}_{\eta}) \rightarrow [-\infty, \infty]$ by $\bb{F}^{\trop} := -\log |\bb{F}|$. Note that $\bb{F} \mapsto \bb{F}^{\trop}$ defines an additive group homomorphism from Cartier divisors on $\bb{Y}$ to continuous functions $\bb{Y}_{\eta} \setminus (\bb{Z}_{\eta} \cup \bb{P}_{\eta}) \rightarrow \bb{R}$.   
    
\end{definition}

\begin{definition-proposition}\label{def:Trunc} (\textit{Truncation by an ideal sheaf}) Fix $K$ a complete valued field, and $\bb{Z}$ a special formal scheme over $K^{\circ}$. Set $Z = \bb{Z}_{\eta}$, and let $\mathcal{I}$ be an ideal sheaf on $\mathscr{O}_{\bb{Z}}$. Then there is a closed analytic domain $Z\{r^{-1}\mathcal{I}\} \subset Z$ defined by the Weierstrass condition $|f| \leq r, f \in \mathcal{I}$ on the class of analytic domains of $Z$ which arise as generic fibers of affine Zariski opens of $\bb{Z}$.

Let $r^n = |\pi|$ for some $\pi \in K^{\circ \circ}$. Fix $\hat{b} : \fBl_{(\pi) + \mathcal{I}^n}(\bb{X}) \rightarrow \bb{X}$ the formal blowup, and define $\bb{Z}\{r^{-1}\mathcal{I}\} \subset \fBl_{(\pi) + \mathcal{I}^n}(\bb{Z})$ the Zariski open subset on which $\pi$ generates the inverse image ideal sheaf $\hat{b}^{-1}\big( (\pi) + \mathcal{I}^n \big)$. Then $\bb{Z}\{r^{-1} \mathcal{I}\}_{\eta} \cong Z\{r^{-1}\mathcal{I}\}$. If $(\pi) + \mathcal{I}$ is a defining ideal sheaf of $\bb{Z}$, then $\bb{Z}\{r^{-1}\mathcal{I}\}$ is an admissible formal model of $Z\{r^{-1}\mathcal{I}\}$. 
\begin{proof}
    Take first $\bb{Z} = \Spf(\mathcal{A})$ an affine open subset, with $Z = \cup_{0 < \delta < 1} \mathscr{M}(A_{\delta})$ as in \cref{remark: GenFiber}. Fixing a set of generators $f_1,...,f_d$ for $\mathcal{I} \leq \mathcal{A}$, we have $\mathscr{M}(A_{\delta}\{r^{-1}f_1,...,r^{-1}f_d\}) \subset \mathscr{M}(A_{\delta})$ a Weierstrass domain of $\mathscr{M}(A_{\delta})$ \cite{Berkovich_Spectral_theory}, whose underlying topological space is the set of all points of $\mathscr{M}(A_{\delta})$ where $|f_i| \leq r$ for each $i$. For another choice of generators $g_1,...,g_{e}$ of $\mathcal{I}$ and any point $p \in Z$, we have $\max_i |f_i(p)| \leq r \iff \max_j |g_j(p)| \leq r$, so $Z\{r^{-1}I\}$ is well-defined. Now suppose $\bb{V} = \Spf(\mathcal{A}_{\bb{V}})$ affine open contained in $\bb{Z}$, and $V = \bb{V}_{\eta}$. The $f_i$ restrict to a set of generators of $\mathcal{I}|_{\bb{V}}$, which implies $V\{r^{-1}\mathcal{I}|_{\bb{V}}\} = Z\{r^{-1} \mathcal{I}\} \cap V$. The general construction now follows by gluing for a cover of $Z$ by analytic domains of the form $\Spf(\mathcal{A})_{\eta}$ for $\Spf(\mathcal{A}) \subset \bb{Z}$ Zariski open.

    The claims of the second paragraph can likewise be checked Zariski locally, so we suppose again $\bb{Z} = \Spf(\mathcal{A})$ is affine, say with defining ideal $\mathcal{J}$, and also $Z = \cup_{0 < \delta < 1} \mathscr{M}(A_{\delta})$. We have $b : \text{Bl}_{(\pi) + \mathcal{I}^n}(\Spec(\mathcal{A})) \rightarrow \Spec(\mathcal{A})$, and $\hat{b} : \fBl_{(\pi) + \mathcal{I}^n}(\bb{Z}) \rightarrow \bb{Z}$ the $b^{-1}\mathcal{J}$-adic completion. By definition $\bb{Z}\{r^{-1}\mathcal{I}\} \subset \fBl_{(\pi) + \mathcal{I}^n}(\bb{Z})$ is the $b^{-1}\mathcal{J}$-adic completion of the distinguished open subscheme $D_+(\pi) \subset \text{Bl}_{(\pi) + \mathcal{I}^n}(\Spec(\mathcal{A}))$. The ring of global sections of $\bb{Z}\{r^{-1}\mathcal{I}\}$ is then described as follows: Let $\frak{a} \subset \mathcal{A}\{u_{m_1...m_r} : m_1 + ... + m_r = n\}$ denote the ideal generated by the elements $\pi u_{m_1...m_r} - f_1^{m_1}...f_r^{m_r}$, and $\frak{a}^{\pi\text{-sat}}$ its $\pi$-saturation; then 
    $$ \Gamma(\bb{Z}\{r^{-1}\mathcal{I}\}, \mathscr{O}_{\bb{Z}\{r^{-1}\mathcal{I}\}}) \cong \mathcal{A}\{u_{m_1...m_r} : m_1 + ... + m_r = n\}/ \frak{a}^{\pi\text{-sat}}.$$
    Now by the definitions of $\pi$-saturation and of the generic fiber construction (wherein $\pi$ is inverted), the generic fiber of $\bb{Z}\{r^{-1}\mathcal{I}\}$ will agree with that of $\Spf(\mathcal{A}\{u_{m_1...m_r}\} / \frak{a})$, from which it follows that $\bb{Z}\{r^{-1}\mathcal{I}\}_{\eta} \cap \mathscr{M}(A_{\delta})$ is precisely $\mathscr{M}(A_{\delta}\{f_1^{m_1}...f_r^{m_r} / \pi :  m_1 + ... + m_r = n \})$, ie. the affinoid subdomain of $\mathscr{M}(A_{\delta})$ where $|f_1^{m_1}...f_r^{m_r}| \leq |\pi|$ for all partitions $m_1 + ... + m_r = n$. This is the same as requiring $|f_i^n| \leq |\pi|$ for each $i$, or $|f_i| \leq r$. We deduce $\mathbb{Z}\{r^{-1} \mathcal{I}\}_{\eta} = Z\{r^{-1}\mathcal{I}\}$, as desired.

    The final claim follows from Section II Proposition 1.1.4(2) of \cite{fujiwara2017foundationsrigidgeometryi}, since in this case the blowup along $(\pi) + \mathcal{I}$ is an admissible blowup in the sense of [loc. cit., II 1.1]. 
    \end{proof}
\end{definition-proposition}

\begin{proposition}\label{prop:TruncFunc} (\textit{Functoriality of Truncation}) Fix $L/K$ an extension of complete (trivially or discretely) valued fields. Fix also $\bb{X}, \bb{Y}$ special formal schemes over $L^{\circ}, K^{\circ}$ respectively, and suppose there is a map $\frak{f} : \bb{X} \rightarrow \bb{Y}$. Take $\mathcal{I}$ a sheaf of ideals on $\bb{Y}$. Then there is a natural identification $X\{r^{-1}(\frak{f}^{-1}\mathcal{I} \cdot \mathscr{O}_{\bb{X}})\} \overset{\cong}{\longrightarrow } X \times_Y Y\{r^{-1}\mathcal{I}\}$ of closed analytic domains of $X$.

Suppose now $K^{\circ}$ (hence also $L^{\circ}$) is discretely valued, and fix some $r^n \in \sqrt{|K^*|}$. Then there is a natural closed immersion $\bb{X}\{r^{-1}(\frak{f}^{-1} \mathcal{I} \cdot \mathscr{O}_{\bb{X}})\} \hookrightarrow \bb{X} \times_{\bb{Y}} \bb{Y}\{r^{-1} \mathcal{I}\}$, an isomorphism when $\frak{f}$ is flat. Its generic fiber is the isomorphism of the first paragraph. 
\begin{proof}

For the first paragraph, we simply observe that as topological subspaces of $X$, 
$$X \times_Y Y \{r^{-1} \mathcal{I}\} = f^{-1}(Y \{r^{-1}\mathcal{I}\}) = X\{r^{-1}(\frak{f}^{-1} \mathcal{I} \cdot \mathscr{O}_{\bb{X}})\}.$$
But these are both closed analytic domains in $X$, and so are determined up to isomorphism by their underlying subspaces.

For the second paragraph, we note there is a surjective map of graded $\mathscr{O}_{\bb{X}}$-algebras 
$$ \oplus_{j = 0}^{\infty} \frak{f}^* ((\pi) + \mathcal{I}^n)^j \rightarrow \oplus_{j = 0}^{\infty} ((\pi) + \frak{f}^{-1}\mathcal{I}^n \cdot \mathscr{O}_{\bb{X}})^j, $$
an isomorphism when $\frak{f}$ is flat. Taking Proj on both sides and completing the distinguished opens $D_+(\pi)$ with respect to a defining ideal sheaf for $\bb{X}$, we obtain precisely the closed immersion of the statement. 
\end{proof}
\end{proposition}

\noindent

\begin{corollary}\label{cor:PropSepPreserved} (\textit{Preservation of Proper / Separated}) Suppose $\frak{f}: \bb{X} \rightarrow \bb{Y}$ is a proper (resp. separated) map of special formal schemes over $K^{\circ}$. Then $\frak{f}_{\eta} : \bb{X}_{\eta} \rightarrow \bb{Y}_{\eta}$ is a proper (resp. separated) map of $K$-analytic spaces. 
\begin{proof}
    Let $X = \bb{X}_{\eta}, Y = \bb{Y}_{\eta}, f = \frak{f}_{\eta}$. Fix $\mathcal{I} \leq \mathscr{O}_{\bb{Y}}$ a defining ideal sheaf, in which case we can write $X = \cup_{0 < r < 1} X\{r^{-1} (f^{-1}\mathcal{I} \cdot \mathscr{O}_{\bb{X}})\}$, $Y = \cup_{0 < r < 1} Y\{r^{-1} \mathcal{I}\}$ as unions of compact analytic domains, with $f$ restricting for each $r$ to a map of the $r$-indexed domains by \cref{prop:TruncFunc}. Now fix some $0 < r < 1$, and $L/K$ a discrete valuation field with an element $\pi \in L^{\circ}$ such that $r^n = |\pi|$ for some $n$. Since $\frak{f}$ is proper (resp. separated), so is the base change $\bb{X}_{L^{\circ}} \times_{\bb{Y}_{L^{\circ}}} \bb{Y}_{L^{\circ}}\{r^{-1} \mathcal{I}\} \rightarrow \bb{Y}_{L^{\circ}}\{r^{-1} \mathcal{I}\}$, and then, since $\bb{X}_{L^{\circ}}\{r^{-1} (f^{-1}\mathcal{I} \cdot \mathscr{O}_{\bb{X}_{L^{\circ}}})\} \rightarrow X_{L^{\circ}} \times_{\bb{Y}_{L^{\circ}}} \bb{Y}_{L^{\circ}}\{r^{-1} \mathcal{I}\}$ is a closed immersion, we have
    $$ \bb{X}_{L^{\circ}}\{r^{-1} (f^{-1}\mathcal{I} \cdot \mathscr{O}_{\bb{X}_{L^{\circ}}})\} \rightarrow \bb{Y}_{L^{\circ}}\{r^{-1} \mathcal{I}\} $$
    proper (resp. separated). By the final statements of \cref{def:Trunc} and \cref{prop:TruncFunc}, this is a map of admissible formal schemes over $L^{\circ}$ whose generic fiber is the base extension 
    $$ X\{r^{-1}(f^{-1} \mathcal{I} \cdot \mathscr{O}_{\bb{X}})\}_L \rightarrow Y\{r^{-1}\mathcal{I}\}_L, $$
    which is therefore proper (resp. separated). The result now follows from descent for valuation extensions (Theorem 9.2 of \cite{Conrad_Descent}). 
\end{proof}
\end{corollary}

\begin{definition-corollary}\label{cor:FormalModels} (\textit{Generic Fiber of a Map to $\bb{Y}$}) Fix $K$ a complete, discrete valuation field, and $L / K$ an arbitrary valuation extension. Suppose $\bb{Y}$ is a special formal scheme over $K^{\circ}$ with $\mathcal{I} \leq \mathscr{O}_{\bb{Y}}$ a defining ideal sheaf, and $\bb{X}$ a finite-type formal scheme over $L^{\circ}$ together with a $K^{\circ}$-map $\frak{f} : \bb{X} \rightarrow \bb{Y}$ of adic formal schemes. Then there exists some $r \in \sqrt{|K^*|}$ such that $\frak{f}$ factors uniquely as the composition of a map $\bb{X} \rightarrow \bb{Y}\{r^{-1} \mathcal{I}\}$ of admissible formal schemes with $\bb{Y}\{r^{-1} \mathcal{I}\} \rightarrow \bb{Y}$. We call the composition $\frak{f}_{\eta} : \bb{X}_{\eta} \rightarrow \bb{Y}_{\eta}$ of generic fibers the \textit{generic fiber of $\frak{f}$}. The generic fiber is compatible with the reduction maps, ie. $\red_{\bb{Y}} \circ \frak{f}_{\eta} = \frak{f}_s \circ \red_{\bb{X}}$.    
\begin{proof}
    We have $\frak{f}^{-1}\mathcal{I} \cdot \mathscr{O}_{\bb{X}} \subset (\alpha) \cdot \mathscr{O}_{\bb{X}}$ for some $\alpha \in L^{\circ \circ}$, since $\mathcal{I}$ is generated by finitely many topologically nilpotent elements (note here we are using quasi-compactness of $\bb{X}$ to pick an $\alpha$ that works globally). Fix $\pi \in K^{\circ \circ}$, and $n$ so that $|\alpha|^n < |\pi|$. Then $f^{-1} \mathcal{I}^n \cdot \mathscr{O}_{\bb{X}} \subset (\pi) \cdot \mathscr{O}_{\bb{X}}$. In particular $f^{-1}((\pi) + \mathcal{I}^n) \cdot \mathscr{O}_{\bb{X}}$ is principal, generated by $\pi$. Now by the universal property of the admissible blowing up (see Prop. 1.1.4(3) of \cite{fujiwara2017foundationsrigidgeometryi}), $\frak{f}$ factors as the composition of a uniquely determined map $\tilde{\frak{f}} : \bb{X} \rightarrow \Bl_{(\pi) + \mathcal{I}^n}(\bb{Y})$ with the projection to $\bb{Y}$. Our observations also show the image of $\tilde{\frak{f}}$ is contained in the Zariski-open of $\Bl_{(\pi) + \mathcal{I}^n}(\bb{Y})$ where $\pi$ generates the inverse image sheaf of $(\pi) + \mathcal{I}^n$, so we obtain the map $\bb{X} \rightarrow \bb{Y}\{r^{-1}\mathcal{I}\}$ of the claim. For any two choices of $r$, say $r_1 < r_2$, the maps factor as $\bb{X} \rightarrow \bb{Y}\{r_1^{-1} \mathcal{I}\} \rightarrow \bb{Y}\{r_2^{-1} \mathcal{I}\}$, giving independence of $\frak{f}_{\eta}$ from the particular choice of $r$.

    For the compatibility of $\frak{f}_{\eta}$ with the reduction maps of $\bb{X}$ and $\bb{Y}$, we apply successively the analogous statements for the two maps $\bb{X} \rightarrow \bb{Y}\{r^{-1} \mathcal{I}\}$ and $\bb{Y}\{r^{-1} \mathcal{I}\} \rightarrow \bb{Y}$. 
\end{proof}
\end{definition-corollary}

\subsection{Polyhedral Complexes and Subdivisions}
In this brief section, we prove an elementary result, \cref{prop:CofinalStarSubdiv}, concerning $*$-subdivisions of geometric complexes of rational, strictly convex polyhedra. Specifically, given such a complex $\mathscr{P}$, and another $\mathscr{G}$, together with a saturated embedding $\mathscr{G} \rightarrow \mathscr{P}$, we establish the existence of a composition of $*$-subdivisions $\mathscr{P}' \rightarrow \mathscr{P}$ such that the image of each element of $\mathscr{G}$ in $\mathscr{P}$ is a union of elements of $\mathscr{P}'$, and so that any $*$-subdivision featured in the composition has center a point (or ray) contained in an element of $\mathscr{G}$.

A secondary goal of the section is to establish notation concerning various constructions arising from toric geometry of geometric objects arising from polyhedral complexes.

\begin{definition}\label{def: *-subdivision} (\textit{$*$-subdivisions}) Fix $P \subset M_{\bb{R}}$ a rational, strictly convex polyhedron, and $p \in P$ (resp. $r \in \rec(P)$) a rational point (resp. a direction in the recession cone). We let $*(P, p)$ denote the geometric complex obtained by performing the $*$-subdivision of $P$ with center $p$ (resp. $r$). Fix $\mathscr{P}$ a geometric complex of rational, strictly convex polyhedra, $p \in |\mathscr{P}|$ (resp. $r \in \rec(\mathscr{P})$) a rational point (resp. a direction in the recession fan). We let $*(\mathscr{P}, p)$ (resp. $*(\mathscr{P}, r)$) denote the corresponding $*$-subdivision.

\end{definition}

\begin{lemma}\label{lemma:StarSubdivLemma} \begin{enumerate}[label=(\alph*)]
    \item Take $\mathscr{P}$ a geometric complex of rational strictly convex polyhedra. Then there is a sequence of star subdivisions resulting in a simplicial complex $\mathscr{P}_s$. 
    \item Suppose $P$ is a polyhedron in $M_{\mathbb{R}}$. Then for any $k$-dimensional rational cross-section $Q \subset P$, $k \leq \dim P$, and point $p \in Q$, there is a natural surjective morphism $*(P,p) \rightarrow *(Q,p)$ of polyhedral complexes. 
\end{enumerate} 
\begin{proof}
    (a) Suppose inductively that all polyhedra of $\mathscr{P}$ of dimension at most $k$ are simplices. That is to say that every polyhedron of dimension $m \leq  k$ is of the form 
    $$ \text{conv}(v_0,...,v_{\ell}) + \text{cone}(r_{\ell + 1},...,r_m),$$
    for some choice of vertices $v_0,...,v_{\ell}$ and rays $r_{\ell + 1},...,r_m$.
    Take $P$ a dimension $k + 1$ polyhedron of $\mathscr{P}$. Write $P = \Delta + C$. If the extremal rays of $C$ are not linearly independent, choose a ray $r$ in the interior of $C$ and subdivide to $\mathscr{P}' := *(\mathscr{P}, r)$. Otherwise, choose a point $p_0$ in the interior of $P$ and subdivide to $\mathscr{P}' := *(\mathscr{P},p_0)$. In either case, we have a polyhedral complex $\mathscr{P}'$ whose dimension $\leq k$ polyhedra remain simplicial and for whom the number of non-simplicial dimension $k + 1$ polyhedra has decreased by at least 1. 
    \\

    \noindent 
    (b) It obviously suffices to prove the claim for $k = \dim P - 1$, since for any $k$ we can take a sequence of hyperplanes cutting out $Q$ and apply the result successively to each further hyperplane intersection.

    Let $\mathscr{P}$ be the complex associated to $P$ and $\mathscr{Q}$ the complex associated to $Q$. Take a defining set of half-spaces $\{(H_i,n_i)\}_{i \in I}$ for $P$. Since $\dim Q = \dim P - 1$, there is a hyperplane $H$ such that $Q = P \cap H$. Let $n \in M$ be a normal vector to $H$. Then $\{(H, n), (H, -n)\} \cup \{(H_i, n_i)\}_{i \in I}$ is a defining set of half-spaces for $Q$. For any subset $S \subset I$, $Q \cap \big( \bigcap_{i \in S} H_i\big)$ is a face of $Q$, and every face of $Q$ can be written as such, so there is a natural surjective map $\mathscr{P} \rightarrow \mathscr{Q}$ given by intersection with $Q$.

    For the claim, we note that for any $G \in \mathscr{P}$ proper, $F := Q \cap G$ is a proper polyhedron in $\mathscr{Q}$ and conv$(G, p) \cap Q = \text{conv}(F,p)$. Thus we obtain the map $*(P,p) \rightarrow *(Q,p)$, which is surjective, since every proper $F \in \mathscr{Q}$ is of the form $Q \cap G$ for some $G \in \mathscr{P}$ proper. 
\end{proof}
\end{lemma}

\begin{proposition}\label{prop:CofinalStarSubdiv} Let $\mathscr{G} \rightarrow \mathscr{P}$ a saturated embedding of geometric complexes of rational, strictly convex polyhedra. Then there is a polyhedral complex $\mathscr{P}'$ refining $\mathscr{P} \cup \mathscr{G}$, obtained from $\mathscr{P}$ by a sequence of star subdivisions. If, moreover, we assume $\mathscr{P}$ simplicial, then we only need to use star subdivisions centered at points (or rays) contained in $|\mathscr{G}| \subset |\mathscr{P}|$. 
\begin{proof}
    First perform a sequence of star subdivisions of $\mathscr{P}$ to obtain a simplicial complex containing all vertices and extremal rays of polyhedra in $\mathscr{G}$. Then each polyhedron of $\mathscr{G}$ is a union of cross-sections of polyhedra of $\mathscr{P}$. By \cref{lemma:StarSubdivLemma}, we can perform a further sequence of star subdivisions to get a simplicial complex $\mathscr{P}_s$ whose polyhedra intersect $|\mathscr{G}|$ simplicially. Let $\mathscr{G}_s$ be the induced simplicial refinement of $\mathscr{G}$. Whichever vertices of $\mathscr{G}_s$ (resp. extremal rays $r$) are not already vertices (resp. extremal rays) of $\mathscr{P}_s$, we obtain $\mathscr{P}'$ by performing any sequence of star subdivisions of $\mathscr{P}_s$ centered at these points (resp. extremal rays). Indeed, since $\mathscr{G}_s$ is simplicial, these subdivisions of $\mathscr{P}_s$ do not induce any further subdivision of $\mathscr{G}_s$. Hence each simplex $S$ of $\mathscr{G}_s$ is the intersection of a simplex $P$ of $\mathscr{P}'$ with $|\mathscr{G}|$, and the vertices and extremal rays of $S$ are all vertices and extremal rays of $P$ so that $S \subset P$ a face. 
\end{proof}
\end{proposition}

For the remainder of this section, we fix $K$ a complete, discretely valued field. In our applications this will often just be $k[[\pi]]$ for some $k$ trivially valued. Set $\bb{T}_M := \Spec(K^{\circ}[N])$. 
 
\begin{remark}\label{remark:ToricNotation} \textit{(From Polyhedra to Analytic Spaces)} We set notation for several constructions of analytic spaces associated to polyhedral complexes in $M_{\bb{R}}$. Fix $\mathscr{P}$, then, a complex of rational, strictly-convex polyhedra in $M_{\bb{R}}$. Recall from \cite{Kempf_Toroidal_embeddings} the $\bb{T}_M$-toric $K^{\circ}$-scheme associated to $\mathscr{P}$, which we will call $\bb{T}_M[\mathscr{P}^{\vee}]$. This is a degeneration over $K^{\circ}$ whose generic fiber is the $T_M$-toric variety $\TV(\rec(\mathscr{P}))$ over $K$, and whose special fiber is a reducible union of toric varieties over $\tilde{K}$, with combinatorics determined by the finite part of $\mathscr{P}$. We note that $\bb{T}_M[\mathscr{P}^{\vee}]$ is affine if and only if $\mathscr{P}$ is the complex associated to a single polyhedron in $M_{\bb{R}}$, and $\bb{T}_M[0^{\vee}] = \bb{T}_M[N] = \bb{T}_M$. The action of $T_M$ on the generic fiber extends to a $K^{\circ}$-scheme action of $\bb{T}_M$ on $K^{\circ}[\mathscr{P}^{\vee}]$.

We define also $\bb{T}_M\{\mathscr{P}^{\vee}\}$ the $K^{\circ \circ}$-adic completion of $\bb{T}_M[\mathscr{P}^{\vee}]$, and $T_M\{\mathscr{P}^{\vee}\}$ its $K$-analytic generic fiber. Then $T_M\{\mathscr{P}^{\vee}\} \subset \TV(\rec(\mathscr{P}))^{\an}$ is the analytic domain with underlying subspace $r^{-1}(\overline{|\mathscr{P}|}),$ where here $r : \TV(\rec(\mathscr{P}))^{\an} \rightarrow \overline{M}_{\bb{R}}$ the canonical retraction. We note $\bb{T}_M\{0^{\vee}\} = \bb{T}_M\{N\}$ is a group in the category of admissible formal $K^{\circ}$-schemes, which acts on each $\bb{T}_M\{\mathscr{P}^{\vee}\}$, and whose generic fiber, $T_M\{N\}$, is the maximal compact subgroup of $T_M^{\an}$, which in turn acts on each $T_M\{\mathscr{P}\}$ via the generic fiber of the $\bb{T}_M\{N\}$-action on $\bb{T}_M\{\mathscr{P}^{\vee}\}$.

\end{remark}

\begin{remark}\label{rem:ProjSubdiv} (\textit{Projective  Subdivisions}) Fix a geometric complex $\mathscr{P} = \{P_{\lambda}\}_{\lambda \in \Lambda}$ of rational, strictly convex polyhedra and a piecewise integral-affine function $\varphi : |\mathscr{P}| \rightarrow \mathbb{R}$ that is convex on restriction to any polyhedron of $\mathscr{P}$. Let $\mathscr{P}_{\varphi} \rightarrow \mathscr{P}$ denote the refinement whose maximal polyhedra are the domains of affineness of $\varphi$. We say $\mathscr{P}_{\varphi}$ is a \textit{projective subdivision} of $\mathscr{P}$.

There are three properties of projective subdivisions of which we make special note. First, the composition of projective subdivisions is projective; that is, if $\mathscr{P}' \rightarrow \mathscr{P}$ and $\mathscr{P} \rightarrow \mathscr{P}''$ are projective subdivisions, then so is $\mathscr{P}' \rightarrow \mathscr{P}''$ \cite{Kempf_Toroidal_embeddings}. Second, if $\mathscr{P}' \rightarrow \mathscr{P} \rightarrow \mathscr{P}''$ is a projective subdivision, then so is $\mathscr{P}' \rightarrow \mathscr{P}$. Third, $*$-subdivisions are projective, and by \cref{prop:CofinalStarSubdiv} arbitrary finite compositions of star subdivisions are cofinal among all subdivisions of $\mathscr{P}$, so in particular we find that projective subdivisions are cofinal.

Finally we recall the significance of projective subdivisions in toric and toroidal geometries. For $\mathscr{P}$ a polyhedral complex in $M_{\bb{R}}$ and $\mathscr{P}_{\varphi} \rightarrow \mathscr{P}$ the projective subdivision associated to a piecewise integral-affine, convex on each cone $\varphi : |\mathscr{P}| \rightarrow \mathbb{R}$, there is a fractional ideal sheaf $\mathscr{I}_{\varphi}$ on $\bb{T}_M[\mathscr{P}^{\vee}]$ whose normalized blowing up is the canonical map $\bb{T}_M[\mathscr{P}_{\varphi}^{\vee}] \rightarrow \bb{T}_M[\mathscr{P}^{\vee}]$ [loc. cit.]. Similarly, if $\Sigma$ is the fan of a toroidal variety $Y$ over a field $k$, and $\varphi : |\Sigma| \rightarrow \mathbb{R}$ piecewise integral-linear, piecewise convex, then there exists a fractional ideal sheaf $\mathscr{I}_{\varphi}$ on $Y$ such that the normalized blowup of $Y$ along $\mathscr{I}_{\varphi}$ is the unique toroidal modification corresponding to the subdivision $\Sigma_{\varphi} \rightarrow \Sigma$ [loc. cit.].
\end{remark}

\begin{proposition}\label{prop:SYZAuts} Fix $K$ a discretely valued field and $T_M$ the $K$-analytification of an algebraic torus over $K$. Take $\mathcal{G} \subset M_{\bb{R}}$ a rational polytope. An SYZ-preserving automorphism (ie. an automorphism preserving the Berkovich retraction to $\mathcal{G}$) of $T_M\{\mathcal{G}^{\vee}\}$ is equivalent to an automorphism of $\bb{T}_M\{\mathcal{G}^{\vee}\}$ induced by a map $\bb{T}_M\{\mathcal{G}^{\vee}\} \rightarrow \bb{T}_M\{N\}$. 
\begin{proof}
    Fix a basis of characters $x_1,...,x_n$ for $T_M$. Then an automorphism $\phi$ of $T_M\{\mathcal{G}^{\vee}\}$ such that $r_M(\phi(p)) = r_M(p)$ for all $p \in T_M\{\mathcal{G}^{\vee}\}$ satisfies $\phi^* x_i = u_i x_i$, with $|u_i| = 1$ on $T_M\{\mathcal{G}^{\vee}\}$. Now fix $A_{\mathcal{G}}$ the affinoid algebra defining $T_M\{\mathcal{G}^{\vee}\}$, so that $\bb{T}_M\{\mathcal{G}^{\vee}\} = \Spf(A_{\mathcal{G}}^{\circ})$ is the canonical $K^{\circ}$-model of $T_M\{\mathcal{G}^{\vee}\}$. We may therefore define $u : \bb{T}_M\{\mathcal{G}^{\vee}\} \rightarrow \bb{T}_M\{N\}$ by $u^* x_i = u_i$, and then $\phi$ arises as the generic fiber of the automorphism of $\bb{T}_M\{\mathcal{G}^{\vee}\}$ obtained by composing $u$ with multiplication $\bb{T}_M\{N\} \times \bb{T}_M\{\mathcal{G}^{\vee}\} \rightarrow \bb{T}_M\{\mathcal{G}^{\vee}\}$. Conversely any morphism such as $u$ determines an SYZ-automorphism of $T_M\{\mathcal{G}^{\vee}\}$. 
\end{proof}
\end{proposition}

\subsection{SYZ Fibration} \label{sec:lSYZ}

Throughout this section, $k$ is a perfect field equipped with its trivial valuation. Our aim is to generalize results of \cite{Kempf_Toroidal_embeddings} and \cite{Thuillier_Geometrie_toroidale} from the algebraic to the formal case. We recall that in \cite{Thuillier_Geometrie_toroidale} Thuillier shows that, for $(\bb{Y}, \bb{D})$ a pair of varieties over $k$ with toroidal singularities, the skeleton of the pair is canonically embedded in $\bb{Y}_{\eta} \setminus \bb{D}_{\eta}$, and there is a retraction mapping, in fact a deformation retract, of $\bb{Y}_{\eta} \setminus \bb{D}_{\eta}$ onto the image of the embedding.

From \cref{def:SNCPairs}-\cref{prop:SNCCharts}, we define and study the snc condition for pairs of special formal schemes over $k$, giving in particular several equivalent characterizations of snc pairs, e.g. \cref{prop:SNCCharts} shows a pair $(\bb{Y}, \bb{D})$ is snc if and only if Zariski-locally at any point $\bb{Y}$ admits an \'etale chart to the formal completion of an algebraic, toric snc pair along a subvariety such that $\bb{D}$ is the preimage of the toric boundary.

In \cref{def:DualFan}, we define the dual fan $\Sigma_{(\bb{Y}, \bb{D})}$ of an snc pair $(\bb{Y}, \bb{D})$ of special formal schemes over $k$ and the SYZ fibration $U := \bb{Y}_{\eta} \setminus \bb{D}_{\eta} \rightarrow |\Sigma_{(\bb{Y}, \bb{D})}|$. We also define the $(\bb{Y}, \bb{D})$-skeleton, $\Sk(\bb{Y}, \bb{D})$, the union of relative interiors of cones of $\Sigma_{(\bb{Y}, \bb{D})}$ corresponding to algebraic strata of $\bb{D}$ and, when $\bb{Y}_s$ is a union of strata of $\bb{D}$, the image of the SYZ fibration. In \cref{prop:FunctTrop}, we demonstrate tropicalizations of maps of snc pairs and their compatibility with the generic fiber functor.

In \cref{def:BasicCharts}-\cref{prop:BlSubdiv}, the combinatorial theory of toric blowups first introduced in \cite{Kempf_Toroidal_embeddings} is generalized to snc pairs of special formal schemes, and in \cref{theorem:SYZMain} we utilize the resulting theory of snc blowups to give a birational interpretation of $\Sigma_{(\bb{Y}, \bb{D})}$, along with a canonical embedding $\Sk(\bb{Y}, \bb{D}) \hookrightarrow U$. There is a proper retraction onto $\Sk(\bb{Y}, \bb{D})$ of the open analytic domain of $U$ consisting of points with center in the union of algebraic strata of $\bb{D}$, generalizing Thuillier's retraction to the formal case.

 In \cref{def:IntLinStructure}-\cref{prop:SkelOfForm}, we study log Calabi-Yau type conditions on $(\bb{Y}, \bb{D})$. In \cref{def:IntLinStructure}, we discuss when $\Sk(\bb{Y}, \bb{D})$ is locally canonically an integral-linear manifold in codimension 1 and describe the smooth structure explicitly. Then in \cref{prop:GluingEnds} we prove an important technical result which gives good formal models for the gluing of toric ends of [Moduli Spaces]. In \cref{prop:SkelOfForm}, the skeleton in $U$ of a $(\bb{Y}, \bb{D})$-log form $\omega$ is observed to be the union of cones of $\Sk(\bb{Y}, \bb{D})$ corresponding to the $\omega$-essential algebraic strata of $\bb{D}$.

We conclude the section by examining the interaction of the SYZ fibration with base extension. In \cref{def:SYZ}, when $k$ is algebraically closed we observe a canonical SYZ fibration on any scalar extension of $U$, compatible with the SYZ fibration of $U$. In \cref{prop:trop}, for a complete valuation extension $K/k$, we examine maps of strictly semistable formal models over $K^{\circ}$ (in the sense of [GRW]) into $\bb{Y}$, proving in particular an analogous result to \cref{prop:FunctTrop} in this setting. In \cref{prop:balancing} we show that tropical balancing holds for tropicalizations of strictly semistable curves in $\bb{Y}$.

\begin{definition}\label{def:SNCPairs} (\textit{SNC Pairs}) Fix a pair $(\bb{Y}, \bb{D})$ of special formal schemes over $k$, with $\bb{D} \subset \bb{Y}$ a reduced divisor having irreducible components $\bb{D}_1,...,\bb{D}_N$, and $\bb{Y}$ irreducible of dimension $n$. The pair is called \textit{simple normal crossing}, or \textit{snc}, if for every $p \in \bb{Y}_s$, the analytic local ring $\hat{\mathscr{O}}_{\bb{Y}, p}$ is isomorphic to a formal power series ring $k(p)[[y_1,...,y_r]]$, where $r = n - \text{tr}_k(k(p))$, and each $\bb{D}_i$ containing $p$ is locally cut out by exactly one of the $y_j$.

We observe immediately the definition implies $\mathscr{O}_{\bb{Y}, p}$ is regular, since $\whscrO_{\bb{Y}, p}$ is. Thus we can fix $y_i \in \mathscr{O}_{\bb{Y}, p}$ vanishing along $\bb{D}_i$, whenever $p \in \bb{D}_i$. Now $\mathscr{O}_{\bb{Y}, p} \rightarrow \whscrO_{\bb{Y}, p}$ is faithfully flat, and the $y_i$ form part of a regular system of parameters in $\whscrO_{\bb{Y}, p}$, by the snc condition, so they can likewise be completed to a regular system of parameters for $\mathscr{O}_{\bb{Y}, p}$. Therefore $(\bb{Y}, \bb{D})$ is snc if and only if each local ring $\mathscr{O}_{\bb{Y}, p}$ is regular, with a regular system of parameters $y_1,...,y_r$ such that each $\bb{D}_i$ containing $p$ is locally cut out by exactly one of the $y_i$. 

\end{definition}

\begin{proposition}\label{prop:SNCclosed} It suffices to check the snc condition at closed points.
\begin{proof}
    For some $p \in \bb{Y}_s$, fix $q$ a specialization of $p$, and $\bb{U} \subset \bb{Y}$ an affine neighborhood of $q$. Then set $\bb{U}^{\al}$ the affine algebraic spectrum of $\Gamma(\bb{U}, \mathscr{O}_{\bb{U}}).$ All closed points of $\bb{U}^{\al}$ belong to the closed subscheme $\bb{U}_s \subset \bb{U}^{\al}$, implying regularity of $\bb{U}^{\al}$ by \cref{remark:LocProp}. Since $\whscrO_{\bb{U}^{\al}, q} = \whscrO_{\bb{U}, q}$, by an identical argument as in the second paragraph of \cref{def:SNCPairs}, we may fix a regular system of parameters $y_1,...,y_r \in \mathscr{O}_{\bb{U}^{\al},q}$ so that each $\bb{D}_i$ containing $q$ is cut out in the local ring by a single $y_i$. WLOG suppose for some $r' < r$ that $\bb{D}_1,...,\bb{D}_{r'}$ are the irreducible components containing $p$, and $y_i$ cuts out $\bb{D}_i$ at $q$ for each $i \leq r'$. Then $y_1,...,y_{r'}$ form part of a regular system of parameters at $p$, and still $y_i = 0$ defines $\bb{D}_i$ there. We conclude the snc condition holds there. 
\end{proof}
\end{proposition}

\begin{lemma}\label{lemma:VanishDiff} Suppose $\psi : \bb{Y} \rightarrow \bb{X}$ a map of special formal schemes over $k$. Fix $y \in \bb{Y}_s$ and $x = \psi(y)$. Then there is a natural isomorphism $\widehat{\Omega}^1_{\bb{Y}/\bb{X}, y} \overset{\cong}{\longrightarrow} \widehat{\Omega}^1_{\whscrO_{\bb{Y}, y} / \whscrO_{\bb{X}, x}}$ of completed $\whscrO_{\bb{Y},y}$-modules. 
\begin{proof}
    The existence of a continuous homomorphism $\widehat{\Omega}^1_{\bb{Y}/\bb{X}, y} \rightarrow \widehat{\Omega}^1_{\whscrO_{\bb{Y}, y} / \whscrO_{\bb{X}, x}}$ is clear. To obtain the inverse, we give a canonical $\frak{m}_y$-adically continuous $\whscrO_{\bb{X}, x}$-derivation $d : \whscrO_{\bb{Y}, y} \rightarrow \widehat{\Omega}^1_{\bb{Y}/\bb{X}, y}$.

    A general element of $\widehat{\mathscr{O}}_{\bb{Y}, y}$ takes the form $f = \sum_{j = 0}^{\infty} f_j,$ where $f_j \in \frak{m}_y^j \leq \mathscr{O}_{\bb{Y}, y}$. Say $f_j$ is defined on affine open $\bb{U}_j \subset \bb{Y}$, with descending inclusions $\bb{U}_0 \supset \bb{U}_1 \supset ...$\textbf{.} Fixing $d_j : \mathscr{O}_{\bb{Y}, y}(\bb{U}_j) \rightarrow \Omega^1_{\bb{Y} / \bb{X}}(\bb{U}_j)$ the universal derivation for each $j$, we observe that $d_j f_j \in \frak{m}_y^{j - 1} \Omega^1_{\bb{Y} / \bb{X}}(\bb{U}_j)$, so the germ of $d_j f_j$ lies in $\frak{m}_y^{j - 1} \Omega^1_{\bb{Y}/\bb{X},y}$. Therefore we define $df = \sum_{j = 0}^{\infty} d_j f_j$ as an element of $\widehat{\Omega}^1_{\bb{Y}/\bb{X}, y}$.

    Suppose that $f = \sum_{j = 0}^{\infty} g_j$, for $g_j \in \frak{m}_y^j$ possibly distinct from $f_j$. We can insist, by shrinking all our opens $\bb{U}_j$ if necessary, that both $f_j, g_j$ are defined on $\bb{U}_j$. For each $\lambda \in \bb{N}$, there exists $j_{\lambda}$ so that $\sum_{j = 0}^{j_{\lambda}} f_j - g_j \in \frak{m}^{\lambda}_y $. Therefore 
    $$ \sum_{j = 0}^{j_{\lambda}} d_j f_j = d_{j_{\lambda}} \big(\sum_{j = 0}^{j_{\lambda}} f_j \big) \in \frak{m}_y^{\lambda - 1} \Omega_{\bb{Y}/\bb{X}}^1(\bb{U}_{j_{\lambda}}), $$
    it follows that $\sum_{j = 0}^{\infty} d_j (f_j - g_j) = 0$, so $df$ is well-defined.

    Suppose $f$ lay in the image of $\whscrO_{\bb{X}, x} \rightarrow \whscrO_{\bb{Y}, y}$. Then we can write $f = \sum_{j = 0}^{\infty} f_j$ with $f_j$ in the image of $\frak{m}_x^j \rightarrow \frak{m}_y^j$. Then $d_j f_j = 0$, so $df = 0$. For $f,g \in \whscrO_{\bb{Y}, y}$ general (with $\frak{m}_y$-convergent series representations $f = \sum_j f_j, g = \sum_j g_j)$, we have 
    $$fg = \sum_{j = 0}^{\infty} \big( \sum_{i = 0}^{j} f_j g_{j - i} \big)$$ 
    $$ \Rightarrow d(fg) = \sum_{j = 0}^{\infty} \big( \sum_{i = 0}^j d_j (f_i g_{j - i})\big) $$
    $$= \sum_{j = 0}^{\infty} \big(\sum_{i = 0}^j f_i d_j g_{j - i}+ g_{j - i} d_j f_i) = f dg + g df,$$
    where for the last equality, we have used that the $d_j$ are compatible on restriction, so taking germs at $p$ gives $d_j f_i = d_i f_i$ and $d_j g_{j - i} = d_{j - i} g_{j - i}$. We conclude $d$ is a $\whscrO_{\bb{X}, x}$-derivation.

\end{proof}
\end{lemma}

\begin{definition}\label{def:Stratum} Fix a pair $(\bb{Y}, \bb{D})$ of special formal schemes over $k$, with $\bb{D} \subset \bb{Y}$ a Weil divisor having irreducible components $\bb{D}_1,...,\bb{D}_N$. A \textit{closed stratum} of $\bb{D}$ is either an irreducible component of an intersection of finitely many $\bb{D}_i$ or else $\bb{Y}$ itself. A closed stratum $\bb{S}$ of $\bb{D}$ is \textit{algebraic} if $\bb{S} \subset \bb{Y}_s$. 
\end{definition}

\begin{proposition}\label{prop:SNCCharts} Fix a pair $(\bb{Y}, \bb{D})$ as in \cref{def:Stratum}. Then $(\bb{Y}, \bb{D})$ is an snc pair if and only if, for every $p \in \bb{Y}_s$, there exists a Zariski open formal subscheme $\bb{U} \subset \bb{Y}$ with $p \in \bb{U}_s$, a lattice $L$ of rank $\dim(\bb{U})$, a regular toric monoid $\tau \subset L_{\bb{R}}$, and an \'etale map $\psi : \bb{U} \rightarrow \whTV(\tau)$, the completion taken with respect to an ideal of $k[\tau^{\vee}_{\bb{Z}}]$, such that each irreducible component of $\bb{D}$ is the preimage of a single irreducible component of the toric boundary.

There exists moreover a \textit{minimal choice} of $L, \tau$ such that, for any other choice of $L', \tau'$ and \'etale $\psi' : \bb{U}' \rightarrow \whTV(\tau' \subset L')$ with $p \in \bb{U}'_s$, there is a natural lattice isomorphism $L \rightarrow L'$ taking $\tau$ onto a face of $\tau'$, so by pulling back the open immersion $\whTV(\tau \subset L) \hookrightarrow \whTV(\tau' \subset L')$ there is a Zariski open $\bb{U}'' \subset \bb{U}'$, and the restriction of $\psi'$ to $\bb{U}''$ is an \'etale map $\psi'' : \bb{U}'' \rightarrow \whTV(\tau \subset L)$. The minimal choice is determined by the minimal stratum $\bb{S}$ of $\bb{D}$ containing $p$. In particular there is an isomorphism $M_{\bb{S}} \oplus \bb{Z}^{\oplus \codim_{\bb{U}}(\bb{S})} \rightarrow L$ identifying $\sigma_{\bb{S}} \oplus \{0\}$ with $\tau$.

For $L, \tau$ the minimal choice, we call an \'etale map $\psi : \bb{U} \rightarrow \whTV(\tau \subset L)$ a \textit{minimal chart} for $\bb{S}$. The formal completion of the target along the union of algebraic strata of the toric boundary likewise depends only on $\bb{S}$. 
\begin{proof}
    For the reverse direction, we use \cref{prop:SNCclosed} to reduce to the case $p$ closed. Then, make a base field extension $k' / k$ so that $p$ splits into finitely many $k'$-points. Fix $p'$ one such point. \'Etale is preserved by base extension, and \'etale maps induce isomorphisms of completed local rings at rational points, so $(\bb{Y}_{k'}, \bb{D}_{k'})$ is snc at $p'$. By Serre's criterion, $\mathscr{O}_{\bb{Y}, p}$ is regular. Furthermore, letting $y_i \in \mathscr{O}_{\bb{Y}, p}$ cut out $\bb{D}_i$ locally, we also have that the image of $y_i$ in $\mathscr{O}_{\bb{Y}_{k'}, p'}$ cuts out the unique irreducible component of $(\bb{D}_i)_{k'}$ which contains $p'$ (note here we use $k$ perfect, for geometric regularity of $\bb{D}_i$). The result follows, since the images of the $y_i$ are linearly independent elements of $\frak{m}_{p'} / \frak{m}_{p'}^2$, which implies the $y_i$ have linearly independent image in $\frak{m}_p / \frak{m}_p^2$, which in turn means the $y_i$ form part of a regular system of parameters for $\mathscr{O}_{\bb{Y}, p}$.

    For the forward direction, it suffices to prove the statement for $p$ closed, by density. Fix an affine neighborhood $\bb{U} = \Spf(B) \subset \bb{Y}$ of $p$ intersecting only the strata of $\bb{D}$ which contain $p$, and furthermore suppose each irreducible component of $\bb{D}|_{\bb{U}}$ (we enumerate these components $\bb{D}_1,...,\bb{D}_r$) is cut out by a single regular function $f_i$ on $\bb{U}$. Complete the $f_i$ to a sequence $f_1,...,f_n$ of functions on $\bb{U}$ forming a regular sequence of parameters for the local ring $\mathscr{O}_{\bb{Y}, p}$. We set $L = \oplus_{i = 1}^n\bb{Z} \cdot e_i$, $\tau = \big(\oplus_{i \leq r} \bb{R}_{\geq 0} \cdot e_i\big) \oplus \{0\}$, and $F$ the union of faces of $\tau$ corresponding to subsets $J \subset [r]$ such that $\cap_{i \in J} \bb{D}_i$ is non-algebraic.

    We note there is a natural injective homomorphism $k[\tau^{\vee}_{\bb{Z}}] \rightarrow \mathscr{O}_{\bb{U}}$ sending $t^{e_i^*} \mapsto f_i$ for $i \leq r$ and $t^{e_i^*} \mapsto f_i + 1$ for $i > r$, defining a map $\psi : \bb{U} \rightarrow \TV(\tau)$. Denote by $N \subset k[\tau_{\bb{Z}}^{\vee}]$ the ideal of elements which become topologically nilpotent in $\mathscr{O}_U$. Then $N$ defines an ideal of $\TV(\tau)$, whose intersection with $\tau_{\bb{Z}}^{\vee}$ generates the equivariant ideal associated to $F \subset \tau$. Define $\whTV(\tau)$ the completion along $N$. We note now that $\psi$ factors through an adic morphism $\bb{U} \rightarrow \whTV(\tau \subset L_{\bb{R}})$, which by abuse of notation we also call $\psi$. We set $\bb{T} = \whTV(\tau \subset L_{\bb{R}})$ for the remainder of the argument.

     Letting $A = \Gamma(\bb{T}, \mathscr{O}_{\bb{T}})$ and $t_i := t^{e_i^*} \in A$, we consider the adic homomorphism $\psi^* : A \rightarrow B$ of topological rings.  By construction $\{t_i\}_{i = 1}^r \cup \{t_i - 1\}_{i = r + 1}^n$ and $\{f_i\}_{i = 1}^n$ form regular sequences of parameters at $\psi(p) = \{0\}^r \times \{1\}^{n - r}$ and $p$, respectively. Now $\widehat{\mathscr{O}}_{\bb{T}, \psi(p)} \rightarrow  \widehat{\mathscr{O}}_{\bb{U}, p}$ induced by $\psi$ is just $k[[x_1,...,x_n]] \hookrightarrow  k(p)[[x_1,...,x_n]]$, a constant base change of the finite, separable extension $k \hookrightarrow k(p)$ (here also we use that $k$ is perfect). Then $\Omega^1_{\whscrO_{\bb{U}, p} / \whscrO_{\bb{T}, \psi(p)}} = 0$, and \cref{lemma:VanishDiff} gives $\Omega^1_{\bb{U}/\bb{T}, p} = 0$, implying $\Omega^1_{\bb{U} / \bb{T}} = 0$ in a formal Zariski open neighborhood of $p$. Similarly, noting the natural isomorphisms of completed local rings $\whscrO_{\Spec(A), \psi(p)} \cong \whscrO_{\bb{T}, \psi(p)}$ and $\whscrO_{\Spec(B), p} \cong \whscrO_{\bb{U}, p}$, we see $\mathscr{O}_{\Spec(A), \psi(p)} \rightarrow \mathscr{O}_{\Spec(B), p}$ is flat, so up to shrinking $\bb{U}$ we can assume $\psi$ is flat and unramified, hence \'etale (see I 5.3(b) of \cite{fujiwara2017foundationsrigidgeometryi}).

    We note the $L, \tau$ constructed above \textit{are} the minimal choice of the second paragraph. Indeed, consider another \'etale morphism $\psi' : \bb{U}' \rightarrow \whTV(\tau' \subset L')$ with $p \in \bb{U}'_s$. Fix a basis for $L' = \oplus_{i = 1}^n \bb{Z} \cdot e_i$ so that $\tau' = \big(\oplus_{i = 1}^{r'} \bb{R}_{\geq 0} \cdot e_i\big) \oplus \{0\}$. We can assume the first $r$ generators $e_i$ correspond to the components which pull back to the $\bb{D}_i$. Now we have an isomorphism $L \rightarrow L'$ of lattices taking $\tau$ to a face of $\tau'$. We also see $\psi'$ determines the homomorphisms $k[(\tau')^{\vee}_{\bb{T}}] \rightarrow k[\tau^{\vee}_{\bb{T} }]\rightarrow \mathscr{O}_{\bb{U}'}$. With $N' \leq k[(\tau')_{\bb{T}}^{\vee}]$ the ideal of topologically nilpotent elements, we have $N'$ the preimage of $N$, which in particular implies that $F$ is the intersection with $\tau$ of the preimage of $F'$ in $L$. The lattice isomorphism gives rise to an open immersion $\whTV(\tau \subset L_{\bb{R}}) \hookrightarrow \whTV(\tau' \subset L'_{\bb{R}})$, the completion along the ideal defined by $N'$ of the toric $\TV(\tau \subset L) \hookrightarrow \TV(\tau' \subset L')$. The $\psi'' : \bb{U}'' \rightarrow \whTV(\tau)$ of the claim is now just defined by pullback. 
\end{proof}
\end{proposition}

\begin{definition}\label{def:DualFan} (\textit{Dual Fan}) We define here a geometric complex $\Sigma_{(\bb{Y}, \bb{D})}$ of finite rational polyhedral cones. For each closed stratum $\bb{S}$ of $\bb{D}$, consider the integral lattice $M_{\bb{S}} \cong \oplus_{\bb{D}_i \supset \bb{S}}\mathbb{Z} \cdot [\bb{D}_i]^*$ and the cone $\sigma_{\bb{S}} \hookrightarrow M_{\bb{S}, \mathbb{R}}$ given by the monoid sum $\oplus_{\bb{D}_i \supset \bb{S}} \mathbb{R}_{\geq 0} \cdot [\bb{D}_i]^*$. The elements of $\Sigma_{(\bb{Y}, \bb{D})}$ are the cones $\sigma_{\bb{S}}$. For every inclusion $\bb{S}' \subset \bb{S}$ of strata, there is a canonical inclusion of integral lattices $M_{\bb{S}} \hookrightarrow M_{\bb{S}'}$, which realizes $\sigma_{\bb{S}}$ as a face of $\sigma_{\bb{S}'}$. The resulting inclusions $\sigma_{\bb{S}} \hookrightarrow \sigma_{\bb{S}'}$ are the morphisms of $\Sigma_{(\bb{Y},\bb{D})}$. The geometric realization $|\Sigma_{(\bb{Y}, \bb{D})}|$ is the piecewise integral-linear space obtained by gluing the cones of $\Sigma_{(\bb{Y},\bb{D})}$ along the inclusion morphisms. In case the intersection of any finite collection of $\bb{D}_i$ is irreducible, then $\Sigma_{(\bb{Y}, \bb{D})}$ can be realized as a cone complex in $\oplus_{i \leq N} \mathbb{R} \cdot [\bb{D}_i]^*$.

We define also a complex $\overline{\Sigma}_{(\bb{Y}, \bb{D})}$ of extended cones, where for each $\bb{S}$ we have the extended lattice $\overline{M}_{\bb{S}, \mathbb{R}} := \oplus_{\bb{D}_i \supset \bb{S}} [-\infty, \infty] \cdot [\bb{D}_i]^*$, and $\overline{\sigma}_{\bb{D}_i} \hookrightarrow \overline{M}_{\bb{S}, \mathbb{R}}$ the closure of $\sigma_{\bb{D}_i}$. The morphisms $\sigma_{\bb{S}} \hookrightarrow \sigma_{\bb{S}'}$ extend naturally to $\overline{\sigma}_{\bb{S}} \hookrightarrow \overline{\sigma}_{\bb{S}'}$, and the gluing of the $\overline{\sigma}_{\bb{S}}$ along these gives a topological space $|\overline{\Sigma}_{(\bb{Y}, \bb{D})}|$ containing $|\Sigma_{(\bb{Y}, \bb{D})}|$ as an open, dense subset. Furthermore, for any closed stratum $\bb{S}$ of $\bb{D}$, let $\bb{D}_{\bb{S}}$ denote the restriction to $\bb{S}$ of the irreducible components of $\bb{D}$ that do not contain $\bb{S}$. For any closed stratum $\bb{T}$ of $\bb{D}_{\bb{S}}$, there is a natural inclusion of $\overline{\sigma}_{\bb{T}} \in \overline{\Sigma}_{(\bb{S}, \bb{D}_{\bb{S}})}$ in $\overline{\sigma}_{\bb{T}} \in \overline{\Sigma}_{(\bb{Y}, \bb{D})}$, 
$$ \sum_{\bb{D}_i \not \supset \bb{S}} a_i e_{\bb{D}_i \cap \bb{S}} \mapsto \sum_{\bb{D}_i \supset \bb{S}} +\infty \cdot [\bb{D}_i]^* + \sum_{\bb{D}_i \not \supset \bb{S}} a_i [\bb{D}_i]^*. $$
These inclusions, ranging over the closed strata $\bb{T}$ of $\bb{D}_{\bb{S}}$, are compatible with the morphisms of $\overline{\Sigma}_{(\bb{S}, \bb{D}_{\bb{S}})}$ and also with those of $\overline{\Sigma}_{(\bb{Y}, \bb{D})}$, so they define a canonical $|\overline{\Sigma}_{(\bb{S}, \bb{D}_{\bb{S}})}| \hookrightarrow |\overline{\Sigma}_{(\bb{Y}, \bb{D})}|$. The images, ranging over closed strata $\bb{S}$ of $\bb{D}$, of the open dense subsets $|\Sigma_{(\bb{S}, \bb{D}_{\bb{S}})}|$ under these inclusions stratify  $|\overline{\Sigma}_{(\bb{Y}, \bb{D})}|$.

For each stratum $\bb{S}$ of $\bb{D}$, there is a natural map $r_{\bb{S}} : Y \rightarrow \overline{\sigma}_{\bb{S}}$ given by 
$$ y \mapsto \sum_{\bb{D}_i \supset \bb{S}} \bb{D}_i^{\trop}(y) \cdot [\bb{D}_i]^*.$$
We then define a canonical $r_{(\bb{Y}, \bb{D})} : Y \rightarrow |\overline{\Sigma}_{(\bb{Y}, \bb{D})}|$ by $r_{(\bb{Y}, \bb{D})}(y) = r_{\bb{S}}(y)$ for all $y$ such that $\bb{S}$ is the minimal stratum containing $\red_{\bb{Y}}(y)$. 
\\

\noindent 
\textbf{Claim:} The map $r_{(\bb{Y}, \bb{D})}$ is continuous.
\begin{proof}
    Fix a point $y \in Y$, and a sequence $\{y_i\}_{i \in \mathbb{N}}$ with $\lim_{i \to \infty} y_i = y$. Fixing $\bb{S}$ the minimal stratum containing $\red_{\bb{Y}}(y)$, up to taking a subsequence we can assume all $\red_{\bb{Y}}(y_i) \in \bb{S}_s$, by anticontinuity of the reduction map. And up to taking a further subsequence, we can assume there exists a stratum $\bb{S}' \subset \bb{S}$ so that, for all $i$, $\bb{S}'$ is the minimal stratum containing $\red_{\bb{Y}}(y_i)$. Then as $i \to \infty$, 
    $$r_{(\bb{Y}, \bb{D})}(y_i) = r_{\bb{S}'}(y_i) \to r_{\bb{S}'}(y) = r_{(\bb{Y}, \bb{D})}(y),$$
    because $r_{\bb{S}'} = r_{\bb{S}}$ on the locus of points $\red_{\bb{Y}}^{-1}(\bb{S}) \setminus \red_{\bb{Y}}^{-1}(\bb{S}')$. 
\end{proof}
\hspace{1mm}

For every closed stratum $\bb{S}$ of $\bb{D}$, the restriction of $r_{(\bb{Y}, \bb{D})}$ to $\bb{S}_{\eta} \hookrightarrow \bb{Y}_{\eta}$ maps into the image of the embedding $|\overline{\Sigma}_{(\bb{S}, \bb{D}_{\bb{S}})}| \hookrightarrow |\overline{\Sigma}_{(\bb{Y}, \bb{D})}|$ and is, under these identifications, precisely $r_{(\bb{S}, \bb{D}_{\bb{S}})}$.

For each stratum $\bb{S}$ of $\bb{D}$, define $\sigma_{\bb{S}}^{\circ} \subset \sigma_{\bb{S}}$ the complement of the proper faces, and similarly $\overline{\sigma}^{\circ}_{\bb{S}} \subset \overline{\sigma}_{\bb{S}}$. We define the $(\bb{Y}, \bb{D})$-skeleton, or $\Sk(\bb{Y}, \bb{D})$, as the union in $|\Sigma_{(\bb{Y}, \bb{D})}|$ of the $\sigma_{\bb{S}}^{\circ}$ ranging over all algebraic strata $\bb{S}$, and $\oSk(\bb{Y}, \bb{D}) \subset |\overline{\Sigma}_{(\bb{Y}, \bb{D})}|$ the union of the $\overline{\sigma}_{\bb{S}}^{\circ}$. For later convenience, we define also $\hsigma_{\bb{S}} = \sigma_{\bb{S}} \cap \Sk(\bb{Y}, \bb{D})$, $\hosigma_{\bb{S}} = \osigma_{\bb{S}} \cap \oSk(\bb{Y}, \bb{D})$, and $*\sigma_{\bb{S}} \subset |\Sigma_{(\bb{Y}, \bb{D})}|$ the union of $\sigma_{\bb{S}'}^{\circ}$ over all strata $\bb{S}' \subset \bb{S}$.

We note finally the basic compatibility of the reduction map with the SYZ fibration, which follows immediately from the properties (a)-(c) listed in \cref{def:CartierFnct}. For an algebraic stratum $\bb{S}$ of $\bb{D}$, we have $\text{red}_{\bb{Y}}^{-1}(\bb{S} \setminus \bb{D}_{\bb{S}}) \cap \oSk(\bb{Y}, \bb{D}) = r_{(\bb{Y}, \bb{D})}^{-1}(\osigma_{\bb{S}}^{\circ})$. 
\end{definition}

\begin{remark}\label{rem:RealizableFan} (\textit{Realizable Dual Fan}) Note that we have elected for generality in the above definition, as we do not insist the complex $\Sigma_{(\bb{Y}, \bb{D})}$ is realizable (ie. living in some $\bb{R}^N$, as it does in our main application). This is because, for a general SNC pair as we have defined it, intersections of arbitrary finite collections of irreducible components of $\bb{D}$ are not necessarily irreducible. However, in the case that all such intersections are irreducible, then $\Sigma_{(\bb{Y}, \bb{D})}$ is realizable as a fan in $\bb{R}^{\bb{D}}$ (one factor for each irreducible component of $\bb{D}$) in the obvious way: For each stratum $\bb{S}$, $\sigma_{\bb{S}} \subset \bb{R}^{\bb{D}}$ is the $\bb{R}_{\geq 0}$-span of the standard basis vectors corresponding to components of $\bb{D}$ which contain $\bb{S}$. 

\end{remark}

\begin{proposition}\label{prop:FunctTrop} (\textit{Tropicalization of a Map}) Fix $k'/k$ a trivially valued extension, $(\bb{Y}, \bb{D})$ and $(\bb{Y}', \bb{D}')$ snc pairs of special formal schemes over $k$ and $k'$, respectively, and a map $f : \bb{Y}' \rightarrow \bb{Y}$ such that $f^{-1}(\bb{D}) \subset \bb{D}'$. Then $f$ induces a canonical continuous map $\trop(f) : |\oSigma_{(\bb{Y}', \bb{D}')}| \rightarrow |\oSigma_{(\bb{Y}, \bb{D})}|$, restricting to piecewise integral-linear $|\Sigma_{(\bb{Y}', \bb{D}')}| \rightarrow |\Sigma_{(\bb{Y}, \bb{D})}|$, such that 
$$r_{(\bb{Y}, \bb{D})} \circ f_{\eta} = \trop(f) \circ r_{(\bb{Y}', \bb{D}')} $$
as maps $Y' \rightarrow |\oSigma_{(\bb{Y}, \bb{D})}|$. 
\begin{proof}
    Fix a stratum $\bb{S}'$ of $\bb{D}'$, and $\bb{S}$ the minimal stratum of $\bb{Y}$ containing its image (note this could be the empty stratum, ie. $\bb{Y})$. We define $\sigma_{\bb{S}'} \rightarrow \sigma_{\bb{S}}$ by 
    $$ [\bb{D}_j']^* \mapsto \sum_{\bb{D}_i \supset \bb{S}} \text{ord}_{\bb{D}_j'}(f^* \bb{D}_i)[\bb{D}_i]^* $$
    for each $\bb{D}_j' \supset \bb{S}'$. Consider $\bb{S}'_1 \supset \bb{S}'_2$ an inclusion of strata of $\bb{D}'$, with $\bb{S}_1 \supset \bb{S}_2$ the minimal strata of $\bb{D}$ containing their images. We compute the image of a $[\bb{D}_j']^*$ such that $\bb{D}_j' \supset \bb{S}_1'$ under the map $\sigma_{\bb{S}_2'} \rightarrow \sigma_{\bb{S}_2}$ to be $\Sigma_{\bb{D}_i \supset \bb{S}_2} \ord_{\bb{D}_j'}(f^* \bb{D}_i) [\bb{D}_i]^*$. If $\bb{D}_i \supset \bb{S}_2$ but $\bb{D}_i \not \supset \bb{S}_1$, then $f^* \bb{D}_i$ intersects $\bb{D}_j'$ in positive codimension, so $\ord_{\bb{D}_j'} (f^* \bb{D}_i) = 0$. We see from this argument that the various maps $\sigma_{\bb{S}'} \rightarrow \sigma_{\bb{S}}$ are compatible, so glue to the piecewise integral-linear $\trop(f)$ of the statement.

    For the compatibility of $\trop$ with the SYZ fibration, fix a point $y' \in Y' \setminus D'$ with $y = f_{\eta}(y')$ and take $\bb{S}'$ the minimal stratum of $\bb{D}'$ containing $\text{red}_{\bb{Y}'}(y')$. Fix also $\bb{S}$ the minimal stratum of $\bb{D}$ containing $f(\bb{S}')$, which we observe is the same as the minimal stratum containing $f_s(\text{red}_{\bb{Y}'}(y')) = \text{red}_{\bb{Y}}(y)$. We then have 
    $$ \trop(r_{(\bb{Y}', \bb{D}')}(y')) = \trop\big( \sum_{\bb{D}_j \supset \bb{S}'} (\bb{D}_j')^{\trop}(y') \cdot [\bb{D}_j']^*\big) $$
    $$ = \sum_{\bb{D}_i \supset \bb{S}} \big(\sum_{\bb{D}_j \supset \bb{S}'}  \ord_{\bb{D}_j'}(f^*\bb{D}_i) (\bb{D}_j')^{\trop}(y') \big) \cdot [\bb{D}_i]^* $$
    $$ = \sum_{ \bb{D}_i \supset \bb{S}} \bb{D}_i^{\trop}(y) \cdot [\bb{D}_i]^* = r_{(\bb{Y}, \bb{D})}(y). $$
    For the extension to $|\oSigma_{(\bb{Y}', \bb{D}')}|$, we apply the same argument to $f|_{\bb{S}'} : \bb{S}' \rightarrow \bb{Y}$ and the SNC pair $(\bb{S}', \bb{D}'_{\bb{S}'})$ for each stratum $\bb{S}'$ of $\bb{D}'$. 
\end{proof}
\end{proposition}

\begin{definition}\label{def:BasicCharts}(\textit{Basic Charts}) Fix $(\bb{X}, \bb{E})$ snc, $L$ a lattice, and $\Sigma_t \subset L_{\bb{R}}$ a regular fan. Set $\bb{Z}_{t} = \whTV(\Sigma_t)$, completed with respect to some ideal sheaf on $\TV(\Sigma_t)$ whose vanishing intersects every boundary stratum in a geometrically irreducible subvariety, and let $\partial \bb{Z}_{t}$ be the toric boundary. We note the assumption on the defining ideal sheaf guarantees that $\Sigma_{(\bb{Z}_t, \partial \bb{Z}_t)} = \Sigma_t$.

An \'etale map $\psi : \bb{X} \rightarrow \bb{Z}_{t}$ satisfying $\bb{E} = \psi^{-1}(\partial\bb{Z}_{t})$ will be called $\textit{basic}$ if it induces an isomorphism of the stratification of $\bb{E}$ with the toric boundary of the codomain, ie. $\psi$ induces a bijection of closed strata which preserves both algebraicity as well as the partial order defined by inclusion of strata. In this case we see there is a canonical isomorphism $\Sigma_{(\bb{X}, \bb{E})} \cong \Sigma_t$ identifying $\Sk(\bb{X}, \bb{E})$ with $\Sk(\bb{Z}_t, \partial \bb{Z}_{t})$. The associated piecewise integral-linear homeomorphism $|\Sigma_{(\bb{X}, \bb{E})}| \rightarrow |\Sigma_t|$ is $\trop(\psi)$.

If $(\bb{Y}, \bb{D})$ is snc and $\psi : \bb{U} \rightarrow \whTV(\tau \subset L_{\bb{R}})$ is a minimal chart for the stratum $\bb{S}$ of $\bb{D}$, then $\psi$ can be made basic by subtracting off all strata not containing $\bb{S}$. 
\end{definition}

\begin{definition}\label{def:ToricBlowups} (\textit{Toric Blowups}) Recall from \cref{rem:ProjSubdiv} the notion of projective subdivisions of a polyhedral complex. We take $\Sigma$ the fan $\Sigma_{(\bb{Y}, \bb{D})}$, and fix $\varphi : |\Sigma| \rightarrow \bb{R}$ a piecewise integral-linear function, convex on restriction to each cone, inducing a projective subdivision $\Sigma_{\varphi}$ of $\Sigma$. Let $\mathscr{K}_{\bb{Y}}$ denote the total sheaf of fractions. We define the fractional ideal sheaf $\mathscr{I}_{\varphi} \leq \mathscr{K}_{\bb{Y}}$ as follows: Fix a basic chart $\psi : \bb{U} \rightarrow \whTV(\sigma_{\bb{S}} \oplus \{0\})$ for a stratum $\bb{S}$. The ideal sheaf $\mathscr{I}_{\varphi}|_{\bb{U}}$ is defined as the $\psi$-inverse image of the fractional ideal sheaf on $\TV(\sigma_{\bb{S}} \oplus \{0\})$ determined by $\varphi|_{\sigma_{\bb{S}}}$. To see this is well-defined, suppose $\bb{S}' \subset \bb{S}$, and $\psi' : \bb{U}' \rightarrow \whTV(\sigma_{\bb{S}'} \oplus \{0\})$ basic for $\bb{S}'$ such that $p \in \bb{U}_s'$. By the second paragraph of \cref{prop:SNCCharts}, we have a Cartesian diagram 
\begin{center}
    \begin{tikzcd}
        \bb{U}'' \arrow[r,"\psi''"] \arrow[d] & \whTV(\sigma_{\bb{S}} \oplus \{0\}) \arrow[d] \\
        \bb{U}' \arrow[r, "\psi'"] & \whTV(\sigma_{\bb{S}'} \oplus \{0\})
    \end{tikzcd}
\end{center}
so the inverse image in $\bb{U}'$ of the fractional ideal sheaf on $\TV(\sigma_{\bb{S}'} \oplus \{0\})$ determined by $\varphi|_{\sigma_{\bb{S}'}}$ restricts on $\bb{U}''$ to the inverse image of the fractional ideal sheaf on $\TV(\sigma_{\bb{S}} \oplus \{0\})$ determined by $\varphi|_{\sigma_{\bb{S}}}$. Now $\psi$ and $\psi''$ both induce basic, minimal charts $\bb{U} \cap \bb{U}'' \rightarrow \whTV(\sigma_{\bb{S}} \oplus \{0\})$. The two pullbacks to $\mathscr{K}_{\bb{Y}}(\bb{U} \cap \bb{U}'')$ of an element of $(M_{\bb{S}} \oplus \bb{Z}^{\codim_{\bb{Y}}(\bb{S})})^{\vee}$ under these charts will agree up to multiplication by an element of $\mathscr{O}_{\bb{U} \cap \bb{U}''}^*$, since they are both invertible away from $\bb{D}$ and have identical orders of vanishing on each irreducible component of $\bb{D}|_{\bb{U} \cap \bb{U}''}$. Thus the inverse images in $\bb{U} \cap \bb{U}''$ of the fractional ideal sheaf determined by $\varphi|_{\sigma_{\bb{S}}}$ agree also, and $\mathscr{I}_{\varphi}$ is well-defined.

Let $ \Bl_{\varphi}(\bb{Y})$ be the normalization of the formal blowup of $\bb{Y}$ along $\mathscr{I}_{\varphi}$, with associated projective map $b_{\varphi} : \Bl_{\varphi}(\bb{Y}) \rightarrow \bb{Y}$. We will see shortly a canonical identification of cone complexes $\Sigma_{(\Bl_{\varphi}(\bb{Y}), b_{\varphi}^{-1}(\bb{D}))} \equiv \Sigma_{\varphi}$, under which $\trop(b_{\varphi})$ is identified with the subdivision map $|\Sigma_{\varphi}| \rightarrow |\Sigma|$.  
\end{definition}

\begin{lemma}\label{lemma:BlPreservesBasic} Fix notation as in the first paragraph of \cref{def:BasicCharts}.
Also fix $\varphi : \Sigma_t \rightarrow \bb{R}$ convex and piecewise integral-linear, defining the complex $\Sigma_{t, \varphi}$ of polyhedra in $L$ with support $|\Sigma_t|$. Set $\bb{Z}_{t, \varphi} = \whTV(\Sigma_{t, \varphi}) = \Bl_{\varphi}(\bb{Z}_{t})$. Then the square diagram 
\begin{center}
    \begin{tikzcd}
        \Bl_{\varphi}(\bb{X}) \arrow[r,"b_{\varphi}"] \arrow[d, "\tilde{\psi}"] & \bb{X} \arrow[d, "\psi"] \\ 
         \bb{Z}_{t, \varphi}\arrow[r, "b_{t,\varphi}"] & \bb{Z}_t
    \end{tikzcd}
\end{center}
is Cartesian (here $\tilde{\psi}$ is obtained from the universal property of blowing up), and $\tilde{\psi}$ is basic, inducing an identification of cone complexes $\Sigma_{(\Bl_{\varphi}(\bb{X}), b_{\varphi}^{-1}(\bb{E}))} \equiv \Sigma_{t,\varphi}$ such that $\trop(b_{\varphi})$ is the subdivision $\Sigma_{t,\varphi} \rightarrow \Sigma_t$. 
\begin{proof}
     To avoid ambiguity, we use $\mathscr{I}_{\varphi}$ to represent the ideal sheaf on $\bb{Z}_t$ induced by $\varphi$. We recall from the definition that the corresponding ideal sheaf on $\bb{X}$ is just $\psi^{-1} \mathscr{I}_{\varphi} \cdot \mathscr{O}_{\bb{X}}$.

     For the first claim, we decompose the square diagram of the statement to two Cartesian squares: 
     \begin{center}
         \begin{tikzcd}
             \Bl_{\varphi}(\bb{X}) \arrow[r] \arrow[d, "\tilde{\psi}"] & \fBl_{\psi^{-1}\mathscr{I}_{\varphi} \cdot \mathscr{O}_{\bb{X}}}(\bb{X}) \arrow[d] \arrow[r] & \bb{X} \arrow[d, "\psi"] \\ 
         \bb{Z}_{t, \varphi}\arrow[r] & \fBl_{\mathscr{I}_{\varphi}}(\bb{Z}_t) \arrow[r]  & \bb{Z}_t
         \end{tikzcd}
     \end{center}
     The horizontal maps of the rightmost (resp. leftmost) square are formal blowup (resp. normalization). The rightmost square is Cartesian because $\psi$ is flat, and so the natural surjective map $\psi^* \mathscr{I}_{\varphi}^n \rightarrow \psi^{-1} \mathscr{I}_{\varphi}^n \cdot \mathscr{O}_{\bb{X}}$ is an isomorphism. Then we get an isomorphism of graded $\mathscr{O}_{\bb{X}}$-algebras 
     $$ \oplus_{n = 0}^{\infty} \psi^* \mathscr{I}_{\varphi}^n \rightarrow \oplus_{n = 0}^{\infty} \psi^{-1} \mathscr{I}_{\varphi}^n \cdot \mathscr{O}_{\bb{X}}, $$
     the (completed) Proj of the left (resp. right) ring defining $\bb{X} \times_{\bb{Z}_{t}} \bb{Z}_{t, \varphi}$ (resp. $\fBl_{\psi^{-1} \mathscr{I}_{\varphi} \cdot \mathscr{O}_{\bb{X}}}(\bb{X}))$. The leftmost square is Cartesian because its fiber product is already normal, having an \'etale map to the normal $\bb{Z}_{t, \varphi}$.

     Fix a stratum $\bb{S}_{\varphi}$ of $b_{\varphi}^{-1}(\bb{E})$. Since $\tilde{\psi}$ is \'etale, $\tilde{\psi}(\bb{S}_{\varphi})$ is a dense Zariski open of a stratum $\bb{S}_{t, \varphi}$ of $\partial \bb{Z}_{t, \varphi}$. In turn, $b_{t, \varphi}(\bb{S}_{t, \varphi})$ is a stratum $\bb{S}_t$ of $\partial \bb{Z}_t$. Now because $\psi$ is basic, the preimage of $\bb{S}_t$ in $\bb{X}$ is a stratum $\bb{S}$ of $\bb{E}$, so we have also $b_{\varphi}(\bb{S}_{\varphi}) = \bb{S}$. In particular note that $\bb{S}_{t, \varphi}$ determines the image of $\bb{S}_{\varphi}$ under $b_{\varphi}$.

     To show $\tilde{\psi}$ is basic, we must show that, given any stratum $\bb{S}_{t, \varphi}$ of $\partial \bb{Z}_{t, \varphi}$, then $\tilde{\psi}^{-1}(\bb{S}_{t, \varphi})$ is an irreducible stratum $\bb{S}_{\varphi}$ of $b_{\varphi}^{-1}(\bb{E})$. Since $\tilde{\psi}$ is basic, we know already that $\tilde{\psi}^{-1}(\bb{S}_{t, \varphi})$ is a (nonempty) union of strata of $b_{\varphi}^{-1}(\bb{E})$, and by the previous paragraph all these strata have the same image stratum $\bb{S}$ of $\bb{E}$. We then obtain an isomorphism $\tilde{\psi}^{-1}(\bb{S}_{t, \varphi}) \cong \bb{S}_{t,\varphi} \times_{\bb{S}_t} \bb{S}$, so it will suffice to show this fiber product is irreducible.

     We will argue by passing to the generic fiber. Therefore we set $S = \bb{S}_{\eta}, S_t = (\bb{S}_t)_{\eta}$, and $S_{t, \varphi} = (\bb{S}_{t, \varphi})_{\eta}$. Note $b_{\varphi}$ is a proper map, so also $b_{\varphi}^{-1}(\bb{S}) \rightarrow \bb{S}$ is, and $\tilde{\psi}^{-1}(\bb{S}_{t, \varphi}) \subset b_{\varphi}^{-1}(\bb{S})$ a closed formal subscheme, making $\tilde{\psi}^{-1}(\bb{S}_{t, \varphi}) \rightarrow \bb{S}$ proper. We've seen each irreducible component of $\tilde{\psi}^{-1}(\bb{S}_{t, \varphi})$ maps surjectively onto $\bb{S}$, so there must be a point $s \in S$ (fix $s_t$ its image in $S_t$) such that the fiber $\tilde{\psi}_{\eta}^{-1}(S_{t, \varphi})_s \cong (S_{t, \varphi})_{s_t} \times_{\mathscr{H}(s_t)} \mathscr{H}(s)$ is reducible.

     Now observe that $\bb{S}_{t, \varphi} \rightarrow \bb{S}_t$ is obtained as the formal completion of a proper, equivariant map of toric varieties, say $\mathcal{S}_{t, \varphi} \rightarrow \mathcal{S}_t$, the torus for $\mathcal{S}_{t}$ a quotient of the torus for $\mathcal{S}_{t, \varphi}$, and the kernel of the quotient homomorphism a torus acting on each fiber with a big torus orbit. But, by \cref{cor:PropSepPreserved} properness implies the fibers of $S_{t, \varphi} \rightarrow S_t$ are the same as fibers of $\mathcal{S}_{t, \varphi}^{\an} \rightarrow \mathcal{S}_t^{\an}$ over points of $S_t \subset \mathcal{S}_t^{\an}$, so they will likewise contain big torus orbits, making them geometrically irreducible. Hence $\tilde{\psi}^{-1}(\bb{S}_{t, \varphi})$ must be irreducible.

\end{proof}
\end{lemma}

\begin{proposition}\label{prop:BlSubdiv} Fix $(\bb{Y}, \bb{D})$ snc and $\varphi : |\Sigma_{(\bb{Y}, \bb{D})}| \rightarrow \bb{R}$ piecewise integral-linear and convex on restriction to each cone. Let $\bb{Y}' = \Bl_{\varphi}(\bb{Y})$ and $\bb{D}' = b_{\varphi}^{-1} \bb{D}$. Then there is a natural identification $\Sigma_{(\bb{Y}', \bb{D}')} \equiv \Sigma_{\varphi}$, under which $\trop(b_{\varphi})$ is identified with the subdivision map $|\Sigma_{\varphi}| \rightarrow |\Sigma_{(\bb{Y}, \bb{D})}|$. 
\begin{proof}
    Suppose $\bb{U} \subset \bb{Y}$, $\psi : \bb{U} \rightarrow \bb{Z}$ a basic affine, minimal for a stratum $\bb{S}$ of $\bb{D}$. Let $\bb{U}' \rightarrow \bb{U}$ be the normalized blowup induced by $\varphi$, similarly $\bb{Z}' \rightarrow \bb{Z}$ for the toric model. Then by \cref{lemma:BlPreservesBasic}, $\bb{U}' \subset \bb{Y}'$ has a basic map $\bb{U}' \rightarrow \bb{Z}'$, so $\Sigma_{(\bb{U}', \bb{D}'|_{\bb{U}'})}$ is the subdivision of $\sigma_{\bb{S}}$ induced by $\varphi$, and $|\Sigma_{(\bb{U}', \bb{D}'|_{\bb{U}'})}| \hookrightarrow |\Sigma_{(\bb{Y}', \bb{D}')}|$ is the preimage under $\trop(b_{\varphi})$ of $\sigma_{\bb{S}}$, since $\bb{U}'$ will see all strata of $\bb{D}'$ which lie over strata of $\bb{D}$ containing $\bb{S}$. The result follows. 
\end{proof}
\end{proposition}

\noindent 

\begin{definition}\label{def:Prim} Fix $(\bb{Y}, \bb{D})$ snc. Suppose $\bb{Z}$ is a prime Weil divisor living on a formal blowup $\bb{Y}' \rightarrow \bb{Y}$. The \textit{center of $\bb{Z}$}, or center$_{\bb{Y}}(\bb{Z})$, is the reduced-induced formal subscheme structure on the image of $\bb{Z}$ in $\bb{Y}$. Denote by $\text{Prim}^*(\bb{Y}, \bb{D})$ the union of $\{0\}$ and the set of prime Weil divisors $\bb{Z}$ living on an snc toric blowup of $\bb{Y}$ with center$_{\bb{Y}}(\bb{Z})$ a stratum of $\bb{D}$, quotiented by the minimal equivalence relation which identifies $\bb{Z}'$ with $\bb{Z}''$ whenever $\bb{Y}'' \rightarrow \bb{Y}' \rightarrow \bb{Y}$ are snc blowups, $\bb{Z}'$ (resp. $\bb{Z}''$) lives on $\bb{Y}'$ (resp. $\bb{Y}'')$, and $\bb{Z}''$ is the strict transform of $\bb{Z}'$. An element of $\text{Prim}^*(\bb{Y}, \bb{D})$ is called an \textit{irreducible $(\bb{Y}, \bb{D})$-divisor}.

We define $\underset{\to}{\text{Div}}(\bb{Y}, \bb{D})$ the free abelian group over $\text{Prim}^*(\bb{Y}, \bb{D}) \setminus \{0\}$, $\underset{\to}{\text{Div}}^*(\bb{Y}, \bb{D})$ its dual, which contains in particular the dual elements $\text{Prim}(\bb{Y}, \bb{D})$ to the $\text{Prim}^*(\bb{Y}, \bb{D}) \subset \underset{\to}{\text{Div}}(\bb{Y}, \bb{D})$. We also set Prim$_{\alg}(\bb{Y}, \bb{D}) \subset \Prim(\bb{Y}, \bb{D})$ those elements dual to algebraic $(\bb{Y}, \bb{D})$-divisors, and $0$ if $\bb{Y}$ itself is algebraic. 
\end{definition}

\begin{theorem}\label{theorem:SYZMain} There is a canonical identification of Prim$(\bb{Y}, \bb{D})$ with the primitive integer points of $|\Sigma_{(\bb{Y}, \bb{D})}|$, therefore also an identification $\bb{N} \cdot \text{Prim}(\bb{Y}, \bb{D}) \equiv |\Sigma_{(\bb{Y}, \bb{D})}|(\bb{Z})$, which restricts to $\bb{N} \cdot \Prim_{\alg}(\bb{Y}, \bb{D}) \equiv \Sk(\bb{Y}, \bb{D})(\bb{Z})$.

The above identifications induce a canonical embedding $\oSk(\bb{Y}, \bb{D}) \hookrightarrow Y$, whose composition with the SYZ fibration $r_{(\bb{Y}, \bb{D})}$ is the identity, after restricting the codomain. Let $\Delta \subset \bb{Y}_s$ denote the union of algebraic strata of $\bb{D}$. The restriction of $r_{(\bb{Y}, \bb{D})}$ to the open analytic domain $\red_{\bb{Y}}^{-1}(\Delta)$ is a proper retraction onto $\oSk(\bb{Y}, \bb{D})$. For each stratum $\bb{S}$ of $\bb{D}$, the composition $\overline{\Sk}(\bb{S}, \bb{D}_{\bb{S}}) \hookrightarrow \oSk(\bb{Y}, \bb{D}) \hookrightarrow Y$ agrees with $\oSk(\bb{S}, \bb{D}_{\bb{S}}) \hookrightarrow S := \bb{S}_{\eta}$.
\begin{proof}
     We first construct a natural map $u : \Prim_{\alg}(\bb{Y}, \bb{D}) \rightarrow U$. Fix an irreducible algebraic $(\bb{Y}, \bb{D})$-divisor $[\bb{Z}]$, here $\bb{Z}$ a prime divisor defined on an snc blowup $b : \bb{Y}' \rightarrow \bb{Y}$. Since $\bb{Z}$ is algebraic of codimension 1, its local ring $\mathscr{O}_{\bb{Y}', \eta_{\bb{Z}}}$ is a DVR, so defines a valuation with center the generic point $\eta_{\bb{Z}} \in \bb{Y}'_s$, which then gives rise to a point of $U = \bb{Y}'_{\eta} \setminus (b^{-1}_* \bb{D})_{\eta}$ we call $u([\bb{Z}]^*)$. This is independent of the representative $\bb{Z}$ of the equivalence class $[\bb{Z}]$, since the exceptional locus of an snc blowup intersects the strict transform of $\bb{Z}$ in positive codimension.

    The composition of $u$ with $r_{(\bb{Y}, \bb{D})}$ extends naturally to a map $\Prim(\bb{Y}, \bb{D}) \rightarrow |\Sigma_{(\bb{Y}, \bb{D})}|(\bb{Z})$, defined in the following manner: Fix an irreducible $(\bb{Y}, \bb{D})$-divisor $[\bb{Z}]$, $\bb{Z}$ defined on an snc blowup $b : \bb{Y}' \rightarrow \bb{Y}$, and set $\bb{S} = \text{center}_{\bb{Y}}(\bb{Z})$. The image of $[\bb{Z}]^* \in \Prim(\bb{Y}, \bb{D})$ in $|\Sigma_{(\bb{Y}, \bb{D})}|(\bb{Z})$ is the point $ \Sigma_{\bb{D}_i \supset \bb{S}} \ord_{\bb{Z}}(b^* \bb{D}_i) [\bb{D}_i]^* \in \sigma_{\bb{S}}$. Note this is precisely the primitive point of $|\Sigma_{(\bb{Y}, \bb{D})}|(\bb{Z})$ which generates the ray corresponding to $\bb{Z}$ in the subdivision of $\Sigma_{(\bb{Y}, \bb{D})}$ that induces the blowup $b$. We also observe every primitive point is in the image, since an arbitrary rational ray $r$ in some cone of $\Sigma_{(\bb{Y}, \bb{D})}$ can be found in some further regular subdivision of $*(\Sigma_{(\bb{Y}, \bb{D})}, r)$, and the divisor corresponding to $r$ in the corresponding snc blowup of $\bb{Y}$ will map to the primitive integer point sitting on $r$.

    Fix $\bb{Z}'$ (resp. $\bb{Z}''$) defined on an snc blowup $b_{\varphi'} : \bb{Y}' \rightarrow \bb{Y}$ (resp. $b_{\varphi''} : \bb{Y}'' \rightarrow \bb{Y}$) associated to piecewise integral-linear, piecewise convex functions $\varphi', \varphi'' : |\Sigma_{(\bb{Y}, \bb{D})}| \rightarrow \bb{R}$. Let $\Sigma'$ (resp. $\Sigma''$) be the subdivision of $\Sigma_{(\bb{Y}, \bb{D})}$ induced by $\varphi'$ (resp. $\varphi''$). Suppose that $[\bb{Z}'], [\bb{Z}'']$ both map to the same point of $|\Sigma_{(\bb{Y}, \bb{D})}|$, spanning a ray $r \in \sigma_{\bb{S}}^{\circ}$. Then $r$ is common to both subdivisions $\Sigma'$ and $\Sigma''$. Let $\Sigma_r$ be a snc subdivision of $*(\Sigma_{(\bb{Y}, \bb{D})}, r)$, and $\bb{Y}_r \rightarrow \bb{Y}$ the corresponding blowup. Up to further snc blowup of $\bb{Y}', \bb{Y}''$ we can assume both $\Sigma'$ and $\Sigma''$ refine $\Sigma_r$. In that case $\varphi', \varphi'' : |\Sigma_r| \rightarrow \bb{R}$ are both piecewise integral-linear, piecewise convex, so $\bb{Y}',\bb{Y}'' \rightarrow \bb{Y}$ factor through snc blowups $\bb{Y}', \bb{Y}'' \rightarrow \bb{Y}_r$, and $\bb{Z}', \bb{Z}''$ are both strict transforms of the boundary divisor of $\bb{Y}_r$ corresponding to the ray $r$. Therefore $[\bb{Z}'] = [\bb{Z}'']$ and $\Prim(\bb{Y}, \bb{D}) \rightarrow |\Sigma_{(\bb{Y}, \bb{D})}|(\bb{Z})$ is an injective map with image the primitive integer points.

    Now take $\bb{S}$ an algebraic stratum of $\bb{D}$. We have a canonical map $\bb{R}_{\geq 0} \cdot (\sigma_{\bb{S}}^{\circ})_{\bb{Z}} \rightarrow U$, which we do not yet know extends to continuous $\sigma_{\bb{S}} \rightarrow U$ (and further, to continuous $\osigma_{\bb{S}} \rightarrow Y$). To get such an extension, we consider the completed local ring of $\bb{Y}$ at the generic point $\xi_{\bb{S}}$ of $\bb{S}$, which by definition of SNC is isomorphic to $k(\xi_{\bb{S}})[[y_1,...,y_r]]$, here $r = \codim_{\bb{Y}}(\bb{S})$, and each $\bb{D}_i$ containing $\bb{S}$ the vanishing of exactly one $y_i$. Equipping $k(\xi_{\bb{S}})$ with its trivial valuation, $\whbbY_{\xi_{\bb{S}}} := \Spf(\whscrO_{\bb{Y}, \xi_{\bb{S}}})$ becomes a special formal scheme over $k(\xi_{\bb{S}})$ with a natural map $h_{\bb{S}} : \whbbY_{\xi_{\bb{S}}} \rightarrow \bb{Y}$ whose generic fiber is the inclusion $B_{\xi_{\bb{S}}} \hookrightarrow Y$ of a product of $r$ Berkovich open unit disks over $k(\xi_{\bb{S}})$. Let $\whbbD_{i, \xi_{\bb{S}}}$ denote the pullback of $\bb{D}_{i}$ to $\whbbY_{\xi_{\bb{S}}}$, and $\whbbD_{\xi_{\bb{S}}}$ the sum of all the $\whbbD_{i, \xi_{\bb{S}}}$, so in particular $h_{\bb{S}}^{-1}(\bb{D}) = \whbbD_{ \xi_{\bb{S}}}$.

    The tropicalization from \cref{prop:FunctTrop} is just the natural cone inclusion $\trop(h_{\bb{S}}) : \osigma_{\bb{S}} \rightarrow | \oSigma_{(\bb{Y}, \bb{D})} |$, which we observe restricts on $\bb{R}_{ \geq 0} \cdot (\sigma_{\bb{S}})_{\bb{Z}}$ to the canonical inclusion of divisorial valuations via the identification $\bb{N} \cdot \Prim_{(\bb{Y}, \bb{D})} \equiv |\Sigma_{(\bb{Y}, \bb{D})}(\bb{Z})|$. Letting $r_{\bb{S}} : B_{\xi_{\bb{S}}} \rightarrow \osigma_{\bb{S}}$ the SYZ fibration associated to the SNC pair $(\whbbY_{\xi_{\bb{S}}}, \whbbD_{\xi_{\bb{S}}})$, we have $\trop(h_{\bb{S}}) \circ r_{\bb{S}} = r_{(\bb{Y}, \bb{D})} \circ (h_{\bb{S}})_{\eta}$, so the restriction of $(h_{\bb{S}})_{\eta}$ over $\osigma^{\circ}_{\bb{S}} \subset B_{\xi_{\bb{S}}}$ gives a continuous and injective $\iota_{\bb{S}} : \osigma^{\circ}_{\bb{S}} \rightarrow Y$ agreeing on $\bb{R}_{\geq 0} \cdot (\sigma_{\bb{S}}^{\circ})_{\bb{Z}}$ with $\trop(h_{\bb{S}})$. But if $\bb{S}' \supset \bb{S}$, then $\trop(h_{\bb{S}})|_{\osigma_{\bb{S}'}} = \trop(h_{\bb{S}'})$. Therefore we may define a continuous extension $\iota_{\bb{S}} : \osigma_{\bb{S}}^{\circ} \cup \osigma_{\bb{S}'}^{\circ} \rightarrow Y$ over points $p \in \osigma_{\bb{S}'}^{\circ}$ by $\iota_{\bb{S}}$ on $\osigma_{\bb{S}}^{\circ}$ and $\iota_{\bb{S}'}$ on $\osigma_{\bb{S}'}^{\circ}$. Indeed, if $\{p_j\}_{j = 1}^{\infty} \subset \sigma_{\bb{S}}^{\circ}$ is a sequence of divisorial points converging to $p \in \osigma_{\bb{S}'}^{\circ}$, then 
    $$ \lim_j \iota_{\bb{S}}(p_j) = \lim_j \trop(h_{\bb{S}})(p_j) = \trop(h_{\bb{S}})(p) = \trop(h_{\bb{S}'})(p) = \iota_{\bb{S}'}(p). $$
    We conclude by extending $\iota_{\bb{S}}$, for each fixed $\bb{S}$, over all $\sigma_{\bb{S}'}^{\circ}$ for all algebraic $\bb{S}' \supset \bb{S}$, and then gluing all of the resulting $\iota_{\bb{S}}$.

 \end{proof}
\end{theorem}

\begin{definition}\label{def:TropOfSubvariety} (\textit{Tropicalization of a Subvariety}) Fix $(\bb{Y}, \bb{D})$ a snc pair, and $\bb{Z} \subset \bb{Y}$ a closed formal subscheme, none of whose irreducible components are contained in $\bb{D}$. The \textit{tropicalization of $\bb{Z}$}, or $\trop(\bb{Z}) \subset |\Sigma_{(\bb{Y}, \bb{D})}|$, is the subset $r_{(\bb{Y}, \bb{D})}(\bb{Z}_{\eta} \cap (\bb{Y}_{\eta} \setminus \bb{D}_{\eta}))$. 
\end{definition}

\begin{proposition}\label{prop:PolyhedralTrop} Notation as in \cref{def:TropOfSubvariety}$, \trop(\bb{Z})$ is contained in a finite union, say $\mathcal{R} \subset |\Sigma_{(\bb{Y}, \bb{D})}|$, of closed, rational polyhedral cones, each of dimension at most $\dim \bb{Z}$, such that $\mathcal{R} \cap \Sk(\bb{Y}, \bb{D}) = \trop(\bb{Z}) \cap \Sk(\bb{Y}, \bb{D})$. 
\begin{proof} 
Suppose $\bb{Z}$ is irreducible. Because $\bb{Z} \not \subset \bb{D}$, we have $\codim_{\bb{Z}}(\bb{Z} \cap \bb{D}) = 1$, in fact $\bb{E} := \bb{Z} \cap \bb{D}$ a Cartier divisor on $\bb{Z}$. By the main result of \cite{Temkin_2012}, there exists a desingularization $\bb{Z}' \rightarrow \bb{Z}$. Let $\bb{E}' \subset \bb{Z}'$ denote the preimage of $\bb{E}$, and apply \cite{temkin2017functorialdesingularizationqboundaries} to $(\bb{Z}', \bb{E}')$ to obtain $\bb{Z}'' \rightarrow \bb{Z}$, with $\bb{E}''$ the preimage of $\bb{E}'$ such that $(\bb{Z}'', \bb{E}'')$ is an snc pair. The resulting proper map $\frak{f} : \bb{Z}'' \rightarrow \bb{Y}$ satisfies $\frak{f}^{-1}(\bb{D}) = \bb{E}''$, and so we can apply \cref{prop:FunctTrop}.

We first show $\trop(\bb{Z})$ is contained in the image $\mathcal{R}$ of $\trop(\frak{f})$. Fix $z \in \bb{Z}_{\eta} \cap (\bb{Y}_{\eta} \setminus \bb{D}_{\eta})$. The map $\frak{f}_{\eta}$ is surjective, so we can choose a lift $z''$ to $\bb{Z}''_{\eta}$. Now we have $\trop(\frak{f})(r_{(\bb{Z}'', \bb{E}'')}(z'')) = r_{(\bb{Y}, \bb{D})}(z)$, so indeed $r_{(\bb{Y}, \bb{D})}(z) \in \mathcal{R}$.

Now consider a point $p \in |\Sigma_{(\bb{Z}'', \bb{E}'')}|$. Let $\sigma_{\bb{S}''}$ the smallest cone of $\Sigma_{(\bb{Z}'', \bb{E}'')}$ containing $p$. Viewing carefully the definition of $\trop$ in the first paragraph of \cref{prop:FunctTrop}, we see that if $\trop(\frak{f})(p) \in \Sk(\bb{Y}, \bb{D})$, then the smallest stratum of $\bb{D}$ which contains the image of $\bb{S}''$ must be algebraic. Therefore $\bb{S}''$ is itself algebraic, because $\frak{f}$ is a proper map. In particular we have $p \in \Sk(\bb{Z}'', \bb{E}'')$. By \cref{theorem:SYZMain} there is a point $z'' \in \bb{Z}''_{\eta}$ such that $r_{(\bb{Z}'', \bb{E}'')}(z'') = p$ (e.g. $z''$ the instance of the point $p$ living in $\bb{Z}''_{\eta}$). Now $z = \frak{f}_{\eta}(z'')$ is a point of $\bb{Z}_{\eta}$ such that $r_{(\bb{Y}, \bb{D})}(z) = \trop(\frak{f})(p)$, completing the proof.   
\end{proof}    
\end{proposition}

\begin{proposition}\label{prop:TropProp} Fix $(\bb{Y}, \bb{D})$ snc, and $\bb{F}$ a Cartier divisor on $\bb{Y}$. The domain of the function $\bb{F}^{\trop}$ (see \cref{def:CartierFnct}) contains $\Sk(\bb{Y}, \bb{D})$, and the restriction $\bb{F}^{\trop}|_{\Sk(\bb{Y}, \bb{D})}$ is piecewise integral-linear, with finitely many maximal domains of linearity. In particular $\bb{F}^{\trop}|_{\Sk(\bb{Y}, \bb{D})}$ extends canonically to a piecewise integral-linear map defined on the closure of $\Sk(\bb{Y}, \bb{D})$ in $|\Sigma_{(\bb{Y}, \bb{D})}|$. 
\begin{proof}
    Fix $\bb{S}$ an algebraic stratum of $\bb{D}$. If $\bb{F} \not \supset \bb{S}$, then $\bb{F}^{\trop}|_{\sigma_{\bb{S}}^{\circ}} = 0$, so we will assume $\bb{F} \supset \bb{S}$.

    Fix notation as in the fourth paragraph of the proof of $\cref{theorem:SYZMain}$. From the argument of the theorem $\sigma_{\bb{S}}^{\circ} \hookrightarrow \Sk(\bb{Y}, \bb{D})$ is precisely the image under $(h_{\xi_{\bb{S}}})_{\eta}$ of $\Sk(\whbbY_{\xi_{\bb{S}}}, \whbbD_{\xi_{\bb{S}}})$, and so $\bb{F}^{\trop}|_{\sigma^{\circ}_{\bb{S}}}$ can be computed by pulling back a defining equation of $\bb{F}$ at $\xi_{\bb{S}}$ to $\whscrO_{\bb{Y}, \xi_{\bb{S}}}$, and evaluating on $\Sk(\whbbY_{\xi_{\bb{S}}}, \whbbD_{\xi_{\bb{S}}})$. The general statement then follows from the special case when $\bb{Y}$ is $\Spf(k[[y_1,...,y_r]])$ and $\bb{D}$ is the vanishing of $y_1...y_r$. For a power series $f = \Sigma_{\underline{\nu} \in \bb{N}^r} c_{\underline{\nu}} \cdot y_1^{\nu_1}...y_r^{\nu_r}$, here $c_{\underline{\nu}} \in k$ for each $\underline{\nu} \in \bb{N}^r$, we have 
    $$ -\log|f|\hspace{0.5mm} |_{\sigma_{\bb{S}}^{\circ}}(a_1[\bb{D}_1]^* + ... + a_r [\bb{D}_r]^*) = \inf_{c_{\underline{\nu}} \neq 0} \nu_1 a_1 + ... + \nu_r a_r.  $$
    That is, the restriction of $-\log |f|$ to $\sigma_{\bb{S}}^{\circ}$ is an infimum of (usually infinitely many) elements of $(\sigma_{\bb{S}}^{\vee})_{\bb{Z}} \subset T^*_{\bb{Z}} \sigma_{\bb{S}}$. But from this infinite collection of elements, one can choose a finite subcollection with identical infimum (a consequence of noetherianity of the polynomial ring). 
\end{proof}
\end{proposition}

\begin{construction}\label{app:ToricBlowupsOverk[[s]]} (\textit{Special Formal $k[[\pi]]$-schemes}) Fix $K^{\circ} = k[[\pi]]$, and set $|\pi| = \epsilon$ for some $0 < \epsilon < 1$ with $-\log \epsilon \in \bb{Q}$. We recall from \cref{ex:DefInOneVar} the duality of special formal $K^{\circ}$-schemes with special formal $k$-schemes defined over $\Spf(K^{\circ})$. Suppose $(\bb{Y}, \bb{D})$ is an snc pair over $k$ with a distinguished map $\bb{Y} \rightarrow \Spf(K^{\circ})$, with special fiber contained in $\bb{D}$. Let $\bb{H} \subset \bb{D}$ consist of the components flat over $K^{\circ}$, and $\bb{V} \subset \bb{D}$ the irreducible components of the special fiber. We call components of $\bb{H}$ \textit{horizontal} and components of $\bb{V}$ \textit{vertical}. Call by $Y_{\epsilon} \subset Y$ the $K$-analytic generic fiber, and similarly define $H_{\epsilon} \subset H =: \bb{H}_{\eta}$. The trop of the map to $\Spf(K^{\circ})$ is a canonical continuous $|\oSigma_{(\bb{Y}, \bb{D})}| \rightarrow \bb{R}_{\geq 0}$, restricting on $\Sigma$ to piecewise integral-linear. The fiber over $-\log \epsilon$ determines a complex of rational, strictly convex polyhedra, say $\mathscr{P}_{(\bb{Y}, \bb{D})}$, with compactification $\overline{\mathscr{P}}_{(\bb{Y}, \bb{D})}$, and an \textit{SYZ fibration} $Y_{\epsilon}  \rightarrow |\overline{\mathscr{P}}_{(\bb{Y}, \bb{D})}|$, restricting to $Y_{\epsilon} \setminus H_{\epsilon} \rightarrow |\mathscr{P}_{(\bb{Y}, \bb{D})}|$. The infinite rays of $\mathscr{P}_{(\bb{Y}, \bb{D})}$ are in 1-1 bijection with horizontal components, while the vertices are in 1-1 bijection with vertical components. 
\end{construction}

\begin{definition-remark}\label{def:IntLinStructure} (\textit{Canonical Integral-Linear Structure}) Here $\bb{S}$ is an algebraic stratum of $\bb{D}$. We say that $\Sk(\bb{Y}, \bb{D})$ is \textit{smooth near $\bb{S}$} if there is a neighborhood of $\sigma_{\bb{S}}^{\circ}$ in $\Sk(\bb{Y}, \bb{D})$ homeomorphic to a manifold of dimension $\dim(\bb{Y})$. Note this is the same as requiring that $*\sigma_{\bb{S}}$ is a manifold. If $\bb{S}$ is a $0$-stratum, then $\Sk(\bb{Y}, \bb{D})$ is always smooth near $\bb{S}$, since $\sigma_{\bb{S}}^{\circ}$ itself is an open manifold in $\Sk(\bb{Y}, \bb{D})$ of dimension $\dim(\bb{Y})$.

If $\bb{S}$ is a complete 1-stratum, then smoothness near $\bb{S}$ is the same as $\bb{S}$ containing exactly two $0$-strata of $\bb{D}$. In this case, just as when $\bb{S}$ is a $0$-stratum, $*\sigma_{\bb{S}}$ has a canonical structure of integral-linear manifold. Fix $\bb{D}_1 \cap ... \cap \bb{D}_{n - 1} = \bb{S}$ and $\bb{D}_{n}, \bb{D}_{n + 1}$ the irreducible components intersecting $\bb{S}$ transversely. The linear functions on $*\sigma_{\bb{S}}$ are given by formal sums $a_1 [\bb{D}_1] + ... + a_{n + 1} [\bb{D}_{n + 1}]$, here $a_1,...,a_{n + 1} \in \bb{R}$, such that $\sum_{i = 1}^{n + 1} a_i \deg\big( \mathscr{O}(\bb{D}_i)|_{\bb{S}} \big) = 0$. The associated transition function $T\sigma_{\bb{S} \cap \bb{D}_n} \rightarrow T\sigma_{\bb{S} \cap \bb{D}_{n + 1}}$ is determined by the assignments $[\bb{D}_i]^* \mapsto [\bb{D}_i]^*$ for $i = 1,...,n - 1$, and
$$ [\bb{D}_n]^* \mapsto -[\bb{D}_{n + 1}]^* - \sum_{i = 1}^{n - 1} (\bb{D}_i \cdot \bb{S}) [\bb{D}_i]^*. $$
Let $M = M_{\bb{S} \cap \bb{D}_{n + 1}}$. The natural inclusion $\sigma_{\bb{S} \cap \bb{D}_{n + 1}} \hookrightarrow M_{\bb{R}}$ now induces one $\overline{* \sigma_{\bb{S}}} \hookrightarrow M_{\bb{R}}$, which we see is strictly convex precisely when $\bb{D}_i \cdot \bb{S} < 0$ for each $i < n$. Supposing $\bb{S}$ is a rational curve, then Proposition 5.4 of \cite{Nicaise_Xu_Yu_The_non-archimedean_SYZ_fibration} shows that this convexity implies the formal completion of $\bb{Y}$ along $\bb{S}$ is isomorphic to the completion of the projective over affine toric variety $\TV(\overline{* \sigma_{\bb{S}}} \subset M_{\bb{R}})$ along its distinguished boundary 1-stratum (note here $\overline{* \sigma_{\bb{S}}}$ is given the fan structure determined by the two cones $\sigma_{\bb{S} \cap \bb{D}_n} \cup \sigma_{\bb{S} \cap \bb{D}_{n + 1}}),$ which is isomorphic to $\bb{P}^1$ and contains exactly two $0$-strata.

Borrowing from [loc. cit.], we say $\bb{Y}$ is \textit{log Calabi-Yau} along $\bb{S}$ if $\Sk(\bb{Y}, \bb{D})$ is smooth near $\bb{S}$, and $\bb{S} \cong \bb{P}^1_k$. Assume the setting of the previous paragraph, with $\bb{Y}$ log Calabi-Yau along $\bb{S}$. Suppose further $\mathcal{G} \subset *\sigma_{\bb{S}}$ is a convex polytope in the integral-linear structure. For each $k \in \bb{N}$, let $\sigma_k \subset M_{\bb{R}}$ denote the closed subcone of $\sigma_{\bb{S} \cap \bb{D}_{n}}$ generated (as a monoid over $\bb{R}_{\geq 0}$) by the elements $[\bb{D}_1]^*,...,[\bb{D}_{n - 1}]^*,$ and $v_k := k \cdot [\bb{D}_{n}]^* + \sum_{i = 1}^{n - 1} [\bb{D}_i]^*$. We can write $* \sigma_{\bb{S}} \subset M_{\bb{R}}$ as the union $\sigma_{\bb{S} \cap \bb{D}_{n + 1}}^{\circ} \cup \big(\bigcup_{k = 1}^{\infty} \sigma_k \big)$. Now we have $\mathcal{G} \subset \sigma_{\bb{S} \cap \bb{D}_{n + 1}}^* \cup \sigma_k$ for some $k \in \bb{N}$.

Recall the $*$-subdivision $*(\Sigma_{(\bb{Y}, \bb{D})}, \bb{R}_{\geq 0} \cdot v_k)$ is the smallest simplicial subdivision of $\Sigma_{(\bb{Y}, \bb{D})}$ containing the ray $\bb{R}_{\geq 0} \cdot v_k$. The cone $\sigma_k$ appears as an element of the corresponding simplicial complex, and because $\sigma_k$ is already regular, we can take $\Sigma' \rightarrow  *(\Sigma_{(\bb{Y}, \bb{D})}, \bb{R}_{\geq 0} \cdot v_k)$ a further subdivision such that $\sigma_{\bb{S} \cap \bb{D}_{n + 1}} , \sigma_k \in \Sigma'$ remain maximal cones, and $\Sigma'$ a regular fan so that $\Sigma' \rightarrow \Sigma_{(\bb{Y}, \bb{D})}$ is a projective subdivision, isomorphic over the complement of $*\sigma_{\bb{S}}$. Letting $\bb{Y}' \rightarrow \bb{Y}$ the corresponding toric blowup, $\bb{D}_i'$ the strict transform of $\bb{D}_i$ for each $i$, and $\bb{S}'$ the strict transform of $\bb{S}$, we have $\bb{D}_i' \cdot \bb{S}' < 0$ for every $i < n$. This setup establishes \textit{Kontsevich-Soibelman smoothness} of the SYZ fibration in \cref{prop:GluingEnds}(a).
\end{definition-remark}

\begin{lemma}\label{lem:PicOfFCLogCY1Strat} Fix notation as in the last two paragraphs of \cref{def:IntLinStructure}, and set $\mathscr{Y}$ the formal completion of $\bb{Y}'$ along $\bb{S}'$, so that in particular we have $\scrY \cong \whTV(\overline{*\sigma_{\bb{S}'}} \subset M_{\bb{R}})$, with the formal completion of the latter taken along the distinguished boundary $\bb{P}^1$ (see second paragraph of [loc. cit.]). Then the restriction map $\Pic(\scrY) \rightarrow \Pic(\scrY_s) \cong \bb{Z}$ is an isomorphism. 
\begin{proof}
    Let $T = \TV(\overline{*\sigma_{\bb{S}'}})$, with toric boundary divisors $E_1,...,E_{n + 1}$. Call by $C$ the distinguished $\bb{P}^1$, the intersection of $E_1,...,E_{n - 1}$, and let $\mathscr{I}$ denote the ideal sheaf on $T$ cutting out $C$, so that in particular we have $\mathcal{N}_{C / T}^* \cong \mathscr{I} / \mathscr{I}^2 \cong \oplus_{i = 1}^{n - 1} \mathscr{O}(-E_i)$ a sum of positive line bundles. Since $C$ is a local complete intersection in $T$, we have moreover $S^m(\mathcal{N}^*_{C/T}) \cong \mathscr{I}^m/ \mathscr{I}^{m + 1}$ (see Theorem 8.21A(e) of \cite{Hartshorne_Algebraic_geometry}) for any $m \in \bb{N}$, which is likewise a sum of positive line bundles, using the basic identities of symmetric powers. Therefore $H^1(T, \mathscr{I}^m / \mathscr{I}^{m + 1}) = 0$ for every $m \in \bb{N}$.

    Now for each $m$, let $\mathscr{Y}_m$ denote the vanishing of $\mathscr{I}^m$ in $T$. We have $\Pic(\mathscr{Y}) \cong \underset{\leftarrow}{\lim} \hspace{0.5mm} \Pic(\mathscr{Y}_m)$ [loc. cit., Ex. II.9.6]. Assume inductively that $\Pic(\mathscr{Y}_m) \rightarrow \Pic(C)$ is an isomorphism for some fixed $m$. We show $\Pic(\mathscr{Y}_{m + 1}) \rightarrow \Pic(\mathscr{Y}_m)$ is an isomorphism, from which the statement follows directly. On $\mathscr{Y}_{m + 1}$, $\mathscr{Y}_m$ is cut out by the nilpotent ideal sheaf $\mathscr{I}^m / \mathscr{I}^{m + 1}$, so we conclude using the long exact sequence of [loc. cit., Ex. III.4.6].

\end{proof}
\end{lemma}

\begin{proposition}\label{prop:GluingEnds} (\textit{Gluing Toric Ends}) Fix $(\bb{Y}, \bb{D})$ snc and $\bb{S}$ a stratum of $\bb{D}$ such that either $\bb{S}$ is a $0$-stratum or else $\bb{Y}$ is log Calabi-Yau along $\bb{S}$. Then, fix $\mathscr{G} \hookrightarrow \Sigma_{(\bb{Y}, \bb{D})}$ an embedding of a realizable polyhedral complex such that $\mathcal{G} := |\mathscr{G}| \subset *\sigma_{\bb{S}}$ is a convex polytope. The following hold: 
\begin{enumerate}[label=(\alph*)]
    \item There exists an open, convex cone $\mathcal{C} \subset *\sigma_{\bb{S}}$ containing $\mathcal{G}$, a lattice $M$, a piecewise linear embedding $\iota : \overline{*\sigma_{\bb{S}}} \hookrightarrow M_{\bb{R}}$, and an analytic isomorphism $r_{(\bb{Y}, \bb{D})}^{-1}(\mathcal{C}) \overset{\cong}{\longrightarrow} r_M^{-1}(\iota(\mathcal{C}))$ that commutes with the retractions to $\mathcal{C}$. 
    \item Let $K^{\circ} = k[[\pi]]$. Then there is a further subdivision $\mathscr{G}_v \rightarrow \mathscr{G}$, an admissible formal blowup $\bb{Y}_v  \rightarrow \bb{Y}_{K^{\circ}}$, and a Zariski open subscheme $\bb{G} \subset \bb{Y}_v$ isomorphic to $\bb{T}_M\{\mathscr{G}_v^{\vee}\}$, such that $G := \bb{G}_{\eta} = r_{(\bb{Y}, \bb{D})}^{-1}(\mathcal{G})$. 
    \item Fix $\mathscr{I} \leq \mathscr{O}_{\bb{Y}}$ a defining ideal sheaf, $\mathscr{I}_v$ the inverse image sheaf in $\bb{Y}_v$. For all $0 < r < 1$ sufficiently close to $1$ such that $r^n = |\pi|$ for some $n \in \bb{N}$, we have $\bb{Y}_v\{r^{-1} \mathscr{I}_v\} \rightarrow \bb{Y}_v$ an isomorphism over $\bb{G}$. 
    \item Fix an analytic isomorphism $\phi : G \rightarrow T_M\{\iota(\mathcal{G})^{\vee}\}$ commuting with the retractions to $\mathcal{G}$, e.g. the restriction of the isomorphism from (a). Take $\bb{X}$ a smooth, projective $T_M$-toric variety over $k$, requiring that the restriction of the piecewise linear structure of the defining fan to $\iota(\overline{*\sigma_{\bb{S}}})$ is isomorphic to the restriction of the $\Sigma_{(\bb{Y}, \bb{D})}$ structure to $\overline{*\sigma_{\bb{S}}}$. Let $X = \bb{X}_{\eta}$ the analytification, fix $0 < r < 1$ close to 1, and define a gluing of $k$-analytic spaces $Z = Y\{r^{-1}\mathscr{I}_v\} \cup_{\phi} X$. There exists then an admissible formal model $\bb{X}_v$ over $K^{\circ}$ of $X_K$, with an open immersion $\bb{T}_M\{\mathscr{G}_v^{\vee}\} \hookrightarrow \bb{X}_v$ whose generic fiber is $T_M\{\iota(\mathcal{G})^{\vee}\} \subset T_M$, and an isomorphism $\Phi : \bb{G} \rightarrow \bb{T}_M\{\mathscr{G}_v^{\vee}\}$ such that $\Phi_{\eta} = \phi$, and so that $\bb{Z} = \bb{Y}_v\{r^{-1} \mathscr{I}_v\} \cup_{\Phi} \bb{X}_v$ satisfies $\bb{Z}_{\eta} = Z_K$. 
    \item Fix notation as in (d). Any line bundle on $\bb{Y}$ (resp. $\bb{Y}_s$) extends to a line bundle on $\bb{Z}$ (resp. $\bb{Z}_s$). 
    \item Suppose $\bb{Y}$ is quasi-projective over affine, and fix notation as in (d). Then there exists a line bundle on $\bb{Z}$ whose restriction to each of $\bb{Y}_v\{r^{-1}\mathscr{I}_v\}$ or $\bb{X}_{v}$ is relatively ample over $K^{\circ}$. 
    
\end{enumerate}
\begin{proof}
       The final paragraph of \cref{def:IntLinStructure} implies (a). For the rest of the argument, we apply frequently the duality of special formal schemes over $k[[s]]$ (see \cref{ex:DefInOneVar} and \cref{app:ToricBlowupsOverk[[s]]}), which allows us to consider our various $\Sigma$ as polyhedral complexes instead of fans, and to make finite subdivisions of the $\Sigma$ using toric blowups.

       For (b), we have $\bb{Y}' \rightarrow \bb{Y}$ as in \cref{def:IntLinStructure}. Let $\Sigma_h = \Sigma'$ and $\bb{Y}_h = \bb{Y}'_{K^{\circ}}$, $h$ standing for horizontal. Fix using \cref{prop:CofinalStarSubdiv} a regular projective subdivision $\Sigma_v \rightarrow \Sigma_{(\bb{Y}, \bb{D})}$ containing a subdivision $\mathscr{G}_v$ of $\mathscr{G}$, such that rec$(\Sigma_v) \equiv \rec(\Sigma)$. Then apply again \cref{prop:CofinalStarSubdiv} to pick $\whSigma \rightarrow \Sigma_v$ a regular projective subdivision which also refines $\Sigma_h$, and which restricts over $\mathscr{G}_v$ to an isomorphism of subcomplexes (ie. $\mathscr{G}_v$ sits unchanged in $\whSigma$). Set $\bb{Y}_v$ the admissible blowup of $\bb{Y}_{K^{\circ}}$ corresponding to $\Sigma_v \rightarrow \Sigma$, $v$ standing for vertical. Then set $\whbbY$ the formal blowup of $\bb{Y}_{K^{\circ}}$ corresponding to $\whSigma \rightarrow \Sigma$. By construction we have a commuting square of formal blowups: 
       \begin{center}
           \begin{tikzcd}
               \whbbY \arrow[r] \arrow[d] & \bb{Y}_v \arrow[d] \\ 
               \bb{Y}_h \arrow[r] & \bb{Y}_{K^{\circ}}
           \end{tikzcd}
       \end{center}
       Let $\bb{G} \subset \whbbY$ the Zariski open subscheme whose generic fiber is $r^{-1}_{(\bb{Y}, \bb{D})}(\mathcal{G})$. The top horizontal map embeds $\bb{G}$ also as a Zariski open subscheme of $\bb{Y}_v$, while the left vertical map shows $\bb{G}$ is isomorphic to $T_M\{\mathscr{G}_v^{\vee}\}$. Indeed, $\bb{Y}_h$ completed along $\bb{S}'$ in the central fiber (over $K^{\circ}$) $\bb{Y}'$ is isomorphic to the base extension to $K^{\circ}$ of $\whTV(\sigma_k \cup \sigma_{\bb{S} \cap \bb{D}_{n + 1}})$, the completion of the toric variety along its distinguished 1-stratum. But $\bb{G}$ is collapsed in $\bb{Y}_h$ to $\bb{S}'$, so is isomorphic to the Zariski open subset (of the toric blowup of $\whTV(\sigma_k \cup \sigma_{\bb{S} \cap \bb{D}_{n + 1}})$ induced by restricting $\whSigma \rightarrow \Sigma'$) corresponding to $\mathscr{G}_v \hookrightarrow \whSigma$. That is exactly $\bb{T}_M\{\mathscr{G}_v^{\vee}\}$.

       For (c), take $\mathscr{I}_h$ the inverse image sheaf of $\mathscr{I}$ in $\bb{Y}_h$ and $\whscrI$ in $\whbbY$. Now $\whscrI|_{\bb{G}}$ is an ideal of definition of $\bb{G}$, so also is $\mathscr{I}_v|_{\bb{G}} \cong \whscrI|_{\bb{G}}$, which shows $(\pi) + (\mathscr{I}_v|_{\bb{G}})^n$ is generated by $\pi$ (recall $r$ is chosen close to 1, or $n$ near infinity). Referring to \cref{def:Trunc}, we see that $\bb{Y}_v\{r^{-1} \mathscr{I}_v\} \rightarrow \bb{Y}_v$ restricts to an isomorphism over $\bb{G}$.

       From the local isomorphism of fans near $\sigma_{\bb{S}}$ we obtain $\bb{X}_v, \bb{X}_h,$ and $\whbbX$ from $\bb{X}_{K^{\circ}}$ via identical sequences of $*$-subdivisions as those used to obtain $\bb{Y}_v,\bb{Y}_h$ and $\whbbY$ from $\bb{Y}_{K^{\circ}}$. In particular we have the commuting square 
        \begin{center}
           \begin{tikzcd}
               \whbbX \arrow[r] \arrow[d] & \bb{X}_v \arrow[d] \\ 
               \bb{X}_h \arrow[r] & \bb{X}_{K^{\circ}}
           \end{tikzcd}
        \end{center}
        and $\bb{T}_M\{\mathscr{G}_v^{\vee}\} \hookrightarrow \bb{X}_v$ Zariski open, whose generic fiber gives an affinoid subdomain embedding $T_M\{\iota(\mathcal{G})^{\vee}\} \hookrightarrow X_K$. By \cref{prop:SYZAuts}, the given $\phi : G \rightarrow T_M\{\iota(\mathcal{G})^{\vee}\}$ is the generic fiber of a uniquely determined isomorphism of the canonical model of $G$ with $\bb{T}_M\{\iota(\mathcal{G})^{\vee}\}$. Performing the admissible snc blowup corresponding to the sequence of $*$-subdivisions $\mathscr{G}_v \rightarrow \mathcal{G}$, we obtain an isomorphism $\Phi : \bb{G} \rightarrow \bb{T}_M\{\mathscr{G}_v^{\vee}\}$ such that $\Phi_{\eta} = \phi$, proving (d).

        For (e), fix $\bb{L}$ on $\bb{Y}$. We observe $\bb{Y}_h$ (resp. $\bb{X}_h$) is a constant base extension of an snc pair over $k$, log Calabi-Yau along the distinguished 1-stratum corresponding to the codimension 1 cone $\sigma_{\bb{S}'}$ of $\Sigma_h$ (resp. $\iota(\sigma_{\bb{S}'})$). Pulling back $\bb{L}$ to get $\bb{L}_h$ on $\bb{Y}_{h}$, and then pulling back again to $\whbbL$ on $\whbbY$, by \cref{lem:PicOfFCLogCY1Strat} we find that $\whbbL|_{\bb{G}}$ is determined by the degree of the restriction of $\bb{L}_h$ to $\bb{S}'$. We can find a line bundle $\bb{L}_{\bb{X}}$ on $\bb{X}$ which, after pulling back to $\bb{X}_h$, has identical degree along the distinguished 1-stratum. Indeed, from standard toric geometry this is equivalent to choosing a piecewise integral-linear function on $M_{\bb{R}}$, linear on each cone of the defining fan of $\bb{X}$, whose kink along the distinguished codimension 1 cone is the desired degree. Let $\bb{L}_{\bb{X}, v}$ be the restriction of $\bb{L}_{\bb{X}}$ to $\bb{X}_v$. We have the isomorphisms
        $$ \bb{L}_{\bb{X}, v}|_{\bb{T}_M\{\mathscr{G}_v^{\vee}\}} \cong \whbbL_{\bb{X}}|_{\bb{T}_M\{\mathscr{G}_v^{\vee}\}} \cong \whbbL|_{\bb{G}} \cong \bb{L}_v|_{\bb{G}},$$
        the first and last induced by $\whbbX \rightarrow \bb{X}_v$ and $\whbbY \rightarrow \bb{Y}_v$, the middle because $\bb{G}$ and $\bb{T}_M\{\mathscr{G}_v^{\vee}\}$ lie over the distinguished 1-strata (recall these are isomorphic to $\bb{P}^1$) of $\bb{Y}_h$ and $\bb{X}_h$, and are therefore determined by the degrees of the restrictions to these 1-strata. This proves the part of the statement concerning extension of a line bundle on $\bb{Y}$. To extend a line bundle on $\bb{Y}_s$ to $\bb{Z}_s$, go through the argument, adding a subscript $s$ wherever relevant.

        Finally, for (f) we note that by definition $b : \bb{Y}_v\{r^{-1}\mathscr{I}_v\} \rightarrow \bb{Y}_{K^{\circ}}$ is quasi-projective, with relatively ample $\bb{E}$ a sum of exceptional divisors. Fix $\bb{L}$ on $\bb{Y}$ relatively ample over affine. Then $b^*\bb{L} \otimes \mathscr{O}(\bb{E})$ is relatively ample over $K^{\circ}$. Choose similarly on $\bb{X}_v$ a relatively ample sum $\bb{E}_{\bb{X}}$ of exceptional divisors of the toric blowup $b_{\bb{X}} : \bb{X}_v \rightarrow \bb{X}_{K^{\circ}}$, so that the restriction of $\bb{E}_{\bb{X}}$ to $\bb{T}_M\{\mathscr{G}_v^{\vee}\}$ is identified with $\bb{E}|_{\bb{G}}$ by $\Phi$, as well as $\bb{L}_{\bb{X}}$ ample on $\bb{X}$ so that the degrees (of the pullbacks to $\bb{Y}_h$ and $\bb{X}_h$, resp.) on restriction to the distinguished 1-strata are identical\textbf{---}for this, we can just pick any two amples, and take multiples so the degrees are the same. Then also $b_{\bb{X}}^* \bb{L}_{\bb{X}} \otimes \mathscr{O}(\bb{E}_{\bb{X}})$ is rel. ample over $K^{\circ}$. By (e) $b^* \bb{L}$ and $b_{\bb{X}}^* \bb{L}_{\bb{X}}$ glue to a line bundle on $\bb{Z}$, and by construction so do $\mathscr{O}(\bb{E})$ and $\mathscr{O}(\bb{E}_{\bb{X}})$. The gluing of $b^* \bb{L} \otimes \mathscr{O}(\bb{E})$ and $b_{\bb{X}}^* \bb{L}_{\bb{X}} \otimes \mathscr{O}(\bb{E}_{\bb{X}})$ now gives the desired line bundle on $\bb{Z}$. 
\end{proof}
\end{proposition}

For what follows, assume $k$ is an algebraically closed field of characteristic $0$, and $K$ a complete discrete valuation field extending $k$.

\begin{definition-remark}\label{def:SYZ} Consider $(\bb{Y}, \bb{D})$ snc pair of special formal schemes over $k$, and $L/k$ an arbitrary valuation extension. We define the \textit{SYZ fibration} $r_{(\bb{Y}, \bb{D})}^L : Y_L \rightarrow  |\oSigma_{(\bb{Y}, \bb{D})}|$ as the composition of the projection $\pi_L : Y_L \rightarrow Y$ with $r_{(\bb{Y}, \bb{D})}$. Since $k$ is algebraically closed, by [Poineau] there is a canonical section $Y \rightarrow Y_L$ of $\pi_L$, and therefore from \cref{theorem:SYZMain} a canonical embedding $\oSk(\bb{Y}, \bb{D}) \hookrightarrow Y_L$. Note by definition, for any polyhedral subset $\mathcal{P} \subset |\Sigma|$, $(r^L_{(\bb{Y}, \bb{D})})^{-1}(\mathcal{P})$ is canonically isomorphic to $r_{(\bb{Y}, \bb{D})}^{-1}(\mathcal{P})_L$. Therefore the exact statement of \cref{prop:GluingEnds}(a) holds as well with $r^L_{(\bb{Y}, \bb{D})}$ replacing $r_{(\bb{Y}, \bb{D})}$.   
\end{definition-remark}

\begin{proposition}\label{prop:SkelOfForm} Fix $\omega$ a volume form on $U$, with at worst simple poles along $\bb{D}$. Then $\Sk(\omega)$ is the subset of $\Sk(\bb{Y}, \bb{D})$ made up of all $\sigma_{\bb{S}}^{\circ}$ such that $\omega$ has a simple pole along each $\bb{D}_i$ that contains $\bb{S}$, provided such a stratum $\bb{S}$ exists. For $L/k$ an arbitrary valuation extension, $\Sk(\omega_L) \subset U_L$ is precisely the canonical lift of $\Sk(\omega)$.  
\begin{proof}
    Fix $p \in \bb{Y}_s$ and $\bb{S}$ the minimal stratum of $\bb{D}$ containing $p$. Fix also $\bb{W} \subset \bb{Y}$ a Zariski open neighborhood of $p$ with an \'etale chart $\psi : \bb{W} \rightarrow \bb{Z}$, here $\bb{Z}$ the formal completion of $k[z_1,...,z_n]$ along some ideal. Suppose WLOG that $\bb{D}_1,...,\bb{D}_r,$ $r \leq n$, are the irreducible components of $\bb{D}$ which contain $\bb{S}$, and that $\bb{D}_i$ is cut out analytically locally near $p$ by $\psi^*z_i$. Let $\bb{E} \subset \bb{Z}$ denote the Cartier divisor $z_1...z_r = 0$.

    Fix some $u \in U$ with center $p \in \bb{Y}_s$, and let $\| \cdot \|_y$, $\| \cdot \|_{\psi_{\eta}(y)}$ denote the Temkin K\"ahler seminorms on $\Omega^n_{U/k, y}$, $\Omega^n_{Z/k, \psi_{\eta}(y)}$ respectively (see \cite{Temkin_Metrization_of_differential_pluriforms} for the definition). Set $\zeta := \frac{dz_1}{z_1} \wedge ... \wedge \frac{dz_n}{z_n}$. We know that $\omega = h \psi^* \zeta$ for some regular function $h$ defined at $p$, which satisfies $|h(u)| \leq 1$ automatically, so that the inequalities $\| \omega \|_u \leq |h(u)| \| \psi^* \zeta\|_u \leq |h(u)| \|\zeta \|_{\psi_{\eta}(u)} \leq 1$ hold always.

    We go case by case. If $\bb{S}$ is non-algebraic, then the center of $\psi_{\eta}(u)$ is not a generic point of a stratum of $\bb{E}$, so $\| \zeta \|_{\psi_{\eta}(u)} < 1$ from the toric case, and hence $\|\omega \|_u < 1$. If $\bb{S}$ is algebraic, but, say, $\bb{D}_1$ is not a pole of $\omega$, then we can write $h = (\psi^* z_1) g$ for another regular function $g$ defined at $p$. But $|(\psi^* z_1)(u) | < 1$, because the center of $u$ is contained in $\bb{D}_1$, and $|g| \leq 1$, so we have also in this case $\| \omega \|_u < 1$. Finally, we consider the case $\bb{S}$ algebraic and $\omega$ having a pole along each $\bb{D}_i$, $i \in [r]$. In this case $h$ is invertible at $p$, so $|h(u)| = 1$, and the $\psi^*z_i$ give a family of monomial parameters at $u$, so that $\| \omega \|_u = |h(u)| \| \psi^* \zeta \|_u$ [loc. cit., Corollary 8.1.3].

    For the final claim, we apply the pullback inequality [loc. cit., 6.3.2] for base field extensions to first show $\Sk(\omega_L)$ must live over $\Sk(\omega)$. Then, using the $\psi$ above, we can find an affinoid domain $G \subset U$, together with quasi-\'etale $G \rightarrow \bb{G}^n_{m}$, which we also call $\psi$ (in fact it is the restriction of $\psi_{\eta}$), such that $\psi(u) \in \Sk(\bb{G}^n_{m})$. As above $\zeta$ is a volume form on $\bb{G}^n_m$. We base extend, giving quasi-\'etale $\psi_L : G_L \rightarrow \bb{G}_{m,L}^n$, and the commuting square
    \begin{center}
        \begin{tikzcd}
            G_L \arrow[d,"\pi_L"] \arrow[r,"\psi_L"] & \bb{G}_{m, L}^n \arrow[d, "\pi_L"] \\ 
            G \arrow[r, "\psi"] & \bb{G}_{m}^n
        \end{tikzcd}
    \end{center}
    The only point of $G_L$ living both over $u$ and also over the canonical lift of $\psi(u)$ in $\bb{G}_{m,L}^n$ is the canonical lift $u_L$ of $u$ to $G_L$, and the K\"ahler seminorm $\| \omega_L \|_{u_L}$ equals 1, since 
    $$  1 = \| \zeta_L \|_{\psi_L(u_L)}= \| \psi_L^* \zeta_L \|_{u_L} \leq |h_L^{-1}(u_L)| \|\omega_L\|_{u_L} = \| \omega_L \|_{u_L} \leq 1, $$
    the second equality by combining Remark 6.1.10(i) and Theorem 5.6.4(i) of [loc. cit.]. Meanwhile at any other point $u'$ of $G_L$ living over $u$, 
    $$\|\omega_L \|_{u'} \leq |h(u')| \|\psi_L^* \zeta_L \|_{u'} = \|\zeta_L\|_{\psi_L(u')} < 1,$$
    because $\psi_L(u') \not \in \Sk(\bb{G}^n_{m,L})$.
    
\end{proof}
\end{proposition}

\begin{proposition}\label{prop:trop} Let $L/K$ an arbitrary valuation extension with tropical value group $\Gamma := -\log|L^*| \subset \bb{R}$. We consider $(\frak{X}, \frak{H})$ a strictly semistable formal model over $L^{\circ}$, in the sense of \cite{Gubler_Skeletons_and_tropicalizations}. We also fix $(\bb{Y}, \bb{D})$ snc over $k$, along with a map $\frak{f} : \frak{X} \rightarrow \bb{Y}$ of formal schemes, which we assume satisfies $\frak{f}_{\eta}^{-1}(\bb{D}_{\eta}) \subset \frak{H}_{\eta}$ (see \cref{cor:FormalModels} for the definition of $\frak{f}_{\eta}$ in this context). Then there is a canonical continuous $\trop(\frak{f}) : \oSk(\frak{X}, \frak{H}) \rightarrow |\oSigma_{(\bb{Y}, \bb{D})}|$ such that  
$$ r_{(\bb{Y}, \bb{D})} \circ \frak{f}_{\eta} = \trop(\frak{f}) \circ r_{(\frak{X}, \frak{H})}. $$
Furthermore, $\trop(\frak{f})$ restricts to a piecewise integral, $\Gamma$-affine $\Sk(\frak{X}, \frak{H}) \rightarrow |\Sigma_{(\bb{Y}, \bb{D})} |$.
\begin{proof}
    Fix a stratum $\frak{S}$ of $\frak{X}_s + \frak{H}$ [loc. cit., 3.15], and $\frak{U} \subset \frak{X}$ a \textit{building block} for $\frak{S}$ (see [loc. cit., Prop. 4.1]; building block is the analog of basic chart in this setting). In particular we have an \'etale map 
    $$ \frak{U} \rightarrow \Spf(L^{\circ}\{x_0,...,x_d\}/(x_0...x_r - \pi)),$$
    here $\pi \in L^{\circ \circ}$ and $0 \leq r \leq d$, and $x_{r + 1},...,x_{r + s}$ cut out the components of $\frak{H}$ containing $\frak{S}$, implying in particular $\Sk(\frak{U}, \frak{H}|_{\frak{U}}) = \Delta(r, \pi) \times \bb{R}_{\geq 0}^s$ [loc. cit., Ex. 3.10]. and a canonical retraction $\frak{U}_{\eta} \setminus \frak{H}_{\eta} \rightarrow \Sk(\frak{U}, \frak{H}|_{\frak{U}})$ determined by evaluating the functions $-\log|x_i|$, $i = 0,..., r+s$.

    Let $\bb{S}$ denote the smallest stratum of $\bb{D}$ such that $\frak{S} \subset \frak{f}^{-1}(\bb{S})$. Up to shrinking $\frak{U}$, we can fix a Zariski open $\bb{U} \subset \bb{Y}$, the domain of a basic chart for $\bb{S}$, such that $\frak{U} \subset \frak{f}^{-1}(\bb{U})$. Fixing $\bb{D}_1,...,\bb{D}_{\ell}$ the irreducible components of $\bb{D}$ which contain $\bb{S}$, and $y_1,...,y_{\ell}$ a set of defining functions on $\bb{U}$, we can apply [loc. cit., Prop. 5.2] to each pullback $\frak{f}^* y_i$, giving $-\log |\frak{f}^* y_i| : \frak{U}_{\eta} \setminus \frak{H}_{\eta} \rightarrow \bb{R}_{\geq 0}$ an integral $\Gamma$-affine function which factors through the retraction to the skeleton. Then we obtain integral $\Gamma$-affine 
    $$(-\log|\frak{f}^* y_1|,...,-\log|\frak{f}^* y_{\ell}|) : \Sk(\frak{U}, \frak{H}|_{\frak{U}}) \rightarrow \sigma_{\bb{S}} \subset \Sk(\bb{Y}, \bb{D}), $$
    which is now clearly compatible with the retractions on either side. Arguing as in [loc. cit., Prop. 5.5], this map is independent of the choice of building block $\frak{U}$ for $\frak{S}$, and for $\frak{S}' \supset \frak{S}$ with building block $\frak{U}'$, the map determined by $\frak{S}$ restricts to the map determined by $\frak{S}'$, and so they all glue to give $\Sk(\frak{X}, \frak{H}) \rightarrow |\Sigma_{(\bb{Y}, \bb{D})}|$.

    Extending over $\oSk(\frak{X}, \frak{H})$ is accomplished by arguing the same over each stratum of the Cartier $\frak{H}$, regarded as strictly semistable in its own right, with horizontal divisor the restriction of the union of components of $\frak{H}$ not containing the stratum in question.  
\end{proof}

\end{proposition}

\begin{proposition}\label{prop:balancing} (\textit{Tropical Balancing}) Fix $L/K$ an arbitrary valuation extension, and consider $(\frak{B}, \frak{P})$ a strictly semistable pointed curve over $L^{\circ}$ with a map $\frak{f} : \frak{B} \rightarrow \bb{Y}$ such that $\frak{f}_{\eta}^{-1}(\bb{D}_{\eta}) \subset \frak{P}_{\eta}$. A finite vertex $v$ of $\Sk(\frak{B}, \frak{P})$ determines a proper component $C_v$ of $\frak{B}_s$. Fix $\bb{E}$ a Cartier divisor on $\bb{Y}$ such that $\frak{f}_{\eta}^{-1}(\bb{E}_{\eta})$ is a finite set of points of $\frak{B}_{\eta}$. Then
$$ \sum_{v \in e} d_e \trop(\frak{f})^* \bb{E}^{\trop} = \deg \big((\frak{f}_s|_{C_v})^* \mathscr{O}(\bb{E})\big),$$
here the sum is over all edges $e$ of $\Sk(\frak{B}, \frak{P})$ containing $v$, each oriented so the positive direction points away from $v$, and $\trop(\frak{f})^* \bb{E}^{\trop}$ the pullback to $\Sk(\frak{B}, \frak{P})$ of the function $\bb{E}^{\trop}$ (see \cref{def:CartierFnct}).

Suppose $\Sk(\bb{Y}, \bb{D})$ is smooth near $\bb{S}$ (see \cref{def:IntLinStructure}), an irreducible stratum of $\bb{D}$ of dimension at most $1$, and suppose $\trop(\frak{f})(v) \in *\sigma_{\bb{S}}$. Then $\trop(\frak{f})$ is \textit{balanced at $v$}, meaning that $\sum_{v \in e} d_e \trop(\frak{f}) = 0$ in the canonical integral-linear structure of $*\sigma_{\bb{S}}$. 
\begin{proof}
    We have chosen to state the proposition with reference to $(\bb{Y}, \bb{D})$ as it is convenient for our application. But the more general statement concerns only a line bundle on $\frak{B}$ and a rational section of the line bundle, in which case the first statement is classical (see e.g. \cite{thuillier2005theorie}).

    For the second statement, we regard $\sum_{v \in e} d_e\trop(\frak{f})$ as a tangent vector of $*\sigma_{\bb{S}}$ at $\trop(\frak{f})(v)$. The linear functionals are the sums $a_1[\bb{D}_1] + ... + a_n [\bb{D}_{n + 1}]$ such that $\sum_{i = 1}^{n + 1} a_i \deg(\mathscr{O}(\bb{D}_i)|_{\bb{S}}) = 0$ (see \cref{def:IntLinStructure}). Taking the dual pairing,
    $$ \langle \sum_{i = 1}^{n + 1} a_i [\bb{D}_i] , \sum_{v \in e} d_e \trop(\frak{f}) \rangle = \sum_{i = 1}^{n + 1}a_i  \big(\sum_{v \in e} d_e \trop(\frak{f})^* \bb{D}_i^{\trop} \big) $$
    $$= \sum_{i = 1}^{n + 1} a_i  \deg \big( (\frak{f}_s |_{C_v})^* \mathscr{O}(\bb{D}_i)\big)$$
    There are two possibilities. Either $\frak{f}_s$ contracts $C_v$ to a point, and then the sum vanishes, or else $\bb{S}$ is an algebraic 1-stratum of $\bb{D}$ and $(\frak{f}_s)_*([C_v]) = d[\bb{S}] \in \text{NE}(\bb{Y}_s)$ for some $d > 0$, in which case the sum becomes 
    $$ = d \sum_{i = 1}^{n + 1} a_i \deg\big( \mathscr{O}(\bb{D}_i)|_{\bb{S}} \big) = 0. $$
    So the pairing is always $0$, giving $\sum_{v \in e} d_e \trop(\frak{f}) = 0$. 
\end{proof}
\end{proposition}

\bibliographystyle{plain}
\bibliography{Ldahema}
\end{document}